\renewcommand{\baselinestretch}{1.5}
\renewcommand{\bibname}{References}
\xpatchcmd{\tableofcontents}{\contentsname
\@mkboth}{\Large\contentsname \@mkboth}{}{} \AtBeginDocument{
\titleformat{\chapter}[display]{\bfseries\Large}{\filleft\MakeUppercase{\chaptertitlename} \Huge\thechapter}
{10mm}{\vspace{10mm}\filright}[\vspace{20mm}]
\titlespacing*{\chapter}{10mm}{-10mm}{10mm}
\newtheorem*{theorem*}{Theorem}
\newtheorem{theorem}{Theorem}[chapter]
\newtheorem{lemma}{Lemma}[chapter]
\newtheorem{remark}{Remark}[chapter]
\newtheorem{corollary}{Corollary}[chapter]
\newtheorem{definition}{Definition}[chapter]
\numberwithin{theorem}{chapter}
\numberwithin{lemma}{chapter}
\numberwithin{remark}{chapter}
\numberwithin{corollary}{chapter}
\numberwithin{definition}{chapter}
\numberwithin{equation}{section}
\numberwithin{section}{chapter}
\numberwithin{subsection}{section}
\def\a{\alpha} 					\def\v{\varphi} 			\def\b{\beta} 
					\def\g{\gamma} 		\def\o{\omega} 
 						\def\la{\lambda}
 \def\dis{\displaystyle} 		\def\p{\partial } 			\def\l{\left(}
\def\r{\right)} 					\def\lv{\left\vert} 		\def\rv{\right\vert} 
\def\lve{\left\Vert}				\def\rve{\right\Vert} 	\def\lk{\left[} \def\rk{\right]} 
\def\lf{\left\{}\def\rf{\right\}} 	 	
\def\mbc{{\mathbb C}} 		\def\mbr{{\mathbb R}}
\def\mbz{{\mathbb Z}}
\DeclareMathOperator*{\grad}{grad}
\DeclareMathOperator*{\im}{Im}
\DeclareMathOperator*{\re}{Re}
\DeclareMathOperator*{\res}{Res}
\DeclareMathOperator*{\sign}{sign}
\title{Spectral Theory of \\ Dirac Operators}
\author{Tigran Harutyunyan \\ Yuri Ashrafyan}
\date{}
\renewcommand{\maketitle}{
  \begin{titlepage}
  \,
  	\vspace{10mm}
    \begin{flushright}
     {\Large \textbf \@author}
    \end{flushright}
     \vspace{40mm}
    \begin{center}
     {\Huge \textbf \@title}
    \end{center}
    \@thanks
  \end{titlepage}
}
\begin{document}
\maketitle

\textbf{Annotation}
\vspace{5mm}

\parbox{13.5cm}{
The main issues of the spectral theory of Dirac operators are presented, namely: transformation operators, asymptotics of eigenvalues and eigenfunctions, description of symmetric and self-adjoint operators in Hilbert space, expansion in eigenfunctions, uniqueness theorems in inverse problems, constructive solution of inverse problems, description of isospectral operators, and some other questions. 
This book is aimed at specialists in spectral theory and graduate students of mathematics at universities.
}

\setcounter{tocdepth}{2}
\tableofcontents

\chapter*{Introduction}
\addcontentsline{toc}{chapter}{Introduction}

This book is based on the lectures taught by T.N. Harutyunyan in recent years at the faculty of Mathematics and Mechanics of Yerevan State University.
Our aim is the representation of the mathematical aspects of the spectral theory of the so-called canonical Dirac operator.

Consider the well-known Pauli matrices
\begin{equation}\label{c0:Pauli_matrices}
  \sigma_1=\left(
                                                             \begin{array}{cc}
                                                               0 & i \\
                                                               -i & 0 \\
                                                             \end{array}
                                                           \right),
                                                           \quad
  \sigma_2=\left(
                                                             \begin{array}{cc}
                                                               1 & 0 \\
                                                               0 & -1 \\
                                                             \end{array}
                                                           \right),
                                                           \quad
  \sigma_3=\left(
                                                             \begin{array}{cc}
                                                               0 & 1 \\
                                                               1 & 0 \\
                                                             \end{array}
                                                           \right),
\end{equation}
which have the properties
\[
\begin{array}{ccc}
\sigma_k^* = \sigma_k, & \text{(self-adjointness)}, & k = 1, 2, 3,\\
\sigma_k^2 = E_2, & (E_2 \text{ is an identity matrix)}, & k = 1, 2, 3,\\
\sigma_k \sigma_j = - \sigma_j \sigma_k, & \text{(anti-commutativity)}, & k,j = 1, 2, 3, ~ k \neq j. 
\end{array}
\]

The canonical Dirac system of ordinary differential equations we call the system
\begin{equation}\label{c0:Dirac_system}
 \left\{ \sigma_1 \dfrac{1}{i} \dfrac{d}{dx} + \sigma_2 p(x) + \sigma_3 q(x) \right\} y \equiv \left\{ B \dfrac{d}{dx} + \Omega(x) \right\} y =
  \lambda y, 
  \qquad
  y = \left( \begin{array}{c}
  y_1 \\
  y_2
  \end{array}
  \right),
\end{equation}
where $p(\cdot)$ and $q(\cdot)$ are given scalar functions, $\lambda$ is a spectral parameter ($\lambda \in \mathbb{C}$), and $y$ is an unknown function.
For the first time, the name "canonical form" was given by Gasymov and Levitan in the article \cite{Gasymov-Levitan:1966}, published in 1966.

V.A. Marchenko (see \cite[p.30]{Marchenko:1977}) suggested a more general definition.
He called an operator equation $By'+\Omega(x)y=\lambda y$ the Dirac system, if $B^2=-E$ and $B\Omega(x)+\Omega(x)B=0$, i.e., in this definition there is no restriction on the dimensions of matrices $B$ and $\Omega(x)$, moreover, they can be any operators in the corresponding Hilbert space.
If we take $\dfrac{1}{i} \cdot \sigma_1$ as matrix $B$ (and this appears in the works of \cite{Titchmarsh:1961, Gasymov-Levitan:1966, Gohberg-Krein:1967, Marchenko:1972, Marchenko:1977}), then any $2\times 2$ dimensional matrix-function, which is anti-commutative with $B$, has the form $\Omega(x) = \sigma_2\cdot p(x)+\sigma_3\cdot q(x)$.
For this reason, we call the canonical Dirac system the equation \eqref{c0:Dirac_system}.

Note that we can write \eqref{c0:Dirac_system}  as a linear normal system of differential equations of the form $y' = A(x,\lambda) y$, where the trace of the matrix $A(x,\lambda) = -\lambda B + B \Omega(x)$ is equal to zero, which ensures the Wronskian of any two solutions of equation \eqref{c0:Dirac_system} to be constant (see, e.g., \cite{Coddington-Levinson:1955}).
This circumstance gives a relation between the canonical Dirac system and the Sturm-Liouville equation $-y'' + q(x) y = \lambda y$, whose Wronskian of any two solutions is also a constant due to the absence of the first derivative.

Dirac's equation appeared in 1929, when P.A.M. Dirac introduced the equation (later named after him), modeling the evolution of spin$-\frac{1}{2}$ particles in relativistic quantum mechanics.
In the original equation, the unknown is a four-component vector-function $\varphi=\left( \varphi_1, \varphi_2, \varphi_3, \varphi_4 \right)^T$ ($T$ is the sign of transponation), and the equation has the form
\begin{equation}\label{c0:Dirac_original}
\left( \alpha_1 p_1 + \alpha_2 p_2 + \alpha_3 p_3 +\alpha_4 \right) \varphi + r(x) \varphi = \lambda \varphi,
\end{equation}
where $p_k = \frac{1}{i} \frac{\partial}{\partial x_k}$, $r(x)$ is a scalar potential of the external field, and $\alpha_k$ are fourth-order square Hermitian matrices, satisfying the conditions (the sign $*$ is for conjugate)
\begin{equation}\label{c0:alpha_conditions}
\begin{cases}
\alpha_k^* = \alpha_k, & \text{(self-adjointness, Hermitianness)},\\
\alpha_k^2 = E_4, & (E_4\text{ is an identity matrix)}, \\
\alpha_k \alpha_j = - \alpha_j \alpha_k, & \text{(anti-commutativity)}~ k \neq j. \\
\end{cases}
\end{equation}
In paper \cite{Stockert:1970} it is proved, that the set $\gamma_N$ of square numerical matrices of an order $N$, satisfying conditions \eqref{c0:alpha_conditions}, consists of $2k+1$ matrices, where $N = 2^k r$, $r = 1(mod2)$, i.e. $r$ is an odd number (obviously, any natural number $N$ can be represented in this way). 
Thus, there are only 3 Pauli matrices (see \eqref{c0:Pauli_matrices}) and only 5 Dirac matrices $\alpha_0, \alpha_1, \alpha_2, \alpha_3, \alpha_4$.    
For example, they can be represented by second-order identity $E_2$ and Pauli $\sigma_1, \sigma_2, \sigma_3$ matrices in the following way
\[
\alpha_0 = \left(
     \begin{array}{cc}
       E_2 & 0 \\
       0 & -E_2 \\
     \end{array}
   \right),
   \quad
   \alpha_k = \left(
     \begin{array}{cc}
       0 & \sigma_k \\
       -\sigma_k & 0 \\
     \end{array}
   \right),  ~ k=1, 2, 3,
   \quad
\alpha_4 = \left(
     \begin{array}{cc}
       0 & E_2 \\
       E_2 & 0 \\
     \end{array}
   \right).
\]

In some cases (see, e.g., \cite{Sargsyan:2005}), the system of partial differential equations \eqref{c0:Dirac_original} can be reduced to the system of ordinary differential equations \eqref{c0:Dirac_system}.

We will not refer to the physical aspects of the models described by the Dirac equation (we can only recommend the book \cite{Thaller:1992}).
We will study the direct and inverse problems of the spectral theory of the canonical Dirac system \eqref{c0:Dirac_system}.

Under the norm $|A|$ of the two-dimensional matrix $A$, we understand the quantity $|A| = \sqrt{\lambda_2(A^* A)}$, where $\lambda_2(A^* A)$ is the largest eigenvalue of a positive definite matrix $A^* A$. 
It is easy to check that if $A = a \sigma_2 + b \sigma_3$ or $A = a E + b B$ (where $a$ and $b$ are arbitrary complex numbers), then
\begin{equation}\label{c0:A_norm_est}
|A| \leq |a| + |b|.
\end{equation}
There are other norms for two-dimensional matrix $A=(a_{ij})_{i,j=1}^2$, for example $|A| = \left( \sum_{i,j=1}^2 |a_{ij}|^2 \right)^{1/2} $, or $|A| = \sum_{i,j=1}^2 |a_{ij}| $. We must note that all these norms are equivalent.

In what follows, by $c(x)$ we denote
\begin{equation}\label{c0:c_x}
c(x) = \int_0^x |p(s)| ds + \int_0^x |q(s)| ds.
\end{equation}

In Chapter \ref{chapter_1} we consider the Cauchy problem
\[
\begin{cases}
\ell y \equiv \left\{ \sigma_1 \dfrac{1}{i} \dfrac{d}{dx} + \sigma_2 p(x) + \sigma_3 q(x) \right\} y = \lambda y \\
y(0)
=
\left(
\begin{array}{l}
~\sin\a \\
-\cos \a
\end{array} 
\right), \qquad \alpha \in \mathbb{C},
\end{cases}
\]
under conditions $p, q \in L_{loc}^1[0,\infty)$ ($p$ and $q$ are complex-valued, summable on arbitrary finite interval functions).
In this chapter, we prove the existence and uniqueness of solution $y = \varphi(x, \lambda, \alpha)$ of this Cauchy problem and its analytical dependence on $\lambda$ and $\alpha$.

Chapter \ref{chapter_2} is devoted to the proof of the existence and some properties of the so-called transformation operators.
In particular, we prove the existence of a kernel $K_0(x,t,\alpha)$ (matrix-function), such that 
\[
\varphi(x, \lambda, \alpha)=
\left(\begin{array}{c}\sin (\lambda x+\alpha) \\ -\cos (\lambda x+\alpha) \end{array}\right) +
\int^x_0 K_0(x, t, \alpha)\left(\begin{array}{c}\sin (\lambda t+\alpha) \\ -\cos (\lambda t+\alpha) \end{array}\right)\, dt\; .
\]

In Chapter \ref{chapter_3} we consider the boundary value problem $L(p, q, \alpha, \beta)$
\[
\begin{cases}
\ell y=\lambda y,  \\
y_1(0) \cos \alpha+y_2 (0) \sin \alpha=0,\\
y_1(\pi)\cos \beta +y_2(\pi)\sin \beta =0. 
\end{cases}
\]
with summable coefficients $p$ and $q$ ($p, q \in L^1_\mathbb{C}[0, \pi]$
\footnote{$p\in L^1_\mathbb{C}[0, \pi]$ means, that $p$ is complex-valued and $\int_0^\pi |p|dx < \infty$, and $p\in L^1_\mathbb{R}[0, \pi]$ means, that $p$ is real-valued.}
) and complex parameters $\alpha$ and $\beta$ in boundary conditions.
We prove that the problem $L(p, q, \alpha, \beta)$ has a countable set of eigenvalues  $\lambda_n = \lambda_n(p, q, \alpha, \beta)$, $n \in \mathbb{Z}$, (with corresponding eigenfunctions $\varphi_n(x) = \varphi(x, \lambda_n, \alpha)$, $n \in \mathbb{Z}$), which have the asymptotics
\[
\lambda_n = n + \dfrac{\beta - \alpha}{\pi} + r_n,
\]
where $r_n = r_n(p, q, \alpha, \beta) = o(1)$ (when $n \to \pm \infty$) uniformly by $p$ and $q$ from bounded subsets of $L^1_\mathbb{C}[0, \pi]$ and uniformly by all complex $\alpha$ and $\beta$ with bounded imaginary parts.
To study in more detail the dependence of the eigenvalues $\lambda_n(p, q, \alpha, \beta)$ on all arguments, we introduce its gradient by the formula 
\[
\grad\, \lambda_{n} =\left(\frac{\partial \lambda_{n} }{\partial \alpha} , \, \frac{\partial \lambda_{n} }{\partial \beta } ,\, \frac{\partial \lambda_{n} }{\partial p\left(x\right)} ,\, \frac{\partial \lambda_{n} }{\partial q\left(x\right)} \right). 
\]
By $a_n=a_n(p, q, \alpha, \beta)$ we denote the square of the $L^2$-norm of eigenfunction $\varphi_n$:
\[
a_n=\int^\pi_0 | \varphi_n(x)|^2 dx.
\]
In the same Chapter \ref{chapter_3} we obtain the asymptotics 
\[
a_n = \pi + \kappa_n, \qquad \kappa_n \to 0, \quad \mbox{when} \ n \to \pm \infty,
\]
and also the representation of $a_n$ by two spectra $\{ \lambda_n(\alpha, \beta)\}_{n \in \mathbb{Z}}$ and  $\{ \lambda_n(\epsilon, \beta)\}_{n \in \mathbb{Z}}$, where $\epsilon \in \left(\alpha, \frac{\pi}{2} \right)$.

Chapter \ref{chapter_4} is devoted to the eigenfunction expansion theorems.

It is known that the problem $L(p, q, \alpha, \beta)$ by a unitary transformation reduces to a problem of the form $L(\tilde{p}, \tilde{q}, \tilde{\alpha}, 0)$.
That is why, when in Chapter \ref{chapter_5}, we introduce the concept of an eigenvalue function (EVF) of a family of Dirac operators, we are talking about a family $\left\{ L(p, q, \alpha, 0), \alpha \in  \left(-\frac{\pi}{2}, \frac{\pi}{2} \right] \right\}$, where $p, q, \in L^1_{\mathbb{R}}[0, \pi]$.

In Chapter \ref{chapter_6}, we investigate inverse problems.
Here we prove four uniqueness theorems in inverse problems.

In Chapter \ref{chapter_7}, we turn to the description of isospectral operators, i.e. knowing that, in general, one spectrum does not uniquely determine a potential matrix $\Omega(x) = p(x) \sigma_2 + q(x) \sigma_3$, we pose the following questions: 
\textit{"How many different matrix-function can generate the same spectrum?"}
Moreover, the second, more precise question:
\textit{"Is it possible to describe the set of all potential matrix-functions that generate the same spectrum?"}
In Theorem \ref{c7:thm_2}, we give the description of all canonical isospectral potentials.

In the theory of inverse problems for Sturm-Liouville boundary-value problem
\[
\begin{cases}
-y'' + q(x) y = \lambda y,  \\
y_1(0) \cos \alpha+y_2 (0) \sin \alpha=0,\\
y_1(\pi)\cos \beta +y_2(\pi)\sin \beta =0,
\end{cases}
\]
the eigenvalues of which we denote by $\lambda_n(q, \alpha, \beta), \ n= 0, 1, 2, \ldots$, V.A. Ambarzumyan's theorem (see \cite{Ambarzumyan:1929}) is well-known:
\begin{theorem*}
If $\lambda_n(q, \frac{\pi}{2}, \frac{\pi}{2}) = n^2, \ n= 0, 1, 2, \ldots$, then $q(x) \equiv 0$.
\end{theorem*}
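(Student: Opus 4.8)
The plan is to run Ambarzumyan's classical variational argument. Observe first that the choice $\alpha=\beta=\frac{\pi}{2}$ turns the boundary conditions into $y'(0)=y'(\pi)=0$, so we are looking at the self-adjoint Neumann problem for $-y''+q(x)y=\lambda y$ on $[0,\pi]$ with real, summable $q$; it is bounded below and has discrete spectrum $\lambda_0(q)<\lambda_1(q)<\cdots\to+\infty$, and by hypothesis $\lambda_n(q)=n^2$ for all $n\ge 0$, in particular $\lambda_0(q)=0$. Two standard facts about this problem drive the proof. First, because the Neumann condition is the natural one for the Dirichlet integral, the quadratic form $\mathfrak h[y]:=\int_0^\pi\bigl(|y'(x)|^2+q(x)\,|y(x)|^2\bigr)\,dx$ is closed and bounded below on the Sobolev space $W_2^1[0,\pi]$ of functions $y$ with $y,y'\in L^2[0,\pi]$ (no boundary restriction), and
\[
\lambda_0(q)=\inf_{\substack{y\in W_2^1[0,\pi]\\ \|y\|_{L^2}=1}}\mathfrak h[y],
\]
equivalently $\mathfrak h[y]\ge\lambda_0(q)\,\|y\|_{L^2}^2$ for every $y\in W_2^1[0,\pi]$. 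Second, the eigenvalues admit the classical asymptotics $\lambda_n(q)=n^2+\frac{1}{\pi}\int_0^\pi q(x)\,dx+o(1)$ as $n\to\infty$.

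Granting these, the rest is short. Testing the Rayleigh quotient with the constant $y\equiv 1$ yields $0=\lambda_0(q)\le\frac1\pi\int_0^\pi q(x)\,dx$, hence $\int_0^\pi q\,dx\ge 0$. Inserting $\lambda_n(q)=n^2$ into the asymptotic formula and letting $n\to\infty$ forces $\int_0^\pi q(x)\,dx=0$. Consequently $\mathfrak h[1]=\int_0^\pi q\,dx=0=\lambda_0(q)\,\|1\|_{L^2}^2$, so the constant function $1$ is a global minimizer of the (unnormalized) form $\mathfrak h$ over $W_2^1[0,\pi]$. Writing out the first variation at this minimizer, $\frac{d}{dt}\mathfrak h[1+t\phi]\big|_{t=0}=0$ for every $\phi\in W_2^1[0,\pi]$; since $\int_0^\pi q\,dx=0$ this reduces to $\int_0^\pi q(x)\phi(x)\,dx=0$ for all such $\phi$ (indeed the quadratic $t\mapsto\mathfrak h[1+t\phi]$ attains its minimum at $t=0$, so its linear coefficient $2\int_0^\pi q\phi$ vanishes), and, $W_2^1[0,\pi]$ being dense in $L^2[0,\pi]$, we get $q(x)=0$ for a.e.\ $x\in[0,\pi]$, which is the assertion.

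The substantive content lies entirely in the two facts isolated above, in the generality $q\in L^1[0,\pi]$: that multiplication by $q$ is infinitesimally form-bounded relative to $-\frac{d^2}{dx^2}$ on $W_2^1[0,\pi]$ — which follows from the interpolation estimate $\|y\|_{L^\infty[0,\pi]}^2\le\varepsilon\|y'\|_{L^2}^2+C_\varepsilon\|y\|_{L^2}^2$ and underpins both the self-adjointness and the Rayleigh characterization of $\lambda_0$ — and the eigenvalue asymptotics $\lambda_n(q)=n^2+\frac1\pi\int_0^\pi q\,dx+o(1)$, which is the Sturm--Liouville analogue of the Dirac asymptotics discussed above and is proved the same way: via the integral-equation (transformation-operator) representation of the solution of the Cauchy problem for $-y''+qy=\lambda y$ together with a zero-counting argument for the characteristic function. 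I expect this last asymptotics, with the $o(1)$ remainder under the mere hypothesis $q\in L^1$, to be the only delicate point; once it and the variational facts are in hand, the displayed computation finishes the proof.
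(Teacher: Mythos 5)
Your argument is the classical variational proof of Ambarzumyan's theorem and it is essentially correct; note, however, that the paper never proves this statement at all — it is quoted in the Introduction as a known result with a reference to Ambarzumyan (1929), and the Ambarzumyan-type results the paper actually proves (the corollaries in the chapter on symmetric cases, for the Dirac system) are obtained by a completely different mechanism: eigenvalue asymptotics to pin down the boundary parameter, then a symmetry of the problem which turns one spectrum into two, then the Borg-type two-spectra uniqueness theorem, with no Rayleigh-quotient or first-variation argument anywhere. So there is nothing in the paper to match your proof against; on its own terms it is sound, with the real content correctly isolated in the two standard facts you quote (the KLMN/form-boundedness of an $L^1$ potential relative to the Neumann Dirichlet form, giving $\mathfrak h[y]\ge\lambda_0\|y\|^2$ on all of $W_2^1$, and the asymptotics $\lambda_n=n^2+\frac{1}{\pi}\int_0^\pi q\,dx+o(1)$ for $q\in L^1$, which is proved by the same transformation-operator and Rouch\'e/Riemann--Lebesgue technique the paper uses for the Dirac system). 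Also, your step deducing $\int_0^\pi q\,dx\ge 0$ from testing with $y\equiv 1$ is redundant, since the asymptotics alone force $\int_0^\pi q\,dx=0$.

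One small repair is needed at the very end. From $\int_0^\pi q\,\phi\,dx=0$ for all $\phi\in W_2^1[0,\pi]$ you conclude $q=0$ a.e.\ by invoking density of $W_2^1$ in $L^2$; but $q$ is only in $L^1$, so the functional $\phi\mapsto\int q\phi$ is not $L^2$-continuous and cannot be extended by $L^2$-density. The correct one-line fix: since $W_2^1[0,\pi]\supset C^\infty[0,\pi]$ is dense in $C[0,\pi]$ in the sup norm and $\left|\int_0^\pi q(\phi-\phi_k)\,dx\right|\le\|q\|_{L^1}\|\phi-\phi_k\|_\infty$, the identity $\int_0^\pi q\phi\,dx=0$ extends to all continuous $\phi$, so the measure $q\,dx$ is zero and $q=0$ almost everywhere. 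With that adjustment the proof is complete.
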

It should be noted, that this theorem is laid in the foundations of the study of inverse problems.

In Chapter \ref{chapter_8}, we answer the question:
\textit{"Is there an analog of Ambarzumyan's theorem in the case of a boundary-value problem for the canonical Dirac system?"}

In the general case, the answer to this question is negative.
We even give an example of a two-parameter family of canonical potentials that generate the same spectrum as a zero potential problem.

Besides this, in the same chapter, we describe cases when the inverse problem for the canonical Dirac system can be solved with a smaller set of spectral data, than the set required in the general case.
Here we prove three theorems, the essence of which we tried to express in the title of the chapter: "Cases of symmetry".

In Chapter \ref{chapter_9}, we solve the inverse problem in terms of eigenvalue functions (EVF).
More precisely, we prove that the properties 1)-4) described in Theorem \ref{c5:thm_1} are not only necessary but also sufficient for a function to be an EVF of a certain family of operators $\left\{ L(p, q, \alpha, 0), \ \alpha \in \left( -\frac{\pi}{2}, \frac{\pi}{2} \right] \right\}$, with $p, q \in L^2_{\mathbb{R}}[0. \pi]$.

In Chapter \ref{chapter_10}, we consider the canonical Dirac system on the half-axis $(0, \infty)$ and whole-axis $(-\infty, \infty)$.

Section \ref{c10:sec_1} is devoted to obtaining an asymptotic formula for the so-called Weyl-Titchmarsh function $m(\lambda)$ for $\lambda = \nu + i \mu$, when $\mu \to \pm \infty$.

In Section \ref{c10:sec_2} we assume that coefficients $p$ and $q$ satisfy conditions that ensure the pure discreteness of the spectrum of the corresponding
selfadjoint operator on the half-axis (semi-axis).
Under these conditions, we obtain a representation of norming constants in terms of two spectra.
This problem is similar to the same problem for the regular operator, which is solved in Chapter \ref{chapter_3}.
However, the technical problems are related to the fact that, unlike in the regular case, here we do not have an asymptotic formula for the eigenvalues, which were essential in the study of questions of convergence of infinite products (in formulas expressing norming constants in terms of two spectra).
Here we use asymptotics of the Weyl-Titchmarsh function.

Section \ref{c10:sec_3} is devoted to obtaining explicit formulas for recalculating the coefficients of the Dirac operator when a finite number of eigenvalues and (or) norming constants are changed.
It is impossible to add or subtract eigenvalues for a regular operator (due to the mandatory asymptotics), but in the singular case, it is allowed.

In Section \ref{c10:sec_4}, we introduce the concept of eigenvalues function (EVE) of the family of singular Dirac operators on the semi-axis with purely discrete spectra and study the properties of this function.

In Section \ref{c10:sec_5}, we consider the Dirac operator with linear potentials ($q(x) = x, p(x) \equiv 0$) on the whole axis and half-axis, the eigenvalues and eigenfunctions of which we compute explicitly.
Considering this operator on the half-axis as a model operator, we change its discrete spectrum in an (almost) arbitrary way.
As a result, we get an operator on the half-axis with in advance given spectrum.

At the end of each chapter, we give notes and references to the literature.
We have a three-digit numbering of formulas, i.e., number (3.2.8) means the eight formula in Section 2 of Chapter 3.

\vspace{20mm}
\hfill Yerevan,  September 25, 2022, \quad T.N. Harutyunyan

\hfill Yu.A. Ashrafyan

\chapter{The Cauchy problem for the Dirac system}\label{chapter_1}

The system of ordinary differential equations
\begin{equation}\label{c1:diff_expression}
\ell y\equiv 
\lf \l 
\begin{array}{cc} 0 & 1\\ -1 & 0\end{array}\r\frac{d}{dx}+
\l \begin{array}{cc} p(x) & q(x)\\ q(x) & -p(x) 
\end{array}
\r \rf 
y
=
\lambda y,\quad y
=
\l 
\begin{array}{c} 
y_1\\ 
y_2
\end{array}
\r\; ,
\end{equation}
where $p$ and $q$ are some given scalar functions, and $\lambda$ is a complex parameter known as a canonical Dirac equation.

The Cauchy problem
\begin{equation}\label{c1:Cauchy_problem}
\left\{
\begin{array}{l}
\ell y=\lambda y \\
y(0)
=
\l 
\begin{array}{l}
~\sin\a \\
-\cos \a
\end{array} 
\r ,
\end{array}
\right.
\end{equation}
was considered by Titchmarsh in \cite{Titchmarsch:1944}.
Under conditions that $p$ and $q$ are continuous functions and $\a$ is a real number, he proved the existence and uniqueness of the solution to the problem \eqref{c1:Cauchy_problem} and, moreover, that this solution is an entire function of the parameter $\lambda$.

If the coefficients $p$ and $q$ of the system \eqref{c1:diff_expression} are not continuous, but, for example, are functions from $L^1(a,b)$, then the classical definition of the solution to the system \eqref{c1:diff_expression} becomes meaningless, since after the substitution, the left-hand side of \eqref{c1:diff_expression} is defined only almost everywhere, however the right-hand side is defined everywhere, and so we can not speak about identity.
For this reason, we give a more general definition of a solution of system \eqref{c1:diff_expression} (see, e.g. \cite{Naimark:1969}, p. 183): a function $y=y(x)$ is called the solution to \eqref{c1:diff_expression}, defined on the interval $(a,b)$, if it is absolutely continuous on each closed subinterval $[a_1, b_1]\subset (a,b)$ and satisfies to \eqref{c1:diff_expression} almost everywhere on $(a,b)$.

\begin{theorem}\label{c1:thm_1} 
Let $p,q\in L^1_{loc} (0, \infty)$, i.e. $p$ and $q$ be measurable, complex-valued functions, absolutely summable on each finite interval $(0, a)$, $a>0$
\begin{equation}\label{c1:p_and_q}
\int\limits^a_0 \lv p(s)\rv\, ds,\qquad \int\limits^a_0 \lv q(s)\rv\, ds<\infty\, .
\end{equation}
Then Cauchy problem \eqref{c1:Cauchy_problem} has a unique solution $y=\varphi(x,\lambda,\a)$. 
The components $\varphi_1(x,\lambda,\a)$ and $\v_2(x,\lambda,\a)$ of this solution (for every fixed $x\in [0,a]$) are entire functions of parameters $\lambda$ and $\a$.
\end{theorem}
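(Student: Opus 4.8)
The plan is to convert the Cauchy problem \eqref{c1:Cauchy_problem} into an equivalent integral equation and then solve it by the classical method of successive approximations (Picard iteration), tracking the dependence on $\lambda$ and $\a$ throughout. First I would multiply the differential equation by $B^{-1} = -B$ (note $B^2 = -E_2$, so $B^{-1} = -B$) to bring it to normal form $y' = A(x,\lambda)y$ with $A(x,\lambda) = -\lambda B + B\Omega(x)$, and then integrate from $0$ to $x$ using the initial condition to obtain
\[
\varphi(x,\lambda,\a) = \l \begin{array}{l} ~\sin\a \\ -\cos\a \end{array} \r + \int_0^x A(s,\lambda)\, \varphi(s,\lambda,\a)\, ds .
\]
By the remark preceding the theorem, a function satisfying this integral equation (absolutely continuous on compact subintervals, since the integrand is in $L^1_{loc}$) is precisely a solution of the Cauchy problem in the generalized sense, and conversely; so the equivalence is routine.

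Next I would set up the iteration $\varphi^{(0)}(x) = (\sin\a, -\cos\a)^T$ and $\varphi^{(n+1)}(x) = \varphi^{(0)}(x) + \int_0^x A(s,\lambda)\varphi^{(n)}(s)\,ds$, and estimate the increments $\varphi^{(n+1)} - \varphi^{(n)}$. Using the norm bound \eqref{c0:A_norm_est}, one has $|A(s,\lambda)| \leq |\lambda| + |p(s)| + |q(s)|$, so writing $\psi(s) = |\lambda| + |p(s)| + |q(s)|$ (which is in $L^1(0,a)$) and $M = |\varphi^{(0)}| = 1$, an induction gives
\[
\big| \varphi^{(n+1)}(x) - \varphi^{(n)}(x) \big| \leq M \, \frac{\l \int_0^x \psi(s)\, ds \r^{n+1}}{(n+1)!} .
\]
Hence the series $\varphi^{(0)} + \sum_{n\geq 0}(\varphi^{(n+1)} - \varphi^{(n)})$ converges uniformly on $[0,a]$ (dominated by $M e^{\int_0^a \psi}$), its sum $\varphi$ satisfies the integral equation, and uniqueness follows by applying the same Gronwall-type estimate to the difference of two solutions (the difference satisfies the homogeneous integral equation, forcing it to vanish).

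For the analyticity claim, I would observe that each iterate $\varphi^{(n)}(x,\lambda,\a)$ is, for fixed $x$, an entire function of $(\lambda,\a)$: $\varphi^{(0)}$ is entire (indeed a trigonometric polynomial in $\a$), and if $\varphi^{(n)}$ is entire in $(\lambda,\a)$ then so is $\varphi^{(n+1)}$, since $A(s,\lambda)$ is affine in $\lambda$ and the integral of an entire-parameter family (with a locally integrable, locally uniformly bounded in the parameter integrand) is again entire — this is a standard application of Morera's theorem together with Fubini. Since the convergence of the series to $\varphi$ is uniform on compact sets in $(\lambda,\a)$ (the bound $M e^{\int_0^a \psi}$ is locally uniform in $\lambda$ because $\psi$ depends on $|\lambda|$ continuously), the limit $\varphi(x,\lambda,\a)$ is entire in $(\lambda,\a)$ for each fixed $x\in[0,a]$, and the two components inherit this.

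The main obstacle is essentially bookkeeping rather than a deep difficulty: one must be careful that the "solution" is meant in the generalized (absolutely continuous, a.e.) sense of Naimark, so the equivalence with the integral equation has to be stated for that notion, and one must verify that the uniform limit of the $\varphi^{(n)}$ is genuinely absolutely continuous on each $[a_1,b_1]$ — which it is, being $\varphi^{(0)}$ plus the indefinite integral of the $L^1$ function $A(\cdot,\lambda)\varphi(\cdot,\lambda,\a)$. The analyticity step requires the slightly delicate point that locally uniform (in the parameter) integrability lets one commute the contour integral of Morera's criterion with the $ds$-integral; once that is granted, everything else is the routine Picard–Gronwall machinery applied to a first-order linear system.
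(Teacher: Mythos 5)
Your proposal follows essentially the same route as the paper: reduction to the integral equation with kernel $A(x,\lambda)$ (the paper's Lemma \ref{c1:lem_1}), Picard iteration with factorial estimates on the increments (Lemma \ref{c1:lem_2}), uniform convergence on compact sets of $(\lambda,\a)$, analyticity of the limit by a Weierstrass-type argument, and uniqueness by iterating the Gronwall-type bound. The only correction needed is that for complex $\a$ one has $|\varphi^{(0)}|\leq \sqrt{2}\,e^{|\im\a|}$ rather than $1$, so the constant $M$ in your increment estimate must carry the factor $e^{|\im\a|}$ (as in the paper's \eqref{c1:u^n_estimates}); since this is continuous in $\a$, the locally uniform convergence and the rest of your argument are unaffected.
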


To prove this theorem, we use three lemmas.
\begin{lemma}\label{c1:lem_1}  
The Cauchy problem \eqref{c1:Cauchy_problem} is equivalent to the integral equation
\begin{equation}\label{c1:integral_equation}
\v(x,\lambda,\a)=
\l 
\begin{array}{l}
~\sin\a \\
-\cos \a
\end{array} 
\r 
+\int\limits^x_0 A(s,\lambda)\v(s,\lambda,\a)\, ds\, ,
\end{equation}
where 
\begin{equation}\label{c1:A_x_lambda}
A(x,\lambda)=\l \begin{array}{cc} q(x) & -p(x)-\lambda \\ \lambda-p(x) & -q(x) \end{array}\r .
\end{equation}
\end{lemma}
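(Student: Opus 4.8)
The plan is to first put the Dirac system into normal form and then recognize \eqref{c1:integral_equation} as the Volterra integral equation canonically attached to that form. Writing $B$ for the coefficient matrix of $d/dx$ and $\Omega(x)$ for the potential matrix in \eqref{c1:diff_expression}, the equation $\ell y = \lambda y$ is $B y' + \Omega(x) y = \lambda y$; since $B^2 = -E$ the matrix $B$ is invertible with $B^{-1} = -B$, so this is equivalent to $y' = B^{-1}\bigl(\lambda E - \Omega(x)\bigr) y$, and a one-line matrix multiplication identifies $-B\bigl(\lambda E - \Omega(x)\bigr)$ with the matrix $A(x,\lambda)$ of \eqref{c1:A_x_lambda}. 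Because $B$ is invertible this passage is reversible and it does not touch the value $y(0)$, so it is enough to show that the Cauchy problem for $y' = A(x,\lambda) y$ with $y(0) = (\sin\alpha,-\cos\alpha)^{T}$ is equivalent to \eqref{c1:integral_equation}, and at the end to translate back via left multiplication by $B$, using $B A(x,\lambda) = \lambda E - \Omega(x)$.

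For the direction ``solution $\Rightarrow$ integral equation'', I would take a generalized solution $y = \varphi(\cdot,\lambda,\alpha)$ — absolutely continuous on each $[0,a]$, satisfying $y'(x) = A(x,\lambda) y(x)$ for a.e.\ $x$, and $y(0) = (\sin\alpha,-\cos\alpha)^{T}$ — fix $a>0$, note that $y$ is then continuous, hence bounded, on $[0,a]$, and that the entries of $A(\cdot,\lambda)$ lie in $L^1(0,a)$ by \eqref{c1:p_and_q}, so $s\mapsto A(s,\lambda)y(s)\in L^1(0,a)$. The fundamental theorem of calculus for absolutely continuous functions then gives $y(x) = y(0) + \int_0^x y'(s)\,ds = (\sin\alpha,-\cos\alpha)^{T} + \int_0^x A(s,\lambda)y(s)\,ds$, which is \eqref{c1:integral_equation}.

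For the converse I would start from a measurable $y$ satisfying \eqref{c1:integral_equation} on $[0,a]$; since the right-hand side is continuous, $y$ is automatically continuous, hence bounded, so again $A(\cdot,\lambda)y(\cdot)\in L^1(0,a)$ and the right-hand side of \eqref{c1:integral_equation} is a constant plus the indefinite Lebesgue integral of an $L^1$ function. Therefore it — and hence $y$ — is absolutely continuous, with a.e.\ derivative $y'(x) = A(x,\lambda)y(x)$, and putting $x=0$ in \eqref{c1:integral_equation} recovers the initial condition; left-multiplying $y' = A(x,\lambda)y$ by $B$ then returns $\ell y = \lambda y$ a.e.\ on $(0,a)$, for every $a>0$.

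I do not expect a real obstacle here; the one point I would be careful to state is the use of the fundamental theorem of calculus in the Carath\'eodory setting in both directions (an absolutely continuous function equals the indefinite integral of its a.e.-derivative, and conversely), together with the honest verification that $A(s,\lambda)y(s)$ is integrable on $(0,a)$ — which is exactly where the boundedness of $y$ on the compact interval and the hypothesis $p,q\in L^1_{loc}(0,\infty)$ enter. This integral equation is the form on which the successive-approximation argument for Theorem \ref{c1:thm_1} will be built.
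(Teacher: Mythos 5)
Your proposal is correct and follows essentially the same route as the paper: left-multiply by $-B$ (equivalently use $B^{-1}=-B$) to pass to the normal form $y'=A(x,\lambda)y$, then integrate for one direction and differentiate the indefinite integral for the other. Your extra care with the Carath\'eodory fundamental theorem of calculus and the integrability of $A(\cdot,\lambda)y(\cdot)$ only makes explicit what the paper's terser argument implicitly uses.
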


\begin{proof}
Let $y$ be the solution to \eqref{c1:Cauchy_problem}.
It means that $y$ is an absolutely continuous function on $(0,\pi)$ (therefore, its derivative, $y'$, is a function from $L^1(0,\pi)$) and equality
\begin{equation}\label{c1:matrix_diff_expression}
B y'(x) = \lambda y(x) - \Omega(x) y(x) = \left\{ \lambda E - \Omega(x)\right\} y(x),
\end{equation}
holds almost everywhere on $(0,\pi)$.
Since both sides are summable functions, we can integrate the equality \eqref{c1:matrix_diff_expression}, but before integration, we multiply both sides from the left by $-B$:
\[
-B^2 y'(x) = \left\{B \Omega(x) - \lambda B \right\} y(x).
\]
Because $B^2 = -E$ and $B \Omega(x) - \lambda B = A(x,\lambda)$, we get that $y$ is the solution of the normal system ordinary, linear differential equations:
\[
y'(x) = A(x,\lambda) y(x),
\]
(this equality holds almost everywhere on $(0,\pi)$).
If in this equality, write $s$ instead of $x$ and integrate with respect to $s$ from $0$ to $x$, we will obtain
\[
y(x) - y(0) = \int_0^x A(s,\lambda) y(s) ds.
\]
Since both sides of the last equality are absolutely continuous functions, then the equality holds not only almost everywhere, but everywhere, i.e., it is an identity.
Thus, if $y = \varphi(x, \lambda, \alpha)$ is the solution to the Cauchy problem \eqref{c1:Cauchy_problem}, then $\varphi(x, \lambda, \alpha)$ is the solution to the integral equation \eqref{c1:integral_equation}.

Now, let $\varphi(x, \lambda, \alpha)$ be a solution to the integral equation \eqref{c1:integral_equation}.
Since the right-side of \eqref{c1:integral_equation} is an upper limit function, it has a derivative equal to the integrand function at the point $x$, and therefore, the left-hand side, i.e. $\varphi(x, \lambda, \alpha)$,  also has a derivative, and
\[
\varphi'(x, \lambda, \alpha) = A(x,\lambda) \varphi(x, \lambda, \alpha)
\]
almost everywhere on $(0,\pi)$.
\end{proof}

Thus, instead of the Cauchy problem \eqref{c1:Cauchy_problem}, we will solve the integral equation \eqref{c1:integral_equation}, which can be written in the form of two scalar integral equations:
\[
\v_1(x,\lambda,\a)=\sin\a+\int\limits^x_0 \lk q(s)\v_1(s,\lambda,\a)-(p(s)+\lambda)\v_2(s,\lambda,\a)\rk\, ds,
\]
\[
\v_2(x,\lambda,\a)=-\cos\a+\int\limits^x_0 \lk (\lambda-p(s)) \v_1(s,\lambda,\a)-q(s)\v_2(s,\lambda,\a)\rk\, ds.
\]
We will solve the integral equation \eqref{c1:integral_equation} by the method of successive approximations.
To this end, we construct the sequence of vector-functions:
\begin{equation}\label{c1:varphi^0}
\v^{(0)}(x,\lambda,\a)=
\l 
\begin{array}{l}
~\sin\a \\
-\cos \a
\end{array} 
\r ,
\end{equation}
and
\begin{equation}\label{c1:varphi^n}
\v^{(n)}(x,\lambda,\a)=\v^{(0)}(x,\lambda,\a)+\int\limits^x_0 A(s,\lambda) \v^{(n-1)}(s,\lambda,\a)\, ds,\quad n \geq 1. 
\end{equation}
Also, we set
\begin{equation}\label{c1:u^n}
u^{(n)}(x,\lambda,\a)=\v^{(n)}(x,\lambda,\a)-\v^{(n-1)}(s,\lambda,\a),\quad n \geq 1 .
\end{equation}
Then
\begin{equation}\label{c1:matrix_u^n}
u^{(n)}(x,\lambda,\a)=\int\limits^x_0 A\l s,\lambda\r u^{(n-1)}(s,\lambda,\a) ds,
\end{equation}
which, in its turn, can be rewritten as
\begin{equation}\label{c1:u_1^n}
u_1^{(n)}(x,\lambda,\a)=\int\limits^x_0 \lf q(s) u_1^{(n-1)}(s,\lambda,\a)- p(s)u^{(n-1)}_2 (s,\lambda,\a)-\lambda u^{(n-1)}_2 (s,\lambda,\a)\rf\, ds,
\end{equation}
\begin{equation}\label{c1:u_2^n}
u_2^{(n)}(x,\lambda,\a)=\int\limits^x_0 \lf \lambda u_1^{(n-1)}(s,\lambda,\a)- p(s)u^{(n-1)}_1 (s,\lambda,\a)-q(s) u^{(n-1)}_2 (s,\lambda,\a)\rf\, ds.
\end{equation}
We also introduce functions:
\begin{equation}\label{c1:q_01}
\dis q_0(x)=\max\limits_{0\leq \xi\leq x}\lv \int\limits^\xi_0 q(s)\, ds\rv, \quad
\dis q_1(x)=\int\limits^x_0 \lv q(s)\rv\, ds,
\end{equation}
and
\begin{equation}\label{c1:p_01}
\dis p_0(x)=\max\limits_{0\leq \xi\leq x}\lv \int\limits^\xi_0 p(s)\, ds\rv, \quad
\dis p_1(x)=\int\limits^x_0 \lv p(s)\rv\, ds.
\end{equation}
It is easy to see, that all functions $q_0$, $q_1$, $p_0$, $p_1$ are monotone increasing (non-decreasing) and non-negative.

\begin{lemma}\label{c1:lem_2} 
For both components $u_1^{(n)}$ and $u_2^{(n)}$ of the vector-functions $u^{(n)}$, the following estimates hold:
\begin{equation}\label{c1:u^n_estimates}
\lv u^{(n)}_{1,2} (x,\lambda,\a)\rv \leq \left[ q_0(x)+p_0(x)+ \vert\lambda\vert x\right]\cdot \frac{ \left[ q_1(x) +p_1(x) + \vert\lambda \vert x \right]^{n-1}}{(n-1)!}\cdot e^{\vert{\rm Im} \, \a\vert}\, .
\end{equation}
\end{lemma}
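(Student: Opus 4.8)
The plan is to establish the estimate \eqref{c1:u^n_estimates} by induction on $n$, using the recursion \eqref{c1:u_1^n}--\eqref{c1:u_2^n} together with the bound \eqref{c0:A_norm_est} on the norm of $A(s,\lambda)$. First I would treat the base case $n=1$: since $u^{(1)}=\varphi^{(1)}-\varphi^{(0)}=\int_0^x A(s,\lambda)\varphi^{(0)}(s,\lambda,\a)\,ds$ and $\varphi^{(0)}=(\sin\a,-\cos\a)^T$ has $|\varphi^{(0)}|\le e^{|\im\a|}$ (this follows from $|\sin\a|,|\cos\a|\le\cosh(\im\a)\le e^{|\im\a|}$), I would write out the two scalar integrals \eqref{c1:u_1^n}--\eqref{c1:u_2^n} with $n=1$, integrate the $p$- and $q$-terms against the bounds $q_1,p_1$ and the $\lambda$-term against $|\lambda|x$, but take special care with the \emph{purely constant} pieces $\sin\a\int_0^x q(s)\,ds$ etc.: these are what force the appearance of $q_0(x)$ and $p_0(x)$ (the maxima of the antiderivatives) rather than $q_1(x),p_1(x)$ in the first bracketed factor. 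This gives $|u^{(1)}_{1,2}(x,\lambda,\a)|\le [q_0(x)+p_0(x)+|\lambda|x]\,e^{|\im\a|}$, which is exactly \eqref{c1:u^n_estimates} for $n=1$.

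Next, for the inductive step I would assume \eqref{c1:u^n_estimates} holds for $n-1$ and plug it into \eqref{c1:u_1^n}--\eqref{c1:u_2^n}. Estimating crudely, $|u^{(n)}_{1,2}(x,\lambda,\a)|$ is bounded by
\[
\int_0^x \bigl(|p(s)|+|q(s)|+|\lambda|\bigr)\,\bigl|u^{(n-1)}_{1,2}(s,\lambda,\a)\bigr|\,ds
\le e^{|\im\a|}\int_0^x \bigl(|p(s)|+|q(s)|+|\lambda|\bigr)\,\bigl[q_0(s)+p_0(s)+|\lambda|s\bigr]\frac{[q_1(s)+p_1(s)+|\lambda|s]^{n-2}}{(n-2)!}\,ds,
\]
where I have used that each component of $u^{(n-1)}$ satisfies the same bound and that $\bigl|A(s,\lambda)u^{(n-1)}\bigr|$ in each coordinate is controlled by $(|p(s)|+|q(s)|+|\lambda|)$ times the max of the two components. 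The monotonicity of $q_0,p_0$ (noted just before the lemma) lets me pull the factor $[q_0(s)+p_0(s)+|\lambda|s]$ out as $[q_0(x)+p_0(x)+|\lambda|x]$ on $[0,x]$. The remaining integral has the form $\int_0^x g'(s)\,\frac{g(s)^{n-2}}{(n-2)!}\,ds$ with $g(s)=q_1(s)+p_1(s)+|\lambda|s$, since $g$ is absolutely continuous with $g'(s)=|p(s)|+|q(s)|+|\lambda|$ a.e.; this integral equals $\frac{g(x)^{n-1}}{(n-1)!}$. Assembling these pieces reproduces \eqref{c1:u^n_estimates} for $n$.

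The main technical point to watch — and the only place the argument is not completely mechanical — is the bookkeeping in the base case that correctly separates the roles of $q_0,p_0$ versus $q_1,p_1$: the constant vector $\varphi^{(0)}$ integrated against $p,q$ produces antiderivatives, whose sup is $p_0,q_0$, and these cannot in general be replaced by $p_1,q_1$ without weakening the estimate, yet in the inductive step the \emph{variable} factor $u^{(n-1)}$ gets integrated against $|p(s)|,|q(s)|$ directly, producing $p_1,q_1$ in the power. One must also verify that the chain-rule identity $\frac{d}{ds}\frac{g(s)^{n-1}}{(n-1)!}=g'(s)\frac{g(s)^{n-2}}{(n-2)!}$ is legitimate for an absolutely continuous $g$, which it is, so that $\int_0^x g'(s)g(s)^{n-2}/(n-2)!\,ds=g(x)^{n-1}/(n-1)!$ holds exactly. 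Everything else is routine term-by-term estimation using the triangle inequality and the monotonicity of the auxiliary functions $q_0,q_1,p_0,p_1$.
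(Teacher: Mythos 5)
Your proposal is correct and follows essentially the same route as the paper: the explicit base case $n=1$ producing the antiderivative bounds $q_0(x)+p_0(x)+|\lambda|x$ together with $|\sin\a|,|\cos\a|\le e^{|\im\a|}$, then induction in which the monotone factor $q_0(s)+p_0(s)+|\lambda|s$ is pulled out at $s=x$ and the remaining integral is evaluated exactly as $\int_0^x g'(s)\,g(s)^{n-2}/(n-2)!\,ds=g(x)^{n-1}/(n-1)!$ with $g(s)=q_1(s)+p_1(s)+|\lambda|s$. Your remark distinguishing the roles of $q_0,p_0$ (from the constant initial vector in the base case) versus $q_1,p_1$ (from integrating against $|p|,|q|$ in the induction) is exactly the bookkeeping the paper's proof relies on.
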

\begin{proof}
From definitions \eqref{c1:varphi^0}, \eqref{c1:varphi^n}, \eqref{c1:u^n} and \eqref{c1:matrix_u^n} for  $u^{(1)}(x,\lambda,\a)$ we get:
\[
u^{(1)}_1 (x,\lambda,\a)=\int\limits^x_0 q(s)\, ds\cdot \sin\a+ \int\limits^x_0 p(s)\, ds \cdot \cos\a +\lambda x \cos\a,
 \]
\[u^{(1)}_2 (x,\lambda,\a)=\int\limits^x_0 q(s)\, ds\cdot \cos\a+ \int\limits^x_0 p(s)\, ds \cdot \sin\a +\lambda x \sin\a.
\]
Taking into account, that $\lv \sin\a\rv\leq e^{\vert{\rm Im} \, \a\vert}$ and $\lv \cos\a\rv \leq e^{\vert{\rm Im} \, \a\vert}$, from the last two equalities and definitions \eqref{c1:q_01}, \eqref{c1:p_01} we obtain the estimates
\[\lv u^{(1)}_1 (x,\lambda,\a)\rv\leq \lk q_0 (x) +p_0(x) +\vert\lambda\vert x\rk e^{\vert{\rm Im} \, \a\vert}\, ,
\]
\[
\lv u^{(1)}_2 (x,\lambda,\a)\rv\leq \lk q_0 (x) +p_0(x) +\vert\lambda\vert x\rk e^{\vert{\rm Im} \, \a\vert}\, ,
\]
which are consistent with \eqref{c1:u^n_estimates}, for $n=1$.
Now, by induction, assuming that estimates \eqref{c1:u^n_estimates} are valid for $u_{1,2}^{(n)}$, we prove that they are valid for $u_{1,2}^{(n+1)}$.
According to \eqref{c1:u_1^n} and \eqref{c1:u^n_estimates}, we have
\begin{gather*}
\lv u^{(n+1)}_{1}(x,\lambda,\a)\rv\leq \int\limits^x_0 \lf \lv q(s)\rv\,\cdot\, \lv  u_1^{(n)}(s,\lambda,\a)\rv \,\cdot\, \lv u^{(n)}_2 (s,\lambda,\a) \rv +\vert \lambda\vert\,\cdot\lv u^{(n)}_2 (s,\lambda,\a)\rv\rf\, ds  \\
\leq e^{\vert{\rm Im} \, \a\vert} \int_0^x  \lf \lv q(s)\rv +\lv p(s)\rv +\vert\lambda\vert\rf \cdot
\lf q_0(s) +p_0(s) +\vert\lambda\vert\cdot s\rf \cdot \frac{(q_1(s) +p_1(s) + \vert\lambda \vert s )^{n-1}}{(n-1)!}\, ds \\
\leq e^{\vert{\rm Im} \, \a\vert} \lf q_0(x) + p_0(x) +\vert\lambda\vert x\rf
\int\limits^x_0 \lf \frac{d}{ds} ( q_1(s) +p_1(s) +\vert\lambda\vert\cdot s) \rf 
\frac{(q_1(s) +p_1(s) +\vert\lambda\vert\cdot s)^{n-1}}{(n-1)!} \, ds\\
=e^{\vert{\rm Im} \, \a\vert} \lf q_0(x) + p_0(x) +\vert\lambda\vert x \rf \frac{1}{(n-1)! n}\int\limits^x_0 \frac{d}{ds} \lf q_1(s) +p_1(s) +\vert\lambda\vert\cdot s\rf^n ds\\
=e^{\vert{\rm Im} \, \a\vert} \lf q_0(x) + p_0(x) +\vert\lambda\vert x \rf
\cdot \frac{( q_1(x) +p_1(x) +\vert\lambda\vert\cdot x)^n}{n!} \; .
\end{gather*}
Here we took into account, that $p_0$ and $q_0$ are monotone increasing, non-negative functions, the facts that $\dis \frac{d}{ds} q_1(s)=\lv q(s)\rv$, $\dis \frac{d}{ds} p_1(s)=\lv p(s)\rv$, and also that $p_1(0)=q_1(0)=0$.
It is easy to see that the estimate for the second component $u^{(n+1)}_2$ is being obtained from \eqref{c1:u_2^n}, exactly in the same way.
\end{proof} 

\begin{lemma}\label{c1:lem_3} The sequences $\left\{ \v^{(n)}_1 \right\}_{n=0}^\infty$ and $\left\{ \v^{(n)}_2 \right\}_{n=0}^\infty$ (see \eqref{c1:varphi^0}, \eqref{c1:varphi^n}) converge uniformly on the set $M=\lf (x,\lambda,\a):\; x\in [0,a], \vert\lambda\vert\leq w,\; \vert \a\vert\leq v \rf,$ where $a$, $w$, $v$ are arbitrary positive numbers.
\end{lemma}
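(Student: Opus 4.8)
The plan is to reduce the statement to the convergence of a series majorised by the exponential series. From the definition \eqref{c1:u^n} we have $u^{(k)}=\v^{(k)}-\v^{(k-1)}$ for $k\ge 1$, so the partial sums telescope:
\[
\v^{(n)}(x,\lambda,\a)=\v^{(0)}(x,\lambda,\a)+\sum_{k=1}^{n}u^{(k)}(x,\lambda,\a),\qquad n\ge 0 .
\]
Consequently, for each component $j=1,2$, uniform convergence of $\left\{\v^{(n)}_j\right\}_{n=0}^\infty$ on $M$ is equivalent to uniform convergence on $M$ of the series $\sum_{k=1}^{\infty}u^{(k)}_j$.

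The second step is to bound the general term of this series uniformly over $M$ by means of Lemma \ref{c1:lem_2}. Recall that $q_0,q_1,p_0,p_1$ are non-negative and non-decreasing; hence for every $(x,\lambda,\a)\in M$ one has $q_0(x)\le q_0(a)$, $p_0(x)\le p_0(a)$, $q_1(x)\le q_1(a)$, $p_1(x)\le p_1(a)$, $\vert\lambda\vert\, x\le wa$, and $e^{\vert{\rm Im}\,\a\vert}\le e^{\vert\a\vert}\le e^{v}$. Writing
\[
C:=\bigl(q_0(a)+p_0(a)+wa\bigr)e^{v},\qquad R:=q_1(a)+p_1(a)+wa ,
\]
the estimate \eqref{c1:u^n_estimates} yields, for all $n\ge 1$ and all $(x,\lambda,\a)\in M$,
\[
\bigl\vert u^{(n)}_{1,2}(x,\lambda,\a)\bigr\vert\le C\,\frac{R^{\,n-1}}{(n-1)!}.
\]

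Finally I would invoke the Weierstrass $M$-test: since $\sum_{n=1}^{\infty}C\,R^{\,n-1}/(n-1)!=C\,e^{R}<\infty$, each series $\sum_{k=1}^{\infty}u^{(k)}_j$ converges absolutely and uniformly on $M$ for $j=1,2$, and therefore $\v^{(n)}_j=\v^{(0)}_j+\sum_{k=1}^{n}u^{(k)}_j$ converges uniformly on $M$. The argument involves no genuine obstacle: the only things to keep in mind are the telescoping structure of the approximations and the fact that, once the monotone functions $q_0,q_1,p_0,p_1$ are bounded at the endpoint $x=a$, the right-hand side of Lemma \ref{c1:lem_2} becomes a constant independent of $(x,\lambda,\a)\in M$ whose associated series is the exponential series. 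As a byproduct the common limit of the two sequences solves the integral equation \eqref{c1:integral_equation}, which is what will feed into the proof of Theorem \ref{c1:thm_1}, though that is not needed for the present lemma.
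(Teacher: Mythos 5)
Your proof is correct and follows essentially the same route as the paper: the paper also bounds the consecutive differences $u^{(n)}_{j}=\v^{(n)}_j-\v^{(n-1)}_j$ on $M$ via Lemma \ref{c1:lem_2}, using monotonicity of $p_0,q_0,p_1,q_1$ and $e^{\vert{\rm Im}\,\a\vert}\leq e^{v}$, to get majorants $C_0 C_1^{n-1}/(n-1)!$ whose series sums to $C_0e^{C_1}$, which is exactly your telescoping-plus-Weierstrass-$M$-test argument. No gaps.
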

\begin{proof}
In order the sequences $\left\{ \v^{(n)}_1 \right\}_{n=0}^\infty$ and $\left\{ \v^{(n)}_2 \right\}_{n=0}^\infty$ to converge uniformly on the set $M$, it is sufficient (see, e.g., \cite{Pontryagin:1965}, pg.~155), that the inequalities 
\[
\dis \lve \v^{(n)}_j -\v_j^{(n-1)}\rve =\lve u^{(n)}_j\rve\stackrel{def}{=}\max\limits_{(x,\lambda,\a)\in M} \lv u_j^{(n)} (x,\lambda,\a)\rv \leq a_n, \quad j = 1, 2,
\]
hold, where the numbers $a_n$ form a convergent series. 
Using estimates \eqref{c1:u^n_estimates} and properties of the functions $p_0$, $q_0$, $p_1$, $q_1$, we get 
\begin{gather*}
\lv u_j^{(n)}(x,\lambda,\a)\rv \leq e^{\lv{\rm Im}\,\a\rv}\cdot \lk q_0 (x)+p_0(x)+\vert\lambda\vert x\rk \frac{\lk q_1(x) +p_1(x) +\vert\lambda\vert x\rk^{n-1}}{(n-1)!} \\
\leq  e^v\cdot \lk q_0(a)+ p_0(a)+wa\rk\cdot \frac{\lk q_1(a)+p_1(a)+wa\rk^{n-1}}{(n-1)!}=C_0 \frac{C_1^{n-1}}{(n-1)!} =a_n\, ,
\end{gather*}
where $C_0=e^v\cdot \lk q_0(a)+ p_0(a)+wa\rk$, $C_1=q_1(a)+p_1(a)+wa$ are finite numbers, according to conditions \eqref{c1:p_and_q} of Theorem \ref{c1:thm_1}. 
Since the series $\dis \sum\limits^\infty_{n=1} a_n =C_0 e^{C_1}$ converges, the proof is complete.
\end{proof} 

From the definition \eqref{c1:varphi^n} of successive approximations $\v^{(n)}$, it follows that components $\v_1^{(n)}$ and $\v_2^{(n)}$ are absolutely continuous functions with respect to $x$. 
From their uniform convergence, it follows that the limit function of this sequence exists, and it is a continuous function. 
Hence in \eqref{c1:varphi^n}, we can pass to the limit as $n\to\infty$. 
It follows that the limit vector-function $\v(x,\lambda,\a)$ is a solution of the integral equation \eqref{c1:integral_equation} and, therefore, the Cauchy problem \eqref{c1:Cauchy_problem}. 
Thus, the existence of a solution is proved.

Again from the construction \eqref{c1:varphi^n} of successive approximations $\v^{(n)}$, it follows that the components $\v_{1,2}^{(n)}$ are entire functions of parameters $\lambda$ and $\a$ (for every fixed $x$). 
In order to prove that the limit function is also entire of parameters $\lambda$ and $\a$, we recall the well-known theorem of  Weierstrass (see, e.g., \cite{Bibikov:1981}, p.~166).

\begin{theorem}[Weierstrass]\label{c1:thm_2} 
If a sequence of functions $\left\{ f^{(n)} \right\}_{n=0}^\infty$, analytic on the domain $U\subseteq C^m$, converges uniformly on every compact subset of $U$, then the limit function is analytic on $U$.
\end{theorem}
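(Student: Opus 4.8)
The plan is to deduce the statement from the Cauchy integral formula by localizing and then passing to the limit. Since analyticity is a local property, it suffices to prove that $f=\lim_{n\to\infty}f^{(n)}$ is analytic in a neighbourhood of an arbitrary fixed point $a=(a_1,\dots,a_m)\in U$. First I would choose a closed polydisc $\overline{P}=\{z\in\mathbb{C}^m:\ |z_j-a_j|\leq r_j,\ j=1,\dots,m\}\subset U$. Because $\overline{P}$ is compact, $\{f^{(n)}\}$ converges to $f$ uniformly on $\overline{P}$, so $f$ is continuous there, and in particular the convergence is uniform on the distinguished boundary $\Gamma=\{z:\ |z_j-a_j|=r_j,\ j=1,\dots,m\}$.

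Next I would write, for each $n$ and each $z$ in the open polydisc $P$, the iterated Cauchy formula
\[
f^{(n)}(z)=\frac{1}{(2\pi i)^m}\int_{|\zeta_1-a_1|=r_1}\cdots\int_{|\zeta_m-a_m|=r_m}\frac{f^{(n)}(\zeta)\,d\zeta_m\cdots d\zeta_1}{(\zeta_1-z_1)\cdots(\zeta_m-z_m)},
\]
which is legitimate since $f^{(n)}$ is analytic on $U\supset\overline{P}$. Fixing a slightly smaller closed polydisc $\overline{P}'$ with radii $r_j'<r_j$, the Cauchy kernel $\prod_{j}(\zeta_j-z_j)^{-1}$ is bounded by $\prod_j(r_j-r_j')^{-1}$ uniformly for $\zeta\in\Gamma$ and $z\in\overline{P}'$; together with $f^{(n)}\to f$ uniformly on $\Gamma$, this allows the interchange of limit and integration and yields the same Cauchy representation for $f$ on $P$. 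Finally, expanding each factor $(\zeta_j-z_j)^{-1}=\sum_{k\geq0}(z_j-a_j)^k(\zeta_j-a_j)^{-k-1}$ in a geometric series that converges absolutely and uniformly for $\zeta\in\Gamma$ and $z$ in a compact subset of $P$, and integrating term by term, I would obtain a power series in $(z-a)$ converging on $P$ and representing $f$; hence $f$ is analytic at $a$, and, $a$ being arbitrary, on all of $U$.

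The step I expect to be the main obstacle — really the only nontrivial point — is the justification of the interchange of limit and integral, i.e. securing the uniform bound on the Cauchy kernel, which is exactly why one restricts $z$ to the smaller polydisc $\overline{P}'$ and then exhausts $P$ by such polydiscs. For $m\geq2$ one should also check that the iterated integral is well behaved (applicability of Fubini, joint continuity of the integrand), but this is routine. An alternative that avoids the multivariable integral is to apply the one-variable case in each coordinate separately, concluding that $f$ is separately analytic on $P$; being also continuous there, $f$ is then analytic by Osgood's lemma. In the application that follows only the case $m=2$ is needed (the parameters $\lambda$ and $\alpha$), so either route applies directly to the sequence $\{\varphi^{(n)}\}$ constructed above.
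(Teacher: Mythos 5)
Your argument is correct, but note that the paper does not prove this theorem at all: it is quoted as a known result with a reference to a textbook (Bibikov, p.~166), and is then only \emph{applied} to the sequence $\varphi^{(n)}(x_0,\lambda,\alpha)$ with $U=\mathbb{C}^2$. So there is no in-paper proof to compare against; what you have written is the standard proof that the cited source would supply. Your polydisc route is sound: uniform convergence on the compact closed polydisc gives uniform convergence on the distinguished boundary, the bound $\prod_j(r_j-r_j')^{-1}$ on the Cauchy kernel for $z$ in the smaller polydisc justifies the interchange of limit and iterated integral, and the geometric-series expansion integrated term by term produces a power series representing $f$ near the arbitrary point $a$, hence analyticity on $U$. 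The alternative you mention (apply the one-variable Weierstrass theorem in each coordinate to get separate analyticity, then invoke continuity of the uniform limit and Osgood's lemma) is equally valid and is arguably the lighter path for the case $m=2$ actually needed in Chapter 1; the only caveat is that you must indeed supply continuity of the limit, as Osgood's lemma is being used in its elementary form rather than the deeper Hartogs theorem. Either way the proposal is complete and compatible with how the theorem is used in the paper.
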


Taking $\v^{(n)}(x_0, \lambda,\a)=f^{(n)}(\lambda,\a)$, $U=\mathbb{C}^2$, from the Weierstrass theorem and Lemma \ref{c1:lem_3} (since, for fixed $x$, the set $M$ covers any compact subset in $\mathbb{C}^2$), we get that the solution $\v(x,\lambda,\a)$ to the Cauchy problem is an entire function of parameters $\lambda$ and $\a$, for any fixed $x$.

To complete the proof of the Theorem \ref{c1:thm_1}, it remains to prove the uniqueness of the solution $\v(x,\lambda,\a)$.
Assume, that there is another solution $\Phi(x,\lambda,\a)$, and denote their difference by $u(x) = \v(x,\lambda,\a)-\Phi(x,\lambda,\a)$.
It is enough to show that the integral equation
\begin{equation}\label{c1:ux_integral_eq}
u(x)=\int\limits^x_0 A(s,\lambda) u(s)\, ds,
\end{equation}
has only a trivial solution. 
Because $u(x)$ is a continuous function, then the quantity $K(x)=\max\limits_{0\leq s\leq x}\lf \lv u_1(s)\rv, \lv u_2(s)\rv\rf$ is finite for any finite $x$. 
Moreover, it is obvious that $K(x)$ is a non-decreasing function. 
Then,  from \eqref{c1:ux_integral_eq}, for the first component $u_1(x)$, we have an estimate
\begin{gather*}
\lv u_1(x) \rv =\lv \int\limits^x_0 \lf q(s) u_1(s) -p(s) u_2(s)-\lambda u_2 (s)\rf\, ds\rv \\
\leq K(x)\int\limits^x_0 \lf \lv q(s)\rv +\lv p(s)\rv +\vert\lambda\vert\rf\, ds =K(x)\lf q_1(x) +p_1(x)+\vert\lambda\vert x\rf\, .
\end{gather*}
The exact estimate is valid for $\vert u_2(x)\vert$. 
Applying these estimates again, from \eqref{c1:ux_integral_eq}, we obtain
\begin{gather*}
\lv u_{1,2}(x)\rv\leq \int\limits^x_0 \lf \lv q(s)\rv +\lv p(s)\rv +\vert\lambda\vert\rf\cdot K(s)
\lf q_1(s) + p_1(s) +\vert\lambda\vert s\rf\, ds \\
\leq K(x) \int\limits^x_0 \lf q_1(s) + p_1(s) +\vert\lambda\vert s\rf\cdot \frac{d}{d s} \lf q_1(s) + p_1(s) +\vert\lambda\vert s\rf\, ds \\
= K(x) \frac{1}{2} \int\limits^x_0 \frac{d}{ds} \, \lk q_1(s) + p_1(s) +\vert\lambda\vert s\rk^2 ds = K(x) \,
\frac{ \lk q_1(x) + p_1(x) +\vert\lambda\vert x\rk^2}{2!}\; .
\end{gather*}
Continue substituting these estimates into \eqref{c1:ux_integral_eq}, we get
\[
\lv u_{1,2}(x)\rv\leq K(x) \,\frac{ \lk q_1(x) + p_1(x) +\vert\lambda\vert x\rk^n}{n!}
\]
for any $n \geq 1$. 
Therefore, $\lv u_{1,2}(x)\rv\equiv 0$. 
This completes the proof of Theorem \ref{c1:thm_1}.

\section*{Notes and references}
\addcontentsline{toc}{section}{Notes and references}
Even though the question of the existence and uniqueness of the solution to the Cauchy problem for the normal system of ordinary differential equations, under the condition of local summability of the coefficients (as well as the analytical dependence of the solution on the spectral parameter and initial conditions), is classical, we did not find a complete presentation of them.
At the same time, we recommend monographs \cite{Naimark:1969} and \cite{Atkinson:1964}.

In our case, the strict proof of these questions, under the condition of \textit{local summability} of the coefficients, was published in \cite{Harutyunyan:2004}.

The surname Harutyunyan is spelled in Russian as "Arutyunyan"  due to the absence of the letter "H" in the Russian alphabet.

\chapter{Transformation operators}\label{chapter_2}

\section{Introduction and statements of the main results.}\label{c2:sec_1}
As we have already noted, the canonical Dirac system 
\begin{equation}\label{c2:Dirac_matrix_sys}
B y'+\Omega(x)y=\lambda y,
\end{equation}
is a normal system
\[
y' = -\lambda B y + B \Omega(x) y = A(x, \lambda) y,
\]
where the matrix $A(x, \lambda)$ has the form \eqref{c1:A_x_lambda}. 
An important role in the theory of normal systems plays the "fundamental matrix" $\Phi(x,\lambda)$, i.e., the $2 \times 2$ matrix-valued solution to the Cauchy problem
\[
\left\{
\begin{array}{l}
y' = A(x, \lambda) y, \\ 
y (0, \lambda)=E.
\end{array}
\right.
\]
If $y = \Phi (x, \lambda)$ is a fundamental matrix, then the solution to the Cauchy problem
\[
\left\{
\begin{array}{l}
y' = A(x, \lambda) y, \\ 
y (0, \lambda)=C,
\end{array}
\right.
\]
is given by the formula $y=\Phi(x,\lambda) C$ for arbitrary $2 \times m$ matrix $C$ ($C \in \mathfrak{M}^{2,m}$) \footnote{$\mathfrak{M}^{n,m}$ is the set of all matrices, which have $n$ rows and $m$ columns.}.
Thus, if we know the fundamental matrix $\Phi(x,\lambda)$, then we know all the solutions of the system \eqref{c2:Dirac_matrix_sys}.
So, our goal is to get as much information as possible about the fundamental matrix $\Phi (x, \lambda)$ of the system \eqref{c2:Dirac_matrix_sys}.

The existence of the fundamental matrix $\Phi (x, \lambda)$ follows from Theorem \ref{c1:thm_1}. 
Indeed
\[
\Phi (x, \lambda) 
=
\left(
\begin{matrix}
\varphi_1(x, \lambda, \pi/2) & \varphi_1(x, \lambda, \pi) \\
\varphi_2(x, \lambda, \pi/2) & \varphi_2(x, \lambda, \pi)
\end{matrix} 
\right),
\]
where $\varphi(x, \lambda, \alpha) = \left( \varphi_1(x, \lambda, \alpha) \; \varphi_2(x, \lambda, \alpha)\right)^T$ is the solution to the Cauchy problem \eqref{c1:Cauchy_problem}, the existence and uniqueness of which are proved in Theorem \ref{c1:thm_1}.  

If $\Omega (x) \equiv 0 $, then the system \eqref{c2:Dirac_matrix_sys} has the form $B y'=\lambda y$ or the same as
\begin{equation}\label{c2:y_lambda_B_y}
y' = - \lambda B y,
\end{equation}
i.e., this is a system with constant coefficients and, therefore, we know the explicit form of its fundamental matrix, which is
\begin{equation}\label{c2:Phi_0}
\Phi_0 (x, \lambda) = e^{- B \lambda x}.
\end{equation}

Let us remind that for a numerical square matrix of order N, the exponent $e^A$  is defined as a series
\[
e^A = \sum_{n=0}^{\infty} \dfrac{A^n}{n!}.
\]
Thus, the fundamental matrix \eqref{c2:Phi_0} has the form
\[
\Phi_0 (x, \lambda) = \sum_{n=0}^{\infty} \dfrac{(- B \lambda x)^n}{n!} = 
\sum_{n=0}^{\infty} \dfrac{(-1)^n ( \lambda x)^n}{n!} B^n.
\]
From the form of matrix $B = \frac{1}{i} \sigma_1$ and the properties of Pauli matrices, we get
\[
\begin{array}{cc}
B = B, & B^2 = -E, \\
B^3 = -B, & B^4 = E,
\end{array}
\]
and more general
\[
B^{2k} = (-1)^k E, \qquad B^{2k+1} = (-1)^k B.
\]
So, for $\Phi_0 (x, \lambda)$, we have 
\begin{align*}
\Phi_0 (x, \lambda) 
& = \sum_{k=0}^{\infty} \dfrac{(-1)^{2k} ( \lambda x)^{2k}}{(2k)!} B^{2k}
+   \sum_{k=0}^{\infty} \dfrac{(-1)^{2k+1} ( \lambda x)^{2k+1}}{(2k+1)!} B^{2k+1}  \\
& =  \sum_{k=0}^{\infty} \dfrac{(-1)^{k} ( \lambda x)^{2k}}{(2k)!} E
-   \sum_{k=0}^{\infty} \dfrac{(-1)^{k} ( \lambda x)^{2k+1}}{(2k+1)!} B  \\
& = \cos(\lambda x) E - \sin(\lambda x) B = 
\left(
\begin{matrix}
\cos(\lambda x) & - \sin(\lambda x) \\
\sin(\lambda x) & \cos(\lambda x)
\end{matrix} 
\right).
\end{align*}

We will consider the canonical Dirac system \eqref{c2:Dirac_matrix_sys} as a perturbation of the system \eqref{c2:y_lambda_B_y}.
We hope that the fundamental matrix of \eqref{c2:Dirac_matrix_sys}  will be some perturbation of the fundamental matrix $\Phi (x, \lambda)  = e^{- B \lambda x}$.

The method of variations of constants suggests that we can look for the fundamental matrix $\Phi (x, \lambda)$  in the form
\begin{equation}\label{c2:Phi_0_U}
\Phi (x, \lambda) = \Phi_0 (x, \lambda) ~ U (x, \lambda) = e^{- B \lambda x} ~ U (x, \lambda)
\end{equation}

According to Theorem \ref{c1:thm_1}, the entries of $\Phi (x, \lambda)$ are entire functions of parameter $\lambda$ (for arbitrary fixed $x$).
Therefore, the entries of the matrix $U (x, \lambda) = e^{ B \lambda x} \Phi (x, \lambda) $ also have this property. 

Substituting the expression \eqref{c2:Phi_0_U} into \eqref{c2:Dirac_matrix_sys} and taking into account  the properties of Pauli matrices, for the matrix $U (x, \lambda)$, we obtain the Cauchy problem
\[
\left\{
\begin{array}{l}
U' (x, \lambda) = e^{2 B \lambda x} B \Omega(x) U(x, \lambda) , \\ 
U (0, \lambda)=E
\end{array}
\right.
\]
which is equivalent to the integral equation
\[
 U(x, \lambda) = E + \int_0^x e^{2 B \lambda t} B \Omega(t) U(t, \lambda) dt.
\]

As indicated in \cite{Marchenko:1977}  (page 30), if we look for a solution  to this integral equation  in the form
\[
 U(x, \lambda) = E + \int_0^x e^{2 B \lambda t} Q(x, t) dt,
\]
i.e. look for a solution $\Phi(x, \lambda)$ in the form
\begin{equation}\label{c2:Phi_int_form}
\Phi(x, \lambda)=e^{-B\lambda x} \left(E+\int^x_0 e^{2B\lambda t}Q(x, t)\, dt\right),
\end{equation}
then, for the (new unknown) matrix-function $Q(x, t)$, we obtain  the equation
\begin{equation} \label{c2:Q_int_eq}
Q(x, t)=B\Omega (t)+\int^{x-t}_0 B\Omega(t+\xi)Q(t+\xi, \xi)\, d\xi,\quad 0\leq t \leq x < a,
\end{equation}
the solvability of which must be proved by the method of successive approximations.

In the case of continuous $p$ and $q$  (the continuous matrix $\Omega $), the specified path is implemented by classical methods.
If we abandon the continuity condition (and assume that $p, q \in L^1_{loc}$),  then under solution of \eqref{c2:Q_int_eq}, we must understand the matrix $Q(x,t)$, such that the equality \eqref{c2:Q_int_eq} holds almost everywhere in $t \in [0,x] $ and uniformly in $x \in [0,a] $.
More precisely,  under solution of \eqref{c2:Q_int_eq}, we will mean a matrix $ Q(x,t) $ such  that
\begin{equation} \label{c2:Q_eq}
\sup_{x\in[0, a]}\int^x_0 \left | Q(x, t)-B\Omega(t)-\int^{x-t}_0 B\Omega(t+\xi)Q(t+\xi, \xi)\, d\xi\right |\, dt=0,
\end{equation}
where $a$  is an arbitrary finite positive number.

Thus, our plan for studying  the properties of the fundamental  matrix  $\Phi (x, \lambda)$  is as follows:

\begin{itemize}
\item[1.] We prove the existence and uniqueness of the solution of the equation \eqref{c2:Q_int_eq}.
\item[2.] We prove that if $ Q(x,t) $ is  the solution of equation \eqref{c2:Q_int_eq},  then $\Phi (x, \lambda) $,  defined by formula \eqref{c2:Phi_int_form},  is the fundamental matrix of the system \eqref{c2:Dirac_matrix_sys}.
\item[3.]  Using the integral representation \eqref{c2:Phi_int_form}, we prove Theorems \ref{c2:thm_1} and \ref{c2:thm_2} (see below), which are our main statements about transformation operators.
\end{itemize}
 
\begin{theorem}\label{c2:thm_1}
Let $p, q\in L^1_{loc}(0,\infty)$, i.e. $p, q$ be complex-valued, local summable functions.
Then there exists a matrix $K(x, t)$ such that the matrix solution $\Phi(x, \lambda)$  of the Cauchy problem
\begin{equation}\label{c2:Phi_Cauchy_prob}
\left\{
\begin{array}{l}
\ell\, \Phi =\lambda\Phi, \\ 
\Phi (0, \lambda)=E,
\end{array}
\right.
\end{equation}
can be represented as
\begin{equation}\label{c2:Phi_rep_1}
\Phi (x, \lambda)=e^{-B\lambda x}+\int^x_{-x}K(x, t)e^{-B\lambda t}dt\, ,
\end{equation}
and the norm of the matrix $K$ satisfies the estimation
\begin{equation}\label{c2:K_est}
\int^x_{-x} | K(x, t) | dt\leq e^{c(x)}-1
\end{equation}
and almost everywhere for $x \in [0, a]$ (where $a>0$ is an arbitrary finite number), the following equalities
\begin{gather}
\label{c2:K_A} K_A(x, x)B-BK_A(x, x)=\Omega(x),\\
\label{c2:K_C} K_C(x, -x)B+BK_C(x, -x)=0\, ,
\end{gather}
hold, where $K_A(x,t)$ and $K_C(x,t)$ are anti-commutative and commutative with $B$ parts of $K(x,t)$, respectively.
\end{theorem}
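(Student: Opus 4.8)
The plan is the three-step scheme announced above: first solve \eqref{c2:Q_int_eq} for $Q$; then check that the matrix $\Phi$ built from $Q$ via \eqref{c2:Phi_int_form} is the fundamental matrix; and finally pass from \eqref{c2:Phi_int_form} to \eqref{c2:Phi_rep_1} by a change of variable, reading off the bound \eqref{c2:K_est} and the diagonal relations \eqref{c2:K_A}--\eqref{c2:K_C}. Note that $Q$ does not depend on $\lambda$, so no analyticity questions arise for it; the $\lambda$-dependence sits entirely in $e^{-B\lambda t}$, and analyticity of $\Phi$ in $\lambda$ is already supplied by Theorem \ref{c1:thm_1}.

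For the first step I would run successive approximations on \eqref{c2:Q_int_eq}: set $Q^{(0)}(x,t)=B\Omega(t)$ and $Q^{(n)}(x,t)=B\Omega(t)+\int_0^{x-t}B\Omega(t+\xi)Q^{(n-1)}(t+\xi,\xi)\,d\xi$, and study the increments $v^{(n)}=Q^{(n)}-Q^{(n-1)}$, for which $v^{(n)}(x,t)=\int_0^{x-t}B\Omega(t+\xi)v^{(n-1)}(t+\xi,\xi)\,d\xi$. Since $p,q$ are merely locally summable, pointwise bounds are useless; the estimate must be taken in the $L^1$-norm in $t$. Writing $B\Omega=q\sigma_2-p\sigma_3$ and using \eqref{c0:A_norm_est} in the form $|B\Omega(s)|\le |p(s)|+|q(s)|$, together with the substitution $u=t+\xi$ and Fubini's theorem, one proves by induction that $\int_0^x|v^{(n)}(x,t)|\,dt\le [c(x)]^{n+1}/(n+1)!$ with $c(x)$ as in \eqref{c0:c_x}. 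Hence $\sum_n v^{(n)}$ converges in $L^1[0,x]$ for each fixed $x$, uniformly on $[0,a]$ in a sense sufficient to pass to the limit, its sum $Q$ satisfies \eqref{c2:Q_int_eq} (equivalently \eqref{c2:Q_eq}), and
\[
\int_0^x|Q(x,t)|\,dt\le\sum_{n\ge 0}\frac{[c(x)]^{n+1}}{(n+1)!}=e^{c(x)}-1 .
\]
Uniqueness is the same estimate applied to the difference of two solutions, which satisfies the homogeneous version of \eqref{c2:Q_int_eq}.

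For the second step, rather than differentiating I would verify that $\Phi$ from \eqref{c2:Phi_int_form} solves the integral equation $\Phi(x,\lambda)=E+\int_0^x A(s,\lambda)\Phi(s,\lambda)\,ds$ with $A$ as in \eqref{c1:A_x_lambda}; by the column-wise uniqueness coming from Theorem \ref{c1:thm_1} this identifies $\Phi$ as the fundamental matrix of \eqref{c2:Phi_Cauchy_prob}. The one delicate algebraic point is pushing exponentials past $B\Omega$: since $B\Omega$ anticommutes with $B$, one has $B\Omega(t)\,e^{2B\lambda\xi}=e^{-2B\lambda\xi}\,B\Omega(t)$, and with this identity the verification collapses exactly onto \eqref{c2:Q_int_eq}. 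For the third step, the substitution $\sigma=x-2t$ turns $\int_0^x e^{B\lambda(2t-x)}Q(x,t)\,dt$ into $\int_{-x}^x K(x,\sigma)e^{-B\lambda\sigma}\,d\sigma$ with $K(x,\sigma)=\tfrac12 Q\!\left(x,\tfrac{x-\sigma}{2}\right)$, giving \eqref{c2:Phi_rep_1}, and the same substitution converts the bound on $\int_0^x|Q(x,t)|\,dt$ into \eqref{c2:K_est}. Finally, \eqref{c2:K_A}--\eqref{c2:K_C} I would obtain by substituting \eqref{c2:Phi_rep_1} back into $\ell\Phi=\lambda\Phi$, writing $\lambda e^{-B\lambda t}=B\,\partial_t e^{-B\lambda t}$ and integrating by parts in $t$; this produces boundary terms at the two endpoints $t=\pm x$ (plus an interior Goursat-type equation for $K$), and matching the coefficients of the two linearly independent exponentials $e^{\mp B\lambda x}$ there, after splitting $K=K_A+K_C$ with $K_AB=-BK_A$ and $K_CB=BK_C$, kills the appropriate pieces and leaves the two identities. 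That the surviving right-hand side equals $\Omega(x)$ is the algebraic fact $(B\Omega)B+B(B\Omega)=0$ combined with the observation — immediate from \eqref{c2:Q_int_eq}, where the integral term drops out on the diagonal — that the diagonal part of the kernel is $\tfrac12 B\Omega(x)$.

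I expect the main obstacle to be the low regularity of $p$ and $q$. Because they lie only in $L^1_{loc}$, the classical continuity-based successive-approximation estimates are unavailable and the whole first step has to be carried out with $L^1$-in-$t$ norms and Fubini rearrangements; moreover, the diagonal identities \eqref{c2:K_A}--\eqref{c2:K_C} must be made sense of for a kernel $K(x,\cdot)$ that is itself merely summable, which is precisely why they are asserted only for almost every $x$ — the point is to isolate the explicit summable ``diagonal part'' $\tfrac12 B\Omega$ of $K$, show the remainder is continuous and vanishes on the diagonal, and only then is the integration-by-parts/coefficient-matching step legitimate. A secondary nuisance throughout is keeping left- versus right-multiplication straight when the exponentials $e^{\pm B\lambda t}$ are commuted past $\Omega$.
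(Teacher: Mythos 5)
Your overall three-step plan (solve \eqref{c2:Q_int_eq} by successive approximations with $L^1$-in-$t$ estimates, verify \eqref{c2:Phi_int_form}, then change variables) is the paper's plan, and your step 1 and step 2 are sound. But there is a genuine gap at the heart of step 3. After the substitution $\sigma=x-2t$ the integral term of \eqref{c2:Phi_int_form} becomes $\tfrac12\int_{-x}^{x}e^{-B\lambda\sigma}\,Q\bigl(x,\tfrac{x-\sigma}{2}\bigr)\,d\sigma$, with the exponential on the \emph{left} of $Q$, whereas \eqref{c2:Phi_rep_1} requires the kernel on the left. Your formula $K(x,\sigma)=\tfrac12 Q\bigl(x,\tfrac{x-\sigma}{2}\bigr)$ tacitly moves $e^{-B\lambda\sigma}$ through $Q$ as if they commuted; they do not (the leading term $Q_0=B\Omega$ anti-commutes with $B$). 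The missing ingredient is the structural fact about the successive approximations: $Q_{2n}=\alpha_{2n}\sigma_2+\beta_{2n}\sigma_3$ anti-commutes with $B$ while $Q_{2n+1}=\alpha_{2n+1}E+\beta_{2n+1}B$ commutes with it, so that $Q=Q_++Q_-$ with $e^{B\lambda s}Q_+=Q_+e^{-B\lambda s}$ and $e^{B\lambda s}Q_-=Q_-e^{B\lambda s}$. Pushing the exponential through each piece separately (and flipping $s\to-s$ in one of the two integrals) gives the correct kernel $K(x,s)=\tfrac12\bigl[Q_+\bigl(x,\tfrac{x+s}{2}\bigr)+Q_-\bigl(x,\tfrac{x-s}{2}\bigr)\bigr]$, which is \emph{not} of the form $\tfrac12 Q\bigl(x,\tfrac{x-s}{2}\bigr)$. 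Without this decomposition your $K$ simply does not satisfy \eqref{c2:Phi_rep_1}.

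The error propagates to \eqref{c2:K_A}--\eqref{c2:K_C}: with your kernel one would get $K(x,-x)=\tfrac12 Q(x,x)=\tfrac12 B\Omega(x)$ and $K(x,x)=\tfrac12 Q(x,0)$, i.e.\ the diagonal contribution $\tfrac12 B\Omega$ lands at the wrong endpoint and both identities fail, so the formula is not merely unproven but false. Moreover, your proposed route to the endpoint identities — substituting \eqref{c2:Phi_rep_1} into $\ell\Phi=\lambda\Phi$ and integrating by parts in $t$ — needs $\partial_t K(x,\cdot)$, which is unavailable when $p,q\in L^1_{loc}$ and $K(x,\cdot)$ is only summable; your remedy of isolating a continuous remainder also breaks down, because the $B$-commuting part of $K$ on the diagonal $t=x$ involves expressions like $\int_0^x(p^2+q^2)$ which may diverge for merely summable coefficients. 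The paper avoids all of this by reading the endpoint values directly off the series: since $\alpha_m(x,x)=\beta_m(x,x)=0$ for $m\ge1$, one gets $K_A(x,x)=\tfrac12\bigl(q(x)\sigma_2-p(x)\sigma_3\bigr)=\tfrac12 B\Omega(x)$, hence \eqref{c2:K_A}, and $Q_-(x,x)=0$, hence $K_C(x,-x)=0$, which is \eqref{c2:K_C}. To repair your argument you need the parity lemma for the $Q_n$ and the resulting $Q_\pm$ splitting; everything else in your outline can then be kept.
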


From Theorem \ref{c2:thm_1} and representation of the vector-solution of system \eqref{c2:Dirac_matrix_sys}, by the fundamental matrix, we obtain the following assertion.

\begin{theorem}\label{c2:thm_2}
For $p, q\in L^1_{loc}(0,\infty)$, the vector-solution $y=\varphi(x, \lambda, \alpha)$ of the Cauchy problem
\begin{equation}\label{c2:Cauchy_problem_y}
\left\{
\begin{array}{l}
\ell y=\lambda y\, ,\\
\displaystyle{y (0)=
\left(
\begin{array}{c}\sin\alpha \\ 
-\cos\alpha
\end{array}
\right),
\quad \alpha\in \mbc\, ,}
\end{array}
\right. 
\end{equation}
can be represented in the form 
\begin{equation}\label{c2:varphi_rep}
\varphi(x, \lambda, \alpha)=\varphi_0(x, \lambda, \alpha)+\int^x_0 K_0(x, t, \alpha)\varphi_0 (t, \lambda, \alpha)\, dt\; ,
\end{equation}
where $\displaystyle{\varphi_0(x, \lambda, \alpha)=\left(\begin{array}{c}\sin (\lambda x+\alpha) \\ -\cos (\lambda x+\alpha) \end{array}\right)}$, and
\begin{equation}\label{c2:K_0}
K_0(x, t, \alpha)=K(x, t)-K(x, -t)\left(\sigma_2\cos 2\alpha +\sigma_3\sin 2\alpha\right).
\end{equation}
Moreover, for any $\alpha \in \mathbb{C}$, almost everywhere for $x \in [0, a]$
\begin{equation}\label{c2:K_0A}
K_{0A}(x, x, \alpha)B-BK_{0A}(x, x, \alpha)=\Omega(x).
\end{equation}
\end{theorem}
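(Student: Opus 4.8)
The plan is to deduce everything from Theorem~\ref{c2:thm_1} by passing from the fundamental matrix to the vector-solution. Since $\Phi(x,\lambda)$ solves $\ell\Phi=\lambda\Phi$ with $\Phi(0,\lambda)=E$, the function $\Phi(x,\lambda)\binom{\sin\alpha}{-\cos\alpha}$ solves $\ell y=\lambda y$ with $y(0)=\binom{\sin\alpha}{-\cos\alpha}$, so by the uniqueness part of Theorem~\ref{c1:thm_1} we have $\varphi(x,\lambda,\alpha)=\Phi(x,\lambda)\binom{\sin\alpha}{-\cos\alpha}$. Substituting the representation~\eqref{c2:Phi_rep_1} and using the already computed identity $e^{-B\lambda x}=\cos(\lambda x)E-\sin(\lambda x)B$, a one-line computation with the addition formulas gives $e^{-B\lambda x}\binom{\sin\alpha}{-\cos\alpha}=\binom{\sin(\lambda x+\alpha)}{-\cos(\lambda x+\alpha)}=\varphi_0(x,\lambda,\alpha)$, and likewise $e^{-B\lambda t}\binom{\sin\alpha}{-\cos\alpha}=\varphi_0(t,\lambda,\alpha)$. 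Hence $\varphi(x,\lambda,\alpha)=\varphi_0(x,\lambda,\alpha)+\int_{-x}^{x}K(x,t)\varphi_0(t,\lambda,\alpha)\,dt$, and it remains to fold the portion $\int_{-x}^{0}$ onto $[0,x]$.

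The one genuinely new ingredient is a reflection identity for the free solution. Put $S_\alpha=\sigma_2\cos2\alpha+\sigma_3\sin2\alpha$; as a linear combination of $\sigma_2,\sigma_3$ it anti-commutes with $B$, and a direct check shows $(-S_\alpha)\binom{\sin\alpha}{-\cos\alpha}=\binom{\sin\alpha}{-\cos\alpha}$. Anti-commutation with $B$ gives $(-S_\alpha)e^{-B\lambda t}=e^{B\lambda t}(-S_\alpha)$, so applying $-S_\alpha$ to $\varphi_0(t,\lambda,\alpha)=e^{-B\lambda t}\binom{\sin\alpha}{-\cos\alpha}$ yields $-S_\alpha\varphi_0(t,\lambda,\alpha)=e^{B\lambda t}\binom{\sin\alpha}{-\cos\alpha}=\varphi_0(-t,\lambda,\alpha)$ (this can also be verified straight from the trigonometric addition formulas). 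Splitting $\int_{-x}^{x}=\int_{0}^{x}+\int_{-x}^{0}$ and substituting $t\mapsto-t$ in the second integral, $\int_{-x}^{0}K(x,t)\varphi_0(t,\lambda,\alpha)\,dt=\int_{0}^{x}K(x,-t)\varphi_0(-t,\lambda,\alpha)\,dt=-\int_{0}^{x}K(x,-t)S_\alpha\varphi_0(t,\lambda,\alpha)\,dt$. Adding the two pieces gives exactly~\eqref{c2:varphi_rep} with $K_0(x,t,\alpha)=K(x,t)-K(x,-t)S_\alpha$, i.e.~\eqref{c2:K_0}; absolute convergence of this integral follows from the bound~\eqref{c2:K_est}.

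For~\eqref{c2:K_0A}, evaluate~\eqref{c2:K_0} at $t=x$: $K_0(x,x,\alpha)=K(x,x)-K(x,-x)S_\alpha$. Write $K=K_A+K_C$ for the $B$-anticommuting and $B$-commuting parts. Using that the product of two $B$-anticommuting matrices commutes with $B$, while the product of a $B$-commuting matrix and a $B$-anticommuting one anti-commutes with $B$, the $B$-anticommuting part of $K(x,-x)S_\alpha$ is $K_C(x,-x)S_\alpha$, whence $K_{0A}(x,x,\alpha)=K_A(x,x)-K_C(x,-x)S_\alpha$. But $K_C(x,-x)$ commutes with $B$ by definition, while~\eqref{c2:K_C} gives $K_C(x,-x)B+BK_C(x,-x)=0$; since $B$ is invertible these together force $K_C(x,-x)=0$. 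Therefore $K_{0A}(x,x,\alpha)=K_A(x,x)$, and~\eqref{c2:K_0A} follows at once from~\eqref{c2:K_A}.

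The analytic heavy lifting is already contained in Theorem~\ref{c2:thm_1}, so the only real care needed is bookkeeping: pinning down the reflection matrix $-S_\alpha$, and tracking in the last step which matrix products land in the $B$-commuting subspace and which in the $B$-anticommuting one. That step is the main potential source of sign or placement errors.
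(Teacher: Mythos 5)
Your proposal is correct and follows essentially the same route as the paper: it represents $\varphi=\Phi\binom{\sin\alpha}{-\cos\alpha}$, folds the $\int_{-x}^{0}$ portion onto $[0,x]$ using the matrix $\sigma_2\cos2\alpha+\sigma_3\sin2\alpha$ (the paper's $A_\alpha$, your $S_\alpha$) via its anti-commutation with $B$ and the eigenvector identity $A_\alpha\binom{\sin\alpha}{-\cos\alpha}=-\binom{\sin\alpha}{-\cos\alpha}$, and then derives \eqref{c2:K_0A} from \eqref{c2:K_A} together with \eqref{c2:K_C} (equivalently $K_C(x,-x)=0$, as the paper itself notes). Your final step is phrased slightly more abstractly through the parity of matrix products with respect to $B$ instead of the paper's explicit formula for $K_0(x,x,\alpha)$, but the content is the same.
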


The case $p, q \in L^2_{loc}(0, \infty)$ is especially important in the applications.
For this reason, we are studying this case separately.
\begin{theorem}\label{c2:thm_3}
If $p, q \in L^2_{loc}(0, \infty)$, then $K_{ij}(x, \cdot)\in L^2(-x, x)$, and $\left[K_0(x, \cdot, \a)\right]_{ij}\in L^2(0, x)$, $i, j=1, 2$  ( $K_{ij}$ are the elements of matrix $K$).
\end{theorem}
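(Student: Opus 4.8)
The plan is to reduce the statement to the square-integrability of the kernel $Q(x,t)$ of the integral equation \eqref{c2:Q_int_eq}, for which we already have the successive-approximation construction used to prove Theorem \ref{c2:thm_1}. Comparing the representations \eqref{c2:Phi_int_form} and \eqref{c2:Phi_rep_1} of the fundamental matrix and carrying out the substitution $s=x-2t$ in the integral $\int_0^x e^{-B\lambda(x-2t)}Q(x,t)\,dt$ (splitting $Q$ into its $B$-commuting and $B$-anticommuting parts, which turn $e^{-B\lambda s}$ into $e^{\pm B\lambda s}$), one sees that $K(x,\cdot)$ is built from $Q(x,\cdot)$ by the affine substitutions $t\mapsto(x\pm t)/2$, which carry $(0,x)$ bijectively onto $(-x,x)$, together with fixed bounded linear operations on $2\times2$ matrices --- all of which preserve $L^2$-membership. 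Likewise $K_0$ is obtained from $K$ through \eqref{c2:K_0} by a restriction, a reflection $t\mapsto-t$ and a right multiplication by the constant matrix $\sigma_2\cos2\alpha+\sigma_3\sin2\alpha$. Hence it suffices to show: if $p,q\in L^2_{loc}(0,\infty)$, then $Q_{ij}(x,\cdot)\in L^2(0,x)$ for each fixed $x\in[0,a]$ and $i,j=1,2$.

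To this end I would use the decomposition $Q=Q^{(0)}+R$, where $Q^{(0)}(x,t)=B\Omega(t)$ and $R=\sum_{n\ge1}Q^{(n)}$ with $Q^{(n)}(x,t)=\int_0^{x-t}B\Omega(t+\xi)\,Q^{(n-1)}(t+\xi,\xi)\,d\xi$. The entries of $Q^{(0)}=B\Omega$ are $\pm p$ and $\pm q$, so $Q^{(0)}(x,\cdot)\in L^2(0,x)$ exactly by the hypothesis; this is the only term that need not be bounded, and the core of the proof is to show that $R(x,\cdot)$ is in fact bounded on $[0,x]$, which is a genuine gain over the $L^1$-estimate $\int_{-x}^x|K|\,dt\le e^{c(x)}-1$ of Theorem \ref{c2:thm_1}. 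The $L^2$-hypothesis is used precisely at the first iterate: writing $\mu(x)^2:=\int_0^x|\Omega(s)|^2\,ds<\infty$ and using the Cauchy--Schwarz inequality together with the substitution $u=t+\xi$,
\[
|Q^{(1)}(x,t)|\le\int_0^{x-t}|\Omega(t+\xi)|\,|\Omega(\xi)|\,d\xi\le\Big(\int_t^x|\Omega(u)|^2du\Big)^{1/2}\Big(\int_0^{x-t}|\Omega(\xi)|^2d\xi\Big)^{1/2}\le\mu(x)^2 ,
\]
so $Q^{(1)}$ is already bounded on the triangle $0\le t\le x\le a$.

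The remaining iterates are then controlled by a routine induction. With $\nu(x):=\int_0^x|\Omega(s)|\,ds\le c(x)<\infty$, using $\mu(t+\xi)\le\mu(x)$ and the elementary identity $\int_t^x|\Omega(u)|\,\nu(u)^{n-1}\,du=\tfrac1n\big(\nu(x)^n-\nu(t)^n\big)$, one gets $|Q^{(n)}(x,t)|\le\mu(x)^2\,\nu(x)^{n-1}/(n-1)!$ for every $n\ge1$ and $0\le t\le x\le a$. Summing, $|R(x,t)|\le\mu(x)^2 e^{\nu(x)}<\infty$, so $R(x,\cdot)$ is bounded, hence lies in $L^2(0,x)$; therefore $Q_{ij}(x,\cdot)=(B\Omega)_{ij}+R_{ij}(x,\cdot)\in L^2(0,x)$, and transferring back through the substitutions of the first paragraph and through \eqref{c2:K_0} yields both assertions of the theorem.

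I expect the only real obstacle to be the bookkeeping in this last induction: the majorant has to be chosen so that a genuine factorial appears (rather than a mere geometric series in $\nu(x)$), since $\nu(x)$, equivalently $c(x)$, need not be small; the displayed identity for $\int|\Omega|\,\nu^{n-1}$ is exactly what produces the $1/(n-1)!$. Everything else reproduces, almost verbatim, the propagation of estimates already performed --- in the $L^1$ setting --- in the proof of Theorem \ref{c2:thm_1} (and paralleling Lemma \ref{c1:lem_2}).
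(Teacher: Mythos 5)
Your proof is correct, and it rests on the same structural observation as the paper's — the zeroth iterate $B\Omega$, with entries $\pm p,\pm q$, is the only contribution that is merely $L^2$, while the tail of the successive-approximation series is uniformly bounded — but you execute the bound quite differently. The paper (Lemmas \ref{c2:lem_4} and \ref{c2:lem_5}) works at the level of the scalar iterates $\alpha_k(x,t)$, $\beta_k(x,t)$ and applies the Cauchy--Schwarz inequality at \emph{every} induction step, producing the refined pointwise majorants $d^{2k+1}(x)\,t^{\frac{k-1}{2}}(x-t)^{\frac{k}{2}}/((k-1)!\sqrt{k})$ and $d^{2k+2}(x)\,t^{\frac{k}{2}}(x-t)^{\frac{k}{2}}/k!$, then assembles $K_{ij}$ via \eqref{c2:K_Q+_Q-} and squares and integrates to get the explicit estimates \eqref{c2:K_11_est}--\eqref{c2:K_ij_est}. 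You instead stay at the level of the matrix iterates $Q^{(n)}$ from \eqref{c2:Q_0_n}, use Cauchy--Schwarz only once (at $n=1$, giving $|Q^{(1)}|\le\mu(x)^2$) and then fall back on the $L^1$ quantity $\nu(x)$ with the factorial identity to get $|Q^{(n)}|\le\mu(x)^2\nu(x)^{n-1}/(n-1)!$; this is lighter bookkeeping and a perfectly valid variant. What the paper's heavier route buys is the explicit form of \eqref{c2:K_11_est}--\eqref{c2:K_ij_est}, which is reused later (Theorem \ref{c2:thm_6}, the weak-boundedness argument of Section \ref{c2:sec_7}, and the uniformity claims in Chapter \ref{chapter_3}); your cruder bound $\mu(x)^2e^{\nu(x)}$ suffices for the theorem as stated (and, since $\nu(x)\le\sqrt{x}\,\mu(x)$, it would also give uniformity over bounded $L^2$-subsets). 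One small point to make explicit when transferring back through \eqref{c2:K_Q+_Q-}: you need $Q_+(x,\cdot)$ and $Q_-(x,\cdot)$ \emph{separately} in $L^2(0,x)$, not just their sum $Q$; this is immediate from your term-by-term bounds, since the odd and even sub-series of $\sum_{n\ge1}Q^{(n)}$ are each dominated by the same convergent majorant, but it deserves a sentence. The reduction of the $K_0$ statement via \eqref{c2:K_0} is exactly what the paper does.
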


When studying the boundary value problem for the system \eqref{c2:Dirac_matrix_sys} on the segment $[0, \pi]$, in addition to transformation operators \eqref{c2:Phi_rep_1} and \eqref{c2:varphi_rep} preserving the initial conditions at the point $0$, there are other useful transformation operators which preserve the initial conditions at the point $\pi$ ("tied" to the point $\pi$).

\begin{theorem}\label{c2:thm_4}
\begin{itemize}
\item[1.] If $p, q\in L^1(0, \pi)$, then there exists a matrix $H(x, t)$ such that the matrix-solution $\Psi(x, \lambda)$ of the Cauchy problem $\ell\Psi=\lambda\Psi$, $\Psi(\pi, \lambda)=E$ that can be represented in the form
\begin{equation}\label{c2:Psi_rep}
\Psi (x, \lambda)=e^{B\lambda (\pi -x)}-
\int^{2\pi -x}_x H(x, t)e^{B\lambda(\pi -t)}dt.
\end{equation}
In this case, the estimate 
\begin{equation}\label{c2:H_est}
\int^{2\pi -x}_x | H(x, t) |\, dt\leq e^{r(x)}-1,
\end{equation}
takes place for the norm of matrix $H$, where $r(x)=\int^{\pi}_x | p(s) |\, ds +\int^{\pi}_x | q(s) |\, ds$.
Besides, the equalities
\begin{align}
& H_A(x, x)B-B H_A(x, x)=\Omega(x), \label{c2:H_A_Omega}\\
& H_C(x, 2\pi -x)B+BH_C(x, 2\pi -x)=0. \label{c2:H_C_0} 
\end{align}
hold almost everywhere on $[0, \pi]$.

\item[2.] Vector-solution $y=\psi(x, \lambda, \b)$ of the Cauchy problem $\ell y=\lambda y$, 
$y(\pi)=\left(\begin{array}{c} \sin \beta \\ -\cos \beta \end{array}\right)$ can be represented as
\begin{equation}\label{c2:psi_rep}
\psi (x, \lambda, \b)=\psi_0(x, \lambda, \b)-\int^{\pi}_x H_{\pi}(x, t, \b)\psi_0(t, \lambda, \b)  dt
\end{equation}
where $\psi_0(x, \lambda, \b)=\left(\begin{array}{c}\sin (\beta -\lambda (\pi -x)) \\ -\cos (\beta -\lambda (\pi -x)) \end{array}\right)$, and 
\[
H_{\pi}(x, t, \b)=H(x, t)-H(x, 2\pi -t)\cdot \left(\sigma_2\cos 2\beta +\sigma_3 \sin 2\beta \right).
\]
Moreover, for arbitrary $\beta \in \mathbb{C}$  almost everywhere for $x \in [0, \pi]$
\begin{equation}\label{c2:H_pi_A}
H_{\pi A}(x, x, \beta)B-B H_{\pi A}(x, x, \beta)=\Omega(x).
\end{equation}

\item[3.] when $p, q\in L^2(0, \pi)$, $H_{ij}(x, \cdot)\in L^2(x, 2\pi -x)$ and $\left[H_{\pi}(x, \cdot, \b)\right]_{ij}\in L^2(x, \pi)$, $i, j=1, 2$.
\end{itemize}
\end{theorem}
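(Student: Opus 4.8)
The plan is to reduce the whole theorem, part by part, to Theorems \ref{c2:thm_1}, \ref{c2:thm_2}, \ref{c2:thm_3}, which already handle the transformation operators anchored at the left endpoint $0$; the only device needed is to reflect the independent variable about the midpoint and to conjugate by $\sigma_2$. Concretely, if $\Psi(x,\lambda)$ is the matrix solution of $\ell\Psi=\lambda\Psi$ with $\Psi(\pi,\lambda)=E$, I would put
\[
z(\xi,\lambda)=\sigma_2\,\Psi(\pi-\xi,\lambda),\qquad \xi\in[0,\pi].
\]
Using $\sigma_2 B\sigma_2=-B$ and $\sigma_2\Omega(x)\sigma_2=p(x)\sigma_2-q(x)\sigma_3$, a short computation shows that $z$ solves the canonical Dirac system
\[
B\,z'(\xi)+\widehat\Omega(\xi)\,z(\xi)=\lambda\,z(\xi),\qquad \widehat\Omega(\xi)=\widehat p(\xi)\,\sigma_2+\widehat q(\xi)\,\sigma_3,\quad \widehat p(\xi)=p(\pi-\xi),\ \ \widehat q(\xi)=-q(\pi-\xi),
\]
with $z(0,\lambda)=\sigma_2$, and clearly $\widehat p,\widehat q\in L^1(0,\pi)$ (respectively $L^2(0,\pi)$) whenever $p,q$ are. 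If $\widehat\Phi$ denotes the fundamental matrix of this reduced system ($\widehat\Phi(0,\lambda)=E$), then $z(\xi,\lambda)=\widehat\Phi(\xi,\lambda)\,\sigma_2$, hence
\[
\Psi(x,\lambda)=\sigma_2\,\widehat\Phi(\pi-x,\lambda)\,\sigma_2 .
\]

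For part 1 I would substitute here the representation \eqref{c2:Phi_rep_1} of $\widehat\Phi$ from Theorem \ref{c2:thm_1}, use $\sigma_2 e^{-B\lambda\xi}\sigma_2=e^{B\lambda\xi}$, and change the integration variable by $s=\pi-t$; this gives exactly \eqref{c2:Psi_rep} with
\[
H(x,t)=-\,\sigma_2\,\widehat K(\pi-x,\pi-t)\,\sigma_2,\qquad x\le t\le 2\pi-x .
\]
The estimate \eqref{c2:H_est} then follows from \eqref{c2:K_est}: $\sigma_2$ is unitary, so $|H(x,t)|=|\widehat K(\pi-x,\pi-t)|$, and the same substitution gives $\int_x^{2\pi-x}|H(x,t)|\,dt=\int_{-(\pi-x)}^{\pi-x}|\widehat K(\pi-x,s)|\,ds\le e^{\widehat c(\pi-x)}-1$, where (cf. \eqref{c0:c_x}) $\widehat c(\pi-x)=\int_0^{\pi-x}(|\widehat p|+|\widehat q|)=\int_x^{\pi}(|p|+|q|)=r(x)$. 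For the a.e. identities \eqref{c2:H_A_Omega}, \eqref{c2:H_C_0} I would use that conjugation by $\sigma_2$ preserves the decomposition of a $2\times2$ matrix into its $B$-commuting and $B$-anticommuting parts (since $\sigma_2 E\sigma_2=E$, $\sigma_2 B\sigma_2=-B$, $\sigma_2\sigma_2\sigma_2=\sigma_2$, $\sigma_2\sigma_3\sigma_2=-\sigma_3$); hence $H_A(x,x)=-\sigma_2\widehat{K}_A(\pi-x,\pi-x)\sigma_2$ and $H_C(x,2\pi-x)=-\sigma_2\widehat{K}_C(\pi-x,-(\pi-x))\sigma_2$, and conjugating \eqref{c2:K_A}, \eqref{c2:K_C} (written for $\widehat\Phi$) by $\sigma_2$, together with $\sigma_2\widehat\Omega(\pi-x)\sigma_2=\Omega(x)$, yields \eqref{c2:H_A_Omega} and \eqref{c2:H_C_0}. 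The $L^2$ claim of part 3 for $H$ follows from Theorem \ref{c2:thm_3} applied to $\widehat\Phi$, because each entry of $H(x,\cdot)$ is a fixed linear combination of the functions $t\mapsto\widehat{K}_{kl}(\pi-x,\pi-t)$ and the affine map $t\mapsto\pi-t$ carries $(x,2\pi-x)$ onto $(-(\pi-x),\pi-x)$, preserving $L^2$.

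For part 2, since $\Psi(\pi,\lambda)=E$ the solution of $\ell y=\lambda y$ with $y(\pi)=(\sin\beta,-\cos\beta)^T$ is $\psi(x,\lambda,\beta)=\Psi(x,\lambda)(\sin\beta,-\cos\beta)^T$. Substituting \eqref{c2:Psi_rep} and using the elementary identity $e^{B\lambda(\pi-t)}(\sin\beta,-\cos\beta)^T=\psi_0(t,\lambda,\beta)$, I would split $\int_x^{2\pi-x}=\int_x^{\pi}+\int_{\pi}^{2\pi-x}$ and in the second integral substitute $t=2\pi-\tau$; the trigonometric identity $\psi_0(2\pi-\tau,\lambda,\beta)=-(\sigma_2\cos 2\beta+\sigma_3\sin 2\beta)\,\psi_0(\tau,\lambda,\beta)$ then merges the two integrals into one over $(x,\pi)$ and produces \eqref{c2:psi_rep} with $H_\pi$ exactly as stated. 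For \eqref{c2:H_pi_A}: \eqref{c2:H_C_0} forces $H_C(x,2\pi-x)=0$, so $H(x,2\pi-x)$ anticommutes with $B$; since $\sigma_2\cos 2\beta+\sigma_3\sin 2\beta$ also anticommutes with $B$, their product $H(x,2\pi-x)(\sigma_2\cos 2\beta+\sigma_3\sin 2\beta)$ commutes with $B$ and hence contributes nothing to the $B$-anticommuting part at $t=x$; therefore $H_{\pi A}(x,x,\beta)=H_A(x,x)$ and \eqref{c2:H_pi_A} is simply \eqref{c2:H_A_Omega}. The $L^2$ assertion for $H_\pi$ follows from the one for $H$, since $t\mapsto2\pi-t$ maps $(x,\pi)$ into $(x,2\pi-x)$.

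There is no serious analytic obstacle here — everything of substance has been pushed back into Theorems \ref{c2:thm_1}--\ref{c2:thm_3}; the only delicate point is keeping track of the substitutions $t\mapsto\pi-t$, $t\mapsto2\pi-t$ and of the $\sigma_2$-conjugation so that the arguments, signs and limits of integration in \eqref{c2:Psi_rep}--\eqref{c2:H_pi_A} come out precisely as written, and verifying that conjugation by $\sigma_2$ respects the commuting/anticommuting splitting. Alternatively, one could dispense with the reduction and rerun verbatim the successive-approximations argument of Section \ref{c2:sec_1} for the analogue of equation \eqref{c2:Q_int_eq} anchored at $\pi$; the route above is shorter and needs nothing beyond the already established theorems.
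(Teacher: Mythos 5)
Your proof is correct, but it takes a genuinely different route from the one in the paper. The paper explicitly declines a reduction argument (it mentions, and rejects as technically heavier, the route via $\Psi=\Phi(x,\lambda)\Phi^{-1}(\pi,\lambda)$) and instead re-runs the whole successive-approximations machinery anchored at the point $\pi$: it sets $\Psi=e^{B\lambda(\pi-x)}V(x,\lambda)$, derives a new Volterra-type equation \eqref{c2:R_int_eq} for a kernel $R(x,t)$, builds the series $R=\sum R_k$ with the recurrences \eqref{2-2-60}--\eqref{2-2-62}, and reads off $H$, $H_\pi$ and the diagonal identities from the explicit structure of $R_\pm$ (Lemmas \ref{c2:lem_6}--\ref{c2:lem_11}, stated without proofs). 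You instead reduce everything to the already-proved Theorems \ref{c2:thm_1}--\ref{c2:thm_3} by the reflection $\xi=\pi-x$ combined with conjugation by $\sigma_2$; your computations check out: $\sigma_2\Omega(\pi-\xi)\sigma_2=p(\pi-\xi)\sigma_2-q(\pi-\xi)\sigma_3$, so the reflected problem is again canonical with $\widehat p,\widehat q$ in the same class, $\Psi(x,\lambda)=\sigma_2\widehat\Phi(\pi-x,\lambda)\sigma_2$ by uniqueness of the Cauchy problem, the substitution $s=\pi-t$ yields \eqref{c2:Psi_rep} with $H(x,t)=-\sigma_2\widehat K(\pi-x,\pi-t)\sigma_2$, unitarity of $\sigma_2$ together with $\widehat c(\pi-x)=r(x)$ gives \eqref{c2:H_est}, and the observation that conjugation by $\sigma_2$ preserves the $B$-commuting/anticommuting splitting turns \eqref{c2:K_A}, \eqref{c2:K_C} into \eqref{c2:H_A_Omega}, \eqref{c2:H_C_0}; your part 2 (splitting $\int_x^{2\pi-x}$ and using $\psi_0(2\pi-t,\lambda,\beta)=-A_\beta\psi_0(t,\lambda,\beta)$) and your derivation of \eqref{c2:H_pi_A} coincide in spirit with the paper's, and part 3 transfers from Theorem \ref{c2:thm_3} by the affine change of variable. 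What your route buys is brevity and no duplication of the Volterra argument; what the paper's route buys is the explicit recurrent formulas for the components $\gamma_m,\delta_m$ of $H$ (analogous to \eqref{c2:alpha_m}--\eqref{c2:beta_m}), which your reduction delivers only implicitly through $\widehat K$, though nothing in the theorem statement or its later uses requires them. The only points to keep explicit in a write-up are the ones you already flag: that $\sigma_2$-conjugation respects the splitting, and that the a.e. identities on the diagonal are interpreted exactly as in Theorem \ref{c2:thm_1} (the commuting parts may involve the undefined pointwise values $p(\pi),q(\pi)$, which is why \eqref{c2:H_C_0} is the statement $H_C(x,2\pi-x)=0$ a.e.).
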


\section{The solution of equation (\ref{c2:Q_int_eq}).}\label{c2:sec_2}
\begin{theorem}\label{c2:thm_5}
Let $p, q \in L_{ loc}^ 1 (0, \infty )$.
Then the integral equation \eqref{c2:Q_int_eq} (with respect to unknown function $Q(x, t) $):
\[
Q(x, t)=B\Omega (t)+\int^{x-t}_0 B\Omega(t+\xi)Q(t+\xi, \xi)\, d\xi,\quad 0\leq t\leq x <a,
\]
has a unique solution.
This solution is absolutely continuous by $x$ and summable by $t$ on $t \in [ 0, x] $ ($Q(x, \cdot ) \in L^1(0, x)$), more precisely
\begin{equation} \label{c2:Q_int_est}
\int^x_0\left | Q(x, t)\right |\, dt\leq e^{c(x)}-1\; .
\end{equation}
\end{theorem}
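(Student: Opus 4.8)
The plan is to solve \eqref{c2:Q_int_eq} by the method of successive approximations, in close parallel with the scalar estimates of Chapter \ref{chapter_1}; the only genuinely new ingredient is the shifted argument $Q(t+\xi,\xi)$ under the integral, which I will absorb by a single change of variables together with Fubini's theorem. First I set
\[
Q_0(x,t)=B\Omega(t),\qquad Q_n(x,t)=\int_0^{x-t}B\Omega(t+\xi)\,Q_{n-1}(t+\xi,\xi)\,d\xi,\quad n\ge 1,
\]
for $0\le t\le x<a$. Since $B$ is unitary (so $|B|=1$) and $\Omega(t)=p(t)\sigma_2+q(t)\sigma_3$, estimate \eqref{c0:A_norm_est} gives $|B\Omega(t)|\le|\Omega(t)|\le|p(t)|+|q(t)|$ for a.e.\ $t$; abbreviating $c'(t)=|p(t)|+|q(t)|$ we have $c(x)=\int_0^x c'(s)\,ds$ by \eqref{c0:c_x}. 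The central claim, proved by induction on $n$, is
\[
\int_0^x |Q_n(x,t)|\,dt\ \le\ \frac{c(x)^{\,n+1}}{(n+1)!},\qquad 0\le x<a .
\]
The case $n=0$ is immediate. For the step I bound $|Q_n(x,t)|\le\int_0^{x-t}c'(t+\xi)\,|Q_{n-1}(t+\xi,\xi)|\,d\xi$, substitute $s=t+\xi$ to rewrite this as $\int_t^x c'(s)\,|Q_{n-1}(s,s-t)|\,ds$, integrate in $t$ over $[0,x]$, interchange the order of integration over $\{(t,s):0\le t\le s\le x\}$, and substitute $\tau=s-t$ in the inner integral; this gives
\[
\int_0^x|Q_n(x,t)|\,dt\ \le\ \int_0^x c'(s)\Big(\int_0^s|Q_{n-1}(s,\tau)|\,d\tau\Big)ds\ \le\ \int_0^x c'(s)\,\frac{c(s)^{\,n}}{n!}\,ds\ =\ \frac{c(x)^{\,n+1}}{(n+1)!},
\]
using $\tfrac{d}{ds}c(s)=c'(s)$ a.e.\ and $c(0)=0$.

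Granting the claim, $Q(x,t):=\sum_{n=0}^{\infty}Q_n(x,t)$ converges absolutely for a.e.\ $t$ and in $L^1(0,x)$ uniformly in $x\in[0,a]$, and
\[
\int_0^x|Q(x,t)|\,dt\ \le\ \sum_{n=0}^{\infty}\frac{c(x)^{\,n+1}}{(n+1)!}\ =\ e^{c(x)}-1 ,
\]
which is \eqref{c2:Q_int_est}. Summing the recursion over $n$ and passing to the limit under the integral sign — legitimate by the uniform $L^1$-domination just obtained — shows that $Q$ satisfies \eqref{c2:Q_int_eq} in the sense of \eqref{c2:Q_eq}. After the substitution $s=t+\xi$ this equation reads $Q(x,t)=B\Omega(t)+\int_t^x B\Omega(s)\,Q(s,s-t)\,ds$, which exhibits $Q(\cdot,t)$, for each fixed $t$, as a constant plus the primitive of an $L^1(t,a)$ function; hence $Q$ is absolutely continuous in $x$, as claimed.

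For uniqueness, suppose $Q$ and $\widetilde Q$ both solve \eqref{c2:Q_int_eq} and put $R=Q-\widetilde Q$. Then $R$ satisfies the homogeneous equation $R(x,t)=\int_0^{x-t}B\Omega(t+\xi)\,R(t+\xi,\xi)\,d\xi$ (a.e.\ $t$, uniformly in $x$), and since both solutions obey \eqref{c2:Q_int_est}, the function $g(x):=\int_0^x|R(x,t)|\,dt$ is measurable and satisfies $g(x)\le M:=2\bigl(e^{c(a)}-1\bigr)$ on $[0,a]$. The same change-of-variables computation as above yields $g(x)\le\int_0^x c'(s)\,g(s)\,ds$, and iterating this inequality gives $g(x)\le M\,c(x)^{\,n}/n!$ for every $n\ge 1$, so $g\equiv 0$ and $R=0$ a.e. The step I expect to be the main obstacle is exactly this bookkeeping — the change of variables $s=t+\xi$ combined with the Fubini interchange, and, in the uniqueness part, the measurability of $g$ in $x$; once these are set up correctly, the estimates are essentially a verbatim repetition of those in Chapter \ref{chapter_1}, so no conceptual difficulty remains.
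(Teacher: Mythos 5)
Your proof is correct and follows essentially the same route as the paper: successive approximations $Q_n$, the identical change of variables $s=t+\xi$ combined with the Fubini--Tonelli interchange to obtain $\int_0^x|Q_n(x,t)|\,dt\le c^{\,n+1}(x)/(n+1)!$, summation of the series to get \eqref{c2:Q_int_est} and to pass to the limit in the sense of \eqref{c2:Q_eq}, and the same Gronwall-type iteration for uniqueness (the paper's Lemma \ref{c2:lem_2}, whose quantity $G(x)=\sup_{0\le s\le x}\int_0^s|\Delta Q(s,t)|\,dt$ plays exactly the role of your bound $M$, and is assumed finite there just as you assume \eqref{c2:Q_int_est} for both solutions). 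The only difference is bookkeeping: the paper routes the estimate through Lemma \ref{c2:lem_1}, decomposing $Q_{2n}$ into $\sigma_2,\sigma_3$ parts and $Q_{2n+1}$ into $E,B$ parts and bounding the scalar coefficients $\alpha_m,\beta_m$ via \eqref{c0:A_norm_est}, whereas you bound the matrix norms directly using submultiplicativity and $|B|=1$ --- a harmless shortcut here, since the commutation structure of Lemma \ref{c2:lem_1} is only needed later, in the proof of Theorem \ref{c2:thm_1}.
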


We will solve the integral equation \eqref{c2:Q_int_eq} by the method of successive approximation.
To this end, we define the sequence of matrices (so far formally)
\begin{equation}\label{c2:Q_0_n}
\begin{aligned}
& Q_0(x, t)=B\Omega(t)=q(t)\sigma_2-p(t)\sigma_3  \\
& Q_n(x, t)=\int^{x-t}_0 B\Omega(t+\xi)Q_{n-1}(t+\xi, \xi)\, d\xi, \quad n \geq 1.
\end{aligned}
\end{equation}

\begin{lemma}\label{c2:lem_1}
The matrices defined in \eqref{c2:Q_0_n} have the following structure
\begin{align*}
& Q_{2n}(x,t)=\alpha_{2n}(x,t)\sigma_2+\beta_{2n}(x,t)\sigma_3,\\
& Q_{2n+1}(x,t)=\alpha_{2n+1}(x,t)E+\beta_{2n+1}(x,t)B,\quad n=0,1,2,\ldots
\end{align*}
where the scalar functions $\alpha_k (x, t) $ and $\beta_k (x, t) $ are determined from recurrent relations (which hold almost everywhere by $ t \in [0, x] $)
\begin{align}
& \label{c2:alpha_0_beta_0} \alpha_0(x,t)=q(t),\quad\beta_0(x,t)=-p(t), \\
& \label{c2:alpha_m} \alpha_m(x,t)=\int_0^{x-t}\left[q(t+\xi)\alpha_{m-1}(t+\xi,\xi)+(-1)^m p(t+\xi)\beta_{m-1}(t+\xi,\xi)\right]d\xi,\\
& \label{c2:beta_m}\beta_m(x,t)=\int_0^{x-t}\left[q(t+\xi)\beta_{m-1}(t+\xi,\xi)-(-1)^m p(t+\xi)\alpha_{m-1}(t+\xi,\xi)\right]d\xi,
\end{align}
for $m \geq 1$ and also the following estimates hold
\begin{equation}\label{c2:a_b_est}
\int^x_0\left( |\alpha_m(x, t) | + |\b_m(x, t) |\right)\, dt \leq \frac{c^{m+1}(x)}{(m+1)!}\, .
\end{equation}
\end{lemma}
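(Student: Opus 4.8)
The plan is to establish the three assertions of the lemma --- the alternating $\mathrm{span}\{\sigma_2,\sigma_3\}$ / $\mathrm{span}\{E,B\}$ structure of the $Q_m$, the recurrences \eqref{c2:alpha_m}--\eqref{c2:beta_m}, and the bound \eqref{c2:a_b_est} --- simultaneously by induction on $m$, after recording one small algebraic fact. Using $B=\tfrac1i\sigma_1$, $\sigma_j^2=E$, $\sigma_2\sigma_3=B=-\sigma_3\sigma_2$ and the anti-commutation relations of the Pauli matrices, a direct computation from $B\Omega(t)=q(t)\sigma_2-p(t)\sigma_3$ gives the multiplication table
\[
B\Omega(t)\,\sigma_2=q(t)E+p(t)B,\qquad B\Omega(t)\,\sigma_3=q(t)B-p(t)E,
\]
\[
B\Omega(t)\,E=q(t)\sigma_2-p(t)\sigma_3,\qquad B\Omega(t)\,B=p(t)\sigma_2+q(t)\sigma_3 .
\]
The essential feature is that left multiplication by $B\Omega$ sends $\mathrm{span}\{\sigma_2,\sigma_3\}$ into $\mathrm{span}\{E,B\}$ and vice versa, and that in the $\{\sigma_2,\sigma_3\}\to\{E,B\}$ half and the $\{E,B\}\to\{\sigma_2,\sigma_3\}$ half the $p$-entries enter with opposite signs.

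For $m=0$ the structure and the identities \eqref{c2:alpha_0_beta_0} are immediate from $Q_0=B\Omega$. Assume the statement for $m-1$. Substituting the expansion of $Q_{m-1}(t+\xi,\xi)$ into the defining formula $Q_m(x,t)=\int_0^{x-t}B\Omega(t+\xi)\,Q_{m-1}(t+\xi,\xi)\,d\xi$ and applying the table above with argument $t+\xi$, one sees that $Q_m$ lands in the opposite subspace, and collecting the coefficients of (respectively) $E,B$ or $\sigma_2,\sigma_3$ yields precisely \eqref{c2:alpha_m}--\eqref{c2:beta_m}; the factor $(-1)^m$ simply records which half of the table was applied at step $m$. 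All equalities are understood a.e.\ in $t$, which is all that is claimed.

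For the estimate, set $\Phi_m(x):=\int_0^x\bigl(|\alpha_m(x,t)|+|\beta_m(x,t)|\bigr)\,dt$; then $\Phi_0(x)=c(x)$. From \eqref{c2:alpha_m}--\eqref{c2:beta_m} (for either parity) one gets the pointwise bound
\[
|\alpha_m(x,t)|+|\beta_m(x,t)|\le\int_0^{x-t}\bigl(|p(t+\xi)|+|q(t+\xi)|\bigr)\bigl(|\alpha_{m-1}(t+\xi,\xi)|+|\beta_{m-1}(t+\xi,\xi)|\bigr)\,d\xi .
\]
Integrate this over $t\in[0,x]$, substitute $u=t+\xi$, use Fubini on the region $\{0\le t\le u\le x\}$, and then substitute $v=u-t$ in the resulting inner integral: the double integral collapses to $\int_0^x\bigl(|p(u)|+|q(u)|\bigr)\Phi_{m-1}(u)\,du=\int_0^x c'(u)\Phi_{m-1}(u)\,du$, since $c'=|p|+|q|$ a.e. Feeding in the induction hypothesis $\Phi_{m-1}(u)\le c^{m}(u)/m!$ together with $c(0)=0$ gives $\Phi_m(x)\le\frac{1}{m!}\int_0^x c'(u)c^{m}(u)\,du=\frac{c^{m+1}(x)}{(m+1)!}$, i.e.\ \eqref{c2:a_b_est}. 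In particular each $\alpha_m(x,\cdot),\beta_m(x,\cdot)$ lies in $L^1(0,x)$, so the successive approximations $Q_m$ are genuinely well defined.

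The real content is concentrated in the change of variables in the last paragraph: the nuisance is that $Q_{m-1}$ is evaluated at $(t+\xi,\xi)$, with both slots moving, and the substitution $u=t+\xi$ followed by Fubini and $v=u-t$ is exactly what converts this into the clean scalar recursion $\Phi_m\le\int_0^x(|p|+|q|)\,\Phi_{m-1}$. The only other point demanding care is the sign bookkeeping with $(-1)^m$ in the induction step; measurability of the $Q_m$ is routine, and the finiteness of all the integrals comes out of the same induction that produces the bound.
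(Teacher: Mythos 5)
Your proof is correct and follows essentially the same route as the paper: induction with the Pauli multiplication relations (your table is just the spelled-out form of $\sigma_2\sigma_3=B$, $\sigma_2 B=\sigma_3$, $B\sigma_3=\sigma_2$) to get the alternating structure and the recurrences with the $(-1)^m$ sign, and the same change of variables plus Fubini--Tonelli reducing the estimate to $\Phi_m(x)\le\int_0^x c'(u)\,\Phi_{m-1}(u)\,du$ and hence to $c^{m+1}(x)/(m+1)!$. The only difference is organizational (the paper proves the estimate first and the structure second), which is immaterial.
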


\begin{proof}
First, we prove that estimates \eqref{c2:a_b_est} are valid.
For $m = 0$ the estimate  \eqref{c2:a_b_est} follows from the definition \eqref{c0:c_x} and \eqref{c2:alpha_0_beta_0}.
Further steps are  by induction: let the estimate \eqref{c2:a_b_est} is true for some $m$, and we will show that it is also true for $m+1$.
It follows from the definitions \eqref{c2:alpha_0_beta_0}, \eqref{c2:alpha_m} and \eqref{c2:beta_m} that
\[
\begin{aligned}
& \int^x_0\left( |\alpha_{m+1}(x, t) | + |\b_{m+1}(x, t) |\right)\, dt  \\
& \leq \int^x_0\int^{x-t}_0\left( | p(t+\xi) | + | q(t+\xi) |\right) \cdot \left( |\alpha_{m}(t+\xi, \xi) | + |\b_{m}(t+\xi, \xi) |\right)\, d\xi\, dt\, .
\end{aligned}
\]
Changing the order of integration in the last integral and changing the variable $s =t+ \xi $, we get that it is equal to
\begin{equation}\label{c2:a_b_int_eq}
\begin{aligned}
& \displaystyle{\int^x_0\int^x_{\xi}\left( | p(s) | + | q(s) |\right)
\left( |\alpha_m(s, \xi) | + |\b_m(s,\xi) |\right)\, ds\, d\xi} \\
& \displaystyle{=\int^x_0\left\{\int^s_0 \left( |\alpha_m(s, \xi) | + |\b_m(s,\xi) |\right)\, d\xi \right\} \left( | p(s) | + | q(s) |\right) \, ds},
\end{aligned}
\end{equation}
(the last equality was obtained by one more time changing the order of integration).
Now using the induction hypothesis \eqref{c2:a_b_est} and the fact that $c'(s)= | p(s) | + | q(s) |$ (see \eqref{c0:c_x}), we obtain that the integral \eqref{c2:a_b_int_eq} can be estimated from above by the integral 
\[
\int^x_0\dfrac{c^{m+1}(s)}{(m+1)!}\cdot c'(s)\, ds=\dfrac{c^{m+2}(x)}{(m+2)!}\, .
\]
Since we have proved the convergence of \eqref{c2:a_b_int_eq}, then, according to the Fubini-Tonelli theorem (see, for example, \cite{Iosida:1965} ), changing the order of integration and the variable under the integral sign are justified.
Thus, the estimate \eqref{c2:a_b_est} is proved.

The relations \eqref{c2:alpha_m} and \eqref{c2:beta_m} are also should be proved by induction (starting from formula \eqref{c2:Q_0_n} using the properties of Pauli matrices (note that $\sigma_2 \sigma_3 = B$, $\sigma_2  B = \sigma_3$, $ B  \sigma_3 = \sigma_2$)
\[
\begin{aligned}
Q_{2n+1}(x,t) 
& = \int_0^{x-t}Q_0(t+\xi, \xi)Q_{2n}(t+\xi,\xi)d\xi\\
& =\int_0^{x-t}\left(q(t+\xi)\sigma_2-p(t+\xi)\sigma_3\right)
\left(\alpha_{2n}(t+\xi,\xi)\sigma_2+\beta_{2n}(t+\xi,\xi)\sigma_3\right)\, d\xi\\
& =\alpha_{2n+1}(x,t)E+\beta_{2n+1}(x,t)B.
\end{aligned}
\]

This completes the proof of Lemma \ref{c2:lem_1}.
\end{proof}

As it is noted in the inequality \eqref{c0:A_norm_est} for the norms of matrices of the form $Q_{2n}=\alpha_{2n} \, \sigma_2+\beta_{2n} \, \sigma_3$ and $Q_{2n+1}=\alpha_{2n+1} \, E+\beta_{2n+1} \, B$,  the following estimate is valid (almost everywhere by $t \in [ 0, x ]$)
\[
 | Q_k(x, t) | \leq  |\alpha_k(x, t) | + |\b_k(x, t) | .
\]
So, according to \eqref{c2:a_b_est},
\[
\int_0^x | Q_k(x, t) |\, dt \leq \frac{c^{k+1}(x)}{(k+1)!} .
\]

Let us consider the series 
\[
Q(x, t) = \sum^{\infty}_{k=0}Q_k(x, t).
\]
Since the series
\[
\sum^{\infty}_{k=0}\int_0^x | Q_k(x, t) |\, dt \leq \sum^{\infty}_{k=0} \frac{c^{k+1}(x)}{(k+1)!}=e^{c(x)}-1
\]
converge uniformly with respect to $x$  on an arbitrary finite interval $[0 , a]$, then
\[
\int_0^x | Q(x, t) |\, dt \leq
\int_0^x \sum^{\infty}_{k=0} | Q_k(x, t) |\, dt =
\sum^{\infty}_{k=0} \int_0^x | Q_k(x, t) |\, dt \leq e^{c(x)}-1,
\]
i.e. matrix $Q (x, t)$ (and all its entries) is summable by $t \in [0,  x]$, where $x$ is  an arbitrary finite positive number (so, the estimate \eqref{c2:Q_int_est} is proved). 

Since successive approximations
\[
\dis S_N(x, t)\stackrel{def}{=}\sum^N_{k=0}Q_k(x, t)
\]
converge to $Q(x, t)$  in the sense
\[
\lim_{N\to\infty}\sup_{x\in[0, a]}\int^x_0 | Q(x, t)-S_N(x, t) |\, dt =0,
\]
then from the equality 
\[
S_N(x, t)=B\Omega(t)+\int^{x-t}_0 B\Omega(t+\xi)S_{N-1}(t+\xi, \xi)\, d\xi,
\]
which holds almost everywhere  by $t \in [0,  x]$, by passing to the limit, when $N \to \infty$, we obtain \eqref{c2:Q_eq} :
\[
\sup_{x\in[0, a]}\int^x_0\left | Q(x, t)-B\Omega(t)-\int^{x-t}_0 B\Omega(t+\xi)Q(t+\xi, \xi)\, d\xi\right |\, dt=0 ,
\]
Thus, $Q(x, t)$  is the solution of integral equation \eqref{c2:Q_int_eq} in the sense \eqref{c2:Q_eq}, and the existence of the solution of \eqref{c2:Q_int_eq} is proved.

 Since the right-hand side of the equation
\[
Q(x, t)=B \Omega(t)+\int^{x}_0 B \Omega(t+\xi) Q(t+\xi, \xi)\, d\xi,
\]
depends on $x$ as a function of the upper limit, the solution $Q(x, t)$   is an absolutely continuous function with respect to $x$,  and the equalities 
\begin{align}
& \label{c2:Q_partial_x} \frac{\partial Q(x, t)}{\partial x}=B\Omega (x)Q(x, x-t), \quad 0 \leq t \leq x <a, \\
& \label{c2:Q_B_Omega} Q(x,x)=B\Omega(x)
\end{align}
hold almost everywhere.

Now we want to prove the uniqueness of the solution of the equation \eqref{c2:Q_int_eq}.

\begin{lemma}\label{c2:lem_2}
The integral equation \eqref{c2:Q_int_eq} has a unique solution.
\end{lemma}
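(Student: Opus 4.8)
The plan is to repeat, in the present ``$L^1$ in $t$, sup in $x$'' setting, the Gronwall-type argument that closed the proof of Theorem~\ref{c1:thm_1}. Suppose $Q^{(1)}$ and $Q^{(2)}$ are two solutions of \eqref{c2:Q_int_eq} in the sense \eqref{c2:Q_eq} (so, as in Theorem~\ref{c2:thm_5}, each satisfies $Q^{(i)}(x,\cdot)\in L^1(0,x)$ for every $x\in[0,a]$), and put $D(x,t)=Q^{(1)}(x,t)-Q^{(2)}(x,t)$. Subtracting the two copies of \eqref{c2:Q_int_eq}, the free term $B\Omega(t)$ cancels, and $D$ satisfies the homogeneous equation
\[
D(x,t)=\int_0^{x-t}B\Omega(t+\xi)\,D(t+\xi,\xi)\,d\xi
\]
for almost every $t\in[0,x]$, uniformly in $x\in[0,a]$. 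It suffices to prove that $h(x):=\int_0^x|D(x,t)|\,dt$ vanishes for every $x\in[0,a]$, and then let $a$ run over $(0,\infty)$.

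First I would record the bound $M:=\sup_{0\le y\le a}h(y)<\infty$, which is immediate from the $L^1$-summability built into the notion of solution (indeed $h(y)\le 2(e^{c(a)}-1)$ by \eqref{c2:Q_int_est}), and note that $|B\Omega(s)|\le|p(s)|+|q(s)|=c'(s)$ by \eqref{c0:A_norm_est}. Taking norms in the homogeneous equation, integrating over $t\in[0,x]$, and then interchanging the order of integration and substituting $s=t+\xi$ exactly as in the passage leading to \eqref{c2:a_b_int_eq} (legitimate by Tonelli's theorem, since the integrand is non-negative and, by $M<\infty$, the iterated integral is finite), I obtain
\[
h(x)\le\int_0^x\Big(\int_0^s|D(s,\xi)|\,d\xi\Big)\,c'(s)\,ds=\int_0^x h(s)\,c'(s)\,ds,\qquad x\in[0,a].
\]

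Finally I would iterate this inequality. The crude bound $h(s)\le M$ for $s\le a$ gives $h(x)\le M\,c(x)$; feeding that back in and using $\int_0^x c(s)^{n-1}c'(s)\,ds=c(x)^n/n!$ together with $c(0)=0$, an induction on $n$ yields $h(x)\le M\,c(x)^n/n!$ for every $n\ge 0$ and every $x\in[0,a]$. Letting $n\to\infty$ forces $h(x)=0$, i.e. $\int_0^x|D(x,t)|\,dt=0$ for all $x\in[0,a]$; since $a>0$ is arbitrary, $Q^{(1)}=Q^{(2)}$ in the sense of \eqref{c2:Q_eq}, which is the asserted uniqueness.

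I do not expect a genuine obstacle here. The only slightly technical points are the interchange of the order of integration together with the change of variable $s=t+\xi$, and the finiteness of $M$; the former has already been carried out in the existence part of Theorem~\ref{c2:thm_5}, and the latter is forced by the very definition of a solution. Thus the proof is essentially Gronwall's inequality adapted to the norm-in-$t$, sup-in-$x$ framework used throughout this section.
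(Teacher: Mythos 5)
Your proof is correct and follows essentially the same route as the paper's: subtract to get the homogeneous equation, take the $L^1$-in-$t$ norm, swap the order of integration with the substitution $s=t+\xi$, bound $\int_0^x|\Delta Q(x,t)|\,dt$ by the supremum of this quantity times $c^n(x)/n!$, and let $n\to\infty$. The only cosmetic difference is your citation of \eqref{c2:Q_int_est} for the a priori bound, where the paper simply invokes the $L^1$-summability built into the definition of a solution — which is all that is actually needed.
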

\begin{proof}
Assume that there exist two solutions $Q_1(x, t)$ and $Q_2(x, t)$ and denote their difference by $\Delta Q(x, t) := Q_1(x, t) - Q_2(x, t)$.
We will  prove that for arbitrary $x>0$, 
\[
\int_0^x  | \Delta Q(x, t)  |\, dt = 0,
\]
which will give the uniqueness of the solution of the equation \eqref{c2:Q_int_eq} in the class $L^1(0, x)$ (or in the sense of \eqref{c2:Q_eq}.

Obviously, the difference $\Delta Q(x, t)$ satisfies the homogeneous integral equation 
\[
\Delta Q(x,t) = \int_0^{x-t}Q_0(t+\xi)\Delta Q(t+\xi,\xi)d\xi .
\]
Since for arbitrary  $x>0$, the quantities $\int_0^x  | Q_i(x, t)  |\, dt$, for $i=1,2$ are finite, so the integral $\int_0^x  | \Delta Q(x, t)  |\, dt$ as well.
Let us denote
\begin{equation}\label{c2:G_sup}
G(x)=\sup_{0 \leq s \leq x}\int_0^{s} | \Delta Q(s,t) |\, dt .
\end{equation}
Further, note that
\[
\begin{aligned}
\int_0^{x} |\Delta Q(x,t) |\, dt
& \leq \int_0^{x}\int_0^{x-t} | Q_0(t+\xi) |\,  | \Delta Q(t+\xi,\xi) |\, d\xi dt=\\
& = \int_0^{x}\left(\int_0^{s} |\Delta Q(s,\xi) |\, d\xi \right) | Q_0(s) |\, ds \leq G(x)c(x),
\end{aligned}
\]
since
\[
\int_0^x | Q_0(s) |\, ds \leq \int_0^x  | p(s) |\, ds+\int_0^x | q(s) |\, ds=c(x)
\]
(see \eqref{c2:Q_0_n} and \eqref{c2:G_sup} ).
Repeating this estimate, we get
\[
\begin{aligned}
\int_0^{x} | \Delta Q(x,t) |\, dt
& \leq \int_0^{x} \left(\int_0^{s} | \Delta Q(s,\xi) |\, d\xi\right) | Q_0(s) |\, ds\\
& \leq \int_0^{x}G(s)c(s)c'(s)\, ds \leq G(x) \int_0^x\frac{\left(c^2(s)\right)'}{2}\, ds=G(x)\frac{c^2(x)}{2}.
\end{aligned}
\]
Repeating these estimates  (i.e., applying the induction method), we arrive at the inequality
\[
\int_0^{x} |\Delta Q(x,t) |\, dt \leq G(x)\frac{c^n(x)}{n!}, \quad n \geq 0.
\]
Since the left-hand side  does not depend on  $n$, and the right-hand side tends to zero  as  $n \to  \infty $  (uniformly by $x \in  [0, a] $ ), then
\[
\int_0^x  | \Delta Q(x, t)  |\, dt \equiv 0,
\]
for arbitrary finite $x$.
This completes the proof of Lemma \ref{c2:lem_2}
\end{proof}

\begin{lemma}\label{c2:lem_3}
If $Q(x,t)$ is the solution of \eqref{c2:Q_int_eq}, then $\Phi(x,\lambda)$, defined by formula \eqref{c2:Phi_int_form}:
\[
\Phi(x, \lambda)=e^{-B\lambda x} \left(E+\int^x_0 e^{2B\lambda t}Q(x, t)\, dt\right),
\]
is the solution of the Cauchy problem \eqref{c2:Phi_Cauchy_prob}.
\end{lemma}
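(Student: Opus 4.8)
The plan is to reduce the statement to the integral equation for the ``correction factor'' $U$, and then to verify that integral equation directly by changing the order of integration and using the commutation rules for $B$, $\Omega$ and the matrix exponential.

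First I would set $U(x,\lambda):=E+\int_0^x e^{2B\lambda t}Q(x,t)\,dt$, so that $\Phi(x,\lambda)=e^{-B\lambda x}U(x,\lambda)$ by \eqref{c2:Phi_int_form} and the initial condition $\Phi(0,\lambda)=E$ holds automatically. As was observed in the discussion preceding Theorem~\ref{c2:thm_1} (substitute \eqref{c2:Phi_0_U} into \eqref{c2:Dirac_matrix_sys} and use the properties of the Pauli matrices), a matrix of the form $e^{-B\lambda x}U$ is a (generalized) solution of the Cauchy problem \eqref{c2:Phi_Cauchy_prob} if and only if $U$ satisfies
\[
U(x,\lambda)=E+\int_0^x e^{2B\lambda t}B\Omega(t)\,U(t,\lambda)\,dt ,
\]
which is the matrix analogue of the equivalence proved in Lemma~\ref{c1:lem_1}. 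So it is enough to check that the particular $U$ built from the solution $Q$ of \eqref{c2:Q_int_eq} satisfies this integral identity.

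The verification goes as follows. I would substitute the ansatz for $U$ into the right-hand side above, and in the left-hand side (the definition of $U$) replace $Q(x,t)$ using \eqref{c2:Q_int_eq}. The ``free'' terms $E+\int_0^x e^{2B\lambda t}B\Omega(t)\,dt$ cancel, and one is left to prove
\[
\int_0^x e^{2B\lambda t}\Big(\int_0^{x-t}B\Omega(t+\xi)Q(t+\xi,\xi)\,d\xi\Big)dt=\int_0^x e^{2B\lambda t}B\Omega(t)\Big(\int_0^t e^{2B\lambda\tau}Q(t,\tau)\,d\tau\Big)dt .
\]
On the left I would put $s=t+\xi$ in the inner integral and then interchange the order of integration over the triangle $\{0\le t\le s\le x\}$; this is legitimate because the bound \eqref{c2:Q_int_est} of Theorem~\ref{c2:thm_5} makes the double integral absolutely convergent, so Fubini--Tonelli applies. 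Then I would use $Be^{cB}=e^{cB}B$ and, from the anticommutativity of $\Omega(\cdot)$ with $B$ together with the explicit expansion of the exponential, $\Omega(\cdot)e^{cB}=e^{-cB}\Omega(\cdot)$, to slide the factor $e^{2B\lambda t}$ to the right of $B\Omega(s)$, and finally substitute $\tau=s-t$. The two sides then coincide, which proves the integral identity and hence the lemma.

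The hard part is the noncommutative bookkeeping: one must track carefully where each matrix exponential lands as the substitutions $s=t+\xi$ and $\tau=s-t$ are performed, and the identities for $B$, $\Omega$ and $e^{cB}$ must be invoked in the right order. There is also a routine regularity point to settle: by \eqref{c2:Q_int_est} the matrix $U(x,\lambda)$ is bounded on $[0,a]$, so the integrand $e^{2B\lambda t}B\Omega(t)U(t,\lambda)$ lies in $L^1(0,a)$; therefore, once the integral identity is known, its right-hand side is absolutely continuous in $x$, and hence $U$, and with it $\Phi=e^{-B\lambda x}U$, is absolutely continuous and satisfies $\Phi'=A(x,\lambda)\Phi$ almost everywhere with $\Phi(0,\lambda)=E$, i.e. $\Phi$ is the generalized solution of \eqref{c2:Phi_Cauchy_prob}. (Alternatively one could differentiate $U$ directly, using \eqref{c2:Q_partial_x} and \eqref{c2:Q_B_Omega} for $\partial Q/\partial x$ and $Q(x,x)$, but the integral-equation route avoids differentiating an only summable integrand.)
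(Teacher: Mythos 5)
Your proof is correct, but it follows a genuinely different route from the paper's. The paper proves the lemma by direct differentiation: using the facts, established right after the construction of $Q$, that $Q(x,t)$ is absolutely continuous in $x$ with $\frac{\partial Q(x,t)}{\partial x}=B\Omega(x)Q(x,x-t)$ and $Q(x,x)=B\Omega(x)$ (formulas \eqref{c2:Q_partial_x} and \eqref{c2:Q_B_Omega}), it differentiates \eqref{c2:Phi_int_form} in $x$ and, after sliding $e^{-B\lambda x}$ past $B\Omega(x)$ via the commutation rules, arrives at $\Phi'=-\lambda B\Phi+B\Omega(x)\Phi$ almost everywhere, whence $B\Phi'+\Omega\Phi=\lambda\Phi$ and $\Phi(0,\lambda)=E$. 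You instead never differentiate $Q$: you verify that the factor $U(x,\lambda)=E+\int_0^x e^{2B\lambda t}Q(x,t)\,dt$ satisfies the Volterra equation $U=E+\int_0^x e^{2B\lambda t}B\Omega(t)U(t,\lambda)\,dt$ by inserting \eqref{c2:Q_int_eq}, applying Fubini--Tonelli (justified by \eqref{c2:Q_int_est} together with the local summability of $\Omega$), changing variables, and using $Be^{cB}=e^{cB}B$, $\Omega e^{cB}=e^{-cB}\Omega$; the passage from this integral equation back to the a.e.\ differential equation is then the matrix analogue of Lemma~\ref{c1:lem_1}, exactly as you indicate. Both arguments rest on the same algebraic identities; your version buys independence from the differentiability-in-$x$ statements \eqref{c2:Q_partial_x}--\eqref{c2:Q_B_Omega} (only the $L^1$ bound from Theorem~\ref{c2:thm_5} is used) at the price of an order-of-integration argument, while the paper's is shorter once those formulas are granted. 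Your brief remarks on regularity (boundedness of $U$, absolute continuity of the right-hand side, and the recovery of $\Phi(0,\lambda)=E$) close the argument properly, so there is no gap.
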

\begin{proof}
Since $Q(x,t)$ is absolutely continuous by $x$, then the right-hand side in \eqref{c2:Q_int_eq} is an absolutely continuous function. 
Using the equalities \eqref{c2:Q_partial_x}, \eqref{c2:Q_B_Omega} and the properties of the Pauli
matrices, we obtain that for derivatives $\Phi'(x,\lambda)$, the following equalities hold almost everywhere:
\begin{align*}
\Phi'(x,\lambda) &= -\lambda B e^{- B \lambda x} \left( E + \int_0^x e^{2 B \lambda t} Q(x,t) dt \right) + \\
&+ e^{- B \lambda x} \left( e^{2 B \lambda x} B \Omega(x) + \int_0^x e^{2 B \lambda t}  B \Omega(x)  Q(x,x-t) dt \right)  \\
&= - \lambda B \Phi(x,\lambda) + B \Omega(x) e^{- B \lambda x}  + B \Omega(x) \int_0^x e^{B \lambda (x-2t)}  Q(x,x-t) dt \\
&= - \lambda B \Phi(x,\lambda) + B \Omega(x) e^{- B \lambda x} \left( E + \int_0^x e^{B \lambda (2x-2t)}  Q(x,x-t) dt \right) \\
&= - \lambda B \Phi(x,\lambda) + B \Omega(x) \Phi(x,\lambda).
\end{align*}

Multiplying both sides of the last equality, from the left, by the matrix $B$ and taking into account that $B^2 = -E$, we obtain that the matrix $\Phi(x, \lambda)$ is the solution of \eqref{c2:Dirac_matrix_sys}. 
The equality $\Phi(0,\lambda) = E$ follows from form \eqref{c2:Q_int_eq}.
Lemma \ref{c2:lem_3} is proved.
\end{proof}

\begin{proof}[Proof of Theorem \ref{c2:thm_1}]
Since we were looking for the fundamental matrix $\Phi(x,\lambda)$  in the form \eqref{c2:Phi_int_form}, so we have
\begin{equation}\label{c2:Phi_rep_2}
\begin{aligned}
\Phi(x,\lambda) 
& =e^{-B\lambda x}+\int_0^x e^{B\lambda(2t-x)}Q(x,t)dt \\
& =e^{-B\lambda x}+\dfrac{1}{2}\int_{-x}^x e^{B\lambda s}Q\left(x, \frac{x+s}{2}\right) ds, 
\end{aligned}
\end{equation}
where we performed a change of the variable $s=2t-x$.
Let us represent the matrix $Q(x,\lambda)=\sum_{k=0}^{\infty}Q_k(x,t)$ in the form
\[
Q(x,t)=\sum_{k=0}^{\infty}Q_{2k}(x,t)+\sum_{k=0}^{\infty}Q_{2k+1}(x,t) = Q_+(x,t)+Q_-(x,t)  .
\]
According to Lemma \ref{c2:lem_1}
\begin{equation}\label{c2:Q+}
Q_+(x,t)=\sum_{k=0}^{\infty}Q_{2k}(x,t)=\left(\sum_{k=0}^{\infty}\alpha_{2k}(x,t)\right)\sigma_2+\left(\sum_{k=0}^{\infty}\beta_{2k}(x,t)\right)\sigma_3
\end{equation}
and, therefore, it anti-commutes with matrix $B$, i.e. $Q_+B = - B Q_+$, and
\begin{equation}\label{c2:Q-}
Q_-(x,t)= \sum_{k=0}^{\infty}Q_{2k+1}(x,t)= \left(\sum_{k=0}^{\infty}\alpha_{2k+1}(x,t)\right)E+
\left(\sum_{k=0}^{\infty}\beta_{2k+1}(x,t)\right)B,
\end{equation}
which commutes with matrix $B$, i.e., $Q_-B = B Q_-$.
In particular, it follows that
\[
e^{B\lambda t}Q_+(x,t)=Q_+(x,t)e^{-B\lambda t}
\]
and 
\[
e^{B\lambda t}Q_-(x,t)=Q_-(x,t)e^{B\lambda t}.
\]
Therefore, the representation \eqref{c2:Phi_rep_2} can be rewritten as
\begin{align*}
\Phi(x,\lambda)
& = e^{-B\lambda x}+\frac{1}{2}\int_{-x}^xe^{B\lambda s}\left(Q_+\left(x, \frac{x+s}{2}\right)+Q_-\left(x, \frac{x+s}{2}\right)\right)\, ds\\
& = e^{-B\lambda x}+\frac{1}{2}\int_{-x}^x Q_+\left(x, \frac{x+s}{2}\right)e^{-B\lambda s}\, ds+\frac{1}{2}\int_{-x}^xQ_-\left(x, \frac{x+s}{2}\right)e^{B\lambda s}ds.
\end{align*}
Changing the variable $s$ by $-s$, only in the last integral, we obtain
\[
\Phi(x,\lambda)=e^{-B\lambda x}+\int_{-x}^xK(x,s)e^{-B\lambda s}ds,
\]
where by $K(x,s)$ we denote the matrix
\begin{equation}\label{c2:K_Q+_Q-}
K(x,s)=\dfrac{1}{2}\left[Q_+\left(x,\frac{x+s}{2}\right)+Q_-\left(x,\frac{x-s}{2}\right)\right].
\end{equation}
Thus, the representation \eqref{c2:Phi_rep_1} is proved.
To prove the estimate \eqref{c2:K_est}, first, note that according to \eqref{c2:Q+} and \eqref{c0:A_norm_est}
\begin{align*}
& \int_{-x}^x \left | Q_+\left(x,\frac{x+s}{2}\right)\right |\, ds=2\int_{0}^x  | Q_+(x,t) |\,  dt \\
\leq & 2\int_{0}^x\left(\left |\sum_{k=0}^\infty\alpha_{2k}(x,t)\right |+\left | \sum_{k=0}^\infty\beta_{2k}(x,t)\right |\right)\, dt  \\
\leq & 2\sum_{k=0}^\infty\int_{0}^x\left( | \alpha_{2k}(x,t) | + | \beta_{2k}(x,t) | \right)\, dt.
\end{align*}
Similarly, according to \eqref{c2:Q-} and \eqref{c0:A_norm_est}
\begin{align*}
& \int_{-x}^x\left | Q_-\left(x,\frac{x-s}{2}\right)\right |\, ds=2\int_{0}^x | Q_-(x,t) |\, dt\\
\leq & 2\sum_{k=0}^\infty\int_{0}^x\left(\left | \alpha_{2k+1}(x,t)\right |+ |\beta_{2k+1}(x,t) |\right)\, dt.
\end{align*}
Therefore, it follows from \eqref{c2:K_Q+_Q-} and \eqref{c2:a_b_est} that
\begin{align*}
& \int_{-x}^x | K(x,t) |\, dt\leq
 \dfrac{1}{2}\int_{-x}^x\left | Q_+\left(x, \frac{x+s}{2}\right)\right |\, ds+ \dfrac{1}{2}\int_{-x}^x\left | Q_-\left(x, \frac{x-s}{2}\right)\right |\, ds \\
\leq & \sum_{k=0}^\infty\int_{0}^x( |\alpha_{k}(x,t) |+ | \beta_{k}(x,t) |)\, dt\leq\sum_{k=0}^\infty\frac{c^{k+1}(x)}{(k+1)!}=e^{c(x)}-1.
\end{align*}
Thus, the estimate \eqref{c2:K_est} is proved.

To prove the equalities \eqref{c2:K_A} and \eqref{c2:K_C}, we write the expression for the kernel $K(x,t)$ in the form
\begin{equation}\label{c2:K_a_b}
\begin{aligned}
K(x,t) 
& = \dfrac{1}{2}\left[\left(\sum_{k=0}^{\infty}\alpha_{2k}\left(x,\frac{x+t}{2}\right)\right)\sigma_2+ \left(\sum_{k=0}^{\infty}\beta_{2k}\left(x, \frac{x+t}{2}\right)\right)\sigma_3+\right.\\
& + \left. \left(\sum_{k=0}^{\infty}\alpha_{2k+1}\left(x, \frac{x-t}{2}\right)\right)E+
\left(\sum_{k=0}^{\infty}\beta_{2k+1}\left(x,\frac{x-t}{2}\right)\right)B\right]
\end{aligned}
\end{equation}
Note that from \eqref{c2:alpha_m} and \eqref{c2:beta_m}, it follows
\begin{equation}\label{c2:a_b_x_x}
\alpha_m(x,x)=\beta_m(x,x)\equiv 0, \qquad \text{ for } \; m\geq 1,
\end{equation}
whence, in its turn, it follows that 
\[
\alpha_m(x,0)=\int^x_0 \left[ q(\xi)\alpha_{m-1}(\xi, \xi)+(-1)^m p(\xi) \beta_{m-1}(\xi,\xi)\right] d\xi =0,
\]
\[
\beta_m(x,0)=\int^x_0 \left[ q(\xi)\b_{m-1}(\xi, \xi)+(-1)^m p(\xi) \alpha_{m-1}(\xi,\xi)\right]  d\xi =0,
\]
for $m \geq 2$.
Besides these, from the same \eqref{c2:alpha_0_beta_0}--\eqref{c2:beta_m}, we obtain
\begin{align*}
& \alpha_0(x,x)=q(x), \quad \beta_0(x,x)=-p(x), \\
& \alpha_1(x,0)=\int^x_0 \left[ q^2(\xi)+p^2(\xi) \right]\ d\xi, \\
& \beta_1(x,0)=-2\int^x_0 p(\xi)q(\xi)  d\xi.
\end{align*}
It follows from \eqref{c2:K_Q+_Q-} that
\[
K(x,x)=\frac{1}{2} \left[ Q_+ (x,x)+Q_-(x,0)\right],
\]
and according to \eqref{c2:Q+}, \eqref{c2:Q-}, \eqref{c2:K_a_b}  and other above-obtained relations, we get 
\begin{align*}
K(x,x) 
 =&\frac{1}{2} \left[ \left(\sum^{\infty}_{k=0} \alpha_{2k}(x,x) \right) \sigma_2 + \left( \sum^{\infty}_{k=0} \beta_{2k}(x,x) \right) \sigma_3 \right] +\\
& +\frac{1}{2} \left[ \left( \sum^{\infty}_{k=0}\alpha_{2k+1}(x,0)\right) E + \left( \sum^{\infty}_{k=0} \beta_{2k+1}(x,0)\right) B \right] \\
 =&\frac{1}{2} ( \alpha_0(x,x) \sigma_2 + \beta_0(x,x)\sigma_3 ) + \frac{1}{2} (\alpha_{1}(x,0) E + \beta_{1}(x,0) B) \\
 =&\frac{1}{2} ( q(x)\sigma_2 -p(x)\sigma_3 ) + \\
& +\frac{1}{2}
\left[ \int^x_0
(q^2(\xi)+p^2(\xi) ) \, d\xi \cdot E - 2\int^x_0 p(\xi)q(\xi)\, d\xi\cdot B \right] \\
 =& K_A(x,x)+K_C (x,x)
\end{align*}
In the case of smooth potentials and even for $p,q\in L^2_{loc}(0,\infty)$, the expression $K(x,x)$ has a precise meaning.
But in the case $p,q\in L^1_{loc}(0,\infty)$, the part of $K(x,x)$ that commutes with $B$ , i.e.
\[
K_C(x,x)=\frac{1}{2}\left[ \int^x_0
\left( p^2(\xi)+q^2(\xi)\right)\, d\xi\cdot E - 2\int^x_0 p(\xi)q(\xi)\, d\xi\cdot B\right], 
\] 
may not make sense due to the divergence of the integrals.
But for the anti-commuting part
\[
K_A(x,x)=\frac{1}{2} [ q(x)\sigma_2 -p(x)\sigma_3 ] ,
\]
due to the equalities
\[
\sigma_2 B=\sigma_3, \quad \sigma_3 B=-\sigma_2,
\]
one easily obtains the quantity (which takes place almost everywhere)
\[
K_A (x,x) B-B K_A(x,x)=q(x) \sigma_3+p(x)\sigma_2 =\Omega(x),
\]
thus, \eqref{c2:K_A} is proved.

To prove \eqref{c2:K_C}, again from \eqref{c2:K_Q+_Q-} we obtain the expression
\[
K(x,-x)=\frac{1}{2} \left[ Q_+(x,0)+Q_-(x,x) \right], 
\]
which, according to \eqref{c2:Q+}, \eqref{c2:Q-} and \eqref{c2:alpha_0_beta_0}, \eqref{c2:K_a_b}, \eqref{c2:a_b_x_x} takes the form
\begin{align*}
K(x,-x)
=& \frac{1}{2} \left[ \l \sum^{\infty}_{k=0}\alpha_{2k}(x,0)\r \sigma_2 + \l \sum^{\infty}_{k=0} \beta_{2k}(x,0)\r\sigma_3\right] +\\
& +\frac{1}{2} \left[ \l \sum^{\infty}_{k=0}\alpha_{2k+1}(x,x)\r E + \l \sum^{\infty}_{k=0} \beta_{2k+1}(x,x)\r B\right] \\
 =& \frac{1}{2} \left[ \alpha_0(x,0) \sigma_2 + \beta_0(x,0)\sigma_3\right] = \frac{1}{2} \left[ q(0)\sigma_2 -p(0)\sigma_3 \right] \\
 =& K_A(x,-x).
\end{align*}

In the case of potential, for which $p(0)$ and $q(0)$ are uniquely determined (in particular, for smooth $p$ and $q$), the statements 
\[
K(x,-x) B+B K(x,-x)=0
\]
and
\[
K_C(x,-x) B+B K_C(x,-x)=0
\]
are equivalent.
Nevertheless, if we set only the condition $p,q\in L^1_{loc}(0,\infty)$, then they are not equivalent, because, in spite of formally (algebraically) $K_A(x,-x) B+B K_A(x,-x)=0$, the expression $K_A(x,-x)$ itself may not have a clear meaning.
Thus, the statement \eqref{c2:K_C}, which is easier to write in the form 
\[
K_C(x,-x)=0
\]
almost everywhere, is proved.
Note that \eqref{c2:K_A} can be replaced by a simpler one $2 K_A(x,x) B = \Omega(x)$, but we wrote these statements in that way to emphasize the analogy with the smooth case. 
Theorem \ref{c2:thm_1}  is completely proved.
\end{proof}

\section{Proof of Theorem \ref{c2:thm_2}.}\label{c2:sec_3}
It is known that the vector solution of the Cauchy problem is represented through the fundamental matrix $\Phi(x, \lambda)$ in the form 
\[
y(x)
= \varphi(x,\lambda,\alpha)
= \Phi(x,\lambda)y(0) 
= \Phi(x,\lambda)
\left(
\begin{array}{c}
\sin\alpha \\ 
-\cos\alpha
\end{array}
\right)
\]
Since $e^{-B\lambda x}=E\cos(\lambda x)-B\sin(\lambda x)$, whence it easily follows that
\[
e^{-B\lambda x}\cdot 
\left(
\begin{array}{c}
\sin\alpha \\ 
-\cos\alpha
\end{array}
\right)
= \left(
\begin{array}{c}
\sin(\lambda x+\alpha) \\ 
-\cos(\lambda
x+\alpha)
\end{array}
\right)
\]
then from the representation \eqref{c2:Phi_int_form}, we have
\begin{equation}\label{c2:varphi_sin_cos}
\begin{aligned}
\varphi(x,\lambda,\alpha)
& =\Phi(x,\lambda)\left(\begin{array}{c}\sin\alpha \\
-\cos\alpha\end{array}\right)
=\left( \begin{array}{c}
\sin(\lambda x+\alpha) \\ 
-\cos(\lambda x+\alpha)
\end{array}\right)+\\
& +\left(\int_{-x}^0K(x,t)e^{-B\lambda t}dt+\int_{0}^xK(x,t)e^{-B\lambda t}dt\right)
\left(
\begin{array}{c}
\sin\alpha \\
-\cos\alpha
\end{array}\right)
\end{aligned}
\end{equation}
Noticing that the square of the orthogonal matrix
\[
A_{\alpha}=
\left(\begin{array}{cc}
\cos2\alpha & \sin2\alpha \\ 
\sin2\alpha & -\cos2\alpha
\end{array}\right)
=\sigma_2\cos2\alpha+\sigma_3\sin2\alpha
\]
is equal to the identity matrix, i.e. $A_{\alpha}^2=E$ and that $A_{\alpha}$ anti-commutes with $B$, whence it follows that $A_{\alpha}e^{B\lambda t}=e^{-B\lambda t}A_{\alpha}$, as well as the fact  
\[
A_{\alpha} 
\left(\begin{array}{cc}
\sin\alpha \\ 
-\cos\alpha
\end{array}\right)
=-\left(
\begin{array}{c}
\sin\alpha \\ 
-\cos\alpha
\end{array}\right),
\] 
we write down the part $\int_{-x}^0$ of the expression  \eqref{c2:varphi_sin_cos} in the form
\begin{align*}
& \int_{-x}^0 K(x,t) e^{-B\lambda t} dt 
\left(\begin{array}{c}
\sin\alpha \\ 
-\cos\alpha
\end{array}\right)  \\
& =-\int_{x}^0K(x,-t)A_{\alpha}A_{\alpha}e^{B\lambda t} dt 
\left(\begin{array}{c}
\sin\alpha \\ 
-\cos\alpha
\end{array}\right)
= \int_{0}^xK(x,-t)A_{\alpha}A_{\alpha}e^{B\lambda t} 
\left(\begin{array}{c}
\sin\alpha \\ 
-\cos\alpha
\end{array}\right) dt  \\
& = -\int_{0}^xK(x,-t) A_{\alpha}e^{-B\lambda t} 
\left(\begin{array}{c}
\sin\alpha \\ 
-\cos\alpha
\end{array}\right)dt 
= -\int_{0}^xK(x,-t)A_{\alpha}
\left(\begin{array}{c}
\sin(\lambda t+\alpha) \\ 
-\cos(\lambda t+\alpha)
\end{array}\right)dt.
\end{align*}
Thus, \eqref{c2:varphi_sin_cos} gets the form
\[
\varphi(x,\lambda,\alpha)
=\varphi_0(x,\lambda,\alpha)+\int_0^x(K(x,t)-K(x,-t)A_{\alpha})\varphi_0(t,\lambda,\alpha) dt,
\]
i.e. the representation \eqref{c2:varphi_rep} with the kernel $K_0(x,t,\alpha)$ of the form \eqref{c2:K_0} is proved.
To prove equality \eqref{c2:K_0A}, note that using \eqref{c2:K_A} and \eqref{c2:K_C} for $K_0$ of the form \eqref{c2:K_0}, we obtain (taking into account the previous reasoning) that
\begin{align*}
K_0(x,x,\alpha)
=&\frac{1}{2} [ q(x)\sigma_2 -p(x)\sigma_3 ] + \\
&+  \frac{1}{2} \left[
p(0)\sin 2\alpha -q(0) \cos 2\alpha + \int^x_0 [ p^2(\xi)+q^2(\xi) ] d\xi
\right] E -\\
& - \frac{1}{2}\left[ p(0)\cos 2\alpha +q(0)\sin 2\alpha -2\int^x_0 p(\xi)q(\xi) d\xi\right] B\, .
\end{align*}
In particular, this implies that 
\[
K_{0A}(x,x,\alpha)=\frac{1}{2}\, q(x)\sigma_2 -\frac{1}{2}\, p(x)\sigma_3=K_A(x,x)
\]
whence it follows that \eqref{c2:K_0A} is the corollary of \eqref{c2:K_A}.
In the case when the second and third terms in the expression for$K_0(x,x,\alpha)$, i.e., $K_{0C}(x,x,\alpha)$ make sense (in particular for smooth $p$ and $q$), \eqref{c2:K_0A} can be written in the form $K_0(x,x,\alpha) B=BK_0(x,x,\alpha)=\Omega(x)$. 
Thus, Theorem \ref{c2:thm_2} is proved.

\section{The case $p, q\in L^2_{loc}$. Proof of Theorem \ref{c2:thm_3}.}\label{c2:sec_4}
Here we will prove that for $p,q\in L_{loc}^2(0,\infty)$ the entries $K_{ij}$ of kernels $K(x,t)$ and $K_0(x,t)$ are also square- summable functions of $t$, more precisely: $K_{ij}(x,\cdot) \in L^2(-x,x)$  and $(K_{0}(x,\cdot,\alpha))_{ij} \in L^2(0,x)$, ($i,j=1,2$).

For this purpose, we introduce a notation
\[
d(x)=\left(\int_0^x |p(s)|^2 ds\right)^{\frac{1}{2}}+\left(\int_0^x  |q(s)|^2 ds\right)^{\frac{1}{2}},
\]
and prove the following assertion.
\begin{lemma}\label{c2:lem_4}
Let $p,q\in L_{loc}^2(0,\infty)$, then for the functions $\alpha_k(x,t)$ and $\beta_k(x,t)$, defined by the equalities \eqref{c2:alpha_m} and \eqref{c2:beta_m}, the estimates
\begin{align}
&  | \alpha_{2k}(x,t) |,\,  | \beta_{2k}(x,t) | \leq d^{2k+1}(x)\frac{t^{\frac{k-1}{2}}(x-t)^{\frac{k}{2}}}{(k-1)!\sqrt{k}},\qquad k=1,2,\dots. \label{c2:alpha_k_est} 
\\
&   |\alpha_{2k+1}(x,t) |,\,  |\beta_{2k+1}(x,t) |\leq d^{2k+2}(x)\frac{t^{\frac{k}{2}}(x-t)^{\frac{k}{2}}}{k!},\quad k=0,1,2,\dots. \label{c2:beta_k_est}
\end{align}
hold.
\end{lemma}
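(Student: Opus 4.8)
The plan is to prove both families of estimates by a single induction on the index $m$, handling $\alpha_m$ and $\beta_m$ together, with the Cauchy--Schwarz inequality playing the role that the crude $L^1$ bound $c'(s)=|p(s)|+|q(s)|$ played in the proof of Lemma \ref{c2:lem_1}. Throughout I would use two elementary observations. First, $d(\cdot)$ is nondecreasing, so $d(t+\xi)\le d(x)$ whenever $t+\xi\le x$. Second, the substitution $s=t+\xi$ gives $\int_0^{x-t}|q(t+\xi)|^2\,d\xi=\int_t^x|q(s)|^2\,ds\le\int_0^x|q(s)|^2\,ds$, and likewise for $p$; so any $L^2$-norm of a shifted coefficient over a subinterval is dominated by the $L^2$-norm over $[0,x]$, which is in turn absorbed into $d(x)$ via $\left(\int_0^x|q|^2\right)^{1/2}+\left(\int_0^x|p|^2\right)^{1/2}=d(x)$ together with $a^2+b^2\le(a+b)^2$ for $a,b\ge 0$.

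Base case $m=1$: substituting $\alpha_0(\cdot,\xi)=q(\xi)$ and $\beta_0(\cdot,\xi)=-p(\xi)$ (see \eqref{c2:alpha_0_beta_0}) into \eqref{c2:alpha_m} and \eqref{c2:beta_m} gives $\alpha_1(x,t)=\int_0^{x-t}\left(q(t+\xi)q(\xi)+p(t+\xi)p(\xi)\right)d\xi$ and a similar expression for $\beta_1$. Applying Cauchy--Schwarz to each product and then the two observations, one obtains $|\alpha_1(x,t)|\le\int_0^x|q|^2+\int_0^x|p|^2\le d^2(x)$ and $|\beta_1(x,t)|\le 2\left(\int_0^x|q|^2\right)^{1/2}\left(\int_0^x|p|^2\right)^{1/2}\le d^2(x)$, which is precisely \eqref{c2:beta_k_est} for $k=0$.

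Inductive step: assume the relevant estimate holds for the index $m$, and prove it for $m+1$, distinguishing the parity of $m$. If $m=2k$, I estimate $\alpha_{2k+1}$ (and $\beta_{2k+1}$ in the same way) from \eqref{c2:alpha_m} by first bounding $|\alpha_{2k}(t+\xi,\xi)|,\,|\beta_{2k}(t+\xi,\xi)|\le d^{2k+1}(x)\,\dfrac{\xi^{(k-1)/2}\,t^{k/2}}{(k-1)!\sqrt{k}}$ --- here the second argument $\xi$ plays the role of ``$t$'' in \eqref{c2:alpha_k_est} and $(t+\xi)-\xi=t$ plays the role of ``$x-t$'' --- pulling the $t$-power and the constants out of the $\xi$-integral, and then applying Cauchy--Schwarz in the form $\int_0^{x-t}|q(t+\xi)|\,\xi^{(k-1)/2}\,d\xi\le\left(\int_0^x|q|^2\right)^{1/2}\left(\int_0^{x-t}\xi^{k-1}\,d\xi\right)^{1/2}=\left(\int_0^x|q|^2\right)^{1/2}\dfrac{(x-t)^{k/2}}{\sqrt{k}}$, and similarly for $p$; the surviving constants combine through $(k-1)!\cdot k=k!$ to give exactly $d^{2k+2}(x)\,\dfrac{t^{k/2}(x-t)^{k/2}}{k!}$, which is \eqref{c2:beta_k_est}. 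If instead $m=2k+1$, the same three moves applied to \eqref{c2:alpha_m} for $\alpha_{2k+2}$ --- now using $(-1)^{2k+2}=1$, the bound $|\alpha_{2k+1}(t+\xi,\xi)|\le d^{2k+2}(x)\,\dfrac{\xi^{k/2}t^{k/2}}{k!}$, and Cauchy--Schwarz against $\int_0^{x-t}\xi^{k}\,d\xi=\dfrac{(x-t)^{k+1}}{k+1}$ --- produce $d^{2k+3}(x)\,\dfrac{t^{k/2}(x-t)^{(k+1)/2}}{k!\sqrt{k+1}}$, i.e.\ \eqref{c2:alpha_k_est} with index $k+1$. Since the base case supplies all odd estimates with $k=0$ and the two inductive moves alternate, all indices are covered.

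I do not expect a genuine obstacle; the one thing requiring care is the bookkeeping of which spatial variable each power attaches to after the shifts $\xi\mapsto t+\xi$. The mechanism to keep in mind is that the ``inner'' power $\xi^{(k-1)/2}$ or $\xi^{k/2}$, produced by evaluating the inductive hypothesis at the second argument $\xi$, is exactly what Cauchy--Schwarz integrates against the $L^2$ factor $|p(t+\xi)|+|q(t+\xi)|$ to manufacture the next power of $x-t$ with the correct $1/\sqrt{k}$ (resp.\ $1/\sqrt{k+1}$), while the ``outer'' power of $t$ and the remaining factor of $d(x)$ simply pass through. Once this pattern is isolated, both halves of the lemma come out of the same computation.
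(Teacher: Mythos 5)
Your proposal is correct and follows essentially the same route as the paper: the base case via Cauchy--Schwarz giving $|\alpha_1|,|\beta_1|\le d^2(x)$, then an induction alternating odd and even indices in which the inductive bound (evaluated at the shifted arguments, with $\xi$ in the role of $t$ and $t$ in the role of $x-t$), the monotonicity of $d$, and Cauchy--Schwarz against $\int_0^{x-t}\xi^{k-1}d\xi$ or $\int_0^{x-t}\xi^{k}d\xi$ produce exactly the stated powers and constants. The only difference is cosmetic: the paper keeps $d^{2k+1}(t+\xi)$ inside the second Cauchy--Schwarz factor before bounding it by $d^{2k+1}(x)$ (and verifies $\alpha_2,\beta_2$ explicitly as a warm-up), while you extract it first; the computation is the same.
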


\begin{proof}
According to the definition \eqref{c2:alpha_0_beta_0} $ | \alpha_0(x,t) | = | q(t) | $ , $ | \beta_0(x,t) | = | p(t) |$.
For $ |\alpha_1(x,t) |$ and $ | \beta_1(x,t) |$ we have (see \eqref{c2:alpha_m} and \eqref{c2:beta_m}):
\begin{align*}
| \alpha_1(x,t) | \leq &
\int_0^{x-t}\left( | q(t+\xi) |\,  | q(\xi) |+ | p(t+\xi) |\,  | p(\xi) |\right)d\xi
\\
\leq & \left(\int_0^{x-t} | q(t+\xi) |^2d\xi\right)^{\frac{1}{2}}\left(\int_0^{x-t} | q(\xi) |^2d\xi\right)^{\frac{1}{2}} +
\\
& + \left(\int_0^{x-t} | p(t+\xi) |^2d\xi\right)^{\frac{1}{2}}\left(\int_0^{x-t} | p(\xi) |^2d\xi\right)^{\frac{1}{2}}
\\
\leq &\int_0^x | q(s) |^2ds+\int_0^x | p(s) |^2ds\leq d^2(x),
\end{align*}
\begin{align*}
| \beta_1(x,t) | \leq & \int_0^{x-t}\left( | q(t+\xi) |  | p(\xi) | + | p(t+\xi) |  | q(\xi) | \right)d\xi 
\\
\leq & \left(\int_0^{x-t} | q(t+\xi) |^2d\xi\right)^{\frac{1}{2}}\cdot
 \left(\int_0^{x-t} | p(\xi) |^2d\xi\right)^{\frac{1}{2}} +
\\
& + \left(\int_0^{x-t} | p(t+\xi) |^2d\xi\right)^{\frac{1}{2}}\left(\int_0^{x-t} | q(\xi) |^2d\xi\right)^{\frac{1}{2}}
\\
\leq & 2\left(\int_0^x | q(s) |^2ds\right)^{\frac{1}{2}} \left(\int_0^x | p(s) |^2ds\right)^{\frac{1}{2}} 
\\
\leq & \int_0^x | q(s) |^2ds+\int_0^x | p(s) |^2ds\leq d^2(x).
\end{align*}
Let us check the validity of estimates \eqref{c2:alpha_k_est}  and \eqref{c2:beta_k_est}  also for $\alpha_2(x,t)$ and $\beta_2(x,t)$:
\begin{align*}
| \alpha_2(x,t) | \leq & \int_0^{x-t} \left( | q(t+\xi) |  | \alpha_1(t+\xi,\xi) | + | p(t+\xi) |  | \beta_1(t+\xi,\xi) |  \right)d\xi
\\
\leq & \left(\int_0^{x-t} | q(t+\xi) |^2d\xi\right)^{\frac{1}{2}}\left(\int_0^{x-t} | \alpha_1(t+\xi,\xi) |^2d\xi\right)^{\frac{1}{2}}+
\\
& + \left(\int_0^{x-t} | p(t+\xi) |^2d\xi\right)^{\frac{1}{2}}\left(\int_0^{x-t} | \beta_1(t+\xi,\xi) |^2d\xi\right)^{\frac{1}{2}}
\\
\leq & \left(\int_0^{x} | q(s) |^2ds\right)^{\frac{1}{2}}\left(\int_0^{x-t}d^4(t+\xi)d\xi\right)^{\frac{1}{2}} +
\\
& + \left(\int_0^{x} | p(s) |^2ds\right)^{\frac{1}{2}}\left(\int_0^{x-t}d^4(t+\xi)d\xi\right)^{\frac{1}{2}}
\\
\leq & \left(\int_0^{x} | q(s) |^2ds\right)^{\frac{1}{2}}d^2(x)(x-t)^{\frac{1}{2}}+ \left(\int_0^{x} | p(s) |^2ds\right)^{\frac{1}{2}}d^2(x)(x-t)^{\frac{1}{2}} 
\\
\leq & d^3(x)(x-t)^{\frac{1}{2}}
\end{align*}
Here we use the fact that $d(x)$ is a monotone increasing, continuous function:
\[
\int_0^{x-t}d^4(t+\xi)d\xi\leq \max_{0\leq\xi\leq x-t} d^4(t+\xi)\int_0^{x-t}d\xi=d^4(x)(x-t)
\]
Because the above-obtained estimates for $\alpha_1(x,t)$ and $\beta_1(x,t)$ are the same, the estimate for $\beta_2(x,t)$  will be the same as for $\alpha_2(x,t)$.
The same remark lets us conclusion that the estimates for $\alpha_k(x,t)$ and $\beta_k(x,t)$ will be the same.

Further by induction: let the estimates\eqref{c2:alpha_k_est} and \eqref{c2:beta_k_est} hold, for $n = 2k$, then 
\begin{align*}
| \alpha_{2k+1}(x,t) | \leq & \int_0^{x-t}\left( | q(t+\xi) |\,  | \alpha_{2k}(t+\xi,\xi) | \right)+ | p(t+\xi) |  | \beta_{2k}(t+\xi,\xi) | d\xi
\\
\leq & \left[\left(\int_0^{x-t} | q(t+\xi) |^2d\xi\right)^{\frac{1}{2}}+
\left(\int_0^{x-t} | p(t+\xi) |^2d\xi\right)^{\frac{1}{2}}\right] \cdot
\\
& \cdot \left(\int_0^{x-t}\left[d^{2k+1}(t+\xi)\frac{\xi^{\frac{k-1}{2}}
t^{\frac{k}{2}}}{(k-1)!\sqrt{k}}\right]^2d\xi\right)^{\frac{1}{2}}
\\
\leq & d(x)d^{2k+1}(x)t^{\frac{k}{2}}
\cdot\frac{1}{(k-1)!\sqrt{k}}\left(\int_0^{x-t}\xi^{k-1}d\xi\right)^{\frac{1}{2}} 
\\
= & d^{2k+2}(x)\frac{t^{\frac{k}{2}}(x-t)^{\frac{k}{2}}}{k!}
\end{align*}
Likewise for $ \beta_ {2k + 1} $. 
To go from an odd number to an even number, it suffices to note that
\begin{align*}
 \left(\int_0^{x-t}\left[d^{2k+2}(t+\xi)\frac{\xi^{\frac{k}{2}}
t^{\frac{k}{2}}}{k!}\right]^2d\xi\right)^{\frac{1}{2}}\leq
d^{2k+2}(x)\frac{t^{\frac{k}{2}}}{k!}\left(\int_0^{x-t}\xi^kd\xi\right)^{\frac{1}{2}}\leq
d^{2k+2}(x)\frac{t^{\frac{k}{2}}(x-t)^{\frac{k+1}{2}}}{k!\sqrt{k+1}}.
\end{align*}
Lemma \ref{c2:lem_4} is proved.
\end{proof}

The representations \eqref{c2:Q+}, \eqref{c2:Q-} and \eqref{c2:K_Q+_Q-} immediately yield the following assertion.
\begin{lemma}\label{c2:lem_5}
The entries of the matrix $K(x,t)$ have the following representations:
\begin{align*}
K_{11}(x,t)=\frac{1}{2}\sum_{k=0}^\infty \left(\alpha_{2k}\left(x,\frac{x+t}{2}\right)+\alpha_{2k+1}\left(x,\frac{x-t}{2}\right)\right),
\\
K_{12}(x,t)=\frac{1}{2}\sum_{k=0}^\infty \left(\beta_{2k}\left(x,\frac{x+t}{2}\right)+\beta_{2k+1}\left(x,\frac{x-t}{2}\right)\right), 
\\
K_{21}(x,t)=\frac{1}{2}\sum_{k=0}^\infty\left(\beta_{2k}\left(x,\frac{x+t}{2}\right)-\beta_{2k+1}\left(x,\frac{x-t}{2}\right)\right), 
\\
K_{22}(x,t)=\frac{1}{2}\sum_{k=0}^\infty\left(\alpha_{2k+1}\left(x,\frac{x-t}{2}\right)-\alpha_{2k}\left(x,\frac{x+t}{2}\right)\right). 
\end{align*}
\end{lemma}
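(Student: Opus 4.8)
The plan is to simply unwind the definition \eqref{c2:K_Q+_Q-} of the kernel,
\[
K(x,s)=\frac{1}{2}\left[Q_+\!\left(x,\tfrac{x+s}{2}\right)+Q_-\!\left(x,\tfrac{x-s}{2}\right)\right],
\]
and to write $Q_+$ and $Q_-$ out in components. From \eqref{c2:Q+} and \eqref{c2:Q-}, abbreviate $\mathcal{A}:=\sum_{k\ge 0}\alpha_{2k}$, $\mathcal{C}:=\sum_{k\ge 0}\beta_{2k}$, $\mathcal{D}:=\sum_{k\ge 0}\alpha_{2k+1}$, $\mathcal{F}:=\sum_{k\ge 0}\beta_{2k+1}$, each to be evaluated at the appropriate point below. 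Recalling from \eqref{c0:Pauli_matrices} that $\sigma_2=\mathrm{diag}(1,-1)$, that $\sigma_3$ is the symmetric off-diagonal matrix with both nonzero entries $1$, and that $B=\tfrac{1}{i}\sigma_1$ is the skew off-diagonal matrix with $B_{12}=1$, $B_{21}=-1$, one gets
\[
Q_+=\mathcal{A}\,\sigma_2+\mathcal{C}\,\sigma_3=\begin{pmatrix}\mathcal{A}&\mathcal{C}\\ \mathcal{C}&-\mathcal{A}\end{pmatrix},\qquad
Q_-=\mathcal{D}\,E+\mathcal{F}\,B=\begin{pmatrix}\mathcal{D}&\mathcal{F}\\ -\mathcal{F}&\mathcal{D}\end{pmatrix}.
\]

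Then I would substitute the arguments dictated by \eqref{c2:K_Q+_Q-}, i.e. evaluate $\mathcal{A},\mathcal{C}$ at $(x,\tfrac{x+s}{2})$ and $\mathcal{D},\mathcal{F}$ at $(x,\tfrac{x-s}{2})$, and read off the four entries of $K(x,s)=\tfrac12\bigl(Q_++Q_-\bigr)$: the diagonal entries are $\tfrac12(\mathcal{A}+\mathcal{D})$ and $\tfrac12(\mathcal{D}-\mathcal{A})$, and the off-diagonal ones are $\tfrac12(\mathcal{C}+\mathcal{F})$ and $\tfrac12(\mathcal{C}-\mathcal{F})$. Reinstating $\mathcal{A}=\sum_{k}\alpha_{2k}$, $\mathcal{C}=\sum_k\beta_{2k}$, $\mathcal{D}=\sum_k\alpha_{2k+1}$, $\mathcal{F}=\sum_k\beta_{2k+1}$ at the stated points yields precisely the four displayed formulas for $K_{11},K_{12},K_{21},K_{22}$ (the dummy variable being renamed $s\mapsto t$), and no further computation is required.

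The only point deserving a sentence of care — and the closest thing to an obstacle here, though a mild one — is the legitimacy of interchanging ``take the $(i,j)$ entry of the matrix'' with the infinite series defining $Q_\pm$, that is, the identity (entry of $\sum_k Q_k$) $=\sum_k$(entry of $Q_k$). This is already supplied by the $L^1$ estimate \eqref{c2:a_b_est} of Lemma \ref{c2:lem_1}: combined with the norm inequality \eqref{c0:A_norm_est} it gives
\[
\sum_{k\ge 0}\int_0^x\bigl(|\alpha_k(x,t)|+|\beta_k(x,t)|\bigr)\,dt\le\sum_{k\ge 0}\frac{c^{k+1}(x)}{(k+1)!}=e^{c(x)}-1<\infty,
\]
so each of the four series converges absolutely in $L^1(0,x)$ and the termwise reading of the entries is valid; under the further hypothesis $p,q\in L^2_{loc}(0,\infty)$ the sharper estimates of Lemma \ref{c2:lem_4} even give convergence pointwise almost everywhere and in $L^2$. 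Thus the lemma is an immediate bookkeeping consequence of \eqref{c2:K_Q+_Q-}, \eqref{c2:Q+}, \eqref{c2:Q-}, and the explicit matrix forms of the Pauli matrices.
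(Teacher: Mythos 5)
Your proposal is correct and is exactly the argument the paper intends: the paper states that Lemma \ref{c2:lem_5} follows "immediately" from \eqref{c2:Q+}, \eqref{c2:Q-} and \eqref{c2:K_Q+_Q-}, and your component-wise unwinding of $K(x,s)=\frac{1}{2}\bigl[Q_+\bigl(x,\frac{x+s}{2}\bigr)+Q_-\bigl(x,\frac{x-s}{2}\bigr)\bigr]$ with the explicit forms of $\sigma_2$, $\sigma_3$, $E$, $B$ is precisely that computation, with all four entries matching. Your extra remark justifying termwise reading of the entries via the estimate \eqref{c2:a_b_est} is a harmless (and correct) bit of added care, not a different route.
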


To prove the quadratic summability of $K_{ij}(x,t)$ in $t$ (i.e. $K_{ij}(x,t)\in L^2(-x,x)$) we need estimates $| K_{ij}(x,t)|^2$. 
To this end, we first obtain estimates for $| K_{ij}(x,t)| $. 
From lemmas \ref{c2:lem_4} and \ref{c2:lem_5}, we have
\begin{align*}
& | K_{11}(x,t)| \leq\frac{1}{2}\sum_{k=0}^{\infty}\left| \alpha_{2k}\left(x,\frac{x+t}{2}\right)\right| +
\frac{1}{2}\sum_{k=0}^{\infty}\left| \alpha_{2k+1}\left(x,\frac{x-t}{2}\right)\right| 
\\
& \leq \frac{1}{2}\left| \alpha_0\left(x,\frac{x+t}{2}\right)\right| +\frac{1}{2}\sum_{k=1}^{\infty}\left| \alpha_{2k}\left(x,\frac{x+t}{2}\right)\right| +\frac{1}{2}\sum_{k=0}^{\infty}\left| \alpha_{2k+1}\left(x,\frac{x-t}{2}\right)\right| 
\\
& \leq\frac{1}{2}\left| q \left(\frac{x+t}{2}\right)\right| +\frac{1}{2}\sum_{k=1}^{\infty}\left| d^{2k+1}(x)\frac{\left(\frac{x+t}{2}\right)^{\frac{k-1}{2}}{\left(\frac{x-t}{2}\right)}^{\frac{k}{2}}}{(k-1)!\sqrt{k}}\right| +
\frac{1}{2}\sum_{k=0}^{\infty}\left| d^{2k+2}(x)\frac{\left(\frac{x-t}{2}\right)^{\frac{k}{2}}{\left(\frac{x+t}{2}\right)}^{\frac{k}{2}}}{k!}\right|
\\
& =\frac{1}{2}\left| q(\frac{x+t}{2})\right| +\frac{1}{2}\sum_{n=0}^{\infty}\left| d^{2n+3}(x)\frac{\left(\frac{x+t}{2}\right)^{\frac{n}{2}}{\left(\frac{x-t}{2}\right)}^{\frac{n+1}{2}}}{n!\sqrt{n+1}}\right| + \frac{1}{2}\sum_{n=0}^{\infty}\left| d^{2n+2}(x)\frac{\left(\frac{x^2-t^2}{4}\right)^{\frac{n}{2}}}{n!}\right| 
\\
&=\frac{1}{2}\left| q\left(\frac{x+t}{2}\right)\right| +\frac{1}{2}d(x)\left(\frac{x-t}{2}\right)^{\frac{1}{2}}\sum_{n=0}^{\infty}d^{2n+2}(x)\frac{\left(\frac{x^2-t^2}{4}\right)^\frac{n}{2}}{n!\sqrt{n+1}}+
\frac{1}{2}\sum_{n=0}^{\infty}d^{2n+2}(x)\frac{\left(\frac{x^2-t^2}{4}\right)^\frac{n}{2}}{n!}
\\
&\leq \frac{1}{2}\left| q\left(\frac{x+t}{2}\right)\right| +\frac{1}{2}\left(d(x)\left(\frac{x-t}{2}\right)^{\frac{1}{2}}+1\right)d^2(x)\sum_{n=0}^{\infty}\frac{\left(d^2(x)\left(\frac{x^2-t^2}{4}\right)^{\frac{1}{2}}\right)^n}{n!}
\\
&= \frac{1}{2}\left| q\left(\frac{x+t}{2}\right)\right| +\frac{d^2(x)}{2}\left(d(x)\left(\frac{x-t}{2}\right)^{\frac{1}{2}}+1\right)e^{\frac{d^2(x)\sqrt{x^2-t^2}}{2}}.
\end{align*}
Since $(a+b)^2 \leq 2(a^2+b^2)$, then
\begin{equation} \label{c2:K_11_est}
| K_{11}(x,t)|^2\leq\frac{1}{2}
\left| q\left(\frac{x+t}{2}\right)\right|^2+
\frac{1}{2}d^4(x)\left[d(x)\left(\frac{x-t}{2}\right)^{\frac{1}{2}}+1\right]^2
e^{d^2(x)\sqrt{x^2-t^2}}
\end{equation}
It obviously follows from \ref{c2:lem_4} and \ref{c2:lem_5}, that exactly the same estimate holds for $| K_{22}(x,t)|^2$.
Moreover, for $| K_{12}(x,t)|^2$ and $| K_{21}(x,t)|^2$, the estimates differ only that in the first term ${\dfrac{1}{2}\left| q \left(\frac{x+t}{2} \right)\right|^2}$ in \eqref{c2:K_11_est} is replaced by ${\dfrac{1}{2}\left| p\left(\frac{x+t}{2}\right)\right|^2}$. 
From \eqref{c2:K_11_est} remark above, one can obtain estimates for the integrals
\begin{equation} \label{c2:K_ii_est}
\begin{aligned}
\int_{-x}^x| K_{ii}(x,t)|^2dt \leq &
\frac{1}{2}\int_{-x}^x
\left| q\left(\frac{x+t}{2}\right)\right|^2dt+\frac{1}{2}d^4(x)
\int_{-x}^x\left[d(x)\left(\frac{x-t}{2}\right)^{\frac{1}{2}}+1\right]^2\times
\\
& \times e^{d^2(x)\sqrt{x^2-t^2}}dt\leq
\int_{0}^x| q(s)|^2ds+2d^4(x)\left[x^2d^2(x)+x\right]e^{xd^2(x)},
\end{aligned}
\end{equation}
\begin{equation}\label{c2:K_ij_est}
\int_{-x}^x| K_{ij}(x,t)|^2dt \leq\int_{0}^x| p(s)|^2ds+2d^4(x)[x^2d^2(x)+x]e^{xd^2(x)},
\end{equation}
$(i,j=1,2, i\neq j)$. 
Thus, the following lemma is proved.
\begin{theorem}\label{c2:thm_6} 
For $p,q\in L_{loc}^2(0,\infty)$, the elements $K_{ij}(x,t)$ of the matrix-kernel $K(x,t)$ of the transformation operator \eqref{c2:Phi_rep_1} satisfy the estimates \eqref{c2:K_11_est}--\eqref{c2:K_ij_est} and, particularly, $K_{i,j}(x,\cdot)\in L^{2}(-x,x)$ $(i,j=1,2)$.
\end{theorem}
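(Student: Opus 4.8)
The plan is to assemble the two lemmas just established and observe that the resulting bounds are finite. Lemma \ref{c2:lem_5} expresses each entry $K_{ij}(x,t)$ as an infinite series built from the scalar functions $\alpha_k,\beta_k$ evaluated at the arguments $\frac{x+t}{2}$ and $\frac{x-t}{2}$, while Lemma \ref{c2:lem_4} majorizes $|\alpha_k|,|\beta_k|$ by explicit monomials in $t$, $x-t$ and powers of $d(x)$. First I would insert the bounds of Lemma \ref{c2:lem_4} into the series of Lemma \ref{c2:lem_5}, peel off the leading term $\alpha_0(x,\frac{x+t}{2})=q(\frac{x+t}{2})$ (respectively $\beta_0=p$) --- the one summand that cannot be controlled by $d$ alone --- and reindex the two remaining sums by a common index $n$ so that both are dominated by $\sum_n d^{2n+2}(x)\bigl(\frac{x^2-t^2}{4}\bigr)^{n/2}/n!$, up to the prefactor $d(x)\bigl(\frac{x-t}{2}\bigr)^{1/2}+1$. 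Summing this exponential series and applying $(a+b)^2\le 2(a^2+b^2)$ gives the pointwise bound \eqref{c2:K_11_est} and its analogues for $|K_{22}|^2$, $|K_{12}|^2$, $|K_{21}|^2$ (with $q$ replaced by $p$ in the last two) --- which is exactly the calculation displayed just above the statement.

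The second step is to integrate these pointwise bounds over $t\in(-x,x)$. This needs only three elementary facts: $\int_{-x}^x|q(\frac{x+t}{2})|^2\,dt=2\int_0^x|q(s)|^2\,ds$; the inequality $\sqrt{x^2-t^2}\le x$ on $(-x,x)$, which converts the exponential factor into the constant $e^{xd^2(x)}$; and $\int_{-x}^x\bigl(\frac{x-t}{2}\bigr)\,dt=x^2$ together with $\int_{-x}^x dt=2x$, which produce the bracket $[x^2d^2(x)+x]$. The outcome is precisely \eqref{c2:K_ii_est} and \eqref{c2:K_ij_est}. Since the hypothesis $p,q\in L^2_{loc}(0,\infty)$ makes $d(x)$ finite for every finite $x$, the right-hand sides are finite, whence $K_{ij}(x,\cdot)\in L^2(-x,x)$ for $i,j=1,2$, which is the assertion of Theorem \ref{c2:thm_6}.

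Since the substantive estimates are the content of Lemmas \ref{c2:lem_4} and \ref{c2:lem_5}, no new idea is required here. The only mildly delicate point is the bookkeeping in the reindexing: one has to match the exponents $\frac{k-1}{2}$ and $\frac{k}{2}$ and the factors $1/\sqrt{k}$, $1/\sqrt{k+1}$ coming from Lemma \ref{c2:lem_4}, so that after absorbing the mismatched half-power into the prefactor $d(x)\bigl(\frac{x-t}{2}\bigr)^{1/2}+1$ everything sits under a single copy of $\exp\bigl(d^2(x)\sqrt{x^2-t^2}/2\bigr)$. With that arranged, the remaining estimation is routine, and the proof of the theorem amounts to recording that the computation carried out before its statement yields exactly \eqref{c2:K_11_est}--\eqref{c2:K_ij_est}.
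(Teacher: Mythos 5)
Your proposal is correct and follows essentially the same route as the paper: the displayed computation preceding the theorem inserts the bounds of Lemma \ref{c2:lem_4} into the series of Lemma \ref{c2:lem_5}, separates the leading $q\left(\frac{x+t}{2}\right)$ (resp. $p$) term, reindexes, sums the exponential series, squares via $(a+b)^2\le 2(a^2+b^2)$, and integrates in $t$ to obtain \eqref{c2:K_11_est}--\eqref{c2:K_ij_est}, from which $K_{ij}(x,\cdot)\in L^2(-x,x)$ is immediate. No substantive difference from the paper's argument.
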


The assertion of Theorem \ref{c2:thm_3} regarding the kernel $K_0(x,t,\alpha)$, that is $(K_0(x,\cdot,\alpha))_{ij}\in L^2(0,x)$, ($i,j=1,2$), is a consequence of similar statements for the kernel $K(x,t)$ and representation \eqref{c2:K_0}.

\section{Transformation operators connected to the point $\pi$.}\label{c2:sec_5}
It is known that the fundamental matrix $\Psi(x,\lambda)$, normalized at the point $\pi$, i.e. satisfying the condition $\Psi(\pi,\lambda)=E$, can be represented through the fundamental matrix $\Phi(x,\lambda)$ by the formula
\[
\Psi(x,\lambda)=\Phi(x,\lambda)\Phi^{-1}(\pi,\lambda).
\]
One can find the form of the inverse matrix $ \Phi^{-1} (\pi, \lambda) $, since there is an identity $ \Phi^{-1} (x, \lambda) \equiv -B \Phi^{*} (x, \bar {\lambda}) B $ (see \cite{Gohberg-Krein:1967}, p. 299) and try to represent the product $ \Phi (x, \lambda) \Phi^{-1} (\pi, \lambda) $ as \eqref{c2:Psi_rep}. 
But this way seems to be technically more complicated, than the application of the same methods that we have already applied in the proof of Theorems \ref{c2:thm_1}, \ref{c2:thm_2}, \ref{c2:thm_3}. 
Therefore, in order not to burden the reader, we present here the steps of the proof of theorem \ref{c2:thm_4} (which are Lemmas \ref{c2:lem_6}--\ref{c2:lem_11}) without proofs.

We will look for the fundamental matrix $\Psi(x,\lambda)$ in the form $\Psi(x,\lambda)=e^{B\lambda(\pi-x)}V(x,\lambda)$. 
For the matrix-function $V(x,\lambda)$, we obtain a Cauchy problem
\begin{equation}\label{c2:Cauchy_prob_V}
\left\{\begin{array}{l}
V'=e^{2B\lambda (x-\pi)}B\Omega(x)V,\\
V(\pi,\lambda)=E,
\end{array}\right.
\end{equation}
which is equivalent to the integral equation
\begin{equation}\label{c2:V_int_eq}
V(x,t)=E-\int_x^\pi e^{2B\lambda(s-\pi)}B\Omega(s)V(s)ds.
\end{equation}

\begin{lemma}\label{c2:lem_6} 
If the matrix $R (x, t)$ is a solution to the integral equation
\begin{equation}\label{c2:R_int_eq}
R(x,t)=B\Omega(t)-\int_{\pi-(t-x)}^\pi B\Omega(t+s-\pi) R(t+s-\pi,s)ds,\; 0\leq x\leq t\leq \pi,
\end{equation}
i.e. \eqref{c2:R_int_eq} holds almost everywhere in $t\in[x, \pi]$, then the matrix-function $V(x,\lambda)$, defined by the formula 
\begin{equation}\label{2-2-55}
V(x,\lambda)=E-\int_x^\pi e^{2B\lambda(s-\pi)}R(x,s)ds
\end{equation}
is a solution to the integral equation \eqref{c2:V_int_eq} or (which is the same) the Cauchy problem \eqref{c2:Cauchy_prob_V}.
\end{lemma}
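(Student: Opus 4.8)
The plan is to proceed exactly as in the proof of Lemma \ref{c2:lem_3}: substitute the explicit formula for $V(x,\lambda)$ into the Cauchy problem \eqref{c2:Cauchy_prob_V} and verify, term by term, that the differential equation $V'=e^{2B\lambda(x-\pi)}B\Omega(x)V$ and the initial condition $V(\pi,\lambda)=E$ hold, using two structural properties of a solution $R$ of \eqref{c2:R_int_eq} that mirror \eqref{c2:Q_partial_x} and \eqref{c2:Q_B_Omega}. (We take for granted the companion existence statement for $R$, which is proved by successive approximations just as Theorem \ref{c2:thm_5}; here $R$ is assumed given.)

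First I would extract the two facts about $R$ that are needed. Putting $t=x$ in \eqref{c2:R_int_eq} makes the limits of the integral coincide, since $\pi-(t-x)=\pi$, so $R(x,x)=B\Omega(x)$ almost everywhere, the analogue of \eqref{c2:Q_B_Omega}. Next, the right-hand side of \eqref{c2:R_int_eq} depends on $x$ only through the lower limit $\pi-(t-x)$, so $x\mapsto R(x,t)$ is absolutely continuous and, differentiating with respect to this limit, one obtains almost everywhere
\[
\frac{\partial R(x,t)}{\partial x}=B\Omega(x)\,R(x,\pi-t+x),
\]
which is the counterpart of \eqref{c2:Q_partial_x}.

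Then I would differentiate $V(x,\lambda)=E-\int_x^\pi e^{2B\lambda(s-\pi)}R(x,s)\,ds$ in $x$ by the Leibniz rule. The term coming from the moving lower limit is $e^{2B\lambda(x-\pi)}R(x,x)=e^{2B\lambda(x-\pi)}B\Omega(x)$, and the interior term is $-\int_x^\pi e^{2B\lambda(s-\pi)}B\Omega(x)\,R(x,\pi-s+x)\,ds$. To recognise the right-hand side $e^{2B\lambda(x-\pi)}B\Omega(x)V(x,\lambda)$, I would use that $\Omega(x)$ (a combination of $\sigma_2,\sigma_3$) anti-commutes with $B$, hence so does $B\Omega(x)$, giving $B\Omega(x)\,e^{2B\lambda\tau}=e^{-2B\lambda\tau}B\Omega(x)$; then the substitution $u=\pi-s+x$, which reverses the limits $x\leftrightarrow\pi$, together with this anti-commutation relation reindexes the interior term to $-e^{2B\lambda(x-\pi)}B\Omega(x)\int_x^\pi e^{2B\lambda(u-\pi)}R(x,u)\,du$. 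Collecting the two pieces yields exactly $V'(x,\lambda)=e^{2B\lambda(x-\pi)}B\Omega(x)\bigl(E-\int_x^\pi e^{2B\lambda(s-\pi)}R(x,s)\,ds\bigr)=e^{2B\lambda(x-\pi)}B\Omega(x)V(x,\lambda)$. The initial condition is immediate, since at $x=\pi$ the integral defining $V$ is empty, so $V(\pi,\lambda)=E$. Hence $V$ solves \eqref{c2:Cauchy_prob_V}, and therefore the equivalent integral equation \eqref{c2:V_int_eq}.

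The only real obstacle is the bookkeeping: one must apply the anti-commutation relation in the right place and choose the substitution $u=\pi-s+x$ so that the shifted kernel argument $R(x,\pi-s+x)$ is carried back to $R(x,u)$ while the exponent $2B\lambda(s-\pi)$ transforms to $2B\lambda(u-\pi)$ up to the factor $e^{2B\lambda(x-\pi)}$. This is the mirror image, near the point $\pi$, of the computation in Section \ref{c2:sec_2}, so no new idea is required.
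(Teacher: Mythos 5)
Your argument is correct and is exactly the argument the paper intends: the paper states Lemma \ref{c2:lem_6} without proof, remarking that it follows by the same methods as in Section \ref{c2:sec_2}, and your proof is the precise mirror of the proof of Lemma \ref{c2:lem_3}, using $R(x,x)=B\Omega(x)$ and $\partial_x R(x,t)=B\Omega(x)R(x,\pi-t+x)$ (the analogues of \eqref{c2:Q_B_Omega} and \eqref{c2:Q_partial_x}) together with the anti-commutation $B\Omega(x)e^{2B\lambda\tau}=e^{-2B\lambda\tau}B\Omega(x)$ and the substitution $u=\pi-s+x$. The computation checks out, including the sign from the moving lower limit and the empty integral giving $V(\pi,\lambda)=E$, so nothing is missing.
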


The integral equation \eqref{c2:R_int_eq} will be solved by successive approximations. 
Let us construct a sequence of matrices:
\begin{equation}\label{2-2-56}
R_0(x,t)=B\Omega(t)=q(t)\sigma_2-p(t)\sigma_3,
\end{equation}
\begin{equation}\label{2-2-57}
R_n(x,t)=-\int_{\pi-(t-x)}^\pi B\Omega(t+s-\pi)R_{n-1}(t+s-\pi,s)ds,\quad n=1,2,\dots\quad.
\end{equation}

\begin{lemma}\label{c2:lem_7} 
The matrices $R_n(x,t)$, defined by the formulas \eqref{2-2-56} and \eqref{2-2-57}, have the following structure:
\begin{equation}\label{2-2-58}
\quad R_{2n}(x,t)=\gamma_{2n}(x,t)\sigma_2+\delta_{2n}(x,t)\sigma_3,\quad n=0,1,2,\dots
\end{equation}
\begin{equation}\label{2-2-59}
R_{2n+1}(x,t)=\gamma_{2n+1}(x,t)E+\delta_{2n+1}(x,t)B,\quad n=0,1,2,\dots \; ,
\end{equation}
where the scalar functions $ \gamma_k (x, t) $ and $ \delta_k (x, t) $ are determined from the recurrence relations:
\begin{align}
\gamma_{0}(x,t)= &q(t),\quad \delta_0(x,t)=-p(t), \label{2-2-60} \\
\gamma_{m}(x,t)=
& \int_{\pi-(t-x)}^\pi q(t+s-\pi)\gamma_{m-1}(t+s-\pi,s)-(-1)^mp(t+s-\pi)\delta_{m-1}(t+s-\pi,s)  ds, \label{2-2-61}\\
\delta_{m}(x,t)=
& \int_{\pi-(t-x)}^\pi q(t+s-\pi)\delta_{m-1}(t+s-\pi,s) -(-1)^mp(t+s-\pi)\gamma_{m-1}(t+s-\pi,s)  ds. \label{2-2-62}
\end{align}
\end{lemma}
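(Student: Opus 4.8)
The plan is to imitate the proof of Lemma \ref{c2:lem_1} almost verbatim: one argues by induction on $n$, using nothing beyond the recursive definition \eqref{2-2-56}--\eqref{2-2-57} and the multiplication table of the four matrices $E$, $B$, $\sigma_2$, $\sigma_3$, namely $\sigma_2^{2}=\sigma_3^{2}=E$, $B^{2}=-E$, $\sigma_2\sigma_3=B=-\sigma_3\sigma_2$, $\sigma_2B=\sigma_3$, $B\sigma_3=\sigma_2$, $\sigma_3B=-\sigma_2$, $B\sigma_2=-\sigma_3$. Write $\mathcal{S}$ for the linear span of $\sigma_2,\sigma_3$ and $\mathcal{E}$ for the linear span of $E,B$. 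The structural observation is that $B\Omega(t)=q(t)\sigma_2-p(t)\sigma_3$ always lies in $\mathcal{S}$, that the product of two elements of $\mathcal{S}$ lies in $\mathcal{E}$, and that the product of an element of $\mathcal{S}$ with an element of $\mathcal{E}$ lies again in $\mathcal{S}$; hence left multiplication by $B\Omega$ interchanges $\mathcal{S}$ and $\mathcal{E}$, and since $R_0\in\mathcal{S}$ this forces exactly the parity alternation claimed in \eqref{2-2-58}--\eqref{2-2-59}. The integration in \eqref{2-2-57} acts coefficient-wise and does not affect the matrix structure.

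Concretely, I would first settle the base case: by \eqref{2-2-56} one has $R_0(x,t)=B\Omega(t)=q(t)\sigma_2-p(t)\sigma_3\in\mathcal{S}$, so $R_0$ has the form \eqref{2-2-58} with $\gamma_0(x,t)=q(t)$ and $\delta_0(x,t)=-p(t)$, which is \eqref{2-2-60}. For the inductive step, assume the representations \eqref{2-2-58}--\eqref{2-2-59} and the recurrences \eqref{2-2-61}--\eqref{2-2-62} for all indices up to $m-1$. If $m-1$ is even, substitute $R_{m-1}=\gamma_{m-1}\sigma_2+\delta_{m-1}\sigma_3$ into \eqref{2-2-57} and expand $B\Omega(t+s-\pi)\,R_{m-1}(t+s-\pi,s)=\bigl(q\sigma_2-p\sigma_3\bigr)\bigl(\gamma_{m-1}\sigma_2+\delta_{m-1}\sigma_3\bigr)$ by the table above; the product lies in $\mathcal{E}$, so after integrating in $s$ the matrix $R_m$ has the form \eqref{2-2-59}, and reading off the coefficients of $E$ and of $B$ under the integral sign produces \eqref{2-2-61}--\eqref{2-2-62} for the (odd) index $m$. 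The case $m-1$ odd is symmetric: one substitutes $R_{m-1}=\gamma_{m-1}E+\delta_{m-1}B$, multiplies on the left by $B\Omega$, lands in $\mathcal{S}$, and collects the coefficients of $\sigma_2$ and of $\sigma_3$. In both cases the factor $(-1)^m$ in \eqref{2-2-61}--\eqref{2-2-62} is exactly what records the antisymmetry $\sigma_3\sigma_2=-\sigma_2\sigma_3$, $\sigma_3B=-B\sigma_3$; one must check that these signs are matched consistently against the overall minus sign in \eqref{2-2-57}.

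Two points deserve a word, though neither is a genuine difficulty. First, since we only assume $p,q\in L^{1}(0,\pi)$, the equalities $R_m=\gamma_m\sigma_2+\delta_m\sigma_3$ (respectively $R_m=\gamma_mE+\delta_mB$) and the recurrences \eqref{2-2-61}--\eqref{2-2-62} are to be understood for almost every $t\in[x,\pi]$, for each fixed $x\in[0,\pi]$, and the iterated integral in \eqref{2-2-57}, together with the interchanges of order of integration and the substitution $s\mapsto t+s-\pi$ needed to iterate, must be justified by the Fubini--Tonelli theorem exactly as in the proof of Lemma \ref{c2:lem_1}. This is legitimate once one knows the relevant integrals converge, which follows from the estimate $\int_x^{\pi}\bigl(|\gamma_m(x,t)|+|\delta_m(x,t)|\bigr)\,dt\leq r^{m+1}(x)/(m+1)!$, with $r(x)=\int_x^{\pi}|p(s)|\,ds+\int_x^{\pi}|q(s)|\,ds$; this is the exact analogue, anchored at the endpoint $\pi$, of estimate \eqref{c2:a_b_est}, proved by the same induction, so (as in Lemma \ref{c2:lem_1}) I would establish this estimate first and the matrix decomposition afterwards. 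I expect the only real obstacle to be purely clerical: keeping straight the signs in the products of the Pauli matrices (taken here in a non-standard normalization) and matching them to the $(-1)^m$ in the stated recurrences. There is no new analytic content beyond what was already handled in Lemma \ref{c2:lem_1}.
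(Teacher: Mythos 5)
Your overall route is the intended one: the paper states Lemma \ref{c2:lem_7} without proof and explicitly refers the reader to the methods of Lemma \ref{c2:lem_1}, which is exactly what you reproduce --- induction on $n$, the base case from \eqref{2-2-56}, the observation that left multiplication by $B\Omega\in\mathrm{span}\{\sigma_2,\sigma_3\}$ interchanges $\mathrm{span}\{\sigma_2,\sigma_3\}$ and $\mathrm{span}\{E,B\}$, and Fubini--Tonelli justified by the estimate of Lemma \ref{c2:lem_8}. The structural claims \eqref{2-2-58}--\eqref{2-2-59} and the base case \eqref{2-2-60} do come out of this argument.

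The gap is precisely the sign check you defer. Carrying out the expansion, the coefficients one reads off are \emph{not} \eqref{2-2-61}--\eqref{2-2-62}. For $m-1$ even one has $(q\sigma_2-p\sigma_3)(\gamma_{m-1}\sigma_2+\delta_{m-1}\sigma_3)=(q\gamma_{m-1}-p\delta_{m-1})E+(q\delta_{m-1}+p\gamma_{m-1})B$, and for $m-1$ odd one has $(q\gamma_{m-1}+p\delta_{m-1})\sigma_2+(q\delta_{m-1}-p\gamma_{m-1})\sigma_3$; combined with the minus sign in \eqref{2-2-57} this forces
\[
\gamma_m(x,t)=-\int_{\pi-(t-x)}^{\pi}\bigl[q\,\gamma_{m-1}+(-1)^m p\,\delta_{m-1}\bigr]\,ds,\qquad
\delta_m(x,t)=-\int_{\pi-(t-x)}^{\pi}\bigl[q\,\delta_{m-1}-(-1)^m p\,\gamma_{m-1}\bigr]\,ds
\]
(with arguments $t+s-\pi$ and $(t+s-\pi,s)$ as in the statement), i.e. the recurrences of Lemma \ref{c2:lem_1} with an extra overall minus --- not the printed \eqref{2-2-61}--\eqref{2-2-62}, which differ in the sign of the $q\gamma_{m-1}$ term and by an overall sign in \eqref{2-2-62}. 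Already at $m=1$ the definitions \eqref{2-2-56}--\eqref{2-2-57} give $\gamma_1(x,t)=-\int_{\pi-(t-x)}^{\pi}\bigl[q(t+s-\pi)q(s)+p(t+s-\pi)p(s)\bigr]ds$, whereas \eqref{2-2-61} would give $\int_{\pi-(t-x)}^{\pi}\bigl[q(t+s-\pi)q(s)-p(t+s-\pi)p(s)\bigr]ds$; for generic $p,q$ these are different functions. So either the recurrences in the statement carry sign misprints (plausible, since everything used later --- the estimates of Lemmas \ref{c2:lem_8} and \ref{c2:lem_11}, and the identities $\gamma_m(x,x)=\delta_m(x,x)=0$ for $m\geq1$, $\gamma_0(x,x)=q(x)$, $\delta_0(x,x)=-p(x)$ --- is insensitive to these signs), in which case your proof must derive and state the corrected recurrences rather than assert the printed ones; or, taken literally, the inductive step you describe cannot ``produce \eqref{2-2-61}--\eqref{2-2-62}''. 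As written, your proposal asserts the match and merely flags that the signs ``must be checked''; that check is the whole content of the inductive step, and it does not close in the form claimed.
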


\begin{lemma}\label{c2:lem_8}  
For $p,q\in L^1(0,\pi)$ the following estimates hold: 
\begin{equation}\label{2-2-63}
\int_x^\pi\left(| \gamma_k(x,t)| +| \delta_k(x,t)| \right)dt \leq \frac{r^{(k+1)}(x)}{(k+1)!},\quad
 k=0,1,2,\ldots\; .
\end{equation}
\end{lemma}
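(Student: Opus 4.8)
The plan is to mimic, essentially line for line, the proof of the estimate \eqref{c2:a_b_est} in Lemma \ref{c2:lem_1}, with two cosmetic changes: the interval $[0,x]$ is replaced by $[x,\pi]$, and the monotone increasing quantity $c(x)$ from \eqref{c0:c_x} is replaced by $r(x)=\int_x^\pi(|p(s)|+|q(s)|)\,ds$, which is monotone \emph{decreasing} with $r'(x)=-(|p(x)|+|q(x)|)$ a.e.\ and $r(\pi)=0$. The argument is by induction on $k$. The base case $k=0$ is immediate: from \eqref{2-2-60} one has $|\gamma_0(x,t)|+|\delta_0(x,t)|=|q(t)|+|p(t)|$, hence $\int_x^\pi(|\gamma_0|+|\delta_0|)\,dt=r(x)=r^{1}(x)/1!$.

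For the inductive step, assume \eqref{2-2-63} holds with $k$ replaced by $m-1$. Writing $g(u)=|p(u)|+|q(u)|$ and $h(a,b)=|\gamma_{m-1}(a,b)|+|\delta_{m-1}(a,b)|$, the recurrences \eqref{2-2-61}, \eqref{2-2-62} and the triangle inequality give, for a.e.\ $t\in[x,\pi]$,
\[
|\gamma_m(x,t)|+|\delta_m(x,t)|\le\int_{\pi-(t-x)}^{\pi}g(t+s-\pi)\,h(t+s-\pi,s)\,ds .
\]
Integrating this over $t\in[x,\pi]$, and using that all integrands are nonnegative so that the Fubini--Tonelli theorem legitimizes the manipulations, I would: (i) substitute $v=t+s-\pi$ in the inner integral, turning the region into $\{x\le v\le t\le\pi\}$ with $h$ evaluated at $(v,\,v-t+\pi)$; (ii) interchange the order of integration so that $t$ is integrated first; (iii) substitute $w=v-t+\pi$ in the $t$-integral. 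The outcome is the decoupled bound
\[
\int_x^\pi\bigl(|\gamma_m(x,t)|+|\delta_m(x,t)|\bigr)\,dt\le\int_x^\pi g(v)\left\{\int_v^\pi h(v,w)\,dw\right\}dv .
\]
Applying the induction hypothesis to the inner brace, $\int_v^\pi h(v,w)\,dw\le r^{m}(v)/m!$, and then using $g(v)=-r'(v)$ together with $r(\pi)=0$, one gets
\[
\int_x^\pi\bigl(|\gamma_m(x,t)|+|\delta_m(x,t)|\bigr)\,dt\le\int_x^\pi\frac{r^{m}(v)}{m!}\bigl(-r'(v)\bigr)\,dv=\left[-\frac{r^{m+1}(v)}{(m+1)!}\right]_{v=x}^{v=\pi}=\frac{r^{m+1}(x)}{(m+1)!},
\]
which is \eqref{2-2-63} for $k=m$, closing the induction.

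The only genuine obstacle is the bookkeeping in the two changes of variable and the order-of-integration swap: because the construction \eqref{2-2-56}--\eqref{2-2-57} is "tied" to the right endpoint $\pi$, the natural primitive one integrates against is $-r$ rather than $c$, and one must keep the limits $\pi-(t-x)$ and $\pi$ straight throughout. The measurability of $\gamma_k,\delta_k$ and the finiteness of the iterated integrals needed to invoke Tonelli follow inductively from the estimate being proved, exactly as in Lemma \ref{c2:lem_1}; no new analytic input is required. (Once \eqref{2-2-63} is in hand, the structural relations \eqref{2-2-58}, \eqref{2-2-59} are verified separately by the same induction using $\sigma_2\sigma_3=B$, $\sigma_2B=\sigma_3$, $B\sigma_3=\sigma_2$, just as for Lemma \ref{c2:lem_1}.)
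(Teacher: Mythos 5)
Your proof is correct and is exactly the argument the paper intends: the paper omits the proofs of Lemmas \ref{c2:lem_6}--\ref{c2:lem_11}, stating they follow by the same method as Theorem \ref{c2:thm_5}/Lemma \ref{c2:lem_1}, and your induction with the change of variables $v=t+s-\pi$, the Fubini--Tonelli interchange, and the identity $|p|+|q|=-r'$ with $r(\pi)=0$ is precisely that mirror-image adaptation of the estimate \eqref{c2:a_b_est}.
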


Since, according to \eqref{2-2-58}, \eqref{2-2-59} and \eqref{c0:A_norm_est}
\begin{equation}\label{2-2-64}
| R_k(x,t)|  \leq | \gamma_k(x,t)| +| \delta_k(x,t)| ,
\end{equation}
for the series
\begin{equation}\label{2-2-65}
R(x,t)=\sum_{k=0}^\infty R_k(x,t)
\end{equation}
we obtain the estimate:
\begin{equation}\label{2-2-66}
\int_x^\pi| R(x,t)| dt \leq \sum_{k=0}^\infty\int_x^\pi| R_k(x,t)| dt \leq \sum_{k=0}^\infty\frac{r^{k+1}(x)}{(k+1)!}=e^{r(x)}-1.
\end{equation}

\begin{lemma}\label{c2:lem_9}  
Integral equation \eqref{c2:R_int_eq} ha a unique solution in $L^1(x,\pi)$.
\end{lemma}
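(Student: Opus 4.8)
The plan is to mirror, step by step, the treatment of the ``point-$0$'' equation \eqref{c2:Q_int_eq} carried out in Lemma \ref{c2:lem_1} and Lemma \ref{c2:lem_2}; the only structural change is that the non-decreasing majorant $c(x)$ is replaced throughout by the non-increasing function $r(x)=\int_x^{\pi}\left(|p(s)|+|q(s)|\right)ds$, so that derivatives of $c$ become $-r'$.

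\emph{Existence.} I would take the series $R(x,t)=\sum_{k=0}^{\infty}R_k(x,t)$ of \eqref{2-2-65}. By \eqref{2-2-64} and Lemma \ref{c2:lem_8}, the majorant series $\sum_{k}\int_x^{\pi}|R_k(x,t)|\,dt$ is dominated by $\sum_{k}\frac{r^{k+1}(x)}{(k+1)!}=e^{r(x)}-1$ and converges uniformly for $x\in[0,\pi]$; hence $R(x,\cdot)\in L^1(x,\pi)$ and the partial sums $S_N=\sum_{k=0}^{N}R_k$ converge to $R$ in the sense $\lim_{N\to\infty}\sup_{x\in[0,\pi]}\int_x^{\pi}|R-S_N|\,dt=0$. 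Rewriting the recursion \eqref{2-2-57} as $S_N(x,t)=B\Omega(t)-\int_{\pi-(t-x)}^{\pi}B\Omega(t+s-\pi)S_{N-1}(t+s-\pi,s)\,ds$ and letting $N\to\infty$ — the interchange of limit and integral being legitimate by Fubini--Tonelli, exactly as in Lemma \ref{c2:lem_1} — shows that $R$ solves \eqref{c2:R_int_eq} in $L^1(x,\pi)$.

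\emph{Uniqueness.} Suppose $R_1,R_2\in L^1(x,\pi)$ both solve \eqref{c2:R_int_eq} and put $\Delta R=R_1-R_2$, which satisfies the homogeneous equation $\Delta R(x,t)=-\int_{\pi-(t-x)}^{\pi}B\Omega(t+s-\pi)\Delta R(t+s-\pi,s)\,ds$. I would set $G(x)=\sup_{x\leq\sigma\leq\pi}\int_{\sigma}^{\pi}|\Delta R(\sigma,t)|\,dt$; this is finite (each solution satisfies the a priori bound \eqref{2-2-66}, so $G(x)\leq 2(e^{r(0)}-1)$) and non-increasing in $x$. The crucial step is the change of order of integration: the substitution $u=t+s-\pi$ turns the region $\{x\leq t\leq\pi,\ \pi-(t-x)\leq s\leq\pi\}$ into $\{x\leq u\leq\pi,\ u\leq s\leq\pi\}$, and since $|B\Omega(u)|\leq|p(u)|+|q(u)|=-r'(u)$ and $\int_u^{\pi}|\Delta R(u,s)|\,ds\leq G(u)\leq G(x)$, one obtains
\[
\int_x^{\pi}|\Delta R(x,t)|\,dt\leq\int_x^{\pi}|B\Omega(u)|\left(\int_u^{\pi}|\Delta R(u,s)|\,ds\right)du\leq G(x)\int_x^{\pi}\bigl(-r'(u)\bigr)\,du=G(x)\,r(x).
\]
Substituting this inequality (applied at $u$ in place of $x$) back into the right-hand side and using $\int_x^{\pi}\bigl(-r'(u)\bigr)\frac{r^{n}(u)}{n!}\,du=\frac{r^{n+1}(x)}{(n+1)!}$, an induction on $n$ yields $\int_x^{\pi}|\Delta R(x,t)|\,dt\leq G(x)\,\frac{r^{n}(x)}{n!}$ for every $n\geq 0$. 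Letting $n\to\infty$ forces $\int_x^{\pi}|\Delta R(x,t)|\,dt=0$ for all $x\in[0,\pi]$, i.e. $R_1=R_2$ almost everywhere.

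I expect the only genuine difficulty to be the bookkeeping in the ``$\pi$-anchored'' geometry: correctly identifying the integration region after the substitution $u=t+s-\pi$ and verifying that each interchange of the order of integration is justified (it is, because of the convergent majorant, so Fubini--Tonelli applies). Once that is set up, all the estimates are formally identical to those already carried out in Lemmas \ref{c2:lem_1} and \ref{c2:lem_2}.
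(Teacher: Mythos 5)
Your proof is correct and follows exactly the route the paper intends: the paper states Lemmas \ref{c2:lem_6}--\ref{c2:lem_11} without proof, remarking that they are obtained by the same method as Theorem \ref{c2:thm_5} and Lemma \ref{c2:lem_2}, and your argument is precisely that analog, with $c(x)$ replaced by $r(x)$, the series $\sum_k R_k$ with the majorant $e^{r(x)}-1$ for existence, and the iterated bound $G(x)\,r^n(x)/n!$ after the substitution $u=t+s-\pi$ (whose image region you identify correctly) for uniqueness.
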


Thus, the series \eqref{2-2-65} is the unique solution to the equation \eqref{c2:R_int_eq}, and the elements $ R_{ij} $ of the matrix $ R $ are summable in $ t \in (x, \pi) $ functions, i.e. $R_{ij}(x,\cdot)\in L^1(x,\pi)$.

Let us denote
\begin{equation}\label{2-2-67}
R_{+}(x,t)=\left(\sum_{n=0}^\infty\gamma_{2n}(x,t)\right)\sigma_2+\left(\sum_{n=0}^\infty\delta_{2n}(x,t)\right)\sigma_3,
\end{equation}
\begin{equation}\label{2-2-68}
R_{-}(x,t)= \left(\sum_{n=0}^\infty\gamma_{2n+1}(x,t)\right)E+ \left(\sum_{n=0}^\infty\delta_{2n+1}(x,t)\right)B.
\end{equation}
According to \eqref{2-2-58}, \eqref{2-2-59}, \eqref{2-2-65}, \eqref{2-2-67} and \eqref{2-2-68}
\begin{equation}\label{2-2-69}
R(x,t)=R_{+}(x,t)+R_{-}(x,t),
\end{equation}
where $R_+$ anti-commutes, and $R_-$ commutes with $B$, i.e.
\begin{equation}\label{2-2-70}
R_{+}(x,t)B=-BR_{+}(x,t),\quad R_{-}(x,t)B=BR_{-}(x,t).
\end{equation}

\begin{lemma}\label{c2:lem_10} 
The fundamental matrix $\Psi(x,\lambda)$ is represented in the form
\[
\Psi(x,\lambda)=e^{B\lambda(\pi-x)}-\frac{1}{2}\int_x^{2\pi-x}\left[R_{+}\left(x,\frac{x+t}{2}\right)-R_{-}\left(x,\pi+\frac{x-t}{2}\right)\right]
e^{B\lambda(\pi-t)}dt.
\]
\end{lemma}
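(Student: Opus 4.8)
The plan is to reduce the statement to the properties of the matrix $R(x,t)$ already established in Lemmas~\ref{c2:lem_6}--\ref{c2:lem_9}. By Lemma~\ref{c2:lem_6} and formula \eqref{2-2-55},
\[
\Psi(x,\lambda)=e^{B\lambda(\pi-x)}V(x,\lambda)=e^{B\lambda(\pi-x)}\left(E-\int_x^\pi e^{2B\lambda(s-\pi)}R(x,s)\,ds\right);
\]
since $e^{B\lambda(\pi-x)}$ is independent of $s$ it may be moved under the integral sign, and because powers of $B$ commute the two exponentials combine, giving
\[
\Psi(x,\lambda)=e^{B\lambda(\pi-x)}-\int_x^\pi e^{B\lambda(2s-x-\pi)}R(x,s)\,ds .
\]
Thus the whole content of the lemma is a rewriting of this single integral.

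First I would use the decomposition $R=R_++R_-$ from \eqref{2-2-69}, where by \eqref{2-2-70} $R_+$ anti-commutes and $R_-$ commutes with $B$, hence $R_+(x,s)$ anti-commutes and $R_-(x,s)$ commutes with every matrix of the form $e^{B\mu}$, $\mu\in\mbc$. Treating the two summands separately: in the $R_-$-integral the exponential simply passes through the factor, $e^{B\lambda(2s-x-\pi)}R_-(x,s)=R_-(x,s)e^{B\lambda(2s-x-\pi)}$, while in the $R_+$-integral it passes through with the exponent negated, $e^{B\lambda(2s-x-\pi)}R_+(x,s)=R_+(x,s)e^{B\lambda(x+\pi-2s)}$. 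Then perform a linear change of the integration variable in each integral separately: $s=\tfrac{x+t}{2}$ in the $R_+$-part and $s=\pi+\tfrac{x-t}{2}$ in the $R_-$-part. Each substitution turns the corresponding exponential factor into $e^{B\lambda(\pi-t)}$ and the range $s\in[x,\pi]$ into $t\in[x,2\pi-x]$ (for the second substitution the limits come out reversed, which supplies a compensating sign). Summing the two transformed integrals yields an expression of the form $\Psi(x,\lambda)=e^{B\lambda(\pi-x)}-\tfrac12\int_x^{2\pi-x}\bigl[R_+(x,\tfrac{x+t}{2})-R_-(x,\pi+\tfrac{x-t}{2})\bigr]e^{B\lambda(\pi-t)}\,dt$, which is the asserted identity.

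Two things need justification. First, interchanging the series $R=\sum_{n\ge0}R_n$ with the integral, moving the constant exponential under the integral, and the changes of variable are all legitimate because of the uniform absolute bound $\int_x^\pi|R(x,t)|\,dt\le e^{r(x)}-1$ coming from \eqref{2-2-64} and \eqref{2-2-66}, together with the boundedness of $e^{B\lambda t}$ for $t$ ranging over a finite real interval and $\lambda$ fixed. Second --- and this is where the real care is needed --- one must keep the signs straight: which of the two exponential factors commutes and which anti-commutes with $R_\pm$, and the sign that arises when a substitution reverses the limits of integration. Getting that bookkeeping exactly right, consistently with the sign conventions in \eqref{2-2-56}--\eqref{2-2-62}, is the main obstacle; everything else is a routine repetition of the computations already carried out in the proofs of Theorems~\ref{c2:thm_1} and \ref{c2:thm_2}, and it also immediately identifies the kernel $H(x,t)=\tfrac12[R_+(x,\tfrac{x+t}{2})-R_-(x,\pi+\tfrac{x-t}{2})]$ appearing in Theorem~\ref{c2:thm_4}.
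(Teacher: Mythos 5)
Your overall route is the one the paper itself indicates (the paper gives this lemma without proof, saying only that it follows from $\Psi=e^{B\lambda(\pi-x)}V(x,\lambda)$ together with \eqref{2-2-55}, \eqref{2-2-69} and \eqref{2-2-70}), so the strategy is fine: pull the exponential inside, split $R=R_++R_-$, commute the exponentials through, and change variables in each piece. The problem is precisely at the step you yourself flag as the crux and then wave through: the sign bookkeeping. In the $R_-$ integral the substitution $s=\pi+\tfrac{x-t}{2}$ has Jacobian $ds=-\tfrac12\,dt$ \emph{and} reverses the limits ($s\colon x\to\pi$ corresponds to $t\colon 2\pi-x\to x$); these two minus signs cancel, so the reversal of limits does \emph{not} supply the extra sign you invoke. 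Carried out carefully, the two pieces come out with the \emph{same} sign:
\[
\int_x^\pi e^{B\lambda(2s-x-\pi)}R(x,s)\,ds=\frac12\int_x^{2\pi-x}\left[R_+\Bigl(x,\tfrac{x+t}{2}\Bigr)+R_-\Bigl(x,\pi+\tfrac{x-t}{2}\Bigr)\right]e^{B\lambda(\pi-t)}\,dt,
\]
so that \eqref{2-2-55}, \eqref{2-2-69}, \eqref{2-2-70} yield $\Psi(x,\lambda)=e^{B\lambda(\pi-x)}-\tfrac12\int_x^{2\pi-x}[R_++R_-]e^{B\lambda(\pi-t)}dt$, i.e.\ a plus between $R_+$ and $R_-$, not the minus you assert. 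Your claim that "summing the two transformed integrals yields" the stated identity therefore does not follow from the steps you describe; the one nontrivial point of the lemma is exactly this sign, and your argument gets it by assertion rather than computation.

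To repair this you must either exhibit where a genuine extra minus enters (it cannot come from the reversed limits, as shown above), or confront the fact that the stated formula is not consistent with \eqref{2-2-55} and \eqref{2-2-57} as printed. Note in this connection that the unproved block Lemmas \ref{c2:lem_6}--\ref{c2:lem_11} is not internally sign-consistent: for instance, the componentwise recursions \eqref{2-2-61}--\eqref{2-2-62} are not the componentwise form of \eqref{2-2-57} (already for $m=1$ the overall sign and the sign of the $p\cdot p$ term disagree), so the relative sign of $R_-$ in the series \eqref{2-2-65} depends on which of these conventions one adopts. A complete proof has to fix one consistent set of sign conventions (as is done, with full proofs, for the analogous kernel $K$ in \eqref{c2:K_Q+_Q-}, where the corresponding combination is $Q_++Q_-$ with a plus) and then derive the formula for $H$ from it; as written, your derivation silently produces the wrong sign relative to the formulas it cites.
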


The proof of this lemma follows from the fact that $\Psi(x,\lambda)=e^{B\lambda(\pi-x)}V(x,\lambda)$, and also from \eqref{2-2-55}, \eqref{2-2-69} and \eqref{2-2-70}.

Denoting now
\begin{equation}\label{2-2-71}
H(x,t)=\frac{1}{2}\left[R_{+}\left(x,\frac{x+t}{2}\right)-R_{-}(x,\pi+\frac{x-t}{2})\right],
\end{equation}
we obtain the representation \eqref{c2:Psi_rep}. 
The estimate \eqref{c2:H_est} follows from \eqref{2-2-67}, \eqref{2-2-68}, \eqref{2-2-69} and estimates \eqref{2-2-63} and \eqref{2-2-64}. 
Note that for $m \geq 1$ for the functions $\gamma_m(x,t)$ and $\delta_m(x,t)$ the identities $\gamma_m(x,x)\equiv\delta_m(x,x)\equiv 0$ hold (see \eqref{2-2-61} and \eqref{2-2-62}), and  $\gamma_0(x,x)=q(x)$, $\delta_0(x,x)=-p(x)$. 
Taking into account \eqref{2-2-70}, from \eqref{2-2-71} we obtain
\begin{align*}
H_A(x,x)B-BH_A(x,x)=& R_+(x,x)B= (q(x)\sigma_2-p(x)\sigma_3)B \\
=&\sigma_2p(x)+\sigma_3q(x)=\Omega(x),
\end{align*}
i.e. formula \eqref{c2:H_A_Omega} is proved. 
Consider now
\[
H(x,2\pi-x)=H_A(x, 2\pi -x)+H_C(x, 2\pi -x)=\dfrac{1}{2}\, R_+(x,\pi)-\dfrac{1}{2}\, R_-(x,x).
\]
Since according to \eqref{2-2-68} and the fact that $\gamma_m(x,x)=\delta_m(x,x)=0$ when $m\geqslant 1$, $H_C(x,2\pi -x)=-\dfrac{1}{2}\, R_-(x,x)\equiv 0$, then
\[
H_A(x,2\pi-x)=\dfrac{1}{2}\, R_+(x,\pi)=\dfrac{1}{2}\, [ q(\pi) \sigma_2 -p(\pi)\sigma_3 ].
\]
In the case of potentials for which $p(\pi)$ and $q(\pi)$ are uniquely determined (in particular, for smooth $p(\cdot)$ and $q(\cdot)$) the assertions
\[
H(x,2\pi-x)B+BH(x,2\pi-x)=0
\]
and \eqref{c2:H_C_0} are equivalent. 
In our (general) case ($p,q\in L^1$) \eqref{c2:H_C_0} is valid.

Since the vector-solution $\psi(x,\lambda,\beta)$ is represented through the fundamental matrix $\Psi(x,\lambda)$ in the form
$\psi(x,\lambda,\beta)=\Psi(x,\lambda)\left(\begin{array}{c}\sin\beta\\-\cos\beta\end{array}\right)$, then from \eqref{c2:Psi_rep}, we have
\begin{equation}\label{2-2-72}
\begin{aligned}
\psi(x,\lambda,\beta)
 =&\psi_0(x,\lambda,\beta)-\int_x^{\pi} H(x,t)e^{B\lambda(\pi-t)}dt 
\left(\begin{array}{c}\sin\beta\\-\cos\beta\end{array}\right) - \\
&- -\int_\pi^{2\pi-x} H(x,t)e^{B\lambda(\pi-t)}dt 
\left(\begin{array}{c}\sin\beta\\-\cos\beta\end{array}\right)
\end{aligned}
\end{equation}
In the integral$\int_\pi^{2\pi-x}$ by changing the variable of integration $t_1=2\pi-t$ we get:
\begin{equation}\label{2-2-73}
\begin{aligned}
& \int_x^{2\pi-x} H(x,t)e^{B\lambda(\pi-t)}dt=-\int_\pi^{x}
H(x,2\pi-t_1)e^{B\lambda(t_1-\pi)}dt_1
\\
& =\int_x^\pi H(x,2\pi-t_1)A_\beta A_\beta e^{-B\lambda(\pi-t_1)}dt_1=
\int_x^\pi H(x,2\pi-t)A_\beta e^{B\lambda(\pi-t)}dt\cdot A_\beta,
\end{aligned}
\end{equation}
where the orthogonal matrix $A_\beta=\sigma_2\cos2\beta+\sigma_3\sin2\beta$ anti-commutes with the matrix $B$, $A_\beta^2=E$ and 
$A_\beta
\left(\begin{array}{c}\sin\beta\\ -\cos\beta\end{array}\right)=
-\left(\begin{array}{c}\sin\beta\\ -\cos\beta\end{array}\right)$. 
Therefore, from \eqref{2-2-72} and \eqref{2-2-73}, we get the representation \eqref{c2:psi_rep} with the kernel $H_{\pi}(x,t,\beta)=H(x,t)-H(x,2\pi-t)A_\beta$.

From \eqref{2-2-67} and \eqref{2-2-71},  we have
\[
H_\pi (x,x,\beta)=\dfrac{1}{2} [ R_+(x,x)-R_-(x,\pi) ] -\dfrac{1}{2} [ R_+(x,\pi)-R_- (x,x) ] A_\beta.
\]
Taking into account \eqref{2-2-67}, \eqref{2-2-68}, the form of the matrix $A_\b$, and the fact that $R_-(x,x)\equiv 0$, we obtain that the part of $H_\pi (x,x,\pi)$, which anti-commutes with $B$ is
\[
H_{\pi,A} (x,x,\beta)=\dfrac{1}{2} R_+(x,x)=H_A (x,x)= q(x)\sigma_2 -p(x)\sigma_3.
\]
This yields \eqref{c2:H_pi_A}.

To obtain estimates of the elements of kernels $H(x,t)$ and $H_\pi(x,t,\beta)$ for $p,q \in L^2[0,\pi]$, we introduce the function
\[
g(x)=\left(\int_x^\pi| p(s)|^2ds\right)^\frac{1}{2}+\left(\int_x^\pi| q(s)|^2ds\right)^\frac{1}{2}.
\]
\begin{lemma}\label{c2:lem_11} 
Let $p,q\in L^2[0,\pi]$. 
Then for the function $\gamma_m(x,t)$ and $\delta_m(x,t)$, defined by the equalities \eqref{2-2-61} and \eqref{2-2-62}, the estimates
\[
|\gamma_{2k}(x,t)|\, , | \delta_{2k}(x,t)|  \leq  g^{2k+1}(x)\frac{(\pi-t)^{\frac{k-1}{2}}(t-x)^{\frac{k}{2}}}{(k-1)!\sqrt{k}}\quad k=1,2,\dots,
\]
\[
| \gamma_{2k+1}(x,t)|\, , | \delta_{2k+1}(x,t)|  \leq   g^{2k+2}(x)\frac{(\pi-t)^{\frac{k}{2}}(t-x)^{\frac{k}{2}}}{(k)!}\quad  k=0,1,2,\dots .
\]
hold.
\end{lemma}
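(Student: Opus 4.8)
To prove Lemma~\ref{c2:lem_11} I would argue by induction on the index $m$, reproducing \emph{mutatis mutandis} the scheme already used in the proof of Lemma~\ref{c2:lem_4}. The key preliminary observation is that the change of variable $\sigma=t+s-\pi$ (so that $s=\pi-(t-\sigma)$ and $ds=d\sigma$, with $s\in[\pi-(t-x),\pi]$ corresponding to $\sigma\in[x,t]$) turns the recurrences \eqref{2-2-61}, \eqref{2-2-62} into
\[
\gamma_m(x,t)=\int_x^t\Bigl[q(\sigma)\,\gamma_{m-1}\bigl(\sigma,\pi-(t-\sigma)\bigr)-(-1)^m p(\sigma)\,\delta_{m-1}\bigl(\sigma,\pi-(t-\sigma)\bigr)\Bigr]\,d\sigma
\]
and the analogous formula for $\delta_m$. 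This is precisely the mirror image of \eqref{c2:alpha_m}, \eqref{c2:beta_m} under the reflection interchanging the endpoint $\pi$ with the endpoint $0$: the variable $t$ is replaced by $\pi-t$, the quantity $x-t$ by $t-x$, and $d(\cdot)$ by $g(\cdot)$. One further notes that $g$ is continuous and monotone \emph{decreasing}, so $g(\sigma)\le g(x)$ whenever $\sigma\ge x$; this is what takes over the role of the monotonicity of $d$ in Lemma~\ref{c2:lem_4}. Since Lemma~\ref{c2:lem_8} already guarantees that the $\gamma_m,\delta_m$ are well-defined $L^1$-functions, no convergence issue arises.

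First I would treat the base cases. From \eqref{2-2-60}, $|\gamma_0(x,t)|=|q(t)|$ and $|\delta_0(x,t)|=|p(t)|$. For $m=1$, the Cauchy--Schwarz inequality applied to the transformed recurrence, together with $\int_x^t|q(\sigma)|^2\,d\sigma\le\int_x^\pi|q(\sigma)|^2\,d\sigma$ and its analogue for $p$, gives $|\gamma_1(x,t)|,|\delta_1(x,t)|\le g^2(x)$, which is the case $k=0$ of the second estimate. Inserting this into the recurrence and applying Cauchy--Schwarz once more, with $\int_x^t g^4(\sigma)\,d\sigma\le g^4(x)(t-x)$, yields $|\gamma_2(x,t)|,|\delta_2(x,t)|\le g^3(x)(t-x)^{1/2}$, the case $k=1$ of the first estimate. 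Because the bounds obtained for $\gamma_m$ and for $\delta_m$ coincide at every step, it suffices to run the induction for one of them.

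The inductive step splits into two half-steps. Passing from an even index $2k$ ($k\ge1$) to $2k+1$: by the induction hypothesis $|\gamma_{2k}(\sigma,\pi-(t-\sigma))|,|\delta_{2k}(\sigma,\pi-(t-\sigma))|\le g^{2k+1}(\sigma)(t-\sigma)^{(k-1)/2}(\pi-t)^{k/2}/((k-1)!\sqrt k)$, so that
\[
|\gamma_{2k+1}(x,t)|\le\Bigl[\Bigl(\int_x^t|q(\sigma)|^2\,d\sigma\Bigr)^{1/2}+\Bigl(\int_x^t|p(\sigma)|^2\,d\sigma\Bigr)^{1/2}\Bigr]\Bigl(\int_x^t\Bigl[\frac{g^{2k+1}(\sigma)(t-\sigma)^{\frac{k-1}{2}}(\pi-t)^{\frac{k}{2}}}{(k-1)!\sqrt k}\Bigr]^2d\sigma\Bigr)^{1/2},
\]
and pulling $g(\sigma)\le g(x)$ out of the inner integral and using $\int_x^t(t-\sigma)^{k-1}\,d\sigma=(t-x)^k/k$ one arrives exactly at $g^{2k+2}(x)(\pi-t)^{k/2}(t-x)^{k/2}/k!$. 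Passing from $2k+1$ ($k\ge0$) to $2(k+1)$ is entirely parallel: inserting the bound for $\gamma_{2k+1},\delta_{2k+1}$, applying Cauchy--Schwarz, using $g(\sigma)\le g(x)$ and $\int_x^t(t-\sigma)^{k}\,d\sigma=(t-x)^{k+1}/(k+1)$, one obtains $g^{2k+3}(x)(\pi-t)^{k/2}(t-x)^{(k+1)/2}/(k!\sqrt{k+1})$, which is the first estimate with $k$ replaced by $k+1$. This closes the induction; for consistency one checks that the $k=0$ case of the second half-step reproduces the base bound for $\gamma_2,\delta_2$.

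In substance the argument is a transcription of the proof of Lemma~\ref{c2:lem_4}, and I do not expect any genuine difficulty. The only point requiring care is the bookkeeping of the reflected variables — keeping straight that in $\gamma_{m-1}(\sigma,\pi-(t-\sigma))$ the first slot is the new ``$x$'', that $\pi$ minus the second slot equals $t-\sigma$, and that the second slot minus the first equals $\pi-t$ — together with tracking the half-integer exponents of $(\pi-t)$ and $(t-x)$ so that the Beta-integral factors combine into the stated factorials $(k-1)!\sqrt k$ and $k!$.
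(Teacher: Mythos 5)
Your proof is correct: the substitution $\sigma=t+s-\pi$ does turn the recurrences \eqref{2-2-61}, \eqref{2-2-62} into the exact mirror of \eqref{c2:alpha_m}, \eqref{c2:beta_m}, and your induction with Cauchy--Schwarz, the monotone decrease of $g$, and the elementary integral $\int_x^t(t-\sigma)^{k-1}d\sigma=(t-x)^k/k$ reproduces the stated bounds, including the base cases. This is precisely the route the paper intends — it omits the proof of Lemma \ref{c2:lem_11} as being the same argument as Lemma \ref{c2:lem_4} transferred to the point $\pi$ — so there is nothing to add.
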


Further, proceeding from \eqref{2-2-71}, \eqref{2-2-67}, \eqref{2-2-68} and the representations for the elements$H_{ij}(x,t)$ (similarly to Lemma \ref{c2:lem_5}), we obtain estimates for $| H_{ij}(x,t)|^2$, similar to the estimates \eqref{c2:K_11_est}, \eqref{c2:K_ii_est} and \eqref{c2:K_ij_est}. 
The assertions of part 3 of Theorem \ref{c2:thm_4} are immediate consequences of these estimates.

\section{The case $p,q\in W^{1,k}_{loc}$, $k \geq 1$.}\label{c2:sec_6}
If $ p $ and $ q $ have absolutely continuous derivatives up to the order $ k-1 $ and derivatives of the $k$-th order belong to $ L^1_{loc} $ (i.e., $ p, q \in W^{1,k}_{loc} $) or $ L^2_{loc} $ (i.e. $ p, q \in W^{2,k}_{loc} $), then it is easy to verify that the functions $ \alpha_n (x, t) $ and $ \beta_n (x, t) $, constructed by the formulas \eqref{c2:alpha_0_beta_0}-\eqref{c2:beta_m}, will have absolutely continuous derivatives with respect to $ x $, up to the order $ k $ and absolutely continuous derivatives with respect to $ t $, up to the order $ k-1 $.
Moreover, the derivative of the $k$-th order $ \frac{\partial^k \alpha_n(x, t)} {\partial t^k} \in L^{1,2}_{loc} $, respectively.

Then the same properties will be possessed by the matrix $ Q(x, t) $ and, accordingly, the matrix $ K(x, t) $, constructed by the formula \eqref{c2:K_Q+_Q-}.

\section{Invertibility and weak boundedness of transformation operators}\label{c2:sec_7}
As it is noted in the inequalities \eqref{c2:K_11_est}-\eqref{c2:K_ij_est} if $p, q \in L^2[0, \pi]$, then $\int_{-\pi}^\pi |K(x,t)|^2 dx < \infty$ and $\int_{-\pi}^\pi |K_0(x,t)|^2 dx < \infty$. 
As a consequence of the inequalities \eqref{c2:K_11_est}-\eqref{c2:K_ij_est}, we also have 
\[
\| K \|^2 = \int_{-\pi}^\pi \int_{-\pi}^\pi |K(x,t)|^2 dt dx < \infty, \quad
\| K_0 \|^2 = \int_{-\pi}^\pi \int_{-\pi}^\pi |K_0(x,t)|^2 dt dx < \infty.
\]
It is known (see \cite{Marchenko:1952, Mikhlin:1959, Albeverio-Hryniv-Mykytyuk:2005}) that, in this case, the Volterra integral operator
\begin{equation}\label{c2:Volterra_W}
\left[ W f \right](x) = \left[ (E + \mathbb{K}) f \right](x) = 
f(x) + \int_{-x}^x K(x,t) f(t) dt
\end{equation}
and 
\begin{equation}\label{c2:Volterra_V}
\left[ V f \right](x) = \left[ (E + \mathbb{K}_0) f \right](x) = 
f(x) + \int_{0}^x K_0(x,t) f(t) dt
\end{equation}
defined on $L^2([-\pi, \pi]; \mathbb{C}^2]$ and $L^2([0, \pi]; \mathbb{C}^2]$ respectively, are invertible, and their inverses are also Volterra integral operators.

Let us recall some notations and inequalities:
\[
\| h \|^2_{[0,x]} := \int_0^x |h(t)|^2 dt \leq \int_0^\pi |h(t)|^2 dt = \| h \|^2.
\]
Since $\int_0^x |p(t)|^2 dt$, $\int_0^x |q(t)|^2 dt$, $x$ and $d(x)$ are monotonically increasing functions for $x \in [0, \pi]$, then it follows from \eqref{c2:K_ij_est}  that
\[
\int_{-x}^x |K_{i j}(x,t)|^2 dt \leq \int_0^\pi |p(t)|^2 dt + 2d^4(\pi) [\pi^2 d^2(\pi) + \pi] e^{\pi d^2(\pi)} = M_p.
\]
Similarly, from \eqref{c2:K_ii_est}, it follows that
\[
\int_{-x}^x |K_{i i}(x,t)|^2 dt \leq M_q.
\]
Let $M = \max\{M_p, M_q\}$.
Since
\[
|K_0(x,t)|^2 \leq |K_{11}(x,t)|^2 + |K_{12}(x,t)|^2 + |K_{21}(x,t)|^2 + |K_{22}(x,t)|^2
\]
then, it follows that
\[
\int_{0}^\pi |K_0(x,t)|^2 dt \leq 4 M \pi = M_0
\]

From \eqref{c2:Volterra_V}, we get
\begin{align*}
| \left[ V f \right](x)  |^2 
\leq & 2|f(x)|^2 + 2 \left| \int_{0}^x K_0(x,t) f(t) dt \right|^2  \\
\leq & 2|f(x)|^2 + 2 \int_{0}^x | K_0(x,t) |^2 dt \int_{0}^x | f(t) |^2 dt
\end{align*}
where in the first line we used well-known inequality $(|a|+|b|)^2 \leq 2 |a|^2 + 2 |b|^2$. 
And, therefore
\[
\| V f \|^2_{[0, x]} = \int_0^x | \left[ V f \right](s)  |^2 ds 
\leq 2 \int_0^x  |f(s)|^2 ds + 2 M_0 \int_0^x  |f(s)|^2 ds
= (2 + 2M_0) \| f \|^2_{[0, x]}.
\]
The property expressed by this inequality is called the weak boundedness of the operator $V$.
Which directly implies the following property.
\begin{itemize}
\item[A.] 
If the sequence of functions $\{f_k\}$ converges in $L^2[0,\pi]$ to a function $f$, then the sequence $\{V f_k\}$  converges to $V f$ in $L^2[0,\pi]$.
\end{itemize}

In particular, since arbitrary function $f \in L^2([0,\pi]; \mathbb{C}^2)$ can be represented in the form
\[
f = \sum_{n=-\infty}^\infty \frac{1}{a_n} \left(f, \varphi_n\right) \varphi_n,
\]
then, the operator $V$ can be applied to the series component-wise
\[
V f = \sum_{n=-\infty}^\infty \frac{1}{a_n} \left(f, \varphi_n\right) V \varphi_n.
\]
Similar properties have the integral operators \eqref{c2:Psi_rep} and \eqref{c2:psi_rep}.

\section*{Notes and references}
\addcontentsline{toc}{section}{Notes and references}
Transformation operators for the Sturm-Liouville equation were deeply studied in the work of V.A. Marchenko in 1952 (see \cite{Marchenko:1952}).

In 1966, in work \cite{Gasymov-Levitan:1966}, M.G. Gasymov and B.M. Levitan formulated a theorem on the existence of a transformation operator of the form 
\[
Vf = (E + \mathbb{K})f = f(x) + \int_0^x K(x,t) f(t) dt,
\]
under the condition $p, q \in L^1_{loc}$.
There was also asserted the connection $K(x,x) B - B K(x,x) = \Omega(x)$ under the condition of the absolute continuity of the potentials $p$ and $q$, i.e. $p, q \in AC_{loc}$.
However, the details of the proof were not available.

In 1972, in monograph \cite{Marchenko:1972} (and then in 1977 in monograph \cite{Marchenko:1977}), Marchenko formulated a theorem on the existence of a transformation operator of the form 
\[
Wf = (E + \mathbb{K})f = f(x) + \int_{-x}^x K(x,t) f(t) dt,
\]
under the condition of continuous $p$ and $q$.

In 1977 F.E. Melik-Adamyan, in paper \cite{Melik-Adamyan:1977}, again formulated a theorem on the existence of a transformation operator under the condition $p, q \in L^1_{loc}$.
But again, not all the details of the proof were complete.
This suggests that the issue was technically tricky enough.
A detailed proof was published in the paper \cite{Harutyunyan:2008-2}.
Another proof was given in \cite{Albeverio-Hryniv-Mykytyuk:2005}.
See also \cite{Lunyov-Malamud:2021} and references therein.

\chapter{The Boundary value problem}\label{chapter_3}

\section{Existence and asymptotics of eigavalues.}\label{c3:sec_1}
By $L(p, q, \alpha, \beta)$ we will denote the boundary value problem
\begin{align}
& \ell y=\lambda y, \label{c3:Dirac_eq} \\
& y_1(0) \cos \alpha+y_2 (0) \sin \alpha=0, \label{c3:a_bound_cond}\\
& y_1(\pi)\cos \beta +y_2(\pi)\sin \beta =0. \label{c3:b_bound_cond}
\end{align}

\begin{definition}\label{c3:def_1}
The values of the parameter $\lambda$ for which the problem $L(p, q, \alpha, \beta)$ has nontrivial solutions are called eigenvalues of that problem, and the corresponding solutions are called the eigenfunctions.
\end{definition}

\begin{theorem}\label{c3:thm_1}
Let $p, q \in L_\mathbb{C}^1[0, \pi]$ and $\alpha, \beta$ are complex numbers ($\alpha, \beta \in \mathbb{C}$).
Then problem $L(p, q, \alpha, \beta)$ has countable set of eigenvalues $\lambda_n =\lambda_n (p, q, \alpha, \beta)$, $n \in \mathbb{Z}$, which form an unbounded sequence, and which has asymptotics 
\begin{equation}\label{c3:ev_asymptotics}
\lambda_n (p, q, \alpha, \beta ) = n+\frac{\beta -\alpha}{\pi } + h_n,
\end{equation}
where $h_n=o(1)$, when $n \to \pm \infty$, and also the estimate of remainder is uniform with respect to $p$ and $q$ from the bounded subsets of $L_\mathbb{C}^1 [0, \pi]$ and is uniform over all $\alpha, \beta \in \mathbb{C}$, which have bounded imaginary part.
If $p, q \in L_\mathbb{C}^2[0, \pi]$, then  $\sum_{n=-\infty }^{\infty }|h_n|^2 < \infty$, and again uniformly with respect to $p$ and $q$ from the bounded subsets of $L_\mathbb{C}^2 [0, \pi]$ and above mentioned $\alpha$ and $\beta$.
\end{theorem}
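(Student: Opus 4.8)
The plan is to reduce the problem to counting the zeros of the characteristic function and then apply Rouché's theorem on suitably chosen contours. By Theorem~\ref{c1:thm_1} the solution $\varphi(x,\lambda,\alpha)$ of $\ell y=\lambda y$, $\varphi(0)=(\sin\alpha,-\cos\alpha)^{T}$, exists, is unique, and is entire in $\lambda$; it automatically satisfies \eqref{c3:a_bound_cond}, so $\lambda$ is an eigenvalue of $L(p,q,\alpha,\beta)$ iff $\Phi(\lambda):=\varphi_{1}(\pi,\lambda,\alpha)\cos\beta+\varphi_{2}(\pi,\lambda,\alpha)\sin\beta=0$, and $\Phi$ is entire in $\lambda$. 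Substituting the transformation-operator representation \eqref{c2:varphi_rep}--\eqref{c2:K_0} and using $\cos\beta\sin(\lambda\pi+\alpha)-\sin\beta\cos(\lambda\pi+\alpha)=\sin(\pi\lambda+\alpha-\beta)$, one gets
\[
\Phi(\lambda)=\sin(\pi\lambda+\alpha-\beta)+\Phi_{1}(\lambda),\qquad \Phi_{1}(\lambda):=\int_{0}^{\pi}\bigl[u(t)\sin(\lambda t+\alpha)-v(t)\cos(\lambda t+\alpha)\bigr]\,dt,
\]
where $(u,v)=(\cos\beta,\sin\beta)\,K_{0}(\pi,\cdot,\alpha)\in L^{1}(0,\pi)$ and, by \eqref{c2:K_est}, $\|u\|_{L^{1}}+\|v\|_{L^{1}}$ is bounded in terms of $c(\pi)$ and of $|\im\alpha|,|\im\beta|$ only. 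The zeros of the leading term are $\lambda_{n}^{0}:=n+\tfrac{\beta-\alpha}{\pi}$, $n\in\mathbb{Z}$ — the asserted main term in \eqref{c3:ev_asymptotics}.

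The second step is to show $\Phi_{1}(\lambda)=o\!\left(e^{\pi|\im\lambda|}\right)$ as $|\lambda|\to\infty$. Writing $\sin(\lambda t+\alpha)$, $\cos(\lambda t+\alpha)$ in terms of $e^{\pm i(\lambda t+\alpha)}$: for $\im\lambda\ge0$ the only exponential not of modulus $\le e^{|\im\alpha|}$ is $e^{-i\lambda t}$, and $e^{-\pi\im\lambda}\int_{0}^{\pi}u(t)e^{-i\lambda t}\,dt=\int_{0}^{\pi}u(t)e^{-i(\re\lambda)t}e^{-\im\lambda(\pi-t)}\,dt\to0$ — by dominated convergence when $\im\lambda\to+\infty$, and by the Riemann–Lebesgue lemma when $\im\lambda$ stays bounded and $\re\lambda\to\pm\infty$; the bounded exponential contributes $O(1)=o(e^{\pi\im\lambda})$. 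The case $\im\lambda\le0$ is symmetric, and the $v$-term is handled identically.

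Now count and label. Since $\sin(\pi\lambda+\alpha-\beta)=(-1)^{n}\sin\!\bigl(\pi(\lambda-\lambda_{n}^{0})\bigr)$, on each circle $|\lambda-\lambda_{n}^{0}|=\delta$ (fixed $\delta\in(0,\tfrac12)$) one has $|\sin(\pi\lambda+\alpha-\beta)|\ge c_{\delta}>0$ with $c_{\delta}$ independent of $n$, while $|\im\lambda|\le\tfrac{|\im(\beta-\alpha)|}{\pi}+\delta$ is bounded there; by the previous step $|\Phi_{1}|\to0$ on that circle as $|n|\to\infty$, so for $|n|\ge N_{0}(\delta)$ Rouché gives exactly one zero $\lambda_{n}$ in the disc, i.e. $\lambda_{n}=n+\tfrac{\beta-\alpha}{\pi}+h_{n}$ with $|h_{n}|<\delta$; since $\delta$ is arbitrary, $h_{n}\to0$. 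A parallel Rouché argument on the rectangles $\{\,|\re\lambda-\re\tfrac{\beta-\alpha}{\pi}|\le N+\tfrac12,\ |\im\lambda|\le N\,\}$ — using $|\sin(\pi\lambda+\alpha-\beta)|\ge1$ on the vertical sides and $|\sin(\pi\lambda+\alpha-\beta)|\ge\tfrac14e^{\pi N-|\im(\alpha-\beta)|}$ on the horizontal ones, against $\Phi_{1}=o(e^{\pi|\im\lambda|})$ — shows $\Phi$ has exactly $2N+1$ zeros in the rectangle; hence the zeros not lying in the small discs $D_{n}$ ($|n|\ge N_{0}$) are exactly $2N_{0}-1$ in number, are confined to a bounded region, and may be labelled $\lambda_{n}$, $|n|<N_{0}$, producing the full sequence $\{\lambda_{n}\}_{n\in\mathbb{Z}}$; it is unbounded because $\lambda_{n}-(n+\tfrac{\beta-\alpha}{\pi})\to0$. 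In particular $\Phi\not\equiv0$.

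Finally, uniformity. Uniformity in $\alpha,\beta$ with bounded imaginary parts is immediate: $c_{\delta}$, the exponential bounds and $N_{0}(\delta)$ may all be chosen independent of such $\alpha,\beta$. The delicate point — what I expect to be the main obstacle — is uniformity with respect to $p,q$ in a bounded subset of $L^{1}_{\mathbb{C}}[0,\pi]$: this needs the Riemann–Lebesgue step to hold uniformly over the associated (bounded-in-$L^{1}$) family of kernels $K_{0}(\pi,\cdot,\alpha)$, which has to be extracted with more care than the plain Riemann–Lebesgue lemma provides. For $p,q\in L^{2}_{\mathbb{C}}[0,\pi]$ this difficulty disappears: by Theorem~\ref{c2:thm_3}, $u,v\in L^{2}(0,\pi)$ with norms controlled by $d(\pi)$; from $\Phi(\lambda_{n})=0$ one gets $\sin(\pi h_{n})=(-1)^{n+1}\Phi_{1}(\lambda_{n})$, hence $|h_{n}|\le\tfrac12|\Phi_{1}(\lambda_{n})|$ once $|n|$ is large; and since the $\lambda_{n}$ stay in a fixed horizontal strip with $\lambda_{n}-n\to\tfrac{\beta-\alpha}{\pi}$, the $\Phi_{1}(\lambda_{n})$ are, up to an $\ell^{2}$-negligible error, Fourier-type coefficients of $u$ and $v$ at frequencies comparable to $n$, so Bessel's inequality yields $\sum_{n}|\Phi_{1}(\lambda_{n})|^{2}<\infty$, hence $\sum_{n}|h_{n}|^{2}<\infty$, uniformly over bounded subsets of $L^{2}_{\mathbb{C}}[0,\pi]$ and over the stated $\alpha,\beta$.
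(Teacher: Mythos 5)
Your main line is the paper's own: reduce to zeros of the entire function $\chi(\lambda)=\varphi_1(\pi,\lambda,\alpha)\cos\beta+\varphi_2(\pi,\lambda,\alpha)\sin\beta$, write it via the transformation operator as $\sin(\pi\lambda+\alpha-\beta)+\Phi_1(\lambda)$ with $\Phi_1$ the transform of $L^1$ kernels, establish $\Phi_1(\lambda)=o(e^{\pi|\im\lambda|})$, and apply Rouch\'e on large contours plus small discs to get the count, the labelling, and $h_n\to0$. The differences here are cosmetic: the paper uses the big circles $C_n$ and quotes Marchenko's Lemma \ref{c3:lem_1}, where you use rectangles and a direct dominated-convergence/Riemann--Lebesgue argument. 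For the $L^2$ tail the routes genuinely diverge: the paper verifies (Lemma \ref{c3:lem_4}, using Theorem \ref{c2:thm_6}) that the characteristic equation has the form $\sin\pi z+\int_{-\pi}^{\pi}f(t)e^{izt}dt$ with $f\in L^2$ and then invokes the Levin--Ostrovsky localization result (Lemma \ref{c3:lem_3}), whereas you bound $|h_n|$ by $|\Phi_1(\lambda_n)|$ and appeal to Bessel. Your route is more elementary and self-contained, but the phrase ``up to an $\ell^2$-negligible error'' hides a real step: replacing $\lambda_n$ by $n+\tfrac{\beta-\alpha}{\pi}$ in $\Phi_1(\lambda_n)$ produces an error of size $O(|h_n|)$ whose constant is of order $\|u\|_{L^1}+\|v\|_{L^1}$ and need not be less than $1$, so it cannot simply be absorbed into $|h_n|\le C|\Phi_1(\lambda_n)|$. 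You must expand one order further --- the first-order coefficient is itself a Fourier-type coefficient of $tu(t),\,tv(t)\in L^2$, hence $o(1)$ --- and then bootstrap to $|h_n|\le C|\widehat F(n)|$ for large $|n|$. With that repair the $\ell^2$ conclusion (and its uniformity, via the estimates \eqref{c2:K_11_est}--\eqref{c2:K_ij_est}) goes through.

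The genuine gap is the one you flag yourself and do not close: the theorem asserts that $h_n=o(1)$ holds uniformly for $p,q$ from bounded subsets of $L^1_{\mathbb C}[0,\pi]$, and your argument is pointwise in $(p,q)$. Uniform boundedness of the kernels in $L^1$ --- which is all that \eqref{c2:K_est} provides --- gives a uniform bound $|\Phi_1(\lambda)|\le Ce^{\pi|\im\lambda|}$ but not uniform decay, and a bounded $L^1$ family does not in general have uniformly small Fourier-type transforms; so the threshold $N_0(\delta)$ in your Rouch\'e step has not been shown independent of $p,q$. The paper disposes of this point in one sentence, by appealing to \eqref{c2:K_est} together with Lemma \ref{c3:lem_1}, so your suspicion that the plain Riemann--Lebesgue lemma is not enough is well founded; nevertheless, relative to the statement being proved, this uniformity claim is the piece your proposal leaves unestablished, and it needs either an argument that the kernel family attached to the given bounded set of potentials is relatively compact in $L^1(-\pi,\pi)$ (so that the decay in Lemma \ref{c3:lem_1} is uniform over it) or some substitute quantitative estimate.
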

\begin{proof}
By $y=\varphi(x, \lambda, \alpha)$ in Chapter \ref{chapter_2} we denoted the solution of Cauchy problem \eqref{c2:Cauchy_problem_y}. 
It follows from initial conditions in \eqref{c2:Cauchy_problem_y} that $\varphi(x,\lambda , \alpha)$ satisfies the boundary conditions \eqref{c3:a_bound_cond} for all $\lambda \in \mathbb{C}$ and to be the eigenfunction must satisfy to boundary condition \eqref{c3:b_bound_cond}. 
Thus the eigenvalues are solutions to the equation (with respect to the unknown $\lambda$)
\begin{equation}\label{c3:chi_characteristic}
\chi (\lambda) \equiv \chi_{\alpha, \beta} (\lambda) \equiv \varphi_1 (\pi, \lambda, \alpha) \cos \beta + \varphi_2 (\pi , \lambda , \alpha)\sin \beta =0,
\end{equation}
i.e. the eigenvalues are zeros of entire function $\chi(\lambda)$.

Since the solution $\varphi(x, \lambda, \alpha)$ represented by fundamental matrix $\Phi(x, \lambda)$ by formula $\varphi \left(x, \lambda , \alpha\right)=\Phi \left(x, \lambda \right)\left( {\sin \alpha}, \, {-\cos \alpha} \right)^T$ and 
\begin{equation} \label{c3:varphi_0}
e^{-B\lambda x} 
 \left(
 \begin{array}{c} 
 {\sin \alpha} \\
 {-\cos \alpha} 
 \end{array}
 \right) 
 =
 \left(
 \begin{array}{c} 
 {\sin (\lambda x+\alpha)} \\
  {-\cos \left(\lambda x+\alpha\right)} 
  \end{array}
  \right)
 =
 \varphi_{0} (x,\lambda , \alpha)
\end{equation}
(it is easy to see that $\varphi_0(x, \lambda, \alpha)$ is the solution of Cauchy problem \eqref{c2:Cauchy_problem_y} for $p(x) \equiv q(x) \equiv 0$), then from \eqref{c2:Phi_rep_1} for $\varphi(x, \lambda, \alpha)$ we obtain the representation
\begin{equation} \label{c3:varphi_representation}
\varphi \left(x, \lambda , \alpha\right)=\varphi_{0} \left(x, \lambda , \alpha\right)+\int_{-x}^{x} K\left(x, t\right)\varphi_{0} \left(t, \lambda , \alpha\right)dt .
\end{equation}
Therefore the equation \eqref{c3:chi_characteristic} will be rewritten in the form
\begin{equation}\label{c3:chi_representation}
\begin{aligned}
\chi \left(\lambda \right)=&
\sin \left(\lambda \pi +\alpha-\beta \right)+\int_{-\pi }^{\pi }K_{11} \left(\pi , t\right)\sin \left(\lambda t+\alpha\right)dt\cos \beta -
\\
- & \int_{-\pi }^{\pi }K_{12} \left(\pi , t\right)\cos \left(\lambda t+\alpha\right) dt \cos \beta +
\int_{-\pi }^{\pi }K_{21} \left(\pi , t\right)\sin \left(\lambda t+\alpha\right)dt\sin \beta -
\\
- & \int_{-\pi }^{\pi }K_{22} \left(\pi , t\right)\cos \left(\lambda t+\alpha\right) dt \sin \beta =\sin \left(\lambda \pi +\alpha-\beta \right)+g_{1} \left(\lambda \right)+
\\
+ & g_{2} \left(\lambda \right)+g_{3} \left(\lambda \right)+g_{4} \left(\lambda \right)=\sin \left(\lambda \pi +\alpha-\beta \right)+g\left(\lambda \right)=0,
\end{aligned}
\end{equation}
where by $g(\lambda)$, we denote the sum of the last four integrals.
It is obvious that $g(\lambda)$ is an entire function.

Since the eigenvalues are the solutions of \eqref{c3:chi_representation}, we must prove the existence of the solution of this equation. 
Let us note that in the case of zero potential ($\Omega(x) \equiv 0$) $g(\lambda) \equiv 0$ and the eigenvalues of the problem $L(0, 0, \alpha, \beta)$ are the solutions of the equation $\sin \left(\lambda \pi + \alpha- \beta \right)=0$, i.e. $\lambda_{n} \left(0, 0, \alpha, \beta \right) = n+\frac{\beta -\alpha}{\pi }$, $n \in \mathbb{Z}$.
In general case, to prove the existence of the solutions of equation \eqref{c3:chi_representation}, we use Rouche theorem (see, e.g., Titchmarsch \cite{Titchmarsh:1980}, page 125).

\begin{theorem}\label{c3:thm_2}
Let $f$ and $g$ are analytic functions in the domain $D$ and let $\Gamma$ be a simple, closed, piecewise smooth curve, which, together with domain $G$, bounded by $\Gamma$, belong to $D$ and the inequality 
\[
|f(z)| > |g(z)|
\] 
holds on all points of $\Gamma$.
Then in domain $G$ the sum $f(z)+g(z)$ has as many zeros as $f(z)$.
\end{theorem}

Let us denote $\lambda +\frac{\alpha-\beta }{\pi } =z$.
Then equation \eqref{c3:chi_representation} can be written in the form
\begin{equation}\label{c3:sin_pi_z}
\sin \pi z+\tilde{g}\left(z\right)=0,
\end{equation}
and if we prove that on contours $C_{n} =\left\{z;\, \, \left|z\right|=n+\frac{1}{2} \right\}$, $n \in \mathbb{Z}$, the inequalities $\left|\tilde{g}\left(z\right)\right|<\left|\sin \pi z\right|$ hold, then, according to Rouche theorem, the number of roots of the equation \eqref{c3:sin_pi_z} (which is the same as the number of eigenvalues) are equal to number of roots (zeros) of function $\sin \pi z$, i.e. $2n+1$.
To this end, we bring the formulation of the well-known lemma (let us note that it is the generalization of the famous Riemann-Lebesgue lemma to the complex case).
\begin{lemma}[\cite{Marchenko:1977} page 36]\label{c3:lem_1} For arbitrary function $f\in L^{1} \left(a, b\right)$, $\left(-\infty <a<b<\infty \right)$, the following equalities $\left(d=\max \left\{\left|a\right|, \left|b\right|\right\}\right)$
\begin{equation} \label{c3:lim_lambda}
\lim_{\left|\lambda \right|\to \infty } e^{-\left|\im \lambda \right|d} \int_{a}^{b}f\left(x\right)\cos \lambda x\, dx =
\lim_{\left|\lambda \right|\to \infty } e^{-\left|\im \lambda \right|d} \int_{a}^{b}f\left(x\right)\sin \lambda x\, dx = 0.
\end{equation}
hold.
\end{lemma}

In what follows, the following lemma will be useful to us:
\begin{lemma}[\cite{Poschel-Trubowitz:1987} ]\label{c3:lem_2}
If $|z-\pi n| \geq \frac{\pi}{4}$, for all integer $n \in \mathbb{Z}$, then
\begin{equation}\label{c3:sinz}
\left|\sin z\right|>\frac{1}{4} e^{\left|Imz\right|} .
\end{equation}
\end{lemma}

Let us now estimate the terms that make up the function $g(\lambda)$.
According to Lemma \ref{c3:lem_1}, for arbitrary $\epsilon > 0$ there exists $A_{ij}>0$, such that for $|\lambda| > A_{ij} (i,j=1,2)$ the inequalities
\[
e^{-| \im  \lambda|\pi } \left|\int_{-\pi }^{\pi }K_{ij} \left(\pi , t\right)\sin \lambda t\, dt \right| < \epsilon,
\quad  e^{-|\im  \lambda|\pi } \left|\int_{-\pi }^{\pi }K_{ij} \left(\pi , t\right)\cos \lambda t\, dt \right| < \epsilon
\]
hold.
Therefore
\begin{equation}\label{c3:12}
\begin{aligned}
& \left|\int_{-\pi }^{\pi }  K_{ij} \left(\pi , t\right)\sin \lambda t\, dt\cos \alpha 
+ \int_{-\pi }^{\pi }K_{ij} \left(\pi , t\right)\cos \lambda t\, dt\sin \alpha \right| \cdot \left|\cos \beta \right|  \\
\leq & \left(\epsilon \, e^{\left| \im  \lambda \right|\pi } \cdot e^{\left| \im  \alpha\right|} +\epsilon e^{\left| \im  \lambda \right|\pi } \cdot e^{\left|\im  \alpha\right|} \right)e^{\left| \im  \beta \right|} \\
= & 2 \, \epsilon \, e^{\left| \im  \lambda \right|\pi +\left| \im  \alpha\right|+\left| \im  \beta \right|}
\end{aligned} 
\end{equation}
On the other hand, according to \eqref{c3:sinz}, if $\left|\lambda \pi +\alpha-\beta -\pi n\right|\geq \dfrac{\pi }{4} $, then
\begin{align*}
\left|\sin \left(\lambda \pi +\alpha-\beta \right)\right| 
> & \frac{1}{4} e^{\left|\im \left(\lambda \pi +\alpha-\beta \right)\right|} 
=  \frac{1}{4} e^{\left|\pi  \im  \lambda +  \im  \alpha-\im  \beta \right|} \\
\geq &  \frac{1}{4} e^{\left| \im  \lambda \right|\pi -\left| \im  \alpha\right|-\left| \im  \beta \right|} .
\end{align*}
Thus, in order the inequality 
\begin{equation} \label{c3:g_lambda_ineq}
\left|g\left(\lambda \right)\right|<\left|\sin \left(\lambda \pi +\alpha-\beta \right)\right|
\end{equation}
hold on  $C_n$, and it is enough that the inequality   
\begin{align*}
\left| g\left(\lambda \right) \right| 
\leq & 8\, \varepsilon \, e^{\left| \im \lambda \right|\pi + \left| \im  \alpha\right|+\left|\im \beta \right|} 
\leq \frac{1}{4} e^{\left|\im \lambda \right|\pi -\left|\im \alpha\right|-\left|\im \beta \right|} \\
\leq & \dfrac{1}{4} e^{\left|\im \lambda \pi +\im \alpha-\im \beta \right|} <\left|\sin \left(\lambda \pi +\alpha-\beta \right)\right|,
\end{align*}
hold, and  this is true when $|\lambda| > A = \max\left\{A_{ij}, \, i,j=1,2 \right\}$ and $\epsilon < \dfrac{1}{32}e^{-2\left|\im \alpha\right|-2\left|\im \beta \right|} $.

Therefore, for sufficiently large $n$ on the contours $ C_{n} =\left\{\lambda :\, \left|\lambda +\frac{\alpha-\beta }{\pi } \right|=n+\frac{1}{2} \right\}$ the inequality \eqref{c3:g_lambda_ineq} holds, whence it follows that the number of eigenvalues of the problem $L\left(p, q, \alpha,\beta \right)$ inside the contour $C_n$ coincides with the number of zeros of the function $\sin \left(\lambda \pi +\alpha-\beta \right)$, i.e. equal to $2n+1$.
Thus, the existence and countability of the eigenvalues of the problem $L\left(p, q, \alpha,\beta \right)$ are proved.

Let us now consider the bound $ P_{n, \frac{1}{4} } =\left\{z:\, \left|z-n\right|=\frac{1}{4} \right\}$  of the circle centered at a point $z=n$ with radius $\dfrac{1}{4}$.
Inequality \eqref{c3:sinz} holds on $ P_{n, \frac{1}{4} }$.
At the same time, the inequality $ \left|\lambda \right|+\left|\dfrac{\alpha-\beta }{\pi } \right|\geq  \left|\lambda +\dfrac{\alpha-\beta}{\pi } \right|=\left|z\right|\geq  \left|n\right|-\frac{1}{4} $, i.e. $ \left|\lambda \right|\geq  \left|n\right|-\left|\dfrac{\alpha-\beta }{\pi } \right|-\dfrac{1}{4} $ fulfilled on this boundary.

Therefore, for sufficiently large $n$ (and, therefore, for sufficiently large $|\lambda|$), inequality \eqref{c3:g_lambda_ineq} also holds.
Hence, according to Ruche's theorem, in a circle bounded by $ P_{n, \frac{1}{4} }$, the number of zeros of the functions $\sin \left(\lambda \pi +\alpha-\beta \right)$ and $\chi \left(\lambda \right)=\sin \left(\lambda \pi +\alpha-\beta \right)+g\left(\lambda \right)$ coincides, i.e. both are 1.
This means that the eigenvalues of the Dirac boundary value problem $L\left(p, q, \alpha,\beta \right)$ of large modulus, all are simple and lie inside the circles $ \left|\lambda +\frac{\alpha-\beta }{\pi } -n\right|<\frac{1}{4} $.

If we denote by $\lambda_n$ the eigenvalue lying in these circles, we will get 
\begin{equation}\label{c3:lambda_n_asymptotics}
\lambda_{n} =n+\frac{\beta -\alpha}{\pi } +h_{n} ,\qquad\mbox{where}\;\; \left|h_{n} \right|<\frac{1}{4}\quad (\mbox{for}\;\; n\geqslant  n_{0})\, .
\end{equation}

Let us prove that $h_n \to 0$, when $n \to \infty $. 
Since
\[
\chi \left(\lambda_{n} \right)=\sin \left(\pi \lambda_{n} +\alpha-\beta \right)+g\left(\lambda_{n} \right)=\sin \left(\pi n+\pi h_{n} \right)+g\left(\lambda_{n} \right)=\left(-1\right)^{n} \sin \pi h_{n} +g\left(\lambda_{n} \right)=0,
\]
and from Lemma \ref{c3:lem_1}, the representation \eqref{c3:chi_representation} for $g(\lambda)$, and \eqref{c3:lambda_n_asymptotics} it follows that $g\left(\lambda_n \right) \to 0$, we obtain that $\sin \pi h_{n} \to 0$, when $n\to \infty $. 
The uniformity of estimate with respect to $p$ and $q$ from the bounded subsets of $L^{1} \left[0,\, \pi \right]$ follows from the estimate \eqref{c2:K_est}. 
Thus, Theorem \ref{c3:thm_1} is proved for the case $p,q\in L^1[0,\pi]$.
\end{proof}

Let us now turn to the case $p, q \in L^{2} \left[0, \pi \right]$. 
Since $L^{2} \left[0, \pi \right] \subset L^{1} \left[0, \pi \right]$, then the asymptotics \eqref{c3:ev_asymptotics} is valid, but in this case we would like to clarify the rate at which the remainder $h_{n}$ tends to zero. 
For this purpose, we use the following assertion, obtained by Levin and Ostrovsky in \cite{Levin-Ostrovski:1980}  (see also \cite{Albeverio-Hryniv-Mykytyuk:2005} and \cite{Avdonin-Ivanov:1995}).
\begin{lemma}\label{c3:lem_3} 
Assume that $z_n = n + \xi_n$, $n \in \mathbb{Z}$ are the zeros of function
\begin{equation}\label{c3:F_z}
F(z) = \sin \pi z + \int_{-\pi}^{\pi} f(t)e^{i z t} dt,
\end{equation}
where $f \in L^p(-\pi, \pi)$ with $p \in (1, 2]$.
Then the sequence $\{ \xi_n \}_{n \in \mathbb{Z}}$ belogs to $l^q$ with $q = \dfrac{p}{p-1}$
\end{lemma}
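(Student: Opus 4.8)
The plan is to substitute each zero $z_n = n + \xi_n$ into the defining equation $F(z_n)=0$ and extract an estimate $|\xi_n| \le C(|b_n| + |r_n|)$ in which both sequences on the right belong to $\ell^q$. First I would recall, repeating verbatim the Rouch\'e argument from the proof of Theorem \ref{c3:thm_1} (which uses only $f \in L^1 \supset L^p$ together with Lemma \ref{c3:lem_1}), that for $|n|$ large the function $F$ has exactly one simple zero in each disc $\{|z-n|<\tfrac14\}$, and that $\xi_n \to 0$ as $|n|\to\infty$. Fix $n_0$ with $M := \sup_{|n|\ge n_0}|\xi_n| < 1$; since $\sin\pi\xi/\xi \to \pi$ as $\xi\to 0$, there is a constant $c>0$ with $|\sin\pi\xi_n| \ge c|\xi_n|$ for all $|n|\ge n_0$. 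Because the finitely many terms with $|n|<n_0$ do not affect membership in $\ell^q$, it suffices to control $\xi_n$ for $|n|\ge n_0$.

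Writing $\sin\pi z_n = (-1)^n\sin\pi\xi_n$, the equation $F(z_n)=0$ becomes
\[
(-1)^n \sin\pi\xi_n = -\int_{-\pi}^{\pi} f(t)e^{iz_n t}\,dt = -\,b_n - r_n,\qquad b_n := \int_{-\pi}^{\pi} f(t)e^{int}\,dt,
\]
with $r_n := \int_{-\pi}^{\pi} f(t)\bigl(e^{i\xi_n t}-1\bigr)e^{int}\,dt$. The numbers $b_n$ are, up to a fixed constant and the relabelling $n\mapsto -n$, the Fourier coefficients of $f$, so the Hausdorff--Young inequality gives $\{b_n\}_{n\in\mathbb{Z}} \in \ell^{q}$ with $q = p/(p-1)$ and $\|\{b_n\}\|_{\ell^q} \le C\|f\|_{L^p}$ --- exactly the exponent claimed.

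The delicate term is $r_n$, because the factor $e^{i\xi_n t}-1$ depends on $n$, so $r_n$ is not the Fourier coefficient of a single fixed function. I would expand $e^{i\xi_n t}-1 = \sum_{k\ge1}(i\xi_n t)^k/k!$, interchange sum and integral (legitimate since $f\in L^1$ and the partial sums converge uniformly on $[-\pi,\pi]$), and obtain $r_n = \sum_{k\ge1}\frac{(i\xi_n)^k}{k!}\,\widehat{t^k f}(n)$, where $\widehat{t^k f}(n) := \int_{-\pi}^{\pi} t^k f(t)e^{int}\,dt$. Each $t^k f$ lies in $L^p(-\pi,\pi)$ with $\|t^k f\|_{L^p} \le \pi^k\|f\|_{L^p}$, so Hausdorff--Young gives $\|\{\widehat{t^k f}(n)\}_n\|_{\ell^q} \le C\pi^k\|f\|_{L^p}$; using $|\xi_n|\le M$ for $|n|\ge n_0$ and the triangle inequality in $\ell^q$,
\[
\bigl\|\{r_n\}_{|n|\ge n_0}\bigr\|_{\ell^q} \le \sum_{k\ge1}\frac{M^k}{k!}\,C\pi^k\|f\|_{L^p} = C\|f\|_{L^p}\bigl(e^{\pi M}-1\bigr) < \infty .
\]

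Finally, $|\sin\pi\xi_n|\ge c|\xi_n|$ together with the displayed identity yields $c|\xi_n| \le |b_n| + |r_n|$ for all $|n|\ge n_0$, so $\{\xi_n\}_{|n|\ge n_0}$ is dominated by the sum of two $\ell^q$-sequences and hence lies in $\ell^q$; adjoining the finitely many remaining terms completes the proof. I expect the main obstacle to be precisely the $r_n$ estimate: the $n$-dependence inside the integrand blocks a one-shot application of Hausdorff--Young, and it is the power-series splitting --- combined with the smallness $M<1$ of the $\xi_n$, which makes the resulting series of $L^p$-bounds summable --- that resolves it.
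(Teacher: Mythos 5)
Your proof is correct, but note that the paper does not prove Lemma \ref{c3:lem_3} at all: it is imported from Levin and Ostrovsky \cite{Levin-Ostrovski:1980} (with pointers to \cite{Albeverio-Hryniv-Mykytyuk:2005} and \cite{Avdonin-Ivanov:1995}), where it belongs to the theory of sine-type entire functions. So your argument is necessarily a different route, and a genuinely useful one: it is self-contained and elementary. The Rouch\'e localization (which indeed only needs $f\in L^1\supset L^p$ and Lemma \ref{c3:lem_1}, exactly as in the proof of Theorem \ref{c3:thm_1}) gives $\xi_n\to 0$; the split $\int_{-\pi}^{\pi}f(t)e^{iz_nt}\,dt=b_n+r_n$ reduces the main term to Fourier coefficients of $f$, where Hausdorff--Young is precisely what produces the dual exponent $q=p/(p-1)$; and the expansion of $e^{i\xi_n t}-1$ into a power series turns the $n$-dependent remainder into a norm-convergent series of Hausdorff--Young bounds for $t^kf$, which is the one step a naive single application of Hausdorff--Young cannot handle. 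As a bonus, your argument gives the quantitative estimate $\|\{\xi_n\}\|_{\ell^q}\leq C\|f\|_{L^p}$ (after discarding finitely many indices), i.e.\ uniformity over bounded sets of $f$ in $L^p$, which is exactly the kind of uniformity Theorem \ref{c3:thm_1} claims for $p,q$ in bounded subsets of $L^2$ and which a bare citation leaves implicit; the price is that it only covers perturbations of the specific form $\int_{-\pi}^{\pi}f(t)e^{izt}\,dt$, whereas the Levin--Ostrovsky machinery covers a wider class of sine-type functions.

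Two minor touch-ups, neither a gap. First, the bound $|\sin\pi\xi_n|\geq c|\xi_n|$ is justified in your text only by the limit at $0$; since you already know $\xi_n\to0$, simply enlarge $n_0$ so that $|\xi_n|\leq\frac12$, where $|\sin\pi\xi|\geq 2|\xi|$ holds outright. Second, the ``triangle inequality in $\ell^q$'' for the infinite sum over $k$ should be read as: the series of sequences converges in $\ell^q$ because the norms are summable, and its sum coincides pointwise with $\{r_n\}$ by the absolute convergence for each fixed $n$; your estimate supplies both ingredients, so the conclusion stands.
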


It follows that if $f \in L^2(-\pi, \pi)$, then $\{ \xi_n \}_{n \in \mathbb{Z}} \in l^2$, i.e. $\sum_{n=-\infty}^{\infty} | \xi_n |^2 < \infty$.
Since the eigenvalues $\lambda_n(p, q, \alpha, \beta)$ are determined through zeros $z_n$ of the equation \eqref{c3:sin_pi_z} by the formula $z_n = \lambda_n + \dfrac{\alpha - \beta}{\pi}$, i.e. $\lambda_n = z_n + \dfrac{\beta - \alpha}{\pi}$, then from $z_n = n + \xi_n$ it follows $\lambda_n = n + \dfrac{\beta - \alpha}{\pi} + \xi_n$.
Thus, to prove the asymptotics \eqref{c3:ev_asymptotics} in the case $p, q \in L^2(-\pi, \pi)$ it suffices to prove that equation \eqref{c3:sin_pi_z} in this case has the form \eqref{c3:F_z} with $f$ from $L^2(-\pi, \pi)$.

\begin{lemma}\label{c3:lem_4} 
If $p, q \in L^{2}_{\mathbb{C}} \left[0, \pi \right]$, then the eigenvalues of problem $L\left(p,q,\alpha,\beta \right)$ have the asymptotics $\lambda_{k} = k + \frac{\beta - \alpha}{\pi} + h_{k} ,$, where $\sum_{k=-\infty }^{\infty }\left|h_{k} \right|^{2}  <\infty $ and the estimate is uniform with respect to  $p$ and $q$ from bounded subsets of $L^{2}_{\mathbb{C}} \left[0,\, \pi \right]$ and is uniform over all $\alpha, \beta \in \mathbb{C}$ with bounded imaginary part.
\end{lemma}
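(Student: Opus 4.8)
The plan is to reduce the claim to Lemma \ref{c3:lem_3} by bringing the characteristic function \eqref{c3:chi_representation} into the exponential form \eqref{c3:F_z}. Starting from the representation of $\chi(\lambda)$ in \eqref{c3:chi_representation}, I would replace every $\sin(\lambda t+\alpha)$ and $\cos(\lambda t+\alpha)$ by their expressions through $e^{i\lambda t}$ and $e^{-i\lambda t}$, so that $g(\lambda)$ becomes a finite sum of integrals $\int_{-\pi}^{\pi}K_{ij}(\pi,t)\,e^{\pm i\lambda t}\,dt$ multiplied by the bounded constants $e^{\pm i\alpha}\cos\beta$ and $e^{\pm i\alpha}\sin\beta$. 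In the integrals carrying $e^{-i\lambda t}$ I would substitute $t\mapsto -t$, which replaces $K_{ij}(\pi,t)$ by $K_{ij}(\pi,-t)$ and puts all integrals into the common form $\int_{-\pi}^{\pi}(\,\cdot\,)\,e^{i\lambda t}\,dt$. Finally, using the substitution $z=\lambda+\frac{\alpha-\beta}{\pi}$ already employed in \eqref{c3:sin_pi_z}, one has $e^{i\lambda t}=e^{izt}\,e^{i(\beta-\alpha)t/\pi}$, and since $\alpha,\beta$ range over a set with bounded imaginary parts the factor $e^{i(\beta-\alpha)t/\pi}$ is uniformly bounded for $t\in[-\pi,\pi]$. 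Collecting everything, $\tilde g(z)$ takes the form $\int_{-\pi}^{\pi}f(t)\,e^{izt}\,dt$, where $f$ is, up to the bounded factor $e^{i(\beta-\alpha)t/\pi}$ and the constants above, a fixed linear combination of the functions $K_{ij}(\pi,t)$ and $K_{ij}(\pi,-t)$, $i,j=1,2$.

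Next I would invoke Theorem \ref{c2:thm_3} (equivalently Theorem \ref{c2:thm_6}): under the hypothesis $p,q\in L^{2}_{\mathbb C}[0,\pi]$ each $K_{ij}(\pi,\cdot)$ belongs to $L^{2}(-\pi,\pi)$, hence so does $f$. Thus equation \eqref{c3:sin_pi_z} is exactly of the form \eqref{c3:F_z} with $p=2$, and Lemma \ref{c3:lem_3} applies with $q=\frac{p}{p-1}=2$: writing the zeros of $\sin\pi z+\tilde g(z)$ as $z_{n}=n+\xi_{n}$, we obtain $\{\xi_{n}\}_{n\in\mathbb Z}\in l^{2}$. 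Translating back through $\lambda_{n}=z_{n}+\frac{\beta-\alpha}{\pi}$ yields $\lambda_{n}=n+\frac{\beta-\alpha}{\pi}+\xi_{n}$, so with $h_{n}:=\xi_{n}$ one gets $\sum_{n}|h_{n}|^{2}<\infty$; this, together with the fact already established in Theorem \ref{c3:thm_1} that all eigenvalues of large modulus are simple and lie one per disk $|\lambda+\frac{\alpha-\beta}{\pi}-n|<\tfrac14$, gives the desired asymptotics. Only finitely many indices $n$ (below a threshold $n_{0}$) need separate treatment, and this does not affect the $l^{2}$ conclusion.

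For the uniformity, note that the $L^{2}(-\pi,\pi)$ norm of $f$ is bounded by a constant times $\max_{i,j}\|K_{ij}(\pi,\cdot)\|_{L^{2}(-\pi,\pi)}$, and the explicit estimates \eqref{c2:K_ii_est}--\eqref{c2:K_ij_est} of Theorem \ref{c2:thm_6} bound these norms by a quantity depending only on $d(\pi)=\bigl(\int_{0}^{\pi}|p|^{2}\bigr)^{1/2}+\bigl(\int_{0}^{\pi}|q|^{2}\bigr)^{1/2}$, which stays bounded when $p,q$ vary over a bounded subset of $L^{2}_{\mathbb C}[0,\pi]$; the $\alpha,\beta$-dependent coefficients are controlled by $e^{|\im\alpha|+|\im\beta|}$-type constants. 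Hence $f$ remains in a fixed ball of $L^{2}(-\pi,\pi)$. The main obstacle — the one point where a little more than the bare statement of Lemma \ref{c3:lem_3} is needed — is that the $l^{2}$-norm of $\{\xi_{n}\}$ must be dominated by a non-decreasing function of $\|f\|_{L^{2}(-\pi,\pi)}$; this is what the proof of the Levin--Ostrovsky result (or of the analogous statement in \cite{Albeverio-Hryniv-Mykytyuk:2005}) actually delivers, so once $f$ lies in a fixed ball the estimate on the remainder is uniform in $p,q,\alpha,\beta$, as claimed.
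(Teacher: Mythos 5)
Your argument is essentially the paper's own proof: you rewrite each $g_k(\lambda)$ as $\int_{-\pi}^{\pi}f_k(t)e^{i\lambda t}\,dt$ with $f_k\in L^2(-\pi,\pi)$ (via the substitution $t\mapsto -t$ and the kernel estimates of Theorem \ref{c2:thm_6}), and then apply the Levin--Ostrovsky Lemma \ref{c3:lem_3} with $p=2$ to get $\{h_k\}\in l^2$, with uniformity drawn from \eqref{c2:K_11_est}--\eqref{c2:K_ij_est}. Your explicit handling of the shift $z=\lambda+\frac{\alpha-\beta}{\pi}$ and the remark that uniformity needs the $l^2$-bound to depend monotonically on $\|f\|_{L^2}$ are correct refinements of points the paper treats tersely, but the route is the same.
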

\begin{proof}
Let us note that
\begin{align*}
g_{1} \left(\lambda, \alpha, \beta \right) 
= & \int_{-\pi }^{\pi }K_{11} \left(\pi , t\right)\frac{e^{i\left(\lambda t+\alpha \right)} -e^{-i\left(\lambda t+\alpha \right)} }{2i} dt\cdot \cos \beta   \\
= & \int_{-\pi }^{\pi }K_{11} \left(\pi , t\right)\frac{e^{i\alpha } }{2i} \cos \beta \cdot e^{i\lambda t} dt - \int_{-\pi }^{\pi }K_{11} \left(\pi ,\, t\right)\frac{e^{-i\alpha } }{2i} \cos \beta \cdot e^{-i\lambda t} dt  \\
= & \int_{-\pi }^{\pi }f_{11} \left(t\right)e^{i\lambda t} dt +\int_{-\pi }^{\pi }f_{12} \left(t\right)e^{i\lambda t} dt =\int_{-\pi }^{\pi }f_{1} \left(t\right)e^{i\lambda t} dt
\end{align*}
where $ f_{11} \left(t\right)=K_{11} \left(\pi ,\, t\right) \dfrac{e^{i\alpha } }{2i} \cos \beta$, $ f_{12} \left(t\right)=K_{11} \left(\pi , -t\right)\dfrac{e^{-i\alpha } }{2i} \cos \beta \in L^{2}_{\mathbb{C}} \left[-\pi , \pi \right]$, and $f_{1} \left(t\right)=f_{11} \left(t\right)+f_{12} \left(t\right)$.
Similarly, $ g_{k} \left(\lambda \right) = \int_{-\pi }^{\pi }f_{k} \left(t\right)e^{i\lambda t} dt $, where $f_{k} \in L^{2} \left[-\pi, \pi \right]$, $k=2, 3, 4$, according to Theorem \ref{c2:thm_6}. 
Therefore, $ g\left(\lambda \right)=\int_{-\pi }^{\pi }f\left(t\right)e^{i\lambda t} dt$, where $f\in L^{2} \left[-\pi , \pi \right]$. 

The uniformity of the estimates with respect to  $p, q$ from the bounded subsets of $L^{2}_{\mathbb{C}} \left[0,\, \pi \right]$ and $\alpha, \beta$ from $\mathbb{C}$ with bounded imaginary part follows from the estimates \eqref{c2:K_11_est}-\eqref{c2:K_ij_est} and from \eqref{c3:ev_asymptotics}.

Theorem \ref{c3:thm_1} is completely proved.
\end{proof}

\section{Asymptotics of eigenvalues in the "smooth" case.}\label{c3:sec_2}
Let us now consider the case when $p,q\in W^m_{k}[0,\pi]$, i.e. $p$, $q$ have absolutely continuous derivatives up to the the order $ k-1 $, and the derivatives of the $ k $-th order belong to $L^m[0,\pi]$, where $m=1$ or $m=2$ ($k \geq 1$).

In this case, the entries $K_{ij}(x,t)$ of kernel of the transformation operator \eqref{c2:Phi_rep_1} are also smooth, in particular $\frac{\partial^{l}}{\partial t^{l}} K_{ij}(\pi, \cdot)\in L^m(-\pi, \pi)$ for $l=0,1,\ldots,k$. 
Therefore the integrals $\int^\pi_{-\pi} K_{ij} (\pi, t)\sin(\lambda t+ \alpha)\, dt$ and $\int^\pi_{-\pi} K_{ij} (\pi, t)\cos(\lambda t+ \alpha)\, dt$, in the formula \eqref{c3:chi_representation}, can be integrated by parts $ k $ times. 
For example, for $p,q\in W^2_2 [0, \pi]$, we get:
\begin{align*}
& \int^\pi_{-\pi} K_{ij}(\pi, t)\sin(\lambda t +\alpha)\, dt   \\
= & -\frac{1}{\lambda} \int^\pi_{-\pi} K_{ij}(\pi, t)\, d\cos(\lambda t +\alpha) \\
= & -\frac{1}{\lambda}\left[ K_{ij}(\pi,\pi)\cos (\lambda \pi +\alpha)-K_{ij}(\pi,-\pi) \cos (-\lambda \pi +\alpha)\right] + \\
& + \frac{1}{\lambda} \int^{\pi}_{-\pi} \frac{\p K_{ij}(\pi, t)}{\p t}\, \cos(\lambda t+\alpha)\, dt \\
= & \frac{c_1(\lambda)}{\lambda} + \int^{\pi}_{-\pi} \frac{\p K_{ij}(\pi, t)}{\p t}\, d\sin(\lambda t+\alpha)\\
= &\frac{c_1(\lambda)}{\lambda} +\frac{1}{\lambda^2}\left[ \frac{\p K_{ij}(\pi, \pi)}{\p t}\,\sin(\lambda \pi+\alpha)-\frac{\p K_{ij}(\pi, -\pi)}{\p t}\,\sin(-\lambda \pi+\alpha)\right] -\\
& -\frac{1}{\lambda^2}\int^{\pi}_{-\pi} \frac{\p^2 K_{ij}(\pi, t)}{\p t^2}\, \sin(\lambda t+\alpha)\, dt  \\
= & \frac{c_1(\lambda)}{\lambda}+\frac{c_2(\lambda)}{\lambda^2}-\frac{1}{\lambda^2}\int^{\pi}_{-\pi} \frac{\p^2 K_{ij}(\pi, t)}{\p t^2}\, \sin(\lambda t+\alpha)\, dt\, .
\end{align*}
If $\frac{\partial^2 K_{ij}(\pi, \cdot)}{\partial t^2}\in L^2[-\pi,\pi]$, then the last integral for $\lambda=\lambda_n=n+O(1)$ is a quantity $c_n$, such that $ \sum^\infty_{n=-\infty}  |c_n|^2 < \infty$. 
In the general case, the right-hand side has the form
\begin{equation}\label{c3:15}
\frac{c_1(\lambda)}{\lambda}+\frac{c_2(\lambda)}{\lambda^2}+\frac{c_3(\lambda)}{\lambda^3}+\cdots +\frac{c_k(\lambda)}{\lambda^k}+
\frac{(-1)^{k+1}}{\lambda^k}\int^{\pi}_{-\pi} \frac{\p^k K_{ij}(\pi, t)}{\p t^k}\, \sin(\lambda t+\alpha)\, dt
\end{equation}
if $k$ is even. 
If $k$ is odd, then under the last integral we have $\cos(\lambda t +\alpha)$).

Hence, for solutions of the equation \eqref{c3:chi_representation}, i.e. for the eigenvalues $ \lambda_n(\Omega, \alpha, \beta)=n+\frac{\beta-\alpha}{\pi}+h_n$ we get, as above,  the relation $\pi h_n \sim g(\lambda_n)$, where  $g(\lambda)$ has the form \eqref{c3:15}.
Thus, the following theorem is true
\begin{theorem}\label{thm2-3-2} 
If $p,q\in W^m_k[0,\pi]$ $(k \geq 1)$, then for the eigenvalues $\lambda_n$ the following formula is valid 
\[
\lambda(p,q,\a,\b)=n+\frac{\b-\a}{\pi} +\frac{\a_1(\lambda_n)}{n}+\cdots \frac{\a_k(\lambda_n)}{n^k}+\frac{c_k(\lambda_n)}{n^k}\; ,
\]
where $\alpha_1(\lambda_1), \ldots, \alpha_k(\lambda_n)$ are constants, uniformly bounded over all $n \in \mathbb{Z}$, and $c_k(\lambda_n)$ has the property
\[
\sum^\infty_{n=-\infty} | c_k(\lambda_n)|^2 <\infty ,
\]
for $m=2$, and for $m=1$ about $c_k(\lambda_n)$ we can say that $c_k(\lambda_n)=o(1)$ when $n \to \pm \infty$.
\end{theorem}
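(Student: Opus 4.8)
The plan is to start from the characteristic equation established in the proof of Theorem~\ref{c3:thm_1}: the eigenvalues $\lambda_n$ are the zeros of $\chi(\lambda)=\sin(\lambda\pi+\alpha-\beta)+g(\lambda)$, where, by \eqref{c3:chi_representation}, $g(\lambda)$ is the sum over $i,j$ of the integrals $\int_{-\pi}^{\pi}K_{ij}(\pi,t)\,[\sin\text{ or }\cos](\lambda t+\alpha)\,dt$ with coefficients $\cos\beta$ or $\sin\beta$. Since $p,q\in W^m_k[0,\pi]$, the result of Section~\ref{c2:sec_6} shows that $K_{ij}(\pi,\cdot)$ has absolutely continuous $t$-derivatives up to order $k-1$ with $\partial_t^kK_{ij}(\pi,\cdot)\in L^m(-\pi,\pi)$, so in particular the boundary values $\partial_t^{\,l}K_{ij}(\pi,\pm\pi)$, $l=0,\dots,k-1$, are well defined. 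First I would integrate by parts $k$ times in each integral, exactly as in the display preceding the theorem, to obtain for each term an expression of the form \eqref{c3:15}; summing over $i,j$ gives
\[
g(\lambda)=\frac{\tilde c_1(\lambda)}{\lambda}+\frac{\tilde c_2(\lambda)}{\lambda^2}+\cdots+\frac{\tilde c_k(\lambda)}{\lambda^k}+\frac{\rho_k(\lambda)}{\lambda^k},
\]
where each $\tilde c_j$ is a trigonometric polynomial in $\lambda$ whose coefficients are built from the boundary values $\partial_t^{\,l}K(\pi,\pm\pi)$ ($l<j$) and from $\cos\beta,\sin\beta$, and $\rho_k(\lambda)$ is a sum of integrals of $\partial_t^kK_{ij}(\pi,t)$ against $\sin(\lambda t+\alpha)$ (or $\cos(\lambda t+\alpha)$, if $k$ is odd).

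Next I would substitute $\lambda=\lambda_n=n+\frac{\beta-\alpha}{\pi}+h_n$. By \eqref{c3:lambda_n_asymptotics} we have $|h_n|<\frac14$ and $h_n\to0$ for $|n|\geq n_0$, so $\im\lambda_n$ stays in a fixed horizontal strip and $\lambda_n/n\to1$; hence each $\tilde c_j(\lambda_n)$ is bounded uniformly in $n$, and $\lambda_n^{-j}=n^{-j}+O(n^{-j-1})$. Re-collecting powers of $1/n$ and renaming, this yields $g(\lambda_n)=\sum_{j=1}^k b_j(\lambda_n)n^{-j}+r_n n^{-k}$ with $b_j(\lambda_n)$ uniformly bounded and $r_n$ equal to $\rho_k(\lambda_n)$ plus the $O(1/n)$ corrections of the lower-order terms. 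The characteristic equation reads $(-1)^n\sin(\pi h_n)=-g(\lambda_n)=O(1/n)$; since $|\sin(\pi h_n)|\geq c|h_n|$ for $|h_n|\leq\frac14$, this forces $h_n=O(1/n)$, whence $\sin(\pi h_n)=\pi h_n+O(1/n^3)$. Solving for $h_n$ and adding $n+\frac{\beta-\alpha}{\pi}$ gives
\[
\lambda_n=n+\frac{\beta-\alpha}{\pi}+\sum_{j=1}^k\frac{\alpha_j(\lambda_n)}{n^j}+\frac{c_k(\lambda_n)}{n^k},
\qquad\alpha_j(\lambda_n)=-\frac{(-1)^n}{\pi}\,b_j(\lambda_n),
\]
the cubic term $O(1/n^3)$ being absorbed into the remainder (it is $o(1/n^k)$ for $k=1,2$ and $O(1/n^3)$ for $k\geq3$).

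It remains to identify the size of $c_k(\lambda_n)$, i.e.\ essentially of $\rho_k(\lambda_n)$, the extra $O(1/n)$ corrections being clearly summable (resp.\ $o(1)$). Write $e^{i\lambda_n t}=e^{int}\,e^{i\frac{\beta-\alpha}{\pi}t}\,e^{ih_nt}$ and put $\tilde F(t)=\partial_t^kK_{ij}(\pi,t)\,e^{i\frac{\beta-\alpha}{\pi}t}$. If $m=2$, then $\partial_t^kK_{ij}(\pi,\cdot)\in L^2$, so $\tilde F\in L^2$, its Fourier coefficients lie in $\ell^2$, and the discrepancy $\left|\int\tilde F(t)(e^{ih_nt}-1)e^{int}\,dt\right|\leq\pi|h_n|\,\|\tilde F\|_{L^1}$ is in $\ell^2$ because $h_n\in\ell^2$ is already known (Lemma~\ref{c3:lem_4}, since $W^2_k\subset L^2$); hence $\rho_k(\lambda_n)\in\ell^2$ and $\sum_n|c_k(\lambda_n)|^2<\infty$. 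If $m=1$, then $\partial_t^kK_{ij}(\pi,\cdot)\in L^1$, and since $\im\lambda_n$ is bounded, Lemma~\ref{c3:lem_1} gives $\int\partial_t^kK_{ij}(\pi,t)\,[\sin\text{ or }\cos](\lambda_n t+\alpha)\,dt\to0$ as $n\to\pm\infty$, so $c_k(\lambda_n)=o(1)$.

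I expect the main obstacle to be purely organizational: carrying the $k$-fold integration by parts, the replacement of $\lambda_n^{-j}$ by $n^{-j}$, and the single bootstrap step for $h_n$ through the computation consistently, so that every correction lands in the advertised slot with uniformly bounded coefficients. The only point of genuine content is the remainder estimate for $\rho_k(\lambda_n)$, where one must control $e^{i\lambda_n t}$ at a non-integer argument; this is handled by factoring out the fixed shift $\frac{\beta-\alpha}{\pi}$ and bounding the leftover $e^{ih_nt}-1$ by means of the already-established $\ell^2$-summability (resp.\ decay) of $h_n$.
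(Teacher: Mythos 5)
Your proposal follows essentially the same route as the paper: integrate the kernel integrals in \eqref{c3:chi_representation} by parts $k$ times using the smoothness of $K_{ij}(\pi,\cdot)$ from Section \ref{c2:sec_6} to get the form \eqref{c3:15}, then extract $h_n$ from the relation $(-1)^n\sin\pi h_n+g(\lambda_n)=0$ already obtained in the proof of Theorem \ref{c3:thm_1}. In fact you supply details the paper leaves implicit — the bootstrap $h_n=O(1/n)$, and the factorization $e^{i\lambda_nt}=e^{int}e^{i\frac{\beta-\alpha}{\pi}t}e^{ih_nt}$ together with $h_n\in\ell^2$ to justify the $\ell^2$-summability of the remainder for $m=2$ — so the argument is correct and consistent with the paper's.
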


\section{Operators in Hilbert space.}\label{c3:sec_3}
It is often useful to consider the boundary value problems as problems of spectral theory of operators in Hilbert space.
So let us remind you of some concepts from the theory of operators in Hilbert space.
Let $H$ is a Hilbert space, generated by scalar product $(f, g)$ and norm $\| f \| = (f, f)^{1/2}$.

\begin{definition}\label{c3:def_linear}
An operator $A$ in $H$ is called linear, if its domain of definition $\mathcal{D}_A$ is a linear subspace of $H$ and for any $x$ and $y$ from $\mathcal{D}_A$ and arbitrary numbers $\alpha$ and $\beta$ from $\mathbb{C}$, the equality 
\[
A(\alpha x + \beta y) = \alpha A(x) + \beta A(y)
\]
holds.
\end{definition}

\begin{definition}\label{c3:def_hermitian}
A linear operator $A$ in $H$ is called Hermitian, if for any $x$ and $y$ from $\mathcal{D}_A$ the equality 
\[
(A x, y) = (x, A y) 
\]
holds.
\end{definition}

\begin{definition}\label{c3:def_symmetric}
A linear operator $A$ in $H$ is called symmetric, if it is Hermitian and its domain $\mathcal{D}_A$ is everywhere dense in $H$, i.e. $\overline{\mathcal{D}_A} = H$  ($\overline{\mathcal{D}_A}$ is the closure of $\mathcal{D}_A$).
\end{definition}

\begin{definition}\label{c3:def_conjugate}
Let $\overline{\mathcal{D}_A} = H$.
Consider the set 
\[
\mathcal{D}^* :=\left\{ y \in H : \exists z \in H \text{ s.t. } (A x, y) = (x, z), \,   \forall x \in D_A \right\}
\]
and define the operator $A^*$ on the domain $\mathcal{D}^*$ by the equality
\[
A^* y = z, \qquad \forall y \in \mathcal{D}^*.
\]
\end{definition}
Operator $A^*$ is called adjoint to $A$.

\begin{lemma}\label{c3:lem_5}
The condition $\overline{\mathcal{D}_A} = H$ ensures the correctness of the definition of the adjoint operator (i.e., $z$ is uniquely determined through $y$).
\end{lemma}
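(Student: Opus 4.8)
The plan is to show that if $y\in\mathcal D^*$, then the element $z$ appearing in Definition \ref{c3:def_conjugate} is uniquely determined by $y$, which is precisely what is needed for the assignment $A^*y=z$ to define a (single-valued) operator. So suppose $z_1,z_2\in H$ both satisfy $(Ax,y)=(x,z_1)$ and $(Ax,y)=(x,z_2)$ for every $x\in\mathcal D_A$. Subtracting, we get $(x,z_1-z_2)=0$ for all $x\in\mathcal D_A$.

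The key step is to upgrade this orthogonality relation from $\mathcal D_A$ to all of $H$, and here is where the density hypothesis $\overline{\mathcal D_A}=H$ enters. Since $\mathcal D_A$ is dense, there is a sequence $\{x_n\}\subset\mathcal D_A$ with $x_n\to z_1-z_2$ in $H$. Using the continuity of the inner product (Cauchy--Schwarz gives $|(x_n,z_1-z_2)-(z_1-z_2,z_1-z_2)|\le\|x_n-(z_1-z_2)\|\cdot\|z_1-z_2\|\to0$), we pass to the limit in $(x_n,z_1-z_2)=0$ to obtain $(z_1-z_2,z_1-z_2)=\|z_1-z_2\|^2=0$, hence $z_1=z_2$. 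Equivalently, one may phrase this as: the functional $x\mapsto(x,z_1-z_2)$ is continuous and vanishes on a dense set, so it vanishes identically; evaluating at $x=z_1-z_2$ finishes the argument.

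There is no real obstacle here — the statement is elementary — but the point worth emphasizing is the role of density: without $\overline{\mathcal D_A}=H$, the difference $z_1-z_2$ would only be constrained to lie in $\mathcal D_A^{\perp}$, which could be nontrivial, so $z$ would be determined only modulo $\mathcal D_A^{\perp}$ and $A^*$ would fail to be well defined. I would include one sentence to that effect after the proof, both to motivate the hypothesis in Definition \ref{c3:def_symmetric} and to explain why symmetric (rather than merely Hermitian) operators are the natural objects for which the adjoint exists.
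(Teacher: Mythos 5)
Your proof is correct and follows essentially the same route as the paper: both arguments reduce to $(x,z_1-z_2)=0$ for all $x\in\mathcal D_A$ and then use the density $\overline{\mathcal D_A}=H$ to conclude $z_1=z_2$. You merely make the final density step more explicit (via an approximating sequence and continuity of the inner product), which is a harmless elaboration of the paper's one-line conclusion.
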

\begin{proof}
Let $Ay=z_1$ and $Ay=z_2$, i.e. $(A x, y) = (x, z_1)$ and $(A x, y) = (x, z_2)$ for all $x \in D_A$.
Hence $(x, z_1 - z_2) = 0$ for all $x \in \mathcal{D}_A$ and since $\overline{\mathcal{D}_A}=H$, it follows that $z_1 - z_2$ is orthogonal to $H$ and hence $z_1 - z_2 = 0$.
\end{proof}

\begin{definition}\label{c3:def_selfadjoint}
A linear operator $A$ in $H$ is called self-adjoint, if $A^* = A$,  i.e $\mathcal{D}^* = \mathcal{D}_A$ and $Ax = A^* x$, $\forall x \in \mathcal{D}_A$.
\end{definition}

\paragraph{The self-adjoint Dirac operators.}
By $\langle f, g \rangle = f_1 \bar{g_1} + f_2 \bar{g_2}$ we denote the scalar product in $\mathbb{C}^2$.
In Hilbert space $L^2([0,\pi], \mathbb{C}^2)$ of two-component vector functions $f=(f_1, f_2)^T$, with a scalar product
\begin{equation}\label{c3:scalar_product}
(f, g) = \int_0^\pi \langle f(x), g(x) \rangle dx = \int_0^\pi \left[  f_1(x) \bar{g}_1(x) + f_2(x) \bar{g}_2(x) \right] dx
\end{equation}
we consider differential operators generated by differential expression
\begin{equation}\label{c3:ell}
\ell = \sigma_1 \dfrac{1}{i} \dfrac{d}{dx} + \sigma_2 p(x) + \sigma_3 q(x) = B \dfrac{d}{dx} + \Omega(x).
\end{equation}
How should one choose the domain of the definition of the operator, generated by expression \eqref{c3:ell}, in order for it to be self-adjoint?

At first let us denote by $\mathcal{D}$ the following set 
\begin{equation}\label{c3:D}
\mathcal{D} :=\left\{ u=(u_1,u_2)^T : u_{1,2} \in AC[0,\pi], \, [\ell u]_{1,2} \in L^2([0,\pi])  \right\}.
\end{equation}
It is obvious that $\mathcal{D}$ is the biggest domain of the definition, on which an operator can be defined (generated by expression \eqref{c3:ell}, since it must act from $L^2([0,\pi], \mathbb{C}^2)$ to $L^2([0,\pi], \mathbb{C}^2)$).

With the boundary value problem $L(p, q, \alpha, \beta)$ (see \eqref{c3:Dirac_eq}-\eqref{c3:b_bound_cond}) we associate the domain $\mathcal{D}_{\alpha, \beta}$:
\begin{equation}\label{c3:D_alpha_beta}
\mathcal{D}_{\alpha, \beta} := 
 \left\{ u \in \mathcal{D}: u_1(0) \cos \alpha+u_2 (0) \sin \alpha=0, \, u_1(\pi)\cos \beta +u_2(\pi)\sin \beta =0 \right\}.
\end{equation}

Since the boundary conditions \eqref{c3:a_bound_cond} and \eqref{c3:b_bound_cond} are periodic by $\alpha$ and $\beta$ with period $\pi$ it is sufficient to consider cases $\alpha \in \left( -\frac{\pi}{2}, \frac{\pi}{2} \right]$, $\beta \in \left( - \frac{\pi}{2}, \frac{\pi}{2} \right]$.

\begin{theorem}\label{c3:thm_7}
Operator $L(p, q, \alpha, \beta)$, generated by differential expression \eqref{c3:ell} on the domain $\mathcal{D}_{\alpha, \beta}$, is self-adjoint.
\end{theorem}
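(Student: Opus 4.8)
The plan is to show both inclusions $L(p,q,\alpha,\beta) \subseteq L^*(p,q,\alpha,\beta)$ (symmetry, i.e. Hermiticity plus dense domain) and $L^*(p,q,\alpha,\beta) \subseteq L(p,q,\alpha,\beta)$, the latter being the substantive direction. The essential analytic tool is the Lagrange identity (integration by parts) for the expression $\ell = B\frac{d}{dx} + \Omega(x)$: for $u, v \in \mathcal{D}$ one computes
\[
(\ell u, v) - (u, \ell v) = \int_0^\pi \langle B u', v\rangle dx + \int_0^\pi \langle u, B v'\rangle dx + \int_0^\pi \langle \Omega u, v\rangle dx - \int_0^\pi \langle u, \Omega v\rangle dx.
\]
Because $\Omega(x) = \sigma_2 p(x) + \sigma_3 q(x)$ is Hermitian (as $p,q$ are real-valued, which is the standing hypothesis for self-adjointness, and $\sigma_2,\sigma_3$ are self-adjoint), the last two integrals cancel. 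For the first two, using $B^* = -B$ (since $B = \frac{1}{i}\sigma_1$ and $\sigma_1^* = \sigma_1$) and integrating by parts, the bulk terms cancel and one is left with the boundary form
\[
(\ell u, v) - (u, \ell v) = \langle B u(\pi), v(\pi)\rangle - \langle B u(0), v(0)\rangle =: [u,v]_\pi - [u,v]_0.
\]
I would first record this identity carefully for all $u,v \in \mathcal{D}$.

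Next, Hermiticity of $L(p,q,\alpha,\beta)$: for $u,v \in \mathcal{D}_{\alpha,\beta}$ both satisfy the boundary conditions, so $[u,v]_0 = [u,v]_\pi = 0$. Concretely, writing out $\langle B u(0), v(0)\rangle$ in components and using $u_1(0)\cos\alpha + u_2(0)\sin\alpha = 0$ together with the analogous relation for $v$ (and $\alpha$ real, so $\cos\alpha,\sin\alpha$ are real), one sees the vectors $(u_1(0), u_2(0))$ and $(v_1(0), v_2(0))$ are both proportional to $(\sin\alpha, -\cos\alpha)$, and $\langle B(\sin\alpha, -\cos\alpha)^T, (\sin\alpha, -\cos\alpha)^T\rangle = 0$ by direct computation. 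Hence $(\ell u, v) = (u, \ell v)$. Density of $\mathcal{D}_{\alpha,\beta}$ in $L^2([0,\pi],\mathbb{C}^2)$ is routine: it contains all smooth compactly supported vector functions in $(0,\pi)$, which are already dense. This gives symmetry, hence $L \subseteq L^*$ and in particular $\mathcal{D}_{\alpha,\beta} \subseteq \mathcal{D}^*$.

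For the reverse inclusion I would take $v \in \mathcal{D}^*$, so there is $z \in L^2$ with $(\ell u, v) = (u, z)$ for all $u \in \mathcal{D}_{\alpha,\beta}$. Testing first against $u \in C_0^\infty((0,\pi),\mathbb{C}^2) \subseteq \mathcal{D}_{\alpha,\beta}$ shows in the distributional sense that $B v' + \Omega v = z$, whence $v' = B^{-1}(z - \Omega v) = -B(z-\Omega v) \in L^1$, so $v$ is absolutely continuous, $\ell v = z \in L^2$, and therefore $v \in \mathcal{D}$ with $L^* v = z = \ell v$. It remains to force the boundary conditions on $v$. Using the Lagrange identity now valid for this $v$ and any $u \in \mathcal{D}_{\alpha,\beta}$, the equation $(\ell u, v) = (u, \ell v)$ forces $[u,v]_\pi - [u,v]_0 = 0$ for all such $u$. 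Since one can choose $u \in \mathcal{D}_{\alpha,\beta}$ with $(u(0),u(\pi))$ taking the value $(\sin\alpha,-\cos\alpha)^T$ at $0$ and $0$ at $\pi$ (and vice versa) independently, we get $[u,v]_0 = 0$ and $[u,v]_\pi = 0$ separately; working out $[u,v]_0 = 0$ with $u(0) = (\sin\alpha,-\cos\alpha)^T$ yields exactly $v_1(0)\cos\alpha + v_2(0)\sin\alpha = 0$, and similarly at $\pi$. Hence $v \in \mathcal{D}_{\alpha,\beta}$, proving $\mathcal{D}^* \subseteq \mathcal{D}_{\alpha,\beta}$ and completing the proof.

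The main obstacle is the bookkeeping in the reverse inclusion: one must verify that $\mathcal{D}_{\alpha,\beta}$ is rich enough to realize arbitrary prescribed boundary vectors at $0$ and $\pi$ that are consistent with the (one-dimensional) boundary conditions, so that the vanishing of the boundary form $[u,v]$ genuinely pins down $v$'s boundary behavior rather than giving a weaker constraint. This is a matter of exhibiting suitable test functions (e.g. multiplying the constant vector $(\sin\alpha,-\cos\alpha)^T$ by a smooth cutoff equal to $1$ near $0$ and $0$ near $\pi$), together with checking that $\langle B a, b\rangle = 0$ for $a,b$ parallel to $(\sin\alpha,-\cos\alpha)^T$ precisely characterizes the boundary condition — a short $2\times 2$ computation using that $\alpha$ is real. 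Everything else is a standard application of the Lagrange identity and the definition of the adjoint.
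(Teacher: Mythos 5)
Your overall skeleton — Lagrange identity $(\ell u,v)-(u,\ell v)=\langle Bu(\pi),v(\pi)\rangle-\langle Bu(0),v(0)\rangle$, vanishing of the boundary form on $\mathcal{D}_{\alpha,\beta}$ (your computation with multiples of $(\sin\alpha,-\cos\alpha)^T$ is exactly the paper's Lemma on $(\ell u,v)=(u,\ell v)$), density, and then identification of $\mathcal{D}^*$ — is the right strategy and coincides with the one the paper announces; in fact your sketch of the inclusion $\mathcal{D}^*\subseteq\mathcal{D}_{\alpha,\beta}$ goes further than what the paper writes out. The genuine gap is in the steps you call ``routine'': you repeatedly use test functions that need not belong to the operator's domain under the paper's hypothesis $p,q\in L^1_{\mathbb{R}}[0,\pi]$. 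Membership in $\mathcal{D}$ requires $[\ell u]_{1,2}\in L^2$, and for $u\in C_0^\infty\bigl((0,\pi);\mathbb{C}^2\bigr)$ one has $\Omega u\in L^1$ but in general \emph{not} in $L^2$ when $p,q\in L^1\setminus L^2$ (e.g. $p$ with non-square-integrable singularities on a dense set); the same objection applies to your cutoff functions ``smooth cutoff times $(\sin\alpha,-\cos\alpha)^T$''. So three steps fail as written in the $L^1$ regime: the density of $\mathcal{D}_{\alpha,\beta}$ via $C_0^\infty$, the distributional identification of $\ell v=z$ by testing against $C_0^\infty$, and the production of elements of $\mathcal{D}_{\alpha,\beta}$ with prescribed boundary vectors by cutoffs. (If one strengthens to $p,q\in L^2$ or bounded, your argument is fine.)

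The repair is the device the paper actually uses: generate domain elements by solving the equation rather than by cutoffs. The paper proves that $\ell u=f$, $f\in L^2$, has a solution in $\mathcal{D}_0$ iff $f$ is orthogonal to the two-dimensional space $\mathit{m}$ of solutions of $\ell u=0$, deduces the decomposition $L^2=R_0\oplus\mathit{m}$, and from it the density of $\mathcal{D}_0$ (hence of $\mathcal{D}_{\alpha,\beta}$) — no use of $C_0^\infty$. The same idea fixes your reverse inclusion: given $v\in\mathcal{D}^*$ with $(\ell u,v)=(u,z)$ for all $u\in\mathcal{D}_{\alpha,\beta}$, let $w$ be an absolutely continuous solution of $\ell w=z$ (it exists since $z\in L^2\subset L^1$, and automatically $w\in\mathcal{D}$); testing against $u\in\mathcal{D}_0$ gives $(\ell u,v-w)=0$, so $v-w\perp R_0$, hence $v-w\in\mathit{m}$, hence $v\in\mathcal{D}$ and $\ell v=z$. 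Finally, the boundary conditions on $v$ are forced by your boundary-form computation, but with admissible test functions obtained as solutions of $\ell u=g$, $g\in L^2$, with $u(0)=(\sin\alpha,-\cos\alpha)^T$ and $g$ adjusted (variation of parameters) so that $u(\pi)$ satisfies the $\beta$-condition, and symmetrically at $\pi$; such $u$ lie in $\mathcal{D}_{\alpha,\beta}$ by construction, and then your identity $\langle B(\sin\alpha,-\cos\alpha)^T,v(0)\rangle=0\Leftrightarrow v_1(0)\cos\alpha+v_2(0)\sin\alpha=0$ closes the argument.
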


To prove this theorem, we also consider the domain $\mathcal{D}_0$:
\[
\mathcal{D}_0:=  \left\{  u \in \mathcal{D} : u_{1,2}(0) = 0, \, u_{1,2}(\pi) = 0 \right\}.
\]

By $L_0$, we denote an operator generated by expression $\ell$ on the domain $\mathcal{D}_0$.
It is clear that
\[
\mathcal{D}_0 \subset \mathcal{D}_{\alpha, \beta} \subset \mathcal{D}.
\]

To prove that the operator $L(p, q, \alpha, \beta)$ is sefl-adjoint it suffices to prove that the set $\mathcal{D}_{\alpha, \beta}$ is everywhere dense in $L^2([0,\pi], \mathbb{C}^2)$ and for the set
\[
\mathcal{D}^* :=\left\{ u \in L^2([0,\pi], \mathbb{C}^2)=H : \exists z \in H \text{ s.t. } (\ell y, u) = (y, z), \,  \forall y \in \mathcal{D}_{\alpha, \beta} \right\}
\]
we have equality $\mathcal{D}^* = \mathcal{D}_{\alpha, \beta}$.
To prove this, we need to show that the equality 
\[
(\ell u, v) = (u, \ell v)
\]
holds for every $u, v$ from $\mathcal{D}_{\alpha, \beta}$.

\begin{lemma}\label{c3:lem_adjoint}
Let $p, q \in L_{\mathbb{R}}^1[0, \pi]$, $\alpha, \beta \in \mathbb{R}$.
Then for arbitrary $u, v \in \mathcal{D}_{\alpha, \beta}$, the equality
\[
(\ell u, v) = (u, \ell v)
\]
hold.
\end{lemma}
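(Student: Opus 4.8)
The plan is to compute the difference $(\ell u, v) - (u, \ell v)$ by splitting $\ell u = Bu' + \Omega u$ and integrating by parts in the first-order term, then checking that the resulting boundary contributions vanish because of the boundary conditions and the reality of $\alpha,\beta$. First I would write
\[
(\ell u, v) = \int_0^\pi \langle Bu'(x), v(x)\rangle\, dx + \int_0^\pi \langle \Omega(x) u(x), v(x)\rangle\, dx ,
\]
which is a legitimate splitting since $\Omega u \in L^1$ (the entries of $\Omega$ are $\pm p,\pm q \in L^1$ and $u$ is bounded) and hence $Bu' = \ell u - \Omega u \in L^1$ as well, while $v$ is bounded. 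Because $p,q$ are \emph{real}-valued, $\Omega(x) = \sigma_2 p(x) + \sigma_3 q(x)$ is Hermitian for a.e.\ $x$ (the Pauli matrices $\sigma_2,\sigma_3$ being self-adjoint), so $\langle \Omega u, v\rangle = \langle u, \Omega v\rangle$ pointwise, and the second integral already equals $\int_0^\pi \langle u, \Omega v\rangle\, dx$. This is the only place where the hypothesis $p,q\in L^1_\mathbb{R}$ rather than $L^1_\mathbb{C}$ enters, and it is essential.

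For the first integral, $\langle Bu', v\rangle = u_2'\bar v_1 - u_1'\bar v_2$; since $u_{1,2},v_{1,2}\in AC[0,\pi]$ and $u_j',v_j'\in L^1[0,\pi]$, each product $u_j\bar v_k$ is absolutely continuous and integration by parts gives
\[
\int_0^\pi \langle Bu', v\rangle\, dx = \bigl[u_2(x)\bar v_1(x) - u_1(x)\bar v_2(x)\bigr]_0^\pi + \int_0^\pi \bigl(u_1 \bar v_2' - u_2 \bar v_1'\bigr)\, dx ,
\]
and the last integrand is precisely $\langle u, Bv'\rangle$. Combining the two pieces yields
\[
(\ell u, v) - (u, \ell v) = \bigl[u_2(x)\bar v_1(x) - u_1(x)\bar v_2(x)\bigr]_0^\pi ,
\]
so everything reduces to showing that this two-dimensional ``Wronskian'' vanishes at $x=0$ and $x=\pi$.

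To finish, note that at $x=0$ the defining conditions of $\mathcal{D}_{\alpha,\beta}$ say $(u_1(0),u_2(0))$ is orthogonal in $\mathbb{C}^2$ to $(\cos\alpha,\sin\alpha)$; and since $\alpha\in\mathbb{R}$ we have $\overline{\cos\alpha}=\cos\alpha$, $\overline{\sin\alpha}=\sin\alpha$, so $(\bar v_1(0),\bar v_2(0))$ is orthogonal to $(\cos\alpha,\sin\alpha)$ as well. As $(\cos\alpha,\sin\alpha)\neq 0$, both vectors are complex multiples of $(\sin\alpha,-\cos\alpha)$; writing $(u_1(0),u_2(0))=c_u(\sin\alpha,-\cos\alpha)$ and $(\bar v_1(0),\bar v_2(0))=c_v(\sin\alpha,-\cos\alpha)$ gives $u_2(0)\bar v_1(0)-u_1(0)\bar v_2(0)=c_uc_v(-\cos\alpha\sin\alpha+\sin\alpha\cos\alpha)=0$. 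The identical argument at $x=\pi$ with $\beta$ in place of $\alpha$ handles the other endpoint, and hence $(\ell u, v)=(u,\ell v)$. The point I would be most careful about is not the algebra but the justification of the integration by parts under merely $L^1$ coefficients: one must observe that $u',v'\in L^1[0,\pi]$ (because $\ell u,\ell v\in L^2$ and $\Omega u,\Omega v\in L^1$), so that each $u_j\bar v_k$ is absolutely continuous with a.e.\ derivative $u_j'\bar v_k+u_j\bar v_k'\in L^1$, which legitimizes the Newton--Leibniz formula; the rest is bookkeeping with the Pauli matrices.
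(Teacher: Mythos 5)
Your proof is correct and follows essentially the same route as the paper: split $\ell u = Bu' + \Omega u$, use the self-adjointness of $\Omega$ for real $p,q$, integrate by parts in the $Bu'$ term, and kill the boundary Wronskian via the boundary conditions. The only differences are cosmetic — you avoid the paper's case split on $\sin\alpha=0$ by noting both boundary vectors lie in the one-dimensional kernel of the functional defined by $(\cos\alpha,\sin\alpha)$, and you justify the integration by parts under $L^1$ coefficients more explicitly than the paper does.
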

\begin{proof}
Since $\ell u = B u' + \Omega(x) u$, we have that
\[
(\ell u, v) = (B u' + \Omega u, v) = (B u', v)  + (\Omega u, v).
\]
Since $p$ and $q$ are real-valued, i.e. $\bar{p}(x) = p(x)$ and $\bar{q}(x) = q(x)$, then the matrix  
$\Omega \left(x\right)=\left(\begin{array}{cc} p (x) & q(x) \\ q(x) & -p(x) \end{array}\right)$ is self-adjoint, and it follows that
\[
(\Omega u, v) = (u, \Omega v).
\]
For $(B u', v) $ we have
\begin{align*}
(B u', v) = & 
 \int_0^\pi \left\langle \left(\begin{matrix}  u_2'(x) \\ -u_1'(x) \end{matrix} \right), \left(\begin{matrix}  v_1(x)  \\ v_2(x) \end{matrix} \right) \right\rangle  dx 
= \int_0^\pi u_2'(x) \bar{v}_1(x) - u_1'(x) \bar{v}_2(x) dx  \\
=& \int_0^\pi \bar{v}_1(x) d u_2(x) - \int_0^\pi \bar{v}_2(x) d u_1(x) 
= \bar{v}_1(\pi) u_2(\pi) - \bar{v}_1(0) u_2(0) - \\
& - \bar{v}_2(\pi) u_1(\pi) + \bar{v}_2(0) u_1(0)  + \int_0^\pi  u_1(x) \bar{v'}_2(x) dx - \int_0^\pi u_2(x) \bar{v'}_1(x) dx \\
=& \left. [u, v] \right|_0^\pi + \int_0^\pi \left\langle u, B v' \right\rangle  dx  =  \left. [u, v] \right|_0^\pi + (u, B v'),
\end{align*}
where by $\left. [u, v] \right|_0^\pi $ we denote
\[
\left. [u, v] \right|_0^\pi = \bar{v}_1(\pi) u_2(\pi) - \bar{v}_1(0) u_2(0)- \bar{v}_2(\pi) u_1(\pi) + \bar{v}_2(0) u_1(0)  
\]

It remains to show that
\[
\left. [u, v] \right|_0^\pi = 0.
\]
Since $u_1(0) \cos \alpha + u_2(0) \sin \alpha = 0$ and $v_1(0) \cos \alpha + v_2(0) \sin \alpha = 0$, then, in the case $\sin \alpha \neq 0$, we have $u_2(0) = u_1(0) \cot \alpha $ and $v_2(0) = v_1(0) \cot \alpha $,  so 
\[
\bar{v}_2(0) u_1(0)  - \bar{v}_1(0) u_2(0) = \bar{v}_1(0) u_1(0) \cot \alpha  - \bar{v}_1(0) u_1(0) \cot \alpha = 0.
\]
In the case $\sin \alpha = 0$, we have $u_1(0) = 0 $ and $v_1(0) = 0$,  so 
\[
\bar{v}_2(0) u_1(0)  - \bar{v}_1(0) u_2(0) = 0.
\]

Similarly, we can show $\bar{v}_1(\pi) u_2(\pi) - \bar{v}_2(\pi) u_1(\pi) = 0$.

Thus $(\ell u, v) = (B u', v)  + (\Omega u, v) = (u, B v')  + (u, \Omega v) = (u, \ell v)$.
This completes the proof of Lemma \ref{c3:lem_adjoint}.
\end{proof}

In order to prove that the set $\mathcal{D}_{\alpha, \beta}$ is everywhere dense it suffices to show that the set $\mathcal{D}_0$ is everywhere dense in $L^2([0,\pi], \mathbb{C}^2)$.
To this end, we prove several lemmas.

\begin{lemma}\label{c3:lem_sol_from_D_0}
Let $f \in L^2([0,\pi], \mathbb{C}^2)$.
In order the equation 
\[
\ell  u = B \dfrac{d u}{dx} + \Omega(x) u = f(x)
\]
to have a solution from $\mathcal{D}_0$, it is necessary and sufficient for $f$ to be orthogonal to all solutions of the homogeneous equation
\[
\ell u = 0.
\]
\end{lemma}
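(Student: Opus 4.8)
The plan is to reduce $\ell u=f$ to a normal first-order linear system, solve it by variation of parameters so as to read off the obstruction coming from the endpoint condition at $x=\pi$, and then to match that obstruction with $L^{2}$-orthogonality to the (two-dimensional) space of homogeneous solutions by means of the symplectic identity $\Phi(x,0)^{T}B\,\Phi(x,0)\equiv B$.

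First I would put the equation in normal form: multiplying $Bu'+\Omega(x)u=f$ on the left by $-B$ and using $B^{2}=-E$ gives $u'=B\Omega(x)u-Bf(x)=A(x,0)u-Bf(x)$, where $A(x,\lambda)=-\lambda B+B\Omega(x)$ is the matrix of Chapter~\ref{chapter_2}. Denote by $\Phi(x,0)$ the fundamental matrix of Theorem~\ref{c2:thm_1} at the value $\lambda=0$, so that $\Phi(0,0)=E$, $\partial_{x}\Phi(x,0)=A(x,0)\Phi(x,0)$, and $\Phi(x,0)$ is invertible for every $x$ (its determinant equals $1$ because $\mathrm{tr}\,A(x,0)=0$); moreover $\Phi(x,0)$ is real-valued since $p,q\in L^{1}_{\mathbb{R}}[0,\pi]$. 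By variation of parameters every absolutely continuous solution of $\ell u=f$ has the form
\[
u(x)=\Phi(x,0)\Bigl(u(0)-\int_{0}^{x}\Phi(s,0)^{-1}\,Bf(s)\,ds\Bigr),
\]
hence $u\in\mathcal{D}_{0}$ if and only if $u(0)=0$ (which then determines $u$ uniquely) and $u(\pi)=0$; since $\Phi(\pi,0)$ is invertible, the latter reduces to the single vector condition
\[
\int_{0}^{\pi}\Phi(s,0)^{-1}\,Bf(s)\,ds=0 .
\]
Thus the claim becomes: this vector condition holds if and only if $(f,v)=0$ for every solution $v$ of $\ell v=0$.

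Next I would note that every solution of $\ell v=0$ is $v(x)=\Phi(x,0)d$ with $d\in\mathbb{C}^{2}$, a two-dimensional space, so, using that $\Phi(x,0)$ is real,
\[
(f,v)=\int_{0}^{\pi}\langle f(x),\Phi(x,0)d\rangle\,dx=\Bigl\langle\ \int_{0}^{\pi}\Phi(x,0)^{T}f(x)\,dx\,,\ d\ \Bigr\rangle ,
\]
which vanishes for all $d\in\mathbb{C}^{2}$ exactly when $\int_{0}^{\pi}\Phi(x,0)^{T}f(x)\,dx=0$. To connect this with the vector condition above, the key point is that $W(x):=\Phi(x,0)^{T}B\,\Phi(x,0)$ is constant: since $\Omega^{T}=\Omega$, $B^{T}=-B$ and $B^{2}=-E$, one checks $A(x,0)^{T}B+B\,A(x,0)=0$, so $W'(x)=\Phi(x,0)^{T}\bigl(A(x,0)^{T}B+B\,A(x,0)\bigr)\Phi(x,0)=0$ and $W(x)\equiv W(0)=B$; this is the $\lambda=0$ instance of the identity $\Phi^{-1}(x,\lambda)=-B\,\Phi^{*}(x,\bar\lambda)B$ recalled in Section~\ref{c2:sec_5}. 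From $\Phi(x,0)^{T}B\,\Phi(x,0)\equiv B$ we obtain $\Phi(x,0)^{-1}=-B\,\Phi(x,0)^{T}B$, hence $\Phi(x,0)^{-1}B=B\,\Phi(x,0)^{T}$, and therefore
\[
\int_{0}^{\pi}\Phi(s,0)^{-1}\,Bf(s)\,ds=B\int_{0}^{\pi}\Phi(s,0)^{T}f(s)\,ds .
\]
As $B$ is invertible, the two conditions coincide, and both directions of the lemma follow.

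As a cross-check, necessity also follows at once from the Lagrange identity used in the proof of Lemma~\ref{c3:lem_adjoint}: if $u\in\mathcal{D}_{0}$ and $\ell u=f$, then for any $v$ with $\ell v=0$ we get $(f,v)=(\ell u,v)=(u,\ell v)+[u,v]\big|_{0}^{\pi}=0$, the boundary term vanishing because $u(0)=u(\pi)=0$. The only genuinely delicate step is the matching at the end of the previous paragraph: a priori the endpoint obstruction and $L^{2}$-orthogonality to the homogeneous solution space are different linear conditions on $f$, and identifying them rests entirely on the symplectic identity $\Phi(x,0)^{T}B\,\Phi(x,0)\equiv B$ --- equivalently, on $\ell$ being formally self-adjoint, which is why the standing assumption $p,q\in L^{1}_{\mathbb{R}}[0,\pi]$ (ensuring $\Phi(x,0)$ real) is invoked. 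Everything else is a routine variation-of-parameters computation.
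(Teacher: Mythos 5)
Your proof is correct, but it runs along a genuinely different track than the paper's. The paper never writes down a fundamental matrix or a variation-of-parameters formula: it takes the unique solution $u$ of the Cauchy problem $\ell u=f$, $u(0)=0$, chooses the two homogeneous solutions $y,z$ normalized at the right endpoint by $y(\pi)=(0,1)^T$, $z(\pi)=(1,0)^T$, and shows by a single integration by parts (the Lagrange identity of Lemma~\ref{c3:lem_adjoint}, with the boundary terms kept) that $(f,y)=-u_1(\pi)$ and $(f,z)=u_2(\pi)$; both necessity and sufficiency then drop out of these two scalar identities at once, since orthogonality to the span of $y,z$ is exactly $u_1(\pi)=u_2(\pi)=0$. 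You instead reduce solvability in $\mathcal{D}_0$ to the vanishing of the $\mathbb{C}^2$-vector $\int_0^\pi\Phi(s,0)^{-1}Bf(s)\,ds$ via variation of parameters, and identify this obstruction with $\int_0^\pi\Phi(s,0)^T f(s)\,ds=0$ through the symplectic invariance $\Phi(x,0)^T B\,\Phi(x,0)\equiv B$, which you verify from $A^TB+BA=0$; your closing remark that necessity also follows from the Lagrange identity is, in effect, the paper's entire mechanism. The trade-off: the paper's argument is shorter and uses nothing beyond existence for the Cauchy problem (Theorem~\ref{c1:thm_1}) and one integration by parts, while yours makes the structure of the obstruction explicit (a single vector condition, i.e.\ a codimension-two constraint matching $\dim\mathit{m}=2$) and isolates precisely where the reality of $p,q$ and the formal self-adjointness of $\ell$ enter, namely through $\Phi(x,0)$ being real and $\Phi^TB\Phi$ being constant. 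Your identification $\Phi(x,0)^{-1}B=B\,\Phi(x,0)^T$ and the resulting equivalence of the two linear conditions are computed correctly, so the argument is complete.
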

\begin{proof}
(Necessity).
Let us denote by $u = u(x)$ the solution of Cauchy problem
\begin{equation}\label{c3:Cauchy_problem_f}
\begin{cases}
\ell u= f(x), \\
u(0)
= 
\left(\begin{array}{c} 
u_1(0) \\
u_2(0)
\end{array} \right)
= 
\left(\begin{array}{c} 
0 \\
0
\end{array} \right)
=
0.
\end{cases}
\end{equation}
Such a solution exists and is unique.
Let $y(x) = \left(\begin{array}{c}  y_1(x) \\ y_2(x) \end{array} \right)$ and $z(x) = \left(\begin{array}{c}  z_1(x) \\ z_2(x) \end{array} \right)$ be solutions of the following Cauchy problems, respectively:
\begin{equation}\label{c3:Cauchy_problem_y_z}
\begin{cases}
\ell y = 0, \\
y(\pi) = 
\left(\begin{array}{c} 
0 \\
1
\end{array} \right),
\end{cases}
\qquad
\begin{cases}
\ell z = 0, \\
z(\pi) = 
\left(\begin{array}{c} 
1 \\
0
\end{array} \right).
\end{cases}
\end{equation} 
It is obvious that the solutions $y$ and $z$ are linearly independent and any solution of the equation $\ell u = 0$ can be represented in linear combination $u(x) = c_1 y(x) + c_2 z(x)$.
Calculating the scalar product $(f, y)$ on $(0, \pi)$ gives
\[
(f, y) = (\ell u, y) = \left. [u, y] \right|_0^\pi + (u, \ell y).
\]
Since $\ell y = 0$, we get
\begin{align*}
(f, y) = & \left. [u, y] \right|_0^\pi 
= \int_0^\pi \left\langle B u', y \right\rangle  dx 
=  \int_0^\pi \left\langle \left(\begin{matrix}  u_2'(x) \\-u_1'(x) \end{matrix} \right), \left(\begin{matrix}  y_1(x) \\ y_2(x) \end{matrix} \right) \right\rangle  dx  \\
= & \int_0^\pi u_2'(x) \bar{y}_1(x) - u_1'(x) \bar{y}_2(x) dx 
= \int_0^\pi \bar{y}_1(x) d u_2(x) - \int_0^\pi \bar{y}_2(x) d u_1(x)  \\
= & \bar{y}_1(\pi) u_2(\pi) - \bar{y}_1(0) u_2(0) - \bar{y}_2(\pi) u_1(\pi) + \bar{y}_2(0) u_1(0) 
= - u_1(\pi). 
\end{align*}
Similarly 
\[
(f, z) =  \bar{z}_1(\pi) u_2(\pi) - \bar{z}_1(0) u_2(0) - \bar{z}_2(\pi) u_1(\pi) + \bar{z}_2(0) u_1(0) = u_2(\pi). 
\]

Thus, if $u \in \mathcal{D}_0$ then $u_1(\pi) = u_2(\pi) = 0$ and therefore $(f, y) = (f, z) = 0$.
Since arbitrary solution of $\ell u = 0$ has the form $u(x) = c_1 y(x) + c_2 z(x)$, then 
\[
(f, u) = c_1 (f, y) + c_2 (f, z) = 0, 
\]
i.e. $f$ is orthogonal to all solutions of the equation $\ell u = 0$.

(Sufficiency).
If $(f, y) = 0$ and $(f, z) = 0$, then $u_1(\pi) = 0$ and $u_2(\pi) = 0$.
Since $u(x)$ is a solution to problem \eqref{c3:Cauchy_problem_f}, it follows that $u \in \mathcal{D}_0$.
Lemma \ref{c3:lem_sol_from_D_0} is proved.
\end{proof}

Let us denote by $\mathit{m}$ the set of all solutions of a homogeneous equation $\ell u = 0$.
It is obvious that $\dim \mathit{m} = 2$ and that $\mathit{m} \subset L^2([0,\pi], \mathbb{C}^2)$.
By $R_0$ we denote the range of operator $L_0$, i.e. the set of vector-functions $f$ from $L^2([0,\pi], \mathbb{C}^2)$ for which there exist $u \in \mathcal{D}_0$, such that $f = \ell u = L_0 u$.

The assertion of Lemma \ref{c3:lem_sol_from_D_0} means that $f \in R_0$ if and only if $f$ is orthogonal to $\mathit{m}$, in other words
\begin{equation}\label{c3:R_0_m}
L^2([0,\pi], \mathbb{C}^2) = R_0 \oplus \mathit{m}.
\end{equation}

\begin{lemma}\label{c3:lem_D_0_dense}
$\mathcal{D}_0$ is everywhere dense in $L^2([0,\pi], \mathbb{C}^2)$.
\end{lemma}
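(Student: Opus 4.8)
The plan is to exhibit, inside $\mathcal{D}_0$, an explicitly described family that is already dense in $H:=L^2([0,\pi],\mathbb{C}^2)$. The obvious guess $C_0^\infty((0,\pi),\mathbb{C}^2)$ is \emph{not} available here: for a sufficiently irregular potential $\Omega\in L^1$ the product $\Omega u$ need not belong to $L^2$ for any nonzero smooth $u$, so such a $u$ need not even lie in $\mathcal{D}$ (see \eqref{c3:D}). Spotting this obstruction — and getting around it by conjugating with the fundamental matrix at $\lambda=0$ — is the only genuine point of the argument; the rest is bookkeeping.

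Concretely, I would first let $\Phi_0(x):=\Phi(x,0)$, where $\Phi(x,\lambda)$ is the fundamental matrix provided by Theorem \ref{c2:thm_1}. Then $\Phi_0$ is absolutely continuous on $[0,\pi]$, satisfies $B\Phi_0'(x)+\Omega(x)\Phi_0(x)=0$ for a.e.\ $x$, and $\Phi_0(0)=E$; since $\operatorname{tr}A(x,0)=0$ its determinant is constant, equal to $1$, so $\Phi_0(x)$ is invertible for every $x$ and both $\Phi_0$ and $\Phi_0^{-1}$ are bounded on $[0,\pi]$, say $|\Phi_0(x)|\le C$ there. Next I would take an arbitrary $v$ that is absolutely continuous on $[0,\pi]$ with $v(0)=v(\pi)=0$ and $v'\in H$, set $u:=\Phi_0 v$, and verify $u\in\mathcal{D}_0$: clearly $u$ is absolutely continuous and $u(0)=u(\pi)=0$ (so both boundary conditions in the definition of $\mathcal{D}_0$ hold), and, differentiating the product a.e.\ and using $B\Phi_0'+\Omega\Phi_0=0$,
\[
\ell u=Bu'+\Omega u=(B\Phi_0'+\Omega\Phi_0)\,v+B\Phi_0 v'=B\Phi_0 v',
\]
which lies in $H$ because $\Phi_0$ is bounded. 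This cancellation is exactly what a rough $\Omega$ would otherwise destroy, and it is the reason for conjugating by $\Phi_0$.

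Finally I would check that the family $\{\Phi_0 v\}$ just produced is dense in $H$. Given $f\in H$ and $\varepsilon>0$, put $w:=\Phi_0^{-1}f\in H$ and choose $v\in C_0^\infty((0,\pi),\mathbb{C}^2)$ with $\|w-v\|<\varepsilon/C$, which is possible since $C_0^\infty$ is dense in $H$; this $v$ meets the hypotheses above, so $u:=\Phi_0 v\in\mathcal{D}_0$ and $\|f-u\|=\|\Phi_0(w-v)\|\le C\,\|w-v\|<\varepsilon$. Hence $\mathcal{D}_0$ is everywhere dense in $H$. (An alternative, closer to the decomposition \eqref{c3:R_0_m} just established, would be to assume $g\perp\mathcal{D}_0$, use the proof of Lemma \ref{c3:lem_sol_from_D_0} to realize every element of $\mathcal{D}_0$ as the variation-of-parameters solution $u_f$ of $\ell u=f$, $u(0)=0$ with $f\perp\mathit{m}$, and deduce $g=0$ from a Fubini computation; but I expect the conjugation argument to be shorter and cleaner, and the main difficulty — realizing that $C_0^\infty$ cannot be used directly — is the same in both routes.)
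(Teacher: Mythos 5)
Your proof is correct, but it takes a genuinely different route from the paper. You build an explicit dense family inside $\mathcal{D}_0$ by conjugating test functions with the fundamental matrix at $\lambda=0$: since $B\Phi_0'+\Omega\Phi_0=0$ a.e., $\det\Phi_0\equiv 1$, and $\Phi_0,\Phi_0^{-1}$ are bounded on $[0,\pi]$, the map $v\mapsto\Phi_0 v$ sends $C_0^\infty((0,\pi),\mathbb{C}^2)$ into $\mathcal{D}_0$ (the cancellation $\ell(\Phi_0 v)=B\Phi_0 v'$ is exactly what defeats the obstruction you correctly identify, namely that $\Omega u$ need not be in $L^2$ for smooth $u$ when $\Omega$ is merely $L^1$), and boundedness of $\Phi_0^{\pm1}$ turns density of $C_0^\infty$ in $H$ into density of the image. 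The paper instead argues by orthogonality, exploiting the decomposition $L^2=R_0\oplus\mathit{m}$ of \eqref{c3:R_0_m} just established via Lemma \ref{c3:lem_sol_from_D_0}: if $f\perp\mathcal{D}_0$, one solves $\ell v=f$, uses the Lagrange identity (boundary terms vanish because $u\in\mathcal{D}_0$) to get $(v,L_0u)=(f,u)=0$ for all $u\in\mathcal{D}_0$, hence $v\in R_0^{\perp}=\mathit{m}$, so $\ell v=0$ and $f=\ell v=0$ — essentially the alternative you sketch in your closing parenthesis. The paper's argument is shorter because the decomposition is already in hand and it stays within the operator-theoretic framework used for self-adjointness; your argument is self-contained (it does not use Lemma \ref{c3:lem_sol_from_D_0} at all), works verbatim for complex-valued $p,q\in L^1$ since no symmetry of $\Omega$ is invoked, and yields the stronger, more concrete statement that $\Phi_0\,C_0^\infty\subset\mathcal{D}_0$ is already dense.
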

\begin{proof}
It is enough to prove that if $f \in L^2([0,\pi], \mathbb{C}^2)$ and $(f, u) = 0, \forall u \in \mathcal{D}_0$, then $f = 0$.
Denote by $v$ the solution of equation $\ell v = f$.
Since $f \in L^2([0,\pi], \mathbb{C}^2)$, then $v \in \mathcal{D}$, and for arbitrary $u \in \mathcal{D}_0$ we have 
\[
(v, L_0 u) = (Lv, u) = (\ell v, u) = (f, u) =0,
\]
i.e. $v \in R_0^\perp$, and according to \eqref{c3:R_0_m}, $v \in \mathit{m}$, that is $\ell v = 0$.
Since $f = \ell v$ $\Rightarrow$ $f = 0$.
Lemma \ref{c3:lem_D_0_dense} is proved.
\end{proof}

\section{Gradient of eigenvalue.}\label{c3:sec_4}
Here we  assume that coefficients $p$ and $q$  are real and summable on $[0,\pi]$, i.e. $p, q\in L_{\mathbb{R}}^{1} \left[0, \pi \right]$, and parameters $\alpha$  and $\beta $  from boundary conditions are also real (as it was noted above,  it is sufficient to consider  the case $ \alpha, \beta \in \left( -\frac{\pi}{2}\, ,\, \frac{\pi}{2} \right]$).
The eigenvalues $\lambda_{n}$, $n\in \mathbb{Z}$, depend on parameters $\alpha$ and $\beta $ and coefficients $p, q$, i.e. $\lambda_{n} =\lambda_{n} \left(\alpha,\beta ,p, q\right)$ .
To study in more details the dependence of eigenvalues on these arguments, we introduce the concept of the gradient of eigenvalues by the following formula

\begin{equation}\label{c3:lambda_grad_def}
\grad\, \lambda_{n} =\left(\frac{\partial \lambda_{n} }{\partial \alpha} , \, \frac{\partial \lambda_{n} }{\partial \beta } ,\, \frac{\partial \lambda_{n} }{\partial p\left(x\right)} ,\, \frac{\partial \lambda_{n} }{\partial q\left(x\right)} \right). 
\end{equation}
The first two components of this vector are usual (ordinary) derivatives by numerical arguments, and the last two are defined from the following definition (see, e.g., \cite{Isaacson-Trubowitz:1983}).

\begin{definition}\label{c3:def_2}
The derivative of function $f$ with respect to function $q$ is the function $\frac{\partial f}{\partial q\left(x\right)} $ ( function on $x$),  which satisfy to equality 
\[
\left. \frac{d}{d\varepsilon } f\left(q+\epsilon v\right) \right|_{\epsilon =0}=\int_{0}^{\pi }\frac{\partial f}{\partial q\left(x\right)} \cdot v\left(x\right)dx 
\]
for arbitrary $v\in L_{R}^{2} \left[0,\, \pi \right]$.
\end{definition}

Let us denote  by $h_{n}(x)=h_{n}(x, p, q, \alpha, \beta)$, $n \in \mathbb{Z}$, normalized eigenfunctions of the problem $L(p, q, \alpha, \beta)$,  corresponding to eigenvalues $\lambda_{n} =\lambda_{n} \left(p, q, \alpha, \beta \right)$.
Besides the solution  $\varphi (x, \lambda , \alpha)$ of Cauchy problem \eqref{c2:Cauchy_problem_y}  let us consider the solution $y=u\left(x, \lambda , \beta \right)$ of the Cauchy problem
 \begin{equation}\label{c3:Cauchy_problem_y}
\left\{\begin{array}{l} 
{\ell y=\lambda y} \\
{y\left(\pi \right) 
= 
\left(\begin{array}{c} 
{\sin \beta } \\
 {-\cos \beta } 
 \end{array}\right).} 
 \end{array}\right.
\end{equation} 
It is obvious,  that both $\varphi_{n} \left(x\right)=\varphi \left(x, \lambda_{n} , \alpha\right)$ and  $u_{n} \left(x\right)=u\left(x, \lambda_{n} , \beta \right)$ are the eigenfunctions of problem $L\left(p, q, \alpha, \beta \right)$.

By $a_{n} $ and $b_{n} $ we denote the square of their $L^{2} $-norms, i.e. 
 \begin{equation}\label{c3:norming_constants}
a_{n} =\int_{0}^{\pi }\left|\varphi_{n} \left(x\right)\right|^{2} dx ,\quad b_{n} =\int_{0}^{\pi }\left|u_{n} \left(x\right)\right|^{2} dx .
\end{equation}
Obviously as $h_{n} \left(x\right)$ we can take both
\begin{equation}\label{c3:normalized_eigenfunction}
h_{n} \left(x\right)=\frac{\varphi_{n} \left(x\right)}{\sqrt{a_{n} } } ,\quad \mbox{and} \quad \tilde{h}_{n} \left(x\right)=\frac{u_{n} \left(x\right)}{\sqrt{b_{n} } } .
\end{equation}
From these definitions, it is easy to see, that $\left|h_{n} \left(0\right)\right|^{2} =\frac{1}{a_{n} }$ and $\left|h_{n} \left(\pi \right)\right|^{2} =\frac{1}{b_{n} }$.
 From the simplicity of eigenvalues, it follows, that $h_{n}$ and $\tilde{h}_{n}$ are linearly dependent, i.e. $h_n(x)=c_n \tilde{h}_n(x)$ and, since they are normalized, $\left|c_{n} \right|=1$. 
In what follows, depending on the context, we omit the notation of some of the arguments of functions. 
For example, if we study the dependence of the eigenvalue $\lambda_{n} $ on $\alpha$, then $\lambda_{n} \left(\alpha+\Delta \alpha\right)-\lambda_{n} \left(\alpha\right)$ means $\lambda_{n} \left(\alpha+\Delta \alpha, \beta , p, q\right)-\lambda_{n} \left(\alpha, \beta , p, q\right)$.

The purpose of this section is to express the components of the gradient \eqref{c3:lambda_grad_def} through normalized eigenfunctions $h_n$ of problem $L\left(p, q, \alpha, \beta \right)$. 

\begin{theorem}\label{c3:thm_3}
Let $h_n(x) = \left(\begin{array}{c} h_{n_1}(x) \\ h_{n_2}(x) \end{array}\right)$, $n \in \mathbb{Z}$ , are the normalized eigenfunctions of problem $L\left(p, q, \alpha, \beta \right)$.
Then the equalities
\begin{align*}
  & \cfrac{\partial \lambda_n(\alpha, \beta, p, q)}{\partial \alpha} = - |h_n(0)|^2, \\
  & \cfrac{\partial \lambda_n(\alpha, \beta, p, q)}{\partial \beta} = |h_n(\pi)|^2, \\
  & \cfrac{\partial \lambda_n(\alpha, \beta, p, q)}{\partial p(x)} = |h_{n_1}(x)|^2 - |h_{n_2}(x)|^2, \\
  & \cfrac{\partial \lambda_n(\alpha, \beta, p, q)}{\partial q(x)} = 2 h_{n_1}(x) \cdot h_{n_2}(x)
\end{align*}
hold.
\end{theorem}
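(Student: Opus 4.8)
The plan is to reduce all four derivatives to one algebraic mechanism built around the characteristic function. Recall from \eqref{c3:chi_characteristic} that $\lambda_n=\lambda_n(\alpha,\beta,p,q)$ are the zeros of the entire function $\chi_{\alpha,\beta}(\lambda)=\varphi_1(\pi,\lambda,\alpha)\cos\beta+\varphi_2(\pi,\lambda,\alpha)\sin\beta$, that the eigenfunction is $\varphi_n(x)=\varphi(x,\lambda_n,\alpha)$ with $a_n=\int_0^\pi|\varphi_n|^2\,dx$ and $h_n=\varphi_n/\sqrt{a_n}$. Since $\varphi_n$ is a nontrivial solution of the Dirac Cauchy problem, uniqueness forces $\varphi_n(\pi)\neq 0$; as $\varphi_n(\pi)$ is orthogonal to $(\cos\beta,\sin\beta)^T$, there is a nonzero scalar $c_n$ with $\varphi_n(\pi)=c_n(\sin\beta,-\cos\beta)^T$. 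Because $\alpha,\beta,p,q$ are real here, $\varphi_n$ is real-valued, $|\varphi_n(0)|^2=\sin^2\alpha+\cos^2\alpha=1$, and $|\varphi_n(\pi)|^2=c_n^2$.

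Two identities do the work. First, a Lagrange (Wronskian-type) identity for the Dirac equation: if $\ell y=\lambda y$ and $\ell z=\mu z$, then a one-line computation from \eqref{c1:diff_expression} gives $\frac{d}{dx}(y_1z_2-y_2z_1)=(\mu-\lambda)(y_1z_1+y_2z_2)$, the quantitative form of the constancy of the Wronskian of two solutions at one $\lambda$. Taking $y=z=\varphi(\cdot,\cdot,\alpha)$, integrating on $[0,\pi]$ (the boundary term at $0$ vanishes since $\varphi(0,\cdot,\alpha)$ is fixed), dividing by $\mu-\lambda$ and letting $\mu\to\lambda_n$ yields, with the dot denoting $\partial_\lambda$,
\[
\varphi_1(\pi,\lambda_n,\alpha)\,\dot\varphi_2(\pi,\lambda_n,\alpha)-\varphi_2(\pi,\lambda_n,\alpha)\,\dot\varphi_1(\pi,\lambda_n,\alpha)=a_n,
\]
which after substituting $\varphi_n(\pi)=c_n(\sin\beta,-\cos\beta)^T$ becomes $c_n\,\dot\chi_{\alpha,\beta}(\lambda_n)=a_n$; in particular $\dot\chi(\lambda_n)\neq 0$, so $\lambda_n$ is a simple zero and the implicit function theorem applies. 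Second, since $\ell$ contains no $\alpha$, the function $w:=\partial_\alpha\varphi(\cdot,\lambda,\alpha)$ again solves $\ell w=\lambda w$ with $w(0)=(\cos\alpha,\sin\alpha)^T$; hence $\varphi_1\,\partial_\alpha\varphi_2-\varphi_2\,\partial_\alpha\varphi_1$ is $x$-independent and equals its value $1$ at $x=0$, so at $x=\pi$, $\lambda=\lambda_n$ it gives $c_n\,\partial_\alpha\chi_{\alpha,\beta}(\lambda_n)=1$.

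The $\alpha$- and $\beta$-derivatives then follow by implicit differentiation of $\chi_{\alpha,\beta}(\lambda_n)=0$: for $\alpha$, $\partial\lambda_n/\partial\alpha=-\partial_\alpha\chi(\lambda_n)/\dot\chi(\lambda_n)=-(1/c_n)/(a_n/c_n)=-1/a_n=-|\varphi_n(0)|^2/a_n=-|h_n(0)|^2$; for $\beta$, a direct computation gives $\partial_\beta\chi(\lambda_n)=-\varphi_1(\pi,\lambda_n,\alpha)\sin\beta+\varphi_2(\pi,\lambda_n,\alpha)\cos\beta=-c_n$, so $\partial\lambda_n/\partial\beta=-(-c_n)/(a_n/c_n)=c_n^2/a_n=|\varphi_n(\pi)|^2/a_n=|h_n(\pi)|^2$. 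For the functional derivatives, perturb the potential: replace $p$ by $p+\varepsilon v$ (or $q$ by $q+\varepsilon v$), $v\in L^2_{\mathbb{R}}[0,\pi]$, so $\ell$ becomes $\ell+\varepsilon v\sigma_2$ (resp. $\ell+\varepsilon v\sigma_3$), with eigenvalue $\lambda_n^\varepsilon$ and eigenfunction $\varphi_n^\varepsilon$ of $L(p+\varepsilon v,q,\alpha,\beta)$; note that $\varphi_n^\varepsilon$ and $\varphi_n$ obey the same boundary conditions at $0$ and $\pi$. Pairing $(\ell+\varepsilon v\sigma_2)\varphi_n^\varepsilon=\lambda_n^\varepsilon\varphi_n^\varepsilon$ against $\varphi_n$ and $\ell\varphi_n=\lambda_n\varphi_n$ against $\varphi_n^\varepsilon$ in $L^2([0,\pi],\mathbb{C}^2)$, subtracting, and using the Lagrange computation of Lemma \ref{c3:lem_adjoint} (whose boundary term now vanishes) gives $\varepsilon\,(v\sigma_2\varphi_n^\varepsilon,\varphi_n)=(\lambda_n^\varepsilon-\lambda_n)(\varphi_n^\varepsilon,\varphi_n)$. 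Dividing by $\varepsilon$ and letting $\varepsilon\to 0$ (continuous dependence of $\varphi_n^\varepsilon,\lambda_n^\varepsilon$ on $\varepsilon$, and $(\varphi_n,\varphi_n)=a_n\neq 0$) gives
\[
\left.\frac{d}{d\varepsilon}\lambda_n^\varepsilon\right|_{\varepsilon=0}=\frac{1}{a_n}\int_0^\pi v(x)\,\langle\sigma_2\varphi_n(x),\varphi_n(x)\rangle\,dx=\int_0^\pi v(x)\,\frac{|\varphi_{n_1}(x)|^2-|\varphi_{n_2}(x)|^2}{a_n}\,dx,
\]
so by Definition \ref{c3:def_2}, $\partial\lambda_n/\partial p(x)=|h_{n_1}(x)|^2-|h_{n_2}(x)|^2$; the identical argument with $\sigma_3$, using $\langle\sigma_3\varphi_n,\varphi_n\rangle=2\varphi_{n_1}\varphi_{n_2}$, gives $\partial\lambda_n/\partial q(x)=2h_{n_1}(x)h_{n_2}(x)$.

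The only genuinely delicate points will be the differentiability justifications: that $\lambda_n$ is a simple zero of $\chi_{\alpha,\beta}$ (handled above, since $\dot\chi(\lambda_n)=a_n/c_n\neq 0$), hence that $\lambda_n$ depends smoothly on $\alpha,\beta$ and on the perturbation parameter $\varepsilon$ by the implicit function theorem; and the continuity and difference-quotient convergence of $\varepsilon\mapsto(\lambda_n^\varepsilon,\varphi_n^\varepsilon)$ and of $\partial_\alpha\varphi$, all of which rest on the analytic and continuous dependence of solutions of the Cauchy problem on $\lambda$, $\alpha$ and the coefficients established in Chapters \ref{chapter_1} and \ref{chapter_2} (the dependence on an $L^1$-perturbation of $p,q$ following from the integral-equation setup of Chapter \ref{chapter_1}). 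Everything else is bookkeeping with the explicit boundary values $\varphi_n(0)=(\sin\alpha,-\cos\alpha)^T$ and $\varphi_n(\pi)=c_n(\sin\beta,-\cos\beta)^T$.
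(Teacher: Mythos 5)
Your proposal is correct, and for the two functional derivatives $\partial\lambda_n/\partial p(x)$ and $\partial\lambda_n/\partial q(x)$ it follows essentially the paper's own argument: perturb the potential by $\varepsilon v\sigma_2$ (resp. $\varepsilon v\sigma_3$), pair the two eigenvalue equations, kill the boundary terms via Lemma \ref{c3:lem_adjoint} because both eigenfunctions obey the same boundary conditions, divide by $\varepsilon$ and pass to the limit. Where you genuinely diverge from the paper is in the first two identities. The paper treats $\partial\lambda_n/\partial\alpha$ and $\partial\lambda_n/\partial\beta$ by the same finite-difference pairing trick: it writes the systems \eqref{c3:system_alpha}--\eqref{c3:system_delta_alpha} for the eigenfunctions of $L(p,q,\alpha,\beta)$ and $L(p,q,\alpha+\Delta\alpha,\beta)$, subtracts, reduces the left side to the boundary term $-\sin\Delta\alpha/\sqrt{a_n(\alpha)a_n(\alpha+\Delta\alpha)}$, and lets $\Delta\alpha\to 0$. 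You instead work with the characteristic function $\chi_{\alpha,\beta}(\lambda)=\varphi_1(\pi,\lambda,\alpha)\cos\beta+\varphi_2(\pi,\lambda,\alpha)\sin\beta$ and implicit differentiation, using the two Wronskian identities $c_n\,\dot\chi(\lambda_n)=a_n$ (which is the specialization of \eqref{c4:int_varphi_pi}, re-derived here from the Lagrange identity) and $c_n\,\partial_\alpha\chi(\lambda_n)=1$ (from the constancy of the Wronskian of $\varphi$ and $\partial_\alpha\varphi$), plus the explicit value $\partial_\beta\chi(\lambda_n)=-c_n$. Your route has the advantage of delivering, as a by-product, the simplicity of the zeros ($\dot\chi(\lambda_n)=a_n/c_n\neq 0$) and hence the smooth dependence of $\lambda_n$ on $\alpha$ and $\beta$ via the implicit function theorem, points the paper leaves implicit; the paper's route has the advantage of uniformity (one pairing mechanism for all four components of the gradient) and of never needing the constant $c_n$ or the characteristic function. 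Both treatments leave the same analytic details (convergence $\tilde h_n\to h_n$, resp. continuity of $\varepsilon\mapsto(\lambda_n^\varepsilon,\varphi_n^\varepsilon)$) at a comparable, semi-informal level, resting on the analytic dependence results of Chapter \ref{chapter_1}, so your proposal is not less rigorous than the original.
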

\begin{proof}
Let us write down the fact that $h_n$ is the eigenfunction of problem $L\left(p, q, \alpha, \beta \right)$ and $\tilde{h}_n$ is the eigenfunction of problem $L\left(p, q, \alpha+\Delta \alpha, \beta \right)$:

\begin{align}
&\left\{
\begin{array}{l}
 Bh'_{n} +\Omega \left(x\right)h_{n} \equiv \lambda_{n} \left(\alpha\right)h_{n} , \\
 h_{n1} \left(0\right)\cos \alpha+h_{n2} \left(0\right)\sin \alpha=0, \\
 h_{n1} \left(\pi \right)\cos \beta +h_{n2} \left(\pi \right)\sin \beta =0.
\end{array}\right. \label{c3:system_alpha}
\\
&\left\{
\begin{array}{l}
 B\tilde{h'}_{n} +\Omega \left(x\right)\tilde{h}_{n} \equiv \lambda_{n} \left(\alpha+\Delta \alpha\right)\tilde{h}_{n} , \\
 \tilde{h}_{n1} \left(0\right)\cos \left(\alpha+\Delta \alpha\right)+\tilde{h}_{n2} \left(0\right)\sin \left(\alpha+\Delta \alpha\right)=0, \\
 \tilde{h}_{n1} \left(\pi \right)\cos \beta +\tilde{h}_{n2} \left(\pi \right)\sin \beta =0. \end{array}\right. \label{c3:system_delta_alpha}
\end{align}
Let's multiply \eqref{c3:system_alpha} scalarly from the right by function $\tilde{h}_n$ and \eqref{c3:system_delta_alpha} from the left by $h_n$.
Taking into account the self-adjointness of the matrix $\Omega(x)$ and that the eigenvalues are real, we get:
\begin{align*}
& \left(Bh'_{n} , \tilde{h}_{n} \right)+\left(\Omega h_{n} , \tilde{h}_{n} \right)=\lambda_{n} \left(\alpha\right)\left(h_{n} , \tilde{h}_{n} \right),\\
&\left(h_{n} , B\tilde{h'}_{n} \right)+\left(\Omega h_{n} , \tilde{h}_{n} \right)=\lambda_{n} \left(\alpha+\Delta \alpha\right)\left(h_{n} , \tilde{h}_{n} \right).
\end{align*} 
Subtracting the first equation from the second one, we obtain
\begin{equation} \label{c3:int_dif}
\begin{aligned}
\int_{0}^{\pi }\left\langle \begin{array}{cc} {\left(\begin{array}{c} {h_{n1} } \\{h_{n2} } \end{array}\right),} & {\left(\begin{array}{c} {\tilde{h}'_{n2} } \\{-\tilde{h}'_{n1} } \end{array}\right)} \end{array}\right\rangle dx
&-\int_{0}^{\pi }\left\langle \begin{array}{cc} {\left(\begin{array}{c} {h'_{n2} } \\{-h'_{n1} } \end{array}\right),} & {\left(\begin{array}{c} {\tilde{h}_{n1} } \\{\tilde{h}_{n2} } \end{array}\right)} \end{array}\right\rangle dx 
\\
& =\left[\lambda_{n} \left(\alpha+\Delta \alpha\right)-\lambda_{n} \left(\alpha\right)\right]\left(h_{n} , \tilde{h}_{n} \right)
\end{aligned}
\end{equation}

Considering that for real coefficients, the components of the solution can be taken real-valued, the integrand on the left-hand side takes the form
\[
h_{n1} \cdot \tilde{h}'_{n2} -h_{n2} \cdot \tilde{h}'_{n1} -h'_{n2} \cdot \tilde{h}_{n1} +h'_{n1} \cdot \tilde{h}_{n2} =\frac{d}{dx} \left[h_{n1} \cdot \tilde{h}_{n2} -h_{n2} \cdot \tilde{h}_{n1} \right], 
\]
and the integral itself
\[
h_{n1} \left(\pi \right)\cdot \tilde{h}_{n2} \left(\pi \right)-h_{n2} \left(\pi \right)\cdot \tilde{h}_{n1} \left(\pi \right)-h_{n1} \left(0\right)\cdot \tilde{h}_{n2} \left(0\right)+h_{n2} \left(0\right)\cdot \tilde{h}_{n1} \left(0\right).
\]
Since 
\[
h_{n} \left(x\right)=\dfrac{\varphi_{n} \left(x, \alpha\right)}{\sqrt{a_{n} \left(\alpha\right)} }, 
\quad \mbox{and} \quad 
\tilde{h}_{n} \left(x\right)=\dfrac{\varphi_{n} \left(x, \alpha+\Delta \alpha\right)}{\sqrt{a_{n} \left(\alpha+\Delta \alpha\right)} },
\] 
then
\[
 h_{n} \left(0\right)=\dfrac{1}{\sqrt{a_{n} \left(\alpha\right)} } \left(\begin{array}{c} {\sin \alpha} \\{-\cos \alpha} \end{array}\right), 
 \quad \mbox{and} \quad 
 \tilde{h}_{n} \left(0\right)=\frac{1}{\sqrt{a_{n} \left(\alpha+\Delta \alpha\right)} } \left(\begin{array}{c} {\sin \left(\alpha+\Delta \alpha\right)} \\{-\cos \left(\alpha+\Delta \alpha\right)} \end{array}\right).
 \]
Therefore, the expression \eqref{c3:int_dif} takes the form
\[
- \frac{1}{\sqrt{a_{n} \left(\alpha\right)a_{n} \left(\alpha+\Delta \alpha\right)} } \sin \Delta \alpha=\left[\lambda_{n} \left(\alpha+\Delta \alpha\right)-\lambda_{n} \left(\alpha\right)\right]\left(h_{n} , \tilde{h}_{n} \right),
\]
whence, we get that
\begin{equation} \label{c3:partial_lambda_alpha}
\frac{\partial \lambda_{n} }{\partial \alpha} =-\frac{1}{a_{n} } =-\left|h_{n} \left(0\right)\right|^{2}.
\end{equation}
Quite similarly, we obtain the equality
\begin{equation} \label{c3:partial_lambda_beta}
\frac{\partial \lambda_{n} }{\partial \beta } =\frac{1}{b_{n} } =\left|h_{n} \left(\pi \right)\right|^{2}.
\end{equation}
To obtain equality $ \dfrac{\partial \lambda_{n} }{\partial p\left(x\right)} =\left|h_{n1} \left(x\right)\right|^{2} -\left|h_{n2} \left(x\right)\right|^{2} $ , we write \eqref{c3:system_alpha} in the form
\begin{equation} \label{c3:system_alpha_2}
Bh'_{n} +\left(\sigma_{2} p\left(x\right)+\sigma_{3} q\left(x\right)\right)h_{n} =\lambda_{n} \left(p\right)\cdot h_{n} ,
\end{equation}
and also
\begin{equation} \label{c3:system_delta_alpha_2}
B\tilde{h'}_{n} +\left(\sigma_{2} \left[p\left(x\right)+\varepsilon v\left(x\right)\right]+\sigma_{3} q\left(x\right)\right)\tilde{h}_{n} =\lambda_{n} \left(p+\varepsilon v\right)\tilde{h}_{n} ,
\end{equation}
where by $\tilde{h}_n$ we denote the eigenfunction of problem $L\left(p+\varepsilon v, q, \alpha, \beta \right)$.

Multiplying \eqref{c3:system_alpha_2} scalarly from the right by function $\tilde{h}_n$, \eqref{c3:system_delta_alpha_2} from the left by $h_n$ and subtracting from each other, and taking into account that $h_n$ and $\tilde{h}_n$ satisfy the same boundary conditions, we obtain
\[
\left(h_{n} , \sigma_{2} \left[p\left(x\right)+\varepsilon v\left(x\right)\right]\tilde{h}_{n} \right)-\left(\sigma_{2} p\left(x\right)h_{n} , \tilde{h}_{n} \right)=\left[\lambda_{n} \left(p+\varepsilon v\right)-\lambda_{n} \left(p\right)\right]\left(h_{n} , \tilde{h}_{n} \right)
\]
whence, it follows that
\[ \frac{\lambda_{n} \left(p+\varepsilon v\right)-\lambda_{n} \left(p\right)}{\varepsilon } =\int_{0}^{\pi }\left(h_{n1} \cdot \bar{\tilde{h}}_{n1} -h_{n2} \cdot \bar{\tilde{h}}_{n2} \right)v\left(x\right)dx .
\]
Passing to the limit when $\epsilon \to 0$ and taking into account that $\tilde{h}_n \to h_n$, when $\epsilon \to 0$, according to the Definition \ref{c3:def_2}, we obtain 
\[
\frac{\partial \lambda_{n} }{\partial p\left(x\right)} =\left|h_{n1} \left(x\right)\right|^{2} -\left|h_{n2} \left(x\right)\right|^{2} .
\]
Quite similarly, we get
\[
\frac{\partial \lambda_{n} }{\partial q\left(x\right)} =2h_{n1} \left(x\right)\cdot h_{n2} \left(x\right).
\]
Theorem \ref{c3:thm_3} is proved.
\end{proof}

\section{Asymptotics of the norming constants.}\label{c3:sec_5}
Since the solution $\varphi(x, \lambda, \alpha)$ of Cauchy problem \eqref{c2:Cauchy_problem_y} satisfies the boundary condition \eqref{c3:a_bound_cond} for all $\lambda \in \mathbb{C}$, and the eigenvalues $\lambda_n(\Omega, \alpha, \beta)$ of the problem $L(p, q, \alpha, \beta)$ are determined from equation \eqref{c3:chi_characteristic}, the functions $\varphi_n(x) \stackrel{def}{=} \varphi(x, \lambda_n(\Omega, \alpha, \beta), \alpha)$, $n \in \mathbb{Z}$, are eigenfunctions of the problem $L(p, q, \alpha, \beta)$. 
The squares of $L^2$-norms of these eigenfunctions, i.e., the quantities
\begin{equation}\label{c3:27}
a_n=a_n(p, q, \alpha, \beta)\stackrel{def}{=} \int^\pi_0 | \varphi_n(x)|^2 dx
= \int^\pi_0 \left\{ | \varphi_{n1}(x)|^2+|\varphi_{n2}(x)|^2 \right\} dx
\end{equation}
are called norming constants. 
It follows from the representation \eqref{c3:varphi_representation} that
\begin{align}
\label{c3:28}
& \varphi_{n1}(x)=\sin(\lambda_n x+\alpha)+\int^x_{-x} \left[ K_{11}(x,t)\sin(\lambda_n t+\alpha)-K_{12} (x,t)\cos(\lambda_n t+\alpha) dt \right],\\
\label{c3:29}
& \varphi_{n2}(x)=-\cos(\lambda_n x+\alpha)+\int^x_{-x} \left[ K_{21}(x,t)\sin(\lambda_n t+\alpha)-K_{22} (x,t)\cos(\lambda_n t+\alpha) dt \right].
\end{align}
According to Theorem \ref{c2:thm_1}, for $p,q\in L^1_{\mathbb{R}}[0,\pi]$, $K_{ij}(x,\cdot)\in L^1(-x,x)$, whence, according to the Riemann-Lebesgue theorem, it follows that the integrals on the right-hand sides of the last two formulas are quantities $o(1)$ for $\lambda_n \to \pm \infty$ (and according to the asymptotics of the eigenvalues $\lambda_n = n + \dfrac{\beta - \alpha}{\pi}+o(1)$, this is equivalent to $n \to \pm \infty$). 
Therefore
\begin{align*}
& \varphi^2_{n1}(x)=\sin^2(\lambda_n x+\alpha)+r_{n1}, \\
& \varphi^2_{n2}(x)=\cos^2(\lambda_n x+\alpha)+r_{n2},
\end{align*}
where $r_{ni}=o(1)$, $i=1,2$, for $n\to\pm\infty$. 
Hence, for the norming constants $a_n = a_n(p, q, \alpha, \beta)$ (note that we are considering the case of real $p$, $q$ and real $\alpha, \beta \in \left( - \frac{\pi}{2}, \frac{\pi}{2} \right]$, for which the components $\varphi_1(x, \lambda, \alpha)$ and $\varphi_2(x, \lambda, \alpha)$ are real) we obtain the formulas
\begin{equation}\label{c3:30}
a_n=\pi+\kappa_n\, ,
\end{equation}
where $\kappa_n = o(1)$ for $p,q\in L^1_{\mathbb R}[0,\pi]$ and $\sum_{n \in \mathbb{Z}}\kappa^2_n <\infty$ for $p,q\in L^2_{\mathbb{R}}[0,\pi]$ (these follow from Theorem \ref{c2:thm_3} and formulas \eqref{c3:28} and \eqref{c3:29}).

If $p,q\in W^m_{k,{\mathbb{R}}}[0,\pi]$ ($m=1$ or $2$), then the integrals on the right-hand sides of the formulas \eqref{c3:28} and \eqref{c3:29} can be integrated by parts $k$ times (see Section \ref{c2:sec_5}) and obtain formulas
\[
\varphi_{n1}(x)=\sin(\lambda_n x+\alpha)+\frac{g_1(x,\lambda_n)}{\lambda_n}+\frac{g_2(x,\lambda_n)}{\lambda^2_n}+ \cdots +\frac{g_k(x,\lambda_n)}{\lambda^k_n}+\frac{f_k(x,\lambda_n)}{\lambda^k_n},
\]
where 
\[f_k(x,\lambda_n)=(-1)^k \int^x_{-x} \left[ \frac{\p^k K_{11}(x,t)}{\p t^k}\, \sin(\lambda_n t+\alpha)-
\dfrac{\p^k K_{12}(x,t)}{\p t^k}\, \cos(\lambda_n t+\alpha) \right] dt
\]
and, according to Section \ref{c2:sec_5}, $ \int^\pi_{-\pi} | f_k(x,\lambda_n)|^2 dx=c_{n,k}$, where $\sum^\infty_{n=-\infty} | c_{n,k}|^2 <\infty$ for $m=2$, and for $m=1$ $\; c_{n,k}=o(1)$, when $n \to \pm \infty$.

Similarly for $\varphi_{n,2}(x)$. 
Hence, in the "smooth case", i.e. at $p,q\in W^m_{k,{\mathbb{R}}}[0,\pi]$ ($k \geq 1$), the following assertion is true:
\begin{theorem}\label{c3:thm_4} 
The norming constants $a_n=a_n(p, q, \alpha, \beta)$ of the problem $L(p, q, \alpha, \beta)$ have the asymptotics
\[
a_n = \pi+\frac{c_1}{n} +\cdots +\frac{c_k}{n^k}+\frac{c_{n,k}}{n^k},
\]
where $c_1, c_2, \ldots, c_k$ are some constants, and
\[
\sum^\infty_{n=-\infty} c^2_{nk}<\infty\qquad \text{ for } \;\; m=2\, .
\]
In the case $p,q \in W^1_{k,{\mathbb{R}}}[0,\pi]$ about $c_{n,k}$ we can assert, that $c_{n,k}=o(1)$, when $n \to \pm \infty$.
\end{theorem}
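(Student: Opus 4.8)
The plan is to refine the computation that already produced \eqref{c3:30} by pushing the integration by parts in \eqref{c3:28}--\eqref{c3:29} all the way to order $k$, in parallel with what Section \ref{c3:sec_2} does for the characteristic function. First I would invoke Section \ref{c2:sec_6}: when $p,q\in W^m_{k,\mathbb{R}}[0,\pi]$ the kernel $K(x,\cdot)$ has absolutely continuous $t$-derivatives up to order $k-1$ with $\partial_t^{k}K_{ij}(\pi,\cdot)\in L^m(-\pi,\pi)$, so that the boundary values $K_{ij}(x,\pm x),\ \partial_tK_{ij}(x,\pm x),\dots,\partial_t^{k-1}K_{ij}(x,\pm x)$ are well-defined absolutely continuous functions of $x$. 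Integrating by parts $k$ times in the two integrals of \eqref{c3:28} then gives
\[
\varphi_{n1}(x)=\sin(\lambda_n x+\alpha)+\sum_{j=1}^{k}\frac{g_j^{(1)}(x,\lambda_n)}{\lambda_n^{j}}+\frac{f_k^{(1)}(x,\lambda_n)}{\lambda_n^{k}},
\]
where each $g_j^{(1)}(x,\lambda_n)$ is a trigonometric polynomial in the phases $\lambda_n x\pm\alpha$ whose coefficients are built from the boundary values $\partial_t^{i}K_{11}(x,\pm x),\ \partial_t^{i}K_{12}(x,\pm x)$, $0\le i\le j-1$ (hence bounded uniformly in $x$ and in $n$), and $f_k^{(1)}$ is, up to sign, $\int_{-x}^{x}\partial_t^kK_{11}(x,t)\sin(\lambda_n t+\alpha)\,dt-\int_{-x}^{x}\partial_t^kK_{12}(x,t)\cos(\lambda_n t+\alpha)\,dt$ (with $\sin$ and $\cos$ interchanged when $k$ is odd). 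An entirely analogous expansion holds for $\varphi_{n2}(x)=-\cos(\lambda_n x+\alpha)+\sum_{j}g_j^{(2)}/\lambda_n^{j}+f_k^{(2)}/\lambda_n^{k}$, now with the second-row entries $K_{21},K_{22}$.

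Second, I would square and add. Since $\sin^2(\lambda_n x+\alpha)+\cos^2(\lambda_n x+\alpha)\equiv1$, the leading terms cancel against each other in $|\varphi_n(x)|^2=\varphi_{n1}^2(x)+\varphi_{n2}^2(x)$, leaving
\[
|\varphi_n(x)|^2=1+\sum_{j=1}^{k}\frac{D_j(x,\lambda_n)}{\lambda_n^{j}}+\frac{E_{n,k}(x)}{\lambda_n^{k}},
\]
where the $D_j$ are again trigonometric polynomials in $\lambda_n x\pm\alpha$ with uniformly bounded coefficients, and $E_{n,k}$ collects all terms carrying a factor $f_k^{(1)}$ or $f_k^{(2)}$. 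Integrating over $[0,\pi]$ gives $\int_0^\pi 1\,dx=\pi$, while each $\int_0^\pi D_j(x,\lambda_n)\,dx$ is, after one further integration by parts in $x$ (or by Lemma \ref{c3:lem_1}), a constant plus $O(1/\lambda_n)$ — the constant being exactly the contribution of the non-oscillating products that, after the product-to-sum identities, lose their $x$-dependence (e.g. the pieces built from $K_{ij}(x,-x)$ paired with $\sin,\cos$ of $\pm2\alpha$, and similarly at $x=\pi$ with $\pm2\beta$). Re-indexing the resulting powers of $1/\lambda_n$ and then substituting $\lambda_n=n+\frac{\beta-\alpha}{\pi}+h_n$ with the smooth-case expansion of $h_n$ from Theorem \ref{thm2-3-2} — i.e. re-expanding every $1/\lambda_n^{j}$ and every $\cos(\lambda_n\pi+\cdots),\sin(\lambda_n\pi+\cdots)$ in powers of $1/n$ — produces
\[
a_n=\pi+\frac{c_1}{n}+\cdots+\frac{c_k}{n^k}+\frac{c_{n,k}}{n^k},
\]
with $c_1,\dots,c_k$ constants depending only on $\alpha,\beta$ and on the boundary values of $p,q$ and their derivatives up to order $k-1$, and $c_{n,k}/n^k$ absorbing all remainders, in particular $\lambda_n^{-k}\int_0^\pi E_{n,k}(x)\,dx$.

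Third, I would control $c_{n,k}$ and here lies the main difficulty. For $m=1$, $\partial_t^kK_{ij}(\pi,\cdot)\in L^1$, so Lemma \ref{c3:lem_1} (uniform Riemann--Lebesgue) gives $f_k^{(i)}(x,\lambda_n)=o(1)$ uniformly in $x$, hence $\int_0^\pi E_{n,k}(x)\,dx=o(1)$ and $c_{n,k}=o(1)$; the $h_n$-re-expansion only adds further $o(1)$ corrections. For $m=2$, $\partial_t^kK_{ij}(\pi,\cdot)\in L^2$, and then, by the same Bessel-type mechanism used in Section \ref{c2:sec_6} and underlying Lemma \ref{c3:lem_3} (square-summability of $\{\int f(t)e^{i\lambda_n t}\,dt\}_{n}$ for $f\in L^2$ along a sequence $\lambda_n=n+o(1)$), one gets $\int_{-\pi}^{\pi}|f_k^{(i)}(x,\lambda_n)|^2\,dx$ square-summable in $n$; combining with the Cauchy--Schwarz inequality applied to $\int_0^\pi E_{n,k}(x)\,dx$ and with Theorem \ref{c2:thm_6} for the uniformity in $p,q$ yields $\sum_n c_{n,k}^2<\infty$. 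The genuine obstacle is not the analytic input (which is immediate from the lemmas already proved) but the bookkeeping in the second step: in each $\int_0^\pi D_j(x,\lambda_n)\,dx$ one must cleanly separate the honestly constant part from the $O(1/\lambda_n)$ oscillatory part, and then convert a $1/\lambda_n$-expansion whose numerators still oscillate with $n$ into a $1/n$-expansion with true constants $c_j$, feeding in the $h_n$-expansion of Theorem \ref{thm2-3-2} consistently so that no residual $n$-dependence is left hidden inside the $c_j$.
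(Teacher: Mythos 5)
Your proposal follows the paper's own route: the paper likewise integrates by parts $k$ times in the representations \eqref{c3:28}--\eqref{c3:29}, using the kernel smoothness from Section \ref{c2:sec_6}, to expand $\varphi_{n1},\varphi_{n2}$ in powers of $1/\lambda_n$ with remainder $f_k(x,\lambda_n)/\lambda_n^k$, and then squares, adds and integrates, controlling the remainder by the uniform Riemann--Lebesgue lemma for $m=1$ and by square-summability of $\int_{-\pi}^{\pi}|f_k(x,\lambda_n)|^2\,dx$ for $m=2$. Your write-up is in fact more explicit than the paper's (which leaves the separation of the constant terms $c_j$ and the $1/\lambda_n\to 1/n$ re-expansion implicit), so it is essentially the same proof.
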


\section{Representation of norming constants by two spectra}\label{c3:sec_6}
The purpose of this subsection is to prove the following statement.

\begin{theorem}\label{thm2-3-5} 
For $p, q \in L^2_\mathbb{R} [0, \pi]$, the norming constants $a_n (\Omega, \alpha, \beta) $ are determined through two spectra $ \left\{ \lambda_k (\alpha) \right\}^\infty_{k = - \infty} $ and $ \left\{ \lambda_k (\epsilon) \right\}^\infty_{k = - \infty} $ (where $ \epsilon $ is any number from $ \left( \alpha, \, \frac {\pi} {2} \right] $) by the formula 
\[
a_n (\Omega, \alpha, \beta) = \frac{\sin (\epsilon - \alpha)} {\lambda_n (\alpha) - \lambda_n (\epsilon)} \prod_{\substack{k = - \infty\\ k \neq n}}^\infty \frac {\lambda_k (\alpha) - \lambda_n (\alpha)} {\lambda_k (\epsilon) - \lambda_n (\alpha)} \,,
\]
where the infinite product is understood in the sense of the principal value, i.e. $ \prod \limits ^ \infty _ {- \infty} a_k = a_0 \lim_ {n \to \infty} \prod \limits ^ n_ {k = 1} a_k \cdot a _ {- k} $.
\end{theorem}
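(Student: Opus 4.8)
The plan is to reduce $a_n$ to a ratio of characteristic functions and then apply a Hadamard-type product representation; throughout, a dot denotes differentiation in $\lambda$ and, for fixed $\beta$, I abbreviate $\lambda_n(\gamma):=\lambda_n(p,q,\gamma,\beta)$. First I would establish the Lagrange identity for the Dirac system: if $\ell u=\lambda u$ and $\ell v=\mu v$, a direct substitution of the system componentwise (compare the Green-type computation in the proof of Lemma \ref{c3:lem_adjoint}, now for the bilinear pairing) gives
\[
\tfrac{d}{dx}\bigl(u_1v_2-u_2v_1\bigr)=(\mu-\lambda)\bigl(u_1v_1+u_2v_2\bigr).
\]
Taking $u=\varphi(\cdot,\lambda_n(\alpha),\alpha)$ and $v=\varphi(\cdot,\mu,\alpha)$, both of which satisfy \eqref{c3:a_bound_cond} so that the integrated left side has no boundary term at $x=0$, then integrating over $[0,\pi]$ and letting $\mu\to\lambda_n(\alpha)$ (legitimate since $\varphi(\cdot,\lambda_n(\alpha),\alpha)$ is real, $p,q,\alpha$ being real) yields
\[
a_n=\varphi_1(\pi,\lambda_n(\alpha),\alpha)\,\dot\varphi_2(\pi,\lambda_n(\alpha),\alpha)-\varphi_2(\pi,\lambda_n(\alpha),\alpha)\,\dot\varphi_1(\pi,\lambda_n(\alpha),\alpha).
\]
Because $\chi_{\alpha,\beta}(\lambda_n(\alpha))=0$, the vector $\bigl(\varphi_1(\pi,\lambda_n(\alpha),\alpha),\varphi_2(\pi,\lambda_n(\alpha),\alpha)\bigr)$ equals $\rho_n\,(\sin\beta,-\cos\beta)$ for a scalar $\rho_n$, and this collapses the identity to $a_n=\rho_n\,\dot\chi_{\alpha,\beta}(\lambda_n(\alpha))$. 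Moreover $\rho_n\neq0$: if $\varphi(\pi,\lambda_n(\alpha),\alpha)=0$, then $\varphi(\cdot,\lambda_n(\alpha),\alpha)$ and the right-normalized solution $\psi(\cdot,\lambda_n(\alpha),\beta)$ (defined by $\ell\psi=\lambda\psi$, $\psi(\pi,\lambda,\beta)=(\sin\beta,-\cos\beta)^T$) would be linearly independent, contradicting $\chi_{\alpha,\beta}(\lambda_n(\alpha))=0$; in particular $\dot\chi_{\alpha,\beta}(\lambda_n(\alpha))=a_n/\rho_n\neq0$, so the zeros of $\chi_{\alpha,\beta}$ are simple.

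Next I would express $\rho_n$ through the second spectrum. Since the Wronskian of two solutions of $\ell y=\lambda y$ does not depend on $x$, the quantity $\varphi_1(x,\lambda,\epsilon)\psi_2(x,\lambda,\beta)-\varphi_2(x,\lambda,\epsilon)\psi_1(x,\lambda,\beta)$ equals $-\chi_{\epsilon,\beta}(\lambda)$ at $x=\pi$ and equals $\sin\epsilon\,\psi_2(0,\lambda,\beta)+\cos\epsilon\,\psi_1(0,\lambda,\beta)$ at $x=0$. At $\lambda=\lambda_n(\alpha)$ one has $\psi(\cdot,\lambda_n(\alpha),\beta)=\rho_n^{-1}\varphi(\cdot,\lambda_n(\alpha),\alpha)$, hence $\psi(0,\lambda_n(\alpha),\beta)=\rho_n^{-1}(\sin\alpha,-\cos\alpha)^T$; substituting gives $\chi_{\epsilon,\beta}(\lambda_n(\alpha))=\sin(\epsilon-\alpha)/\rho_n$, which is nonzero because $\lambda_n(\alpha)$ cannot also be an eigenvalue of $L(p,q,\epsilon,\beta)$ (otherwise $\varphi(\cdot,\lambda_n(\alpha),\alpha)$ and $\varphi(\cdot,\lambda_n(\alpha),\epsilon)$ would be independent solutions satisfying the same condition at $\pi$; the same argument gives $\lambda_n(\alpha)\neq\lambda_k(\epsilon)$ for all $n,k$). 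Hence $\rho_n=\sin(\epsilon-\alpha)/\chi_{\epsilon,\beta}(\lambda_n(\alpha))$ (and $\epsilon-\alpha\in(0,\pi)$ since $\alpha\in(-\tfrac{\pi}{2},\tfrac{\pi}{2}]$, $\epsilon\in(\alpha,\tfrac{\pi}{2}]$, so $\sin(\epsilon-\alpha)>0$), and therefore
\[
a_n=\frac{\sin(\epsilon-\alpha)\,\dot\chi_{\alpha,\beta}(\lambda_n(\alpha))}{\chi_{\epsilon,\beta}(\lambda_n(\alpha))}.
\]

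It remains to turn this ratio into the infinite product. For $p,q\in L^2_{\mathbb R}[0,\pi]$ each of $\chi_{\alpha,\beta}$ and $\chi_{\epsilon,\beta}$ is entire of exponential type $\pi$ with $\chi_{\gamma,\beta}(\lambda)=\sin(\lambda\pi+\gamma-\beta)+\int_{-\pi}^{\pi}f_\gamma(t)e^{i\lambda t}\,dt$, $f_\gamma\in L^2(-\pi,\pi)$ (representation \eqref{c3:chi_representation} and Theorem \ref{c2:thm_6}), with simple zeros $\{\lambda_k(\gamma)\}_{k\in\mathbb Z}$, and the two zero sets are disjoint by the previous paragraph. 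Hence the meromorphic function $\chi_{\alpha,\beta}/\chi_{\epsilon,\beta}$ admits the principal-value product
\[
\frac{\chi_{\alpha,\beta}(\lambda)}{\chi_{\epsilon,\beta}(\lambda)}=\prod_{k=-\infty}^{\infty}\frac{\lambda_k(\alpha)-\lambda}{\lambda_k(\epsilon)-\lambda},
\]
where convergence in the principal-value sense follows from $\lambda_k(\gamma)=k+(\beta-\gamma)/\pi+o(1)$ of Theorem \ref{c3:thm_1} (with the $\ell^2$ refinement of Lemma \ref{c3:lem_4} making the tail absolutely convergent once the $O(1/k)$ terms cancel in pairs), and the equality holds because the quotient of its two sides is entire, zero-free, of minimal exponential growth with matching behaviour on vertical lines, hence constant, the constant being $1$ (for instance by comparing the canonical-product representations of $\chi_{\alpha,\beta}$ and $\chi_{\epsilon,\beta}$ relative to $\sin(\lambda\pi+\alpha-\beta)$ and $\sin(\lambda\pi+\epsilon-\beta)$, or by evaluation at a convenient point). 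Letting $\lambda\to\lambda_n(\alpha)$ in this product and splitting off the $k=n$ factor $\tfrac{\lambda_n(\alpha)-\lambda}{\lambda_n(\epsilon)-\lambda}$, whose numerator cancels the simple zero of $\chi_{\alpha,\beta}$, gives $\dfrac{\dot\chi_{\alpha,\beta}(\lambda_n(\alpha))}{\chi_{\epsilon,\beta}(\lambda_n(\alpha))}=\dfrac{1}{\lambda_n(\alpha)-\lambda_n(\epsilon)}\prod_{k\ne n}\dfrac{\lambda_k(\alpha)-\lambda_n(\alpha)}{\lambda_k(\epsilon)-\lambda_n(\alpha)}$, and multiplying by $\sin(\epsilon-\alpha)$ is precisely the asserted formula (for $p\equiv q\equiv0$ the right side evaluates to $\pi$, consistently with $a_n=\pi$). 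The main obstacle is this last step: justifying the principal-value product rigorously — its convergence and, above all, its identification with the meromorphic ratio together with the correct multiplicative constant — is the one place where genuine complex-analytic input (exponential type, the $\ell^2$ zero-asymptotics, a Phragm\'en--Lindel\"of or Liouville argument) is needed, the rest being Wronskian bookkeeping.
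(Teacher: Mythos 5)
Your first half is correct and, in substance, coincides with the paper's intermediate step rather than replacing it: the function $\chi_{\alpha,\beta}(\lambda)/\chi_{\epsilon,\beta}(\lambda)$ you work with is exactly the function $m(\lambda)$ of \eqref{c3:33}, since $u_1(0,\lambda)\cos\gamma+u_2(0,\lambda)\sin\gamma$ is (up to sign) the characteristic function of the $(\gamma,\beta)$ problem (cf. \eqref{c4:omega_chi}), and your identity $a_n=\sin(\epsilon-\alpha)\,\dot\chi_{\alpha,\beta}(\lambda_n(\alpha))/\chi_{\epsilon,\beta}(\lambda_n(\alpha))$ is precisely \eqref{c3:55}. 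Your mechanics — the Lagrange identity for $\varphi$, the relation $\varphi(\pi,\lambda_n(\alpha),\alpha)=\rho_n(\sin\beta,-\cos\beta)^T$, and the constancy of the Wronskian of $\varphi(\cdot,\lambda,\epsilon)$ with the right-normalized solution — are somewhat leaner than the paper's route through \eqref{c3:40}--\eqref{c3:47} and \eqref{c3:53}--\eqref{c3:54}, and your zero-potential consistency check is right; that portion I would accept as written.

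The genuine gap is the step you yourself flag as the main obstacle: the identity $\chi_{\alpha,\beta}(\lambda)/\chi_{\epsilon,\beta}(\lambda)=\prod_{k}\bigl(\lambda_k(\alpha)-\lambda\bigr)/\bigl(\lambda_k(\epsilon)-\lambda\bigr)$ with multiplicative constant exactly $1$. That is the actual content of the theorem, and it is where the paper spends nearly all of its effort: it shows via \eqref{c3:47}--\eqref{c3:48} that $m$ maps the upper half-plane to itself, invokes Levin's Theorem \ref{thm2-3-6} to get the product \eqref{c3:34} with an undetermined constant $c>0$ together with the interlacing \eqref{c3:52} (which is also what makes the pairing of the zeros $\lambda_k(\alpha)$ with the poles $\lambda_k(\epsilon)$ — on which both the principal-value convergence and the value of the constant depend — canonical), proves the convergence Lemmas \ref{lem2-3-5} and \ref{lem2-3-6} (which genuinely need the $\ell^2$ refinement of the remainders, not just $h_n=o(1)$), computes $\lim_{\mu\to\infty}m(i\mu)=e^{i(\epsilon-\alpha)}$ (Lemma \ref{lem2-3-7}), and separates modulus and argument in \eqref{c3:62}--\eqref{c3:63} to conclude $c\prod\lambda_k(\epsilon)/\lambda_k(\alpha)=1$. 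In your sketch, ``evaluation at a convenient point'' cannot determine the constant, since there is no point at which ratio and product are independently known; the only accessible evaluation is the asymptotic one along $\lambda=i\mu$, and interchanging that limit with the conditionally convergent principal-value product is exactly the uniform-in-$\mu$ estimate of Lemma \ref{lem2-3-6} that you omit. Likewise, ``comparing the canonical-product representations of $\chi_{\alpha,\beta}$ and $\chi_{\epsilon,\beta}$'' merely re-raises the same normalization problem for each factor separately (this is done, at the cost of a full page of estimates, in Section \ref{c6:sec_3}). A Liouville or Phragm\'en--Lindel\"of argument can indeed be made to work, but it requires a lower bound for $\chi_{\epsilon,\beta}$ away from its zeros (Lemmas \ref{c3:lem_1} and \ref{c3:lem_2}) and growth control of the product, i.e., work of the same order as Lemmas \ref{lem2-3-5}--\ref{lem2-3-7}; until it is carried out, the proof is incomplete at its decisive step.
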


To this end, we will consider a meromorphic function
\begin{equation}\label{c3:33}
m_{\alpha, \epsilon} (\lambda) = m (\lambda) = \frac {u_1 (0, \lambda) \cos \alpha + u_2 (0, \lambda) \sin \alpha} {u_1 (0 , \lambda) \cos \epsilon + u_2 (0, \lambda) \sin \epsilon},
\end{equation}
where $ \alpha, \epsilon \in \left( - \frac{\pi} {2} \,, \frac{\pi} {2} \right] $ and $ \alpha \neq \epsilon $, and $ u (x, \lambda) = u (x, \lambda, \beta, \Omega) $ is the solution to the Cauchy problem \eqref{c3:Cauchy_problem_y}.

Our plan for proof of Theorem \ref{thm2-3-5} is as follows. 
First, we prove (see below the equality \eqref{c3:48}) that for $ 0 <\alpha <\epsilon \leq \frac{\pi} {2} $ the function $ m (\lambda) $ transforms the upper half-plane into the upper one, whence, according to the well-known theorem for such functions (see theorem \ref{thm2-3-6} below), we obtain
\[
m (\lambda) = c \, \frac{\lambda - \lambda_0 (\alpha)} {\lambda - \lambda_0 (\epsilon)}\prod^ \infty_ {\substack{k = - \infty \\ k \neq 0}} \left( 1- \frac{\lambda} {\lambda_k (\alpha)} \right) \left( 1- \frac{\lambda} {\lambda_k (\epsilon) } \right)^{- 1},
\]
where $ c> 0 $, and the infinite product is understood in the sense of the principal value, i.e.
\begin{equation}\label{c3:34}
m (\lambda) = c \, \frac{\lambda - \lambda_0 (\alpha)} {\lambda - \lambda_0 (\epsilon)}\prod^ \infty_{k = 1} \frac{\lambda_k (\epsilon)  \cdot \lambda _ {- k} (\epsilon)} {\lambda_k (\alpha) \cdot \lambda_{- k} (\alpha)} \, \cdot \, \frac{\left( \lambda_k (\alpha) - \lambda \right) \left( \lambda _ {- k} (\alpha) - \lambda \right)} {\left( \lambda_k (\epsilon) - \lambda \right) \left( \lambda_{- k} ( \epsilon) - \lambda \right)}.
\end{equation}

Second, we prove that the infinite product $ \prod^\infty_{k = 1} \frac{\lambda_k (\epsilon) \lambda_{- k} (\epsilon)} {\lambda_k (\alpha) \lambda_{- k} (\alpha)} $ converges (see Lemma \ref{lem2-3-7}), and the infinite product
\[
\prod^\infty_ {k = 1} \left| \frac{\left( \lambda_k (\alpha) - i \mu \right) \left( \lambda_{- k} (\alpha) - i \mu \right)} {\left( \lambda_k (\epsilon) - i \mu \right) \left( \lambda_{- k} (\epsilon) - i \mu \right)} \right|
\]
converges uniformly in $ \mu \geq 1 $ (see Lemma \ref{lem2-3-6}).

Third, we prove (see Lemma \ref{lem2-3-7}) that 
\begin{equation}\label{c3:36}
\lim_{\mu \to \infty} m (i \mu) = e^{i (\epsilon - \alpha)}.
\end{equation}
Passing to the limit in \eqref{c3:34} for $ \lambda = i \mu $ and $ \mu \to \infty $ (see details below), we get the value of the constant $ c $, more precisely, we get
\begin{equation}\label{c3:37}
c \prod^ \infty_{k = 1} \frac{\lambda_k (\epsilon) \lambda_{- k} (\epsilon)} {\lambda_k (\alpha) \cdot \lambda_{- k} (\alpha)} = 1.
\end{equation}
Fourth, we prove the equality
\begin{equation}\label{c3:38}
\left. \frac{\partial m} {\partial \lambda} \right|_{\lambda = \lambda_n (\alpha)} = \frac {a_n (\Omega, \alpha, \beta)} {\sin (\epsilon - \alpha)} \,,
\end{equation}
and fifth, we prove the equality
\begin{equation}\label{c3:39}
\left. \frac{\partial m (\lambda)} {\partial \lambda} \right|_{\lambda = \lambda_n (\Omega, \alpha, \beta)} = \frac {1} {\lambda_n (\alpha) - \lambda_n (\epsilon)} \prod^\infty_{k = - \infty} \frac{\lambda_k (\alpha) - \lambda_n (\alpha)} {\lambda_k (\epsilon) - \lambda_n ( \alpha)}.
\end{equation}
The last two equalities imply the assertion of Theorem \ref{thm2-3-5}.

Let us move on to the implementation of our plan. 
In what follows, we often denote by "dot" the derivative of $\lambda$, i.e. $ \dot{u}(x,\lambda)=\frac{\partial}{\partial\lambda}  u(x,\lambda)$ or $ \dot{u}{}'(x,\lambda)=\frac{\partial^2  u(x,\lambda)}{\partial  x\, \partial  \lambda}$. 
Through $\bar{u}_{1,2}(x,\lambda)$ we denote a complex conjugate function to
$u_{1,2}(x,\lambda)$.
\begin{lemma}\label{lem2-3-4} 
The identities
\begin{align}
& \label{c3:40} \frac{d}{dx} \left[  \dot{u}_1(x,\lambda)  u_2 (x,\lambda)-  u_1 (x,\lambda) \dot{u}_2(x,\lambda) \right]  \equiv  -\left[  u^2_1(x,\lambda)+u^2_2(x,\lambda)\right]\, ,\\
& \label{c3:41} \frac{d}{dx} \left[  u_2(x,\lambda) \bar{u}_1 (x,\lambda)-  u_1(x,\lambda)\bar{u}_2 (x,\lambda) \right]  \equiv  2i{\im}\, \lambda \cdot  \left|  u(x,\lambda)\right|^2 .
\end{align}
are hold.
\end{lemma}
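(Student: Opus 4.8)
The plan is to differentiate each bracketed expression with respect to $x$ and use nothing more than the first-order system satisfied by $u$ and the one satisfied by its $\lambda$-derivative $\dot{u}$; the apparently complicated cross terms will disappear because the coefficient matrix $A(x,\lambda)$ in \eqref{c1:A_x_lambda} has zero trace. Recall that $u=u(x,\lambda,\beta)$ solves the normal system $u' = A(x,\lambda)\,u$, and that $\partial_\lambda A(x,\lambda) = -B$; differentiating $u' = A(x,\lambda)u$ in $\lambda$ (which is legitimate, since by Theorem~\ref{c1:thm_1} each component $u_{1,2}(x,\cdot)$ is entire and the successive approximations converge locally uniformly in $(x,\lambda)$, so the series may be differentiated termwise in $\lambda$) gives $\dot{u}{}' = -Bu + A(x,\lambda)\,\dot{u}$. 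I will also use the elementary $2\times 2$ fact that, writing $\xi\wedge\eta := \xi_1\eta_2 - \xi_2\eta_1$, one has $(M\xi)\wedge\eta + \xi\wedge(M\eta) = (\operatorname{tr}M)(\xi\wedge\eta)$ for every matrix $M$, together with the direct evaluations $(B\xi)\wedge\xi = \xi_1^2 + \xi_2^2$ and $\bar\xi\wedge(B\xi) = -\bigl(|\xi_1|^2 + |\xi_2|^2\bigr)$.

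For \eqref{c3:40}, the bracket is $\dot{u}\wedge u$, so $\frac{d}{dx}(\dot{u}\wedge u) = \dot{u}{}'\wedge u + \dot{u}\wedge u' = (-Bu + A\dot{u})\wedge u + \dot{u}\wedge(Au)$; the two terms carrying $A=A(x,\lambda)$ cancel because $\operatorname{tr}A(x,\lambda) = 0$, leaving $-(Bu)\wedge u = -(u_1^2 + u_2^2)$, which is precisely \eqref{c3:40}. For \eqref{c3:41} I use that $p$ and $q$ are real on $[0,\pi]$ (the standing assumption of this section), whence $\bar u$ solves $\bar u{}' = A(x,\bar\lambda)\bar u$ and $A(x,\lambda) - A(x,\bar\lambda) = -(\lambda - \bar\lambda)B$. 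The bracket is $\bar u\wedge u$, so $\frac{d}{dx}(\bar u\wedge u) = (A(x,\bar\lambda)\bar u)\wedge u + \bar u\wedge(A(x,\lambda)u)$; substituting $A(x,\lambda) = A(x,\bar\lambda) - (\lambda-\bar\lambda)B$ and using $\operatorname{tr}A(x,\bar\lambda) = 0$, the $A(x,\bar\lambda)$ terms cancel and there remains $-(\lambda-\bar\lambda)\,\bar u\wedge(Bu) = (\lambda-\bar\lambda)\bigl(|u_1|^2 + |u_2|^2\bigr) = 2i\,\im\lambda\cdot|u(x,\lambda)|^2$, which is \eqref{c3:41}.

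There is no genuine obstacle: once the trace-zero cancellation is isolated, each identity is a two-line computation, and both sides are absolutely continuous functions of $x$ on $[0,\pi]$, so the identities hold everywhere there. The only point worth a sentence of care is the interchange of $\partial_x$ and $\partial_\lambda$ used to get $\dot{u}{}' = -Bu + A\dot{u}$; as noted, this is immediate from Theorem~\ref{c1:thm_1}. A reader preferring not to use the wedge shorthand can instead write out all four components of each derivative and observe that every monomial in $p(x)$, $q(x)$ and $\lambda$ occurs twice with opposite signs — pure bookkeeping that leads to the same conclusion.
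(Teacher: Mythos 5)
Your proof is correct and is in substance the paper's own argument: both proofs differentiate the Dirac system in $\lambda$ (resp.\ pass to the conjugate system using the realness of $p$ and $q$) and then compute the $x$-derivative of the Wronskian-type bilinear expressions $\dot{u}_1u_2-u_1\dot{u}_2$ and $\bar{u}_1u_2-\bar{u}_2u_1$. The only difference is presentational: where the paper multiplies the four scalar identities by $\dot{u}_1,\dot{u}_2,u_1,u_2$ (resp.\ $\bar{u}_1,\bar{u}_2$) and cancels terms by hand, you package the same cancellation in the identity $(M\xi)\wedge\eta+\xi\wedge(M\eta)=(\operatorname{tr}M)(\xi\wedge\eta)$ together with $\operatorname{tr}A(x,\lambda)=0$ and $\partial_\lambda A=-B$, which is a tidier way of doing the same bookkeeping.
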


\begin{proof} 
Let's write the identity $\ell  u\equiv  \lambda  u$ component-wise in the form (for brevity, we omit the arguments $x$ and $\lambda$):
\begin{align}
& \label{c3:42}  u'_2+p\cdot  u_1+q  u_2\equiv  \lambda  u_1\\
& \label{c3:43} -u'_1+q\cdot  u_1-p\cdot  u_2\equiv  \lambda u_2\, ,
\end{align}
and differentiate these identities by $\lambda$:
\begin{align}
& \label{c3:44} \dot{u} {}'_2+p\cdot  \dot{u}_1+q  \cdot  \dot{u}_2\equiv  u_1+\lambda  \dot{u}_1\\
& \label{c3:45} -\dot{u} {}'_1+q\cdot  \dot{u}_1-p\cdot  \dot{u}_2\equiv  u_2+\lambda  \dot{u}_2\, .
\end{align}
Multiplying both sides \eqref{c3:42} by $\dot{u}_1$, \eqref{c3:43} by $\dot{u}_2$, \eqref{c3:44} by $u_1$, and \eqref{c3:45} by $u_2$, we obtain
\begin{align*}
& \quad  u'_2\cdot  \dot{u}_1+p\cdot  u_1\cdot  \dot{u}_1+q  \cdot  u_2\cdot  \dot{u}_1\equiv  \lambda  u_1\cdot  \dot{u}_1\\
& -u'_1 \cdot  \dot{u}_2+q\cdot  u_1\cdot  \dot{u}_2-p\cdot  u_2\cdot  \dot{u}_2 \equiv  \lambda  u_2\cdot  \dot{u}_2\, ,
\end{align*}
\begin{align*}
& \dot{u}{}'_2\cdot  u_1+p\cdot  \dot{u}_1  u_1+q  \cdot  \dot{u}_2 \cdot  u_1\equiv  u^2_1+\lambda  \dot{u}_1\cdot  u_1\\
& –\dot{u}{}'_1 \cdot  u_2+q\cdot  \dot{u}_1  u_2-p\cdot  \dot{u}_2\cdot  u_2 \equiv  u^2_2+ \lambda  \dot{u}_2\cdot  u_2\, .
\end{align*}
Adding the first two identities and subtracting the last two, we get
\[
\frac{d}{dx}\l  \dot{u}_1  u_2 -  u_1 \dot{u}_2\r  \equiv  -\left[  u^2_1+u^2_2\right],
\]
i.e. \eqref{c3:40} is proved.

To prove \eqref{c3:41}, the identity \eqref{c3:42} multiply by $\bar{u}_1$, and \eqref{c3:43} by $\bar{u}_2$:
\begin{align*}
& \quad u'_2\cdot  \bar{u}_1+p\cdot  \vert u_1\vert^2 +q \cdot  u_2\cdot  \bar{u}_1\equiv  \lambda \vert u_1\vert^2 \, ,\\
& -u'_1 \cdot  \bar{u}_2+q\cdot  u_1\cdot  \bar{u}_2-p\cdot  \vert u_2\vert^2 \equiv  \lambda\vert u_2\vert^2\, .
\end{align*}
Add to these identities their complex conjugates:
\begin{align*}
& \quad  \bar{u} {}'_2\cdot  u_1+p\cdot  \vert  u_1\vert^2+q  \cdot  \bar{u}_2 \cdot  u_1\equiv  \bar{\lambda} \cdot  \vert  u_1\vert^2\, ,\\
& -\bar{u} {}'_1 \cdot  u_2+q\cdot  \bar{u}_1\cdot  u_2-p\cdot\vert  u_2\vert^2 \equiv  \bar{\lambda}\cdot  \vert  u_2\vert^2\, .
\end{align*}
Adding the first two and subtracting the last two equals, we get:
$$\frac{d}{dx}\left[  u_2\cdot  \bar{u}_1 -  u_1\cdot  \bar{u}_2 \right]  =2  i  {\im}\, \lambda\cdot  \left[  \vert  u_1\vert^2 +\vert  u_2\vert^2 \right]=2i\cdot  {\im}\, \lambda\cdot  \vert  u\vert^2\, ,$$
i.e. \eqref{c3:41} is proved and thus Lemma \ref{lem2-3-4} is proved.
\end{proof}

Now integrating the identity \eqref{c3:40} on $x$ from $0$ to $\pi$ (since $u_1(\pi,\lambda)=\sin\beta$, $u_2(\pi,\lambda)=-\cos\beta$ holds for all $\lambda\in\mathbb{C}$, then $\dot{u}_1(\pi,\lambda)\equiv  \dot{u}_2 (\pi,\lambda)\equiv  0$), we obtain that
\begin{equation}\label{c3:46}
\dot{u}_1 (0,\lambda)\cdot  u_2(0,\lambda)-u_1(0,\lambda)\dot{u}_2(0,\lambda)\equiv  \int^\pi_0 \left[  u^2_1(x,\lambda)+u^2_2(x,\lambda)\right]\,  dx\, ,
\end{equation}
and also integrating the identity \eqref{c3:41}, we obtain
\begin{equation}\label{c3:47}
u_1(0,\lambda) \bar{u}_2 (0,\lambda)-u_2(0, \lambda)\cdot  \bar{u}_1(0, \lambda)=2i\int^\pi_0 \vert u(x,\lambda)\vert^2 dx\cdot  {\im}\, \lambda\, .
\end{equation}

It is not difficult to compute that
\[
2i\im\, m(\lambda)=m(\lambda)-\bar{m}(\lambda)=\frac{\left[ u_1(0,\lambda)\bar{u}_2(0,\lambda)-\bar{u}_1(0,\lambda)\cdot u_2(0, \lambda) \right]\cdot\sin(\epsilon -\alpha)}{\left| u_1(0,\lambda)\cos\epsilon +u_2(0,\lambda)\sin\epsilon\right|^2}.
\]
Taking into account \eqref{c3:47}, from the latter we get that
\begin{equation}\label{c3:48}
\im\{m(\lambda)\}=\frac{\int^\pi_0 |u(x,\lambda)|^2 dx\cdot \sin(\epsilon -\alpha)}{\left| u_1(0,\lambda)\cos\epsilon +u_2(0, \lambda) \sin\epsilon\right|^2}\, \cdot \im\, \lambda\, ,
\end{equation}
this means that the "real" (i.e. $\im\{m(\lambda)\}=0$ at $\im\, \lambda=0$) meromorphic function $m(\lambda)$ at $0<\epsilon -\alpha<\pi$ transfer the upper half-plane to itself. 
For such functions, the following is known (see \cite{Levin:1956}, p.~398)
\begin{theorem}\label{thm2-3-6} 
For some real meromorphic function $m(\lambda)$ to translate the upper half-plane to the upper half-plane, it is necessary and sufficient that this function is represented as
\begin{equation}\label{c3:49}
m(\lambda)=c\, \frac{\lambda  -a_0}{\lambda  -b_0} \prod^\infty_{\substack{k = - \infty\\ k \neq 0}}\left( 1-\frac{\lambda}{a_k}\right) \left( 1-\frac{\lambda}{b_k}\right)^{-1}\, ,
\end{equation}
where $c>$ 0 and
\begin{equation}\label{c3:50}
b_k<a_k<b_{k+1},\quad k\in\mathbb{Z},\quad a_{-1}<0<b_1\, .
\end{equation}
\end{theorem}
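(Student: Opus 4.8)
The statement is the classical description of meromorphic Herglotz (Nevanlinna) functions whose poles and zeros interlace on the real axis; the plan is to prove necessity and sufficiency separately. \textbf{Necessity.} Suppose $m$ is real meromorphic, $m(\bar\lambda)=\overline{m(\lambda)}$, and $m$ maps the open upper half-plane $\Pi^{+}=\{\im\lambda>0\}$ into itself. Since $m$ is non-constant, the open mapping theorem makes $m(\Pi^{+})$ an open subset of the open half-plane, so $m$ has neither a pole nor a zero in $\Pi^{+}$, and by the reality relation the same holds in the lower half-plane; hence all poles $\{b_{k}\}$ and zeros $\{a_{k}\}$ are real. A Laurent expansion at a real pole is incompatible with $\im m>0$ nearby unless the pole is simple with negative residue; equivalently I would invoke the Herglotz representation $m(\lambda)=A\lambda+B+\int\bigl(\tfrac{1}{t-\lambda}-\tfrac{t}{1+t^{2}}\bigr)\,d\mu(t)$ with $A\ge0$, $\mu\ge0$, which here reduces to $\mu=\sum_{k}c_{k}\delta_{b_{k}}$, $c_{k}>0$. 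Then $m'(\lambda)=A+\sum_{k}c_{k}(b_{k}-\lambda)^{-2}>0$ for real $\lambda$ off the poles, so $m$ is strictly increasing on each gap $(b_{k},b_{k+1})$, and since $m(b_{k}^{+})=-\infty$, $m(b_{k+1}^{-})=+\infty$, there is exactly one zero $a_{k}$ in that gap. This is the interlacing $b_{k}<a_{k}<b_{k+1}$ of \eqref{c3:50}, and after relabelling (and, harmlessly, assuming $0$ is not a pole) one also gets $a_{-1}<0<b_{1}$.

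\textbf{The product representation.} With the zeros $\{a_{k}\}$ and poles $\{b_{k}\}$ identified, I would check that the right side of \eqref{c3:49} converges: interlacing together with the near-unit spacing of the $b_{k}$ gives $|1/a_{k}-1/b_{k}|=|a_{k}-b_{k}|/|a_{k}b_{k}|=O(k^{-2})$ once the pairs $(a_{k},b_{k})$ and $(a_{-k},b_{-k})$ are grouped, so the symmetric (principal-value) product converges locally uniformly off $\{b_{k}\}$ to a meromorphic $M(\lambda)$ with exactly the zeros $\{a_{k}\}$, poles $\{b_{k}\}$, all simple, and a finite nonzero limit at $\infty$. Then $m/M$ is entire, zero-free and of finite exponential type, hence $m/M=e^{g}$ with $g$ affine; reality of $m$ and $M$ forces $g$ real on $\mbr$, and boundedness of $\arg m$ on $\Pi^{+}$ (the Herglotz property), together with the matching behaviour of $M$, forces $g$ constant. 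A sign check — comparing $m$ and $M$ at a real point of one gap, or at a residue — gives $e^{g}=c>0$, which is \eqref{c3:49}.

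\textbf{Sufficiency.} Conversely, let $m$ be given by \eqref{c3:49} with $c>0$ and interlacing \eqref{c3:50}. I would pass to partial fractions: grouping factors as above, $m(\lambda)=c_{\infty}+\sum_{k}\gamma_{k}(b_{k}-\lambda)^{-1}$ with $c_{\infty}\ge0$ the limit of the product and $\gamma_{k}=-\res_{\lambda=b_{k}}m$. The crux is that the interlacing and $c>0$ make every $\gamma_{k}>0$: writing $\gamma_{k}$ as $c\,b_{k}$ times the value at $b_{k}$ of the remaining product, each quotient $(1-b_{k}/a_{j})/(1-b_{k}/b_{j})$ switches sign exactly across the interval separating $a_{j}$ from $b_{j}$, and these signs telescope to leave $\gamma_{k}>0$. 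Then for $\im\lambda>0$ one has $\im m(\lambda)=\sum_{k}\gamma_{k}\,\im\lambda/|b_{k}-\lambda|^{2}>0$, i.e. $m(\Pi^{+})\subseteq\Pi^{+}$, and $m$ is real by inspection of the product.

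\textbf{Main obstacle.} The delicate step is exactly this bookkeeping — turning ``interlacing plus $c>0$'' into ``all $\gamma_{k}>0$'', equivalently ``$m$ increases from $-\infty$ to $+\infty$ across every gap'' — carried out uniformly in $k$ and simultaneously with the only conditionally (principal-value) convergent product; this is the one place where hypotheses \eqref{c3:50} are genuinely consumed. Everything else is standard Herglotz-representation and Hadamard-factorization machinery, and for the application in this section only the necessity direction is actually needed, since \eqref{c3:48} already certifies that the relevant $m(\lambda)$ is real and maps $\Pi^{+}$ into itself.
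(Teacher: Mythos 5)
You are proving a statement the paper itself does not prove: Theorem \ref{thm2-3-6} is quoted as a known result (Chebotarev--Krein; the paper just cites Levin's book, p.~398), so there is no internal proof to compare against, and I assess your argument on its own terms. Your necessity direction follows the standard route (Herglotz representation, simple real poles with positive weights, strict monotonicity between consecutive poles, hence exactly one zero per gap), and that skeleton is sound; but two auxiliary claims are not consequences of the hypotheses, and one is false. Convergence of the product needs no ``near-unit spacing of the $b_k$'' (that is a feature of the eigenvalue application, not of the theorem): interlacing alone gives $\sum_{k\geq 1}\bigl(1/b_k-1/a_k\bigr)\leq 1/b_1$ by telescoping (similarly for $k\leq -1$), which already yields absolute, locally uniform convergence of the product. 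More seriously, your claims that the canonical product $M$ has ``a finite nonzero limit at $\infty$'' and that $m/M$ is ``of finite exponential type, hence $e^{g}$ with $g$ affine'' are unjustified, and the first is wrong in general: the function $m(\lambda)=\lambda+\tan\lambda$ is real, meromorphic and maps the upper half-plane into itself with interlacing data satisfying \eqref{c3:50}, and since $|m(i\mu)|\sim\mu$, the theorem itself forces the corresponding product to grow like $|\lambda|$ along the imaginary axis rather than tend to a limit (the product silently absorbs the linear term of the Herglotz representation). The repair is the bounded-argument idea you mention only in passing: once $M$ is known to be Herglotz (sufficiency applied to the constructed product), $\arg m$ and $\arg M$ lie in $(0,\pi)$ on the upper half-plane and in $(-\pi,0)$ on the lower one, so $g=\log(m/M)$ is entire with $|\im g|\leq\pi$ everywhere, hence $\im g$ is constant, hence $g$ is constant; positivity of the constant follows from comparing the signs of $m$ and $M$ on a single gap. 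No Hadamard factorization or growth estimate is needed.

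Your sufficiency direction has a genuine gap, which you yourself flag as ``the main obstacle'' but do not overcome. You base it on the expansion $m(\lambda)=c_\infty+\sum_k \gamma_k (b_k-\lambda)^{-1}$ with $c_\infty\geq 0$ equal to ``the limit of the product'' and all $\gamma_k>0$; neither the validity of this expansion nor its convergence is established, and the asserted form is incorrect in general: by the same example, the product may carry a linear term, so the correct Mittag--Leffler form is $m(\lambda)=A\lambda+B+\sum_k\gamma_k\bigl(\frac{1}{b_k-\lambda}-\frac{1}{b_k}\bigr)$ with $A\geq 0$, and $\sum_k\gamma_k/|b_k|$ may diverge, so your series need not even converge. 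The standard way to close this is to reverse the order: verify that each finite partial product $m_n$ is a rational Herglotz function --- there the interlacing sign count is finite and clean, giving simple poles with negative residues and hence $\im m_n>0$ on the upper half-plane --- and then pass to the locally uniform limit (guaranteed by the telescoping estimate) to get $\im m\geq 0$, hence $\im m>0$ since $m$ is non-constant. Finally, you are right that only the necessity direction is used in Section \ref{c3:sec_6}, via \eqref{c3:48}; but as written, both your convergence justification and your identification $m=cM$ rest on claims that need the repairs above.
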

For the \eqref{c3:33} function, $a_k=\lambda_k(\Omega,\alpha,\beta)=\lambda_k(\alpha)$, $b_k=\lambda_k(\Omega, \epsilon, \beta)=\lambda_k(\epsilon)$, $k\in\mathbb{Z}$. 
Therefore, (so that the representation \eqref{c3:49} and inequality \eqref{c3:50} take place) we will take the following eigenvalues numbering:

First, we enumerate the eigenvalues $ \lambda_n\left( \Omega,\frac{\pi}{2},\beta\right)=\lambda_n\left( \frac{\pi}{2}\right)$ in ascending order of the index, and through $ \lambda_0 \left( \frac{\pi}{2}\right)$ we number the least non-positive eigenvalue, i.e.
\begin{equation}\label{c3:51}
\ldots \lambda_{-n} \left( \frac{\pi}{2}\right) < \lambda_{-n+1} \left( \frac{\pi}{2} \right)<\ldots  < \lambda_0 \left( \frac{\pi}{2}\right)\leq  0< \lambda_1\left( \frac{\pi}{2}\right) <\ldots  <\lambda_n\left( \frac{\pi}{2}\right) <\ldots,
\end{equation}
for $n= 1,2,\ldots\,$. 
To set the numbering for $ \alpha \in  \left( -\frac{\pi}{2}\, ,\, \frac{\pi}{2}\right)$, note that if in \eqref{c3:33} we take $ \epsilon  =\frac{\pi}{2}$, i.e. in \eqref{c3:50} we take $b_k=\lambda_k \left( \frac {\pi }{2}\right) $, , we get that
\[
\lambda_n \left( \frac{\pi}{2}\right)<\lambda_n(\alpha)<\lambda_{n+1}\left(\frac{\pi}{2}\right),\quad n\in \mathbb{Z},
\]
and if we take $ \alpha<\epsilon<\frac{\pi}{2}$, then from \eqref{c3:50} (taking into account the previous inequality) we get that
\begin{equation}\label{c3:52}
\lambda_k  \left( \frac{\pi}{2}\right) <\lambda_k(\epsilon)<\lambda_k  (\alpha)< \lambda_{k+1}\l\frac{\pi}{2} \right),\qquad  k\in  \mathbb{Z}\, .
\end{equation}
Inequalities \eqref{c3:51} and \eqref{c3:52} give an unambiguous numbering for eigenvalues $\lambda_n(\alpha) = \lambda_n(\Omega,\alpha,\beta)$, $n \in \mathbb{Z}$, for all $\alpha \in  \left( -\frac{\pi}{2}\, ,\, \frac{\pi}{2}\right]$ and fixed $\beta\in \left( -\frac{\pi}{2}\, ,\, \frac{\pi}{2}\right] $. 
This enumeration shows that each eigenvalue $\lambda_k(\Omega,\alpha,\beta)=\lambda_k(\alpha)$ is a decreasing function of the parameter $\alpha$ on the segment $ \left( -\frac{\pi}{2}\, ,\, \frac{\pi}{2}\right]$.

Thus, we got the representation \eqref{c3:34}. 
Calculating the derivative of the function $m(\lambda)$, based on the formula \eqref{c3:33} and applying the equality \eqref{c3:46}, we obtain that
\begin{equation}\label{c3:53}
\frac{\partial m(\lambda)}{\partial \lambda}=\frac{\int^\pi_0 |u(x,\lambda)|^2dx\cdot \sin(\epsilon-\alpha)}{\left(u_1(0,\lambda)\cos\epsilon +u_2(0,\lambda)\sin\epsilon\right)^2}\; .
\end{equation}
On the other hand, due to the simplicity of eigenvalue $\lambda_n(\Omega,\alpha,\beta)$, the eigenfunctions  $\varphi_n(x)=\varphi(x,\lambda_n, \alpha)$ and $u_n(x)=u(x,\lambda_n, \beta)$ are linearly dependent, i.e. there are constants $c_n=c_n(p,q,\alpha,\beta)$, such that
\begin{equation}\label{c3:54}
u_n(x)=c_n \varphi_n(x),\quad n\in\mathbb{Z}\, .
\end{equation}
Hence, in particular, we have that at $\lambda=\lambda_n(\Omega,\alpha,\beta)$
\[
u_1(0, \lambda_n)=c_n \varphi_1(0, \lambda_n)=c_n \sin\alpha.
\]
\[
u_2(0, \lambda_n)=c_n \varphi_2(0, \lambda_n)=-c_n \cos\alpha.
\]
Substituting these formulas in \eqref{c3:53} (and taking into account the notation \eqref{c3:27}), we get that for any $ \epsilon\in  \left( \alpha,\, \frac{\pi}{2}\right]$:
\begin{equation}\label{c3:55}
\left. \frac{\partial  m(\lambda)}{\partial  \lambda}\right|_{\lambda=\lambda_n(\Omega, \alpha,\beta)}=\frac{a_n(\Omega,\alpha,\beta)}{\sin(\epsilon  -\alpha)}\; ,
\end{equation}
i.e. we proved \eqref{c3:38}.

Now calculate the derivative $ \left. \frac{\partial  m}{\partial  \lambda}\right|_{\lambda=\lambda_n}$, based on the formula \eqref{c3:34}. 
For this (as we have already done) separating the factor
$$a_n= \left( 1-\frac{\lambda}{\lambda_n(\alpha,\beta)}\right) \left( 1-\frac{\lambda}{\lambda_n(\epsilon,\beta)}\right)^{-1}=\frac{\lambda_n(\epsilon,\beta)}{\lambda_n(\alpha,\beta)}\, \cdot  \, \frac{\lambda_n(\alpha,\beta)-\lambda}{\lambda_n  (\epsilon, \beta)-\lambda}$$
from the finite product $ \prod^n_{k=-n}  a_k$, we write $m(\lambda)$ in the form
\[
m(\lambda)=\frac{\lambda_n(\alpha,\beta)-\lambda}{\lambda_n(\epsilon,\beta)-\lambda}\, \cdot  {\mathcal  P}(\lambda),
\]
where $\mathcal{P}(\lambda)$ is the whole rest of the product. 
Hence, it follows that
\[
\frac{\partial  m}{\partial  \lambda}=\frac{-\left( \lambda_n(\epsilon,\beta)-\lambda\right) +\lambda_n(\alpha,\beta)-\lambda}{\left( \lambda_n(\epsilon, \beta)-\lambda\right)^2}\, \cdot  {\mathcal  P}(\lambda)+\frac{\lambda_n(\alpha,\beta)-\lambda}{\lambda_n  (\epsilon,\beta)-\lambda}\, \cdot  \dot{\mathcal  P}(\lambda).
\]
For $\lambda=\lambda_n(\alpha,\beta)=\lambda_n(\alpha)$ (here $\beta$ is fixed) we obtain
\begin{equation}\label{c3:56}
\begin{aligned}
\left. \frac{\partial  m(\lambda)}{\partial  \lambda}\right|_{\lambda=\lambda_n(\alpha,\beta)} 
= \frac{1}{\lambda_n(\alpha,\beta)-\lambda_0(\epsilon,\beta)}\, \cdot\, {\mathcal  P}(\lambda_n(\alpha,\beta))\\
=  c\frac{\lambda_n(\alpha)-\lambda_0(\alpha)}{\lambda_n(\alpha)-\lambda_0(\epsilon)}\,\cdot\, \frac{1}{\lambda_n(\alpha)-\lambda_n(\epsilon)}\ \cdot \frac{\lambda_n(\epsilon)}{\lambda_n(\alpha)} \prod^\infty_{\substack{k = - \infty \\ k \neq n, k \neq 0, }} \frac{\lambda_k(\epsilon)}{\lambda_k(\alpha)}\, \cdot\, \frac{\lambda_k(\alpha) -\lambda_n(\alpha)}{\lambda_k(\epsilon)-\lambda_n(\alpha)}
\end{aligned}
\end{equation}
if $n\neq  0$ and
\begin{equation}\label{c3:57}
\left. \frac{\partial  m(\lambda)}{\partial  \lambda}\right|_{\lambda=\lambda_0(\alpha,\beta)} =c  \, \frac{1}{\lambda_0(\alpha)-\lambda_0(\epsilon)}\, \prod^\infty_{\substack{k = - \infty \\ k \neq 0}} \frac{\lambda_k(\epsilon)}{\lambda_k(\alpha)}\, \cdot\, \frac{\lambda_k(\alpha) -\lambda_0(\alpha)}{\lambda_k(\epsilon)-\lambda_0(\alpha)}\, .
\end{equation}
From the formulas \eqref{c3:55},  \eqref{c3:56} and \eqref{c3:57} it follows that for expressing $a_n$ through two spectra we have to find out the value of the constant $c$ in the formula \eqref{c3:34} (or in \eqref{c3:56}).

\begin{lemma}\label{lem2-3-5} 
For $p,q\in  L^2_\mathbb{R}  [0, \pi]$ infinite product $\displaystyle \prod^\infty_{\substack{k=-\infty \\ k \neq 0}} \, \frac{\lambda_k(\epsilon)}{\lambda_k(\alpha)}$, understood in the sense of the main value, i.e. $\displaystyle \prod^\infty_{k=1} \frac{\lambda_k(\epsilon)\cdot  \lambda_{-k}(\epsilon)}{\lambda_k(\alpha)\cdot  \lambda_{-k}(\alpha)}$, converges.
\end{lemma}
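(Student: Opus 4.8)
The plan is to reduce the convergence of the infinite product (understood in the principal-value sense) to the absolute convergence of a single series $\sum_{k\ge 1}|c_k-1|$, where $c_k:=\dfrac{\lambda_k(\epsilon)\lambda_{-k}(\epsilon)}{\lambda_k(\alpha)\lambda_{-k}(\alpha)}$, and to establish the latter from the refined eigenvalue asymptotics of Theorem \ref{c3:thm_1} (see Lemma \ref{c3:lem_4}). Since $p,q\in L^2_\mathbb{R}[0,\pi]$, for the fixed $\beta$ we have, for $\gamma=\alpha$ and $\gamma=\epsilon$,
\[
\lambda_k(\gamma)=\lambda_k(\Omega,\gamma,\beta)=k+\frac{\beta-\gamma}{\pi}+h_k(\gamma),\qquad \sum_{k\in\mathbb{Z}}|h_k(\gamma)|^2<\infty .
\]
First I would note that, by the numbering \eqref{c3:51}--\eqref{c3:52}, one has $\lambda_k(\alpha)>0$ and $\lambda_k(\epsilon)>0$ for $k\ge 1$, and $\lambda_{-k}(\alpha)<0$, $\lambda_{-k}(\epsilon)<0$ for $k\ge 1$; hence every factor $c_k$ is a well-defined nonzero number and the whole question concerns the tail.

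Next I would set $a_k:=\lambda_k(\epsilon)/\lambda_k(\alpha)$ and $b_k:=\lambda_{-k}(\epsilon)/\lambda_{-k}(\alpha)$, so $c_k=a_kb_k$, and compute
\[
a_k-1=\frac{\lambda_k(\epsilon)-\lambda_k(\alpha)}{\lambda_k(\alpha)}
=\frac{\tfrac{\alpha-\epsilon}{\pi}+\big(h_k(\epsilon)-h_k(\alpha)\big)}{k+\tfrac{\beta-\alpha}{\pi}+h_k(\alpha)},\qquad
b_k-1=\frac{\tfrac{\alpha-\epsilon}{\pi}+\big(h_{-k}(\epsilon)-h_{-k}(\alpha)\big)}{-k+\tfrac{\beta-\alpha}{\pi}+h_{-k}(\alpha)} .
\]
The crucial point is that the two leading contributions $\pm\dfrac{(\alpha-\epsilon)/\pi}{k}$ cancel once $a_k-1$ and $b_k-1$ are added, so that
\[
(a_k-1)+(b_k-1)=O\!\left(\frac{1}{k^{2}}\right)+\frac{h_k(\epsilon)-h_k(\alpha)}{k+O(1)}-\frac{h_{-k}(\epsilon)-h_{-k}(\alpha)}{k+O(1)} .
\]
By the Cauchy--Schwarz inequality, $\sum_{k\ge 1}\dfrac{|h_{\pm k}(\gamma)|}{k}\le\big(\sum_k|h_k(\gamma)|^2\big)^{1/2}\big(\sum_k k^{-2}\big)^{1/2}<\infty$, while the $O(k^{-2})$ term is plainly summable; hence $\sum_{k\ge 1}\big|(a_k-1)+(b_k-1)\big|<\infty$.

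Finally, since $a_k-1=O(1/k)$ and $b_k-1=O(1/k)$, the cross term satisfies $(a_k-1)(b_k-1)=O(1/k^{2})$, so from $c_k-1=(a_k-1)+(b_k-1)+(a_k-1)(b_k-1)$ we obtain $\sum_{k\ge 1}|c_k-1|<\infty$, which is the standard criterion for the convergence of $\prod_{k\ge 1}c_k$ to a finite nonzero value. The only genuine obstacle is the bookkeeping that produces the cancellation of the harmonic terms — without passing to the symmetric partial products $\prod_{k=1}^{n}$ (the principal value) the product over positive indices alone diverges — together with the elementary but essential use of $\{h_k(\gamma)\}\in\ell^2$ to dominate $\sum|h_k|/k$. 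Everything else is routine estimation.
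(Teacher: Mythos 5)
Your proof is correct and follows essentially the same route as the paper: both use the refined asymptotics $\lambda_k(\gamma)=k+\frac{\beta-\gamma}{\pi}+h_k(\gamma)$ with $\{h_k(\gamma)\}\in \ell^2$, pair the indices $k$ and $-k$ (the principal value) so that the harmonic $\frac{(\alpha-\epsilon)/\pi}{k}$ terms cancel, and bound the surviving $\ell^2$-coefficient over $k$ by the Cauchy--Schwarz inequality. The only cosmetic difference is bookkeeping: the paper first expands the quadratic products $\lambda_k\lambda_{-k}$ and writes the ratio as $1+r_k$ with $r_k=\frac{O(1)}{k^2}+\frac{\tilde b_k}{k}$, concluding via the convergence of $\sum r_k$ and $\sum r_k^2$, whereas you expand the two ratios $a_k,b_k$ separately and conclude from $\sum_{k\ge 1}|c_k-1|<\infty$; both reductions are valid.
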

\begin{proof} 
From the asymptotic formula \eqref{c3:ev_asymptotics} it follows that
\begin{equation}\label{c3:58}
\begin{aligned}
\lambda_k(\alpha)\cdot  \lambda_{-k}(\alpha) 
= & \left( k+\frac{\beta-\alpha}{\pi} +h_k(\alpha)\right) \left( -k+\frac{\beta-\alpha}{\pi} +h_{-k}(\alpha)\right)  \\
= & \left( \frac{\beta-\alpha}{\pi}\right)^2-k^2 +k\left(h_{-k}(\alpha)-h_k(\alpha)\right)+ \\
& + \frac{\beta-\alpha}{\pi}\left(h_{-k}(\alpha)-h_k(\alpha)\right)  \\
= & a^2-k^2+k\cdot b_k(\alpha)+a g_k(\alpha),
\end{aligned}
\end{equation}
where we denoted $a=\frac{\beta-\alpha}{\pi}$, $b_k(\alpha)=h_{-k}(\alpha)-h_k(\alpha)$, $g_k(\alpha)=h_k(\alpha)+h_{-k}(\alpha)$, and since $\sum^\infty_{k=-\infty} h^2_k(\alpha)<\infty$, then $\sum^\infty_{-\infty} b^2_k, \sum^\infty_{-\infty} g^2_k<\infty$. If we denote $ e=\frac{\beta-\epsilon}{\pi}$, then from \eqref{c3:58} we will obtain
\begin{align*}
\frac{\lambda_k(\epsilon)\lambda_{-k}(\epsilon)}{\lambda_k(\alpha)\lambda_{-k}(\alpha)} 
= & \frac{e^2-k^2+kb_k(\epsilon)+eg_k(\epsilon)}{a^2-k^2+kb_k(\alpha)+eg_k(\alpha)}  \\
= & 1+ \frac{e^2-a^2+eg_k(\epsilon  )-ag_k(\alpha)}{a^2-k^2+kb_k(\epsilon)+ag_k(\alpha)} +\frac{k\left(b_k(\epsilon)-b_k(\alpha)\right)}{a^2-k^2+kb_k(\epsilon)+ag_k(\alpha)}  \\
= &    1+r_k\, ,
\end{align*}
where $r_k$ have the form $r_k=\frac{O(1)}{k^2}+\frac{\tilde{b}_k}{k}$, where $\sum^\infty_{-\infty} \tilde{b}^2_k<\infty$. Convergence of $\sum^\infty_{-\infty}\! {}^{'} \,\frac{\tilde{b}_k}{k}$ follows from the Cauchy-Bunyakovsky inequality
\[
\left| \sum^\infty_{\substack{k=-\infty \\ k \neq 0}} \, \frac{\tilde{b}_k}{k}\right|\leq \left(\sum^\infty_{\substack{k=-\infty \\ k \neq 0}} \, \tilde{b}^2_k\right)^{\frac{1}{2}} \left(\sum^\infty_{\substack{k=-\infty \\ k \neq 0}} \, \frac{1}{k^2}\right)^{\frac{1}{2}}<\infty.
\] 
Therefore, the series $\sum^\infty_{\substack{k=-\infty \\ k \neq 0}} \, r_k$ converges. 
But since $r_k=o\left(\frac{1}{k}\right)$,it follows that the series $\sum^\infty_{-\infty} r^2_k$ also converges. 
And the convergence of these series is a sufficient condition for convergence of the product $ \prod^\infty_{\substack{k=-\infty \\ k \neq 0}} \, (1+r_k)$ (see \cite{Fichtenholz:1966}, paragraph 401). Lemma \ref{lem2-3-5} is proved.
\end{proof}

\begin{lemma}\label{lem2-3-6} 
For $p,q\in  L^2_\mathbb{R}  [0, \pi]$ the infinite product 
\[
 \prod^\infty_{\substack{k=-\infty \\ k \neq 0}} \, \left|  \frac{\lambda_k(\alpha)-i\mu}{\lambda_k(\epsilon)-i\mu}\right|= \prod^\infty_{k=1}\left|  \frac{\left( \lambda_k(\alpha)-i\mu\right) \left( \lambda_{-k}(\alpha)-i\mu\right)}{\left( \lambda _k(\epsilon)-i\mu\right) \left( \lambda_{-k}(\epsilon)-i\mu\right)}\right|  =\prod^\infty_{k=1}  p_k,
 \]
converges uniformly, where $\mu \geq  1$.
\end{lemma}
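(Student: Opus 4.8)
The plan is to prove that $\sum_{k\ge1}|p_k-1|$ converges uniformly for $\mu\ge1$ and that no factor vanishes; then the product converges uniformly. Set $M=M_k(\mu)=k^2+\mu^2$ and work with the squares of the factors. From $\ell y=\lambda y$ one has $\lambda_k(\alpha)+\lambda_{-k}(\alpha)=2a+g_k(\alpha)$, and by \eqref{c3:58}, $\lambda_k(\alpha)\lambda_{-k}(\alpha)=a^2-k^2+kb_k(\alpha)+ag_k(\alpha)$, where $a=\frac{\beta-\alpha}{\pi}$ and $\sum_k b_k(\alpha)^2,\ \sum_k g_k(\alpha)^2<\infty$; the analogous formulas hold with $\epsilon$ in place of $\alpha$ and $e=\frac{\beta-\epsilon}{\pi}$ in place of $a$. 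Writing $N_k=(\lambda_k(\alpha)-i\mu)(\lambda_{-k}(\alpha)-i\mu)$ and $D_k=(\lambda_k(\epsilon)-i\mu)(\lambda_{-k}(\epsilon)-i\mu)$, so that $p_k^2=|N_k|^2/|D_k|^2$, one gets
\[
N_k=\bigl(a^2-M+kb_k(\alpha)+ag_k(\alpha)\bigr)-i\mu\bigl(2a+g_k(\alpha)\bigr),
\]
and similarly for $D_k$.

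Next I would obtain a lower bound for the denominator, uniform in $\mu$. Since $b_k(\epsilon)\to0$ and $g_k(\epsilon)\to0$, there is an index $k_0$, independent of $\mu$, with $|e^2+kb_k(\epsilon)+eg_k(\epsilon)|\le M/2$ for all $k\ge k_0$ and all $\mu\ge1$; hence
\[
|D_k|^2\ge\bigl(M-e^2-kb_k(\epsilon)-eg_k(\epsilon)\bigr)^2\ge\frac{M^2}{4},\qquad k\ge k_0,\ \mu\ge1.
\]
Then I would estimate $p_k^2-1=(|N_k|^2-|D_k|^2)/|D_k|^2$. Expanding $|N_k|^2$ and $|D_k|^2$, the difference of numerators splits into terms of three types: $-2M\Delta_1$ with $\Delta_1=(a^2-e^2)+k(b_k(\alpha)-b_k(\epsilon))+ag_k(\alpha)-eg_k(\epsilon)$; a term $\Delta_2$ which is a difference of squares of quantities of size $O(1+k|b_k|)$; and $\mu^2\Delta_3$, where $\Delta_3$ is a difference of squares of the bounded quantities $2a+g_k(\alpha)$ and $2e+g_k(\epsilon)$. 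Dividing by $|D_k|^2\ge M^2/4$ and using the elementary inequalities $M\ge k^2$ and $M^2\ge4k^2\mu^2$ (the latter from $k^2+\mu^2\ge2k\mu$), each contribution is bounded, uniformly in $\mu\ge1$, by terms of the form $C/k^2$, $Ck^{-1}|b_k(\alpha)-b_k(\epsilon)|$, and $Ck^{-2}(|b_k|+|g_k|)^2$. Writing $\tilde b_k=b_k(\alpha)-b_k(\epsilon)$, this gives $|p_k^2-1|\le C\bigl(k^{-2}+k^{-1}|\tilde b_k|\bigr)$ with $\sum_k\tilde b_k^2<\infty$; by the Cauchy-Bunyakovsky inequality $\sum_k k^{-1}|\tilde b_k|<\infty$, so $\sum_{k\ge k_0}|p_k^2-1|$ converges, and uniformly in $\mu\ge1$ since no constant above depends on $\mu$.

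Finally, since $p_k>0$ one has $|p_k-1|\le|p_k^2-1|$, so $\sum_{k\ge k_0}|p_k-1|$ converges uniformly in $\mu\ge1$, whence $\prod_{k\ge k_0}p_k$ converges uniformly to a nonvanishing limit; the finitely many remaining factors $p_1,\dots,p_{k_0-1}$ are, for each fixed $k$, continuous positive functions of $\mu\in[1,\infty)$ with limit $1$ as $\mu\to\infty$, hence bounded above and bounded away from $0$ uniformly in $\mu\ge1$. Multiplying, $\prod_{k\ge1}p_k$ converges uniformly for $\mu\ge1$. The main obstacle is keeping every estimate uniform in $\mu$: the potentially dangerous term carries a factor $\mu^2$ in the numerator, and it is precisely the bound $M^2\ge4k^2\mu^2$ that controls it, turning $\mu^2\Delta_3/|D_k|^2$ into a quantity of order $k^{-2}$ independent of $\mu$.
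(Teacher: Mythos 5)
Your proposal is correct and follows essentially the same route as the paper: both reduce the statement to uniform (in $\mu\geq 1$) summability of the deviation of the squared factors $p_k^2=\bigl(\lambda_k^2(\alpha)+\mu^2\bigr)\bigl(\lambda_{-k}^2(\alpha)+\mu^2\bigr)\big/\bigl(\lambda_k^2(\epsilon)+\mu^2\bigr)\bigl(\lambda_{-k}^2(\epsilon)+\mu^2\bigr)$ from $1$, using the asymptotic expansion \eqref{c3:58} and exactly the uniformity bounds $k^2+\mu^2\geq k^2$ and $\mu^2\leq (k^2+\mu^2)^2/(4k^2)$ to tame the $\mu^2$-term, with the dominant contribution $k^{-1}|\tilde b_k|$ handled by Cauchy--Bunyakovsky. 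The only differences are cosmetic: you organize the algebra through $\re$ and $\im$ of $N_k,D_k$, lower-bound $|D_k|^2\geq M^2/4$ for $k\geq k_0$, and pass from $\sum|p_k^2-1|$ to $\sum|p_k-1|$ via $|p_k-1|\leq|p_k^2-1|$, whereas the paper writes $q_k=1+\nu_k$ and bounds $\sum|\nu_k|$ after taking logarithms.
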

\begin{proof} 
Since the convergence of this product is equivalent to the convergence of the series $ \sum^\infty_{k=1} \ln  p_k$, and since
\[
p_k= \left|  \frac{\left( \lambda_k(\alpha)-i\mu\right) \left( \lambda_{-k}(\alpha)-i\mu\right)}{\left( \lambda_k(\epsilon)-i\mu\right) \left( \lambda_{-k}(\epsilon)-i\mu\right)}\right|=\left[  \frac{\left( \lambda ^2_k(\alpha)+\mu^2\right) \left( \lambda^2_{-k}(\alpha)+\mu^2\right)}{\left( \lambda^2_k(\epsilon)+\mu^2\right) \left( \lambda^2_{-k}(\epsilon)+\mu^2\right)}\right]^{\frac{1}{2}}=q_k^{\frac{1}{2}},
\]
then it is enough to prove the uniform convergence of the series $ \frac{1}{2}\sum^\infty_{k=1} \ln  q_k$. 
To do this, note that
\begin{equation}\label{c3:59}
\begin{aligned}
q_k 
= & \frac{\left( \lambda^2_k(\alpha)+\mu^2\right) \left( \lambda^2_{-k}(\alpha)+\mu^2\right)}{\left( \lambda^2_k(\epsilon)+\mu^2\right) \left( \lambda^2_{-k}(\epsilon)+\mu^2\right)}  \\
= & \left( 1+\frac{ \lambda^2_k(\alpha)-\lambda^2_{k}(\epsilon)}{\lambda ^2_k(\epsilon)+\mu^2}\right) \left( 1+ \frac{ \lambda^2_{-k}(\alpha)-\lambda^2_{-k}(\epsilon)}{\lambda^2_{-k}(\epsilon)+\mu^2}\right)  \\
= &1+\frac{\lambda^2_{k}(\alpha)\cdot  \lambda^2_{-k}(\alpha)-\lambda^2_{k}(\epsilon)\cdot  \lambda^2_{-k}(\epsilon)+\mu^2 \left[  \lambda^2_{k}(\alpha)-\lambda^2_{k}(\epsilon)+ \lambda^2_{-k}(\alpha)- \lambda^2_{-k}(\epsilon)\right]  }{\left( \lambda^2_{k}(\epsilon)+\  mu^2\right) \left( \lambda^2_k(\alpha)+\mu^2\right)}\, .
\end{aligned}
\end{equation}
Based on the formula \eqref{c3:58}, we have:
\[
\begin{aligned}
\lambda^2_{k}(\alpha)\cdot \lambda^2_{-k}(\alpha) - & \lambda^2_{k}(\epsilon)\cdot \lambda^2_{-k}(\epsilon)  \\
= & \left(a^2-k^2+kb_k(\alpha)+eg_k(\alpha)\right)^2 - \left(e^2-k^2+kb_k(\epsilon)+eg_k(\epsilon)\right)^2 \\
= & \left[ a^2-e^2+k \left(b_k(\alpha)-b_k(\epsilon)\right)+a g_k(\alpha)-eg_k(\epsilon)\right] \cdot \\ 
& \cdot \left[ a^2+e^2-2k^2 +k \left(b_k(\alpha)-b_k(\epsilon)\right)+a g_k(\alpha)+eg_k(\epsilon)\right]  \\
= &a_k \cdot k^3 +(c+c_k)\cdot k^2 +d_k\cdot k+\tilde{c}+e_k\, ,
\end{aligned}
\]
where $c$, $\tilde{c}$ are some constant, and the sequences $\{a_k\}^\infty_1$, $\{c_k\}^\infty_1$, $\{d_k\}^\infty_1$, $\{e_k\}^\infty_1$ all are from $\ell^2$, i.e. $\sum^\infty_{k=1}|a_k|^2<\infty$, etc.

Referring to the expression $\lambda^2_{k}(\alpha)- \lambda^2_{k}(\epsilon)+\lambda^2_{-k}(\alpha)-\lambda^2_{-k}(\epsilon)$, noting that
$\lambda_{k}(\alpha)- \lambda_{k}(\epsilon)=a-e+g_k(\alpha,\epsilon)$, $\lambda_{k}(\alpha)- \lambda_{k}(\epsilon)=2k-(a-e)+b_k (\alpha,\epsilon)$,
$\lambda_{-k}(\alpha)-\lambda_{-k}(\epsilon)=a-e+g_{-k}(\alpha,\epsilon)$, and $\lambda_{-k}(\alpha)-\lambda_{-k}(\epsilon)=-2k-(a+e)+b_{-k}(\alpha,\epsilon)$, where $g_{\pm k}$, $b_{\pm k}$ small quantities, after careful calculations we get that
\[
\lambda^2_{k}(\alpha)- \lambda^2_{k}(\epsilon)+\lambda^2_{-k}(\alpha)-\lambda^2_{-k}(\epsilon)=k\cdot r_k +\kappa_k\, ,
\]
where $\sum^\infty_{k=1} r^2_k<\infty$, $\sum^\infty_{k=1} \kappa^2_k<\infty$.
Therefore, the formula \eqref{c3:59} takes the form $q_k=1+\nu_k$, where
\[
\nu_k(\mu)=\frac{a_k\cdot  k^3+(c+c_k)  k^2+d_k  k+\tilde{c}+e_k+\mu^2(k\cdot  r_k+\kappa_k)}{\left[  \left( k-\frac{\epsilon}{\pi}+h_k(\epsilon)\right)^2+\mu^2\right] \left[  \left( -k-\frac{\epsilon}{\pi}+h_{-k}(\epsilon)\right)^2+\mu^2\right]}.
\]
Since for $\mu\geq  1$, $k^2+\mu^2>k^2$, $ \frac{\mu^2}{a^2+\mu^2}<1$, then dividing the last fraction term-wise ($\nu_k(\mu)$), we get that
\[
\left|  \nu_k(\mu)\right|  \leq  \frac{a_k+r_k}{k} +O\left( \frac{1}{k^2}\right),
\]
i.e. $ \sum^\infty_{k=1} \left|  \nu_k(\mu)\right|<\infty $ uniformly on all $\mu\geq  1$. Lemma \ref{lem2-3-6} is proved.
\end{proof}

\begin{lemma}\label{lem2-3-7} 
For $p,q\in  L^2_\mathbb{R}  [0, \pi]$ $\; \lim_{\mu\to\infty}  m(i\mu)=e^{i(\epsilon  -\alpha)}$.
\end{lemma}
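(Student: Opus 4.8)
The plan is to determine the behaviour of the numerator and the denominator in the definition \eqref{c3:33} of $m(\lambda)$ separately along the ray $\lambda = i\mu$, $\mu \to +\infty$, and then pass to the quotient. The starting point is the observation that $u(x,\lambda,\beta)$ is exactly the vector-solution $\psi(x,\lambda,\beta)$ of Theorem \ref{c2:thm_4}, since both solve $\ell y = \lambda y$ with $y(\pi) = (\sin\beta,-\cos\beta)^T$. Hence, by the representation \eqref{c2:psi_rep},
\[
u(0,\lambda) = \psi_0(0,\lambda,\beta) - \int_0^\pi H_\pi(0,t,\beta)\,\psi_0(t,\lambda,\beta)\,dt,
\]
where $\psi_0(t,\lambda,\beta) = \left( \sin(\beta - \lambda(\pi-t)),\ -\cos(\beta - \lambda(\pi-t)) \right)^T$; this splits $u(0,\lambda)$ into an explicit term and an integral remainder.

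First I would compute the explicit term. Setting $\lambda = i\mu$ and using $\cos(i\mu\pi) = \cosh\mu\pi$, $\sin(i\mu\pi) = i\sinh\mu\pi$, together with the fact that $\cosh\mu\pi$ and $\sinh\mu\pi$ are both $\tfrac12 e^{\mu\pi}(1+o(1))$ as $\mu\to+\infty$, one gets
\[
\sin(\beta - i\mu\pi) = \sin\beta\cosh\mu\pi - i\cos\beta\sinh\mu\pi = -\tfrac{i}{2}e^{\mu\pi}e^{i\beta} + o(e^{\mu\pi}),
\]
using $\sin\beta - i\cos\beta = -ie^{i\beta}$, and likewise $-\cos(\beta - i\mu\pi) = -\tfrac12 e^{\mu\pi}e^{i\beta} + o(e^{\mu\pi})$.

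The main step is to show the integral remainder is negligible next to $e^{\mu\pi}$. Since the entries of $\psi_0(t,i\mu,\beta)$ are bounded in modulus by $C\,e^{\mu(\pi - t)}$ uniformly for $t\in[0,\pi]$, one has
\[
\left| \int_0^\pi H_\pi(0,t,\beta)\,\psi_0(t,i\mu,\beta)\,dt \right| \le C\, e^{\mu\pi}\int_0^\pi |H_\pi(0,t,\beta)|\, e^{-\mu t}\, dt .
\]
Because $p,q\in L^2_\mathbb{R}[0,\pi]\subset L^1[0,\pi]$, Theorem \ref{c2:thm_4} gives $H_\pi(0,\cdot,\beta)\in L^1(0,\pi)$; since $e^{-\mu t}\to 0$ for each $t>0$ and $e^{-\mu t}\le 1$, dominated convergence yields $\int_0^\pi |H_\pi(0,t,\beta)|e^{-\mu t}\,dt\to 0$, so the remainder is $o(e^{\mu\pi})$. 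Consequently
\[
u_1(0,i\mu) = -\tfrac{i}{2}e^{\mu\pi}e^{i\beta} + o(e^{\mu\pi}), \qquad u_2(0,i\mu) = -\tfrac12 e^{\mu\pi}e^{i\beta} + o(e^{\mu\pi}).
\]

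Finally I would substitute into \eqref{c3:33}. Using $\sin\gamma + i\cos\gamma = i e^{-i\gamma}$, the numerator $u_1(0,i\mu)\cos\alpha + u_2(0,i\mu)\sin\alpha$ equals $-\tfrac{i}{2}e^{\mu\pi}e^{i(\beta-\alpha)} + o(e^{\mu\pi})$, and the denominator $u_1(0,i\mu)\cos\epsilon + u_2(0,i\mu)\sin\epsilon$ equals $-\tfrac{i}{2}e^{\mu\pi}e^{i(\beta-\epsilon)} + o(e^{\mu\pi})$. Dividing, the common factor $-\tfrac{i}{2}e^{\mu\pi}$ cancels, and since the limiting denominator $e^{i(\beta-\epsilon)}$ is nonzero, we obtain $m(i\mu)\to e^{i(\beta-\alpha)}/e^{i(\beta-\epsilon)} = e^{i(\epsilon-\alpha)}$, as claimed. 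The only genuinely delicate point is the dominated-convergence estimate of the integral remainder; everything else is elementary trigonometry. One could equally well work with $\varphi(x,\lambda,\alpha)$ and the fundamental-matrix representation \eqref{c2:Phi_rep_1}, but \eqref{c2:psi_rep} is the natural choice here since $m(\lambda)$ is built directly from $u$.
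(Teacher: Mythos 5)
Your argument is correct and follows essentially the same route as the paper: represent $u(0,\lambda)$ through the transformation operator tied to the point $\pi$ (formula \eqref{c2:psi_rep}), extract the leading terms $\pm\tfrac12 e^{\mu\pi}e^{i\beta}$, show the integral remainder is $o(e^{\mu\pi})$, and pass to the quotient in \eqref{c3:33}. The only cosmetic difference is that you dispose of the remainder by dominated convergence along the ray $\lambda=i\mu$, where the paper cites its Riemann--Lebesgue-type Lemma \ref{c3:lem_1}; these are interchangeable here.
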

\begin{proof} 
From the formulas \eqref{c2:psi_rep}, it follows that
\[
u_1(0, \lambda)=\sin  (\beta-\lambda\pi)-\int^\pi_0 \lf  H_{11}(0,t)\sin  (\beta-\lambda(\pi  -t))-H_{12}(0,t)\cos  (\beta-\lambda(\pi  -t))\rf \,  dt,
\]
\[
u_2(0, \lambda)=\cos (\beta-\lambda\pi)-\int^\pi_0 \lf H_{21}(0,t)\sin (\beta-\lambda(\pi -t))-H_{22}(0,t)\cos (\beta-\lambda(\pi -t)) \rf \, dt.
\]
And according to the Lemma \ref{c3:lem_1}, we get
\[
\lim_{\mu\to\infty}  e^{-\pi  \mu}  u_1(0,  i\mu)=\lim_{\mu\to\infty}  e^{-\pi  \mu} \left( \frac{e^{i(\beta-i\mu\pi)}-e^{-i(\beta-i\mu\pi)}}{2i}\right)=\frac  {e^{i\beta}}{2i}.
\]
Similarly
\[
\lim_{\mu\to\infty}  e^{-\pi  \mu}  u_2(0,  i\mu)=-\frac{e^{i\beta}}{2}.
\]

Proceeding now from the definition \eqref{c3:33}, we get that
\[
\begin{aligned}
\lim_{\mu\to\infty}  m(i\mu) &=\lim_{\mu\to\infty} \frac{\frac{u_1(0,  i\mu)}{u_2(0,  i\mu)}\, \cos\alpha+\sin\alpha}{\frac{u_1(0,  i\mu)}{u_2(0,  i\mu)}\, \cos\epsilon+\sin \epsilon}=\frac{i\cos\alpha+\sin\alpha}{i\cos\epsilon+\sin\epsilon} \\
&= \frac{\cos\alpha-i\sin\alpha}{\cos\epsilon-i\sin\epsilon}=\frac{e^{-i\alpha}}{e^{-i\epsilon}} =  e^{i(\epsilon-\alpha)},
\end{aligned}
\]
i.e., we proved the Lemma \ref{lem2-3-7}, thus the equality \eqref{c3:36} is proved and, in particular,
\[
\lim_{\mu\to\infty}\left| m(i\mu)\right| =1,\qquad \lim_{\mu\to\infty}\arg\{ m(i\mu)\}=\epsilon -\alpha.
\]
\end{proof}

Write the formula \eqref{c3:34} at $\lambda=i\mu$
\[
m(i\mu)=c\, \frac{\lambda_0(\alpha)-i\mu}{\lambda_0(\epsilon)-i\mu}
\prod^\infty_{\substack{k=1 \\ k \neq 0}} \, \frac{\lambda_k(\epsilon)\cdot \lambda_{-k}(\epsilon)}{\lambda_k(\alpha)\cdot \lambda_{-k}(\alpha)}
\prod^\infty_{\substack{k=1 \\ k \neq 0}} \,
\frac{\lambda_k(\alpha)-i\mu}{\lambda_k(\epsilon)-i\mu}\;
\frac{\lambda_{-k}(\alpha)-i\mu}{\lambda_{-k}(\epsilon)-i\mu}
\]
and take the logarithm (the principal value of the logarithm) from both sides:
\begin{equation}\label{c3:62}
\begin{aligned}
\ln\{ m(i\mu)\}\stackrel{def}{=} & 
\ln \left| m(i\mu)\right| +i\arg m(i\mu)  \\
= & \ln \, \left| c\, \frac{\lambda_0(\alpha)-i\mu}{\lambda_0(\epsilon)-i\mu}\right| +i\arg \left(c\, \frac{\lambda_0(\alpha)-i\mu}{\lambda_0(\epsilon)-i\mu}\right)+\sum^\infty_{\substack{k=-\infty \\ k \neq 0}} \, \ln \left| \frac{\lambda_n(\epsilon)}{\lambda_n(\alpha)}\right| + \\
& + \sum^\infty_{\substack{k=-\infty \\ k \neq 0}} \, \ln \left| \frac{\lambda_n(\alpha)-i\mu}{\lambda_n(\epsilon)-i\mu}\right|
+\sum^\infty_{\substack{k=-\infty \\ k \neq 0}} \, i\arg \left(\frac{\lambda_n(\epsilon)}{\lambda_n(\alpha)}\, \cdot\,
\frac{\lambda_n(\alpha)-i\mu}{\lambda_n(\epsilon)-i\mu}\right)\, .
\end{aligned}
\end{equation}
Since $c>0$ and $ \frac{\lambda_k(\epsilon)}{\lambda_k(\alpha)}>0$ for $k\neq  0$, then these factors under the argument sign can be discarded. 
Note also that
\[
\arg \lf \frac{\lambda_n(\epsilon)}{\lambda_n(\alpha)}\, \cdot\,\frac{\lambda_n(\alpha)-i\mu}{\lambda_n(\epsilon)-i\mu}\right\}=\arg \lf \frac{\lambda_n(\alpha)-i\mu}{\lambda_n(\epsilon)-i\mu}\right\}
\]
is equal to the angle at which the segment of the real axis $\left[ \lambda_k(\epsilon), \lambda_k(\alpha)\right]$ from the point $i\mu$ is visible. 
Therefore, the sequence $\varphi_n(\mu) \stackrel{def}{=} \sum^n_{-n} \arg \lf \frac{\lambda_n(\alpha)-i\mu}{\lambda_n(\epsilon)-i\mu}\right\}$ is a positive, monotone increasing and bounded ($\varphi_n(\mu)<\pi$ uniformly for all $\mu>0$) sequence, i.e. it converges. 
Taking all this into account and separating the real and imaginary parts in \eqref{c3:62}, we get two equalities:
\begin{equation}\label{c3:63}
\ln\left| m(i\mu)\right|=\ln\, \lf c\,\left| \frac{\lambda_0(\alpha)-i\mu}{\lambda_0(\epsilon)-i\mu}\right|\right\} +
\sum^\infty_{\substack{k=-\infty \\ k \neq 0}} \, \ln \frac{\lambda_n(\epsilon)}{\lambda_n(\alpha)} +
\sum^\infty_{\substack{k=-\infty \\ k \neq 0}} \, \ln \left| \frac{\lambda_n(\alpha)-i\mu}{\lambda_n(\epsilon)-i\mu}\right|
\end{equation}
and
\[
\arg\{ m(i\mu)\}=\sum^\infty_{\substack{k=-\infty \\ k \neq 0}} \, \arg \frac{\lambda_n(\alpha)-i\mu}{\lambda_n(\epsilon)-i\mu}\, .
\]
Based on the fact that $\prod^\infty_{-\infty} \frac{\lambda_n(\alpha)-i\mu}{\lambda_n(\epsilon)-i\mu}$ uniformly converges (for $\mu\geq 1$) and that
\[
\lim_{\mu\to\infty}\left| m(i\mu)\right| =1,\qquad \lim_{\mu\to\infty}\arg\{ m(i\mu)\}=\epsilon -\alpha\, ,
\]
and passing to the limit in \eqref{c3:63} for $\mu\to\infty$, we obtain
\[
0=\ln c+\sum^\infty_{\substack{k=-\infty \\ k \neq 0}} \, \ln \frac{\lambda_n(\epsilon)}{\lambda_n(\alpha)} =\ln \left(c\, \prod^\infty_{\substack{k=-\infty \\ k \neq 0}} \frac{\lambda_n(\epsilon)}{\lambda_n(\alpha)}\right)\, ,
\]
i.e.
\[
c\, \prod^\infty_{\substack{k=-\infty \\ k \neq 0}} \frac{\lambda_n(\epsilon)}{\lambda_n(\alpha)}=1\, .
\]
i.e. equality \eqref{c3:37} is proved. 
Substituting this value $c$ into the formula \eqref{c3:57}, we get \eqref{c3:39}. 
The Theorem \ref{thm2-3-5} is proved.

\section*{Notes and references}
\addcontentsline{toc}{section}{Notes and references}
The boundary value problem (BVP) was first investigated by G.D Birkhoff and R.E. Langer in \cite{Birkhoff-Langer:1923}.
They obtained the asymptotic behavior of eigenvalues and eigenfunctions.
Asymptotic formula 
\[
\lambda_n(p,q, \alpha, \beta) = n + \cfrac{\beta - \alpha}{\pi} + r_n
\]
was given in \cite{Gasymov-Dzhabiev:1975}.

The gradient of eigenvalues was investigated for Sturm-Liouville BVP in \cite{Isaacson-Trubowitz:1983}.
For the Dirac system, it was given in \cite{Harutyunyan-Azizyan:2006, Ashrafyan-Harutyunyan:2017-1}.
The representation of norming constants by two spectra was given in \cite{Gasymov-Dzhabiev:1975, Harutyunyan:2004, Harutyunyan-Azizyan:2006}.

\chapter{Eigenfunction expansion theorems}\label{chapter_4}

\section{Statements of Theorems.}\label{c4:sec_1}
In the Hilbert space $L^{2} \left( [0,\pi], \mathbb{C}^{2} \right)$ of two-component vector functions $f=\left(f_{1} , f _{2} \right)^{T} $ we introduce the pseudoscalar product 
\[
\left(f, g\right)=\int_{0}^{\pi }g^{T} \left(\xi \right)f\left(\xi \right)d\xi  =\int_{0}^{\pi }\left[f_{1} \left(\xi \right)g_{1} \left(\xi \right)+f_{2} \left(\xi \right)g_{2} \left(\xi \right)\right]d\xi 
\]  
and devote by $h_{n} \left(x\right)=h_{n} \left(x,p,q,\alpha ,\beta \right)$ "pseudonormalized" eigenfunctions, determined by the formula
\[
 h_{n} \left(x\right)=\dfrac{1}{\sqrt{\left(\varphi_{n} ,\varphi_{n} \right)}} \cdot \varphi_{n} (x), \qquad n \in \mathbb{Z}
 \]

Bellow we will show (see \eqref{c4:int_varphi_omega_dot}) that the condition on the eigenvalues to be simple ensures that the quantity $\left(\varphi_{n} , \varphi_{n} \right)$ differs from zero.
The square root $\sqrt{\left(\varphi_{n} , \varphi_{n} \right)}$ is taken in main (principal) sense, i.e. $\re\sqrt{\left(\varphi_{n} , \varphi_{n} \right)} \geq 0$.
The purpose of this section is to prove Theorems \ref{c4:thm_1}, \ref{c4:thm_2}, \ref{c4:thm_3} and \ref{c4:thm_4}.

\begin{theorem}\label{c4:thm_1}
If all the eigenvalues of the problem $L\left(p, q, \alpha , \beta \right)$  are simple, then an arbitrary absolutely continuous vector-function  $f$ ($f_{k} \in AC\left[0, \pi \right]$, $k=1, 2$), satisfying the boundary condition \eqref{c3:a_bound_cond}, \eqref{c3:b_bound_cond}  can be expanded in a series
\begin{equation} \label{c4:f_expansion}
f\left(x\right)=\sum_{k\in \mathbb{Z}}c_{k} \cdot h_{k} \left(x\right) =\sum_{k\in \mathbb{Z}}\left(f,\, h_{k} \right) \cdot h_{k} \left(x\right)
\end{equation}
in terms of the eigenfunctions of the problem  $L\left(p, q, \alpha , \beta \right)$, which converges to $f$ uniformly on $[0, \pi]$, that is
\[
\lim_{\substack{ n, m>N \\ N\to \infty}}
\sup_{x \in [0,\pi ]} \left|f\left(x\right)-\sum_{k=-n}^{m}\left(f,\, h_{k} \right)h_{k} \left(x\right) \right|=0,
\]
where $|f|=\sqrt{|f_1|^2+|f_2|^2}$.
\end{theorem}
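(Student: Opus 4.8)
The plan is to use the classical resolvent (Green's-function) method: express the partial sums of \eqref{c4:f_expansion} as contour integrals of the resolvent kernel and let the contours expand, estimating everything with the transformation operators of Chapter~\ref{chapter_2} and the eigenvalue asymptotics of Chapter~\ref{chapter_3}. First I would build the Green's function of $L(p,q,\alpha,\beta)$. Besides $\varphi(x,\lambda,\alpha)$, which satisfies \eqref{c3:a_bound_cond}, take the solution $\psi(x,\lambda,\beta)$ of $\ell y=\lambda y$ with $y(\pi)=(\sin\beta,-\cos\beta)^{T}$ from Theorem~\ref{c2:thm_4}, which satisfies \eqref{c3:b_bound_cond}; since the bilinear concomitant of two solutions of \eqref{c1:diff_expression} is constant (the coefficient matrix has zero trace), the quantity $\varphi_1(\pi,\lambda,\alpha)\psi_2(\pi,\lambda,\beta)-\varphi_2(\pi,\lambda,\alpha)\psi_1(\pi,\lambda,\beta)$ equals $-\chi(\lambda)$, with $\chi$ the characteristic function \eqref{c3:chi_characteristic} whose (by hypothesis simple) zeros are the $\lambda_n$. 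For $\lambda\neq\lambda_n$ the problem $(\ell-\lambda)y=g$ with both boundary conditions then has a unique solution $y(x,\lambda)=\int_0^\pi G(x,\xi,\lambda)g(\xi)\,d\xi$, where $G$ is assembled from $\varphi$ and $\psi$ in the usual Volterra way with $\chi(\lambda)$ in the denominator, and $G(x,\cdot,\lambda)$ is meromorphic in $\lambda$ with simple poles exactly at the $\lambda_n$.

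Next I would compute residues. Using the form of $G$, the linear dependence $\psi(x,\lambda_n,\beta)=d_n\varphi_n(x)$ at an eigenvalue, and the identity \eqref{c4:int_varphi_omega_dot} (which also shows $(\varphi_n,\varphi_n)\neq0$, as $\lambda_n$ is simple), one gets
\[
\res_{\lambda=\lambda_n}\int_0^\pi G(x,\xi,\lambda)f(\xi)\,d\xi=-(f,h_n)\,h_n(x).
\]
It is convenient to record here that, because $\Omega^{T}=\Omega$ and $B^{T}=-B$, the eigenfunctions are biorthogonal for the bilinear pairing of Theorem~\ref{c4:thm_1}: from $(\lambda_j-\lambda_k)(\varphi_j,\varphi_k)=(\ell\varphi_j,\varphi_k)-(\varphi_j,\ell\varphi_k)=0$, the last equality by integration by parts whose boundary terms vanish because $\varphi_j,\varphi_k$ satisfy \eqref{c3:a_bound_cond}, \eqref{c3:b_bound_cond}, one has $(h_j,h_k)=\delta_{jk}$, so $c_k=(f,h_k)$ is the only consistent choice. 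Integrating $\frac{1}{2\pi i}\oint_{C_n}y(x,\lambda)\,d\lambda$ over the contours $C_n=\{\lambda:|\lambda+(\alpha-\beta)/\pi|=n+\tfrac12\}$ of Theorem~\ref{c3:thm_1} (whose Rouché count shows that, for large $n$, $C_n$ encloses precisely $\lambda_{-n},\dots,\lambda_n$) therefore yields $-\sum_{k=-n}^{n}(f,h_k)h_k(x)$.

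It then remains to prove that $\frac{1}{2\pi i}\oint_{C_n}y(x,\lambda)\,d\lambda\to -f(x)$ uniformly in $x\in[0,\pi]$. I would do this by comparison with the free problem $p=q=0$: write $y=y_0+(y-y_0)$, where $y_0$ uses the explicit free Green's function $G_0$ (with $\chi_0(\lambda)=\sin(\lambda\pi+\alpha-\beta)$). For $y_0$ the contour integral is exactly minus the partial sum of the expansion of $f$ in the free eigenfunctions, a Fourier-type series; since $f$ is continuous and of bounded variation and satisfies the boundary conditions, the boundary terms in the integration by parts of its coefficients vanish and the series converges to $f$ uniformly (Dirichlet--Jordan). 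For $y-y_0$ I would use the transformation-operator representations \eqref{c3:varphi_representation} for $\varphi$ and \eqref{c2:psi_rep} for $\psi$, the lower bound $|\chi(\lambda)|\ge c\,e^{|\im\lambda|\pi}$ on $C_n$ (which follows from $\chi=\chi_0+g$ and the smallness of $g$ on $C_n$ established for Theorem~\ref{c3:thm_1}, together with Lemma~\ref{c3:lem_2}), one integration by parts in $\xi$ against $f$ — whose boundary contributions vanish since $f$ satisfies \eqref{c3:a_bound_cond}, \eqref{c3:b_bound_cond} — and the Riemann--Lebesgue lemma~\ref{c3:lem_1}, to conclude that
\[
\frac{1}{2\pi i}\oint_{C_n}\bigl(y(x,\lambda)-y_0(x,\lambda)\bigr)\,d\lambda\longrightarrow 0
\]
uniformly in $x$ as $n\to\infty$. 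Combining the two pieces gives \eqref{c4:f_expansion} with uniform convergence of the symmetric partial sums.

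The principal difficulty is to upgrade from the symmetric sums $\sum_{k=-n}^{n}$ produced by the circular contours $C_n$ to the genuine partial sums $\sum_{k=-n}^{m}$ with $n,m\to\infty$ independently, as the theorem demands; I expect this to be the technical heart of the proof. I would replace $C_n$ by rectangular contours $\Gamma_{n,m}$ whose vertical sides sit between $\lambda_{-n-1}$ and $\lambda_{-n}$, resp.\ between $\lambda_m$ and $\lambda_{m+1}$ (the asymptotics \eqref{c3:ev_asymptotics} guarantee these gaps exist and are bounded below) and whose horizontal sides recede to $\pm i\infty$, redo the estimates of the previous paragraph on $\Gamma_{n,m}$, and combine them with the uniform bound $\sup_x|h_k(x)|\le C$ and the decay of $(f,h_k)$ from the integration by parts, so that the tails are controlled uniformly and one may pass to the limit along arbitrary $n,m$. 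The fact that the operator need not be self-adjoint (complex $p,q,\alpha,\beta$ are allowed) causes no real trouble, since everything above is carried out with the bilinear pairing and the residue calculus never uses orthogonality; an alternative proof, transferring the uniformly convergent free expansion through the invertible, weakly bounded transformation operator of Section~\ref{c2:sec_7} together with the asymptotics \eqref{c3:ev_asymptotics}, is also available.
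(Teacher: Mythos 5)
Your proposal is essentially the paper's own argument: the resolvent representation built from $\varphi(x,\lambda,\alpha)$ and the solution normalized at $\pi$ (the paper's formula \eqref{c4:Dirac_solution_non_homogen}), the residue computation via \eqref{c4:int_varphi_omega_dot} giving $-\left(f,h_n\right)h_n(x)$, and contour estimates based on the transformation operators, Lemma \ref{c3:lem_1}, Lemma \ref{c3:lem_2}, and one integration by parts against $f$ using its boundary conditions. The only place you deviate is the step you yourself flag as the technical heart: rather than upgrading symmetric sums afterwards (your rectangles with sides receding to $\pm i\infty$ would leave only conditionally convergent vertical-line integrals, and termwise tail control via $\sup_x|h_k|$ and the $O(1/k)$ decay of $(f,h_k)$ is not absolutely summable), the paper integrates from the start over finite closed contours $C_{m,l}$ crossing the line $\im\lambda=\im\frac{\beta-\alpha}{\pi}$ at $-m+\frac{\beta-\alpha}{\pi}-\frac{1}{2}$ and $l+\frac{\beta-\alpha}{\pi}+\frac{1}{2}$, so the residue theorem directly yields the partial sums $\sum_{k=-m}^{l}$ with $m,l$ independent.
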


We will also prove a theorem on uniform equiconvergence.

\begin{theorem}\label{c4:thm_2}
Let   $f\in L^{2} \left(\left[0, \pi \right], \mathbb{C}^{2} \right)$. 
Then, if all the eigenvalues of the problem $L\left(p, q, \alpha , \beta \right)$  are simple, the following equality
\[
\lim_{\substack{n, m>N \\ N \to \infty}} \sup_{x\in [0, \pi ]} \left|\sum_{k=-m}^{n}\left(f,\, h_{k} \right)h_{k} \left(x\right) -\sum_{k=-m}^{n}\left(f, h_{k}^{0} \right)h_{k}^{0} \left(x\right) \right|=0
\]
holds, where $h_{k}^{0} \left(x\right)$ are "pseudonormalized" eigenfunctions of the problem $L\left(0, 0, \alpha , \beta \right)$.
\end{theorem}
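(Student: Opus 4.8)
The plan is to prove the equiconvergence theorem by the classical contour–integration (Green's function) method, reducing the general $L^{2}$ case to Theorem \ref{c4:thm_1} by a density argument. Put $S_{m,n}f(x)=\sum_{k=-m}^{n}(f,h_{k})h_{k}(x)$ and $S_{m,n}^{0}f(x)=\sum_{k=-m}^{n}(f,h_{k}^{0})h_{k}^{0}(x)$. First I would record that for $f\in L^{2}([0,\pi],\mathbb{C}^{2})$ and $m,n$ large one has
\[
S_{m,n}f(x)=-\frac{1}{2\pi i}\oint_{\Gamma_{m,n}}\left(\int_{0}^{\pi}G(x,\xi,\lambda)f(\xi)\,d\xi\right)d\lambda ,
\]
where $G(x,\xi,\lambda)$ is the Green's function of $L(p,q,\alpha,\beta)$, built in the usual way from $\varphi(x,\lambda,\alpha)$, the solution $\psi(x,\lambda,\beta)$ tied to the point $\pi$, and the characteristic function $\chi_{\alpha,\beta}(\lambda)$ of \eqref{c3:chi_characteristic}; and $\Gamma_{m,n}$ is a rectangle with vertical sides $\operatorname{Re}\lambda=-m-\tfrac12+\tfrac{\beta-\alpha}{\pi}$ and $\operatorname{Re}\lambda=n+\tfrac12+\tfrac{\beta-\alpha}{\pi}$ and horizontal sides receding to $\operatorname{Im}\lambda=\pm\infty$ (their contribution disappears because $\int_{0}^{\pi}|G(x,\xi,\nu\pm iT)|^{2}\,d\xi\to0$ as $T\to\infty$). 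By the eigenvalue asymptotics \eqref{c3:ev_asymptotics} this contour separates, for large $|k|$, both $\lambda_{k}=\lambda_{k}(p,q,\alpha,\beta)$ and $\lambda_{k}^{0}=k+\tfrac{\beta-\alpha}{\pi}=\lambda_{k}(0,0,\alpha,\beta)$ from their neighbours, so the same formula with the Green's function $G_{0}$ of $L(0,0,\alpha,\beta)$ (built from $\varphi_{0},\psi_{0}$ and $\chi_{0}(\lambda)=\sin(\lambda\pi+\alpha-\beta)$) produces $S_{m,n}^{0}f$; finitely many small indices, where only simplicity (not the asymptotics) is available, add a fixed bounded remainder irrelevant to the limit. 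Hence the object to be controlled is
\[
D_{m,n}f(x)=S_{m,n}f(x)-S_{m,n}^{0}f(x)=-\frac{1}{2\pi i}\oint_{\Gamma_{m,n}}\int_{0}^{\pi}\bigl[G(x,\xi,\lambda)-G_{0}(x,\xi,\lambda)\bigr]f(\xi)\,d\xi\,d\lambda .
\]

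The reduction step is then easy. Suppose we have the uniform bound $\sup_{m,n}\,\sup_{x\in[0,\pi]}|D_{m,n}g(x)|\le C\|g\|_{L^{2}}$, with $C$ independent of $m,n$. Absolutely continuous functions satisfying the boundary conditions \eqref{c3:a_bound_cond}--\eqref{c3:b_bound_cond} are dense in $L^{2}([0,\pi],\mathbb{C}^{2})$ (already $C_{0}^{\infty}((0,\pi),\mathbb{C}^{2})$ is), and such a function $g$ satisfies the boundary conditions of $L(0,0,\alpha,\beta)$ as well, since those depend only on $\alpha,\beta$; moreover the eigenvalues of $L(0,0,\alpha,\beta)$ are all simple. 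Hence Theorem \ref{c4:thm_1}, applied to $L(p,q,\alpha,\beta)$ and to $L(0,0,\alpha,\beta)$, gives $S_{m,n}g\to g$ and $S_{m,n}^{0}g\to g$ uniformly on $[0,\pi]$, i.e. $\sup_{x}|D_{m,n}g(x)|\to0$. Given $\delta>0$, pick such a $g$ with $C\|f-g\|_{L^{2}}<\delta/2$, then $N$ with $\sup_{m,n>N}\sup_{x}|D_{m,n}g(x)|<\delta/2$; then $\sup_{m,n>N}\sup_{x}|D_{m,n}f(x)|<\delta$, which is the assertion of Theorem \ref{c4:thm_2}.

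It remains to prove the uniform bound for $D_{m,n}$. Here I would substitute the transformation–operator representations $\varphi(x,\lambda,\alpha)=\varphi_{0}(x,\lambda,\alpha)+\int_{0}^{x}K_{0}(x,t,\alpha)\varphi_{0}(t,\lambda,\alpha)\,dt$ (Theorem \ref{c2:thm_2}, \eqref{c2:varphi_rep}), the analogous formula \eqref{c2:psi_rep} for $\psi$ (Theorem \ref{c2:thm_4}), and $\chi_{\alpha,\beta}(\lambda)=\chi_{0}(\lambda)+g(\lambda)$ from \eqref{c3:chi_representation}. On $\Gamma_{m,n}$ Lemma \ref{c3:lem_1} gives $|g(\lambda)|\le\varepsilon_{N}\,e^{|\operatorname{Im}\lambda|\pi}$ with $N:=\min\{m,n\}$ and $\varepsilon_{N}\to0$, while Lemma \ref{c3:lem_2} gives $|\chi_{0}(\lambda)|\ge\tfrac14\,e^{|\operatorname{Im}\lambda|\pi-|\operatorname{Im}\alpha|-|\operatorname{Im}\beta|}$; writing $\chi^{-1}=\chi_{0}^{-1}-g\chi_{0}^{-2}+g^{2}\chi_{0}^{-2}\chi^{-1}$ one splits $G-G_{0}$ into a main part $\chi_{0}^{-1}\bigl[(\varphi-\varphi_{0})\psi^{T}+\varphi_{0}(\psi-\psi_{0})^{T}\bigr]$ and remainders carrying a factor of size $O(\varepsilon_{N})$. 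In the main part $\varphi-\varphi_{0}$ and $\psi-\psi_{0}$ are Fourier–type integrals of the kernels $K_{0}$ and $H_{\pi}$, whose $L^{1}$–norms (or $L^{2}$–norms, when $p,q\in L^{2}$, by Theorems \ref{c2:thm_3} and \ref{c2:thm_4}) are uniformly bounded in $x$; after performing the $\lambda$–integration over $\Gamma_{m,n}$ the inner integral against $g$ becomes an integral over $[0,\pi]$ of a kernel dominated, by Bessel's inequality together with the weak boundedness and invertibility of the transformation operators established in Section \ref{c2:sec_7}, by $\|g\|_{L^{2}}$ uniformly in $m,n$ and $x$; the remainder terms are $\le C\varepsilon_{N}$. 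The norming–constant asymptotics $a_{n}=\pi+\kappa_{n}$, $\kappa_{n}\to0$ (see \eqref{c3:30}), together with \eqref{c3:ev_asymptotics}, enter when one passes between $h_{k}$ and $\varphi_{0}(\cdot,\lambda_{k}^{0},\alpha)$.

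The main obstacle is precisely this last estimate, made uniform in $x\in[0,\pi]$ — including the endpoints, where the diagonal $x=\xi$ of the kernel meets the boundary — and uniform in the truncation indices $m,n$. The naive pointwise bound $|G-G_{0}|\le\text{const}$ on $\Gamma_{m,n}$ is useless, since the length of $\Gamma_{m,n}$ grows; one must instead exploit the oscillatory cancellation in the $\lambda$–integral: the whole integrand is assembled from exponentials $e^{i\lambda t}$ with $L^{2}$ (or $L^{1}$) amplitudes, so the contour integral is naturally governed by convolution and Plancherel–type estimates rather than by pointwise size. A structural fact that makes this run is that the jump of $G$ across the diagonal $x=\xi$ equals that of $G_{0}$ — it is a constant multiple of $B$, independent of the potential — so that $G-G_{0}$ is continuous in $(x,\xi)$ and decays faster in $\lambda$. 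Concretely I would split $\Gamma_{m,n}$ into the part with $|\operatorname{Im}\lambda|\le1$, handled via Lemma \ref{c3:lem_2} and the cancellation, and the part with $|\operatorname{Im}\lambda|\ge1$, where the exponential gap between $|\chi_{0}|$ and $|g|$ already forces an $O(\varepsilon_{N})$ contribution, and I would carry the endpoint bookkeeping using that $\varphi(0,\lambda,\alpha)$, $\varphi(\pi,\lambda,\alpha)$, $\psi(0,\lambda,\beta)$, $\psi(\pi,\lambda,\beta)$ are explicitly given by the transformation–operator representations.
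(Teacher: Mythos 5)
Your scaffolding (contour representation of both partial sums plus a density reduction to Theorem \ref{c4:thm_1} applied to $L(p,q,\alpha,\beta)$ and to $L(0,0,\alpha,\beta)$) is logically coherent, but the entire proof hangs on the uniform bound $\sup_{m,n}\sup_{x}\left|D_{m,n}f(x)\right|\leq C\|f\|_{L^{2}}$, and that bound is never established: you yourself call it ``the main obstacle'' and only list tools you would use. Moreover, the tools as cited cannot deliver it. Bessel's inequality is available only in the self-adjoint case ($p,q,\alpha,\beta$ real), whereas Theorem \ref{c4:thm_2} is stated with the pseudoscalar product and admits complex $p,q,\alpha,\beta$, for which $\{h_{k}\}$ is not an orthonormal system; and the ``weak boundedness'' of Section \ref{c2:sec_7} is an $L^{2}\to L^{2}$ statement, so it cannot by itself convert an $L^{2}$ bound on $f$ into a bound on the sup-norm of $D_{m,n}f$ that is uniform in the truncation indices. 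What is actually needed — a uniform (in $x$, $m$, $n$) $L^{2}(d\xi)$ estimate for the difference of the truncated kernels, obtained by explicitly cancelling the leading parts of $G$ and $G_{0}$ via \eqref{c2:varphi_rep}, \eqref{c2:psi_rep} and \eqref{c3:chi_representation} — is precisely the computation you postpone, so as written the argument has a genuine gap at its decisive step.

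For comparison, the paper reaches the conclusion far more economically and without any density argument or uniform operator bound: it reuses the contour-integral computation already carried out for Theorem \ref{c4:thm_1} (formulas \eqref{2-4-30}--\eqref{2-4-37}, based on Lemmas \ref{c3:lem_1} and \ref{c3:lem_2}) and observes that the only terms obstructing uniform convergence — e.g. the boundary contributions $f_{1}(0)\cos\alpha+f_{2}(0)\sin\alpha$ and $f_{1}(\pi)\cos\beta+f_{2}(\pi)\sin\beta$ arising in $I_{1}$, $I_{2}$ — occur identically in both expansions, hence cancel in the difference, while all remaining terms are exactly those already estimated uniformly on the contours $C_{m,l}$. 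If you want to keep your route, the piece you must supply is the uniform smallness (not mere boundedness) of the resolvent difference $G-G_{0}$ on the contours; your structural remark that the diagonal jump of $G$ is potential-independent points in the right direction, but citing Bessel's inequality and weak boundedness does not replace that estimate.
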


Roughly speaking, Theorem \ref{c4:thm_2} says that adding a potential $\Omega \left(x\right)=\left(\begin{array}{cc} p (x) & q(x) \\ q(x) & -p(x) \end{array}\right)$  to the expression $ B\dfrac{d}{dx} $ does not change the type of the convergence of the expansion in terms of eigenfunctions of the operator, generated by the expression $l_{0} =B\dfrac{d}{dx} $.
For example, if $f$ does not satisfy the boundary conditions \eqref{c3:a_bound_cond} and \eqref{c3:b_bound_cond}, then there may be no uniform convergence in both cases, but the difference (indicated in Theorem \ref{c4:thm_2}) converges uniformly.

If $p, q\in L_{\mathbb{R}}^{1} \left[0, \pi \right]$ and $\alpha , \beta \in \mathbb{R}$, then the problem $L\left(p, q, \alpha , \beta \right)$ corresponds to the selfadjoint operator (see Section \ref{c3:sec_3}).
It is known that the eigenvalues of this operator are all real, simple (see detail in \eqref{c4:int_varphi_omega_dot}), and also the components of the eigenfunctions can be chosen real.
In particular, in this case $\left\{h_{n} \left(x\right)\right\}_{n=-\infty }^{\infty } $ is an orthonormal system.
Therefore, Theorem \ref{c4:thm_2} implies

\begin{theorem}\label{c4:thm_3}
If $p, q\in L_{\mathbb{R}}^{1} \left[0, \pi \right]$, $\alpha , \beta \in \mathbb{R}$ then arbitrary vector-function $f$ from $D(L)$ can be expanded into a uniformly convergent series \eqref{c4:f_expansion} in terms of the normalized eigenfunctions of operator $L$.
\end{theorem}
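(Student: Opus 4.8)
The plan is to deduce Theorem \ref{c4:thm_3} as a direct corollary of Theorem \ref{c4:thm_2}, just as the text announces. First I would observe that under the hypotheses $p,q\in L^1_\mathbb{R}[0,\pi]$ and $\alpha,\beta\in\mathbb{R}$, the problem $L(p,q,\alpha,\beta)$ is associated (via Theorem \ref{c3:thm_7}) with a self-adjoint operator on $L^2([0,\pi],\mathbb{C}^2)$; hence its eigenvalues are real, and (as recorded in the remark around \eqref{c4:int_varphi_omega_dot}) they are simple, so the standing hypothesis of Theorems \ref{c4:thm_1} and \ref{c4:thm_2} — simplicity of all eigenvalues — is automatically satisfied. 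Moreover in this real case the eigenfunctions $\varphi_n$ can be chosen real-valued, the quantity $(\varphi_n,\varphi_n)=a_n=\int_0^\pi|\varphi_n|^2\,dx$ is a genuine positive $L^2$-norm squared, and the ``pseudonormalized'' eigenfunctions $h_n=\varphi_n/\sqrt{a_n}$ coincide with the usual normalized eigenfunctions; the pseudoscalar product $(f,g)$ agrees with the Hermitian scalar product \eqref{c3:scalar_product} on real vector-functions. Thus $\{h_n\}_{n\in\mathbb{Z}}$ is an orthonormal system in $L^2([0,\pi],\mathbb{C}^2)$, and by self-adjointness and the countability/completeness of the eigenvalue set established in Theorem \ref{c3:thm_1}, it is in fact an orthonormal basis.

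Next I would treat the free problem $L(0,0,\alpha,\beta)$ in the same way: its normalized eigenfunctions $h^0_n$ form an orthonormal basis of $L^2([0,\pi],\mathbb{C}^2)$ — indeed they are, up to normalization, the vectors $\varphi_0(x,\lambda_n(0,0,\alpha,\beta),\alpha)=\bigl(\sin(\lambda_n x+\alpha),-\cos(\lambda_n x+\alpha)\bigr)^T$ with $\lambda_n(0,0,\alpha,\beta)=n+\frac{\beta-\alpha}{\pi}$, and their completeness is the classical Fourier-type expansion on $[0,\pi]$. Consequently, for $f\in L^2([0,\pi],\mathbb{C}^2)$ the partial sums $S^0_{n,m}(f)(x):=\sum_{k=-m}^{n}(f,h^0_k)h^0_k(x)$ converge to $f$ in $L^2$. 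The key point I would then isolate is: if in addition $f\in D(L)$, i.e.\ $f=(f_1,f_2)^T$ with $f_{1,2}\in AC[0,\pi]$, $[\ell f]_{1,2}\in L^2[0,\pi]$, and $f$ satisfies the boundary conditions \eqref{c3:a_bound_cond}, \eqref{c3:b_bound_cond}, then $S^0_{n,m}(f)$ converges to $f$ \emph{uniformly} on $[0,\pi]$. This is the smooth-case uniform convergence of the trigonometric expansion, which one may invoke as classical; alternatively it can be taken as (a special case of) Theorem \ref{c4:thm_1} applied to the zero potential.

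Finally I would combine the two ingredients. For $f\in D(L)$, write
\[
f(x)-\sum_{k=-m}^{n}(f,h_k)h_k(x)=\Bigl(f(x)-\sum_{k=-m}^{n}(f,h^0_k)h^0_k(x)\Bigr)+\Bigl(\sum_{k=-m}^{n}(f,h^0_k)h^0_k(x)-\sum_{k=-m}^{n}(f,h_k)h_k(x)\Bigr).
\]
By the previous paragraph the first bracket tends to $0$ uniformly on $[0,\pi]$ as $N\to\infty$ with $n,m>N$, and by Theorem \ref{c4:thm_2} (whose hypotheses hold, since all eigenvalues are simple and $f\in L^2$) the second bracket also tends to $0$ uniformly. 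Hence $\sup_{x\in[0,\pi]}\bigl|f(x)-\sum_{k=-m}^{n}(f,h_k)h_k(x)\bigr|\to 0$, which is exactly the assertion of Theorem \ref{c4:thm_3}; and since $\{h_k\}$ is orthonormal, the coefficients are $(f,h_k)$ as in \eqref{c4:f_expansion}. I expect the only genuine obstacle is justifying the uniform convergence of the \emph{free} expansion $S^0_{n,m}(f)\to f$ for $f\in D(L)$ — everything else is bookkeeping about the real self-adjoint case — but this is either classical Fourier analysis on $[0,\pi]$ or a direct instance of the already-proved Theorem \ref{c4:thm_1}, so it requires only a remark rather than new work.
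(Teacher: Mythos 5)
Your argument is correct, but it takes a detour that the paper does not. The paper's own proof is a one-liner: since $p,q$ and $\alpha,\beta$ are real, the operator is self-adjoint, so all eigenvalues are automatically simple (as noted via \eqref{c4:int_varphi_omega_dot}) and the eigenfunctions can be taken real; any $f\in D(L)$ has absolutely continuous components and satisfies the boundary conditions \eqref{c3:a_bound_cond}, \eqref{c3:b_bound_cond}, so Theorem \ref{c4:thm_1} applies \emph{directly} to $L(p,q,\alpha,\beta)$ and gives the uniformly convergent expansion — Theorem \ref{c4:thm_3} is stated separately only to highlight the self-adjoint case. You instead route the argument through the equiconvergence Theorem \ref{c4:thm_2} together with uniform convergence of the free ($\Omega\equiv0$) expansion for $f\in D(L)$, which you then justify precisely by invoking Theorem \ref{c4:thm_1} for the zero potential. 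This is logically sound (and your preliminary observations — simplicity of eigenvalues, realness of eigenfunctions, identification of the pseudoscalar product with the genuine scalar product, orthonormality of $\{h_n\}$ — match the paper's setup), but notice that the very fact you use to dispose of your ``only genuine obstacle'' applies verbatim, and more economically, to the perturbed problem itself: $f\in D(L)$ satisfies exactly the hypotheses of Theorem \ref{c4:thm_1} for $L(p,q,\alpha,\beta)$, making the passage through Theorem \ref{c4:thm_2} and the free expansion unnecessary. The only thing your longer route buys is a template that would still work if one only knew equiconvergence plus convergence for the unperturbed operator; here both are available, so the direct application is preferable.
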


Using the fact that $D(L)$ is everywhere dense in $L^{2} \left(\left[0, \pi \right], \mathbb{C}^{2} \right)$, we obtain the following theorem.
\begin{theorem}\label{c4:thm_4}
If $p, q\in L_{\mathbb{R}}^{1} \left[0, \pi \right]$, $\alpha , \beta \in \mathbb{R}$, then for arbitrary vector-function $f$ from $L^{2} \left(\left[0, \pi \right], \mathbb{C}^{2} \right)$ the series \eqref{c4:f_expansion} converges to $f$ with respect to $L^{2} \left(\left[0, \pi \right], \mathbb{C}^{2} \right)$ norm, i.e. 
\[
\lim_{\substack { n, m>N \\ N\to \infty}} \left\| f-\sum_{k=-m}^{n}\left(f,\, h_{k} \right)h_{k}  \right\| =0
\]
and, besides, Parseval's equality
\[
\left\| f\right\|^{2} =\sum_{k=-\infty }^{\infty }\left|\left(f, h_{k} \right)\right|^{2} ,\quad 
\left(\left\| f\right\|^{2} \stackrel{def}{=} \int_{0}^{\pi }\left(\left|f_{1} \left(x\right)\right|^{2} +\left|f_{2} \left(x\right)\right|^{2} \right)dx \right).
\]
holds.
\end{theorem}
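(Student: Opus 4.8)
The plan is to deduce Theorem \ref{c4:thm_4} from Theorem \ref{c4:thm_3} by a standard density argument, using two facts already at our disposal. First, the domain $D(L)=\mathcal{D}_{\alpha,\beta}$ is everywhere dense in $H:=L^{2}([0,\pi],\mathbb{C}^{2})$: this follows from Lemma \ref{c3:lem_D_0_dense}, since $\mathcal{D}_{0}\subset\mathcal{D}_{\alpha,\beta}$. Second, in the self-adjoint case ($p,q\in L^{1}_{\mathbb{R}}[0,\pi]$, $\alpha,\beta\in\mathbb{R}$) the eigenfunctions $h_{n}$ may be chosen real-valued, so that the system $\{h_{n}\}_{n\in\mathbb{Z}}$ is orthonormal with respect to the genuine scalar product \eqref{c3:scalar_product}, and the coefficients $(f,h_{n})$ occurring in \eqref{c4:f_expansion} agree with $\int_{0}^{\pi}\langle f(x),h_{n}(x)\rangle\,dx$ precisely because $h_{n}$ is real. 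Throughout I write $S_{m,n}g:=\sum_{k=-m}^{n}(g,h_{k})h_{k}$ for the two-sided partial sums.

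First I would record Bessel's inequality: by orthonormality of $\{h_{k}\}$ in $H$, for every $g\in H$ and all $m,n$,
\[
\|S_{m,n}g\|^{2}=\sum_{k=-m}^{n}|(g,h_{k})|^{2}\le\|g\|^{2},
\]
so the truncation map $g\mapsto S_{m,n}g$ has operator norm $\le 1$ on $H$, uniformly in $m,n$. Now fix $f\in H$ and $\varepsilon>0$. By density of $D(L)$ pick $g\in D(L)$ with $\|f-g\|<\varepsilon$. By Theorem \ref{c4:thm_3} the series \eqref{c4:f_expansion} for $g$ converges to $g$ uniformly on the compact interval $[0,\pi]$, hence also in the norm of $H$ (since $\|g-S_{m,n}g\|^{2}\le\pi\,\sup_{x}|g(x)-S_{m,n}g(x)|^{2}$); thus there is $N_{0}$ with $\|g-S_{m,n}g\|<\varepsilon$ whenever $m,n>N_{0}$. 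For such $m,n$,
\[
\|f-S_{m,n}f\|\le\|f-g\|+\|g-S_{m,n}g\|+\|S_{m,n}(g-f)\|<\varepsilon+\varepsilon+\|g-f\|<3\varepsilon,
\]
which gives $S_{m,n}f\to f$ in $H$ as $m,n\to\infty$ independently, i.e. the first assertion.

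Parseval's equality then follows at once: by orthonormality $\|S_{m,n}f\|^{2}=\sum_{k=-m}^{n}|(f,h_{k})|^{2}$, and letting $m,n\to\infty$ the left side tends to $\|f\|^{2}$ by the $L^{2}$-convergence just proved, whence $\sum_{k=-\infty}^{\infty}|(f,h_{k})|^{2}=\|f\|^{2}$. There is no serious obstacle at this stage; the only points requiring a little care are the bookkeeping in the $3\varepsilon$-estimate with the two indices $m,n$ growing independently, and the passage from uniform convergence on $[0,\pi]$ (Theorem \ref{c4:thm_3}) to $L^{2}$-convergence so that the density argument can be applied. The genuinely hard analytic work — proving uniform convergence of the eigenfunction expansion on the dense set $D(L)$ — has already been done in Theorems \ref{c4:thm_1}--\ref{c4:thm_3}.
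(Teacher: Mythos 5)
Your proposal is correct and follows essentially the same route as the paper: approximate $f$ by an element of $D(L)$, split $\|f-S_{m,n}f\|$ into three terms by the triangle inequality, control the cross term by Bessel's inequality, control the middle term by the uniform (hence $L^{2}$) convergence already established for functions in $D(L)$ (Theorems \ref{c4:thm_1}/\ref{c4:thm_3}), and then read off Parseval's equality from orthonormality and the $L^{2}$-convergence. Your extra remarks (the pseudoscalar vs.\ genuine scalar product agreeing for real $h_{n}$, and the uniform-to-$L^{2}$ passage) only make explicit points the paper uses implicitly.
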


For the first time, a theorem on expansion in terms of an eigenfunction of a regular operator of Dirac type was formulated and proved by Titchmarsh in \cite{Titchmarsch:1944} in the following form:
\begin{theorem}\label{c4:thm_5}
Let $p, q, r \in C_{\mathbb{R}}^1 \left[0, \pi \right]$, $f\in L_{\mathbb{R}}^{1} \left[0, \pi \right]$  and have bounded variation in the neighborhood of the point $x$.
Let, in addition, all the eigenvalues of the boundary value problem for the system of equations
\begin{equation} \label{c4:Dirac_system}
\left\{\left(\begin{array}{cc} {0} & {1} \\ {-1} & {0} \end{array}\right)\frac{d}{dx} +\left(\begin{array}{cc} {p\left(x\right)} & {q\left(x\right)} \\ {q\left(x\right)} & {r\left(x\right)} \end{array}\right)\right\}y=\lambda y,
\end{equation}
with boundary conditions \eqref{c3:a_bound_cond} and \eqref{c3:b_bound_cond}, where $\alpha , \beta \in \mathbb{R}$ are simple.
Then the series \eqref{c4:f_expansion} at the point $x$ converges to the value$ \dfrac{1}{2} \left(f\left(x+0\right)+f\left(x-0\right)\right)$.
\end{theorem}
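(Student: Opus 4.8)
The plan is to follow Titchmarsh's contour-integration method, reducing the statement to the classical Dirichlet--Jordan convergence test for ordinary Fourier series. As a first step I would bring the general system \eqref{c4:Dirac_system} to the canonical form \eqref{c1:diff_expression}: writing the potential as $\frac{p+r}{2}E+\sigma_2\frac{p-r}{2}+\sigma_3 q$ and substituting $y(x)=e^{B\int_0^x\frac{p+r}{2}(t)\,dt}z(x)$, which is a pointwise orthogonal change of the unknown (hence an $L^2$-isometry), one gets $\ell z=\lambda z$ with $\ell$ as in \eqref{c3:ell} for new coefficients $\tilde p,\tilde q\in C^1_{\mathbb R}[0,\pi]$ and a new right-endpoint angle $\tilde\beta\in\mathbb R$; the left boundary condition is unchanged, and the eigenfunction expansion transports verbatim under this substitution. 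So from now on I assume the operator is already $L(p,q,\alpha,\beta)$ of canonical type.

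Next I would represent the partial sum $S_Nf(x):=\sum_{|n|\le N}(f,h_n)h_n(x)$ through the resolvent. Using the solution $\psi(\cdot,\lambda,\beta)$ tied to $\pi$ (Theorem \ref{c2:thm_4}) together with $\varphi(\cdot,\lambda,\alpha)$, one builds the Green's function $G(x,\xi,\lambda)$ of $L(p,q,\alpha,\beta)-\lambda$, its denominator being the characteristic function $\chi_{\alpha,\beta}(\lambda)$ of \eqref{c3:chi_characteristic} (well-defined since the Wronskian of two solutions of \eqref{c1:diff_expression} is constant). For $\lambda$ off the spectrum $\big((L-\lambda)^{-1}f\big)(x)=\int_0^\pi G(x,\xi,\lambda)f(\xi)\,d\xi$, and since by Theorem \ref{c3:thm_1} the contour $\Gamma_N=\{\lambda:|\lambda+\frac{\alpha-\beta}{\pi}|=N+\tfrac12\}$ encircles exactly $\lambda_{-N},\dots,\lambda_N$ while avoiding all eigenvalues, and these eigenvalues are simple by hypothesis, the residue theorem gives $S_Nf(x)=-\frac1{2\pi i}\oint_{\Gamma_N}\big(\int_0^\pi G(x,\xi,\lambda)f(\xi)\,d\xi\big)\,d\lambda$, the residue at $\lambda_n$ being precisely $(f,h_n)h_n(x)$ (standard bilinear--residue computation, using $\|\varphi_n\|^2=a_n$ from \eqref{c3:27}). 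The identical formula with the explicit Green's function $G_0$ built from $\varphi_0,\psi_0$ and $\chi_0(\lambda)=\sin(\lambda\pi+\alpha-\beta)$ represents the partial sum $S_N^0f(x)$ of the expansion in eigenfunctions of $L(0,0,\alpha,\beta)$.

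The proof then splits into a classical part and an equiconvergence part. For the classical part, the model expansion is, after elementary trigonometric rearrangement, an ordinary Fourier series on $[0,\pi]$ with separated boundary conditions, and the explicit $G_0$ turns $S_N^0f(x)$ into the convolution of $f$ with a Dirichlet kernel plus ``reflection'' terms built from $x+\xi$ and $2\pi-x-\xi$; since $f\in L^1_{\mathbb R}[0,\pi]$ has bounded variation near $x$, the Dirichlet--Jordan test gives that the Dirichlet part tends to $\frac12(f(x+0)+f(x-0))$, while the reflection terms tend to $0$ by the Riemann--Lebesgue Lemma \ref{c3:lem_1} because $x$ is interior, so $S_N^0f(x)\to\frac12(f(x+0)+f(x-0))$. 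For the equiconvergence part one must show $\Delta_N(x):=S_Nf(x)-S_N^0f(x)\to0$, i.e. $\oint_{\Gamma_N}\int_0^\pi(G-G_0)(x,\xi,\lambda)f(\xi)\,d\xi\,d\lambda\to0$; here I would insert $\varphi=\varphi_0+\mathbb K_0\varphi_0$ (Theorem \ref{c2:thm_2}) and the analogous representation of $\psi$ (Theorem \ref{c2:thm_4}(2)), together with $\chi=\chi_0+g$ where, by Lemma \ref{c3:lem_1} refined through the $C^1$-smoothness (Section \ref{c2:sec_6}, Theorem \ref{thm2-3-2}), $g(\lambda)=o(1)\,e^{|\im\lambda|\pi}$ uniformly on $\Gamma_N$, while on $\Gamma_N$ one has the lower bound $|\chi(\lambda)|\ge c\,e^{|\im\lambda|\pi}$ (obtained inside the proof of Theorem \ref{c3:thm_1} via Lemma \ref{c3:lem_2}), so that $\frac1\chi-\frac1{\chi_0}=\frac{\chi_0-\chi}{\chi\chi_0}=o(1)\,e^{-|\im\lambda|\pi}$ there. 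Expanding $G-G_0$, every summand either carries this small factor or is $\frac1{\chi_0}$ times a product containing a transformation-kernel integral $\int_0^{\bullet}K_0(\bullet,t,\alpha)\varphi_0(t,\lambda,\alpha)\,dt$, which after $\int_0^\pi d\xi$ and the outer $\oint d\lambda$ is negligible by Riemann--Lebesgue; routine bounds on $\Gamma_N$ (length $O(N)$, the gained $e^{-|\im\lambda|\pi}$ beating the exponential growth in $G_0$) give $\Delta_N(x)\to0$. Alternatively, one can split $f=f_0+f_1$ with $f_0$ supported near $x$ (there it is BV, hence bounded, hence in $L^2$) and apply the equiconvergence Theorem \ref{c4:thm_2} to $f_0$, reducing $\Delta_N$ for the merely-$L^1$ tail $f_1$ (supported away from $x$) to the localization statements $S_Nf_1(x)\to0$ and $S_N^0f_1(x)\to0$, again proved from the contour representation. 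Combining the two parts proves the theorem.

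The main obstacle is the equiconvergence step under only the pointwise hypotheses ($f\in L^1$, bounded variation near $x$): one must estimate $G-G_0$ on the contours $\Gamma_N$ finely enough that the $o(1)$-factors survive the $\oint d\lambda$ of length $O(N)$, and this is exactly where the $C^1$-regularity of $p,q,r$ is indispensable, as it supplies the extra $\lambda^{-1}$-decay of the transformation kernels and of the remainder $g$ in $\chi=\chi_0+g$ (cf. Section \ref{c2:sec_6}); without it one would obtain equiconvergence only in an averaged, not a pointwise, sense. A secondary technical point is tracking how the endpoint boundary datum and the norming constants $a_n$ transform under the gauge substitution used in the reduction to canonical form.
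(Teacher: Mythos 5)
The paper itself contains no proof of this statement: Theorem \ref{c4:thm_5} is quoted as Titchmarsh's result, with the proof delegated to \cite{Titchmarsch:1944} (see the Notes and references of Chapter \ref{chapter_4}), so there is no in-text argument to compare yours with. Your overall strategy is the classical Titchmarsh route and is consistent with the machinery the book does use for Theorem \ref{c4:thm_1}: the gauge reduction is sound (write the potential as $\tfrac{p+r}{2}E+\sigma_2\tfrac{p-r}{2}+\sigma_3 q$; the rotation $e^{Bw}$ with $w(x)=\int_0^x\tfrac{p+r}{2}$ is pointwise orthogonal, preserves bounded variation and one-sided limits, leaves the left boundary condition alone and only shifts $\beta$), and the resolvent/contour representation with residues $(f,h_n)h_n(x)$, Dirichlet--Jordan for the free problem, and equiconvergence via transformation operators is exactly how the cited source proceeds.

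The genuine gap is in the equiconvergence step, which as written does not close. First, the bound $\frac{1}{\chi}-\frac{1}{\chi_0}=o(1)\,e^{-\left|\im\lambda\right|\pi}$ is not enough against a contour of length $O(N)$: you need $o\!\left(1/\left|\lambda\right|\right)$ on $\Gamma_N$ (as in the proof of Theorem \ref{c4:thm_1}), and this only comes from pairing the $O\!\left(1/\left|\lambda\right|\right)$ supplied by the $C^1$-smoothness (integration by parts in $t$ in the kernels, or in $\chi-\chi_0$) with the $o(1)$ Riemann--Lebesgue factor coming from the $\xi$-integration against $f$; your bookkeeping blurs which factor supplies which. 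Second, and more seriously, the terms of the type $\frac{\psi_0(x,\lambda)}{\chi_0(\lambda)}\int_0^x f(\xi)\bigl[\int_0^\xi K_0(\xi,t)\varphi_0(t,\lambda)\,dt\bigr]d\xi$ are not ``negligible by routine bounds'': after one integration by parts in $t$ they are $O\!\left(1/\left|\lambda\right|\right)$ times a kernel comparable to $e^{-\left|\im\lambda\right|(x-\xi)}$, which has no decay at $\xi=x$, so absolute estimates over $\Gamma_N$ give only $O(1)$. These terms are Dirichlet-kernel-type oscillatory integrals and must be treated like the main term: carry out the $\lambda$-integration (or the residue summation) first and then apply Riemann--Lebesgue or the second mean value theorem to the resulting trigonometric sums in $\xi$ --- this is precisely the delicate core of Titchmarsh's argument, and it is missing here. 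Finally, your fallback through Theorem \ref{c4:thm_2} plus ``localization'' is partly circular: the claim $S_Nf_1(x)\to0$ for the perturbed expansion, with $f_1\in L^1$ vanishing near $x$, is itself an equiconvergence/localization statement that requires the same contour estimates you are trying to avoid.
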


Later (see \cite{Levitan-Sargsyan:1988}), it was proved:
\begin{theorem}\label{c4:thm_6}
Let $p, r\in C_{\mathbb{R}} \left[0, \pi \right]$, $\alpha , \beta \in \mathbb{R}$, and $h_n$ are the normalized eigenfunctions of the boundary value problem for the system \eqref{c4:Dirac_system}, where $q\left(x\right) \equiv 0$, with the boundary conditions \eqref{c3:a_bound_cond}, \eqref{c3:b_bound_cond}.
If $f_{k} \in C^{1} \left[0, \pi \right]$, $k=1, 2$, and $f$ satisfies \eqref{c3:a_bound_cond} and \eqref{c3:b_bound_cond}, then the series \eqref{c4:f_expansion} converges to $f$ uniformly on $[0, \pi]$.
\end{theorem}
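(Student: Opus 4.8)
The plan is to deduce Theorem~\ref{c4:thm_6} from our (more general) Theorem~\ref{c4:thm_3} after first reducing the system \eqref{c4:Dirac_system} with $q\equiv 0$ to canonical form by a gauge transformation. The potential matrix of \eqref{c4:Dirac_system} in this case is $\mathrm{diag}(p,r)=a(x)E+\Omega_0(x)$ with $a=\tfrac{p+r}{2}$ and $\Omega_0=\tfrac{p-r}{2}\,\sigma_2$ (which anti-commutes with $B$). I would set $y(x)=U(x)z(x)$, $U(x)=e^{c(x)B}=\cos c(x)\,E+\sin c(x)\,B$, where $c(x)=\int_0^x a(s)\,ds$. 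Since $B^2=-E$ we get $U'=aBU$, hence $BU'+aU=aB^2U+aU=0$; substituting $y=Uz$ into $\ell y=\lambda y$ these two terms cancel, and using that $U$ commutes with $B$ while $\Omega_0$ anti-commutes with it (so that $U^{-1}\Omega_0U=\Omega_0e^{2c(x)B}=\tfrac{p-r}{2}\bigl(\cos 2c\,\sigma_2+\sin 2c\,\sigma_3\bigr)$), one arrives at the canonical equation $Bz'+\widetilde\Omega(x)z=\lambda z$ with $\widetilde\Omega=\widetilde p\,\sigma_2+\widetilde q\,\sigma_3$, where $\widetilde p=\tfrac{p-r}{2}\cos 2c$ and $\widetilde q=\tfrac{p-r}{2}\sin 2c$ both belong to $C_{\mathbb R}[0,\pi]\subset L^1_{\mathbb R}[0,\pi]$.

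Next I would check that the transformation respects the boundary conditions. As $c(0)=0$, we have $U(0)=E$, so \eqref{c3:a_bound_cond} is unchanged; and since $U(\pi)=\cos c(\pi)\,E+\sin c(\pi)\,B$ is a rotation, a short computation shows that condition \eqref{c3:b_bound_cond} becomes the separated condition $z_1(\pi)\cos\beta'+z_2(\pi)\sin\beta'=0$ with $\beta'=\beta+c(\pi)\in\mathbb R$. Hence the boundary value problem for \eqref{c4:Dirac_system} with $q\equiv 0$ is equivalent to the canonical problem $L(\widetilde p,\widetilde q,\alpha,\beta')$. Because $U(x)$ is a real orthogonal matrix, $|U(x)v|=|v|$ for every $v$, so if $\widetilde h_n$ are the normalized eigenfunctions of $L(\widetilde p,\widetilde q,\alpha,\beta')$ then $h_n=U\,\widetilde h_n$ are precisely the normalized eigenfunctions of the original problem, and for the expansion coefficients $(f,h_n)=(U^{-1}f,\widetilde h_n)$.

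Finally I would put $\widetilde f(x)=U(x)^{-1}f(x)$. Since $f_k\in C^1[0,\pi]$ and the entries of $U^{-1}$ are $C^1$, we get $\widetilde f_k\in C^1[0,\pi]\subset AC[0,\pi]$; $\ell\widetilde f=B\widetilde f'+\widetilde\Omega\widetilde f$ is continuous, hence in $L^2$; and $\widetilde f$ satisfies the transformed boundary conditions, so $\widetilde f\in D\bigl(L(\widetilde p,\widetilde q,\alpha,\beta')\bigr)$. Theorem~\ref{c4:thm_3} then gives that $\sum_n(\widetilde f,\widetilde h_n)\widetilde h_n$ converges to $\widetilde f$ uniformly on $[0,\pi]$; multiplying by the bounded continuous matrix $U(x)$ and using $h_n=U\widetilde h_n$ and $(f,h_n)=(\widetilde f,\widetilde h_n)$, we obtain $\sum_n(f,h_n)h_n(x)=U(x)\sum_n(\widetilde f,\widetilde h_n)\widetilde h_n(x)\to U(x)\widetilde f(x)=f(x)$ uniformly, which is the assertion. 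An alternative route would be to use the equiconvergence Theorem~\ref{c4:thm_2} together with the classical fact that the expansion associated with the unperturbed operator $B\tfrac{d}{dx}$ converges uniformly for $C^1$ functions satisfying the boundary conditions.

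The one genuinely non-routine point is the reduction step: one must verify that removing the scalar part $a(x)E$ does not take us out of the class of canonical potentials and does not destroy the separated form of the boundary conditions. This is exactly what the identities $U^{-1}\Omega_0U=\Omega_0e^{2c(x)B}$ and the rotation form of $U(\pi)$ guarantee. Everything else is bookkeeping — the transformation is linear, preserves $|\cdot|$ pointwise (hence the normalization of eigenfunctions and the value of the pseudoscalar product), and maps the domain of the original operator onto that of the canonical one.
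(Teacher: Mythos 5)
Your argument is correct: the decomposition $\mathrm{diag}(p,r)=\tfrac{p+r}{2}E+\tfrac{p-r}{2}\sigma_2$, the gauge $U(x)=e^{c(x)B}$ with $c(x)=\int_0^x\tfrac{p+r}{2}$, the identity $U^{-1}\Omega_0U=\Omega_0e^{2cB}=\tfrac{p-r}{2}(\cos 2c\,\sigma_2+\sin 2c\,\sigma_3)$, the shift $\beta\mapsto\beta+c(\pi)$ in the boundary condition at $\pi$, and the orthogonality of $U(x)$ (preserving normalization and the expansion coefficients) all check out, so Theorem \ref{c4:thm_6} does follow from Theorem \ref{c4:thm_3}. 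Note, however, that the paper itself gives no proof of this statement: Theorem \ref{c4:thm_6} is quoted from Levitan--Sargsyan \cite{Levitan-Sargsyan:1988} purely for comparison, and the chapter only remarks that Theorems \ref{c4:thm_1}--\ref{c4:thm_4} generalize it. So your route is genuinely different from the paper's treatment: you make the classical result a corollary of the paper's own expansion theorem by carrying out explicitly the reduction to canonical form that the paper only alludes to (the Introduction's remark on unitary reduction, and the gauge matrix $A(x)=E\cos\omega+B\sin\omega$ used in Chapter \ref{chapter_6} for isospectrality). What this buys is a self-contained proof and, in fact, a stronger statement: since after the reduction you only need the hypotheses of Theorem \ref{c4:thm_1}/\ref{c4:thm_3}, continuity of $p,r$ and $C^1$ smoothness of $f$ can be relaxed (absolutely continuous $f$ satisfying the boundary conditions suffices, and $p,r$ summable works if one invokes Theorem \ref{c4:thm_1} directly), which is precisely the sense in which the paper claims its theorems generalize the cited result. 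Two small points worth stating explicitly if you write this up: the simplicity of the eigenvalues of the transformed self-adjoint problem (needed for Theorem \ref{c4:thm_1}, and automatic here by the argument of Section \ref{c4:sec_2}) transfers back to the original problem through the bijection $y=Uz$; and your local symbol $c(x)$ clashes with the paper's notation \eqref{c0:c_x}, so rename it.
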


Thus, Theorems \ref{c4:thm_1}--\ref{c4:thm_4} in some sense generalize the known results (for example, in the sense that $p$ and $q$ can be complex-valued and less smooth and $\alpha$ and $\beta$ from the boundary conditions are also complex) or supplement them.

\section{Notations and outline of the proof of Theorem \ref{c4:thm_1}.}\label{c4:sec_2}
In addition to the solution $\varphi \left(x, \lambda , \alpha \right)$ of Cauchy problem \eqref{c2:Cauchy_problem_y} we also consider the solution $u\left(x, \lambda , \beta \right)$ of the system \eqref{c2:Dirac_matrix_sys}, satisfying the initial conditions $u_{1} \left(\pi , \lambda , \beta \right)=\sin \beta $, $u_{2} \left(\pi , \lambda , \beta \right)=-\cos \beta $.
The eigenvalues of the problem $L(p, q, \alpha , \beta )$ are the roots of both the equations $\chi(\la)=0$ (see \eqref{c3:chi_characteristic} ) and the equation $\chi_1 (\lambda )=u_{1} \left(0, \lambda , \b\right)\cos \alpha +u_{2} \left(0, \lambda , \beta \right)\sin \alpha =0$ .
In this case $\varphi_{n} \left(x\right)=\varphi \left(x, \lambda_{n} , \alpha \right)$ and $u_{n} \left(x\right)=u\left(x, \lambda_{n} , \beta \right)$, $n\in \mathbb{Z}$, are eigenfunctions corresponding to the eigenvalue $\lambda_n$ and, in the case of simplicity of eigenvalues, $\varphi_{n} $ and $u_{n} $ are linearly dependent, i.e. there exist constants $c_{n} =c_{n} \left(p, q, \alpha , \beta \right)$ , such that
\begin{equation} \label{c4:u_c_varphi}
u_{n} \left(x\right)=c_{n} \varphi_{n} \left(x\right).
\end{equation} 
Note also, that the system \eqref{c2:Dirac_matrix_sys}  can be written as a normal system $y'=A\left(x, \lambda \right)y$, where the trace of the matrix $A\left(x, \lambda \right)=0$.
According to Liouville's formula, it follows that the Wronskian $\omega \left(x, \lambda \right)=\omega \left(\lambda \right)=\left|\begin{array}{cc} {\varphi_{1} } & {u_{1} } \\ {\varphi_{2} } & {u_{2} } \end{array}\right|$   depends only on $\lambda$, and, now it is easy to calculate, $\omega \left(0, \lambda \right)=\chi \left(\lambda \right)=\omega \left(\pi , \lambda \right)=-\chi_1 \left(\lambda \right)$, i.e. there is a connection
\begin{equation} \label{c4:omega_chi}
\omega \left(x,\, \lambda \right)=\omega \left(\lambda \right)=\chi_1 \left(\lambda \right)=-\chi \left(\lambda \right).
\end{equation} 

\begin{lemma}\label{c4:lem_1} There is equality
\begin{equation}\label{c4:int_varphi_1_2}
\begin{aligned}
\left. \int_{0}^{\pi }\left[\varphi_{1}^{2} \left(x, \lambda , \alpha \right)+\varphi_{2}^{2} \left(x, \lambda , \alpha \right)\right]dx 
=\left[\varphi_{1} \cdot \dot{\varphi }_{2} -\varphi_{2} \cdot \dot{\varphi }_{1} \right]\, \right|^\pi_0 
\\
=\varphi_{1} \left(\pi , \lambda \right)\cdot \dot{\varphi }_{2} \left(\pi , \lambda \right)-\varphi_{2} \left(\pi , \lambda \right)\cdot \dot{\varphi }_{1} \left(\pi ,\, \lambda \right) .
\end{aligned}
\end{equation}
\end{lemma}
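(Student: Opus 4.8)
The plan is to reproduce, for the solution $\varphi(x,\lambda,\alpha)$, the computation already carried out for $u$ in Lemma \ref{lem2-3-4}, and then to exploit the special form of the initial data of $\varphi$ at $x=0$. First I would write the identity $\ell\varphi\equiv\lambda\varphi$ component-wise (suppressing the arguments $x,\lambda,\alpha$), which, by the form \eqref{c1:A_x_lambda} of $A(x,\lambda)$, reads
\[
\varphi_2'+p\cdot\varphi_1+q\varphi_2\equiv\lambda\varphi_1,\qquad -\varphi_1'+q\cdot\varphi_1-p\cdot\varphi_2\equiv\lambda\varphi_2 .
\]
By Theorem \ref{c1:thm_1} the components $\varphi_1,\varphi_2$ are entire in $\lambda$ for each fixed $x$ (equivalently, one may differentiate the integral equation \eqref{c1:integral_equation} under the integral sign), so $\dot{\varphi}=\partial\varphi/\partial\lambda$ exists and satisfies the $\lambda$-differentiated system. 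Multiplying the original equations by $\dot{\varphi}_1,\dot{\varphi}_2$ respectively, the differentiated ones by $\varphi_1,\varphi_2$, and combining them exactly as in the derivation of \eqref{c3:40} (the terms containing $p$ and $q$ cancel in pairs), I obtain the identity
\[
\frac{d}{dx}\left[\dot{\varphi}_1\varphi_2-\varphi_1\dot{\varphi}_2\right]\equiv-\left[\varphi_1^2+\varphi_2^2\right],
\]
valid for every $\lambda\in\mathbb{C}$.

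Next I would integrate this identity over $x\in[0,\pi]$, which gives
\[
\left[\dot{\varphi}_1\varphi_2-\varphi_1\dot{\varphi}_2\right]\Big|_0^\pi=-\int_0^\pi\left[\varphi_1^2(x,\lambda,\alpha)+\varphi_2^2(x,\lambda,\alpha)\right]dx ,
\]
that is, $\int_0^\pi[\varphi_1^2+\varphi_2^2]\,dx=\left[\varphi_1\dot{\varphi}_2-\varphi_2\dot{\varphi}_1\right]\big|_0^\pi$. Finally, since the initial condition in the Cauchy problem \eqref{c2:Cauchy_problem_y} is $\varphi(0,\lambda,\alpha)=(\sin\alpha,\,-\cos\alpha)^T$, which does not depend on $\lambda$, we have $\dot{\varphi}_1(0,\lambda)\equiv\dot{\varphi}_2(0,\lambda)\equiv0$; hence the lower-limit contribution vanishes and the boundary term reduces to $\varphi_1(\pi,\lambda)\dot{\varphi}_2(\pi,\lambda)-\varphi_2(\pi,\lambda)\dot{\varphi}_1(\pi,\lambda)$, which is precisely \eqref{c4:int_varphi_1_2}.

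There is essentially no real obstacle here: the whole argument is the algebraic cancellation already performed in Lemma \ref{lem2-3-4} together with one substitution of the boundary data. The only point that deserves an explicit word is the legitimacy of differentiating $\varphi(x,\cdot,\alpha)$ in $\lambda$ and of claiming that $\dot{\varphi}$ solves the $\lambda$-differentiated system; both are immediate consequences of the analytic dependence of $\varphi$ on $\lambda$ established in Theorem \ref{c1:thm_1}.
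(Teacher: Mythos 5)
Your proposal is correct and follows essentially the same route as the paper: write the system component-wise, differentiate in $\lambda$, form the same linear combination so that the $p,q$ terms cancel to give $\frac{d}{dx}\left[\varphi_1\dot{\varphi}_2-\varphi_2\dot{\varphi}_1\right]\equiv\varphi_1^2+\varphi_2^2$, then integrate and use $\dot{\varphi}_{1,2}(0,\lambda)\equiv 0$ from the $\lambda$-independence of the initial data. The justification via analyticity in $\lambda$ (Theorem \ref{c1:thm_1}) matches the paper's implicit use of the same fact, so nothing is missing.
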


\begin{remark}\label{c4:rem_1}
In the last equality we look into account that since $\v_1(0, \lambda)\equiv \sin\alpha$, $\v_2(0, \lambda)=-\cos \alpha$, for all $\lambda \in \mathbb{C}$, then
\begin{equation}\label{c4:varphi_dot_1_0}
\dot{\varphi }_{1} \left(0, \lambda \right)\equiv \dot{\varphi }_{2} \left(0, \lambda \right)\equiv 0.
\end{equation}
\end{remark}
\begin{proof}[Proof of Lemma \ref{c4:lem_1}]
Let us write that $\varphi =\left(\varphi_{1} ,\, \varphi_{2} \right)^T$ is the solution of \eqref{c2:Dirac_matrix_sys}:
\begin{gather}
\varphi'_{2} \left(x, \lambda \right)+p\left(x\right)\varphi_{1} \left(x, \lambda \right)+q\left(x\right)\varphi_{2} \left(x, \lambda \right)\equiv \lambda \varphi_{1} \left(x, \lambda \right), \label{c4:varphi_eq_1}
\\
-\varphi'_{1} \left(x, \lambda \right)+q\left(x\right)\varphi_{1} \left(x, \lambda \right)-p\left(x\right)\varphi_{2} \left(x,\lambda \right)\equiv \lambda \varphi_{2} \left(x, \lambda \right), \label{c4:varphi_eq_2}
\end{gather}
and let us differentiate these identities with respect to $\lambda$ $ \left(\dot{f}=\dfrac{\partial f}{\partial \lambda } \right)$:
\begin{gather}
\dot{\varphi'}_{2} \left(x, \lambda \right)+p\left(x\right)\dot{\varphi }_{1} \left(x, \lambda \right)+q\left(x\right)\dot{\varphi }_{2} \left(x, \lambda \right)\equiv \varphi_{1} \left(x, \lambda \right)+\lambda \dot{\varphi }_{1} \left(x, \lambda \right), \label{c4:varphi_lambda_eq_1}
\\
-\dot{\varphi'}_{1} \left(x, \lambda \right)+q\left(x\right)\dot{\varphi }_{1} \left(x, \lambda \right)-p\left(x\right)\dot{\varphi }_{2} \left(x, \lambda \right)\equiv \varphi_{2} \left(x, \lambda \right)+\lambda \dot{\varphi }_{2} \left(x, \lambda \right).  \label{c4:varphi_lambda_eq_2}
\end{gather}
Multiplying  \eqref{c4:varphi_eq_1} by $-\dot{\varphi }_{1} \left(x, \lambda \right)$,  \eqref{c4:varphi_eq_2}  by $-\dot{\varphi }_{2} \left(x, \lambda \right)$,  \eqref{c4:varphi_lambda_eq_1} by $\varphi_{1} \left(x, \lambda \right)$,  \eqref{c4:varphi_lambda_eq_2} by $\varphi_{2} $ and adding all together, we obtain the identity 
\[
\dfrac{d}{dx} \left(\varphi_{1} \cdot \dot{\varphi }_{2} -\varphi_{2} \cdot \dot{\varphi }_{1} \right)\equiv \varphi_{1}^{2} +\varphi_{2}^{2},
\]
integrating the latter and taking into account \eqref{c4:varphi_dot_1_0}, we obtain \eqref{c4:int_varphi_1_2}.
Lemma \ref{c4:lem_1} is proved.
\end{proof}

If now in identity \eqref{c4:int_varphi_1_2} we take $\lambda= \lambda_n(p, q, \alpha, \beta)~$, then we obtain
\begin{equation} \label{c4:int_varphi_pi}
\int_{0}^{\pi }\left[\varphi_{1}^{2} \left(x, \lambda_{n} \right)+\varphi_{2}^{2} \left(x, \lambda_{n} \right)\right]dx =\varphi_{1} \left(\pi , \lambda_{n} \right)\dot{\varphi }_{2} \left(\pi , \lambda_{n} \right)-\dot{\varphi }_{1} \left(\pi , \lambda_{n} \right)\varphi_{2} \left(\pi , \lambda_{n} \right).
\end{equation}
On the other hand, from \eqref{c4:u_c_varphi} (since $\omega (x,\lambda)\equiv \v_1 (x,\lambda)\cdot u_2 (x,\lambda)-\v_2 (x,\lambda)u_1 (x,\lambda)\equiv \o (\pi, \lambda)$)
\begin{multline} \label{c4:omega_dot_pi}
\dot{\omega }\left(\lambda_{n} \right)=\dot{\varphi }_{1} \left(\pi , \lambda_{n} \right)u_{2} \left(\pi , \lambda_{n} \right)-\dot{\varphi }_{2} \left(\pi , \lambda_{n} \right)u_{1} \left(\pi , \lambda_{n} \right)+\varphi_{1} \left(\pi , \lambda_{n} \right)\dot{u}_{2} \left(\pi , \lambda_{n} \right)- \\
-\varphi_{2} \left(\pi , \lambda_{n} \right)\dot{u}_{1} \left(\pi , \lambda_{n} \right)
\equiv c_{n} \left[\dot{\varphi }_{1} \left(\pi , \lambda_{n} \right)\cdot \varphi_{2} \left(\pi ,\, \lambda_{n} \right)-\dot{\varphi }_{2} \left(\pi , \lambda_{n} \right)\cdot \varphi_{1} \left(\pi , \lambda_{n} \right)\right].
\end{multline}
Here we took into account that $\dot{u}_{1} \left(\pi , \lambda \right)\equiv \dot{u}_{2} \left(\pi , \lambda \right)\equiv 0$ for arbitrary  $\lambda $.
It follows from \eqref{c4:int_varphi_1_2}, \eqref{c4:int_varphi_pi} and \eqref{c4:omega_dot_pi} that 
\begin{equation} \label{c4:int_varphi_omega_dot}
\int_{0}^{\pi }\left[\varphi_{1}^{2} \left(x,\, \lambda_{n} \right)+\varphi_{2}^{2} \left(x,\, \lambda_{n} \right)\right]dx =-\frac{\dot{\omega }\left(\lambda_{n} \right)}{c_{n} } .
\end{equation}
It is seen from \eqref{c4:int_varphi_omega_dot} that from the simplicity of the eigenvalues ( i.e. if $\omega(\lambda_n)=0$, then  $\dot{\omega}(\lambda)\neq 0$) it follows that $(\varphi_n, \varphi_n)\neq 0$.
It also follows from \eqref{c4:int_varphi_omega_dot} that the eigenvalues are simple in case when $p, q\in L_{R}^{1} \left[0, \pi \right]$ and $\alpha , \beta \in R$, i.e. in symmetric case.

Indeed, in this case the components $\varphi_{1n} $ and $\varphi_{2n} $ of eigenfunctions $\varphi_{n} \left(x\right)=\varphi \left(x,\lambda_{n} , \alpha \right)$ are real-valued an since $\varphi_n(x)\not\equiv 0$, then $(\varphi_n, \varphi_n)>0$.
It follows from \eqref{c4:omega_chi} that $\omega \left(\lambda_{n} \right)=0$, and from \eqref{c4:int_varphi_omega_dot} that $\dot{\omega }\left(\lambda_{n} \right)\ne 0$, i.e. the eigenvalues $\lambda_{n} $ are simple.
Note that in this case, it is easy to prove also "algebraic" simplicity of eigenvalues, i.e., if two eigenfunctions correspond to the same eigenvalue, then they are linearly dependent.

Indeed, if we have two eigenfunctions $\varphi$ and $\tilde{\varphi}$, then the fact that they satisfy the same boundary condition \eqref{c3:b_bound_cond} we can write in the form of equality to zero the scalar product of vectors 
$\varphi(\pi, \lambda_n)=\left(\begin{array}{c} \varphi_1(\pi, \lambda_n) \\ \varphi_2(\pi, \lambda_n) \end{array}\right)$ and $\left(\begin{array}{c} \cos\beta \\ \sin\beta \end{array}\right)$ in two-dimensional real space $\mathbb{R}^2$, i.e. 
\[
\varphi_1(\pi, \lambda_n)\cos\beta + \varphi_2(\pi, \la_n)\sin\beta =
 \left\langle 
\left(\begin{array}{c} \varphi_1(\pi, \lambda_n) \\ \varphi_2(\pi, \lambda_n) \end{array}\right),
\left(\begin{array}{c} \cos\beta \\ \sin\beta \end{array}\right)
 \right\rangle=0 .
\]

Similarly for $\tilde{\varphi}(\pi, \lambda_n)$.
But if in $\mathbb{R}^2$ (i.e., on the plane) two vectors are perpendicular to the same (nonzero) vector $\left(\begin{array}{c} \cos\beta \\ \sin\beta \end{array}\right)$, then they are linearly dependent, that is $\varphi(\pi, \lambda_n)=c_n \tilde{\varphi}(\pi, \lambda_n)$.
And since $\v$ and $\tilde{\v}$ both are solutions of the same equation $\ell y=\la_n y$, then it follows from the uniqueness of the solution of the Cauchy problem that the equality  $\v(x, \la_n)=c_n \tilde{\v}(x, \la_n)$ holds for all $x\in [0,\pi]$.

It is not difficult to check directly (see \cite{Titchmarsch:1944}, or \cite{Levitan-Sargsyan:1988} page 245) that the solution of non-homogeneous boundary value problem
\begin{align}
& \ell y=\lambda y-f\left(x\right),& \lambda \in C \label{c4:Dirac_non-homogen} \\
& y_{1} \left(0\right)\cos \alpha +y_{2} \left(0\right)\sin \alpha =0, & \alpha \in C \label{c4:a_bound_cond} \\
& y_1\left(\pi \right)\cos \beta +y_{2} \left(\pi \right)\sin \beta =0, & \beta \in C \label{c4:b_bound_cond}
\end{align}
for $\lambda \ne \lambda_{n} $, $n\in \mathbb{Z}$ and arbitrary $f_{1} , f_{2} \in L_{\mathbb{C}}^{1} \left[0, \pi \right]$ given by the formula
\begin{equation} \label{c4:Dirac_solution_non_homogen}
y\left(x, \lambda \right)=\frac{u\left(x,\, \lambda \right)}{\omega \left(\lambda \right)} \int_{0}^{x}\varphi^{T} \left(\xi ,\, \lambda \right)f\left(\xi \right)d\xi  +\frac{\varphi \left(x, \lambda \right)}{\omega \left(\lambda \right)} \int_{x}^{\pi }u^{T} \left(\xi , \lambda \right)f\left(\xi \right)d\xi  .
\end{equation}

In the complex plane $\lambda$ $\left(\lambda \in C\right)$ consider a sequence of expanding contours $C_{m, l} $, that intersect the (parallel to the real) axis $ \im  \lambda =  \im  \dfrac{\beta -\alpha }{\pi } $ at the points $-m+\dfrac{\beta -\alpha }{\pi } -\dfrac{1}{2} $ and $l +\dfrac{\beta -\alpha }{\pi } +\dfrac{1}{2} $.
According to the asymptotics of the eigenvalues \eqref{c3:ev_asymptotics}, for sufficiently large $m$ and $l$, the eigenvalues $\lambda_{-m} ,\lambda_{-m+1} , \ldots , \lambda_{l-1} , \lambda_{l} $ will be inside this contour.
By $C_{0,25} $ we denote the set of all complex numbers of the plane  $\lambda$ without circles of radius $0,25$ centered at points $\dis n+\dfrac{\beta -\alpha }{\pi }$, $n\in \mathbb{Z}$  (in which, starting from some $n_0$, all eigenvalues are found).
For sufficiently large $m$ and $l$ the contours $C_{m, l} $  will lie in $C_{0,25} $, i.e. the distance of any point of the contour $C_{m, l} $ to eigenvalue will be more than $0,25$.
For example, as $C_{m, l} $ we can take circles of radius $\dfrac{m+l+1}{2} $, centered at the point $\left(\dfrac{l-m}{2} + \re\dfrac{\beta -\alpha }{\pi },  \im  \dfrac{\beta -\alpha }{\pi } \right)$.
Below we prove two lemmas, from which Theorem \ref{c4:thm_1} will follow.

\begin{lemma}\label{c4:lem_2}
If $f$ satisfies the conditions of Theorem \ref{c4:thm_1}, then
\[
\dfrac{1}{2\pi i} \int_{C_{m, l}  } y \left(x, \lambda \right)d\lambda  =-f\left(x\right)+R_{m, l} (x),
\]
where
\begin{equation} \label{c4:lim_sup_R}
\lim_{\substack{ n, l>N \\ N\to \infty}} \sup_{x\in [0,\, \pi ]} \left|R_{m, l} \left(x\right)\right|=0.
\end{equation}
\end{lemma}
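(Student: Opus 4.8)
The function $y(x,\lambda)$ in \eqref{c4:Dirac_solution_non_homogen} is, for each fixed $x\in[0,\pi]$, meromorphic in $\lambda$ with poles only at the zeros of $\omega(\lambda)$, i.e. at the eigenvalues $\lambda_n$, so $y$ is holomorphic on a neighbourhood of $C_{m,l}$ once $m,l$ are large (recall $C_{m,l}\subset C_{0,25}$). The plan is to compare $y$ with the Green's function $y_0(x,\lambda)$ of the free operator $\ell_0=B\frac{d}{dx}$ with the same boundary conditions, obtained from \eqref{c4:Dirac_solution_non_homogen} by replacing $\varphi(\cdot,\lambda),\,u(\cdot,\lambda),\,\omega(\lambda)$ with the explicit quantities $\varphi_0(x,\lambda,\alpha)=(\sin(\lambda x+\alpha),\,-\cos(\lambda x+\alpha))^T$, $\psi_0(x,\lambda,\beta)=(\sin(\beta-\lambda(\pi-x)),\,-\cos(\beta-\lambda(\pi-x)))^T$ and $\omega_0(\lambda)=-\sin(\lambda\pi+\alpha-\beta)$ (the free value of the Wronskian, cf. \eqref{c4:omega_chi}), and then to prove separately
\[
\frac1{2\pi i}\int_{C_{m,l}}y_0(x,\lambda)\,d\lambda\longrightarrow -f(x)\qquad\text{and}\qquad \frac1{2\pi i}\int_{C_{m,l}}\bigl(y(x,\lambda)-y_0(x,\lambda)\bigr)\,d\lambda\longrightarrow 0,
\]
both uniformly for $x\in[0,\pi]$ as $m,l\to\infty$. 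Adding the two gives the asserted identity with $R_{m,l}(x)$ the sum of the two error terms, hence \eqref{c4:lim_sup_R}.

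The uniform estimate I would record first and use throughout is a lower bound for $|\omega|$ on the contours. From $\chi(\lambda)=\sin(\lambda\pi+\alpha-\beta)+g(\lambda)$ in \eqref{c3:chi_representation}, the exponentially weighted Riemann--Lebesgue Lemma \ref{c3:lem_1} (so $g(\lambda)=o(e^{\pi|\im\lambda|})$ as $|\lambda|\to\infty$) and Lemma \ref{c3:lem_2}, there are $c>0$ and $N_0$ with $|\omega(\lambda)|=|\chi(\lambda)|\ge c\,e^{\pi|\im\lambda|}$ for every $\lambda\in C_{m,l}$ with $m,l\ge N_0$; the same bound with $g\equiv 0$ holds for $|\omega_0|$. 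Together with $|\varphi_0(t,\lambda,\alpha)|,|\psi_0(t,\lambda,\beta)|\le C e^{\pi|\im\lambda|}$ and the transformation-operator representations \eqref{c2:varphi_rep}, \eqref{c2:psi_rep} of Theorems \ref{c2:thm_2} and \ref{c2:thm_4}, this yields $|\varphi(x,\lambda,\alpha)|\le C e^{x|\im\lambda|}$ and $|u(x,\lambda,\beta)|\le C e^{(\pi-x)|\im\lambda|}$ on $C_{m,l}$, and hence that $y$ and $y_0$ are bounded on $C_{m,l}$.

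For the free integral I would interchange the order of integration (over $\lambda\in C_{m,l}$ and over $\xi\in[0,\pi]$), evaluate the resulting $\lambda$-integrals of the explicit kernel $\bigl(\text{exponentials in }\lambda\bigr)/\sin(\lambda\pi+\alpha-\beta)$ by residues at $\lambda^0_k=k+\frac{\beta-\alpha}{\pi}$, and recognise $\frac1{2\pi i}\int_{C_{m,l}}y_0(x,\lambda)\,d\lambda$ as $-\sum_{k=-m}^{l}(f,h^0_k)h^0_k(x)$, i.e. minus the partial sum of the expansion of $f$ in the eigenfunctions of $\ell_0$. This partial sum is the convolution of $f$ with a Dirichlet-type kernel, and its uniform convergence to $f$ on $[0,\pi]$ is classical; it is here that both hypotheses on $f$ are used — absolute continuity gives the interior convergence (one integration by parts in $\xi$), while the boundary conditions \eqref{c3:a_bound_cond}, \eqref{c3:b_bound_cond} on $f$ make the boundary terms of that integration by parts vanish, which is what upgrades the convergence to be uniform up to the endpoints $x=0$ and $x=\pi$.

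For the difference I would substitute $\varphi=(E+\mathbb{K}_0)\varphi_0$ and $u(x,\lambda,\beta)=\psi_0(x,\lambda,\beta)-\int_x^{\pi}H_\pi(x,t,\beta)\psi_0(t,\lambda,\beta)\,dt$ (Theorems \ref{c2:thm_2}, \ref{c2:thm_4}) together with $\omega=\omega_0+(\chi_0-\chi)$, $\chi_0(\lambda)=\sin(\lambda\pi+\alpha-\beta)$, into \eqref{c4:Dirac_solution_non_homogen} and expand; every term of $y-y_0$ then carries at least one factor of the form $\int K_0(\cdot,t,\alpha)\varphi_0(t,\lambda,\cdot)\,dt$, $\int H_\pi(\cdot,t,\beta)\psi_0(t,\lambda,\cdot)\,dt$ or $g(\lambda)=\chi(\lambda)-\chi_0(\lambda)$, each $o(e^{\pi|\im\lambda|})$ uniformly by Lemma \ref{c3:lem_1}. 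Dividing by $\omega$ or $\omega_0$ (bounded below by $c\,e^{\pi|\im\lambda|}$) and using the exponential bounds above shows $y(x,\lambda)-y_0(x,\lambda)\to 0$ uniformly for $x\in[0,\pi]$ and $\lambda\in C_{m,l}$; to integrate this over $C_{m,l}$, whose length grows like $m+l$, I would split $C_{m,l}$ into the part inside a fixed horizontal strip $|\im\lambda|\le A$, which has bounded length and on which the integrand is already $o(1)$, and the part outside, where the oscillatory integrals above tend to zero as $|\im\lambda|\to\infty$, uniformly in $\re\lambda$, by dominated convergence after one integration by parts in $\xi$ (legitimate since $f$ is absolutely continuous, and here requiring no boundary condition on $f$) fast enough to be integrable against $|d\lambda|$. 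This last point — reconciling the merely $o(1)$ pointwise control of $y-y_0$ with the growth of the contour length, uniformly in $x$ and most delicately at $x=\pi$, where only the first of the two integrals in \eqref{c4:Dirac_solution_non_homogen} is present — is the main obstacle; everything else is bookkeeping with the estimates assembled above.
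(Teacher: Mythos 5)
Your overall architecture (split $y=y_0+(y-y_0)$, evaluate the free part by residues, kill the difference on the contours) is reasonable, and the estimates you assemble — the lower bound $|\omega(\lambda)|\geq c\,e^{\pi|\im\lambda|}$ on $C_{m,l}$, the exponential bounds for $\varphi$ and $u$, the Riemann--Lebesgue smallness of the kernel integrals — are exactly the right ingredients. But the proof has a genuine gap, and it is precisely the point you flag at the end as "the main obstacle": with only the pointwise control $y(x,\lambda)-y_0(x,\lambda)=o(1)$ uniformly on $C_{m,l}$, the contour integral cannot be controlled, because the length of $C_{m,l}$ grows like $m+l$. Your strip-splitting does not repair this: the part of $C_{m,l}$ inside a fixed strip $|\im\lambda|\leq A$ has bounded length and causes no trouble, but the part outside the strip still has length of order $m+l$, while your bound on the integrand there is only small uniformly in $A$ (and in $\re\lambda$), not small compared with $1/(m+l)$; "dominated convergence along the contour" has nothing summable to dominate by. So as written the second half of the argument does not close.

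The missing idea — and the route the paper actually takes — is to gain a factor $1/\lambda$ in \emph{every} term before integrating over the contour. One integration by parts in $\xi$ in $I_1$ and $I_2$ (legitimate since $f$ is absolutely continuous, with the boundary conditions \eqref{c3:a_bound_cond}, \eqref{c3:b_bound_cond} cancelling the boundary terms at $\xi=0$ and $\xi=\pi$) gives \eqref{2-4-32}--\eqref{2-4-33}: $I_{1,2}=O\bigl(e^{|\im\lambda|\,\cdot}/|\lambda|\bigr)$ with an $o(1/|\lambda|)$-type error. Feeding this into \eqref{c4:Dirac_solution_non_homogen}, together with $\sin(\lambda\pi+\alpha-\beta)/(\lambda\omega(\lambda))=-1/\lambda+o(1/|\lambda|)$ (see \eqref{2-4-36}) and the bound \eqref{2-4-37} for the remainder terms, yields $y(x,\lambda)=-f(x)/\lambda+o(1/|\lambda|)$ uniformly for $x\in[0,\pi]$ and $\lambda\in C_{m,l}$ (formulas \eqref{2-4-30}--\eqref{2-4-38}); since $|\lambda|\asymp m+l$ on $C_{m,l}$, the error integrates to $o(1)$ and $\frac{1}{2\pi i}\int_{C_{m,l}}\lambda^{-1}d\lambda=1$ produces $-f(x)+R_{m,l}(x)$ directly, with no need for the free Green's function at all. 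Your decomposition through $y_0$ is compatible with this, but only if you carry the integration by parts through the difference $y-y_0$ as well, so that the difference is $o(1/|\lambda|)$ rather than $o(1)$ on the contours; without that quantitative gain the step you yourself identify as delicate (uniformity in $x$, in particular near $x=\pi$) is not merely bookkeeping but the whole proof.
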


\begin{lemma}\label{c4:lem_3}
Provided the simplicity of eigenvalues, the residue
\[
\res y\left(x, \lambda_{n} \right)=-h_{n} \left(x\right)\cdot \int_{0}^{\pi }h_{n}^{T} \left(\xi \right)f\left(\xi \right)d\xi  =-\left(f, h_{n} \right)\cdot h_{n} (x).
\]
\end{lemma}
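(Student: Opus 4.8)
The plan is to read off the residue directly from the explicit representation \eqref{c4:Dirac_solution_non_homogen} of the solution $y(x,\lambda)$ of the non-homogeneous problem. For each fixed $x\in[0,\pi]$ write $y(x,\lambda)=g(x,\lambda)/\omega(\lambda)$ with numerator
\[
g(x,\lambda)=u(x,\lambda)\int_0^x\varphi^T(\xi,\lambda)f(\xi)\,d\xi+\varphi(x,\lambda)\int_x^\pi u^T(\xi,\lambda)f(\xi)\,d\xi .
\]
First I would note that $g(x,\cdot)$ is an entire function of $\lambda$: the entries of $\varphi(\cdot,\lambda,\alpha)$ and $u(\cdot,\lambda,\beta)$ are entire in $\lambda$ by Theorem \ref{c1:thm_1}, and since $f_1,f_2\in L^1_{\mathbb{C}}[0,\pi]$ the two integrals are entire in $\lambda$ as well. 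Hence the only possible singularities of $y(x,\cdot)$ are the zeros of the Wronskian $\omega(\lambda)$, which by \eqref{c4:omega_chi} coincide with the eigenvalues $\lambda_n$. Under the hypothesis that the eigenvalues are simple, $\omega(\lambda_n)=0$ while $\dot{\omega}(\lambda_n)\neq 0$ (the latter being guaranteed by \eqref{c4:int_varphi_omega_dot}), so $\lambda_n$ is a simple pole and the standard residue formula gives $\res y(x,\lambda_n)=g(x,\lambda_n)/\dot{\omega}(\lambda_n)$.

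Next I would use the linear dependence \eqref{c4:u_c_varphi}, namely $u(x,\lambda_n)=u_n(x)=c_n\varphi_n(x)$ and $u^T(\xi,\lambda_n)=c_n\varphi_n^T(\xi)$, to collapse the two integrals in $g(x,\lambda_n)$ into one. This yields
\[
g(x,\lambda_n)=c_n\varphi_n(x)\left[\int_0^x\varphi_n^T(\xi)f(\xi)\,d\xi+\int_x^\pi\varphi_n^T(\xi)f(\xi)\,d\xi\right]=c_n\varphi_n(x)\int_0^\pi\varphi_n^T(\xi)f(\xi)\,d\xi ,
\]
so that the $x$-dependence decouples and
\[
\res y(x,\lambda_n)=\frac{c_n}{\dot{\omega}(\lambda_n)}\,\varphi_n(x)\int_0^\pi\varphi_n^T(\xi)f(\xi)\,d\xi .
\]

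Finally, I would evaluate the scalar $c_n/\dot{\omega}(\lambda_n)$ by means of \eqref{c4:int_varphi_omega_dot}, which reads $(\varphi_n,\varphi_n)=\int_0^\pi[\varphi_{n1}^2+\varphi_{n2}^2]\,dx=-\dot{\omega}(\lambda_n)/c_n$, hence $c_n/\dot{\omega}(\lambda_n)=-1/(\varphi_n,\varphi_n)$. Substituting this, and then replacing $\varphi_n=\sqrt{(\varphi_n,\varphi_n)}\,h_n$ both in the outer factor and under the integral, the two square roots combine with $1/(\varphi_n,\varphi_n)$ to give
\[
\res y(x,\lambda_n)=-\frac{1}{(\varphi_n,\varphi_n)}\,\varphi_n(x)\int_0^\pi\varphi_n^T(\xi)f(\xi)\,d\xi=-h_n(x)\int_0^\pi h_n^T(\xi)f(\xi)\,d\xi=-(f,h_n)\,h_n(x),
\]
which is the assertion (using the pseudoscalar product $(f,g)=\int_0^\pi g^T(\xi)f(\xi)\,d\xi$). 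The only point that really requires attention is verifying that $\lambda_n$ is a pole of order exactly one for $y(x,\cdot)$: no order reduction matters and no order increase can occur because $\omega$ cannot have a multiple zero — this is excluded precisely by the simplicity of the eigenvalue together with \eqref{c4:int_varphi_omega_dot}. Everything else is straightforward bookkeeping.
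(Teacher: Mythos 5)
Your proof is correct and follows essentially the same route as the paper: apply the simple-pole residue formula to the representation \eqref{c4:Dirac_solution_non_homogen}, use the linear dependence \eqref{c4:u_c_varphi} to merge the two integrals, and then invoke \eqref{c4:int_varphi_omega_dot} to replace $c_n/\dot{\omega}(\lambda_n)$ by $-1/(\varphi_n,\varphi_n)$ and pass to $h_n$. You merely spell out a few steps (entirety of the numerator, pole order, the collapsing of the integrals) that the paper leaves implicit.
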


Since the contour integral of a meromorphic function is equal to the sum of the residues at the poles lying inside the contour, from Lemmas \ref{c4:lem_2} and \ref{c4:lem_3}, we obtain the equality
\[
f\left(x\right)-R_{m, l} \left(x\right)=\sum_{n=-m}^{l}\left(f, h_{n} \right)\cdot h_{n} \left(x\right) .
\]
Passing to the limit, when $m$ and $n \to \infty$ according to \eqref{c4:lim_sup_R} we obtain the assertion of Theorem \ref{c4:thm_1}.

\section{Proof of Theorem \ref{c4:thm_1}}\label{c4:sec_3}
We have proved in Chapter \ref{chapter_2} that there exist transformation operators such that the solutions $\varphi \left(x, \lambda , \alpha \right)$ and $u\left(x, \lambda , \beta \right)$ can be represented in the form 
\begin{align*}
& \varphi \left(x, \lambda , \alpha \right)=\left(\begin{array}{c} {\sin \left(\lambda x+\alpha \right)} \\ {-\cos \left(\lambda x+\alpha \right)} \end{array}\right)+\int_{0}^{x}K\left(x, \xi , \alpha \right)\cdot \left(\begin{array}{c} {\sin \left(\lambda \xi +\alpha \right)} \\ {-\cos \left(\lambda \xi +\alpha \right)} \end{array}\right)d\xi , \\
& u\left( x, \lambda , \beta \right)=\left(\!\!\!\begin{array}{c} {\sin \left(\lambda \left(x-\pi \right)+\beta\right)} \\
{-\cos \left(\lambda \left(x-\pi \right)+\beta \right)} \end{array}\!\!\!\right)+\int_{x}^{\pi }H\left(x, t, \beta \right)\cdot \left(\!\!\!\begin{array}{c} {\sin \left(\lambda \left( t-\pi \right)+\beta \right)} \\
{-\cos \left(\! \lambda \left(t-\pi \right)+\beta \!\right)} \end{array}\!\!\!\right)dt
\end{align*}
where matrix kernels $K\left(x, t, \alpha \right)$ and $H\left(x, t, \beta \right)$ have certain properties, in particular, for $p, q\in L_{\mathbb{C}}^{1} \left[0, \pi \right]$, $K_{ij} \left(x, \cdot  , \alpha \right)\in L^{1} \left(0, x\right)$, $H_{ij} \left(x, \cdot , \beta \right)\in L^{1} \left(x, \pi \right)$, $\left(i, j=1, 2\right)$ uniformly with respect to all complex $\alpha $ and $\beta $, having bounded imaginary part, which, for short, we will write in the form:
\[
\alpha , \beta \in \Pi_M =\left\{\gamma :\; \left|  \im  \gamma \right|\leq M\right\}
\]
In Chapter \ref{chapter_2} we denoted these kernels by $K_0(x,t,\alpha)$ and $H_\pi (x,t,\b)$.
Here, for brevity, we omit the indexes.
Thus, for components $\varphi_{1} , \varphi_{2} $ and $u_{1} ,u_{2} $we have the representations
\begin{equation} \label{c4:varphi_1_rep}
\begin{aligned}
\varphi_{1} \left(x, \lambda , \alpha \right) & =  \sin \left(\lambda x+\alpha \right) + 
 \int_{0}^{x}\left[K_{11} \left(x, t, \alpha \right)\sin \left(\lambda t+\alpha \right)-K_{12} \left(x, t, \alpha \right)\cos \left(\lambda t+\alpha \right)\right]dt 
\\
& = \sin \left(\lambda x+\alpha \right)+g_{1} \left(x, \lambda , \alpha \right),
\end{aligned}
\end{equation}
\begin{equation} \label{c4:varphi_2_rep}
\begin{aligned}
\varphi_{2} \left(x, \lambda , \alpha \right) & =-\cos \left(\lambda x+\alpha \right)+
 \int_{0}^{x}\left[K_{21} \left(x, t,\, \alpha \right)\sin \left(\lambda t+\alpha \right)-K_{22} \left(x, t, \alpha \right)\cos \left(\lambda t+\alpha \right)\right]dt =
\\
& =-\cos \left(\lambda x+\alpha \right)+g_{2} \left(x, \lambda , \alpha \right),
\end{aligned}
\end{equation}
\begin{equation} \label{c4:u_1_rep}
\begin{aligned}
u_{1} \left(x,  \lambda ,  \beta \right) & = \sin \left(\lambda \left(x-\pi \right)+\beta \right)+
\int_{x}^{\pi }\left[H_{11} \left(x, t, \beta \right)\sin \left(\lambda \left(t-\pi \right)+\beta \right)-H_{12} \left(x,  t,  \beta \right)\cos \left(\lambda \left(t-\pi \right)+\beta \right)\right]dt 
\\
& =\sin \left(\lambda \left(x-\pi \right)+\beta \right)+r_{1} \left(x,  \lambda ,  \beta \right)
\end{aligned}
\end{equation}
\begin{equation} \label{c4:u_2_rep}
\begin{aligned}
u_{2} \left(x,  \lambda ,  \beta \right) & =-\cos \left(\lambda \left(x-\pi \right)+\beta \right)+
\int_{x}^{\pi }
\left\langle \left(
\begin{array}{c} {H_{21} \left(x,  t,  \beta \right)} \\ {H_{22} \left(x,  t,  \beta \right)} 
\end{array}\right),  
\left(\begin{array}{c} {\sin \left(\lambda \left(t-\pi \right)+\beta \right)} \\
{-\cos \left(\lambda \left(t-\pi \right)+\beta \right)} 
\end{array}\right)\right\rangle dt 
\\
&=-\cos \left[\lambda \left(x-\pi \right)+\beta \right]+r_{2} \left(x,  \lambda ,  \beta \right).
\end{aligned}
\end{equation}
According to Lemma \ref{c3:lem_1} about $g_{1,2} $ and $r_{1,2}$ we can say, that
\begin{equation} \label{c4:g_r_k}
g_{k} \left(x,  \lambda ,  \alpha \right)=\epsilon \left(\lambda \right)e^{\left| \im  \lambda \right|x} ,   
r_{k} \left(x,  \lambda ,  \beta \right)=\epsilon \left(\lambda \right)e^{\left|  \im  \lambda \right|\left(\pi -x\right)} , \quad  k=1, 2\, ,
\end{equation}
where $\epsilon \left(\lambda \right)\to 0$, when $\left|\lambda \right|\to \infty $.
In what follows, we use the same symbol $\epsilon \left(\lambda \right)$ to designate different functions of $\lambda $, such that $\epsilon \left(\lambda \right)\to 0$ when $\left|\lambda \right|\to \infty $, i.e. $\epsilon \left(\lambda \right)=o\left(1\right)$  at $\left|\lambda \right|\to \infty $.
Denoting
\[
I_{1} =I_{1} \left(x,  \lambda \right)=\int_{0}^{x}\left[f_{1} \left(\xi \right)\sin \left(\lambda \xi +\alpha \right)-f_{2} \left(\xi \right)\cos \left(\lambda x+\alpha \right)\right]  d\xi  ,
\]
\[
I_{2} =I_{2} \left(x,  \lambda \right)=\int_{0}^{x}\left[f_{1} \left(\xi \right)\sin \left(\lambda \left(\xi -\pi \right)+\beta \right)-f_{2} \left(\xi \right)\cos \left(\lambda \left(\xi -\pi \right)+\beta \right)\right]  d\xi  ,
\]
and substituting expressions \eqref{c4:varphi_1_rep}-\eqref{c4:u_2_rep} into \eqref{c4:Dirac_solution_non_homogen}, for components $y_{1} $ and $y_{2} $ of the solution of non-homogeneous problem \eqref{c4:Dirac_non-homogen}--\eqref{c4:b_bound_cond} we get the expressions:
\begin{equation} \label{2-4-30}
y_{1} \left(x,  \lambda \right)=
\frac{\sin \left[\lambda \left(x-\pi \right)+\beta \right]}{\omega \left(\lambda \right)} I_{1} +
\frac{\sin \left(\lambda x+\alpha \right)}{\omega \left(\lambda \right)} I_{2} +
\frac{r_{1} \left(x, \lambda \right)}{\omega \left(\lambda \right)} I_{1} +
\frac{g_{1} \left(x, \lambda \right)}{\omega \left(\lambda \right)} I_{2} ,
\end{equation}
\begin{equation} \label{2-4-31}
y_{2} \left(x,  \lambda \right)=
-\frac{\cos \left[\lambda \left(x-\pi \right)+\beta \right]}{\omega \left(\lambda \right)} I_{1} 
-\frac{\cos \left(\lambda x+\alpha \right)}{\omega \left(\lambda \right)} I_{2} 
+\frac{r_{2} \left(x,  \lambda \right)}{\omega \left(\lambda \right)} I_{1} 
+\frac{g_{2} \left(x,  \lambda \right)}{\omega \left(\lambda \right)} I_{2} .
\end{equation}
About the integrals $I_{1} $ and $I_{2} $ we note, that
\begin{align*}
I_{1} \left(x,  \lambda ,  \alpha \right)=
& -\frac{1}{\lambda } \int_{0}^{x}f_{1} \left(\xi \right)d\cos \left(\lambda \xi +\alpha \right) -\frac{1}{\lambda } \int_{0}^{x}f_{2} \left(\xi \right)d\sin \left(\lambda \xi +\alpha \right) 
\\
= & -\frac{1}{\lambda } \left[f_{1} \left(x\right)\cos \left(\lambda x+\alpha \right)+\right. \left. +f_{2} (x)\sin \left(\lambda x+\alpha \right)-f_{1} (0)\cos \alpha -f_{2} \left(0\right)\sin \alpha \right]+
\\
&+\frac{1}{\lambda } \left[\int_{0}^{x}f'_{1} \left(\xi \right)\cos \left(\lambda \xi +\alpha \right)d\xi  +\int_{0}^{x}f'_{2} \left(\xi \right)\sin \left(\lambda \xi +\alpha \right)d\xi  \right]
\end{align*}
If $f$ satisfies the condition \eqref{c4:a_bound_cond} and is absolutely continuous, then the last two integrals satisfy to estimate $\left|\int_{0}^{x}f'_{1} \left(\xi \right)\cos \left(\lambda \xi +\alpha \right) \, d\xi \right|<\epsilon \left(\lambda \right)e^{\left| \im \lambda \right|x} $, uniformly by $x\in \left[0,  \pi \right]$ and $\alpha \in \Pi_{M} $, and  $f_{1} \left(0\right)\cos \alpha +f_{2} \left(0\right)\sin \alpha =0$. 
Therefore
\begin{equation} \label{2-4-32}
I_{1} \left(x,  \lambda, \alpha \right)=-\frac{1}{\lambda } \left[f_{1} \left(x\right)\cos \left(\lambda x+\alpha \right)+f_{2} \left(x\right)\sin \left(\lambda x+\alpha \right)\right]+\frac{\epsilon \left(\lambda \right)}{\lambda } e^{\left| \im \lambda \right|x} 
\end{equation}
Similarly, if $f$ satisfies \eqref{c4:b_bound_cond}, then
\begin{equation} \label{2-4-33}
\begin{aligned}
I_{2} \left(x, \lambda ,  \beta \right)= 
&\frac{1}{\lambda } \left\{f_{1} \left(x\right)\cos \left[\lambda \left(x-\pi \right)+\beta \right]\right. +\left. f_{2} \left(x\right)\sin \left[\lambda \left(x-\pi \right)+\beta \right]\right\} + \\ 
& +\frac{\epsilon \left(\lambda \right)}{\lambda } e^{\left| \im \lambda \right|\left(\pi -x\right)}
\end{aligned}
\end{equation}
uniformly by $x\in \left[0,  \pi \right]$ and $\beta \in \Pi_{M} $.

If we denote $z=\lambda \pi +\alpha -\beta $, then the set of all complex number $z$, satisfying the estimates $\left|z-\pi n\right| \geq  \dfrac{\pi }{4} $, coincides with the set $C_{0,25} $.

Since the Wronskian (see \eqref{c4:varphi_1_rep}--\eqref{c4:u_2_rep})
\begin{equation}\label{2-4-34}
\begin{aligned}
\omega \left(\lambda \right) 
& = \varphi_{1} \left(\pi ,  \lambda \right)u_{2} \left(\pi ,  \lambda \right)-\varphi_{2} \left(\pi ,  \lambda \right)u_{1} \left(\pi ,  \lambda \right) 
\\
& = -\sin \left(\lambda \pi +\alpha -\beta \right) -g_{1} \left(\pi ,  \lambda \right)\cos \beta -g_{2} \left(\pi ,  \lambda \right)\sin \beta
\end{aligned}
\end{equation}
and since for all $\lambda \in C_{0,25} $, according to the Lemma \ref{c3:lem_2}, the estimate
\[
\left|\sin \left(\lambda \pi +\alpha -\beta \right)\right|>\frac{1}{4} e^{\left| \im \lambda \right|\pi +\left| \im \alpha \right|-\left| \im \beta \right|},
\] 
hold, and for $g_{1} $ and $g_{2} $ \eqref{c4:g_r_k} hold, then we get
\begin{equation} \label{2-4-35}
\left|\omega \left(\lambda \right)\right|>\frac{1}{4} e^{\left| \im \lambda \right|\pi +\left| \im \alpha \right|-\left| \im \beta \right|} -\epsilon \left(\lambda \right)e^{\left| \im \lambda \right|\pi } \geq  c_{0} e^{\left| \im \lambda \right|\pi }
\end{equation}
(where $c_{0}$ is some positive number) for sufficiently large (by absolute value) $\lambda $, $\lambda \in C_{0,25} $. 
Besides this, from \eqref{2-4-34} we have the representation $ \frac{1}{\omega \left(\lambda \right)} =-\frac{1}{\sin \left(\lambda \pi +\alpha -\beta \right)} +G(\lambda )$, where $G\left(\lambda \right)=-\frac{g_{1} (\pi , \la)\cos \beta +g_{2}(\pi ,\la )\sin \beta }{\sin (\la \pi +\a -\b )\cdot \omega (\lambda )} $ and the estimate $\left|G\left(\lambda \right)\right|\cdot \left|\sin \left(\lambda \pi +\alpha -\beta \right)\right|\leq\epsilon \left(\lambda \right)=o\left(1\right)$ hold for $\lambda \in C_{0,25} $, uniformly by $\alpha ,  \beta \in \Pi_M$.
In particular
\begin{equation}\label{2-4-36}
\frac{\sin \left(\la \pi +\a -\b\right)}{\lambda \omega (\lambda)} =-\frac{1}{\lambda } +o\left(\frac{1}{\lambda }\right) ,\quad \text{ for }  \left|\lambda \right|\to \infty \text{ and }  \lambda \in C_{0,25} \, .
\end{equation}

Denote by $F=\max_{x\in \left[0,  \pi \right]} \left|f_{1,  2} \left(x\right)\right|$, and $R_{1,  2} \left(x,  \lambda \right)$ the sum of the last two terms in \eqref{2-4-30} and \eqref{2-4-31}, respectively. 
Taking into account that $\left|\sin \left(\lambda x+\alpha \right)\right|$, $\left|\cos \left(\lambda x+\alpha \right)\right|\leq e^{\left| \im \lambda \right|x} \cdot e^{\left| \im \alpha \right|}$, and $\left|\sin \left(\lambda \left(x-\pi \right)+\beta \right)\right|$, $\left|\cos \left(\lambda \left(x-\pi \right)+\beta \right)\right|\leq e^{\left| \im \lambda \right|\left(\pi -x\right)} \cdot e^{\left| \im \beta \right|}$,  and also the estimates \eqref{c4:g_r_k}, \eqref{2-4-32} and \eqref{2-4-33} for $R_{1,  2} \left(x,  \lambda \right)$ we obtain the estimate
\[
\left|R_{1,  2} \left(x,  \lambda \right)\right|\leq\frac{\epsilon \left(\lambda \right)}{\left|\lambda \right|\left|\omega \left(\lambda \right)\right|} \cdot e^{\left| \im \lambda \right|\pi }
\]
uniformly by $x\in \left[0,  \pi \right]$ and $\alpha ,  \beta \in \Pi_M$. 
Taking into account \eqref{2-4-35}, we can assert, that on the circles $C_{m, l} $ the estimate
\begin{equation} \label{2-4-37}
\left|R_{1,  2} \left(x, \lambda \right)\right|\leq\frac{\epsilon \left(\lambda \right)}{\left|\lambda \right|} ,
\end{equation}
hold, i.e. the functions $R_{1} \left(x,  \lambda \right)$ and $R_{2} \left(x,  \lambda \right)$ are meromorphic functions, which on $C_{m,  l} $ satisfy the estimate \eqref{2-4-37} uniformly by $x\in \left[0,  \pi \right]$ and $\alpha ,  \beta \in\Pi_M $. 
Therefore, the contour integral over $C_{m,  l}$ of these functions tends to zero as $m,  l>N \to \infty $. 
Substituting \eqref{2-4-32} and \eqref{2-4-33} into \eqref{2-4-30}, we can write \eqref{2-4-30} in the form
\begin{equation}\label{2-4-38}
\begin{aligned}
y_{1} \left(x,  \lambda \right)=
& \frac{\sin \left(\lambda \pi +\alpha -\beta \right)}{\lambda \omega \left(\lambda \right)} f_{1} \left(x\right)+\frac{\epsilon \left(\lambda \right)}{\lambda \omega \left(\lambda \right)} e^{\left| \im \lambda \right|x} \cdot
\sin \left(\lambda \left(x-\pi \right)+\beta \right)+
\\
& +\frac{\epsilon \left(\lambda \right)}{\lambda \omega \left(\lambda \right)} e^{\left| \im \lambda \right|\left(\pi -x\right)} \cdot \sin \left(\lambda x+\alpha \right)+\tilde{R}_{1} \left(x,  \lambda \right)
\end{aligned}
\end{equation}

Like the remainder $\tilde{R}_{1} \left(x,  \lambda \right)$, the sum of the second and third terms in  \eqref{2-4-38} is a meromorphic function, which, according to \eqref{2-4-35}, on the contours $C_{m,  l}$ satisfies the estimate
\begin{align*}
& \left|\frac{\epsilon \left(\lambda \right)}{\lambda \omega \left(\lambda \right)} e^{\left| \im \lambda \right|x} \sin \left(\lambda \left(x-\pi \right)+\beta \right)+\frac{\epsilon \left(\lambda \right)}{\lambda \omega \left(\lambda \right)} e^{\left| \im \lambda \right|\left(\pi -x\right)} \sin \left(\lambda x+\alpha \right)\right| 
\\
& \leq \left|\frac{\epsilon \left(\lambda \right)}{\lambda \omega \left(\lambda \right)} \right|  \left(e^{\left| \im \lambda \right|\pi } \cdot e^{\left| \im \beta \right|} +e^{\left| \im \lambda \right|\pi } \cdot e^{\left| \im \alpha \right|} \right) 
\\
& = \left|\frac{\epsilon_{1} \left(\lambda \right)}{\lambda \omega \left(\lambda \right)} \right|e^{\left| \im \lambda \right|\pi } \leq \frac{\left|\epsilon_{1} \left(\lambda \right)\right|}{\left|\lambda \right|} =o\left(\frac{1}{\left|\lambda \right|} \right)
\end{align*}
uniformly by all $x\in \left[0,  \pi \right]$ and $\alpha ,  \beta \in \Pi_M$. 
Regarding the first term $ \frac{\sin \left(\lambda \pi +\alpha -\beta \right)}{\lambda \omega \left(\lambda \right)} f_{1} \left(x\right)$, according to \eqref{2-4-36}, we can say that it is equal to $-\frac{1}{\lambda } f_{1} \left(x\right)+o\left(\frac{1}{\left|\lambda \right|} \right)f_{1} \left(x\right)$. 
Quite similarly, for $y_{2} $ from \eqref{2-4-31} we obtain $y_{2} \left(x,  \lambda \right)=-\frac{1}{\lambda } f_{2} \left(x\right)+o\left(\frac{1}{\left|\lambda \right|} \right)$, where the reminder is estimated on the contours $C_{m,  l} $ uniformly by $x\in \left[0,  \pi \right]$ and $\alpha ,  \beta \in \Pi_M$. 
Therefore (since $\frac{1}{2\pi i} \int_{C_{m,  l} }\frac{1}{\lambda } d\lambda  =1$) $ \frac{1}{2\pi i} \int_{C_{m,  l} }y\left(x,  \lambda \right)dx =-f\left(x\right)+R_{m,  l} \left(x\right)$, where $\sup _{x\in \left[0,  \pi \right]} \left|R_{m,  l} \left(x,  \alpha ,  \beta \right)\right|\to 0$ for $m,  l>N\to \infty $. 
Lemma \ref{c4:lem_2} is proved.

Turning to the proof of Lemma \ref{c4:lem_3}, we note that according to the formula for calculating the residue at a simple pole, from \eqref{c4:Dirac_solution_non_homogen} and \eqref{c4:u_c_varphi} we have
\[
\left. \res y\left(x,  \lambda \right)\right|_{\lambda =\lambda_{n} } =\lim_{\lambda \to \lambda_{n} } \left(\lambda -\lambda_{n} \right)y\left(x,  \lambda \right)=\frac{k_{n} \cdot \varphi \left(x,  \lambda_{n} \right)}{\dot{\omega }\left(\lambda_{n} \right)} \int_{0}^{\pi }\varphi^{T} \left(\xi ,  \lambda_{n} \right)f\left(\xi \right)d\xi .
\]
According to \eqref{c4:int_varphi_omega_dot} the last expression is $-\int_{0}^{\pi }h_{n}^{T} \left(\xi \right)f\left(\xi \right)d\xi  \cdot h_{n} \left(x\right)=-\left(f,  h_{n} \right)\cdot h_{n} \left(x\right)$, where $h_{n} $ is defined as in Section \ref{c4:sec_1}. 
Lemma \ref{c4:lem_3} is proved, and thus Theorem \ref{c4:thm_1} is proved.

\section{Proof of Theorems \ref{c4:thm_2}, \ref{c4:thm_3} and \ref{c4:thm_4}}\label{c4:sec_4}
To prove Theorem \ref{c4:thm_2}, note that if both series converge uniformly to $f$ (for example, if $f$ satisfies the conditions of Theorem \ref{c4:thm_1}), then Theorem \ref{c4:thm_2} is obvious. 
If there is no uniform convergence, then it should be noted that the terms that interfere with uniform convergence (for example $f_{1} \left(0\right)\cos \alpha +f_{2} \left(0\right)\sin \alpha $ and $f_{1} \left(\pi \right)\cos \beta +f_{2} \left(\pi \right)\sin \beta $) are the same in both sums and their difference vanishes.

Theorem \ref{c4:thm_3} is an obvious consequence of Theorem \ref{c4:thm_1}, and we have separated it into a distinct theorem only because it is an important case of a selfadjoint operator generated by the canonical Dirac system.

Thus, in selfadjoint case the system of normalized eigenfunctions $\{h_n\}_{n \in \mathbb{Z}}$ form an orthonormal system, i.e. $(h_k,h_j) = \delta_{k,j} = \left\{ \begin{matrix}
1, & \, k=j, \\
0, & \, k\neq j .
\end{matrix}
\right.$

\begin{lemma}\label{c4:lem_4}
If $f \in L^2[(0,\pi), \mathbb{C}^2]$ and $\{h_n\}_{n \in \mathbb{Z}}$ is an orthonormal system of eigenfunctions of the operator $L(p,q,\alpha,\beta)=L(\Omega,\alpha,\beta)$, then the series
\begin{equation}\label{c4:f_h_k}
\sum_{k=-\infty}^{\infty} | (f, h_k) |^2
\end{equation}
converges and 
\begin{equation}\label{c4:f_norm}
\sum_{k=-\infty}^{\infty} | (f, h_k) |^2 \leq \|f\|^2.
\end{equation}
\end{lemma}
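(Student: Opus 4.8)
The plan is to establish the classical Bessel inequality for the orthonormal system $\{h_n\}_{n\in\mathbb{Z}}$, working throughout with symmetric partial sums, in agreement with the principal-value understanding of sums indexed by $\mathbb{Z}$ used in this chapter. Fix $f\in L^2([0,\pi],\mathbb{C}^2)$ and, for each $N\in\mathbb{N}$, put
\[
S_N(x)=\sum_{k=-N}^{N}(f,h_k)\,h_k(x)\in L^2([0,\pi],\mathbb{C}^2).
\]
The entire argument rests on expanding the non-negative quantity $\|f-S_N\|^2$, where $(\cdot,\cdot)$ and $\|\cdot\|$ denote the scalar product \eqref{c3:scalar_product} and the associated norm.

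First I would compute $(f,S_N)$ and $\|S_N\|^2$ with the help of the orthonormality relations $(h_k,h_j)=\delta_{kj}$, available here because we are in the self-adjoint case (the real-valuedness of the components of $h_n$ and the orthonormality were recorded just before the statement of the lemma). Since the scalar product is linear in the first argument and conjugate-linear in the second,
\[
(f,S_N)=\sum_{k=-N}^{N}\overline{(f,h_k)}\,(f,h_k)=\sum_{k=-N}^{N}|(f,h_k)|^2,
\]
and, likewise,
\[
\|S_N\|^2=(S_N,S_N)=\sum_{k=-N}^{N}\sum_{j=-N}^{N}(f,h_k)\,\overline{(f,h_j)}\,(h_k,h_j)=\sum_{k=-N}^{N}|(f,h_k)|^2.
\]
Consequently,
\[
0\le\|f-S_N\|^2=\|f\|^2-(f,S_N)-(S_N,f)+\|S_N\|^2=\|f\|^2-\sum_{k=-N}^{N}|(f,h_k)|^2,
\]
so that $\sum_{k=-N}^{N}|(f,h_k)|^2\le\|f\|^2$ for every $N$.

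Finally, the left-hand side is a partial sum of a series of non-negative terms, bounded above by $\|f\|^2$ uniformly in $N$; therefore the series $\sum_{k=-\infty}^{\infty}|(f,h_k)|^2$ converges and its sum does not exceed $\|f\|^2$, which is precisely \eqref{c4:f_norm}. I do not expect any genuine obstacle here: the proof is routine, and the only points calling for a word of care are keeping track of the conjugation convention in $(f,g)$ and invoking the orthonormality of $\{h_n\}_{n\in\mathbb{Z}}$, which holds only in the self-adjoint setting and has already been noted in the text.
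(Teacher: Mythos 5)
Your proof is correct and is essentially the paper's own argument: expand $0\le\|f-\sum(f,h_k)h_k\|^2$ using orthonormality to get $\|f\|^2-\sum|(f,h_k)|^2\ge 0$ for the partial sums, then conclude convergence since the terms are non-negative. The only cosmetic difference is that you use symmetric partial sums $\sum_{k=-N}^{N}$ while the paper writes general partial sums $\sum_{k=-m}^{n}$, which changes nothing for a series of non-negative terms.
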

\eqref{c4:f_norm} is called Bessel's inequality.

\begin{proof}
Since $\|f\|^2 = (f,f)$, it is easy to calculate that for any finite $m$ and $n$ the inequality
\begin{equation}\label{c4:f_norm_diff}
0\leq \|f - \sum_{k=-m}^{n}  (f, h_k) h_k\|^2 = \|f\|^2 - \sum_{k=-m}^{n} | (f, h_k) |^2
\end{equation}
holds, which proves the convergence of the series \eqref{c4:f_h_k} and Bessel's inequality.
\end{proof}

\begin{definition}\label{c4:def_1}
A sequence $\{\varphi_m\}_{m \in \mathbb{Z}}$ of vector-functions in Hilbert space $H=L^2[(0,\pi);\mathbb{C}^2]$ is called complete in $H$ if the closure of its span coincides with $H$.
Equivalently, a sequence $\{\varphi_m\}_{m \in \mathbb{Z}}$ is complete in $H$ if and only if the following implication holds for every $f \in H$,
\[
(f, \varphi_m) = 0, \quad m \in \mathbb{Z}, \Rightarrow f=0.
\]
\end{definition}

\begin{proof}[Proof of Theorem \ref{c4:thm_4}]
Since $D\left(L\right)$ is dense everywhere in $L^{2} \left[\left(0,\pi \right)\, \mathbb{C}^{2} \right]$, then for any $f\in L^{2} $ there exists $f_{\varepsilon } \in D\left(L\right)$, such that $\left\| f-f_{\epsilon } \right\| <\epsilon $ (for arbitrary $\epsilon >0$). 
Therefore
\[
\left\| f-\sum_{-m}^{n}\left(f,\, h_{k} \right)h_{k}  \right\| \leq 
\left\| f-f_{\epsilon } \right\| +\left\| f_{\epsilon } -\sum_{-m}^{n} \left(f_{\epsilon } ,\, h_{k} \right)h_{k}  \right\| +\left\| \sum_{-m}^{n} \left(f-f_{\epsilon } ,\, h_{k} \right)h_{k}  \right\|.
\]
It is easy to calculate that the last term equals
\[
\left( \sum_{-m}^{n} | (f-f_{\epsilon } ,\, h_{k} )|^2  \right)^{1/2},
\]
which, by Bessel's inequality, does not exceed $\left\| f-f_{\epsilon } \right\|$.
Applying Theorem \ref{c4:thm_1} to the function $f_\epsilon$, we conclude that there exists a number $N$, depending on $\epsilon$, such that
\[
\left\| \sum_{-m}^{n} \left(f-f_{\epsilon } ,\, h_{k} \right)h_{k}  \right\|  < \epsilon, \quad (m, n \geq N),
\]
and, therefore, 
\[
\left\| f-\sum_{-m}^{n}\left(f,\, h_{k} \right)h_{k}  \right\| \leq \epsilon + \epsilon + \epsilon= 3 \epsilon,
\]
which proves the equality
 \[
\lim_{\substack{ n, m>N \\ N\to \infty}} 
\left\| f-\sum_{-m}^{n}\left(f,\, h_{k} \right)h_{k}  \right\| = 0.
\]

Parseval's equality directly follows from the last equality and \eqref{c4:f_norm_diff}.
Thus, we have proved Theorem \ref{c4:thm_4}.
\end{proof}

\begin{definition}\label{c4:Riesz}
A sequence $\{\varphi_m\}_{m \in \mathbb{Z}}$ of vector-functions in Hilbert space $H=L^2[(0,\pi);\mathbb{C}^2]$ is called a Riesz basis, if it admits a representation $\varphi_m = T e_m, \, m \in \mathbb{Z}$, where $\{e_m\}_{m \in \mathbb{Z}}$ is an orthonormal basis in $H$ and $T:H\rightarrow H$ is a bounded operator with a bounded inverse.
\end{definition}

\section*{Notes and references}
\addcontentsline{toc}{section}{Notes and references}
The completeness of the system of eigenfunctions for the system of two equations of the first order was proved in 1921 by W.A. Hurwitz \cite{Hurwitz:1921}.

A point-wise convergence result in spectral decomposition was first investigated in \cite{Birkhoff-Langer:1923} and then in \cite{Titchmarsh:1961}.

Theorems \ref{c4:thm_1}-\ref{c4:thm_3} were published \cite{Harutyunyan-Azizyan:2006}.

Theorem \ref{c4:thm_5} was published in \cite{Titchmarsch:1944}.

Theorem \ref{c4:thm_6} was published in \cite{Levitan-Sargsyan:1988}.

\chapter{Eigenvalue Function}\label{chapter_5}

\section{Definition and properties.}

Consider the boundary value problem $L(\Omega,\g,0)=L(p,q,\g,0)$:
\begin{align}
& \ell y\equiv \left\{ B\dfrac{d}{dx} +\Omega(x) \right\} \, y=\lambda y,\quad y=\left(y_1\atop y_2\right),\quad 0< x < \pi, \quad \lambda \in \mathbb{C} \label{c5:Dirac_eq} \\
& y_1(0)\cos\gamma +y_2(0)\sin \gamma =0, \label{c5:boundary_cond_0} \\
& y_1(\pi)=0, \label{c5:boundary_cond_pi}
\end{align}
If $p$ and $q$ are real, summable functions on $[0, \pi]$, i.e. $p, q \in L^1_{\mathbb{R}}(0, \pi)$, then differential operators $L(\Omega,\gamma)$, generated by the differential expression  $\ell$  in the Hilbert space of two-component vector-functions $L^2( 0,\pi;\mathbb{C}^2)$,  on the domain
\begin{align*}
\mathcal{D}_{L(\Omega,\gamma)} = \left\{ y=\left(y_1\atop y_2\right),\; y_k\in AC[0, \pi],\;
(\ell y)_k \in L^2[0,\pi],\; k=1,2;\right.\\
\Big. y_1(0)\cos\gamma +y_2(0) \sin\g=0,\; y_1(\pi) = 0\Big\}
\end{align*}
are self-adjoint for every real  $\gamma$ (see Theorem \ref{c3:thm_7}). 
It's known (see Chapter \ref{chapter_3}), that $L(\Omega, \gamma)$ has a discrete spectrum consisting of simple eigenvalues  $\lambda_n(\Omega, \gamma)$, $n\in \mathbb{Z}$, which forms an unbounded (both from above and below) sequence.
The enumeration of the eigenvalues for $\gamma \in \left(-\frac{\pi}{2}, \frac{\pi}{2} \right] $ is given in Chapter \ref{chapter_3} (see \eqref{c3:51}--\eqref{c3:52}).

To set the enumeration of the eigenvalues for all real values of the parameter $\gamma$ from \eqref{c5:boundary_cond_0}, proceed as follows.
Represent an arbitrary real $\gamma$ as $\gamma = \alpha - \pi m$ for some $\alpha \in  \left(-\frac{\pi}{2}, \frac{\pi}{2} \right] $ and $m \in \mathbb{Z}$. 
Define eigenvalues $\lambda_n(\gamma)$ as follows:
\begin{equation}\label{c5:lambda_n_gamma}
\lambda_n(\gamma) = \lambda_n(\alpha - \pi m) \stackrel{def}{=} \lambda_{n + m}(\alpha).
\end{equation}
The formula \eqref{c5:lambda_n_gamma} defines a unique enumeration for eigenvalues for all real values of the parameter $\gamma$ from the boundary condition \eqref{c5:boundary_cond_0}.

\begin{definition}\label{c5:def_1} 
A function, which is defined for each real number $\gamma = \alpha - \pi m$, $\alpha \in  \left(-\frac{\pi}{2}\, , \frac{\pi}{2} \right] $, $m \in \mathbb{Z}$, by formula
\[
\lambda(\gamma) = \lambda(\alpha - \pi m) \stackrel{def}{=} \lambda_m(\alpha),
\]
where $\lambda_m(\alpha)$, $m \in \mathbb{Z}$, are eigenvalues of the operator $L(\Omega, \alpha)$, enumerated according to \eqref{c3:51}--\eqref{c3:52}, is called eigenvalues function (EVF) of the family of Dirac operators $\left\{ L(\Omega, \alpha):\,  \alpha \in  \left(-\frac{\pi}{2}, \frac{\pi}{2} \right] \right\} $.
\end{definition}

\begin{remark}\label{c5:rem_1} 
In fact, saying EVF we meant the eigenvalue $\lambda_0(\gamma)$, continued from $\left(-\frac{\pi}{2}, \frac{\pi}{2} \right] $ to the whole real axis by formula \eqref{c5:lambda_n_gamma} (for $n=0$).
\end{remark}

\begin{theorem}\label{c5:thm_1}  
Let $p, q \in L^2_{\mathbb{R}}(0,\pi)$. 
Then the EVF $\lambda(\cdot)$ of the family of Dirac operators $\left\{ L(\Omega, \alpha),
\alpha \in  \left(-\frac{\pi}{2}, \frac{\pi}{2} \right] \right\} $ has the following properties:
\begin{itemize}
\item[1.] As a function of a real variable, it is a strictly decreasing real-valued function defined along the whole axis.
There is a point $\alpha_0 \in  \left(-\frac{\pi}{2}, \frac{\pi}{2} \right] $, such that $\lambda(\alpha_0)  = 0$.

\item[2.] For each real point $\gamma$, there is some complex neighborhood $V_\gamma$, in which a single-valued analytic function $\tilde{\lambda} (\cdot)$ is defined, which coincides with $\lambda(\cdot)$ for real values of argument, i.e. EVF $\lambda(\cdot)$ is a real analytic function.

\item[3.] Function $c(\gamma) \stackrel{def}{=} \lambda(\gamma) + \gamma/\pi$ has the property:
\[
\sum_{n=-\infty}^\infty c^2(\alpha - \pi n) < \infty\quad \text{for arbitrary} \quad \alpha \in \left(-\frac{\pi}{2}, \frac{\pi}{2} \right].
\]

\item[4.] For all $\alpha$ and $\beta$, such that $-\frac{\pi}{2} < \alpha < \beta \leq \frac{\pi}{2}$ and all $n \in \mathbb{Z}$
\begin{equation}\label{c5:partial_gamma}
\left.\dfrac{\partial   \lambda(\gamma)}{\partial   \g}\right|_{\gamma = \alpha - \pi n} = \dfrac{\lambda(\alpha - \pi n) - \lambda(\beta- \pi n)}{\sin(\beta- \alpha)}
\prod_{k=-\infty\atop{k\neq n}}^\infty \dfrac{\lambda(\beta- \pi k) - \lambda(\alpha - \pi n)}{\lambda(\alpha - \pi k) - \lambda(\alpha - \pi n)}.
\end{equation}
\end{itemize}
\end{theorem}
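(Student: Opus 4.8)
The plan is to obtain all four properties from the regular boundary value problem $L(\Omega,\alpha,0)$ studied in Chapters \ref{chapter_1}--\ref{chapter_3}; the only new ingredient is the bookkeeping that turns the "per branch" statements about $\lambda_n(\alpha)$, $\alpha\in\left(-\frac{\pi}{2},\frac{\pi}{2}\right]$, into statements about the single function $\lambda(\gamma)$ on $\mathbb{R}$ via the enumeration \eqref{c5:lambda_n_gamma}. Throughout, $\lambda(\gamma)$ is $\lambda_0(\gamma)$ continued (Remark \ref{c5:rem_1}), and on the interval $\left(-\frac{\pi}{2}-\pi n,\frac{\pi}{2}-\pi n\right]$ it is the map $\gamma\mapsto\lambda_n(\gamma+\pi n)$.

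For Property~1, on each such interval Theorem \ref{c3:thm_3} gives $\dfrac{\partial\lambda_n}{\partial\alpha}=-|h_n(0)|^2=-\dfrac{1}{a_n}<0$, so $\lambda(\cdot)$ is strictly decreasing on it. Because the boundary condition \eqref{c5:boundary_cond_0} is $\pi$-periodic in $\gamma$, the operators $L(\Omega,-\frac{\pi}{2},0)$ and $L(\Omega,\frac{\pi}{2},0)$ coincide, so their spectra agree as sets and the enumeration \eqref{c3:51}--\eqref{c3:52} forces $\lambda_n(-\frac{\pi}{2})=\lambda_{n+1}(\frac{\pi}{2})$; together with continuity of the eigenvalue curves in $\alpha$ (a consequence of the joint analyticity in Theorem \ref{c1:thm_1}) this gives $\lim_{\alpha\to-\frac{\pi}{2}^{+}}\lambda_n(\alpha)=\lambda_{n+1}(\frac{\pi}{2})$, i.e., the consecutive branches glue continuously. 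Hence $\lambda(\cdot)$ is a continuous, strictly decreasing function on all of $\mathbb{R}$; by the asymptotics $\lambda_n(\alpha)=n-\alpha/\pi+o(1)$ (Theorem \ref{c3:thm_1} with $\beta=0$) it runs from $+\infty$ to $-\infty$, so it has a unique zero, and since on $\left(-\frac{\pi}{2},\frac{\pi}{2}\right]$ its range is $\left[\lambda_0(\frac{\pi}{2}),\lambda_1(\frac{\pi}{2})\right)$ while $\lambda_0(\frac{\pi}{2})\le 0<\lambda_1(\frac{\pi}{2})$ by \eqref{c3:51}, the zero $\alpha_0$ lies in $\left(-\frac{\pi}{2},\frac{\pi}{2}\right]$.

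Property~2 follows from the analytic implicit function theorem: the characteristic function of $L(\Omega,\gamma,0)$ is $\chi_{\gamma}(\lambda)=\varphi_1(\pi,\lambda,\gamma)$, entire jointly in $(\lambda,\gamma)$ by Theorem \ref{c1:thm_1}, and each eigenvalue is a simple zero (simplicity in the symmetric case follows from \eqref{c4:int_varphi_omega_dot}), so $\partial\chi_{\gamma}/\partial\lambda\neq 0$ there; this produces in a complex neighbourhood $V_{\gamma}$ of every real $\gamma$ a single-valued analytic root $\tilde\lambda(\cdot)$ of $\chi_{\gamma}(\lambda)=0$ agreeing with $\lambda(\cdot)$ on the reals. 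For Property~3 write $c(\alpha-\pi n)=\lambda_n(\alpha)+\alpha/\pi-n$; Lemma \ref{c3:lem_4} applied with $\beta=0$ gives $\lambda_n(\alpha)=n-\alpha/\pi+h_n$ with $\sum_n|h_n|^2<\infty$ for $p,q\in L^2_{\mathbb{R}}$, hence $c(\alpha-\pi n)=h_n$ and $\sum_n c^2(\alpha-\pi n)=\sum_n h_n^2<\infty$.

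Property~4 is the combination of two already-established facts. Near $\gamma=\alpha-\pi n$ the EVF equals $\gamma\mapsto\lambda_n(\gamma+\pi n)$, so $\left.\dfrac{\partial\lambda(\gamma)}{\partial\gamma}\right|_{\gamma=\alpha-\pi n}=\dfrac{\partial\lambda_n(\alpha)}{\partial\alpha}=-\dfrac{1}{a_n(\Omega,\alpha,0)}$ by Theorem \ref{c3:thm_3}; substituting the two-spectra representation of $a_n$ from Theorem \ref{thm2-3-5} (with its parameter $\epsilon$ taken equal to $\beta\in(\alpha,\frac{\pi}{2}]$ and the fixed right endpoint equal to $0$) and rewriting $\lambda_k(\alpha)=\lambda(\alpha-\pi k)$, $\lambda_k(\beta)=\lambda(\beta-\pi k)$ reduces, after the routine algebra and with the sign convention that $\partial\lambda/\partial\gamma<0$ forced by Property~1, to the claimed identity \eqref{c5:partial_gamma}. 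The one step that needs genuine care is the gluing in Property~1 --- verifying that the analytic branches on consecutive fundamental intervals assemble into one continuous (indeed real-analytic) curve --- which rests on the $\pi$-periodicity of \eqref{c5:boundary_cond_0} and on continuous dependence of the simple spectrum on $\alpha$; everything else is a transcription of the regular-problem results of Chapters \ref{chapter_1}--\ref{chapter_3}.
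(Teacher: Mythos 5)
Your proposal is correct and follows essentially the same route as the paper: monotonicity of each branch from Chapter \ref{chapter_3} plus the enumeration for Property 1, the implicit function theorem applied to $\varphi_1(\pi,\lambda,\gamma)$ with the nonvanishing $\lambda$-derivative coming from the norm identity for Property 2, the $L^2$ eigenvalue asymptotics for Property 3, and $\left.\partial\lambda/\partial\gamma\right|_{\gamma=\alpha-\pi n}=-1/a_n(\alpha)$ combined with Theorem \ref{thm2-3-5} for Property 4. The only deviations are immaterial: you cite Theorem \ref{c3:thm_3} directly instead of rederiving the derivative identity as in \eqref{2-5-9}--\eqref{2-5-10}, and you spell out the gluing of consecutive branches at $\gamma=\frac{\pi}{2}-\pi m$ somewhat more explicitly than the paper does.
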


\begin{remark}\label{c5:rem_2} 
In Chapter \ref{chapter_9} we will see that conditions $1.-4.$ are not only necessary, but also sufficient for a certain function defined on the entire real axis to be the EVF of a certain family of Dirac operators $\left\{ L(\Omega, \alpha), \alpha \in  \left(-\frac{\pi}{2}, \frac{\pi}{2} \right] \right\} $ with $p, q \in L^2_{\mathbb{R}}[0,\pi]$.
\end{remark}

\begin{proof}[Proof of Theorem \ref{c5:thm_1}]
In Chapter \ref{chapter_3} we proved, that every eigenvalue $\lambda_k(\alpha) = \lambda_k(\Omega, \alpha, 0)$ is a decreasing function of an argument $\alpha$ on interval $\left(-\frac{\pi}{2}, \frac{\pi}{2} \right] $.

From definition \ref{c5:def_1} it follows that EVF is a strictly decreasing function on the intervals $\left(\pi m -\frac{\pi}{2}, \pi m + \frac{\pi}{2} \right] $, $m \in \mathbb{Z}$, covering the real axis.
Let us now prove that the EVF is an analytic function on the real axis.
To do this, we will use the implicit function theorem in the following formulation (see, e.g.,  \cite{Bibikov:1981}, pg.~166):

\begin{theorem}\label{c5:thm_2} Let $\lambda \in \mathbb{C}$, $\gamma \in \mathbb{C}$ and $F(\lambda,   \gamma):\, U \to \mathbb{C}$ be an analytic function in some neighborhood $U$ of point $(\lambda_0, \gamma_0 )$, moreover
\[
\frac{\partial F(\lambda_0, \gamma_0 )}{\partial \lambda} \neq 0.
\]
Then the equality $F(\lambda,   \gamma) = F(\lambda_0, \gamma_0 )$ uniquely determines the function $\lambda = \lambda(\gamma) :\, V \to \mathbb{C}$, analytic in some neighborhood $V$ of the point $ \gamma_0 $ and such that for $\gamma \in V$ $(\lambda(\gamma), \gamma) \in U$, $\lambda(\gamma_0 )=\lambda_0$ and $F(\lambda(\gamma), \gamma) = F(\lambda_0, \gamma_0 )$ for all $\gamma \in V$.
\end{theorem}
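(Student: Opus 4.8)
The plan is to prove this holomorphic implicit function theorem by the argument principle, treating $\gamma$ as a parameter and solving $F(\lambda,\gamma)=F(\lambda_0,\gamma_0)$ for $\lambda$. Replacing $F$ by $F-F(\lambda_0,\gamma_0)$, I may assume $F(\lambda_0,\gamma_0)=0$, so the task becomes to produce a holomorphic $\lambda(\gamma)$ with $F(\lambda(\gamma),\gamma)\equiv 0$ and $\lambda(\gamma_0)=\lambda_0$. The hypothesis $\frac{\partial F}{\partial\lambda}(\lambda_0,\gamma_0)\neq 0$ says that the holomorphic function $\lambda\mapsto F(\lambda,\gamma_0)$ is not constant and has a \emph{simple} zero at $\lambda_0$; in particular its zeros are isolated, so I can fix $r>0$ and a disc radius $\delta>0$ with the closed bidisc $\{|\lambda-\lambda_0|\le r\}\times\{|\gamma-\gamma_0|\le\delta\}\subset U$ and such that $\lambda_0$ is the only zero of $F(\cdot,\gamma_0)$ in $\{|\lambda-\lambda_0|\le r\}$.

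First I would establish existence and uniqueness of the root for each nearby $\gamma$. Since $F$ is continuous and $F(\lambda,\gamma_0)\neq 0$ on the compact circle $\Gamma_r=\{|\lambda-\lambda_0|=r\}$, shrinking $\delta$ guarantees $F(\lambda,\gamma)\neq 0$ for all $\lambda\in\Gamma_r$ and all $|\gamma-\gamma_0|\le\delta$ (here compactness also yields a uniform lower bound $|F|\ge c>0$ on $\Gamma_r\times V$, with $V=\{|\gamma-\gamma_0|<\delta\}$). The number of zeros (with multiplicity) of $\lambda\mapsto F(\lambda,\gamma)$ inside $\Gamma_r$ is then
\[
N(\gamma)=\frac{1}{2\pi i}\oint_{\Gamma_r}\frac{\partial F(\lambda,\gamma)/\partial\lambda}{F(\lambda,\gamma)}\,d\lambda .
\]
As an integer-valued function of $\gamma$ whose integrand is continuous in $(\lambda,\gamma)$ with nonvanishing denominator, $N(\gamma)$ depends continuously on $\gamma$, hence is constant on the connected disc $V$, equal to $N(\gamma_0)=1$. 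Thus for each $\gamma\in V$ there is exactly one zero $\lambda(\gamma)$ inside $\Gamma_r$, and it is simple; this defines $\lambda(\cdot)$ on $V$ with $\lambda(\gamma_0)=\lambda_0$ and $(\lambda(\gamma),\gamma)\in U$.

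Second I would prove analyticity of $\lambda(\cdot)$ through the residue representation of the unique simple root,
\[
\lambda(\gamma)=\frac{1}{2\pi i}\oint_{\Gamma_r}\lambda\,\frac{\partial F(\lambda,\gamma)/\partial\lambda}{F(\lambda,\gamma)}\,d\lambda ,
\]
which follows from the argument principle applied to $\lambda\,(\partial F/\partial\lambda)/F$: the weighted count picks out the single zero times its value. For fixed $\lambda\in\Gamma_r$ the integrand is holomorphic in $\gamma$, and it is jointly continuous on $\Gamma_r\times V$; hence the parameter integral is holomorphic on $V$. I would justify this by Morera's theorem (integrating $\lambda(\gamma)$ over the boundary of an arbitrary small triangle in $\gamma$, interchanging the two integrations by Fubini using the uniform bound $|F|\ge c$, and invoking Cauchy's theorem in $\gamma$), or equivalently by differentiation under the integral sign. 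This yields the analyticity of $\lambda(\gamma)$, and $F(\lambda(\gamma),\gamma)=0$ holds by construction.

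Finally, uniqueness within the neighborhood is already contained in the first step: any solution of $F(\lambda,\gamma)=0$ with $\lambda$ inside $\Gamma_r$ must equal the single root found there, so the germ $\lambda(\cdot)$ is unique. The main obstacle is precisely the analyticity step — passing from the contour-integral formula to holomorphy of $\lambda(\cdot)$ in the parameter $\gamma$; the argument principle cleanly supplies existence, simplicity, and the count, but the dependence on $\gamma$ requires the Morera/Fubini interchange (or a differentiation-under-the-integral estimate) resting on the uniform lower bound for $|F|$ on $\Gamma_r\times V$. Everything else is a matter of choosing $r$ and $\delta$ small enough to stay inside $U$ and to isolate the zero.
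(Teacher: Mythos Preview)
Your proof is correct: the argument-principle construction, with the zero-counting integral $N(\gamma)$ and the residue formula $\lambda(\gamma)=\frac{1}{2\pi i}\oint_{\Gamma_r}\lambda\,\frac{F_\lambda}{F}\,d\lambda$, is a standard and complete route to the holomorphic implicit function theorem, and your handling of the analyticity step via Morera/Fubini with the uniform lower bound on $|F|$ along $\Gamma_r\times V$ is the right justification.

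However, the paper does not prove this statement at all. Theorem~\ref{c5:thm_2} is quoted as a known result with the reference ``(see, e.g., \cite{Bibikov:1981}, pg.~166)'' and is used as a black box in the proof of Theorem~\ref{c5:thm_1} (applied with $F(\lambda,\gamma)=\varphi_1(\pi,\lambda,\gamma)$). So you have supplied a self-contained argument where the authors simply cite the literature; your approach is not ``different'' from theirs so much as it fills in what they chose to omit.
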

As $F(\lambda,   \gamma)$ we have $F(\lambda,   \gamma) = \varphi_1(\pi, \lambda,  \gamma)$, which is an entire function of two complex variables $\lambda$ and $\gamma$. 
The eigenvalues of operator $L(\Omega, \gamma)$ are zeros (by $\lambda$) of the function $\varphi_1(\pi, \lambda,   \gamma)$. 
Let $\gamma_0 $ is an arbitrary real number, which we represent as $\gamma_0  = \alpha_0 - \pi m$, where $\a_0 \in \left(-\frac{\pi}{2}\, , \frac{\pi}{2} \right] $, $m\in \mathbb{Z}$, and let $\lambda_0 = \lambda(\gamma_0 ) = \lambda(\a_0 -\pi m) = \lambda_m(\alpha_0) $ is the value of EVF at $\gamma_0 $. 
Then $\varphi_1(\pi, \lambda_0, \gamma_0 ) = \varphi_1(\pi, \lambda_m(\alpha_0) , \a_0 - \pi m) = 0$, since eigenfunction $\varphi(x, \lambda_m(\alpha_0) , \a_0 - \pi m)$ satisfies the boundary condition \eqref{c5:boundary_cond_pi}. 
Let us prove that the value of the derivative $\frac{\partial   \varphi_1(\pi,\lambda_0, \gamma_0 )}{\partial   \lambda}$ at point $ (\lambda_0, \gamma_0) $ is nonzero. 
To this end, note that for the solution $ \varphi (x, \lambda, \gamma) $ of the Cauchy problem \eqref{c2:Cauchy_problem_y}, the identity
$$\frac{\partial   \varphi_2(\pi,\lambda,   \gamma)}{\partial   \lambda}\varphi_1(\pi,\lambda,   \gamma) - \frac{\partial   \varphi_1(\pi,\lambda,   \gamma)}{\partial   \lambda}\varphi_2(\pi,\lambda,   \gamma) = \int_0^\pi\left[\v_1^2(x,\lambda,   \gamma)+ \v_2^2(x,\lambda,   \gamma)\right]\, dx$$
hold (see \eqref{c4:int_varphi_pi}). 
Considering that $\varphi_1(\pi,\lambda(\gamma), \gamma) = 0$ we get
\begin{equation}\label{c5:partial_varphi_1}
\frac{\partial   \varphi_1(\pi,\lambda(\gamma), \gamma)}{\partial   \lambda} = -\frac{1}{\varphi_2(\pi,\lambda(\gamma), \gamma)} \int_0^\pi
\left[\v_1^2(x,\lambda(\gamma), \gamma) + \v_2^2(x,\lambda(\gamma), \gamma)\right]\, dx.
\end{equation}
Since for real $\gamma$ the operators $L(\Omega, \gamma)$ are self-adjoint, the components $ \varphi_1 $ and $ \varphi_2 $ of vector-eigenfunctions $\varphi(x, \lambda(\gamma), \gamma)$ can be considered real, so $ | \varphi|^2 \equiv |\varphi_1|^2 + |\varphi_2|^2 = \varphi_1^2 + \varphi_2^2 $.
It follows that the square of the $ L^2$-norm
\begin{equation}\label{c5:noming_constant}
a(\gamma) =\int_0^\pi|\varphi(x, \lambda(\gamma), \gamma)|^2 \, dx
\end{equation}
of eigenfunction $\varphi(x, \lambda(\gamma), \gamma)$, which is usually called the normalization constant, coincides with the integral on the right-hand side of the formula \eqref{c5:partial_varphi_1} and is always nonzero for $\gamma \in \mathbb{R}$. 
It is also obvious that $\varphi_2(\pi, \lambda(\gamma), \gamma) \neq 0$, because otherwise, from the uniqueness of the solution to the Cauchy problem it would follow (since $\varphi_1(\pi, \lambda(\gamma), \gamma) = 0$), that $\varphi(x, \lambda(\gamma), \gamma) \equiv 0$, and this contradicts the fact that $\varphi(x, \lambda(\gamma), \gamma)$ is an eigenfunction.

Thus, according to \eqref{c5:partial_varphi_1}, the value of the derivative $ \frac {\p \varphi_1 (\pi, \lambda (\gamma), \gamma)} {\partial \lambda} $ is nonzero for any real point $ \gamma$. 
Therefore, according to Theorem \ref{c5:thm_2}, there exists a complex neighborhood $ V \subset \mathbb{C} $ of a real point $ \gamma_0 $, in which defined a single-valued analytic function $ \lambda (\gamma) $, $ \gamma \in V $, such that $ \lambda (\gamma_0) = \lambda_0 $ and $ \varphi_1 (\pi, \lambda (\gamma), \gamma) = 0 $ for all $ \gamma \in V $.
 Since $ \gamma_0 $ is an arbitrary real number, we have proved that $ \lambda (\cdot) $ is analytic on the entire real axis. 
More precisely, there is an open set containing a real axis, where a single-valued analytic function $ \tilde{\lambda} (\cdot) $ is defined, which coincides with the EVF we have defined for the real values of the argument.

From the continuity of the EVF $ \lambda (\gamma) $ it follows that when $ \gamma $ changes to $ \left[ - \frac{\pi}{2} \,, \frac {\pi} {2} \right] $ this function takes all values from (see definition \ref{c5:def_1})
\[
\la\left(-\frac{\pi}{2}\right)= \la\left(\frac{\pi}{2} - \pi\right)= \la_1\left(\frac{\pi}{2}\right)>0\quad \mbox{to}\quad \lambda \left(\frac{\pi}{2}\right) = \lambda_0 \left(\frac{\pi}{2}\right) \leq 0.
\]
Therefore, there is a point $\alpha_0 \in  \left(-\frac{\pi}{2}\, , \frac{\pi}{2} \right] $, such that $\lambda(\alpha_0)  = 0$. 
Thus, assertions $1)$ and $2)$ of Theorem \ref{c5:thm_1} are proved.

It is known (see Chapter \ref{chapter_3}), that if $p,q \in L_{\mathbb{R}}^2[0,\pi]$, then for the eigenvalues of the operators $L(\Omega,\alpha)=L(\Omega,\a,0)$ the following asymptotics holds:
\begin{equation}\label{2-5-8}
\lambda_n(\alpha) = n - \frac{\a}{\pi} + h_n(\alpha),\qquad \sum_{n = -\infty}^\infty h_n^2(\alpha) < \infty
\end{equation}
for all $\alpha \in  \left(-\frac{\pi}{2}\, , \frac{\pi}{2} \right] $. 
In terms of the EVF, this asymptotics can be written as $\lambda(\gamma) =-\frac{\g}{\pi} + h(\gamma)$, where $h(\gamma)$ is an analytic function at each real point ($h(\alpha - \pi n) \stackrel{def}{=} h_n(\alpha)$), having the property
$$\sum_{n = -\infty}^\infty h_n^2(\alpha)<\infty \qquad\mbox{for any}\quad
\alpha \in  \left(-\frac{\pi}{2}\, , \frac{\pi}{2} \right] .$$
To prove the representation \eqref{c5:partial_gamma}, first, note that for any real  $\gamma$ and $\beta$ the identity (its derivation is very similar to the derivation of the equalities \eqref{c3:int_dif} and \eqref{c3:partial_lambda_alpha})
\begin{equation}\label{2-5-9}
[\lambda(\gamma) - \lambda(\b)]\cdot \left(\varphi(\cdot, \lambda(\gamma), \gamma), \varphi(\cdot,\lambda(\b),\b)\right) = \sin(\beta- \gamma),
\end{equation}
holds, where $(\varphi, \psi)$ means scalar product.

Dividing both sides \eqref{2-5-9} by $\gamma - \beta$ and tending $\beta \to \gamma$, we obtain
\[
\frac{\p\lambda(\gamma)}{\partial   \g}\cdot \lve \varphi(\cdot,\lambda(\gamma),\gamma)\rve^2 = -1,
\]
i.e. (see \eqref{c5:noming_constant})
\begin{equation}\label{2-5-10}
\frac{\p\lambda(\gamma)}{\partial   \gamma} = - \frac{1}{a(\gamma)}.
\end{equation}
On the other hand, in Theorem \ref{thm2-3-5} we obtained the representation of the normalization constants $a_n(\alpha)\stackrel{def}{=}a(\alpha - \pi n)$:
\begin{equation}\label{2-5-11}
\frac{1}{a_n(\alpha)} = \frac{\lambda_n(\b) - \lambda_n(\alpha)}{\sin(\alpha - \b)}
\, \prod_{k=-\infty\atop{k\ne n}}^\infty
\frac{\lambda_k(\b) - \lambda_n(\alpha)}{\lambda_k(\alpha) - \lambda_n(\alpha)}
\end{equation}
for arbitrary $\beta$, such that $-\frac{\pi}{2} < \alpha < \beta \leq \frac{\pi}{2}$. 
From the definition of EVF ($\lambda(\alpha - \pi n) = \lambda_n(\alpha)$) and equalities \eqref{2-5-10} and \eqref{2-5-11} the presentation \eqref{c5:partial_gamma} follows. 
Theorem \ref{c5:thm_1} is proved.
\end{proof}

\section*{Notes and references}
\addcontentsline{toc}{section}{Notes and references}

The concept of an eigenvalues function (EVF) of a family of operators was introduced by T.N. Harutyunyan in \cite{Harutyunyan:1990} in 1990 for Dirac operators.

The concept of EVF for a family of Sturm-Liouville operators was introduced by T.N. Harutyunyan in 2000 in a paper \cite{Harutyunyan-Navasardyan:2000}.
The properties of this function were studied in detail in paper \cite{Harutyunyan:2019b} and were applied for solving the inverse problem.

\chapter{Inverse problems. Uniqueness theorems.}\label{chapter_6}

\section{Statements of Theorems.}\label{c6:sec_1}
Inverse spectral problems consist of recovering operators from their spectral characteristics.

For the first time, the inverse problems for Dirac systems were considered in papers \cite{Moses:1957,Prats-Toll:1959,Verde:1959} and were devoted to the reconstruction of singular Dirac operator (on the half axis) by so-called "spectral function".
More completely, this problem was solved in \cite{Gasymov-Levitan:1966}.

In the regular symmetric case we consider, the spectral function, $\rho(\cdot)$ (see the general definition in \cite{Gasymov-Levitan:1966} and \cite{Marchenko:1952}) of the problem (operator) $L(p, q, \alpha, \beta)$ is defined on the whole (real) axis $\lambda$, a step-wise, increasing, left-continuous function having jumps at points $\lambda = \lambda_n$, equal to $\dfrac{1}{a_n}$ (and normalized by condition $\rho(\lambda_0) = 0$).
In this case, having the spectral function is equivalent to have two sequences: a sequence of eigenvalues $\{ \lambda_n \}_{n \in \mathbb{Z}} = \{ \lambda_n(p, q, \alpha, \beta) \}_{n \in \mathbb{Z}}$ and a sequence of normalization constants $\{ a_n \}_{n\in \mathbb{Z}} = \{ a_n(p, q, \alpha, \beta) \}_{n \in \mathbb{Z}}$.

In the case of the Sturm-Liouville problem by spectral function, one can uniquely and constructively recover the potential $q(\cdot)$ and numbers $\alpha$ and $\beta$, defining the boundary conditions. 
In the case of the Dirac operator this is not true anymore (this was first noted in \cite{Gasymov-Levitan:1966}).

Indeed, if $\omega(x)$ is an absolutely continuous function, then the change $\psi(x,\lambda) = A(x) \varphi (x,\lambda, \alpha)$, where the unitary matrix
\[
A(x)= 
\left(
\begin{array}{cc}
\cos\omega(x) & \sin\omega(x) \\ 
-\sin\omega(x) & \cos\omega(x)
\end{array}
\right) 
=
B \sin\omega(x) + E \cos \omega(x),
\]
reduces the canonical system $\left\{ B \frac{d}{dx} + \Omega(x)\right\} \, \varphi = \lambda \varphi$ to a system $\left\{ B\frac{d}{dx} + \tilde{\Omega}(x) \right\} \, \psi =\lambda \psi$, where $\tilde{\Omega}(x)=A^{-1}(x) B \cdot A'(x) + A^{-1} (x) \Omega(x) A(x)$, but leaves the spectral function unchanged, i.e. eigenvalues and normalized constants. 
The condition of being canonical, i.e. to be $\tilde{\Omega}(x) = \sigma_2 \cdot \tilde{p}(x) +\sigma_3\cdot \tilde{q}(x)$, requires $A'(x)\equiv 0$, i.e. $\omega(x)=const=\omega_0$. 
But already a constant matrix
\begin{equation}\label{c6:A_matrix}
A=
\left( \begin{array}{cc}
\cos\o_0 & \sin\o_0 \\ 
-\sin\o_0 & \cos\o_0
\end{array} 
\right)
\tag{*}
\end{equation}
transforms the problem $L(p, q, \alpha, \beta)$ into a problem $ L( A^{-1} \Omega A, \alpha - \omega_0, \beta - \omega_0)$, and these two different problems have the same spectral function. 
Fixing one of the boundary conditions assures that $\omega_0 = 0$, i.e., it reduces the set of unitary transformations of the form \eqref{c6:A_matrix} to the identity transformation.
Therefore, in what follows, we will consider one of the boundary conditions to be fixed (we often take $\beta = 0$, i.e. the boundary condition \eqref{c3:b_bound_cond} takes the form $y_1 (\pi, \lambda) = 0$).

Thus, it is clear that this condition is necessary for the unique solvability of the inverse problem by spectral function. 
But is it sufficient? 
For example, is "the uniqueness theorem by the spectral function" valid?:

\begin{theorem*}
If $\lambda_n(\Omega_1, \alpha_1, \beta)=\lambda_n (\Omega_2, \alpha_2, \beta)$ and $a_n(\Omega_1, \alpha_1, \beta)=a_n (\Omega_2, \alpha_2, \beta)$ for all $n \in \mathbb{Z}$, then $\Omega_1(x)=\Omega_2(x)$ a.e. and $\a_1=\a_2$?
\end{theorem*}

The positive answer to this question will be given in Section \ref{c6:sec_2}.

In this section, we formulate and prove 4 uniqueness theorems in inverse problems for operator $L(p, q, \alpha, 0)$, $\alpha \in \left( - \frac{\pi}{2}, \frac{\pi}{2} \right]$. 
For the coefficients $p$ and $q$, we assume that $p, q \in L^2_{\mathbb{R}} [0, \pi]$. 
As noted above (see Section \ref{c4:sec_2}), in this case, the eigenvalues $ \lambda_n (p, q, \alpha, 0) = \la_n(\Omega, \alpha)$, $n \in \mathbb{Z}$, of operator $L(p, q, \alpha, 0)$ are all simple and therefore there exist constants $c_n = c_n (p, q, \alpha, 0) = c_n (\Omega, \alpha)$, such that the eigenfunctions  $\varphi_n = \varphi (x, \lambda_n, \alpha)$ and $u_n = u(x, \lambda_n, 0)$ are connected by relation
\begin{equation}\label{c6:u_c_varphi}
u_n(x) = c_n \, \varphi_n(x), \qquad n \in \mathbb{Z}.
\end{equation}
Let us note, that from the notation $a_n = \| \varphi_n \|^2$, $b_n = \| u_n \|^2$ and \eqref{c6:u_c_varphi} it follows, that
\begin{equation}\label{c6:c_b_a}
\dfrac{c_n^2}{b_n} = \dfrac{1}{a_n}, \qquad n \in \mathbb{Z}.
\end{equation}

\begin{theorem}\label{c6:thm_1}
Let $p, q, \tilde{p}, \tilde{q} \in L^1_{\mathbb{R}} [0, \pi]$ and for all $n\in \mathbb{Z}$
\begin{align}
\lambda_n(\Omega,\alpha) = &\lambda_n(\tilde{\Omega}, \tilde{\alpha}), \label{c6:thm_1_lambda_n} \\
\tilde{c}_n=c_n (\tilde{\Omega}, \tilde{\alpha}) = &\nu
c_n(\Omega,\alpha)=\nu c_n, \quad (\nu= const),  \label{c6:thm_1_c_n}
\end{align}
then $\alpha=\tilde{\alpha}$, $\nu=1$ and $\Omega(x)=\tilde{\Omega}(x)$ almost everywhere (a.e.) on $[0,\pi]$.
\end{theorem}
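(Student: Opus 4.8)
The plan is to use the transformation operators of Chapter \ref{chapter_2} twice — once through the solution $\varphi$ normalized at $0$ and once through the solution $u$ normalized at $\pi$ — and then to play the two resulting, oppositely oriented, Volterra transmutations against each other. First I would pin down $\alpha$: by Theorem \ref{c3:thm_1} (with $\beta=0$) the eigenvalues satisfy $\lambda_n(\Omega,\alpha)=n-\alpha/\pi+o(1)$ and $\lambda_n(\tilde\Omega,\tilde\alpha)=n-\tilde\alpha/\pi+o(1)$ as $n\to\pm\infty$; subtracting these and invoking \eqref{c6:thm_1_lambda_n} gives $(\tilde\alpha-\alpha)/\pi=o(1)$, hence $\alpha=\tilde\alpha$. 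From now on write $\alpha$ for both, so that the two problems share the \emph{same} free solutions $\varphi_0(\cdot,\lambda,\alpha)$ and $u_0(\cdot,\lambda,0)=\psi_0(\cdot,\lambda,0)$.

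Next, set up the two transmutations. By Theorem \ref{c2:thm_2}, $\varphi_n=(E+\mathbb{K}_0)\varphi_0(\cdot,\lambda_n,\alpha)$ and $\tilde\varphi_n=(E+\tilde{\mathbb{K}}_0)\varphi_0(\cdot,\lambda_n,\alpha)$, where $\mathbb{K}_0,\tilde{\mathbb{K}}_0$ are the Volterra operators with kernels $K_0(x,t,\alpha),\tilde K_0(x,t,\alpha)$ (lower triangular, i.e. $[\mathbb{K}_0f](x)=\int_0^x K_0(x,t,\alpha)f(t)\,dt$). By the invertibility recorded in Section \ref{c2:sec_7}, $(E+\tilde{\mathbb{K}}_0)^{-1}$ is again a lower Volterra perturbation of the identity, so $E+\mathbb{P}:=(E+\mathbb{K}_0)(E+\tilde{\mathbb{K}}_0)^{-1}$ is bounded of the form $E+(\text{lower Volterra})$ and $\varphi_n=(E+\mathbb{P})\tilde\varphi_n$ for all $n$. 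Combining with $u_n=c_n\varphi_n$ and $\tilde u_n=\tilde c_n\tilde\varphi_n=\nu c_n\tilde\varphi_n$ (hypothesis \eqref{c6:thm_1_c_n}, and $c_n\neq0$ since $u_n\not\equiv0$) gives $u_n=\tfrac1\nu(E+\mathbb{P})\tilde u_n$. On the other hand, Theorem \ref{c2:thm_4} applied to the $\pi$-normalized solution yields $u_n=(E-\mathbb{H})u_0(\cdot,\lambda_n,0)$ and $\tilde u_n=(E-\tilde{\mathbb{H}})u_0(\cdot,\lambda_n,0)$, where $\mathbb{H},\tilde{\mathbb{H}}$ are Volterra operators of the \emph{opposite} orientation ($[\mathbb{H}f](x)=\int_x^\pi H_\pi(x,t,0)f(t)\,dt$); hence $u_n=(E+\mathbb{S})\tilde u_n$ with $E+\mathbb{S}:=(E-\mathbb{H})(E-\tilde{\mathbb{H}})^{-1}$ of the form $E+(\text{upper Volterra})$.

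Now the two representations of $u_n$ must agree on all $\tilde u_n$. Since $\tilde p,\tilde q\in L^1_{\mathbb{R}}[0,\pi]$ and $\alpha,\beta=0$ are real, the problem $L(\tilde\Omega,\alpha,0)$ is selfadjoint, and by Theorem \ref{c4:thm_4} its normalized eigenfunctions — which may be taken as the $\tilde u_n$ up to scalars, the eigenvalues being simple — form an orthonormal basis of $L^2([0,\pi];\mathbb{C}^2)$; in particular $\{\tilde u_n\}_{n\in\mathbb{Z}}$ is complete. Therefore the bounded operator $\tfrac1\nu(E+\mathbb{P})-(E+\mathbb{S})$ annihilates a complete system, so it vanishes: $\tfrac1\nu(E+\mathbb{P})=E+\mathbb{S}$. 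Reading this as an equality of integral kernels, the four contributions $\tfrac1\nu\,\delta(x-t)E$, $\tfrac1\nu P(x,t)$ (supported on $t<x$), $\delta(x-t)E$ and $S(x,t)$ (supported on $t>x$) live on mutually disjoint sets — the diagonal, the lower and the upper triangles — hence match separately. The diagonal comparison gives $\tfrac1\nu=1$, i.e. $\nu=1$; the off-diagonal comparisons give $\mathbb{P}=0$ (and $\mathbb{S}=0$). Consequently $E+\mathbb{K}_0=E+\tilde{\mathbb{K}}_0$, so $K_0(x,t,\alpha)=\tilde K_0(x,t,\alpha)$, and then \eqref{c2:K_0A} applied to both problems gives $\Omega(x)=K_{0A}(x,x,\alpha)B-BK_{0A}(x,x,\alpha)=\tilde K_{0A}(x,x,\alpha)B-B\tilde K_{0A}(x,x,\alpha)=\tilde\Omega(x)$ for a.e.\ $x$. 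Together with $\alpha=\tilde\alpha$ and $\nu=1$ this finishes the proof.

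\textbf{Main obstacle.} The genuinely delicate step is the operator bookkeeping in the third paragraph: one must know that $\mathbb{K}_0,\tilde{\mathbb{K}}_0,\mathbb{H},\tilde{\mathbb{H}}$ act boundedly on $L^2([0,\pi];\mathbb{C}^2)$, that inverting and composing them preserves the Volterra structure together with its orientation, and that an operator which is simultaneously of the form $E+(\text{lower Volterra})$ and $\nu E+(\text{upper Volterra})$ must be $E$ with $\nu=1$. For $L^2_{\mathbb{R}}$ potentials these facts are supplied by Theorem \ref{c2:thm_3} and the weak-boundedness discussion of Section \ref{c2:sec_7}; for the merely $L^1_{\mathbb{R}}$ potentials of the statement, where the kernels are only summable in one variable, one must argue the kernel splitting more carefully — e.g.\ through the uniformly convergent series $\sum_k Q_k$ defining $K_0$, or by a density reduction to the $L^2$ case — and this is where the proof requires the most care.
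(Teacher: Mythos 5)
Your proposal follows essentially the same route as the paper's proof: fix $\alpha=\tilde\alpha$ from the eigenvalue asymptotics, then play two transformation operators of opposite Volterra orientation (one tied to $0$, one tied to $\pi$) against each other using $u_n=c_n\varphi_n$, $\tilde c_n=\nu c_n$ and completeness of the eigenfunctions, so that an identity of the form $(1-\nu)f=(\text{lower Volterra})f+(\text{upper Volterra})f$ forces $\nu=1$ and both kernels to vanish; the paper simply takes the intertwining operators $E+\mathbb{K}$ and $E+\mathbb{H}$ between the two perturbed problems directly (Chapter \ref{chapter_2} and the cited literature) instead of your compositions $(E+\mathbb{K}_0)(E+\tilde{\mathbb{K}}_0)^{-1}$, and compares $Vf$ with $\nu Uf$ on expansions of arbitrary $f$ rather than on the eigenfunctions themselves, which is only bookkeeping. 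One small patch is needed at your last step: operator equality $E+\mathbb{K}_0=E+\tilde{\mathbb{K}}_0$ gives $K_0(x,t,\alpha)=\tilde K_0(x,t,\alpha)$ only for a.e.\ $(x,t)$ in the triangle, which does not by itself yield equality of the diagonal values entering \eqref{c2:K_0A}; the paper avoids this by concluding $\varphi(x,\lambda)\equiv\tilde\varphi(x,\lambda)$ and subtracting the two differential equations, so that $\bigl(\Omega(x)-\tilde\Omega(x)\bigr)\varphi(x,\lambda)=0$ a.e., whence $\Omega=\tilde\Omega$ a.e., and you should finish the same way. Finally, the $L^1$-versus-$L^2$ boundedness issue you flag is present in the paper's own proof as well (term-wise application of $U$, $V$ is justified there via the weak boundedness of Section \ref{c2:sec_7}, proved for $L^2$ potentials), so your proposal is no weaker on that point.
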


Theorem \ref{c6:thm_1} is an analogue of a similar uniqueness theorem in inverse Sturm--Liouville problem, proved in \cite{Harutyunyan:2009}, but here we can write \eqref{c6:thm_1_c_n} in the form $\tilde{c}_n=\nu c_n$ instead $\tilde{c}_n=c_n$ in \cite{Harutyunyan:2009}, and after we prove, that $\nu=1$. 
This theorem show that the set $\left\{c_n(\Omega,\alpha)\right\}_{n\in\mathbb{Z}}$ we can consider as supplementary spectral data which together with $\left\{\lambda_n(\Omega,\alpha)\right\}_{n\in\mathbb{Z}}$ uniquely defined operator $L(\Omega,\alpha)$.

\begin{theorem}\label{c6:thm_2}
Let $p, q, \tilde{p}, \tilde{q} \in L^2_{\mathbb{R}} [0, \pi]$ and for all $n\in\mathbb{Z}$ 
\begin{align}
\lambda_n(\Omega,\alpha) = &\lambda_n(\tilde{\Omega}, \tilde{\alpha}), \label{c6:thm_2_lambda_n} \\
a_n (\Omega, \alpha) = &\nu a_n(\tilde{\Omega},\tilde{\alpha}), \quad (\nu= const).  \label{c6:thm_2_a_n}
\end{align}
Then $\alpha=\tilde{\alpha}$, $\nu=1$ and $\Omega(x)=\tilde{\Omega}(x)$ a.e. on $[0,\pi]$.
\end{theorem}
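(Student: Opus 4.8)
The plan is to derive Theorem~\ref{c6:thm_2} from the already established Theorem~\ref{c6:thm_1}, by converting the norming‑constant data $\{a_n\}$ into the connection‑coefficient data $\{c_n\}$ of \eqref{c6:u_c_varphi}. With the fixed boundary condition $\beta=0$, the characteristic function of \eqref{c3:chi_characteristic} is $\chi(\lambda)=\varphi_1(\pi,\lambda,\alpha)$, and by \eqref{c4:omega_chi} the Wronskian satisfies $\omega(\lambda)=-\chi(\lambda)$. First I would record the identity
\[
c_n=\frac{\dot\chi(\lambda_n)}{a_n},\qquad n\in\mathbb{Z}.
\]
Indeed, in the self‑adjoint case treated in this section the eigenfunction components may be taken real, so $a_n=\|\varphi_n\|^2=\int_0^\pi[\varphi_1^2(x,\lambda_n)+\varphi_2^2(x,\lambda_n)]\,dx$, and \eqref{c4:int_varphi_omega_dot} gives $a_n=-\dot\omega(\lambda_n)/c_n=\dot\chi(\lambda_n)/c_n$; the same relation also drops out of $u_n(\pi)=(0,-1)^T=c_n\varphi_n(\pi)$ (whence $c_n=-1/\varphi_2(\pi,\lambda_n,\alpha)$) together with \eqref{c4:int_varphi_pi} and $\varphi_1(\pi,\lambda_n,\alpha)=0$. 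Note $\dot\chi(\lambda_n)\neq0$ since the eigenvalues are simple, so $c_n\neq0$.

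Next, the hypothesis $\lambda_n(\Omega,\alpha)=\lambda_n(\tilde\Omega,\tilde\alpha)$ for all $n$ says that the entire functions $\chi$ and $\tilde\chi$ have the same zeros; both are of exponential type (by the transformation‑operator representation \eqref{c3:chi_representation} and Lemma~\ref{c3:lem_1}), so $\chi(\lambda)=\mu\,\tilde\chi(\lambda)$ for some constant $\mu\neq0$ (see the obstacle below). Differentiating at a common zero $\lambda_n$ yields $\dot\chi(\lambda_n)=\mu\,\dot{\tilde\chi}(\lambda_n)$, hence, using $a_n=\nu\tilde a_n$ and the identity above,
\[
\tilde c_n=\frac{\dot{\tilde\chi}(\lambda_n)}{\tilde a_n}=\frac{\dot\chi(\lambda_n)/\mu}{a_n/\nu}=\frac{\nu}{\mu}\,c_n,\qquad n\in\mathbb{Z}.
\]
This is exactly hypothesis \eqref{c6:thm_1_c_n} of Theorem~\ref{c6:thm_1} with the constant $\nu/\mu$, while \eqref{c6:thm_1_lambda_n} holds by assumption; since $L^2_{\mathbb{R}}[0,\pi]\subset L^1_{\mathbb{R}}[0,\pi]$, Theorem~\ref{c6:thm_1} applies and gives $\alpha=\tilde\alpha$, $\nu/\mu=1$, and $\Omega(x)=\tilde\Omega(x)$ a.e. Finally, with $\alpha=\tilde\alpha$ and $\Omega=\tilde\Omega$ the two boundary value problems coincide, so $a_n(\Omega,\alpha)=a_n(\tilde\Omega,\tilde\alpha)$ for every $n$; comparing with \eqref{c6:thm_2_a_n} forces $\nu=1$, which completes the proof.

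The one genuinely substantial point is the rigidity statement ``same zeros $\Rightarrow$ $\chi=\mu\,\tilde\chi$'': since $\chi,\tilde\chi$ are entire of order $1$ with identical zero sets, their ratio has the form $Ce^{a\lambda}$, and the factor $e^{a\lambda}$ must be excluded. I would do this via the asymptotic representation $\chi(\lambda)=\sin(\lambda\pi+\alpha)+g(\lambda)$ with $g(\lambda)=o\!\left(e^{|\im \lambda|\pi}\right)$ (read off from \eqref{c3:chi_representation} and Lemma~\ref{c3:lem_1}): along rays on which $|\sin(\lambda\pi+\alpha)|$ and $|\sin(\lambda\pi+\tilde\alpha)|$ stay bounded below one has $|\chi|\asymp|\tilde\chi|\asymp e^{|\im \lambda|\pi}$ as $|\lambda|\to\infty$, so the indicator diagrams of $\chi$ and $\tilde\chi$ coincide (each is the segment from $-i\pi$ to $i\pi$), which forces $a=0$. (Comparing the leading terms along the real and imaginary axes even shows $\mu=1$ once $\alpha=\tilde\alpha$ is known, but the value of $\mu$ is irrelevant to the argument above.) Every remaining step is a mechanical combination of formulas from Chapters~\ref{chapter_3} and~\ref{chapter_4} with the already‑proved Theorem~\ref{c6:thm_1}.
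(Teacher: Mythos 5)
Your proof is correct, and its skeleton is the one the paper uses: with $\beta=0$ you relate norming constants to the connection coefficients through $a_n=\dot\chi(\lambda_n)/c_n$ (the paper's formula \eqref{c6:3-1}, obtained from \eqref{c4:int_varphi_1_2} and $u_n(\pi)=(0,-1)^T$), you compare the characteristic functions of the two problems through their common simple zeros, and you reduce everything to Theorem~\ref{c6:thm_1}. Where you genuinely diverge is in the key technical step. The paper proves the stronger Lemma that $\chi(\lambda)=\varphi_1(\pi,\lambda,\alpha,\Omega)$ is \emph{uniquely determined} by the spectrum: it writes the Hadamard factorization (treating genus $0$ and genus $1$ separately) and pins down the constants $c$, resp.\ $a,b$, by the Marchenko-type asymptotics $\varphi_1(\pi,i\mu,\alpha)/\sin(i\mu\pi+\alpha)\to 1$ together with the convergence Lemmas~A.1 and A.2 on the infinite products; this yields $\chi\equiv\tilde\chi$ exactly, hence equality of $\dot\chi(\lambda_n)$, and then $\nu=1$ comes straight out of Theorem~\ref{c6:thm_1}. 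You instead prove only the weaker rigidity statement $\chi=\mu\,\tilde\chi$, excluding the Hadamard factor $e^{a\lambda}$ by the indicator argument ($h_\chi(\theta)=h_{\tilde\chi}(\theta)=\pi|\sin\theta|$, so $\operatorname{Re}(ae^{i\theta})\equiv 0$), carry the unknown constant $\mu$ into the hypothesis of Theorem~\ref{c6:thm_1} as $\tilde c_n=(\nu/\mu)c_n$, and recover $\nu=1$ at the very end from $\Omega=\tilde\Omega$, $\alpha=\tilde\alpha$ and positivity of $a_n$. What each route buys: yours avoids entirely the infinite-product computations (Lemmas~A.1, A.2 and the determination of the factorization constants), at the price of invoking indicator/Phragm\'en--Lindel\"of machinery; the paper's heavier computation yields the explicit product representation of $\chi$ by the spectrum, a reusable fact in its own right. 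All the supporting ingredients you cite (simplicity of eigenvalues so that $\dot\chi(\lambda_n)\neq 0$, exponential type of $\chi$ from \eqref{c3:chi_representation} and Lemma~\ref{c3:lem_1}, $L^2\subset L^1$ so Theorem~\ref{c6:thm_1} applies) are available in the text, so I see no gap.
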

Theorem \ref{c6:thm_2} is an analogue of the famous Marchenko's uniqueness  theorem \cite{Marchenko:1950,Marchenko:1952,Levitan:1962}. 
This theorem was announced in \cite{Harutyunyan:1994}, but without proof. 
A similar result follows from the Theorem\,1.2 of paper \cite{Watson:1999}, which in our terms can be formulated in the following form.

\begin{theorem*}[\cite{Watson:1999}]
Let $p,q\in AC[0,\pi]$ and for all $n\in\mathbb{Z}$ $\lambda_n(\Omega,\alpha,\beta)=\lambda_n(\tilde{\Omega}, \alpha,\beta)$,  $a_n(\Omega,\alpha,\beta)=a_n(\tilde{\Omega}, \alpha,\beta)$ . 
Then $\tilde{\Omega}(x)\equiv \Omega(x)$ on $[0,\pi]$.
\end{theorem*}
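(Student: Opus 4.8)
The Watson-type statement that closes the excerpt is not an independent result: it is the particular case $\nu=1$, $\tilde\alpha=\alpha$ of Theorem~\ref{c6:thm_2}, once the general right-endpoint angle is normalized. Indeed, as recalled at the start of this section, the constant rotation $A=E\cos\beta+B\sin\beta$ carries $L(\Omega,\alpha,\beta)$ into $L(A^{-1}\Omega A,\alpha-\beta,0)$ without changing eigenvalues or norming constants, so the hypotheses $\lambda_n(\Omega,\alpha,\beta)=\lambda_n(\tilde\Omega,\alpha,\beta)$, $a_n(\Omega,\alpha,\beta)=a_n(\tilde\Omega,\alpha,\beta)$ transfer to $L(A^{-1}\Omega A,\alpha-\beta,0)$ and $L(A^{-1}\tilde\Omega A,\alpha-\beta,0)$; since $AC[0,\pi]\subset L^2_{\mathbb R}[0,\pi]$, Theorem~\ref{c6:thm_2} gives $A^{-1}\Omega A=A^{-1}\tilde\Omega A$ a.e., hence $\Omega=\tilde\Omega$ a.e. So the real content is Theorem~\ref{c6:thm_2}, and I will indicate at the end how Theorem~\ref{c6:thm_1} follows. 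For Theorem~\ref{c6:thm_2} the first step is elementary: by Theorem~\ref{c3:thm_1} (with $\beta=0$) $\lambda_n(\Omega,\alpha)=n-\alpha/\pi+h_n$ with $h_n\to0$, so $\alpha=\pi\lim_{n\to\infty}\bigl(n-\lambda_n(\Omega,\alpha)\bigr)$ is read off from the spectrum and $\alpha=\tilde\alpha$; likewise $a_n=\pi+\kappa_n$ with $\kappa_n\to0$ (see \eqref{c3:30}), so $\pi=\nu\pi$ and $\nu=1$. We are thus reduced to: equal $\{\lambda_n\}$, equal $\{a_n\}$ and equal $\alpha$ $\Longrightarrow$ equal $\Omega$.

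For the reduced statement the plan is to use a crossing transformation operator. Let $V=E+\mathbb K_0$ and $\tilde V=E+\tilde{\mathbb K}_0$ be the transformation operators of Chapter~\ref{chapter_2} built for the angle $\alpha$, so $V\varphi_0(\cdot,\lambda,\alpha)=\varphi(\cdot,\lambda,\alpha)$ for the potential $\Omega$ and $\tilde V\varphi_0(\cdot,\lambda,\alpha)=\tilde\varphi(\cdot,\lambda,\alpha)$ for $\tilde\Omega$; by Section~\ref{c2:sec_7} both are Volterra operators on $L^2\bigl([0,\pi];\mathbb C^2\bigr)$ that are invertible with Volterra inverses. Set $\mathcal T=\tilde V V^{-1}=E+\mathbb P$; then $\mathbb P$ is again a Volterra (lower-triangular) integral operator, and $\mathcal T\varphi(\cdot,\lambda,\alpha)=\tilde\varphi(\cdot,\lambda,\alpha)$ for every $\lambda$. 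In particular $\mathcal T\varphi_n=\tilde\varphi_n$, where $\varphi_n=\varphi(\cdot,\lambda_n,\alpha)$ and $\tilde\varphi_n=\tilde\varphi(\cdot,\lambda_n,\alpha)$ are eigenfunctions of the two selfadjoint problems (here $\lambda_n$ is a common eigenvalue by the previous step). Since $\|\varphi_n\|^2=a_n(\Omega,\alpha)=a_n(\tilde\Omega,\alpha)=\|\tilde\varphi_n\|^2$, the operator $\mathcal T$ sends the orthonormalized system $h_n=\varphi_n/\sqrt{a_n}$ onto $\tilde h_n=\tilde\varphi_n/\sqrt{a_n}$; by Theorem~\ref{c4:thm_4} both $\{h_n\}$ and $\{\tilde h_n\}$ are complete orthonormal systems, so $\mathcal T$ maps one orthonormal basis onto another and hence is unitary.

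Now a unitary operator of the form $E+(\text{Volterra})$ must equal $E$: indeed $\mathcal T^{-1}=V\tilde V^{-1}=E+\mathbb P'$ with $\mathbb P'$ again lower-triangular Volterra, while $\mathcal T^{-1}=\mathcal T^{*}=E+\mathbb P^{*}$ with $\mathbb P^{*}$ upper-triangular; thus $\mathbb P'=\mathbb P^{*}$ has its kernel supported on the diagonal and therefore vanishes, so $\mathcal T=E$. Consequently $\tilde\varphi(x,\lambda,\alpha)=\varphi(x,\lambda,\alpha)$ for all $x,\lambda$; substituting this common function into the Dirac equation gives $\bigl(\tilde\Omega(x)-\Omega(x)\bigr)\varphi(x,\lambda,\alpha)=0$, and choosing two values of $\lambda$ for which the solutions are linearly independent for a.e.\ $x$ (their Wronskian is a nonzero constant) forces $\tilde\Omega(x)=\Omega(x)$ a.e.; combined with $\alpha=\tilde\alpha$ and $\nu=1$ this proves Theorem~\ref{c6:thm_2}. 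The hard part is the second paragraph: it is exactly the invertibility and weak boundedness of the transformation operators on $L^2$, furnished by Section~\ref{c2:sec_7}, that makes the crossing operator available; without it one would instead have to form the kernel
\[
F(x,t)=\sum_{n\in\mathbb Z}\Bigl[a_n^{-1}\varphi_0(x,\lambda_n,\alpha)\varphi_0(t,\lambda_n,\alpha)^{T}-\pi^{-1}\varphi_0\bigl(x,n-\tfrac\alpha\pi,\alpha\bigr)\varphi_0\bigl(t,n-\tfrac\alpha\pi,\alpha\bigr)^{T}\Bigr],
\]
which converges in $L^2$ precisely because $\{\kappa_n\},\{h_n\}\in\ell^2$ for $p,q\in L^2_{\mathbb R}$, write down the Gelfand--Levitan--Marchenko equation $K_0(x,t,\alpha)+F(x,t)+\int_0^x K_0(x,s,\alpha)F(s,t)\,ds=0$ for $K_0(x,\cdot,\alpha)$, prove its unique solvability (again equivalent to invertibility of $E+\mathcal F=\tilde V^{*}\tilde V$), and recover $\Omega$ from \eqref{c2:K_0A}.

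Finally, Theorem~\ref{c6:thm_1} reduces to this. As before $\tilde\alpha=\alpha$ from the eigenvalue asymptotics. Since $c_n=-1/\varphi_2(\pi,\lambda_n,\alpha)$ and $\varphi_2(\pi,\lambda_n,\alpha)=-\cos(\lambda_n\pi+\alpha)+o(1)=-(-1)^n+o(1)$, one gets $c_n=(-1)^n+o(1)$, hence $|\nu|=1$ and, comparing along even $n$, $\nu=1$, so $\tilde c_n=c_n$. It remains to check $a_n(\tilde\Omega,\alpha)=a_n(\Omega,\alpha)$, after which the crossing-operator argument above applies verbatim. For this, note that $\chi(\lambda)=\varphi_1(\pi,\lambda,\alpha)=-\omega(\lambda)$ and $\tilde\chi$ are entire functions of exponential type $\pi$ with the same zero set $\{\lambda_n\}$ and the same leading term $\sin(\lambda\pi+\alpha)$; by Hadamard's factorization $\chi=\mathrm{const}\cdot\tilde\chi$, and comparing the two along $\lambda=i\mu$, $\mu\to+\infty$ (where $\chi(i\mu),\tilde\chi(i\mu)\sim(2i)^{-1}e^{-i\alpha}e^{\mu\pi}$) gives $\chi\equiv\tilde\chi$; then $a_n=\dot\chi(\lambda_n)/c_n=\dot{\tilde\chi}(\lambda_n)/\tilde c_n=\tilde a_n$ by \eqref{c4:int_varphi_omega_dot}. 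This delivers $\Omega=\tilde\Omega$ a.e., $\alpha=\tilde\alpha$, $\nu=1$, proving Theorem~\ref{c6:thm_1}.
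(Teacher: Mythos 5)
Your proposal is sound, but note first that the paper itself gives no proof of this statement: it is quoted from Watson's paper purely for comparison with Theorem \ref{c6:thm_2}, so the meaningful comparison is with the paper's proofs of Theorems \ref{c6:thm_1} and \ref{c6:thm_2} in Sections \ref{c6:sec_2}--\ref{c6:sec_3}. Your reduction of the Watson statement to Theorem \ref{c6:thm_2} via the constant rotation $A=E\cos\beta+B\sin\beta$ is exactly the normalization described at the start of Chapter \ref{chapter_6} (it preserves the canonical form, the eigenvalues and the norming constants), and $AC[0,\pi]\subset L^2_{\mathbb{R}}[0,\pi]$, so that part is in order. Where you genuinely diverge is in proving Theorem \ref{c6:thm_2} itself: the paper routes it through Theorem \ref{c6:thm_1}, first using Hadamard factorization and the asymptotics along $\lambda=i\mu$ to show that the characteristic function $\varphi_1(\pi,\lambda,\alpha,\Omega)$, hence $\dot{\varphi}_1(\pi,\lambda_n)$, is determined by the spectrum, which converts equality of the $a_n$ into $\tilde{c}_n=\nu c_n$, and then proves Theorem \ref{c6:thm_1} with two crossing transformation operators (one anchored at $0$, one at $\pi$) fed into the eigenfunction expansions, the complementary triangular supports of the two kernels forcing both to vanish together with $\nu=1$. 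You instead use a single crossing operator $\mathcal{T}=\tilde{V}V^{-1}=E+\mathbb{P}$, make it unitary from $a_n=\tilde{a}_n$ plus completeness of the orthonormal eigenfunction systems (Theorem \ref{c4:thm_4}), and kill $\mathbb{P}$ by the same triangular-support phenomenon, now in the form $\mathcal{T}^{*}=\mathcal{T}^{-1}$ with lower- versus upper-triangular kernels. This buys a shorter argument that bypasses the $c_n$'s and the Hadamard step entirely (your $\nu=1$ and $\alpha=\tilde{\alpha}$ come straight from the asymptotics of $a_n$ and $\lambda_n$), at the price of leaning on the $L^2$ boundedness and Volterra invertibility of the transformation operators from Section \ref{c2:sec_7}, which is available here because $p,q\in L^2$.

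One small repair is needed at the very end: your appeal to ``two values of $\lambda$ whose solutions have a nonzero constant Wronskian'' does not work, since $\varphi(\cdot,\lambda_1,\alpha)$ and $\varphi(\cdot,\lambda_2,\alpha)$ solve different systems (their Wronskian is not constant, and it even vanishes at $x=0$ because both start from the same initial vector). The conclusion is immediate by a different one-line remark: $\Omega-\tilde{\Omega}=\Delta p\,\sigma_2+\Delta q\,\sigma_3$ has determinant $-(\Delta p^2+\Delta q^2)$, and it annihilates the vector $\varphi(x,\lambda)$, which never vanishes by uniqueness for the Cauchy problem; hence $\Delta p=\Delta q=0$ almost everywhere.
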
  

It is easy to see the difference between these theorems. See also the papers \cite{Horvath:2005} and \cite{Wei-Wei:2015}.

\begin{theorem}\label{c6:thm_3}
Let $p, q \in L^2_{\mathbb{R}} [0, \pi]$ and  for all $n\in\mathbb{Z}$
\begin{equation}\label{c6:thm_3_lambda_n}
        \lambda_n(\Omega,\alpha) = \lambda_n(\tilde{\Omega}, \tilde{\alpha}), 
        \qquad
        \lambda_n(\Omega,\alpha_1)= \lambda_n(\tilde{\Omega}, \tilde{\alpha}_1),
\end{equation}
where $\alpha_1\neq \alpha$. 
Then $\alpha=\tilde{\alpha}$,
$\alpha_1=\tilde{\alpha}_1$ and $\Omega(x)=\tilde{\Omega}(x)$ a.e. on $[0,\pi]$.
\end{theorem}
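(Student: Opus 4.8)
\textbf{Proof proposal for Theorem \ref{c6:thm_3}.}

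The plan is to reduce this ``two-spectra'' uniqueness statement to Theorem \ref{c6:thm_2}, which has already been proved. The two spectra $\{\lambda_n(\Omega,\alpha)\}$ and $\{\lambda_n(\Omega,\alpha_1)\}$ (with $\alpha_1\neq\alpha$, both boundary conditions at $\pi$ fixed and equal) should determine the sequence of norming constants $\{a_n(\Omega,\alpha)\}$. This is exactly the content of Theorem \ref{thm2-3-5}: for $p,q\in L^2_{\mathbb R}[0,\pi]$, $a_n(\Omega,\alpha,\beta)$ is expressed through the two spectra $\{\lambda_k(\alpha)\}_{k\in\mathbb Z}$ and $\{\lambda_k(\epsilon)\}_{k\in\mathbb Z}$ with $\epsilon\in(\alpha,\pi/2]$, by the explicit formula involving the infinite product. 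So first I would fix $\beta=0$ as agreed in the chapter, and without loss of generality assume $\alpha<\alpha_1$ (renaming the two boundary parameters if necessary, and noting the hypotheses are symmetric in the pair). Then from \eqref{c6:thm_3_lambda_n} the two spectra of $\Omega$ at $(\alpha,\alpha_1)$ coincide termwise with the two spectra of $\tilde\Omega$ at $(\tilde\alpha,\tilde\alpha_1)$.

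The delicate point is that Theorem \ref{thm2-3-5} requires the spectra to be enumerated according to the scheme \eqref{c3:51}--\eqref{c3:52}, and that the smaller parameter plays the role of $\alpha$ and the larger the role of $\epsilon$; one must check that the numbering of $\{\lambda_n(\Omega,\alpha)\}$ and $\{\lambda_n(\tilde\Omega,\tilde\alpha)\}$ induced by the common numerical values is consistent on both sides. This follows from the monotonicity statement established in Chapter \ref{chapter_3}: each $\lambda_k(\Omega,\gamma)$ is a strictly decreasing function of $\gamma$ on $(-\pi/2,\pi/2]$ and the interlacing \eqref{c3:52} holds; since the two sequences of numbers agree, the index assigned to a given eigenvalue is the same whether it is viewed as coming from $\Omega$ or from $\tilde\Omega$, and likewise the relative order of $\alpha$ versus $\alpha_1$ is recovered from the interlacing pattern of the two spectra. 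Hence $\tilde\alpha<\tilde\alpha_1$ as well, and applying the formula of Theorem \ref{thm2-3-5} to both operators with $(\alpha,\epsilon)=(\alpha,\alpha_1)$ and $(\tilde\alpha,\tilde\alpha_1)$ respectively, we get
\[
a_n(\Omega,\alpha,0)=\frac{\sin(\alpha_1-\alpha)}{\lambda_n(\alpha)-\lambda_n(\alpha_1)}\prod_{\substack{k=-\infty\\k\neq n}}^{\infty}\frac{\lambda_k(\alpha)-\lambda_n(\alpha)}{\lambda_k(\alpha_1)-\lambda_n(\alpha)}
\]
and the analogous expression for $a_n(\tilde\Omega,\tilde\alpha,0)$, in which every $\lambda_k$ is replaced by $\tilde\lambda_k$. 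Since all $\lambda_k=\tilde\lambda_k$ by \eqref{c3:thm_3_lambda_n}, the only possible discrepancy is in the prefactor $\sin(\alpha_1-\alpha)$ versus $\sin(\tilde\alpha_1-\tilde\alpha)$.

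To kill that discrepancy I would first pin down $\alpha_1-\alpha=\tilde\alpha_1-\tilde\alpha$. One way: the asymptotics \eqref{c3:ev_asymptotics}, i.e.\ $\lambda_n(\Omega,\gamma,0)=n-\gamma/\pi+o(1)$, applied to the sequence $\{\lambda_n(\alpha)\}=\{\tilde\lambda_n(\tilde\alpha)\}$, forces $\alpha\equiv\tilde\alpha\pmod{\pi}$; restricting both to $(-\pi/2,\pi/2]$ gives $\alpha=\tilde\alpha$, and the same argument on the second spectrum gives $\alpha_1=\tilde\alpha_1$. In particular $\sin(\alpha_1-\alpha)=\sin(\tilde\alpha_1-\tilde\alpha)$, so $a_n(\Omega,\alpha,0)=a_n(\tilde\Omega,\tilde\alpha,0)$ for every $n$. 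Now the hypotheses of Theorem \ref{c6:thm_2} are met with $\nu=1$: the eigenvalues coincide and the norming constants coincide. By that theorem we conclude $\alpha=\tilde\alpha$ (already known), $\nu=1$ (automatic), and $\Omega(x)=\tilde\Omega(x)$ a.e.\ on $[0,\pi]$, which together with $\alpha_1=\tilde\alpha_1$ is the assertion. The main obstacle I anticipate is purely bookkeeping — verifying that the enumeration conventions of Theorem \ref{thm2-3-5} transfer correctly to the pair $(\tilde\Omega,\tilde\alpha,\tilde\alpha_1)$ and that one may indeed take $\alpha<\alpha_1$ without loss of generality — rather than anything analytically deep, since the heavy lifting is done by Theorems \ref{thm2-3-5} and \ref{c6:thm_2}.
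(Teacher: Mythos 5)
Your proposal is correct and follows essentially the same route as the paper: the asymptotics \eqref{c3:ev_asymptotics} give $\alpha=\tilde\alpha$ and $\alpha_1=\tilde\alpha_1$, then the two-spectra representation of the norming constants (Theorem \ref{thm2-3-5}, with $\epsilon=\alpha_1$) yields $a_n(\Omega,\alpha)=a_n(\tilde\Omega,\tilde\alpha)$ for all $n$, and Theorem \ref{c6:thm_2} finishes the argument. The bookkeeping about enumeration and taking $\alpha<\alpha_1$ without loss of generality, which you flag as the delicate point, is handled implicitly in the paper in exactly the way you describe.
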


It is natural to call this theorem "Borg uniqueness theorem'', since it is similar to the case of Sturm--Liouville problem (see \cite{Zhikov:1967, Levitan:1973, Borg:1946, Levitan-Sargsyan:1988}).

\begin{theorem}\label{c6:thm_4}
Let $p, q \in L^1_{\mathbb{R}} [0, \pi]$ and  for some fixed $n_0\in\mathbb{Z}$ and for a distinct convergent sequence $\left\{\alpha_k\right\}_{k=1}^{\infty}$ $\left(-{\pi}/{2}<\alpha_k
\leqslant {\pi}/{2}\right)$
\begin{equation}\label{c6:thm_4_lambda_n_k}
   \lambda_{n_0}(\Omega,\alpha_k)=
    \lambda_{n_0}(\tilde{\Omega}, \alpha_k), \quad k=1,2,3,\ldots \, .
\end{equation}
Then $\Omega(x)=\tilde{\Omega}(x)$ a.e. on $[0,\pi]$.
\end{theorem}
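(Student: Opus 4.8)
\emph{Strategy.} The plan is to exploit that, for fixed index, the eigenvalue is an \emph{analytic} function of the boundary parameter, use the identity theorem to upgrade agreement on a convergent sequence to agreement on the whole parameter range, and then reduce to the uniqueness theorems already established. Fix $n_{0}$. The map $\alpha\mapsto\lambda_{n_{0}}(\Omega,\alpha)=\lambda_{n_{0}}(p,q,\alpha,0)$ is a translate of the eigenvalue function of the family $\{L(\Omega,\alpha,0)\}$: by Definition \ref{c5:def_1} and \eqref{c5:lambda_n_gamma}, $\lambda_{n_{0}}(\Omega,\alpha)=\lambda(\Omega;\alpha-\pi n_{0})$ where $\lambda(\Omega;\cdot)$ denotes the EVF. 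By part $2$ of Theorem \ref{c5:thm_1} this EVF is real analytic, i.e. it extends to a single-valued analytic function on a connected open set of $\mathbb{C}$ containing the real axis; the same holds for $\tilde{\Omega}$. Note that only this analyticity statement is needed, and its proof uses merely that $\varphi_{1}(\pi,\lambda,\gamma)$ is entire in $(\lambda,\gamma)$ (true for $p,q\in L^{1}_{\mathbb{R}}$ by Theorem \ref{c1:thm_1}), the implicit function theorem, and the self-adjoint-case facts $\varphi_{2}(\pi,\lambda(\gamma),\gamma)\neq0$ and $\int_{0}^{\pi}|\varphi|^{2}\,dx>0$, so it is available under the hypotheses of the present theorem.

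\emph{Identity theorem.} The distinct convergent sequence $\{\alpha_{k}\}\subset(-\pi/2,\pi/2]$ has a limit point $\alpha_{\ast}\in[-\pi/2,\pi/2]$, which lies in the common open domain of analyticity of $\alpha\mapsto\lambda_{n_{0}}(\Omega,\alpha)$ and $\alpha\mapsto\lambda_{n_{0}}(\tilde{\Omega},\alpha)$ (the domain is open around all of $\mathbb{R}$, so the fact that $-\pi/2$ itself is excluded from the interval is irrelevant). By \eqref{c6:thm_4_lambda_n_k} the analytic function $\alpha\mapsto\lambda_{n_{0}}(\Omega,\alpha)-\lambda_{n_{0}}(\tilde{\Omega},\alpha)$ vanishes on a set accumulating at $\alpha_{\ast}$, hence vanishes identically. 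Thus $\lambda_{n_{0}}(\Omega,\alpha)=\lambda_{n_{0}}(\tilde{\Omega},\alpha)$ for every real $\alpha$, i.e. $\lambda(\Omega;\gamma)=\lambda(\tilde{\Omega};\gamma)$ for all $\gamma\in\mathbb{R}$; applying \eqref{c5:lambda_n_gamma} once more this yields $\lambda_{n}(\Omega,\alpha)=\lambda_{n}(\tilde{\Omega},\alpha)$ for \emph{all} $n\in\mathbb{Z}$ and all $\alpha\in(-\pi/2,\pi/2]$, i.e. the spectra of $L(\Omega,\alpha,0)$ and $L(\tilde{\Omega},\alpha,0)$ coincide for every admissible $\alpha$.

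\emph{Reduction.} If $p,q\in L^{2}_{\mathbb{R}}[0,\pi]$ one may simply pick two distinct values $\alpha\neq\alpha_{1}$ and invoke the Borg-type Theorem \ref{c6:thm_3} (with $\tilde{\alpha}=\alpha$, $\tilde{\alpha}_{1}=\alpha_{1}$) to conclude $\Omega=\tilde{\Omega}$ a.e. In the general $L^{1}$ case, fix one $\alpha$: the characteristic function $\chi_{\Omega,\alpha,0}(\lambda)=\varphi_{1}(\pi,\lambda,\alpha)$ is entire of exponential type $\pi$ with $\chi_{\Omega,\alpha,0}(\lambda)=\sin(\lambda\pi+\alpha)+o\bigl(e^{|\im\lambda|\pi}\bigr)$ (see \eqref{c3:chi_representation} with $\beta=0$); since it has the same zeros and the same leading asymptotics as $\chi_{\tilde{\Omega},\alpha,0}$, Hadamard's factorization forces $\varphi_{1}(\pi,\cdot,\alpha;\Omega)\equiv\varphi_{1}(\pi,\cdot,\alpha;\tilde{\Omega})$. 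By the Wronskian identity \eqref{c4:omega_chi} one has $\varphi_{1}(\pi,\lambda,\alpha)=-\bigl[u_{1}(0,\lambda)\cos\alpha+u_{2}(0,\lambda)\sin\alpha\bigr]$ with $u(x,\lambda)=u(x,\lambda,0)$ independent of $\alpha$; letting $\alpha$ range over the interval gives $u_{1}(0,\cdot;\Omega)\equiv u_{1}(0,\cdot;\tilde{\Omega})$ and $u_{2}(0,\cdot;\Omega)\equiv u_{2}(0,\cdot;\tilde{\Omega})$. From $u(0,\lambda_{n},0)=c_{n}(\Omega,\alpha)\,\varphi(0,\lambda_{n},\alpha)=c_{n}(\Omega,\alpha)(\sin\alpha,\,-\cos\alpha)^{T}$ together with $\lambda_{n}(\Omega,\alpha)=\lambda_{n}(\tilde{\Omega},\alpha)$ we get $c_{n}(\Omega,\alpha)=c_{n}(\tilde{\Omega},\alpha)$ for all $n$. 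Theorem \ref{c6:thm_1} (with $\tilde{\alpha}=\alpha$, $\nu=1$) then gives $\Omega(x)=\tilde{\Omega}(x)$ a.e. on $[0,\pi]$.

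\emph{Main difficulty.} The only genuinely delicate point is the analyticity of $\lambda_{n_{0}}(\Omega,\cdot)$ across the real axis — that agreement on a convergent sequence is as strong as agreement on an interval — and this is exactly what Theorem \ref{c5:thm_1}$(2)$ provides. After that, the argument is bookkeeping: the EVF shift \eqref{c5:lambda_n_gamma}, the Wronskian relations, a standard Hadamard factorization whose only subtlety (fixing the multiplicative constant) is settled by the $\sin(\lambda\pi+\alpha)$ asymptotics, and the previously proved uniqueness theorems \ref{c6:thm_1} (or \ref{c6:thm_3}).
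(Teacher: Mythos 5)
Your proof is correct, and its core is exactly the paper's argument: write $\lambda_{n_0}(\Omega,\alpha)=\lambda(\alpha-\pi n_0)$ as a translate of the EVF, use the real analyticity from Theorem \ref{c5:thm_1}(2) together with the identity theorem to upgrade agreement on the convergent sequence $\{\alpha_k\}$ to equality of the EVFs on all of $\mathbb{R}$, hence equality of the spectra $\{\lambda_n(\Omega,\alpha)\}$ and $\{\lambda_n(\tilde\Omega,\alpha)\}$ for every $\alpha$, and then conclude by the Borg-type Theorem \ref{c6:thm_3} for two distinct values of $\alpha$. Where you genuinely diverge is in the final reduction: the paper stops at ``now follows from Theorem \ref{c6:thm_3}'', even though Theorem \ref{c6:thm_3} (and Theorem \ref{c5:thm_1} as stated) carry the hypothesis $p,q\in L^2_{\mathbb{R}}[0,\pi]$ while the present theorem is stated for $L^1_{\mathbb{R}}[0,\pi]$; you notice this mismatch, observe that the analyticity proof only needs entirety of $\varphi_1(\pi,\lambda,\gamma)$ plus the implicit function theorem (valid for $L^1$), and supply an $L^1$-compatible reduction -- Hadamard factorization pinning down $\varphi_1(\pi,\cdot,\alpha)$ from its zeros and the $\sin(\lambda\pi+\alpha)$ asymptotics, the Wronskian relation \eqref{c4:omega_chi} to recover $u_{1,2}(0,\cdot)$ and hence the constants $c_n$, and then the $L^1$ uniqueness Theorem \ref{c6:thm_1}. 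This buys a proof matching the stated $L^1$ hypothesis at the cost of redoing, in the spirit of Section \ref{c6:sec_3}, the determination of the characteristic function by one spectrum (which the paper only carries out under $L^2$ via Lemmas A.1--A.2); the paper's route is shorter but implicitly uses the $L^2$ assumption.
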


Theorem \ref{c6:thm_4} is an analogue of McLaughlin--Rundell Theorem (see \cite{Levinson:1949}) for Sturm--Liouville inverse problem.

\section{The proof of Theorem~\ref{c6:thm_1}}\label{c6:sec_2}

According to condition \eqref{c6:thm_1_lambda_n} and asymptotics \eqref{c3:ev_asymptotics} we have
$$\lambda_n(\Omega,\alpha)=n-\frac{\alpha}{\pi}+o(1)=
n-\frac{\tilde{\alpha}}{\pi}+o(1)=
\lambda_n(\tilde{\Omega},\tilde{\alpha})$$
when $n\to\pm \infty$. It follows, that $\alpha=\tilde{\alpha}$.

It is known (see \cite{Albeverio-Hryniv-Mykytyuk:2005}, \cite{Gasymov-Levitan:1966} and Chapter \ref{chapter_2}) that there exist the  transformation operator $U=E+\mathbb{K}$, which transform the solution $\varphi(x,\lambda)$ of Caushy problem $\ell y = \lambda y$, $y(0,\lambda) = (\sin \alpha,-\cos \alpha)^T$ to the solution $\tilde{\varphi}(x,\lambda)$ of Caushy problem $\tilde{\ell}y \equiv By'+\tilde{\Omega}(x) y = \lambda y$, $y(0,\lambda) = (\sin \alpha,-\cos \alpha)^T$, i.e.
\begin{equation}\label{c6:2-1}
    \tilde{\varphi}(x,\lambda) =  (E+\mathbb{K})\varphi =
    \varphi(x,\lambda)+\int\limits^x_0 K(x,t) \varphi(t,\lambda)\, dt,
\end{equation}
where the kernel $K(\,\cdot\, ,\,\cdot\,)$ have the properties,
described in \cite{Albeverio-Hryniv-Mykytyuk:2005} (see also \cite{Gasymov-Levitan:1966} and Chapter \ref{chapter_2}). 
Also (see \cite{Harutyunyan:2008-2} and Section \ref{c2:sec_5})  there exist transformation operator
$V=E+\mathbb{H}$, which transfer the solution
$u(x,\lambda)=u(x,\lambda,\Omega)$ to solution
$\tilde{u}(x,\lambda)=u(x,\lambda,\tilde{\Omega})$:
\begin{equation}\label{c6:2-2}
    \tilde{u}(x,\lambda)=(E+\mathbb{H})u=
    u(x,\lambda)+\int\limits^\pi_x H(x,t) u(t,\lambda)\, dt=V u.
\end{equation}
Besides, it is known (see Theorem \ref{c4:thm_4}, Definition \ref{c4:def_1} and Section \ref{c2:sec_5}),
that each system of eigenfunctions
$\left\{\varphi_n\right\}_{n=-\infty}^\infty$,
$\left\{\tilde{\varphi}_n\right\}_{n=-\infty}^\infty$,
$\left\{u_n\right\}_{n=-\infty}^\infty$ and
$\left\{\tilde{u}_n\right\}_{n=-\infty}^\infty$
form a complete system in $L^2([0,\pi];\mathbb{C}^2)$ (see Chapter \ref{chapter_4}), i.e., the expansions
\begin{align}
    & \label{c6:2-3} f(x)=\sum\limits^\infty_{n=-\infty} \frac{1}{a_n} (f,\varphi_n) \varphi_n(x), \quad
   f(x)=\sum\limits^\infty_{n=-\infty} \frac{1}{\tilde{a}_n} (f,\tilde{\varphi}_n) \tilde{\varphi}_n(x),\\
    & \label{c6:2-5} f(x)=\sum\limits^\infty_{n=-\infty} \frac{1}{b_n} (f,u_n) u_n(x),\quad 
f(x)=\sum\limits^\infty_{n=-\infty} \frac{1}{\tilde{b}_n} (f,\tilde{u}_n) \tilde{u}_n(x)
\end{align}
converges to $f$ in $L^2$-sense for arbitrary $f\in L^2([0,\pi];\mathbb{C}^2)$.

Applying now operator $U=E+\mathbb{K}$ to the both sides of \eqref{c6:2-3}, take  into account \eqref{c6:2-1} and also that we can apply $U$ term-wise, (since under condition  $ p,q \in L^2_{\mathbb{R}}[0,\pi]$, it is easy to prove, that operator $U$ is "weakly bounded" (see Section \ref{c2:sec_7}), we obtain
\begin{equation}\label{c6:2-7}
    U f=\sum\limits^\infty_{k=-\infty} \frac{1}{a_n} (f, \varphi_n) U\varphi_n=
    \sum\limits_{k\in \mathbb{Z}} \frac{1}{a_n} (f, \varphi_n)
    \tilde{\varphi}_n .
\end{equation}
Applying now operator $V=\mathbb{E}+\mathbb{H}$ to the both sides of \eqref{c6:2-5}, taking into account \eqref{c6:u_c_varphi}, \eqref{c6:c_b_a}, \eqref{c6:thm_1_c_n}, \eqref{c6:2-2}, \eqref{c6:2-7} and applying $V$ term-wise yields
\begin{align*}
Vf= &
f+\mathbb{H}f=\sum\limits_{k\in \mathbb{Z}} \frac{1}{b_n} (f, u_n) Vu_n = \\
= & \sum\limits_{k\in \mathbb{Z}} \frac{1}{b_n} (f, c_n\varphi_n) \tilde{u}_n = \sum\limits_{k\in \mathbb{Z}} \frac{\bar{c}_n}{b_n} (f, \varphi_n)\tilde{c}_n \tilde{\varphi}_n = \\
= &\nu
    \sum\limits_{k\in \mathbb{Z}} \frac{\bar{c}_n c_n}{b_n}
    (f, \varphi_n) \tilde{\varphi}_n= \nu \sum\limits_{k\in \mathbb{Z}} \frac{1}{a_n}
    (f, \varphi_n) \tilde{\varphi}_n=\nu U f.
\end{align*}
    
    So, we have
$f+\mathbb{H}f=\nu U f=\nu f+\nu \mathbb{K} f,$ or, the same
\begin{equation}\label{c6:2-8}
    (1-\nu) f(x)+\int\limits^\pi_x H(x,t) f(t)\, dt=
    \nu \int\limits^x_0 K(x,t) f(t)dt.
\end{equation}
Since \eqref{c6:2-8} hold for arbitrary $f\in L^2([0,\pi],
\mathbb{C}^2)$,  it follows that $K(x,t)=0$ a.e. for $t\in (0,x]$,
$H(x,t)=0$ a.e. for $x<t\leqslant\pi$ and $\nu=1$. From
\eqref{c6:2-1} we have obtain that $\tilde{\varphi}(x,\lambda)=
\varphi(x,\lambda)$ for all $x\in [0,\pi]$, and therefore
$[\Omega(x)-\tilde{\Omega}(x)]\varphi(x,\lambda)=0$ a.e. on
$(0,\pi)$. Since $\varphi(x,\lambda)$ is continuous and
$\varphi(x,\lambda)\not\equiv 0$, we obtain, that
$\tilde{\Omega}(x)=\Omega(x)$ a.e. on $[0,\pi]$. Theorem
\ref{c6:thm_1} is proved.

\section{The proof of Theorem~\ref{c6:thm_2}}\label{c6:sec_3}
As we note above, it follows from condition \eqref{c6:thm_1_lambda_n}, that $\alpha=\tilde{\alpha}$.  
Also it follows from  $u(\pi,\lambda)=(0, -1)^T$ and \eqref{c6:c_b_a} that $\varphi_1(\pi, \lambda_n (\Omega,\alpha),\alpha) = \frac{1}{c_n}
u_{n1}(\pi)=0$. So, from equality
$$
a_n=\varphi_1(\pi,\lambda_n) \dot{\varphi}_2(\pi,\lambda_n)-\dot{\varphi}_1(\pi,\lambda_n)  \varphi_2(\pi,\lambda_n)
$$
(see \eqref{c4:int_varphi_1_2}) will follow, that
\begin{align}
& a_n(\Omega, \alpha)=-\dot{\varphi}_1 (\pi, \lambda_n(\Omega,
\alpha),\alpha)
\varphi_2 (\pi, \lambda_n(\Omega, \alpha,\alpha) \notag\\
&=-\dot{\varphi}_1 (\pi, \lambda_n(\Omega, \alpha),\alpha) \frac{u_2
(\pi, \lambda_n(\Omega,\alpha))}{c_n(\Omega,\alpha)}=
\frac{\dot{\varphi}_1 (\pi, \lambda_n(\Omega, \alpha),\alpha,
\Omega)}{c_n(\Omega, \alpha)}. \label{c6:3-1}
\end{align}
If we prove that from condition \eqref{c6:thm_1_lambda_n}
follow
\begin{equation}\label{c6:3-2}
    \dot{\varphi}_1(\pi,\lambda_n(\Omega,\alpha),\alpha,\Omega)=
    \dot{\varphi}_1(\pi,\lambda_n(\tilde{\Omega},\alpha), \tilde{\alpha}, \tilde{\Omega}),
\end{equation}
$n\in\mathbb{Z}$, then from \eqref{c6:3-1} and similar equality
$$a_n(\tilde{\Omega},\alpha,0)=\frac{\dot{\varphi}_1 (\pi,\lambda_n(\tilde{\Omega},\alpha), \tilde{\alpha}, \tilde{\Omega})}{\tilde{c}_n(\tilde{\Omega}, \alpha)}$$
and condition \eqref{c6:thm_1_c_n}: $a_n(\tilde{\Omega},\alpha)= \nu a_n(\Omega,\alpha)$ will follow, that
$$c_n(\Omega,\alpha)=\nu \tilde{c}_n (\tilde{\Omega},\alpha), \quad n\in\mathbb{Z},$$
which, according to Theorem \ref{c6:thm_1}, give us that $\nu=1$,
and $\Omega(x)=\tilde{\Omega}(x)$  a.e. So, we will prove
Theorem~\ref{c6:thm_2}, if we will prove \eqref{c6:3-2}.

According to representation
\begin{equation}\label{c6:3-3}
    \varphi(x,\lambda,\alpha)=
    \left(\sin (\lambda x+\alpha)
    \atop
    -\cos (\lambda x+\alpha)\right)+
    \int\limits^x_{-x} K(x,t)e^{-B\lambda t}
    \left(\sin \alpha
    \atop -\cos \alpha\right) dt
\end{equation}
(see \eqref{c2:varphi_sin_cos}), for $\lambda=i\mu$, $\mu\in \mathbb{R}$, we have
$$\varphi_1(\pi, i\mu,\alpha,\Omega)=\sin(\pi i \mu+\alpha)+\int\limits^\pi_{-\pi} \left[ K_{11}(\pi, t)\sin(i\mu t+\alpha)-K_{12}(\pi, t)\cos (i\mu t+\alpha)\right]\, dt,$$
$$\varphi_1\left( \pi, i\mu,\alpha,\tilde{\Omega}\right)=
\sin(\pi i \mu+\alpha)+\int\limits^\pi_{-\pi} \left[
\tilde{K}_{11}(\pi, t)\sin(i\mu t+\alpha)-\tilde{K}_{12}(\pi, t)
\cos (i\mu t+\alpha)\right]\, dt,
$$
 where $K_{ij}(\pi,\,\cdot\,)\in
L^1(-\pi, \pi)$ ($j=1,2$). It follows from a lemma of Marchenko (see
\cite{Marchenko:1977}, p.36, lemma 3.1), that
\begin{equation}\label{c6:varphi_1_sin}
\lim\limits_{\mu\to\infty} \frac{\varphi_1 (\pi, i\mu, \alpha,
\Omega)}{\sin(\pi i \mu+\alpha)}= \lim\limits_{\mu\to\infty}
\frac{\varphi_1 \left( \pi, i\mu, \alpha,
\tilde{\Omega}\right)}{\sin(\pi i \mu+\alpha)}=1
\end{equation}
and as a corollary from the last equality, that
\begin{equation}\label{c6:varphi_1_varphi_1}
\lim\limits_{\mu\to\infty} \frac{\varphi_1 (\pi, i\mu, \alpha, \Omega)}{\varphi_1 \left( \pi, i\mu, \alpha, \tilde{\Omega}\right)}=1.
\end{equation}
On the other hand, also from representation \eqref{c6:3-3} follows that $\varphi_1(\pi,\lambda,\alpha,\Omega)$ and
$\varphi_1(\pi,\lambda,\alpha, \tilde{\Omega})$
are the entire functions on $\lambda$ of exponential type (i.e. of order $1$). As the genus of an entire function does not exceed its order these functions have a genus $0$ or $1$ (see \cite{Levin:1971} or \cite{Titchmarsh:1980}).
If the genus equal zero, and $\lambda_0(\alpha)\neq 0$, then, according to Hadamard's theorem (see \cite{Levin:1971}, p21, or \cite{Titchmarsh:1980}, p. 259), we have the representation (where $c>0$)
\begin{equation}\label{c6:3-5}
\varphi_1(\pi, \lambda, \alpha, \Omega)=c
\prod\limits^\infty_{k=-\infty} \left(
1-\frac{\lambda}{\lambda_k(\alpha)}\right) = c\left(
1-\frac{\lambda}{\lambda_0(\alpha)}\right) \prod\limits^\infty_{k=1}
\left( 1-\frac{\lambda}{\lambda_k(\alpha)}\right) \left(
1-\frac{\lambda}{\lambda_{-k}(\alpha)}\right).
\end{equation}
If $\lambda_0(\alpha)=0$, then instead of factor
$\left(1-\frac{1}{\lambda_0(\alpha)}\right)$  in \eqref{c6:3-5}
we must write $\lambda$. If the genus equals $1$ and
$\lambda_0(\alpha)\neq 0$, then we have representation
$$
 \varphi_1 (\pi, \lambda, \alpha, \Omega)
=e^{a\lambda+b}\prod\limits^\infty_{k=-\infty} \left(
1-\frac{\lambda}{\lambda_k(\alpha)}\right)
e^{\frac{\lambda}{\lambda_k(\alpha)}}
$$
\begin{equation}\label{c6:3-6}
  = e^{a\lambda+b}\, e^{\frac{\lambda}{\lambda_0(\alpha)}} \left(
1-\frac{\lambda}{\lambda_0(\alpha)}\right) \prod\limits^\infty_{k=1}
\left( 1-\frac{\lambda}{\lambda_k(\alpha)}\right) \left(
1-\frac{\lambda}{\lambda_{-k}(\lambda)}\right) \,
e^{\frac{\lambda}{\lambda_k(\alpha)}+
\frac{\lambda}{\lambda_{-k}(\alpha)}}.
\end{equation}
If $\lambda_0(\alpha)=0$, then instead of factor
$e^{\frac{\lambda}{\lambda_0(\alpha)}} \left(
1-\frac{\lambda}{\lambda_0(\alpha)}\right)$ in \eqref{c6:3-6},
we must write $\lambda$ (see \cite{Melik-Adamyan:1977}).
\begin{lemma}\label{lemma-1}
The characteristic function
$\chi(\lambda)=\varphi_1(\pi,\lambda, \alpha, \Omega)$ of problem
$L(\Omega, \alpha, 0)$ uniquely defined by spectra
$\left\{ \lambda_n(\Omega, \alpha, 0)\right\}_{n\in \mathbb{Z}}=
\left\{ \lambda_n(\alpha)\right\}_{n\in\mathbb{Z}}$.
\end{lemma}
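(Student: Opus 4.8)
The plan is to show that $\chi(\lambda)=\varphi_1(\pi,\lambda,\alpha,\Omega)$ is an entire function of exponential type whose zero set is precisely the spectrum $\{\lambda_n(\alpha)\}_{n\in\mathbb{Z}}$, and then to reconstruct $\chi$ from this zero set via a Hadamard factorization, the remaining free constants being pinned down by the behaviour of $\chi$ along the imaginary axis. From the representation \eqref{c6:3-3} one reads off at once that $\chi$ is entire with $|\chi(\lambda)|\le Ce^{\pi|\im\lambda|}$, hence of order at most $1$; its zeros are exactly the (simple) eigenvalues $\lambda_n(\alpha)$; and applying the Riemann--Lebesgue type Lemma \ref{c3:lem_1} to the integral term of \eqref{c6:3-3}, together with the fact that $|\sin(\pi i\mu+\alpha)|$ is comparable to $e^{\pi|\mu|}$, one obtains
\[
\lim_{\mu\to\pm\infty}\frac{\chi(i\mu)}{\sin(\pi i\mu+\alpha)}=1
\]
(this is \eqref{c6:varphi_1_sin}, now for $\mu\to\pm\infty$).

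Next, by Hadamard's factorization theorem (the degree of the exponential polynomial does not exceed the order, hence is $\le 1$),
\[
\chi(\lambda)=e^{a\lambda+b}\,\lambda^{m}\!\!\prod_{\lambda_n(\alpha)\neq 0}\!\!\left(1-\frac{\lambda}{\lambda_n(\alpha)}\right)e^{\lambda/\lambda_n(\alpha)},
\]
where $m\in\{0,1\}$ according to whether $0$ belongs to the spectrum. Pairing the factors with indices $k$ and $-k$ and invoking the asymptotics $\lambda_n(\alpha)=n-\alpha/\pi+h_n(\alpha)$ with $\sum_n h_n^2(\alpha)<\infty$, the series $\sum_{k\ge 1}\bigl(\lambda_k(\alpha)^{-1}+\lambda_{-k}(\alpha)^{-1}\bigr)$ converges (exactly as in Lemma \ref{lem2-3-5}), so the exponential convergence factors collapse into a single $e^{\sigma\lambda}$ and
\[
\chi(\lambda)=e^{a'\lambda+b}\,P(\lambda),\qquad P(\lambda):=\prod_{k=-\infty}^{\infty}\left(1-\frac{\lambda}{\lambda_k(\alpha)}\right),
\]
the product being understood in the principal-value sense (as in Chapter \ref{chapter_3}, with the $k=0$ factor replaced by $\lambda$ when $\lambda_0(\alpha)=0$). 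Here $P$ is an entire function determined by the spectrum alone, while $a',b\in\mathbb{C}$ are still to be identified.

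To see that $a'$ and $b$ are themselves determined by the spectrum, let $\psi(\lambda)=\varphi_1(\pi,\lambda,\tilde\alpha,\tilde\Omega)$ be any characteristic function with the same spectrum $\{\lambda_n(\alpha)\}_{n\in\mathbb{Z}}$. The eigenvalue asymptotics \eqref{c3:ev_asymptotics} already force $\tilde\alpha=\alpha$, and the previous paragraph gives $\psi(\lambda)=e^{c\lambda+d}P(\lambda)$ with the very same product $P$. Hence $\chi/\psi=e^{(a'-c)\lambda+(b-d)}$ is a zero-free entire function, and
\[
e^{(a'-c)i\mu+(b-d)}=\frac{\chi(i\mu)}{\psi(i\mu)}=\frac{\chi(i\mu)/\sin(\pi i\mu+\alpha)}{\psi(i\mu)/\sin(\pi i\mu+\alpha)}\longrightarrow 1\quad(\mu\to\pm\infty).
\]
Taking $\mu\to+\infty$ and $\mu\to-\infty$ forces $a'=c$, after which $e^{b-d}=1$; thus $\chi\equiv\psi$, which is precisely the assertion of the lemma. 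The one genuinely technical step is the collapse of the Hadamard product to the principal-value form $e^{a'\lambda+b}P(\lambda)$: it requires verifying, from the $\ell^2$-asymptotics of the eigenvalues, that $\sum_{k\ge 1}(\lambda_k^{-1}+\lambda_{-k}^{-1})$ and the paired product converge locally uniformly, which is the estimate already performed in Lemma \ref{lem2-3-5}. The growth bound on $\chi$, the asymptotics at $i\infty$, and the final elementary argument on $e^{(a'-c)i\mu+(b-d)}$ are all routine.
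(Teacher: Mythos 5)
Your proof is correct, and it reaches the lemma by a genuinely different final step than the paper. Both arguments start the same way: the transformation-operator representation \eqref{c6:3-3} shows $\chi$ is entire of exponential type with simple zeros exactly at $\{\lambda_n(\alpha)\}$, Marchenko's lemma gives $\chi(i\mu)/\sin(\pi i\mu+\alpha)\to 1$, and Hadamard factorization plus the pairing of the indices $k$ and $-k$ (legitimate, since $\lambda_k^{-1}+\lambda_{-k}^{-1}=O(k^{-2})$ by the asymptotics \eqref{c3:ev_asymptotics}) reduces everything to identifying the free exponential constants. The paper then \emph{computes} those constants explicitly from the spectrum: it forms the quotient $\chi(i\mu)/\sin(\pi i\mu+\alpha)$, takes logarithms, separates real and imaginary parts, and passes to the limit, which requires the auxiliary Lemmas A.1 and A.2 (convergence of $\prod k^2/(-\lambda_k\lambda_{-k})$ and uniform convergence in $\mu$ of the paired product) and yields the closed formula $\varphi_1(\pi,\lambda,\alpha,\Omega)=\pi\left(1-\frac{\lambda}{\lambda_0}\right)\prod_{k\ge 1}\frac{(\lambda_k-\lambda)(\lambda-\lambda_{-k})}{k^2}$. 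You instead prove uniqueness directly: any two characteristic functions with the same (simple) zero set share the same principal-value product, so their ratio is $e^{A\lambda+B}$, and the common normalization at $i\mu$ forces $A=0$, $e^{B}=1$ (in fact $\mu\to+\infty$ alone already does this, since $e^{iA_1\mu}$ cannot converge unless $A_1=0$). Your route is more economical --- it avoids the explicit evaluation of the constant and the uniform-in-$\mu$ product estimates --- and it delivers exactly what the lemma states and what the proof of Theorem \ref{c6:thm_2} uses (the identity $\dot{\varphi}_1(\pi,\lambda_n,\alpha,\Omega)=\dot{\varphi}_1(\pi,\lambda_n,\alpha,\tilde{\Omega})$); what it does not give, and the paper's computation does, is the explicit reconstruction of $\chi$ from the spectrum, which has independent interest. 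Two small points you rely on and should state: the comparison function $\psi$ must also be a characteristic function of a Dirac problem with summable potential, so that the same Marchenko asymptotics apply to it, and the simplicity of the eigenvalues (the self-adjoint case) is what guarantees the zeros cancel exactly in the quotient.
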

\begin{proof}
It is enough to show that in the case \eqref{c6:3-5} the number $c$ and in the case \eqref{c6:3-6} -- the numbers $a$ and $b$ uniquely defined by spectra
$\left\{ \lambda_n(\alpha)\right\}_{n\in\mathbb{Z}}$.

Let us first consider the case \eqref{c6:3-5}. Using the well-known formula
$$\sin\pi z=\pi z
\prod\limits^\infty_{k=1} \left( 1-\frac{z^2}{k^2}\right)$$ and
\eqref{c6:3-5}, we can write
\begin{equation}\label{c6:3-7}
\frac{\varphi_1(\pi, i\mu, \alpha, \Omega)}{\sin (\pi i \mu+\alpha)}=
\frac{c \left( 1-\frac{i\mu}{\lambda_0(\alpha)}\right) \prod\limits^\infty_{k=1}
\left( 1-\frac{i\mu}{\lambda_k(\alpha)}\right)
\left( 1-\frac{i\mu}{\lambda_{-k}(\lambda)}\right)}{\pi
\left( i \mu+\frac{\alpha}{\pi}\right) \prod\limits^\infty_{k=1}
\left( 1-\frac{i\mu+\frac{\alpha}{\pi}}{k}\right) \left( 1-\frac{i\mu+\frac{\alpha}{\pi}}{-k }\right)}\, .
\end{equation}
\end{proof}

Our aim is to compute the value of $c$ from \eqref{c6:3-7} by using the relation \eqref{c6:varphi_1_sin}. Before continuing, we formulate two lemmas (we omit the proof of these assertions since they are long,
but elementally follow from asymptotics of
$\lambda_n(\Omega,\alpha)$).

\noindent {\bf Lemma A.1.} {\it  For $p,q\in L^2_\mathbb{R} [0,\pi]$
the infinite product $\prod\limits^\infty_{k=1}
\frac{k^2}{-\lambda_k(\alpha) \lambda_{-k}(\alpha)}$ converge.}

\noindent {\bf Lemma A.2.} {\it For $p,q\in L^2_\mathbb{R} [0,\pi]$
the infinite product
$$
\prod\limits^\infty_{k=1} \left\vert
\frac{\left( \lambda_k(\alpha)-i\mu\right)
\left(\lambda_{-k}(\alpha)-i\mu\right)}{
\left( k-\frac{\alpha}{\pi} -i\mu\right)
\left( -k-\frac{\alpha}{\pi} -i\mu\right)
}\right\vert
$$
converge uniformly by $\mu\geqslant 1$ ($\mu\in[1,\infty)$) and, as corollary
$$\lim\limits_{\mu\to \infty}\prod\limits^\infty_{k=1}
\frac{\left( \lambda_k(\alpha)-i\mu\right)
\left(\lambda_{-k}(\alpha)-i\mu\right)}{\left( k-\frac{\alpha}{\pi}
-i\mu\right) \left( -k-\frac{\alpha}{\pi} -i\mu\right)} =1.
$$
}

To compute $c$ we rewrite \eqref{c6:3-7} in the
form (we use Lemma~A.1 and A.2)
\begin{align*}
    & \frac{\varphi_1 (\pi, i\mu, \alpha, \Omega)}{\sin (\pi i \mu+\alpha)}=
\frac{c}{\pi} \frac{ \left(
\frac{1}{i\mu}-\frac{1}{\lambda_0(\alpha)} \right) }{ \left(
1+\frac{\alpha}{i\pi\mu}\right) } \frac{ \prod\limits^\infty_{k=1}
\frac{\lambda_k(\alpha)-i\mu}{\lambda_k(\alpha)}
\frac{\lambda_{-k}(\alpha)-i\mu}{\lambda_{-k}(\alpha)} }{
\prod\limits^\infty_{k=1}
\frac{\left(k-\frac{\alpha}{\pi}-i\mu\right)
\left(-k-\frac{\alpha}{\pi}-i\mu\right)}{k^2}
}\\
& =\frac{c}{\pi}
\frac{\left( \frac{1}{i\mu}-\frac{1}{\lambda_0(\alpha)}\right)}{
\left( 1+\frac{\alpha}{i\pi\mu}\right)}
\prod\limits^\infty_{k=1}
\frac{k^2}{\lambda_k(\alpha)(-\lambda_{-k}(\alpha))}
\prod\limits^\infty_{k=1}
\frac{(\lambda_k(\alpha)-i\mu)( \lambda_{-k}(\alpha)-i\mu)}{
\left( k-\frac{\alpha}{\pi}-i\mu\right)
\left(-k-\frac{\alpha}{\pi}-i\mu\right)}
\end{align*}
and take the logarithm of both sides (the principle value of logarithm):
\begin{align*}
& \log \frac{\varphi_1(\pi, i\mu, \alpha,\Omega)}{
\sin (\pi i \mu+\alpha)}
\stackrel{def}{=}\log
\left\vert \frac{\varphi_1(\pi, i\mu, \alpha,\Omega)}{
\sin (\pi i \mu+\alpha)}\right\vert +
i\arg \frac{\varphi_1(\pi, i\mu, \alpha, \Omega)}{
\sin (\pi i \mu+\alpha)}\\
& =\log \frac{c}{\pi} +\log \left\vert \frac{\lambda_0 -i\mu}{
\lambda_0 \left( \frac{\alpha}{\pi}+i\mu\right)}\right\vert +
i\arg \frac{\lambda_0 -i\mu}{\lambda_0
\left( \frac{\alpha}{\pi}+i\mu\right)} +
\log \prod\limits^\infty_{k=1}
\frac{k^2}{\lambda_k (-\lambda_{-k}(\alpha))} \\
& +\frac{1}{2} \sum\limits^\infty_{k=1} \left\{ \log \frac{\left(
\lambda^2_k +\mu^2\right) \left( \lambda^2_{-k}+\mu^2\right)}{
\left( \left( k-\frac{\alpha}{\pi}\right)^2+\mu^2\right) \left(
\left( k+\frac{\alpha}{\pi}\right)^2 +\mu^2\right)}+ i
\arg\frac{(\lambda_k -i\mu)(\lambda_{-k}-i\mu)}{ \left(
k-\frac{\alpha}{\pi}-i\mu\right) \left(
-k-\frac{\alpha}{\pi}-i\mu\right)}\right\}.
\end{align*}
According to \eqref{c6:varphi_1_sin} the left side of last equality tend to zero
when $\mu\to \infty$. On the right-hand side, separating the real and
imaginary parts and passing to the limit when $\mu\to\infty$, we
obtain
$$
\log \frac{c}{\pi\vert\lambda_0\vert}  \prod\limits^\infty_{k=1}
\frac{k^2}{\left[ -\lambda_k(\alpha)\,
\lambda_{-k}(\alpha)\right]}=0,\quad {\mbox{ i.e.}}\quad
c=\pi\vert\lambda_0\vert\, \prod\limits^\infty_{k=1} \frac{
-\lambda_k(\alpha) \lambda_{-k}(\alpha)}{k^2}.
$$
Insert this
value of $c$ in \eqref{c6:3-5}, we obtain
$$
\varphi_1(\pi, \lambda,\alpha, \Omega)=\pi\,
\left( 1-\frac{\lambda}{\lambda_0(\alpha)}\right) \,
\prod\limits^\infty_{k=1} \frac{\left(
\lambda_k(\alpha)-\lambda\right) \left(
\lambda-\lambda_{-k}(\alpha)\right)}{k^2},
$$
 i.e. characteristic
function $\varphi_1(\pi, \lambda, \alpha, \Omega)$ uniquely defined
by spectra $\left\{\lambda_k(\alpha)\right\}_{k\in \mathbb{Z}}$
($\lambda\in \mathbb{C}$). In particular, if $\lambda_n(\Omega,
\alpha)=\lambda_n(\tilde{\Omega}, \alpha)$ for all $n\in\mathbb{Z}$,
then $ \varphi_1 (\pi, \lambda, \alpha, \Omega)\equiv \varphi_1
(\pi, \lambda, \alpha, \tilde{\Omega})$ for all $ \lambda\in
\mathbb{C}.$  It follows from the last identity that
$$\dot{\varphi}_1 (\pi, \lambda_n(\Omega,\alpha,0), \alpha, \Omega)= \dot{\varphi}_1 \left(  \pi, \lambda_n(\Omega,\alpha,0), \alpha, \tilde{\Omega}\right)$$
for all $n\in\mathbb{Z}$, i.e. we obtain \eqref{c6:3-2}. Thus, Theorem~\ref{c6:thm_2} proved in the case \eqref{c6:3-5}.

In the case \eqref{c6:3-6} instead of \eqref{c6:3-7} we obtain
\begin{align}
\frac{\varphi_1(\pi, i\mu, \alpha, \Omega)}{
\sin (\pi i \mu+\alpha)} & =
\frac{e^{ai\mu+\frac{i\mu}{\lambda_0(\alpha)}+b}\left( 1-\frac{i\mu}{\lambda_0(\alpha)} \right)}{\pi
\left( i\mu +\frac{\alpha}{\pi}\right)}
\prod\limits^\infty_{k=1}
\frac{k^2}{\lambda_k  (-\lambda_{-k})}\times\notag \\
& \label{c6:3-8} \times \prod\limits^\infty_{k=1} \frac{ \left(
\lambda_k-i\mu\right) \left( \lambda_{-k}-i\mu\right) e^{i\mu \left(
\frac{1}{\lambda_k}+\frac{1}{\lambda_{-k}}\right)} }{ \left(
k-\frac{\alpha}{\pi}-i\mu\right) \left(
-k-\frac{\alpha}{\pi}-i\mu\right) }\, .
\end{align}
Since all eigenvalues are real, the modules of factors $e^{i\mu \left( \frac{1}{\lambda_k}+\frac{1}{\lambda_{-k}}\right)}$
and $e^{i\mu \left(a+ \frac{1}{\lambda_0}\right)}$ equals $1$.

This implies, in particular, that the last infinite product in
\eqref{c6:3-8} converge uniformly by $\mu\in[1,\infty)$ (it is
the reiteration of Lemma~A.2).  Therefore, if we take the logarithm of
both sides of \eqref{c6:3-8}, separating the real and
imaginary parts, and passing to limit when $\mu\to\infty$, we obtain
$$
\frac{e^b}{\pi \vert \lambda_0(\alpha)\vert}
\prod\limits^\infty_{k=1} \frac{k^2}{\lambda_k(\alpha) \left(
-\lambda_{-k} (\alpha)\right)} =1.
$$
From this formula, we uniquely
defined $b$. The quantity $a$ is defined from the equality of real
parts. Thus, Theorem~\ref{c6:thm_2} completely proved.

\section{The proof of Theorem~\ref{c6:thm_3}}\label{c6:sec_4}

We note that from condition \eqref{c6:thm_1_lambda_n} follow $\alpha=\tilde{\alpha}$. It is well known the representation of norming constants $a_n$ by two spectra (see \cite{Gasymov-Dzhabiev:1975, Harutyunyan:1985}):
$$a_n (\Omega,\alpha,0)=\frac{\sin(\varepsilon-\alpha)}{
\lambda_n(\Omega,\alpha)-\lambda_n(\Omega,\varepsilon)}
\prod\limits^\infty_{k=-\infty\atop k\neq n} \frac{\lambda_k(\Omega,\alpha)-
\lambda_n(\Omega,\alpha)}{\lambda_k(\Omega,\varepsilon)-
\lambda_n(\Omega,\alpha)}$$
and
$$a_n (\tilde{\Omega},\alpha,0)=
\frac{\sin(\varepsilon-\alpha)}{\lambda_n(\tilde{\Omega},\alpha)-
\lambda_n(\tilde{\Omega},\varepsilon)}
\prod\limits^\infty_{k=-\infty\atop k\neq n}
\frac{\lambda_k(\tilde{\Omega},\alpha)-
\lambda_n(\tilde{\Omega},\alpha)}{
\lambda_k(\tilde{\Omega},\varepsilon)-
\lambda_n(\tilde{\Omega},\alpha)}$$ for arbitrary $\varepsilon\in
\left(\alpha, {\pi}/{2}\right)$. 
If we take as $\varepsilon$ $\, \alpha_1$ from condition  \eqref{c6:thm_3_lambda_n}, we obtain the
equality of the right sides and, therefore, equality
$a_n(\Omega,\alpha)=a_n\left( \tilde{\Omega},\alpha\right)$, $n\in
\mathbb{Z}$, which together with \eqref{c6:thm_1_lambda_n} (according to
Theorem~\ref{c6:thm_2}) give us the equality
$\Omega(x)=\tilde{\Omega}(x)$ a.e. on $[0,\pi]$.
Theorem~\ref{c6:thm_3} is proved.

\section{The proof of Theorem~\ref{c6:thm_4}}\label{c6:sec_5}
In Chapter \ref{chapter_5} we introduced the concept of eigenvalues function of family of Dirac operators by formula (we give it for $\beta=0$) $\lambda (\gamma)=\lambda(\alpha-\pi n)=\lambda_n (\alpha),$ where $\gamma\in (-\infty, \infty)$, $\alpha\in \left( -\frac{\pi}{2}\, ,\, \frac{\pi}{2}\right]$,
$n\in\mathbb{Z}$. 
It was proved in Theorem \ref{c5:thm_1} that this function is a real analytic function on $(-\infty,\infty)$. 
From condition \eqref{c6:thm_4_lambda_n_k} follow that 
\begin{align*}
\lambda_{n_0} (\Omega, \alpha_k) =& \lambda (\alpha_k - \pi n_0, \Omega) =
\lambda (\gamma_k, \Omega) = \lambda (\gamma_k, \tilde{\Omega}) = \\
= & \lambda (\alpha_k - \pi n_0, \tilde{\Omega}) = \lambda_{n_0} (\tilde{\Omega}, \alpha_k), 
\end{align*}
and therefore, $\lambda (\gamma, \Omega) = \lambda (\gamma, \tilde{\Omega}) $, for all $\gamma \in (-\infty, \infty)$ as two analytic functions, which coincide on a distinct convergent sequence $\gamma_k=\alpha_k - \pi n_0, \, k=1, 2, \ldots \, .$. 
In particular, 
\[
\lambda_{n} (\Omega, \alpha) = \lambda (\alpha - \pi n, \Omega) = \lambda (\alpha - \pi n, \tilde{\Omega}) = \lambda_{n} (\tilde{\Omega}, \alpha)
\]
for all $n \in \mathbb{Z}$  and arbitrary $\alpha \in \left( -\dfrac{\pi}{2}, \dfrac{\pi}{2} \right]$.
Similarly, for some $\alpha_1 \neq \alpha$.
Theorem~\ref{c6:thm_4} now follows from  Theorem~\ref{c6:thm_3}.

\section*{Notes and references}
\addcontentsline{toc}{section}{Notes and references}

In 2008 in paper\cite{Harutyunyan:2009} T.N. Harutyunyan proved the uniqueness theorem in inverse Sturm-Liouville problem by spectra $\{ \lambda_n \}_{n=0}^\infty$ and the "similarity coefficients" $\{ c_n \}_{n=0}^\infty$.
As corollaries from this theorem, we obtain the uniqueness theorem of Marchenko and the theorem of Borg.

The analogs of these results for the Dirac system (Theorems \ref{c6:thm_1}--\ref{c6:thm_4} ) were
published in 2019 in paper \cite{Harutyunyan:2019c}.

\chapter{Isospectral Dirac operators}\label{chapter_7}

In this section we consider Dirac operator $L(\Omega, \alpha, \beta)$ with one fixed boundary condition, $\beta = 0$, hence here we consider the problem $L(\Omega, \alpha, 0)$.

\begin{definition}
Two Dirac operators $L(\Omega, \alpha, 0)$ and $L(\tilde{\Omega}, \tilde{\alpha}, 0)$ are said to be isospectral,
if $\lambda_n(\Omega, \alpha, 0) = \lambda_n(\tilde{\Omega}, \tilde{\alpha}, 0)$, for every $n \in \mathbb{Z}$.
\end{definition}

Let $\Omega, \tilde{\Omega} \in L^1_{\mathbb{R}}[0, \pi] $ and the operators $L(\Omega, \alpha, 0)$ and $L(\tilde{\Omega}, \tilde{\alpha}, 0)$ are isospectral, then the asymptotics 
\begin{equation}\label{c7:lambda_n_ass}
\lambda_n=n-\dfrac{\alpha}{\pi}+o(1)
\end{equation}
brings to $\tilde{\alpha} = \alpha$.
So, instead of isospectral operators $L(\Omega, \alpha, 0)$ and $L(\tilde{\Omega}, \tilde{\alpha}, 0)$, we can talk about "isospectral potentials" $\Omega$ and $\tilde{\Omega}$ (and we will left the term $\alpha$ in $\lambda_n$ and $a_n$). 
Let us fix some $\Omega \in L^2_{\mathbb{R}}[0, \pi] $ and consider the set of all canonical potentials
$\tilde{\Omega} = \left(
                    \begin{array}{cc}
                      \tilde{p} & \tilde{q} \\
                      \tilde{q} & -\tilde{p} \\
                    \end{array}
                  \right)
$,
with the same spectrum as $\Omega$:
\[
M^2(\Omega) = \{ \tilde{\Omega} \in L^2_{\mathbb{R}}[0, \pi]:
\lambda_n(\tilde{\Omega}, \tilde{\alpha}, 0) = \lambda_n(\Omega, \alpha, 0), n \in \mathbb{Z} \}.
\]
Our main goal is to give the description of the set $M^2(\Omega)$.
Note that the problem of description of isospectral Sturm-Liouville operators was solved in \cite{Isaacson-Trubowitz:1983, Isaacson-McKean-Trubowitz:1984, Dahlberg-Trubowitz:1984, Poschel-Trubowitz:1987, Korotyaev-Chelkak:2009, Jodeit-Levitan:1997}.

From the uniqueness Theorem \ref{c6:thm_2} it easily follows:
\begin{corollary}\label{cor2.1.1}
The map
\[
\tilde{\Omega} \in M^2 (\Omega) \leftrightarrow \{ a_n(\tilde{\Omega}), n \in \mathbb{Z} \}
\]
is one-to-one.
\end{corollary}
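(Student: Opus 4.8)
The plan is to prove Corollary \ref{cor2.1.1} directly from the uniqueness Theorem \ref{c6:thm_2}, exploiting the fact that on the set $M^2(\Omega)$ all spectra coincide and the parameter $\tilde\alpha$ is already pinned down. First I would recall that by definition every $\tilde\Omega\in M^2(\Omega)$ satisfies $\lambda_n(\tilde\Omega,\tilde\alpha,0)=\lambda_n(\Omega,\alpha,0)$ for all $n\in\mathbb Z$, and that the asymptotics \eqref{c7:lambda_n_ass} forces $\tilde\alpha=\alpha$. Hence on $M^2(\Omega)$ the second boundary parameter is fixed ($\beta=0$ throughout) and the first is $\alpha$ for every element, so the only varying datum is the potential $\tilde\Omega$ itself, and attaching to it the sequence $\{a_n(\tilde\Omega)\}_{n\in\mathbb Z}:=\{a_n(\tilde\Omega,\alpha,0)\}_{n\in\mathbb Z}$ is a well-defined map.

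Next I would show the map is injective. Suppose $\tilde\Omega_1,\tilde\Omega_2\in M^2(\Omega)$ and $a_n(\tilde\Omega_1)=a_n(\tilde\Omega_2)$ for all $n\in\mathbb Z$. Since both lie in $M^2(\Omega)$ we already have $\lambda_n(\tilde\Omega_1,\alpha,0)=\lambda_n(\Omega,\alpha,0)=\lambda_n(\tilde\Omega_2,\alpha,0)$ for every $n$, so conditions \eqref{c6:thm_2_lambda_n} and \eqref{c6:thm_2_a_n} of Theorem \ref{c6:thm_2} hold with $\nu=1$ (and with the roles $\Omega\rightsquigarrow\tilde\Omega_1$, $\tilde\Omega\rightsquigarrow\tilde\Omega_2$, $\alpha=\tilde\alpha$). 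Theorem \ref{c6:thm_2} then yields $\tilde\Omega_1(x)=\tilde\Omega_2(x)$ a.e.\ on $[0,\pi]$, which is exactly injectivity.

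For surjectivity onto its image there is nothing to prove: the statement is that the correspondence $\tilde\Omega\leftrightarrow\{a_n(\tilde\Omega)\}$ is one-to-one, i.e.\ a bijection between $M^2(\Omega)$ and the set $\{\{a_n(\tilde\Omega)\}_{n\in\mathbb Z}:\tilde\Omega\in M^2(\Omega)\}$ of attained norming-constant sequences; surjectivity onto that image set is automatic. One should just remark that $p,q\in L^2_{\mathbb R}[0,\pi]$ (so $M^2(\Omega)\subset L^2_{\mathbb R}[0,\pi]$) is precisely the hypothesis needed to invoke Theorem \ref{c6:thm_2}.

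I do not expect a genuine obstacle here; the only points requiring a sentence of care are: (i) checking that the constant $\nu$ in Theorem \ref{c6:thm_2} may be taken equal to $1$ in this application — it can, because we are comparing $a_n(\tilde\Omega_1)$ with $a_n(\tilde\Omega_2)$ directly, so $\nu=1$, and Theorem \ref{c6:thm_2} in fact concludes $\nu=1$ anyway; and (ii) making explicit that "$\tilde\alpha=\alpha$" is not an extra assumption but a consequence of isospectrality via \eqref{c7:lambda_n_ass}, so the norming constants $a_n(\tilde\Omega)$ are unambiguously indexed. With these remarks in place the corollary follows in a few lines.
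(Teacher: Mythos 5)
Your proposal is correct and follows the paper's own route: the corollary is stated there as an immediate consequence of the Marchenko-type uniqueness Theorem \ref{c6:thm_2}, applied exactly as you do (common spectrum on $M^2(\Omega)$, equal norming constants, $\nu=1$), with the identification $\tilde\alpha=\alpha$ coming from the eigenvalue asymptotics. Your write-up simply makes explicit the steps the paper leaves to the reader; no gaps.
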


It is known that in the case of $\Omega\in W_{k,\mbr}^2[0,\pi]$, the norming constants have the following asymptotic representation
\begin{equation}\label{c7:a_n_ass}
a_n(\Omega)=\pi+\frac{c_1}{n}+\frac{c_2}{n^2}+\cdots +\frac{c_k}{n^{k}}+\frac{c_{k,n}}{n^k},
\end{equation}
where $c_1,\ldots,c_{k}$ are some constants, and $\dis\sum_{n=-\infty}^{+\infty} c_{k,n}^2<\infty$. 
Since $\tilde{\Omega} \in M^2 (\Omega)$, then $a_n(\tilde{\Omega})$ have similar asymptotic representation. 
Insofar as $a_n(\Omega)$ and $a_n(\tilde{\Omega})$ are positive numbers, there exist real numbers $t_n = t_n(\tilde{\Omega})$, such that
$\dfrac{a_n(\Omega)}{a_n(\tilde{\Omega})} = e^{t_n}$. 
Hence, we have
\begin{equation}\label{2-7-3}
e^{t_n}=1+\frac{d_1}{n}+\frac{d_2}{n^2}+ \cdots +\frac{d_{k}}{n^{k}}+\frac{d_{k,n}}{n^k},
\end{equation}
where $d_1, d_2, \ldots,d_k$ -- are some constants, and $\dis\sum_{n=-\infty}^{+\infty} d_{k,n}^2<\infty$. 
By $P_k$ we denote the set of all sequences $\{t_n;\, n\in\mbz\}$, $t_n\in\mbr$, which have the same asymptotics as  \eqref{2-7-3}. 
As all $a_n(\Omega)$ are fixed, then from the corollary \ref{cor2.1.1} and the equality
$a_n(\tilde{\Omega}) =  a_n(\Omega) e^{-t_n}$ we will get:

\begin{corollary}\label{cor2-7-2}
The map
\[
\tilde{\Omega} \in M^2 (\Omega) \leftrightarrow \{ t_n(\tilde{\Omega}), n \in \mathbb{Z} \} \in P_k
\]
is one-to-one.
\end{corollary}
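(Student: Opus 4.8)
The statement to prove is Corollary~\ref{cor2-7-2}: the map $\tilde{\Omega}\in M^2(\Omega)\leftrightarrow\{t_n(\tilde{\Omega}),\,n\in\mathbb{Z}\}\in P_k$ is one-to-one. The plan is to assemble this from Corollary~\ref{cor2.1.1} together with the algebra relating $t_n$ to the norming constants, and to verify carefully that the correspondence actually lands in $P_k$ and is invertible.

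First I would recall that Corollary~\ref{cor2.1.1} already tells us the map $\tilde\Omega\in M^2(\Omega)\leftrightarrow\{a_n(\tilde\Omega),\,n\in\mathbb{Z}\}$ is a bijection (onto its image, the set of admissible norming-constant sequences). Since $\Omega$ is fixed, the numbers $a_n(\Omega)$ are fixed positive constants, and the formula $a_n(\tilde\Omega)=a_n(\Omega)e^{-t_n}$ sets up a bijection between sequences $\{a_n(\tilde\Omega)\}$ of positive numbers and sequences $\{t_n\}$ of real numbers, namely $t_n=\ln\!\big(a_n(\Omega)/a_n(\tilde\Omega)\big)$. Composing the two bijections gives a bijection $\tilde\Omega\leftrightarrow\{t_n(\tilde\Omega)\}$; injectivity is then immediate, because if $t_n(\tilde\Omega_1)=t_n(\tilde\Omega_2)$ for all $n$, then $a_n(\tilde\Omega_1)=a_n(\Omega)e^{-t_n(\tilde\Omega_1)}=a_n(\Omega)e^{-t_n(\tilde\Omega_2)}=a_n(\tilde\Omega_2)$ for all $n$, whence $\tilde\Omega_1=\tilde\Omega_2$ a.e.\ by Corollary~\ref{cor2.1.1} (which in turn rests on the uniqueness Theorem~\ref{c6:thm_2}, noting that isospectrality forces $\tilde\alpha=\alpha$ via the asymptotics \eqref{c7:lambda_n_ass}).

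The substantive point, and the one I would spend the most care on, is that the image of this map is exactly (contained in) $P_k$: that is, for $\tilde\Omega\in M^2(\Omega)$ the sequence $\{t_n(\tilde\Omega)\}$ really has the asymptotic form \eqref{2-7-3}. Here one uses that for $\Omega\in W^2_{k,\mathbb{R}}[0,\pi]$ both $a_n(\Omega)$ and $a_n(\tilde\Omega)$ have the smooth-case asymptotics \eqref{c7:a_n_ass} (Theorem~\ref{c3:thm_4}), with $a_n=\pi+c_1/n+\cdots+c_k/n^k+c_{k,n}/n^k$ and $\sum c_{k,n}^2<\infty$; one must also check that $\tilde\Omega$ inherits the smoothness class $W^2_{k,\mathbb{R}}$ so that this asymptotics applies to it — this is where being isospectral with $\Omega\in W^2_{k,\mathbb{R}}$ (plus the matching of $a_n$'s up to the $e^{t_n}$ factor, which is part of the running setup) is invoked. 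Then $a_n(\Omega)/a_n(\tilde\Omega)$ is a ratio of two such asymptotic series with leading term $1$, so it equals $1+d_1/n+\cdots+d_k/n^k+d_{k,n}/n^k$ with $\sum d_{k,n}^2<\infty$: expand the reciprocal of the denominator in a geometric-type series in powers of $1/n$, multiply, and collect the $\ell^2$ tails using the elementary facts that a product or sum of an $\ell^2$ sequence with a bounded sequence is $\ell^2$ and that the $O(1/n^{k+1})$ remainders are $\ell^2$. This shows $\{e^{t_n}\}$ has the form \eqref{2-7-3}, i.e.\ $\{t_n\}\in P_k$ by definition of $P_k$.

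Finally I would remark that the "one-to-one'' claim should be read as a bijection onto the relevant subset of $P_k$ (the admissible $t$-sequences), mirroring how Corollary~\ref{cor2.1.1} is a bijection onto the admissible $a$-sequences; surjectivity onto all of $P_k$ is not asserted here and would require the constructive solution of the inverse problem (the later chapters). The only genuine obstacle is the bookkeeping in the previous paragraph — verifying that dividing the two asymptotic expansions preserves both the polynomial-in-$1/n$ part and the $\ell^2$ character of the remainder, and confirming that $\tilde\Omega$ is in the same Sobolev class so that \eqref{c7:a_n_ass} is available for it; everything else is a formal composition of bijections already established.
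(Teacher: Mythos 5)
Your proof is correct and follows essentially the same route as the paper: compose the bijection of Corollary \ref{cor2.1.1} with the change of variables $a_n(\tilde{\Omega})=a_n(\Omega)e^{-t_n}$, and use the asymptotics \eqref{c7:a_n_ass} for both $a_n(\Omega)$ and $a_n(\tilde{\Omega})$ to see that $e^{t_n}$ has the form \eqref{2-7-3}, i.e.\ $\{t_n\}\in P_k$. Your added care about dividing the two asymptotic expansions and about $\tilde{\Omega}$ belonging to the same class $W^2_{k,\mathbb{R}}$ only makes explicit what the paper passes over silently.
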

Thus, each isospectral potential is uniquely determined by a sequence $\{ t_n ; n \in \mathbb{Z} \}$.

\section{Changing one norming constant}\label{c7:sec_1}
At the first we give the description of a family of isospectral potentials $\Omega (x, t), t \in \mathbb{R}$, for which only one norming constant $a_m (\Omega (\cdot, t))$ differs from $a_m (\Omega)$ (namely, $a_m (\Omega (\cdot, t)) = a_m (\Omega) e^{-t}$), while the others are equal, i.e. $a_m (\Omega (\cdot, t)) = a_m (\Omega)$, when $n \neq m$.
By $h_n(x,\Omega)$ we denote normalized eigenfunctions (i.e. $\|h_n(x)\| = 1$) of operator $L(\Omega, \alpha, \beta)$:
\begin{equation}\label{eq2.1.5}
h_n(x) = h_n(x, \Omega) = \dfrac{\varphi_n(x, \Omega)}{\sqrt{a_n(\Omega,\alpha)}},
\end{equation}
It is easy to see, that $|h_n(0)|^2 = \dfrac{1}{a_n}$.
Let $\ell(\Omega)=\{\ell_n(\Omega):\, n\in\mbz\}$, where
\begin{equation}\label{2-7-4}
\ell_n(\Omega)=\ln\frac{\lv h_n(\pi,\Omega)\rv}{\lv h_n(0,\Omega)\rv}=\ln|\v_{n,2}(\pi,\Omega)|,
\end{equation}
and
\begin{equation}\label{2-7-5}
\theta_n(x,t,\Omega)=1+(e^*-1)\,\int_0^x \lv h_n (s,\Omega)\rv^2\,ds.
\end{equation}
Note, in case of $\Omega\in W_{k,\mbr}^2[0,\pi]$ yields $\ell(\Omega)\in P_k$.
Insofar as the differential expression $\ell$ is self-adjoint, thus $h_{n,1}$ and $h_{n,2}$ can be taken real.

\begin{theorem}\label{c7:thm_1}
Let $t \in \mathbb{R}$, $\alpha \in \left( - \dfrac{\pi}{2}, \dfrac{\pi}{2} \right]$ and \footnote{Here $^*$ is a sign of transponation.}
\begin{equation} \label{2-7-6}
\Omega(x,t) = \Omega(x) + \dfrac{e^{t} - 1}{\theta_m(x,t,\Omega)} \{ B h_m(x,\Omega) h_m^*(x,\Omega) - h_m(x,\Omega) h_m^*(x,\Omega) B \}.
\end{equation}
Then, 
\begin{enumerate}
\item 
for arbitrary $t \in \mathbb{R}$, $\lambda_n(\Omega(\cdot,t)) = \lambda_n (\Omega)$ for all $n \in \mathbb{Z}$, $a_n(\Omega(\cdot,t)) = a_n (\Omega)$ for all $n \in \mathbb{Z} \backslash \{ m\}$ and $a_m(\Omega(\cdot,t)) = a_m (\Omega) e^{-t}$.
The normalized eigenfunctions of the problem $L (\Omega(\cdot, t), \alpha)$ are given by the formulae:
\begin{equation}\label{2-7-7}
h_n(x, \Omega(\cdot, t))=\left\{ \begin{array}{c}
                                  \quad \quad \ \quad \ \quad \ 
                                  \dfrac{e^{t/2}}{\theta_m(x,t,\Omega)} h_m(x, \Omega),\quad \ \quad \ \quad \ \quad if \ n=m \\
                                   h_n(x,\Omega)-\dfrac{(e^*-1)\int_0^x h^*_m(s,\Omega)h_n(s,\Omega)ds}{\theta_m(x,t,\Omega)} h_m(x,\Omega), if \ n \neq m;
                                   \end{array} \right.
\end{equation}

\item 
\begin{equation}\label{2-7-8}
\ell_n(\Omega(\cdot,t))=\left\{\begin{array}{ll}
\dis \ell_n(\Omega)-t,  & \mbox{for}\; n=m\\
\dis \ell_n(\Omega) , &\mbox{for}\; n\neq m;
\end{array}\right.
\end{equation}

\item
\begin{equation}\label{2-7-9}
\int^\pi_0 |\Omega(x,t) -\Omega(x)|\, dx=|t|.
\end{equation}
\end{enumerate}
\end{theorem}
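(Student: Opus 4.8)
The strategy is to verify directly that the modified potential $\Omega(\cdot,t)$ defined by \eqref{2-7-6} does what is claimed, using the Darboux-type ansatz \eqref{2-7-7} for the new eigenfunctions. First I would check that the functions $h_n(x,\Omega(\cdot,t))$ given by \eqref{2-7-7} are indeed solutions of $\ell(\Omega(\cdot,t))\,y=\lambda_n y$. This is a computation: differentiate the ansatz, use that $h_n(\cdot,\Omega)$ solves $\ell(\Omega)y=\lambda_n y$, use the identity $\theta_m'(x,t,\Omega)=(e^t-1)|h_m(x,\Omega)|^2$, and collect the extra terms; they must assemble into exactly the rank-one correction term in \eqref{2-7-6} acting on $h_n(x,\Omega(\cdot,t))$. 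Here one uses crucially that $B$ is skew ($B^*=-B$, $B^2=-E$) and that the correction $\frac{e^t-1}{\theta_m}\{Bh_mh_m^*-h_mh_m^*B\}$ is anti-commuting with $B$ and symmetric, so $\Omega(\cdot,t)$ is again a canonical potential of the form $\tilde p\sigma_2+\tilde q\sigma_3$ with real $\tilde p,\tilde q$; the local summability of $\tilde p,\tilde q$ follows since $h_m$ is continuous and $\theta_m$ is bounded away from $0$ (indeed $\theta_m\ge\min\{1,e^t\}>0$). Then I would check the boundary conditions: at $x=0$, $\theta_m(0,t,\Omega)=1$, so $h_n(0,\Omega(\cdot,t))$ is a scalar multiple of $h_n(0,\Omega)$, hence satisfies the same condition \eqref{c3:boundary_cond_0} with the same $\alpha$; at $x=\pi$ the first component: for $n\ne m$ one shows $h_{n,1}(\pi,\Omega(\cdot,t))=0$ because $\int_0^\pi h_m^*(s,\Omega)h_n(s,\Omega)\,ds=0$ by orthogonality and $h_{n,1}(\pi,\Omega)=0$; for $n=m$ it is immediate. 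This proves $\lambda_n(\Omega(\cdot,t))=\lambda_n(\Omega)$ for all $n$.

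Next, for the norming constants: since $h_n(x,\Omega(\cdot,t))$ as given in \eqref{2-7-7} must be shown to be normalized, compute $\|h_n(\cdot,\Omega(\cdot,t))\|^2$. For $n=m$ this is $e^t\int_0^\pi\theta_m^{-2}|h_m|^2\,dx = e^t\int_0^\pi\theta_m^{-2}\frac{\theta_m'}{e^t-1}\,dx = \frac{e^t}{e^t-1}\big[-\theta_m^{-1}\big]_0^\pi = \frac{e^t}{e^t-1}\big(1-\theta_m(\pi,t,\Omega)^{-1}\big)$, and since $\theta_m(\pi,t,\Omega)=1+(e^t-1)\cdot 1=e^t$ (because $\int_0^\pi|h_m(s,\Omega)|^2ds=1$), this equals $\frac{e^t}{e^t-1}\cdot\frac{e^t-1}{e^t}=1$. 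So $h_m(\cdot,\Omega(\cdot,t))$ is normalized; then $a_m(\Omega(\cdot,t))=\|\varphi_m(\cdot,\Omega(\cdot,t))\|^2$, where $\varphi_m(\cdot,\Omega(\cdot,t))$ is the Cauchy solution with the same initial data, which at $x=0$ equals $h_m(0,\Omega(\cdot,t))\sqrt{a_m(\Omega(\cdot,t))}$; comparing with $h_m(0,\Omega(\cdot,t))=e^{t/2}h_m(0,\Omega)$ and $|h_m(0,\Omega)|^2=1/a_m(\Omega)$ yields $a_m(\Omega(\cdot,t))=a_m(\Omega)e^{-t}$. For $n\ne m$ one expands $\|h_n(\cdot,\Omega(\cdot,t))\|^2$ using the ansatz; the cross terms and the square of the correction term must cancel to give $1$, using orthonormality of $\{h_k(\cdot,\Omega)\}$ and the same integral identity for $\theta_m'$. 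This gives $a_n(\Omega(\cdot,t))=a_n(\Omega)$.

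Then part 2 is immediate from part 1 and the definition \eqref{2-7-4}: $\ell_n=\ln|h_n(\pi)|/|h_n(0)|$, and since $h_n(0,\Omega(\cdot,t))=h_n(0,\Omega)$ up to sign and modulus (scalar factor of modulus depending on $n$), while for $n\ne m$ also $h_n(\pi,\Omega(\cdot,t))$ has the same second component as... more carefully: $\ell_n(\Omega)=-\tfrac12\ln a_n+\tfrac12\ln b_n$ where $b_n=\|u_n\|^2$, equivalently $\ell_n=\ln|\varphi_{n,2}(\pi,\Omega)|$ since $|h_n(\pi)|=|h_{n,2}(\pi)|=|\varphi_{n,2}(\pi)|/\sqrt{a_n}$ and $|h_n(0)|^2=1/a_n$. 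Using $a_m(\Omega(\cdot,t))=a_m(\Omega)e^{-t}$ and $a_n$ unchanged for $n\ne m$, together with the eigenvalues (hence $\varphi_{n,2}(\pi)$, which is determined by the spectrum via the characteristic function being unchanged — here one must be a little careful, but $\varphi_{n,2}(\pi,\Omega(\cdot,t))$ can be read off from the ansatz directly), gives \eqref{2-7-8}. Finally, for part 3, from \eqref{2-7-6} and the norm estimate \eqref{c0:A_norm_est} applied to the correction matrix (which has the form $aE+bB$ structure... actually $Bh_mh_m^*-h_mh_m^*B$ anti-commutes with $B$, so it is of the form $a\sigma_2+b\sigma_3$), one computes $|\Omega(x,t)-\Omega(x)|$ exactly: the correction matrix $\frac{e^t-1}{\theta_m}(Bh_mh_m^*-h_mh_m^*B)$ has norm $\frac{|e^t-1|}{\theta_m}|h_m(x,\Omega)|^2$ (this requires checking that for $v\in\mathbb R^2$ the $2\times2$ matrix $Bvv^*-vv^*B$ has operator norm $|v|^2$, a direct eigenvalue computation). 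Then $\int_0^\pi|\Omega(x,t)-\Omega(x)|\,dx=|e^t-1|\int_0^\pi\frac{|h_m(x,\Omega)|^2}{\theta_m(x,t,\Omega)}\,dx$. Substituting $|h_m|^2=\theta_m'/(e^t-1)$ converts this to $\frac{|e^t-1|}{e^t-1}\int_0^\pi\frac{\theta_m'}{\theta_m}\,dx=\operatorname{sign}(e^t-1)\,[\ln\theta_m]_0^\pi=\operatorname{sign}(e^t-1)\ln\theta_m(\pi,t,\Omega)=\operatorname{sign}(e^t-1)\ln e^t=\operatorname{sign}(e^t-1)\cdot t=|t|$, since $\operatorname{sign}(e^t-1)=\operatorname{sign}(t)$.

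\textbf{Main obstacle.} The hard part is the first computation — verifying that the Darboux ansatz \eqref{2-7-7} actually produces solutions of the equation with the modified potential \eqref{2-7-6}, i.e.\ that all the extra terms generated by differentiating the correction term in the ansatz reorganize precisely into the rank-one perturbation of $\Omega$. This is a bookkeeping-intensive matrix calculation relying essentially on the algebra of $B$ (skew-symmetry, $B^2=-E$) and on the defining ODE $\theta_m'=(e^t-1)|h_m|^2$; everything else (norms, boundary conditions, part 3) then follows by comparatively routine integration, using the telescoping $|h_m|^2\,dx = d\theta_m/(e^t-1)$ repeatedly. A secondary technical point worth stating carefully is that $\Omega(\cdot,t)\in L^1_{\mathbb R}[0,\pi]$ and that it is genuinely of canonical form, which is what lets us invoke the earlier theory (existence/uniqueness of $\varphi$, the characteristic-function formalism, orthonormality of eigenfunctions) for the perturbed operator.
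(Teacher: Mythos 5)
Your route is essentially the paper's: verify the Darboux-type ansatz \eqref{2-7-7} directly against the perturbed potential \eqref{2-7-6}, check the boundary conditions at $0$ and $\pi$, prove normalization by the telescoping substitution $|h_m|^2\,dx=d\theta_m/(e^t-1)$, read the norming constants off $|h_n(0)|^2=1/a_n$, and obtain parts 2 and 3 from the endpoint values and the exact pointwise norm of the rank-one correction (your eigenvalue computation for $Bvv^*-vv^*B$ reproduces the paper's $\sqrt{\Delta p^2+\Delta q^2}$). Two steps, however, are missing as written.

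First, exhibiting the functions \eqref{2-7-7} as eigenfunctions only shows that every $\lambda_n(\Omega)$ is an eigenvalue of $L(\Omega(\cdot,t),\alpha)$; the asserted equality $\lambda_n(\Omega(\cdot,t))=\lambda_n(\Omega)$ for all $n$, with the standard enumeration, also requires excluding additional eigenvalues of the perturbed problem, and your text jumps over this. The paper proves \eqref{2-7-9} first precisely for this purpose: it yields $\Omega(\cdot,t)\in L^1_{\mathbb R}[0,\pi]$, hence the eigenvalues of $L(\Omega(\cdot,t),\alpha)$ obey the asymptotics \eqref{c3:ev_asymptotics}, and an extra eigenvalue inserted into the sequence $\{\lambda_n(\Omega)\}$ (which already has these asymptotics) would contradict the counting. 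Your observation $\theta_m\ge\min\{1,e^t\}>0$ already supplies the needed integrability, so this is a one-line addition, but it must be made. Second, in the verification of $\tilde\ell h_n=\lambda_n h_n$ for $n\neq m$ — the step you call the main obstacle — the residual terms do not cancel using only the algebra of $B$, the equation for $h_n$, and $\theta_m'=(e^t-1)|h_m|^2$: after substituting $Bh_m'=\lambda_m h_m-\Omega h_m$ one is left with a term proportional to $\bigl\{h_m^*(x)Bh_n(x)+(\lambda_m-\lambda_n)\int_0^x h_m^*(s)h_n(s)\,ds\bigr\}h_m(x)$, and its vanishing is exactly the Lagrange-type identity \eqref{2-7-17}, obtained by integrating the Wronskian relation for the two solutions of the unperturbed equation and using that they satisfy the same condition at $x=0$. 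Naming and proving this identity is what actually closes the computation; with it (and with the no-extra-eigenvalues argument) the remainder of your outline — normalization for $n\neq m$ via the total derivative of $H_{mn}^2/\theta_m$, and parts 2 and 3 — coincides with the paper's proof.
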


\begin{proof}
At first, we will prove the relation \eqref{2-7-9}. 
From \eqref{2-7-6} we calculate the difference $\dis \Omega(x,t) -\Omega(x)=\left( \begin{array}{cc} \Delta p& \Delta q\\ \Delta q& -\Delta p\end{array}\right) $, which is
$$\Delta p= \frac{e^*-1}{\theta_m(x,t,\Omega)}\cdot 2h_{m1}(x)\,h_{m2}(x)\quad \mbox{and}\quad
\Delta q= \frac{e^*-1}{\theta_m(x,t,\Omega)}\cdot\left(h_{m2}^2(x) -h_{m1}^2(x)\right).$$
Hence, we have
$$|\Omega(x,t) -\Omega(x)|=\sqrt{\Delta p^2+\Delta q^2}=\frac{|e^*-1|}{\theta_m(x,t,\Omega)}
|h_m(x)|^2=\left({\rm sign}\,t\right)\cdot\frac{\p}{\p x}\ln\theta_m(x,t,\Omega),$$
and
$$\int^\pi_0 |\Omega(x,t) -\Omega(x)|\,dx=\left({\rm sign}\,t\right)\left(\ln \theta_m(\pi,t,\Omega)
-\ln\theta_m(0,t,\Omega)\right)=\left({\rm sign}\,t\right)\cdot t=|t|.$$

Now we'll show that for any $t\in\mbr$ and $n\in\mbz$ there are identities (here we denote $h_n(x,\Omega(\cdot,t))=h_n(x,t)$):
\begin{gather}
\label{2-7-15} \tilde{\ell}h_n(x,t)\equiv\la_n (\Omega)\,h_n(x,t),\\
\label{2-7-16} h_{n1}(0,t)\cos\a+h_{n2}(0,t)\sin\a\equiv 0,\quad h_{n1}(\pi,t)\equiv 0
\end{gather}
where $\dis \tilde{\ell}=B\frac{d}{dx} +\Omega(x,t)$. 
In particular, this yields that all $\la_n(\Omega)$, $n\in\mbz$, are also eigenvalues for the problem $\l \Omega(\cdot,t),\a\r$. 
Since from \eqref{2-7-9} it follows $\Omega(\cdot,t)\in L^1_{\mbr}[0, \pi] $, thus the problem $\l \Omega(\cdot,t),\a\r$ does not have any other eigenvalues, because it will contradict the asymptotics \eqref{c3:ev_asymptotics}. 
In this way it will be proved that $\Omega(\cdot,t)$ and $\Omega$ are isospectral, for every $t\in\mbr$. 
Let's show \eqref{2-7-15}, for $n=m$ (here we denote $\theta_m(x,t,\Omega)=\theta$, $h_n(x,\Omega)=h_n(x)=h_n$):
\begin{align*}
\tilde{\ell} h_m(x,t)=
& \left\{ B\frac{d}{dx}+\Omega(x)+\frac{e^*-1}{\theta}\left(Bh_m\cdot h_m^*-h_m\cdot
h_m^*B\right)\right\}\cdot\frac{e^{\frac{t}{2}}}{\theta} h_m =\\
= & \frac{e^{\frac{t}{2}}}{\theta}\left[Bh'_m+\Omega(x) h_m\right] + \\
& + \frac{e^{\frac{t}{2}}}{\theta} \left[-\frac{\theta'}{\theta} Bh_m+\frac{(e^*-1) |h_m|^2}{\theta} B h_m -\frac{(e^*-1)}{\theta} h_m\cdot h_m^* B h_m\right].
\end{align*}
From definition \eqref{2-7-5} it follows that $\theta'_m=(e^*-1) |h_m(x)|^2$. 
It is also obvious that $h^*_m B h_m=0$. 
Taking into account latter relations and that $B h'_m+\Omega(x) h_m=\la_m(\Omega) h_m$, we get \eqref{2-7-15}, for $n=m$. 

Now, the case $n\neq m$. 
Then
\begin{align*}
& \qquad \ \qquad \tilde{\ell}h_n(x,t)= 
B h_n' +\Omega(x) h_n-
\frac{(e^*-1)}{\theta} \times \\
& \times  \left\{( h_m^* h_n) \cdot B h_m - B h_m(h^*_m\cdot h_m) + h_m h^*_m B h_n +\int_0^x h^*_m\cdot h_n\, ds (B h'_m+\Omega(x) h_m)\right\}=\\
& \qquad \ \qquad = \lambda_n(\Omega)  h_n -\frac{(e^*-1)}{\theta}\left\{ h_m^* B h_n + \lambda_m \int_0^x h^*_m h_n\, ds\right\} h_m.
\end{align*}
In order from this to come to the identity \eqref{2-7-15}, it is enough  to show
\begin{equation}\label{2-7-17}
h_m^* B h_n + \la_m \int_0^x h^*_m(s) h_n(s)\, ds=\la_n
\int_0^x h^*_m(s) h_n(s)\, ds.
\end{equation}
For this, we write
\begin{gather}
\label{2-7-18} B h'_n(s) +\Omega(s) h_n(s) =\la_n h_n(s),\\
\label{2-7-19} \left(B h'_m(s)\right)^* +\left(\Omega(s) h_m(s)\right)^*=\la_m h^*_m(s).
\end{gather}

Multiplying \eqref{2-7-18} from left by  $h^*_m(s)$,and  \eqref{2-7-19} -- from right by  $h_n(s)$  and  subtract the second from the first equality.
Given  that  $h^*_m\Omega h_n= \left(\Omega h_m\right)^*\cdot h_n$,  we get  that
\[
h^*_m(s) B h'_n(s) - \left( B h'_m(s)\right)^*\cdot h_n(s) =\left(\la_n-\la_m\right) h^*_m(s) h_n(s).
\]
Integrating this identity from $0$  to  $x$,  we get  \eqref{2-7-17}.
It follows  from  \eqref{2-7-7} that $h_n(x,\Omega(\cdot,t))$  satisfies the boundary conditions  \eqref{2-7-16}. 
It remains to check whether the eigenfunctions \eqref{2-7-7} normalized or not. 
Let  $n=m$,  then
\begin{align*}
\int^\pi_0 |h_m(x,\Omega(\cdot,t))|^2\,dx = 
&  e^*\int^\pi_0 \frac{|h_m(x)|^2}{\left(1+(e^*-1)
\int_0^x|h_m(s)|^2\,ds\right)^2}\,dx = \\
= &-\frac{e^*}{e^*-1} \int^\pi_0 \left(\frac{d}{dx}
\theta^{-1}\right)\, dx = \\
= & -\frac{e^*}{e^*-1}\left(\frac{1}{\theta(\pi,t)} -
\frac{1}{\theta(0,t)}\right) = -\frac{e^*}{e^*-1}\left(\frac{1}{e^*}-1\right)=1.
\end{align*}
Let now $n\neq m$. Denoting  the $H_{mn}(x)=\int_0^x h^*_m(s)h_n(s)\,ds$  and  noting  that  $H'_{mn}(x)= h^*_m(x) h_n(x)$, we get:
\begin{align*}
|h_n(x,\Omega(\cdot,t))|^2= 
& h^*_n(x,\Omega(\cdot,t))\cdot h_n(x,\Omega(\cdot,t)) = \\
= & |h_n(x)|^2 -\frac{2(e^*-1)}{\theta} H_{mn}\cdot h^*_m \cdot h_n + \frac{(e^*-1)^2}{\theta^2} H_{mn}^2\cdot |h_m|^2 = \\
= &  |h_n(x)|^2 -(e^*-1) \left( \frac{H^2_{mn}}{\theta}\right)'.
\end{align*}
Since $H_{mn}(\pi)=H_{mn}(0)=0$,  then
\[
\int^\pi_0 \lv h_n(x,\Omega(\cdot,t))\rv^2\,dx=
\int^\pi_0 |h_n(x)|^2\,dx-(e^*-1)\left(\frac{H^2_{mn}(\pi)}{\theta(\pi,t)}-\frac{H^2_{mn}(0)}{\theta(0,t)}
\right)=1.
\]
To prove \eqref{2-7-8},  note  that, as follows  from  \eqref{2-7-7},  at  $n\neq m$ we have  $h_n(\pi,\Omega(\cdot,t)) =h_n(\pi,\Omega)$  and  $h_n(0,\Omega(\cdot,t))=h_n(0,\Omega)$. 
It follows  that  $\ell_n(\Omega(\cdot,t))=\ell_n(\Omega)$, and  at  $n=m \leftrightarrow h_m(\pi,\Omega(\cdot,t))=e^{-\frac{t}{2}}\, h_m(\pi,\Omega)$  and  $h_m(0,\Omega(\cdot,t)=e^{\frac{t}{2}}\, h_m(0,\Omega)$. 
Thus, we  get
\[
\ell_m(\Omega(\cdot,t))=\ln\left(e^{-t}\frac{|h_m(\pi,\Omega)|} {|h_m(0,\Omega)|} \right)=l_m(\Omega) - t.
\]
\end{proof}

Theorem \ref{c7:thm_1} shows that it is possible to change exactly one norming constant (or one element of the sequence $\ell(\Omega)$ ), keeping the others unchanged.
As an example of isospectral potentials $\Omega$ and $\tilde{\Omega}$ we can present
\begin{equation*}
  \Omega(x) \equiv 0 =
\left(
  \begin{array}{cc}
    0 & 0 \\
    0 & 0 \\
  \end{array}
\right)
\end{equation*}
and
\begin{equation}\label{c7:tilde_Omega}
 \tilde{\Omega}(x) = \Omega_{m,t}(x) = \dfrac{\pi (e^* - 1)}{\pi + (e^* - 1) x}
\left(
  \begin{array}{cc}
    -\sin 2 m x & \cos 2 m x \\
    \cos 2 m x & \sin 2 m x \\
  \end{array}
\right),
\end{equation}
where $t \in \mathbb{R}$ is an arbitrary real number and $m \in \mathbb{Z}$ is an arbitrary integer.

\section{Changing all norming constants}\label{c7:sec_2}
\subsection{Recurrent description}
Changing successively each $a_m (\Omega)$ by $a_m (\Omega) e^{-t_m}$, we can obtain any isospectral potential, corresponding to the sequence $\{ t_m; m \in \mathbb{Z} \} \in P_k $.
It follows from the uniqueness Theorem \ref{c6:thm_2} that the sequence in which we change the norming constants is unimportant.
That's why we can change it in a convenient sequence.
We will use the following notations:
\begin{flushleft}
\quad \quad $T_{-1} = \{ \ldots, 0, \ldots \}$,\\
\quad \quad $T_{0} = \{ \ldots, 0, \ldots, 0, t_0,  0, \ldots, 0, \ldots \}$, \\
\quad \quad $T_{1} = \{ \ldots, 0, \ldots, 0, 0, t_0, t_1, 0, \ldots, 0, \ldots \}$, \\
\quad \quad $T_{2} = \{ \ldots, 0, \ldots, 0, t_{-1}, t_0, t_1, 0, \ldots, 0, \ldots \}$, \\
\quad \quad \ldots,\\
\quad \quad $T_{2n} = \{ \ldots, 0, 0, t_{-n}, \ldots, t_{-1}, t_0, t_1, \ldots, t_{n-1}, t_{n}, 0, \ldots \}$, \\
\quad \quad $T_{2n+1} = \{ \ldots, 0, t_{-n}, t_{-n+1}, \ldots, t_{-1}, t_0, t_1, \ldots, t_{n}, t_{n+1}, 0, \ldots \}$, \\
\quad \quad \ldots.
\end{flushleft}
Let $\Omega(x, T_{-1}) \equiv \Omega(x)$ and
\begin{equation}\label{2-7-10}
\Omega(x, T_{m}) = \Omega(x, T_{m-1}) + \bigtriangleup \Omega(x, T_{m}), \quad m = 0, 1, 2, \ldots,
\end{equation}
where
\begin{equation}\label{2-7-11}
\bigtriangleup \Omega(x, T_{m}) = \dfrac{e^{t_{\tilde{m}}} - 1}{\theta_m(x, t_{\tilde{m}}, \Omega(\cdot, T_{m-1}))}
                                  [ B h_{\tilde{m}}(x, \Omega(\cdot, T_{m-1})) h_{\tilde{m}}^*(\cdot)
                                  - h_{\tilde{m}}(\cdot) h_{\tilde{m}}^*(\cdot) B ],
\end{equation}
where $\tilde{m} = \dfrac{m+1}{2}$, if $m$ is odd and $\tilde{m} = - \dfrac{m}{2}$, if $m$ is even.
The arguments in others $h_{\tilde{m}}(\cdot)$ and $h_{\tilde{m}}^*(\cdot)$ are the same as in the first.

\begin{theorem}\label{c7:thm_2}
Let $T = \{ t_n, n \in \mathbb{Z} \} \in l^2 $ and $\Omega \in L^2_{\mathbb{R}}[0, \pi]$. 
Then
\begin{enumerate}
\item
\begin{equation}\label{2-7-12}
\Omega(x, T) \equiv \Omega(x) + \sum_{m=0}^{\infty} \bigtriangleup \Omega(x, T_{m}) \in M^2(\Omega).
\end{equation}
\begin{equation}\label{2-7-13}
\ell(\Omega(\cdot,T))=l(\Omega)-T\, .
\end{equation}

\item The mapping  \eqref{2-7-12}  $T\longmapsto \Omega(\cdot,T)$ is one-to-one,  and if    $\tilde{\Omega}\in M^2_{k}(\Omega) $,  then
\begin{equation}\label{2-7-14}
\tilde{\Omega}=\Omega(\cdot,\ell(\Omega) -\ell(\tilde{\Omega})),
\end{equation}
i.e. the map  $M^2_{k}(\Omega) \ni \tilde{\Omega} \longmapsto \ell(\Omega) -\ell(\tilde{\Omega})\in P_k$  is the inverse  to  mapping \eqref{2-7-12}.
\end{enumerate}
\end{theorem}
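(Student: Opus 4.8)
The plan is to build the isospectral potential $\Omega(\cdot,T)$ for $T=\{t_n\}_{n\in\mathbb{Z}}\in l^2$ by an inductive limiting process, using Theorem \ref{c7:thm_1} as the single-step engine and Theorem \ref{c6:thm_2} for uniqueness. First I would define the partial potentials $\Omega(x,T_m)$ by \eqref{2-7-10}--\eqref{2-7-11}: each step is exactly one application of Theorem \ref{c7:thm_1} with $m$ replaced by $\tilde m$ and $t$ replaced by $t_{\tilde m}$, applied to the potential $\Omega(\cdot,T_{m-1})$ in place of $\Omega$. By part (1) of Theorem \ref{c7:thm_1}, passing from $T_{m-1}$ to $T_m$ leaves all eigenvalues unchanged, multiplies the single norming constant $a_{\tilde m}$ by $e^{-t_{\tilde m}}$, and leaves all other norming constants fixed; by part (3) of that theorem, $\int_0^\pi |\bigtriangleup\Omega(x,T_m)|\,dx = |t_{\tilde m}|$. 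This already gives $\ell(\Omega(\cdot,T_m)) = \ell(\Omega) - T_m$ by \eqref{2-7-8} and an explicit formula for the normalized eigenfunctions $h_n(x,\Omega(\cdot,T_m))$ via \eqref{2-7-7}.

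Next I would establish convergence of the series in \eqref{2-7-12}. Since $T\in l^2$, one has $\sum_n t_n^2<\infty$, and I would show $\sum_m \int_0^\pi |\bigtriangleup\Omega(x,T_m)|\,dx = \sum_m |t_{\tilde m}| $ together with the $L^2$-estimates: using \eqref{2-7-7} the eigenfunctions $h_{\tilde m}(\cdot,\Omega(\cdot,T_{m-1}))$ stay uniformly bounded in $L^2$ (indeed normalized) and, because $|t_{\tilde m}|\to 0$, the factor $e^{t_{\tilde m}}-1$ is $O(t_{\tilde m})$; combined with the square-summability of $\{t_n\}$ and the asymptotics \eqref{c7:a_n_ass}/\eqref{2-7-3} this yields $\sum_m \|\bigtriangleup\Omega(\cdot,T_m)\|_{L^2}<\infty$. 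Hence $\Omega(x,T) := \Omega(x)+\sum_{m=0}^\infty \bigtriangleup\Omega(x,T_m)$ converges in $L^2_{\mathbb{R}}[0,\pi]$, so $\Omega(\cdot,T)\in L^2_{\mathbb{R}}[0,\pi]$. Then I would check that the eigenvalues and norming constants pass to the limit: since the spectrum is invariant at every finite stage and the map $\Omega\mapsto\{\lambda_n(\Omega,\alpha)\}$ is continuous on $L^2$ (from the transformation-operator estimates \eqref{c2:K_est} and the characteristic-function representation \eqref{c3:chi_representation}), we get $\lambda_n(\Omega(\cdot,T),\alpha) = \lambda_n(\Omega,\alpha)$ for all $n$, i.e. $\Omega(\cdot,T)\in M^2(\Omega)$; similarly $a_n(\Omega(\cdot,T),\alpha) = a_n(\Omega,\alpha)e^{-t_n}$, which gives \eqref{2-7-13}.

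For part (2), injectivity of $T\mapsto\Omega(\cdot,T)$ follows from Corollary \ref{cor2-7-2}: the potential determines its sequence of norming constants, hence $\{t_n\}$ via $a_n(\Omega(\cdot,T)) = a_n(\Omega)e^{-t_n}$, so two different $T$'s give different potentials. Conversely, given $\tilde\Omega\in M^2(\Omega)$ (with the smoothness/decay making $\ell(\tilde\Omega)\in P_k$, or in the $l^2$ setting just $\ell(\Omega)-\ell(\tilde\Omega)\in l^2$), set $T=\ell(\Omega)-\ell(\tilde\Omega)$; then by \eqref{2-7-13} the constructed potential $\Omega(\cdot,T)$ has the same eigenvalues as $\Omega$ and the same norming constants as $\tilde\Omega$ (because $\ell_n$ determines $a_n/|h_n(0)|^2$ up-to-spectrum data, and more directly $a_n(\Omega(\cdot,T)) = a_n(\Omega)e^{-t_n} = a_n(\tilde\Omega)$), so by the uniqueness Theorem \ref{c6:thm_2} we conclude $\Omega(\cdot,T)=\tilde\Omega$ a.e., which is \eqref{2-7-14}; this shows the stated map is a two-sided inverse and completes the surjectivity.

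The main obstacle I expect is the convergence and limit-interchange argument in the middle paragraph: one must control $\bigtriangleup\Omega(x,T_m)$ not just in $L^1$ (where Theorem \ref{c7:thm_1}(3) gives the clean bound $|t_{\tilde m}|$) but in $L^2$, and simultaneously control how the eigenfunctions $h_{\tilde m}(\cdot,\Omega(\cdot,T_{m-1}))$ drift as the base potential changes at each stage — a priori these could grow. The resolution is that \eqref{2-7-7} gives an explicit formula showing $h_n(\cdot,\Omega(\cdot,T_m))$ differs from $h_n(\cdot,\Omega(\cdot,T_{m-1}))$ only through the bounded correction term with denominator $\theta_m\ge\min(1,e^{t_{\tilde m}})$ bounded away from $0$, so normalization is preserved and the corrections telescope controllably; combined with $\{t_n\}\in l^2$ this closes the estimate. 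A secondary technical point is verifying that $\Omega(\cdot,T)$ has no extra eigenvalues — but this is immediate from the asymptotics \eqref{c7:lambda_n_ass} once $\Omega(\cdot,T)\in L^1_{\mathbb{R}}[0,\pi]$, exactly as in the proof of Theorem \ref{c7:thm_1}.
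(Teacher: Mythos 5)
Your overall architecture matches the paper's: the recurrent single-step construction via Theorem \ref{c7:thm_1}, the identity \eqref{2-7-13} by iterating \eqref{2-7-8}, and part (2) via Corollary \ref{cor2-7-2} together with the Marchenko-type uniqueness Theorem \ref{c6:thm_2}. But there is a genuine gap exactly where you predicted the difficulty: your convergence argument for the series in \eqref{2-7-12}. You propose to bound the increments by $\int_0^\pi |\bigtriangleup\Omega(x,T_m)|\,dx=|t_{\tilde m}|$ (correct, by \eqref{2-7-9}) and then sum, and likewise to get $\sum_m \lVert \bigtriangleup\Omega(\cdot,T_m)\rVert_{L^2}<\infty$ from "$e^{t_{\tilde m}}-1=O(t_{\tilde m})$ combined with square-summability." This does not close: the hypothesis is only $T\in l^2$, and $l^2\not\subset l^1$, so $\sum_m |t_{\tilde m}|$ may diverge; any estimate of the form $\lVert\bigtriangleup\Omega(\cdot,T_m)\rVert\le C\,|t_{\tilde m}|$ therefore cannot give absolute convergence of the series, in either $L^1$ or $L^2$. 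If one wants a direct convergence proof one must exploit cancellation — the increments are essentially $(e^{t_n}-1)$ times oscillatory products of eigenfunction components (behaving like $\sin 2\lambda_n x$, $\cos 2\lambda_n x$ with $\lambda_n\sim n$), so the series is of Fourier type with $l^2$ coefficients and converges in $L^2$ only conditionally; your proposal contains no such argument, and the subsequent limit-interchange for eigenvalues and norming constants rests on the unproved convergence.

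The paper sidesteps this entirely: after the finite recurrent construction it does not estimate the tail of the series at all, but instead observes that the target spectral data $\{\lambda_n(\Omega)\}$ and $\{a_n(\Omega)e^{-t_n}\}$ satisfy the asymptotics \eqref{c7:lambda_n_ass} and \eqref{c7:a_n_ass}, and then invokes the Gasymov--Dzhabiev characterization theorem (quoted in the proof) to guarantee the existence of a potential in the stated class with exactly these spectral data, identifying it with $\Omega(\cdot,T)$; the regularity statement $\Omega(\cdot,T)\in W^2_{k,\mathbb{R}}[0,\pi]$ also comes from that theorem rather than from norm estimates on the increments. Your part (2) is essentially the paper's argument run in the opposite direction (you start from $T=\ell(\Omega)-\ell(\tilde\Omega)$, the paper starts from $\tilde T$ defined by the norming-constant ratios and only afterwards deduces $\tilde T=\ell(\Omega)-\ell(\tilde\Omega)$ from \eqref{2-7-13}); your shortcut "$\ell_n$ determines $a_n$ within the isospectral class" is true but needs the justification the paper supplies, namely that the characteristic function, hence $\dot{\varphi}_1(\pi,\lambda_n)$, is fixed by the spectrum, so $a_n(\Omega)/a_n(\tilde\Omega)=e^{\ell_n(\Omega)-\ell_n(\tilde\Omega)}$. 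To repair your proof, either supply the oscillation/cancellation estimate for the $L^2$-convergence of $\sum_m\bigtriangleup\Omega(\cdot,T_m)$, or follow the paper and appeal to the Gasymov--Dzhabiev existence theorem.
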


\begin{proof}
Let in Theorem \ref{c7:thm_1} $t=t_0$. 
Then, based on the initial potential $\Omega(x)=\Omega(x,T_{-1})$, by the formula \eqref{2-7-6} we get the isospectral potential $\Omega(x,T_0) $. 
Based on $\Omega(x,T_0)$, quite similarly, by the formula \eqref{2-7-10} we get the potential $\Omega(x,T_1) $, which has two normalizations, namely $a_0 (\Omega(T_1))=a_0(\Omega(T_0))=a _0(\Omega)  e^{-t_0}$ and $a_1(\Omega(T_1))=a_1(\Omega(T_0))\,e^{-t_1}=a_1(\Omega)  e^{-t_1}$ differ from the norming constants $\Omega$. 
By induction, based on the potential of $\Omega(T_{m-1})$, according to the formulas \eqref{2-7-10} and \eqref{2-7-11} we get the isospectral potential $\Omega(T_m)$, which already has $m+ 1 $ of norming constants differ from the norming constants $\Omega$. 
Continuing this process at infinitum ($T_m \to  T$, when $m\to\infty$), we will come to the potential $\Omega(x,T) $ (see \eqref{2-7-12}), having spectral data $\{\lambda_n(\Omega);\,  a_n(\Omega(T))=a_n(\Omega)  e^{-t_n}; \,  n \in \mathbb{Z}\}$. 
To prove that $\Omega(\cdot,T)\in  M^2_{k}(\Omega)$, it remains to show that $\Omega(\cdot,T)\in  W_{k,\mathbb{R}}^2[0,\pi]$. 
For this, first note that from $\Omega\in  W_{k,\mathbb{R}}^2[0,\pi]$ it follows that $a_n(\Omega)$ have asymptotic \eqref{c7:a_n_ass}, and since $T\in P_k$, then $a_n(\Omega(T))=a_n(\Omega)  e^{-t_n}$ have the same asymptotics. Secondly, let's use the following result of M.G.~Gasimov and T.T.~Jabiyev.

\begin{theorem*} (\cite{Gasymov-Dzhabiev:1975}). 
In order for the sequences $\{\lambda_n\}_{-\infty}^{\infty}$ and $\{a_n\}_{-\infty}^{\infty}$ to be, respectively, eigenvalues and norming constants of the boundary values problem \eqref{c3:Dirac_eq}-\eqref{c3:b_bound_cond} with the potential $\Omega$ of the form $\left( \begin{array}{cc} p & q \\  q &  -p\end{array}\right) $, where $p, q \in  W_{k,\mathbb{R}}^2[0,\pi]$, it is necessary and sufficient that the asymptotic formulas \eqref{c7:lambda_n_ass} and \eqref{c7:a_n_ass} hold, respectively, and $\lambda_n \neq \lambda_m$ at $n \neq  m$, all $a_n>0$, and that all derivatives of orders $k$ of of the matrix-function
\[
F(x,t)=\sum_{n=-\infty}^{+\infty}\left\{\frac{1}{a_n}\,\varphi_0(x,\lambda_n) \varphi_0^*(t,\lambda_n) -\frac{1}{\pi}
\varphi_0(x,\lambda^0_n) \varphi_0^*(t,\lambda_n^0)\right\},
\]
where $\varphi_0(x,\lambda)=\left( \begin{array}{c} \sin(\lambda x+\alpha) \\ -\cos(\lambda x+\alpha) \end{array} \right)$, $\lambda^0_n= n-\frac{\alpha}{\pi}$,  belong to  $L^2_{\mathbb{R}}\left([0,\pi]\times [0,\pi]\right)$.
\end{theorem*}

Since the asymptotic formulas \eqref{c7:lambda_n_ass} and \eqref{c7:a_n_ass} are valid, it follows from this theorem that $\Omega(\cdot,T) \in  W_{k,\mathbb{R}}^2[0,\pi]$ (the statement about $F(x,t) $ can be checked in the same way as in \cite{Gasymov-Dzhabiev:1975}). 
Equality $\ell(\cdot,T))=\ell(\Omega)-  T$ is proved by induction, by repeating the reasoning given in the proof \eqref{2-7-9} of the theorem \ref{c7:thm_1}. 
Thus, the first statement of Theorem \ref{c7:thm_2} proved.

It is known (see Theorem \ref{c2:thm_2}), there is a transformation operator $I+K$, representing the solution $\varphi(x,\lambda,\alpha)$ of the Cauchy problem \eqref{c2:Cauchy_problem_y} in the form of
\[
\varphi(x,\lambda,\alpha) =\varphi_0(x,\lambda,\alpha) +\int_0^x K_0(x,t) \varphi_0(t,\lambda,\alpha) \,dt
\]
through $ \varphi_0(x,\lambda,\alpha)=\left( \begin{array}{c} \sin(\lambda x+\alpha) \\ -\cos(\lambda x+\alpha) \end{array} \right)$ and the matrix  $K_0(x,t)$.
In this case,  if  $\Omega\in L^1_{\mathbb{R}}[0, \pi] $,  then  $K_0(x,\cdot)\in L^1_{\mathbb{R}}(0,x)$, and  if  $\Omega\in W_{k,\mathbb{R}}^2[0,\pi]$,  then  $K_0(x,\cdot)\in W^2_{k,\mathbb{R}}(0,x)$  for  any  $0<x\leqslant  \pi$.

We can also show that, if  $\Omega\in W_{k,\mathbb{R}}^2[0,\pi]$,  then  $\{\ell_n(\Omega)\} \in  P_k$.
The proof of this follows from the presentation
\begin{align*}
\varphi_2(\pi,\lambda_n,\alpha)= & -\cos(\lambda_n\pi+\alpha)+\\
& +\int^\pi_0 \left[K_{21}(\pi,t)\sin(\lambda_n  t+\alpha) -K_{22}(\pi,t) \cos(\lambda_n
t+\alpha)\right]\, dt,
\end{align*}
if you use the asymptotics of eigenvalues \eqref{c7:lambda_n_ass} and integrate the last integral $k$ times in parts.

Now we are ready to prove the second part of Theorems \ref{c7:thm_2}. 
According to the Corollary \ref{cor2-7-2}, each $\tilde{\Omega}\in  M^2_{k}(\Omega) $ corresponds to some sequence $\tilde{T}\in  P_k$ such that $a_n(\tilde{\Omega}) = a_n(\Omega)  e^{-\tilde{t}_n}$. 
Let's construct the potential $\Omega (\cdot,\tilde{T})$ from this $\tilde{T}$, according to the formula \eqref{2-7-12}. 
From \eqref{2-7-13} and the fact that $\{\ell_n(\Omega)\} \in  P_k$, we get $\ell\left( \Omega(\cdot,\tilde{T}) \right) = \ell(\Omega)-\tilde{T}\in  P_k$. 
It is also obvious that $a_n(\Omega(\cdot,\tilde{T})) = a_n(\Omega)  e^{-\tilde  {t}_n} = a_n(\tilde{\Omega})$. 
Hence, according to the uniqueness theorem, $\tilde{\Omega}(x) = \Omega \left(  x,\tilde{T}\right)$ almost everywhere. 
Hence, $\ell(\tilde{\Omega}) =\ell \left( \Omega(\cdot,\tilde{T}) \right) = \ell(\Omega) - \tilde{T}$, and $\tilde{T}  = \ell(\Omega) - \ell(\tilde{\Omega})$. 
Theorem  \ref{c7:thm_2}  is proved.
\end{proof}

We see, that each potential matrix $\Delta \Omega(x, T_{m})$  defined by normalized  eigenfunctions  $h_{\tilde{m}}(x, \Omega(x, T_{m-1}))$  of the previous operator  $L(\Omega(\cdot, T_{m-1}), \alpha)$.
This approach we call "recurrent" description.

Theorem \ref{c7:thm_2} describes all isospectral potentials in terms of normalized eigenfunctions $h_n$ of potential $\Omega$ and sequences $T \in  P_k$. 
From Theorem \ref{c7:thm_2} we can conclude that in Marchenko's uniqueness theorem, it is possible to replace $a_n(\Omega)$ with $\ell_n(\Omega)$, that is, we expect that the following uniqueness theorem takes place.

\begin{theorem}\label{thm2-7-3} 
The mapping 
\[
\Omega\in  L^2_{\mathbb{R}}[0,\pi] \longmapsto \{\lambda_n(\Omega) ,\ell_n(\Omega);\,  n \in \mathbb{Z}\}
\]
is one-to-one.
\end{theorem}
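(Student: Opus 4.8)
\textbf{Proof proposal for Theorem \ref{thm2-7-3}.}

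The plan is to reduce this uniqueness statement to the already-established Marchenko-type uniqueness Theorem \ref{c6:thm_2}, by showing that knowing $\{\lambda_n(\Omega)\}_{n\in\mathbb{Z}}$ together with $\{\ell_n(\Omega)\}_{n\in\mathbb{Z}}$ determines $\{a_n(\Omega)\}_{n\in\mathbb{Z}}$. Suppose $\Omega,\tilde\Omega\in L^2_{\mathbb{R}}[0,\pi]$ satisfy $\lambda_n(\Omega)=\lambda_n(\tilde\Omega)$ and $\ell_n(\Omega)=\ell_n(\tilde\Omega)$ for all $n\in\mathbb{Z}$. First, as in the proof of Theorem \ref{c6:thm_2}, the asymptotics \eqref{c7:lambda_n_ass} force $\alpha=\tilde\alpha$, so both operators belong to the same family. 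By Theorem \ref{c7:thm_2} (applied with base potential $\Omega$), $\tilde\Omega=\Omega(\cdot,T)$ for the sequence $T=\ell(\Omega)-\ell(\tilde\Omega)$, and by \eqref{2-7-13} this $T$ is the zero sequence precisely when $\ell(\Omega)=\ell(\tilde\Omega)$. Since by hypothesis $\ell_n(\Omega)=\ell_n(\tilde\Omega)$ for every $n$, we get $T\equiv 0$, hence $\Omega(\cdot,T)\equiv\Omega$, i.e. $\tilde\Omega(x)=\Omega(x)$ almost everywhere, which is the assertion.

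An alternative (and essentially equivalent) route, which I would include to make the argument self-contained in case one does not wish to invoke the full recurrent construction of Theorem \ref{c7:thm_2}, is the following. Recall from \eqref{eq2.1.5} and the line below it that $|h_n(0,\Omega)|^2=1/a_n(\Omega)$, and likewise $|h_n(\pi,\Omega)|^2=1/b_n(\Omega)$; thus by \eqref{2-7-4},
\[
\ell_n(\Omega)=\ln\frac{|h_n(\pi,\Omega)|}{|h_n(0,\Omega)|}=\frac12\ln\frac{a_n(\Omega)}{b_n(\Omega)}.
\]
Now the representation of norming constants by two spectra, Theorem \ref{thm2-3-5}, expresses $a_n(\Omega)=a_n(\Omega,\alpha,0)$ purely in terms of the two spectra $\{\lambda_k(\Omega,\alpha,0)\}$ and $\{\lambda_k(\Omega,\epsilon,0)\}$; but under the fixed second boundary condition $\beta=0$ one has the analogous representation for $b_n$ in terms of spectra with the roles of the endpoints interchanged. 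The key observation is that the single spectrum $\{\lambda_n(\Omega,\alpha,0)\}$ together with the ratios $a_n/b_n$ (equivalently, the $\ell_n$) suffices to pin down the $a_n$ themselves: from Lemma \ref{c6:lemma-1} the characteristic function $\chi(\lambda)=\varphi_1(\pi,\lambda,\alpha,\Omega)$ is uniquely determined by $\{\lambda_n(\Omega,\alpha)\}$, and then $a_n=-\dot\varphi_1(\pi,\lambda_n,\alpha,\Omega)\,\varphi_2(\pi,\lambda_n,\alpha,\Omega)$ by \eqref{c4:int_varphi_1_2}, where $\dot\varphi_1(\pi,\lambda_n)=\dot\chi(\lambda_n)$ is already determined; so the only remaining unknown is $\varphi_2(\pi,\lambda_n,\alpha,\Omega)$, whose modulus is $|h_n(\pi,\Omega)|\sqrt{a_n}=\sqrt{a_n/b_n}=e^{\ell_n}$. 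Hence $a_n\cdot e^{-2\ell_n}=\varphi_2(\pi,\lambda_n)^2$, and combining with $a_n=-\dot\chi(\lambda_n)\varphi_2(\pi,\lambda_n)$ gives $\varphi_2(\pi,\lambda_n)=-\dot\chi(\lambda_n)e^{2\ell_n}$ and thus $a_n=\dot\chi(\lambda_n)^2 e^{2\ell_n}$, a quantity computed entirely from the given data. Therefore $a_n(\Omega)=a_n(\tilde\Omega)$ for all $n$, and Theorem \ref{c6:thm_2} yields $\Omega=\tilde\Omega$ a.e.

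I expect the main obstacle to be bookkeeping around signs and the correct matching of boundary data: the quantity $\varphi_2(\pi,\lambda_n,\alpha,\Omega)$ is determined only up to sign by its modulus, so one must argue that the sign is also fixed — e.g. by using the relation $a_n=-\dot\varphi_1(\pi,\lambda_n)\varphi_2(\pi,\lambda_n)>0$ together with the fact that $\dot\chi(\lambda_n)$ is already known (including its sign, since $\chi$ is known), which forces the sign of $\varphi_2(\pi,\lambda_n)$. A secondary point requiring care is the convergence of the infinite products entering $\dot\chi$ and hence $a_n$; but this is exactly the content of Lemmas A.1, A.2 and \ref{c6:lemma-1} already proved for the case $p,q\in L^2_{\mathbb R}[0,\pi]$, so it can be cited rather than reproved. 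For brevity in the paper I would present the first (short) route via Theorem \ref{c7:thm_2} as the main proof, and relegate the direct computation above to a remark.
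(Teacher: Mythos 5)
Your main argument (the first route) is correct and is essentially the paper's own proof: the paper likewise reduces the statement to Theorem \ref{c7:thm_2} — defining $T$ through the norming-constant ratios, using \eqref{2-7-13} to conclude $T=\{0\}$ from $\ell(\Omega_1)=\ell(\Omega_2)$, hence $a_n(\Omega_1)=a_n(\Omega_2)$, and finishing with Marchenko's Theorem \ref{c6:thm_2}; your appeal to \eqref{2-7-14} is the same circle of ideas, since \eqref{2-7-14} was itself established via that uniqueness theorem.

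One correction to your ``alternative'' computation, though, before it could be kept even as a remark: by \eqref{2-7-4} one has $\ell_n=\ln|\varphi_{n,2}(\pi)|$, so $\varphi_2(\pi,\lambda_n)^2=e^{2\ell_n}$ directly; your intermediate relation $a_n e^{-2\ell_n}=\varphi_2(\pi,\lambda_n)^2$ is false (it amounts to $b_n=a_n/b_n$), and consequently the final formula $a_n=\dot\chi(\lambda_n)^2 e^{2\ell_n}$ is wrong. The correct conclusion from $a_n=-\dot\chi(\lambda_n)\varphi_2(\pi,\lambda_n)>0$ and $|\varphi_2(\pi,\lambda_n)|=e^{\ell_n}$ is $a_n=|\dot\chi(\lambda_n)|\,e^{\ell_n}$ (and the sign of $\varphi_2(\pi,\lambda_n)$ is then forced to be opposite to that of $\dot\chi(\lambda_n)$). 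Since $\dot\chi(\lambda_n)$ is determined by the spectrum alone (Lemma \ref{lemma-1}), this corrected formula still shows that $\{\lambda_n,\ell_n\}$ determines $\{a_n\}$, so the alternative route survives after the fix; as written, however, its key identity does not hold.
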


\begin{proof}
Let $\lambda_n(\Omega_1) = \lambda_n(\Omega_2)$ and $\ell_n(\Omega_1) = \ell_n(\Omega_2)$ for all $n \in \mathbb{Z}$. 
It follows that $\Omega_2 \in  M_0^2(\Omega_1)$. 
Denote $e^{t_n} = \frac{a_n(\Omega_1)}{a_n(\Omega_2)}$. 
According to \eqref{2-7-13} $\ell(\Omega  _2) = \ell(\Omega_1) -T$ and, therefore, $T=\{0\}$, i.e. $t_n = 0$, for all $n \in \mathbb{Z}$. 
Thus, $a_n(\Omega_1) = a_n(\Omega_2)$, and according to Marchenko's theorem $\Omega_1(x) = \Omega_2(x)$ almost everywhere.
\end{proof}

\subsection{Explicit description}\label{sec7:explicit}

We want to give a description of the set $M^2(\Omega)$ only in terms of eigenfunctions $h_n(x, \Omega)$ of the initial operator $L(\Omega, \alpha, 0)$ and sequence $T \in l^2$.
With this aim, let us denote by $N(T_m)$ the set of the positions of the numbers in $T_m$, which are not necessary zero, i.e.
\begin{flushleft}
\quad \quad $N(T_0) = \{0\}$,\\
\quad \quad $N(T_1) = \{ 0, 1\}$, \\
\quad \quad $N(T_2) = \{ -1, 0, 1 \} $, \\
\quad \quad \ldots,\\
\quad \quad $N(T_{2n}) = \{-n, -(n-1), \ldots, 0, \ldots, n-1, n\}$, \\
\quad \quad $N(T_{2n+1}) = \{-n, -(n-1), \ldots, 0, \ldots, n, n+1\}$, \\
\quad \quad \ldots,
\end{flushleft}
in particular $N(T) \equiv \mathbb{Z}$.
By $S(x, T_m)$ we denote $(m+1)\times (m+1)$ square matrix
\begin{equation*}
  S(x, T_m) = \Big( \delta_{ij} + (e^{t_j} - 1 ) \int_{0}^{x} h_i^* (s) h_j  (s) ds \Big)_{i, j \in N(T_m)}
\end{equation*}
where $\delta_{ij}$ is a Kronecker symbol. By $S_{p}^{(k)}(x, T_m)$ we denote a matrix, which is obtained from the matrix $S(x, T_m)$ when we replace the $k-th$ column of $S(x, T_m)$ by $H_p(x, T_m) = \{- ( e^{t_k} - 1 ) h_{k_p}(x) \}_{k \in N(T_m)}$ column, $p = 1, 2$.
Now we can formulate our result as follow:

\begin{theorem}\label{thm2.1.4}
Let $T = \{ t_k \}_{k \in \mathbb{Z}} \in l^2 $ and $\Omega \in L^2_{\mathbb{R}}[0, \pi]$.
Then the isospectral potential from $M^2(\Omega)$, corresponding to $T$, is given by the formula
\begin{equation*}
\Omega(x, T) =  \Omega(x) + G(x, x, T) B - B G(x, x, T) =  \left(
                        \begin{array}{cc}
                          p(x, T) & q(x, T) \\
                          q(x, T) & -p(x, T) \\
                        \end{array}
                      \right),
\end{equation*}
where
\[
G(x, x, T) = \dfrac{1}{\det S(x, T)}
\sum_{k \in \mathbb{Z}}
\left(
  \begin{array}{c}
    \det S_{1}^{(k)}(x, T) \\
    \det S_{2}^{(k)}(x, T) \\
  \end{array}
\right)
h_k^*(x),
\]
and $\det S(x, T) = \displaystyle \lim_{m \rightarrow \infty} \det S(x, T_m)$ (the same for $\det S_p^k(x, T), p=1, 2$).

In addition, for $ p(x, T)$ and  $q(x, T)$ we get an explicit representations:
\[
\begin{array}{c}
  p(x, T)  = p(x) - \dfrac{1}{\det S(x, T)} \displaystyle \sum_{k \in \mathbb{Z}} \displaystyle \sum_{p=1}^2 \det S_{p}^{(k)}(x, T) h_{k_{(3-p)}}(x),\\
  \\
  q(x, T)  = q(x) + \dfrac{1}{\det S(x, T)} \displaystyle \sum_{k \in \mathbb{Z}} \displaystyle \sum_{p=1}^2 (-1)^{p-1} \det S_{p}^{(k)}(x, T) h_{k_p}(x).
\end{array}
\]
\end{theorem}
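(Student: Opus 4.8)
The plan is to reduce the explicit formula to the recurrent description of Theorem \ref{c7:thm_2} and then recognize the telescoped recursion as a ratio of determinants. By Theorem \ref{c7:thm_2}, for $T=\{t_k\}\in l^2$ the isospectral potential is $\Omega(x,T)=\Omega(x)+\sum_{m=0}^{\infty}\Delta\Omega(x,T_m)$, obtained as the limit, when $m\to\infty$, of the potentials $\Omega(\cdot,T_m)$ produced by $m+1$ successive single–index modifications \eqref{2-7-10}--\eqref{2-7-11}. Since $N(T_m)\uparrow\mathbb Z$ and $S(x,T_m)\to S(x,T)$ entrywise, it suffices to: (i) prove the explicit formula for every finite truncation $T_m$, i.e. that $\Omega(\cdot,T_m)=\Omega+G(\cdot,\cdot,T_m)B-BG(\cdot,\cdot,T_m)$ with the determinantal $G$ built from the $(m+1)\times(m+1)$ matrix $S(x,T_m)$; (ii) pass to the limit $m\to\infty$; (iii) read off $p(x,T)$ and $q(x,T)$ from the $2\times 2$ commutator. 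A cleaner alternative for step (i) would be to take the explicit right–hand side as a definition and verify, via the uniqueness Theorem \ref{c6:thm_2}, that the resulting operator has eigenvalues $\lambda_n(\Omega)$ and norming constants $a_n(\Omega)e^{-t_n}$; but establishing the latter requires exactly the same determinantal computation, so either route passes through the same core identity.

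For (i) I would argue by induction on $m$, carrying along not only the potential but also a Cramer–type representation of the normalized eigenfunctions $h_n(\cdot,\Omega(\cdot,T_m))$ in terms of the original $h_k(\cdot,\Omega)$, $k\in N(T_m)$, together with the transformation kernel $G(\cdot,\cdot,T_m)$ that realizes $\varphi(\cdot,\lambda,\alpha,\Omega(\cdot,T_m))=(I+\mathbb G_m)\varphi(\cdot,\lambda,\alpha,\Omega)$, where $[\mathbb G_m f](x)=\int_0^x G(x,t,T_m)f(t)\,dt$. The step from $T_{m-1}$ to $T_m$ is a single modification of index $\tilde m$ governed by Theorem \ref{c7:thm_1}, applied to $L(\Omega(\cdot,T_{m-1}),\alpha)$, whose eigenfunctions are already known by the inductive hypothesis. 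Substituting those into \eqref{2-7-6} and \eqref{2-7-7} and using standard determinant manipulations — cofactor expansion along the newly adjoined row/column, multilinearity of $\det$, and the key identity that the quantity $\theta_{\tilde m}$ of \eqref{2-7-5} formed with $h_{\tilde m}(\cdot,\Omega(\cdot,T_{m-1}))$ equals the ratio $\det S(x,T_m)/\det S(x,T_{m-1})$ — collapses the two–level expression into the one–level formula indexed by $N(T_m)$. The invariance under the order of the modifications (a consequence of the uniqueness Theorem \ref{c6:thm_2}, already invoked in Theorem \ref{c7:thm_2}) is mirrored by the fully symmetric dependence of $S(x,T_m)$ on the indices in $N(T_m)$, which is what makes this bookkeeping consistent; order–independence plus the single–step identity is, in effect, an "addition theorem" for these Darboux–type transformations and is the technical heart of the proof.

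For (ii), since $T\in l^2$ one has $e^{t_j}-1=O(|t_j|)$ while $\big|\int_0^x h_i^*(s)h_j(s)\,ds\big|\le\|h_i\|\,\|h_j\|=1$, so $S(x,T_m)-E$ is the section of a matrix with square–summable entries; hence $\det S(x,T):=\lim_m\det S(x,T_m)$ and the minors $\det S_p^{(k)}(x,T)$ exist, the convergence being uniform in $x\in[0,\pi]$, and the series over $k\in\mathbb Z$ defining $G(x,x,T)$ converges absolutely and uniformly. One also needs $\det S(x,T)\neq 0$ on $[0,\pi]$: this follows inductively from $\det S(x,T_m)=\det S(x,T_{m-1})\cdot\theta_{\tilde m}(x,t_{\tilde m},\Omega(\cdot,T_{m-1}))$, where each factor $\theta(x,t,\cdot)=1+(e^{t}-1)\int_0^x|h|^2\,ds$ is strictly positive (it lies in $[1,e^{t}]$ or $[e^{t},1]$ according to the sign of $t$), together with the $l^2$ summability giving a positive uniform lower bound for the limiting product; equivalently, $\det S(x,T)$ is the Jacobian of the transformation $I+\mathbb G$, which is invertible because $\Omega(\cdot,T)$ is a genuine potential (cf. Section \ref{c2:sec_7}). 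Combining the uniform convergence of the determinantal side with the convergence $\Omega(\cdot,T_m)\to\Omega(\cdot,T)$ established in Theorem \ref{c7:thm_2} yields the explicit formula for $\Omega(\cdot,T)$ itself.

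Step (iii) is a direct computation. With $B=\tfrac1i\sigma_1=\left(\begin{smallmatrix}0&1\\-1&0\end{smallmatrix}\right)$ and $M=\left(\begin{smallmatrix}a&b\\c&d\end{smallmatrix}\right)$ one has $MB-BM=\left(\begin{smallmatrix}-b-c&a-d\\a-d&b+c\end{smallmatrix}\right)$, so matching with $\Omega(x,T)-\Omega(x)=\left(\begin{smallmatrix}p(x,T)-p(x)&q(x,T)-q(x)\\ q(x,T)-q(x)&-(p(x,T)-p(x))\end{smallmatrix}\right)$ gives $p(x,T)-p(x)=-b-c$ and $q(x,T)-q(x)=a-d$; writing $G(x,x,T)=\tfrac1{\det S(x,T)}\sum_k\left(\begin{smallmatrix}\det S_1^{(k)}\\ \det S_2^{(k)}\end{smallmatrix}\right)\left(h_{k_1}\ h_{k_2}\right)$ and substituting reproduces exactly the stated formulas for $p(x,T)$ and $q(x,T)$, and together with $\Omega(x,T)=\Omega(x)+G(x,x,T)B-BG(x,x,T)$ completes the theorem. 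The main obstacle is step (i): making the cofactor bookkeeping in the inductive step fully rigorous, in particular proving the identification of $\theta_{\tilde m}$ with the determinant ratio and checking that the Cramer–type eigenfunction representations are reproduced at each level so that the induction closes.
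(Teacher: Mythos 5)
Your route is genuinely different from the paper's, and the difference matters: the paper never composes the single--step transformations of Theorem \ref{c7:thm_1} at all. It works with the Gelfand--Levitan equation \eqref{eq2.1.8}: for a finite modification $T_m$ the perturbation of the spectral measure is a finite sum of point masses, so the kernel $F(x,y,T_m)=\sum_{k\in N(T_m)}(e^{t_k}-1)h_k(x)h_k^*(y)$ is degenerate, the integral equation collapses to the finite linear system \eqref{eq2.1.12} for the coefficient functions $g_{k_p}(x)$, and Cramer's rule yields the determinantal kernel $G(x,y,T_m)$ in a single step --- no induction, no cofactor bookkeeping, no ``addition theorem''. The identification of the resulting potential $\Omega(x)+G(x,x,T_m)B-BG(x,x,T_m)$ with the recurrently built $\Omega(\cdot,T_m)$ of Theorem \ref{c7:thm_2} is then immediate from coincidence of the spectral data and the uniqueness Theorem \ref{c6:thm_2}, and the passage $m\to\infty$ leans on \eqref{eq2.1.15} and Theorem \ref{c7:thm_2}. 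So your assertion that ``either route passes through the same core identity'' is not accurate: the paper's route deliberately avoids that identity.

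The genuine gap in your proposal is precisely the step you flag as ``the main obstacle'' and then leave unproved: the composition identity $\theta_{\tilde m}\bigl(x,t_{\tilde m},\Omega(\cdot,T_{m-1})\bigr)=\det S(x,T_m)/\det S(x,T_{m-1})$ together with the claim that Cramer--type representations of the eigenfunctions $h_n(\cdot,\Omega(\cdot,T_m))$ in terms of the original $h_k(\cdot,\Omega)$ reproduce themselves at each inductive step. That identity is the entire content of the theorem --- it is exactly what converts the recurrent description \eqref{2-7-10}--\eqref{2-7-11} into the closed determinantal formula --- so a proof that names it as an obstacle has not proved the statement. The gap also propagates into your step (ii): your argument that $\det S(x,T)\neq 0$ rests on writing $\det S(x,T_m)$ as the product of the strictly positive factors $\theta_{\tilde k}$, i.e.\ on the same unproved identity (your fallback, invertibility of $I+\mathbb{G}$, presupposes that the determinantal $G$ is the transformation kernel, which is again the unproved step (i)). Steps (ii)'s convergence estimates and (iii)'s commutator computation are fine and agree with the paper, but without (i) the argument does not close; the paper's degenerate--kernel Gelfand--Levitan computation is the missing ingredient that makes the determinants appear without any induction.
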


\begin{proof}
The spectral function of an operator $L(\Omega, \alpha, 0)$ is defined as

\[
 \rho(\lambda) = \left\{ \begin{array}{c}
                                   \displaystyle \sum_{0 < \lambda_n \leq \lambda} \dfrac{1}{a_n(\Omega)}, \qquad \lambda > 0,   \\
                                   - \displaystyle \sum_{\lambda < \lambda_n \leq 0} \dfrac{1}{a_n(\Omega)}, \qquad \lambda < 0,
                                   \end{array}\right.
\]
i.e. $\rho(\lambda)$ is left-continuous, step function with jumps in points $\lambda = \lambda_n$ equals $\dfrac{1}{a_n}$ and $\rho(0) = 0$.

Let $\Omega, \tilde{\Omega} \in L^2_{\mathbb{R}}[0, \pi] $ and they are isospectral.
It is known (see \cite{Gasymov-Levitan:1966, Levitan-Sargsyan:1988, Albeverio-Hryniv-Mykytyuk:2005, Harutyunyan:2008-2}), that there exists a function $G(x, y)$ such that:
\begin{equation*}
\varphi(x,\lambda, \alpha, \tilde{\Omega}) = \varphi (x,\lambda, \alpha, \Omega) +
\int_0^x G(x, s) \varphi(s, \lambda, \alpha, \Omega) dt.
\end{equation*}
It is also known (see, e.g., \cite{Gasymov-Levitan:1966, Levitan-Sargsyan:1988, Albeverio-Hryniv-Mykytyuk:2005}), that the function $G(x, y)$ satisfies to the Gel'fand-Levitan integral equation:
\begin{equation}\label{eq2.1.8}
G(x,y)+F(x,y)+ \int^x_0 G(x,s)F(s,y)ds=0,\quad 0\leq y \leq x,
\end{equation}
where
\begin{equation*}
F(x,y)= \int_{-\infty}^{\infty} \varphi(x,\lambda, \alpha, \Omega) \varphi^* (y,\lambda, \alpha, \Omega)
d[\tilde{\rho}(\lambda) -\rho(\lambda)],
\end{equation*}

If the potential $\tilde{\Omega}$ from $M^2(\Omega)$ is such that only finite norming constants of the operator $L(\tilde{\Omega}, \alpha, 0)$ are different from the norming constants of the operator $L(\Omega, \alpha, 0)$, i.e. $a_n(\tilde{\Omega}) = a_n (\Omega) e^{-t_n}, n \in N(T_m)$ and the others are equal, then we have
\begin{equation*}
d \tilde{\rho} (\lambda) - d \rho (\lambda) =
\displaystyle \sum_{k \in N(T_m)} \Big(\dfrac{1}{\tilde{a_k}} - \dfrac{1}{a_k} \Big) \delta(\lambda - \lambda_k) d \lambda =
\displaystyle \sum_{k \in N(T_m)} \Big(\dfrac{e^{t_k}-1}{a_k} \Big) \delta(\lambda - \lambda_k) d \lambda,
\end{equation*}
where $\delta$ is Dirac $\delta$-function. In this case, the kernel $F(x, y)$ can be written in the form of a finite sum (using notation \eqref{eq2.1.5}):
\begin{equation}\label{eq2.1.9}
F(x, y) = F(x, y, T_m)= \displaystyle \sum_{k \in N(T_m)} ( e^{t_k} - 1 ) h_k(x, \Omega) h_k^* (y, \Omega),
\end{equation}
and consequently, the integral equation \eqref{eq2.1.8} becomes an integral equation with the degenerated kernel, i.e., it becomes a system of linear equations, and we look for the solution in the following form:
\begin{equation}\label{eq2.1.10}
G(x, y, T_m)= \displaystyle \sum_{k \in N(T_m)} g_k(x) h_k^* (y),
\end{equation}
where $g_k(x) = \left(
                  \begin{array}{c}
                    g_{k_1} (x) \\
                    g_{k_2} (x) \\
                  \end{array}
                \right)
$
is unknown vector-function.
Putting the expressions \eqref{eq2.1.9} and \eqref{eq2.1.10} into the integral equation \eqref{eq2.1.8} we will obtain
a system of algebraic equations for determining the functions $g_k(x)$:
\begin{equation}\label{eq2.1.11}
g_k(x) + \displaystyle \sum_{i \in N(T_m)} s_{ik}(x) g_i(x) = - ( e^{t_k} - 1 ) h_k(x), \quad k \in N(T_m),
\end{equation}
where
\[
s_{ik}(x)= (e^{t_k} - 1 ) \int_{0}^{x} h_i^* (s) h_k  (s) ds.
\]
It would be better if we consider the equations \eqref{eq2.1.11} for the vectors
$g_k = \left(
         \begin{array}{c}
           g_{k_1} \\
           g_{k_2} \\
         \end{array}
       \right)
$
by coordinates $g_{k_1}$ and $g_{k_2}$ to be a system of scalar linear equations:
\begin{equation}\label{eq2.1.12}
g_{k_p}(x) + \displaystyle \sum_{i \in N(T_m)}  s_{ik}(x) g_{i_p}(x) = - ( e^{t_k} - 1 ) h_{k_p}(x), \quad k \in N(T_m), \quad p= 1, 2.
\end{equation}
The systems \eqref{eq2.1.12} might be written in matrix form
\begin{equation*}
S(x, T_m) g_p(x, T_m) = H_p(x, T_m), \qquad p=1, 2,
\end{equation*}
where the column vectors $g_p(x, T_m) = \{ g_{k_p} (x, T_m)\}_{k \in N(T_m)},\ p = 1, 2$, and the solution can be found in the form (Cramer's rule):
\[
g_{k_p}(x, T_m) = \dfrac{\det S_{p}^{(k)}(x, T_m)}{\det S(x, T_m)}, \quad k \in N(T_m), \quad p = 1, 2.
\]

Thus we have obtained for $g_k(x)$ the following representation:
\begin{equation}\label{eq2.1.13}
g_k(x, T_m) = \dfrac{1}{\det S(x, T_m)}
\left(
  \begin{array}{c}
    \det S_{1}^{(k)}(x, T_m) \\
    \det S_{2}^{(k)}(x, T_m) \\
  \end{array}
\right)
\end{equation}
and then by putting \eqref{eq2.1.13} into \eqref{eq2.1.10} we find the function $G(x, y, T_m)$.
If the potential $\Omega$ is from $L^1_{\mathbb{R}}$, then such is also the kernel $G(x, x, T_m)$ (see \cite{Harutyunyan:1994}), and the relation between them gives as follow:
\begin{equation}\label{eq2.1.14}
\Omega(x, T_m) = \Omega(x) + G(x, x, T_m) B - B G(x, x, T_m).
\end{equation}
On the other hand, we have
\begin{equation}\label{eq2.1.15}
\Omega(x, T_m) = \Omega(x) + \displaystyle \sum_{k=0}^{m} \bigtriangleup \Omega(x, T_{k}).
\end{equation}
So, using the Theorem \ref{c7:thm_2} and the equality \eqref{eq2.1.15} we can pass to the limit in \eqref{eq2.1.14}, when $m \rightarrow \infty$:
\begin{equation}\label{eq2.1.16}
\Omega(x, T) = \Omega(x) + G(x, x, T) B - B G(x, x, T).
\end{equation}
The potentials $\Omega$ in \eqref{2-7-12} and \eqref{eq2.1.16} have the same spectral data $\{ \lambda_n(T), a_n(T) \}_{n \in \mathbb{Z}}$, and therefore they are the same and $\Omega(\cdot, T)$ defined by \eqref{eq2.1.16} is also from $M^2(\Omega)$.

Using \eqref{eq2.1.10} and \eqref{eq2.1.13} we calculate the expression $G(x, x, T_m) B - B G(x, x, T_m)$ and pass to the limit, obtaining for the $p(x, T)$ and $q(x, T)$ the representations:
\[
\begin{array}{c}
  p(x, T)  = p(x) - \dfrac{1}{\det S(x, T)} \displaystyle \sum_{k \in N(T)} \displaystyle \sum_{p=1}^2 \det S_{p}^{(k)}(x, T) h_{k_{(3-p)}}(x),\\
  \\
  q(x, T)  = q(x) + \dfrac{1}{\det S(x, T)} \displaystyle \sum_{k \in N(T)} \displaystyle \sum_{p=1}^2 (-1)^{p-1} \det S_{p}^{(k)}(x, T) h_{k_p}(x).
\end{array}
\]
The theorem is proved.
\end{proof}

For example, when we change just one norming constant (e.g., for $T_0$), we get two independent linear equations:
\[
\begin{array}{c}
    ( 1 + s_{00}(x) ) g_{0_1}(x) = - ( e^{t_0} - 1 ) h_{0_1}(x), \\
  \\
    ( 1 + s_{00}(x) ) g_{0_2}(x) = - ( e^{t_0} - 1 ) h_{0_2}(x).
\end{array}
\]
For function $g$, we find
\[
\begin{array}{c}
  g_{0_1}(x)  = - \dfrac{( e^{t_0} - 1 ) h_{0_1}(x)}{1 + s_{00}(x)}, \\
  \\
  g_{0_2}(x)  = - \dfrac{( e^{t_0} - 1 ) h_{0_2}(x)}{1 + s_{00}(x)},
\end{array}
\]
and for the potentials $p(x, T_0)$ and $q(x, T_0)$ we get the following representation
\[
\begin{array}{c}
  p(x, T_0)  = p(x) + \dfrac{ e^{t_0} - 1 }{1 + s_{00}(x)} ( 2 h_{0_1}(x) h_{0_2}(x) ), \\
  \\
  q(x, T_0)  = q(x) + \dfrac{ e^{t_0} - 1 }{1 + s_{00}(x)} ( h_{0_2}^2(x) - h_{0_1}^2(x) ).
\end{array}
\]

\section*{Notes and references}
\addcontentsline{toc}{section}{Notes and references}
The description of isospectral Sturm-Liouville operators was given in 1983 by E.L. Isaacson and E. Trubowitz in paper \cite{Isaacson-Trubowitz:1983}.

For the Dirac system, such a description was given by T.N. Harutyunyan in 1994 in paper \cite{Harutyunyan:1994}, and then in 2017 in paper \cite{Ashrafyan-Harutyunyan:2017}.

\chapter{Symmetric cases}\label{chapter_8}

This section describes cases when the inverse problem for the canonical Dirac system is solved by a smaller set of spectral data than that required to solve the inverse problem in the general case. Some analogs of the well-known Ambartsumyan theorem (in the inverse Sturm-Liouville problem) for the Dirac system case are also obtained.

If we denote by $\lambda_n(q,\alpha,\beta)$, $n=0, 1, 2, \ldots$, eigenvalues of the Sturm-Liouville boundary value problem $L(q,\alpha,\beta)$:
\[
\left\{ 
\begin{array}{l}
-y'' + q(x) y = \lambda y, \quad x \in (0, \pi),\; q \in L^1_{\mathbb{R}}[0, \pi],\; \lambda \in \mathbb{C},\\
y(0) \cos \alpha + y'(0) \sin \alpha  = 0,\quad \alpha \in (0, \pi],\\
y(\pi) \cos \beta  +y'(\pi) \sin \beta   = 0,\quad \beta \in [0, \pi),
\end{array}
\right.
\]
then the well-known theorem of V.A.~Ambarzumyan (\cite{Ambarzumyan:1929} see also \cite{Gasymov-Levitan:1964, Isaacson-McKean-Trubowitz:1984}) says:

if ${\lambda_n\left(q, \frac{\pi}{2} , \frac{\pi}{2} \right) = n^2}$, $n=0, 1, 2, \ldots$, then $q(x)=$0 almost everywhere (a.e.).

One of the questions we want to answer in this section is: "Is there an analog of Ambarzumyan's theorem in the case of a boundary value problem for the canonical Dirac system?", i.e., for the  problem $L(p, q,\alpha,\beta) $ ($p,  q \in L^1_{\mathbb{R}}[0, \pi]$, $\lambda\in\mathbb{C}$):
\begin{align}
\label{2-8-1} 
\left\{\left(\begin{array}{cc}
0 & 1\\ 
-1 & 0
\end{array}\right) 
\frac{d}{dx} +
\left(\begin{array}{cc} 
p(x) & q(x)\\ 
q(x) & -p(x)
\end{array}\right)\right\} 
y & =
\lambda y, \quad x\in (0, \pi),\; y=\left(\begin{array}{l}{y_1}\\{y_2}\end{array}\right)\\
\label{2-8-2} 
y_1(0)\cos \alpha+y_2(0)\sin \alpha  & = 0 , \quad \alpha\in \left(-\frac{\pi}{2} , \frac{\pi}{2} \right],\\
\label{2-8-3} 
y_1(\pi)\cos \beta  +y_2(\pi) \sin \beta & =0,\quad \beta\in \left(-\frac{\pi}{2} , \frac{\pi}{2} \right].
\end{align}

The eigenvalues of this problem, which we denoted by $\lambda_n  =\lambda_n  (p, q,\alpha,\beta)$, $n\in  \mathbb{Z}$, are all simple and form a real sequence (unbounded neither from below nor from above), having asymptotics:
\begin{equation}\label{2-8-4}
\lambda_n  (p,  q,\ a,\beta)=n+\frac{\beta  -\alpha}{\pi}+o(1),\quad\text{with}\;\; n\to\pm\infty\, .
\end{equation}
Note also that for $p(x) = q(x) \equiv  0$ we have the explicit form of eigenvalues:
\begin{equation}\label{2-8-5}
\lambda_n (0, 0, \alpha, \beta)=n+\frac{\beta -\alpha}{\pi},\quad n\in \mathbb{Z}  .
\end{equation}
Just say that "in the general case" the answer to the question about the analog of Ambartsumyan's theorem is negative, i.e., there are no such $\alpha_0$ and $\beta_0$, that from the equalities
\begin{equation}\label{2-8-6}
\lambda_n (p, q, \alpha_0, \beta_0)=n+\frac{\beta_0 -\alpha_0}{\pi},\quad n\in \mathbb{Z} \, ,
\end{equation}
it would follow that $p(x) \equiv q(x) \equiv 0$ a.e..
More precisely, there are infinite number of "canonical potentials", i.e. matrices of the form $\Omega = \left(\begin{array}{cc} p(x) & q(x)\\ q(x) & -p(x) \end{array}\right)$, for which the set of eigenvalues of the $\lambda_n (p, q, \alpha_0, \beta_0)$ coincides with the set (\ref{2-8-6}) (for any $\alpha_0$ and $\beta_0$). 
The set of these "isospectral" potentials is described in Chapter \ref{chapter_7}.
In particular, for example, \eqref{c7:tilde_Omega} shows that this "infinite number" of isospectral (with zero potential) potential-matrices may be not only countable, but also may have continuum cardinality.

At the same time, there is an infinite number of "particular cases" for which there are analogs of Ambartsumyan's theorem. 
For example, at the end of this chapter, it is proved that:

1) if
\begin{equation}\label{2-8-7}
\lambda_n (0, q, \alpha, 0)=n-\frac{\alpha}{\pi}\quad\text{for all}\;\; n\in \mathbb{Z} \, ,
\end{equation}
for some ${\alpha\in \left(-\frac{\pi}{2}, 0\right)\cup\left(0, \frac{\pi}{2} \right)}$, then $q(x)=$0 almost everywhere.

2) if
\[
\lambda_n\left(p, 0, \alpha, \frac{\pi}{4}\right)=n+\frac{1}{4}-\frac{\alpha}{\pi}\quad\text{for all}\;\; n\in \mathbb{Z} \, ,
\]
for some ${\alpha\in\left(-\frac{\pi}{2} , \frac{\pi}{2} \right]}$, ${\alpha \neq \frac{\pi}{4}}$, then $p(x)=0$ a.e..

Here we speak about an infinite number of "special cases", that is, if in the Sturm-Liouville problem the statement takes place only at ${\alpha  =\beta  =\frac{\pi}{2}}$, then here the statement is true if the condition is fulfilled for $\alpha$ from some interval.

On the other hand, these are "essentially" particular cases, because if in the case of the Sturm-Liouville problem it is stated that the "potential" $q(x)= 0 $, then here only part of the "potential matrix" $\Omega(x)=\left(\begin{array}{cc}1 & 0\\ 0 & -1 \end{array}\right)p(x)+\left(\begin{array}{cc}0 & 1\\ 1 & 0 \end{array}\right)q(x)$ turns to zero, provided that the other part is assumed to be "zero" in advance.

Another, more general question that we want to answer in this section can be formulated as follows: "Are there cases where the inverse problem for the canonical Dirac system can be solved by a smaller set of spectral data, than the set required in the general case?"

A stricter formulation of the question and its answer will be given below.

The set of eigenvalues $\lambda_n  (p,  q,\alpha,\beta)= \lambda_n  (\Omega,\alpha,\beta) $ and the norming constants of a given problem is called spectral data. 
As noted above, for the problem $L(p,q,\alpha,\beta) $ the following theorem is true, which follows from the theorem \ref{c6:thm_2}  and \cite{Malamud:1998}.

\begin{theorem}\label{thm2-8-1} 
\begin{enumerate}
\item If $\lambda_n (\Omega_1, \alpha_1, \beta)=\lambda_n (\Omega_2, \alpha_2, \beta)$ and $a_n(\Omega_1, \alpha_1, \beta)=a_n (\Omega_2, \alpha_2, \beta)$ for all $n\in \mathbb{Z} $, then $\Omega_1(x)=\Omega_2(x)$ a.e. and $\alpha_1=\alpha_2$.
\item The potential matrix $\Omega(x)=\sigma_2  p(x)+\sigma_3  q(x)$ is uniquely and constructively reconstructed by the spectral function of the problem $L(p,  q,\alpha,\beta_0) =L(\Omega,\alpha,\beta_0)$ (i.e. with a fixed $\beta  =\beta_0$).
\end{enumerate}
\end{theorem}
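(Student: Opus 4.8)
\textbf{Proof plan for Theorem \ref{thm2-8-1}.}

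The plan is to deduce both assertions from results already established in the excerpt, together with the spectral-data/spectral-function correspondence described at the beginning of Chapter \ref{chapter_6}. First I would recall that, in the regular symmetric case, ``knowing the spectral function $\rho(\cdot)$'' is equivalent to knowing the two sequences $\{\lambda_n(\Omega,\alpha,\beta)\}_{n\in\mathbb{Z}}$ and $\{a_n(\Omega,\alpha,\beta)\}_{n\in\mathbb{Z}}$, because $\rho$ is the left-continuous step function whose jump at $\lambda_n$ equals $1/a_n$, normalized by $\rho(\lambda_0)=0$. Hence the hypotheses of part (1) say precisely that the two problems $L(\Omega_1,\alpha_1,\beta)$ and $L(\Omega_2,\alpha_2,\beta)$ have the same spectral function.

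For part (1), the argument is essentially a restatement of Theorem \ref{c6:thm_2}. One must first observe that $p,q\in L^2_{\mathbb{R}}[0,\pi]$ (the standing assumption of this chapter, cf. \eqref{2-8-1}), so Theorem \ref{c6:thm_2} applies with the common value $\beta$ playing the role of the fixed boundary parameter. From $\lambda_n(\Omega_1,\alpha_1,\beta)=\lambda_n(\Omega_2,\alpha_2,\beta)$ and the asymptotics \eqref{2-8-4}, namely $\lambda_n=n+(\beta-\alpha_i)/\pi+o(1)$, one gets $\alpha_1=\alpha_2$ by passing to the limit $n\to\pm\infty$. Then the equalities of the eigenvalues and of the norming constants are exactly the hypotheses \eqref{c6:thm_2_lambda_n}--\eqref{c6:thm_2_a_n} of Theorem \ref{c6:thm_2} with $\nu=1$ (after a unitary normalization fixing $\beta$, as explained in the discussion preceding \eqref{c6:A_matrix}, the boundary condition at $\pi$ being common to both problems), whence $\Omega_1(x)=\Omega_2(x)$ a.e. and $\alpha_1=\alpha_2$. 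The only mild subtlety is the reduction to $\beta$ fixed: one invokes the unitary transformation $\psi=A\varphi$ with constant $A$ of the form \eqref{c6:A_matrix} to move both problems to the normalized form $L(\tilde\Omega_i,\tilde\alpha_i,0)$, noting this leaves the spectral data untouched, and then applies Theorem \ref{c6:thm_2} in the form already proved.

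For part (2), the statement is the constructive counterpart: it asserts that the Gelfand--Levitan procedure recovers $\Omega$ uniquely from the spectral function of $L(\Omega,\alpha,\beta_0)$. Here I would cite the constructive machinery used in the proof of Theorem \ref{thm2.1.4} (Chapter \ref{chapter_7}): the transformation-operator representation $\varphi(x,\lambda,\alpha,\Omega)=\varphi_0(x,\lambda,\alpha)+\int_0^x G(x,s)\varphi_0(s,\lambda,\alpha)\,ds$, the kernel $F(x,y)=\int_{-\infty}^{\infty}\varphi_0(x,\lambda)\varphi_0^*(y,\lambda)\,d[\rho(\lambda)-\rho_0(\lambda)]$ built from the given spectral function, the Gelfand--Levitan equation $G(x,y)+F(x,y)+\int_0^x G(x,s)F(s,y)\,ds=0$, and finally $\Omega(x)=\Omega_0(x)+G(x,x)B-BG(x,x)=G(x,x)B-BG(x,x)$ since $\Omega_0\equiv 0$. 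Uniqueness of the solution $G$ of this Volterra-type equation (the relevant invertibility being guaranteed by the weak boundedness of the transformation operator established in Section \ref{c2:sec_7}), combined with part (1), shows that the reconstructed $\Omega$ is the only canonical potential with the prescribed spectral data; the reference \cite{Malamud:1998} supplies the precise conditions under which a given $\rho$ is admissible and the algorithm converges. The main obstacle I anticipate is not a conceptual one but a bookkeeping one: one must carefully verify that fixing $\beta=\beta_0$ indeed removes the one-parameter ambiguity coming from the constant unitary rotations \eqref{c6:A_matrix} --- i.e.\ that the map $\omega_0\mapsto A^{-1}\Omega A$ is nontrivial on the orbit unless $\omega_0=0$ --- so that ``the spectral function with $\beta$ fixed'' is genuinely complete data, and that all the $L^2$-regularity and convergence hypotheses needed to run the Gelfand--Levitan scheme (square-summability of $\{a_n-\pi\}$, the asymptotics of $\lambda_n$, simplicity of the spectrum) are in force, all of which follow from Theorems \ref{c3:thm_1}, \ref{c2:thm_3}, \ref{thm2-3-5} and the material of Section \ref{c4:sec_2}.
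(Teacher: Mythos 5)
Your proposal matches the paper's treatment: the paper gives no independent argument here, deriving part (1) directly from Theorem \ref{c6:thm_2} (together with \cite{Malamud:1998}) and part (2) from the constructive Gelfand--Levitan machinery, exactly the route you take. The one point to watch is regularity: the standing assumption of this chapter is $p,q\in L^1_{\mathbb{R}}[0,\pi]$ (see \eqref{2-8-1}), not $L^2$ as you state, whereas Theorem \ref{c6:thm_2} is proved for $L^2$; the paper bridges this by the citation of \cite{Malamud:1998}, so your argument is complete provided you either restrict to $L^2$ potentials or invoke that reference for the general case.
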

Unlike the case of the Sturm-Liouville problem, it follows from the asymptotic formula \eqref{2-8-4} for the eigenvalues of the problem $L(\Omega,\alpha,\beta)$ that if
$\lambda_n (\Omega_1, \alpha_1, \beta)=\lambda_n (\Omega_2, \alpha_2, \beta)$, $n\in \mathbb{Z} $, then $\alpha_1=\alpha_2$ even if $\Omega_1\neq\Omega_2$.

In inverse problems for the Dirac system, the uniqueness theorem by two spectra is known (see Theorem \ref{c6:thm_3}, as well as the work \cite{Malamud:1998}), which is an analog of the Borg theorem for Sturm-Liouville problem. 
The formula (see Section \ref{c3:sec_6}) is also known, according to which the norming constants are expressed through two spectra. 
Let's formulate these results in the form of the following theorem.

\begin{theorem}\label{thm2-8-2} 
\begin{enumerate}
\item (Uniqueness). If $\lambda_n  (\Omega_1, \alpha,\beta)=\lambda_n  (\Omega_2, \alpha,\beta)$ and $\lambda_n(\Omega_1, \g,\beta)=\lambda_n  (\Omega_2, \g,\beta) $ for all $n\in  \mathbb{Z} $, $0<|\alpha-\g|<\pi$, then $\Omega_1(x)=\Omega_2(x)$ almost everywhere.
\item (Representation of norming constants by two spectra). 
For arbitrary $\gamma\in  \left(  -\frac{\pi}{2}\, ,\, \frac{\pi}{2} \right]$, such that $0<|\alpha-\gamma|<\pi$, the equality
\begin{equation}\label{2-8-8}
a_n(\alpha, \beta)=\frac{\sin (\gamma -\alpha)}{\lambda_n (\alpha, \beta)-\lambda_n (\gamma, \beta)}
\prod^{\infty}_{\substack{ k=-\infty \\ k\neq n}}
\frac{\lambda_k(\alpha,\beta)-\lambda_n(\alpha,\beta)}{\lambda_k(\gamma-\beta)-\lambda_n(\alpha,\beta)}
\end{equation}
hold.
\end{enumerate}
\end{theorem}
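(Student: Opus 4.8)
The plan is to treat the two parts of Theorem~\ref{thm2-8-2} as essentially known consequences of material already developed in the excerpt, and to assemble the proof by quotation and reduction rather than by fresh computation. For part~(1), the uniqueness by two spectra, I would simply invoke Theorem~\ref{c6:thm_3}. One must check that the hypotheses match: Theorem~\ref{c6:thm_3} is stated with the second boundary parameter fixed (it concerns $L(\Omega,\alpha,0)$ and $L(\Omega,\alpha_1,0)$), whereas here $\beta$ is an arbitrary fixed value in $\left(-\frac{\pi}{2},\frac{\pi}{2}\right]$. The first step, therefore, is to recall that by a constant unitary rotation of the form \eqref{c6:A_matrix} the problem $L(\Omega,\alpha,\beta)$ is carried to $L(A^{-1}\Omega A,\alpha-\beta,0)$ while eigenvalues are preserved; applying this rotation with $\omega_0=\beta$ to both $\Omega_1$ and $\Omega_2$ reduces the hypothesis $\lambda_n(\Omega_1,\alpha,\beta)=\lambda_n(\Omega_2,\alpha,\beta)$, $\lambda_n(\Omega_1,\gamma,\beta)=\lambda_n(\Omega_2,\gamma,\beta)$ to the two-spectra hypothesis of Theorem~\ref{c6:thm_3} with $\tilde\alpha=\alpha$, $\tilde\alpha_1=\gamma$ forced by the asymptotics \eqref{c3:ev_asymptotics}. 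Theorem~\ref{c6:thm_3} then yields $A^{-1}\Omega_1 A=A^{-1}\Omega_2 A$ a.e., hence $\Omega_1=\Omega_2$ a.e.

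For part~(2), the representation of the norming constants by two spectra, the work has already been done in Section~\ref{c3:sec_6}, specifically Theorem~\ref{thm2-3-5}, which gives exactly the formula
\[
a_n(\Omega,\alpha,\beta)=\frac{\sin(\epsilon-\alpha)}{\lambda_n(\alpha)-\lambda_n(\epsilon)}\prod_{\substack{k=-\infty\\k\neq n}}^{\infty}\frac{\lambda_k(\alpha)-\lambda_n(\alpha)}{\lambda_k(\epsilon)-\lambda_n(\alpha)}
\]
for $\epsilon\in\left(\alpha,\frac{\pi}{2}\right]$, understood in the principal-value sense, under the hypothesis $p,q\in L^2_{\mathbb R}[0,\pi]$. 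The second step is thus to cite Theorem~\ref{thm2-3-5} and to note that the notation $\lambda_n(\alpha)$ there stands for $\lambda_n(\Omega,\alpha,\beta)$ with $\beta$ fixed, so that after renaming $\epsilon$ as $\gamma$ the displayed formula \eqref{2-8-8} is obtained verbatim. I would then remark that the constraint $0<|\alpha-\gamma|<\pi$ together with $\alpha,\gamma\in\left(-\frac{\pi}{2},\frac{\pi}{2}\right]$ is what guarantees $\sin(\gamma-\alpha)\neq 0$ and the strict interlacing \eqref{c3:52} needed for the product to converge; when $\gamma<\alpha$ one uses the symmetric form with $\sin(\gamma-\alpha)$ of the appropriate sign, which is consistent because each eigenvalue is a monotone (decreasing) function of the boundary parameter.

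The only genuine subtlety — and the place I expect to spend the most care — is the bookkeeping of enumerations and signs. Theorem~\ref{thm2-3-5} and the lemmas supporting it (Lemmas~\ref{lem2-3-4}--\ref{lem2-3-7}) were proved under the normalization $0<\alpha<\epsilon\leq\frac{\pi}{2}$, where the interlacing \eqref{c3:52} fixes a canonical numbering of the two spectra. In the statement of Theorem~\ref{thm2-8-2} the parameters $\alpha$ and $\gamma$ range over all of $\left(-\frac{\pi}{2},\frac{\pi}{2}\right]$ with only $0<|\alpha-\gamma|<\pi$ assumed, so one has to verify that the extension formula \eqref{c5:lambda_n_gamma}, i.e.\ the consistent global enumeration constructed in Chapter~\ref{chapter_5}, makes the product in \eqref{2-8-8} independent of which of $\alpha,\gamma$ is the larger and reduces to the case already treated. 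I would handle this by observing that swapping $\alpha\leftrightarrow\gamma$ changes $\sin(\gamma-\alpha)$ to $-\sin(\gamma-\alpha)$ and simultaneously, via \eqref{c3:52}, shifts the roles of the two factor sequences in a way that compensates, so the right-hand side is invariant; the case $\gamma=\frac{\pi}{2}$ is the reference case of Theorem~\ref{thm2-3-5}, and any other admissible $\gamma$ is reached from it by the monotonicity and interlacing already established. With this reduction in place, both assertions follow, and the theorem is proved.
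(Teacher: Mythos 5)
Your proposal matches the paper's own treatment: the paper offers no independent proof of Theorem \ref{thm2-8-2}, presenting it explicitly as a reformulation of the Borg-type uniqueness Theorem \ref{c6:thm_3} and of the two-spectra representation of norming constants from Section \ref{c3:sec_6} (Theorem \ref{thm2-3-5}), exactly the two citations you build on. Your added bookkeeping—reducing general $\beta$ to $\beta=0$ via the rotation \eqref{c6:A_matrix} and checking the enumeration and sign conventions when $\gamma<\alpha$—only makes explicit what the paper leaves implicit, so the argument is correct and essentially the same.
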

Denote $\Omega^*(x)=-\Omega(\pi -x)$. 
If $\Omega  (x)=\Omega^*(x)$, then such a potential matrix we will call odd.
\begin{theorem}\label{thm2-8-3} 
\begin{enumerate}
\item  For all $n\in\mathbb{Z} $ the following equalities hold
\begin{align}
\label{2-8-9}
\lambda_{-n}(\Omega^*, \alpha, \beta)=-\lambda_n(\Omega, \beta, \alpha)\, ,\\
\label{2-8-10}
a_{-n}(\Omega^*, \alpha, \beta)=a_n(\Omega, \beta, \alpha)\, .
\end{align}
\item  (Uniqueness) If for all $n\in\mathbb{Z} $
\begin{align}
\label{2-8-11} \lambda_{-n}(\tilde{\Omega}, \alpha, \beta)=-\lambda_n(\Omega, \beta, \alpha)\, ,\\
\label{2-8-12} a_{-n}(\tilde{\Omega}, \alpha, \beta)=a_n(\Omega, \beta, \alpha)\, ,
\end{align}
then $\tilde{\Omega}(x)=\Omega^*(x)=-\Omega(\pi -x)$ almost everywhere.
\item The odd potential $\Omega$ is uniquely and constructively reconstructed by the sets $\{\lambda_n (\Omega, \alpha, \alpha), n>0\}$ and $\{a(\Omega, \alpha, \alpha), n\geq 0\}$ (or $\{\lambda_n (\Omega, \alpha, \alpha), n<0\}$ and $\{a(\Omega, \alpha, \alpha), n\leq 0\}$), i.e. by the "half" of the spectral data.
\end{enumerate}
\end{theorem}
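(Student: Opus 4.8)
The whole theorem rests on the reflection $x\mapsto\pi-x$, and the plan is to extract all three parts from a single symmetry computation. First I would record the elementary fact that if $y$ solves $\ell y=By'+\Omega(x)y=\lambda y$, then $z(x):=y(\pi-x)$ satisfies $Bz'(x)+\Omega^*(x)z(x)=-\lambda z(x)$, i.e.\ $z$ solves the canonical Dirac equation with potential $\Omega^*(x)=-\Omega(\pi-x)$ and spectral parameter $-\lambda$: indeed $z'(x)=-y'(\pi-x)$, and substituting $t=\pi-x$ into $By'(t)+\Omega(t)y(t)=\lambda y(t)$ gives $-Bz'(x)+\Omega(\pi-x)z(x)=\lambda z(x)$, which is the claim after rearranging and using $\Omega^*=-\Omega(\pi-\cdot)$. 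Since $z(0)=y(\pi)$ and $z(\pi)=y(0)$, this map carries a function satisfying the boundary conditions of $L(\Omega,\beta,\alpha)$ (parameter $\beta$ at $0$, parameter $\alpha$ at $\pi$) to one satisfying those of $L(\Omega^*,\alpha,\beta)$, and conversely; it is therefore a bijection between the eigenfunctions of $L(\Omega,\beta,\alpha)$ and of $L(\Omega^*,\alpha,\beta)$ under which the eigenvalue changes sign. This already yields the equality of (unindexed) spectra $\{\lambda_n(\Omega^*,\alpha,\beta)\}_{n\in\mathbb{Z}}=\{-\lambda_n(\Omega,\beta,\alpha)\}_{n\in\mathbb{Z}}$.

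To upgrade this to the index-by-index identity \eqref{2-8-9} I would invoke the asymptotics \eqref{2-8-4}: $\lambda_n(\Omega^*,\alpha,\beta)=n+\frac{\beta-\alpha}{\pi}+o(1)$, whereas $-\lambda_n(\Omega,\beta,\alpha)=-n+\frac{\beta-\alpha}{\pi}+o(1)$. Since the enumeration of the eigenvalues is uniquely pinned down by this asymptotic normalization (by the Rouch\'e argument of Chapter~\ref{chapter_3}), the set identity forces $\lambda_{-n}(\Omega^*,\alpha,\beta)=-\lambda_n(\Omega,\beta,\alpha)$ for every $n$. For \eqref{2-8-10} I would take the eigenfunction $\varphi(\cdot,\lambda_n(\Omega,\beta,\alpha),\beta,\Omega)$ of $L(\Omega,\beta,\alpha)$ and reflect it; the resulting solution of the $\Omega^*$-equation is, up to a scalar, the corresponding eigenfunction of $L(\Omega^*,\alpha,\beta)$, and since $x\mapsto\pi-x$ is an isometry of $L^2([0,\pi],\mathbb{C}^2)$ the two $L^2$-norms coincide; matching the canonical normalizations at the relevant endpoint (using the identity \eqref{c4:int_varphi_1_2} for $\|\varphi_n\|^2$) then gives \eqref{2-8-10}.

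Part~2 reduces to Part~1 together with the uniqueness Theorem~\ref{thm2-8-1}(1) (equivalently Theorem~\ref{c6:thm_2}). Applying \eqref{2-8-9} and \eqref{2-8-10} to $\Omega$ itself gives $\lambda_{-n}(\Omega^*,\alpha,\beta)=-\lambda_n(\Omega,\beta,\alpha)$ and $a_{-n}(\Omega^*,\alpha,\beta)=a_n(\Omega,\beta,\alpha)$; combining with the hypotheses \eqref{2-8-11}, \eqref{2-8-12} and re-indexing by $m=-n$, we obtain that $\tilde\Omega$ and $\Omega^*$ possess the same eigenvalues and the same norming constants for the boundary value problem with the same fixed parameters $(\alpha,\beta)$. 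Hence $\tilde\Omega(x)=\Omega^*(x)=-\Omega(\pi-x)$ almost everywhere.

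Part~3 is an immediate corollary of Part~1 in the odd case. Taking $\beta=\alpha$ and using $\Omega^*=\Omega$, the identities \eqref{2-8-9}, \eqref{2-8-10} become $\lambda_{-n}(\Omega,\alpha,\alpha)=-\lambda_n(\Omega,\alpha,\alpha)$ and $a_{-n}(\Omega,\alpha,\alpha)=a_n(\Omega,\alpha,\alpha)$; in particular $\lambda_0(\Omega,\alpha,\alpha)=0$. Thus the half-sets $\{\lambda_n(\Omega,\alpha,\alpha):n>0\}$ and $\{a_n(\Omega,\alpha,\alpha):n\ge0\}$ determine the whole spectrum (by reflection, together with $\lambda_0=0$) and all the norming constants, hence the full spectral function of $L(\Omega,\alpha,\alpha)$; the matrix $\Omega$ is then reconstructed constructively by Theorem~\ref{thm2-8-1}(2) (the Gel'fand--Levitan--type procedure of Chapter~\ref{chapter_7}), and the variant with $n<0$, $n\le0$ is symmetric. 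I expect the only genuinely delicate point to be the bookkeeping in Part~1: because the reflection reverses the natural ordering of the spectrum, the index-by-index statement must be read off from the asymptotics rather than directly, and one must check that the reflected eigenfunction is the \emph{canonically} normalized eigenfunction of the mirror problem, so that no spurious scalar factor enters \eqref{2-8-10}.
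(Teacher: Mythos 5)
Your route is the paper's route: the reflection $x\mapsto\pi-x$, the sign flip of the eigenvalue together with the swap of the boundary parameters, then Theorem~\ref{thm2-8-1} for part~2, and the evenness of the spectral data plus the constructive recovery from the spectral function for part~3. For the eigenvalue identity \eqref{2-8-9} you are in fact more careful than the paper (which simply declares that the reflected eigenvalue ``is enumerated by $-n$''): pinning the index down through the asymptotics \eqref{2-8-4} is a legitimate way to do it, and that half of part~1 is fine.

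The genuine gap is in \eqref{2-8-10}, and it is exactly the point you flag at the end and then dismiss. Put $w(x)=\varphi(\pi-x,\lambda_n(\Omega,\beta,\alpha),\beta,\Omega)$. Then $w(\pi)=(\sin\beta,-\cos\beta)^T$ exactly, but $w(0)=\varphi(\pi,\cdot)=\kappa_n(\sin\alpha,-\cos\alpha)^T$, where $|\kappa_n|$ is the Euclidean norm of the original eigenfunction at $x=\pi$; for the Dirac system $\frac{d}{dx}|y|^2=2q(y_1^2-y_2^2)-4p\,y_1y_2\not\equiv0$, so $|\kappa_n|\neq1$ in general. Hence $w$ is the $\pi$-normalized eigenfunction of the mirror problem, and the $L^2$-isometry only yields $a_n(\Omega,\beta,\alpha)=\kappa_n^2\,a_{-n}(\Omega^*,\alpha,\beta)$, i.e. $a_{-n}(\Omega^*,\alpha,\beta)=b_n(\Omega,\beta,\alpha)$ in the notation \eqref{c3:norming_constants} — not \eqref{2-8-10}. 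Your proposed repair, ``matching the canonical normalizations at the relevant endpoint using \eqref{c4:int_varphi_1_2}'', does not give $\kappa_n=\pm1$: that identity controls $\dot{\varphi}(\pi,\lambda_n)$, not $|\varphi(\pi,\lambda_n)|$. Moreover $|\kappa_n|=1$ genuinely fails: by Theorem~\ref{c7:thm_1} and \eqref{2-7-4} the potential $\Omega_{m,t}$ of \eqref{c7:tilde_Omega} has $\ln|\varphi_{m,2}(\pi)|=-t\neq0$, so with $\alpha=\beta=0$ one gets $a_{-m}(\Omega^*,0,0)=a_m(\Omega_{m,t},0,0)/\kappa_m^2=\pi e^{t}$, while $a_m(\Omega_{m,t},0,0)=\pi e^{-t}$. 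So the scalar cannot be argued away; note that the paper's own proof has the same lacuna (it only asserts that the $L^2$-norms of $\varphi_n(x)$ and $\varphi_n(\pi-x)$ coincide), and the difficulty propagates to parts~2 and~3 as you (and the paper) derive them, since they use \eqref{2-8-10}; the contrast with Theorems~\ref{thm2-8-4} and \ref{thm2-8-5} is that there the symmetry is a constant unitary matrix acting pointwise, which sends unit Cauchy data to unit Cauchy data, whereas the reflection exchanges the endpoints and picks up the factor $\kappa_n$.
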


\begin{proof} 
It is easy to check that if $\varphi_n(x)$ is an eigenfunction of the problem $L(\Omega, \alpha, \beta)$, corresponding to the eigenvalue $\lambda_n$, then $\tilde{\varphi}_n(x)\equiv\varphi_n  (\pi  -x)$ is the eigenfunction of the problem $L(\Omega^*, \beta, \alpha)$, corresponding to the eigenvalue $-\lambda_n$, which we enumerate by $-n$, i.e. $-\lambda_{  n}(\Omega,\alpha,\beta)=\lambda_{-n}(\Omega^*, \beta,\ a)$. 
The problem $L(\Omega^*, \beta,\alpha)$ has no other eigenvalues, which follows from the fact that if $\mu$ is an eigenvalue of the problem $L(\Omega^*, \beta, \alpha)$, then $-\mu$ is an eigenvalue of the problem $L(\Omega,\ a,\beta) $, which is proved the same way as above. 
Equality $a_{-n}(\Omega^*,\beta  , \alpha)=a_n(\Omega,\alpha,\beta)$ follows from the fact that $L^2$-norms of functions $\varphi_n(x)$ and $\varphi_n(\pi  -x) $ coincide.

To prove the second part of the theorem, note that under the conditions \eqref{2-8-11}, \eqref{2-8-12} from \eqref{2-8-9} and \eqref{2-8-10} it follows that the spectral data of the problems $L(\tilde{\Omega}, \alpha,\beta) $ and $L(\Omega^*, \alpha,\beta) $ coincide. 
According to the theorem \ref{thm2-8-1} it follows that $\tilde{\Omega}(x)=\Omega^*(x)$ a.e.

If the potential is odd, i.e. $\Omega(x)=\Omega^*(x)$, then from the  first part it follows that $\lambda_{-n}(\Omega,\alpha,\alpha)=-\lambda_n(\Omega,\alpha,\ a) $ (in particular, $\lambda_{0}(\Omega,\alpha,\alpha)=0$) and $a_{-n}(\Omega,\alpha,\alpha)=a_n(\Omega,\alpha,\alpha) $ for all $n\in\mathbb{Z} $. 
Thus, if we know the spectral data of the problem $L(\Omega,\alpha,\alpha)$ only with positive indices (and also $a_0 (\Omega,\alpha,\alpha)$), or only with negative ones, we can construct a spectral function by which. 
Then, according to the second part of Theorem \ref{thm2-8-1},  constructively recover potential of $\Omega(x)$. 
Theorem \ref{2-8-3} is proved.
\end{proof}

\begin{theorem}\label{thm2-8-4} 
\begin{enumerate}
\item  For all $n\in\mathbb{Z} $ the following equalities hold
\begin{align}
\label{2-8-13} \lambda_{-n}(0, q, -\alpha, -\beta)=-\lambda_n(0, q, \alpha, \beta)\, ,\\
\label{2-8-14} a_{-n}(0, q, -\alpha, -\beta)=a_n(0, q, \alpha, \beta)\, .
\end{align}
\item  (Uniqueness by one spectrum.) 
If
\begin{equation}\label{2-8-15}
\lambda_n (0, q_1, \alpha, 0)=\lambda_n (0, q_2, \alpha, 0)\quad\text{ for all}\;\; n\in\mathbb{Z} \, ,
\end{equation}
${0<|\alpha| <\frac{\pi}{2} }$, then $q_1(x)=q_2(x)$ almost everywhere.
\item  (Reconstruction by one spectrum.) 
The potential of the problem $L(0,  q,\alpha,0)$ at ${0<|\alpha|  <\frac{\pi}{2} }$ is uniquely and constructively recovered by one spectrum $\left\{\lambda_n  (0,  q,\alpha,0),\; n\in\mathbb{Z} \right\}$. 
In this case
\begin{multline}\label{2-8-16}
a_n(0,  q, \alpha, 0)=\\
=-\frac{\sin  2\alpha}{\lambda_n  (0,  q, \alpha, 0)-\lambda_{-n}(0,  q, \alpha, 0)}
\prod^{\infty}_{\substack{k=-\infty\\ k\neq n}}
\frac{\lambda_n(0, q, \alpha, 0)-\lambda_k(0, q, \alpha, 0)}{\lambda_n(0, q, \alpha, 0)+\lambda_{-k}(0, q, \alpha, 0)}\; .
\end{multline}
\item The potential of the problem $L(0, q, 0, 0)$ is uniquely and constructively recovered by sets: $\left\{\lambda_n (0, q, 0, 0),\; n>0\right\}$ and $\left\{a_n(0, q, 0, 0),\; n\geq 0\right\}$ or
$\left\{\lambda_n (0, q, 0, 0),\; n<0\right\}$ and $\left\{a_n(0, q, 0, 0),\; n\leq 0\right\}$.
\end{enumerate}
\end{theorem}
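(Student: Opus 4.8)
The engine of the whole theorem is a reflection symmetry in the spectral parameter that is available precisely because $p\equiv 0$. Writing $\Omega(x)=q(x)\sigma_3$ and recalling that $\sigma_2$ anti-commutes with both $B=\frac{1}{i}\sigma_1$ and $\sigma_3$, we compute $\ell(\sigma_2 y)=B\sigma_2 y'+q\sigma_3\sigma_2 y=-\sigma_2(By'+q\sigma_3 y)=-\sigma_2\,\ell y$; hence $\ell y=\lambda y$ implies $\ell(\sigma_2 y)=-\lambda(\sigma_2 y)$ with the \emph{same} potential. Moreover $|\sigma_2 y(x)|^2=|y(x)|^2$ for every $x$, while $\sigma_2$ sends the initial vector $(\sin\alpha,-\cos\alpha)^T$ to $(\sin\alpha,\cos\alpha)^T=-(\sin(-\alpha),-\cos(-\alpha))^T$ and converts the boundary parameters $\alpha,\beta$ into $-\alpha,-\beta$. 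Thus $\sigma_2\varphi_n=-\varphi(\cdot,-\lambda_n(0,q,\alpha,\beta),-\alpha)$ is an eigenfunction of $L(0,q,-\alpha,-\beta)$ at the point $-\lambda_n(0,q,\alpha,\beta)$, whose index must be $-n$ by the asymptotics \eqref{2-8-4}; comparing $L^2$-norms yields the corresponding statement for the norming constants. This proves part 1, i.e. \eqref{2-8-13} and \eqref{2-8-14}.

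For part 2, put $\beta=0$. By part 1 the spectrum $\{\lambda_n(0,q,-\alpha,0)\}_{n\in\mathbb{Z}}$ is entirely determined by $\{\lambda_n(0,q,\alpha,0)\}_{n\in\mathbb{Z}}$ through $\lambda_n(0,q,-\alpha,0)=-\lambda_{-n}(0,q,\alpha,0)$. Since $0<|\alpha|<\frac{\pi}{2}$ we have $-\alpha\neq\alpha$ and $0<|\alpha-(-\alpha)|=2|\alpha|<\pi$, so the hypothesis $\lambda_n(0,q_1,\alpha,0)=\lambda_n(0,q_2,\alpha,0)$ for all $n$ also gives $\lambda_n(0,q_1,-\alpha,0)=\lambda_n(0,q_2,-\alpha,0)$ for all $n$. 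The Borg-type uniqueness by two spectra (Theorem \ref{thm2-8-2}, part 1), applied with the potential matrices $\Omega_i=q_i\sigma_3$ and the boundary parameters $\alpha$ and $-\alpha$, then forces $\Omega_1=\Omega_2$, i.e. $q_1=q_2$, almost everywhere.

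Part 3 joins the same two observations with the two-spectra representation of the norming constants. Uniqueness is part 2. For the constructive recovery, from the single spectrum $\{\lambda_n(0,q,\alpha,0)\}$ build the second spectrum $\{\lambda_n(0,q,-\alpha,0)\}$ by part 1, and substitute $\gamma=-\alpha$ into formula \eqref{2-8-8}; using $\sin(\gamma-\alpha)=-\sin 2\alpha$ and $\lambda_k(0,q,-\alpha,0)=-\lambda_{-k}(0,q,\alpha,0)$ one obtains, after simplification, the explicit expression \eqref{2-8-16} for $a_n(0,q,\alpha,0)$ in terms of that one spectrum alone. Having both the eigenvalues and the norming constants --- equivalently, the spectral function of $L(0,q,\alpha,0)$ with $\beta$ fixed at $0$ --- the potential $q$ is reconstructed uniquely and constructively by Theorem \ref{thm2-8-1}, part 2 (the Gel'fand-Levitan procedure).

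Finally, for part 4 take $\alpha=\beta=0$. Part 1 now reads $\lambda_{-n}(0,q,0,0)=-\lambda_n(0,q,0,0)$ (so in particular $\lambda_0=0$) and $a_{-n}(0,q,0,0)=a_n(0,q,0,0)$, so the sets $\{\lambda_n(0,q,0,0):n>0\}$ and $\{a_n(0,q,0,0):n\geq 0\}$ --- or, symmetrically, the ones with negative indices --- already recover the full collection of eigenvalues and norming constants, hence the spectral function, whence $q$ by Theorem \ref{thm2-8-1}, part 2. The conceptual step throughout is spotting and using the $\sigma_2$-symmetry; the place where care is required is the bookkeeping: pinning the index shift $n\mapsto-n$ in \eqref{2-8-13} from the asymptotics, handling the unimodular constant relating $\sigma_2\varphi_n$ to the normalized eigenfunction of the reflected problem, and tracking signs in the passage from \eqref{2-8-8} to \eqref{2-8-16}. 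None of this is deep, but all of it must be done carefully.
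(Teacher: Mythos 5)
Your proof is correct and follows essentially the same route as the paper: the $\sigma_2$-anticommutation symmetry giving $\lambda_{-n}(0,q,-\alpha,-\beta)=-\lambda_n(0,q,\alpha,\beta)$ and equality of norms for part 1, the Borg-type two-spectra uniqueness (Theorem \ref{thm2-8-2}) for part 2, the substitution $\gamma=-\alpha$ in \eqref{2-8-8} together with \eqref{2-8-13} for \eqref{2-8-16} and Gel'fand--Levitan recovery for part 3, and the symmetry of the spectral data about the origin for part 4. Nothing essential differs from the paper's argument.
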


\begin{proof}
 Let $\varphi_n$ is the eigenfunction of the problem $L(0,  q,\alpha, 0)$, corresponding to the eigenvalue $\lambda_n = \lambda_n  (0,  q,\alpha,0)$.
Then the vector-function $\tilde{\varphi}_n=\sigma_2\varphi_n$ satisfies the equation
\begin{align*}
\ell\tilde{\varphi}_n = &
\left\{\sigma_1\frac{1}{i}\frac{d}{dx}+ \sigma_3\cdot  q(x)\right\}\sigma_2\varphi_n(x) = \\
= & -\sigma_2 \left\{\sigma_1\frac{1}{i}\frac{d}{dx}+ \sigma_3\cdot  q(x)\right\}\varphi_n  (x) = \\
= & -\sigma_2\ell\varphi_n =-\sigma_2\lambda_n\varphi_n=-\lambda_n\tilde{\varphi}_n
\end{align*}
and the boundary conditions $(-\alpha, -\beta)$. 
It follows (as in Theorem \ref{2-8-3}) that
\[
\lambda_{-n}(0, q, -\alpha, -\beta)=-\lambda_n (0, q, \alpha, \beta),\quad n\in\mathbb{Z} \, .
\]
The equality $a_{-n}(0,  q,-\alpha,-\beta)=a_n(0,  q,\alpha,\beta) $ follows from the fact that $L^2$-norms of $\varphi_n$ and $\sigma_2\varphi_n$ coincide.

To prove the second part, note that according to \eqref{2-8-13} and \eqref{2-8-15}
\begin{equation}\label{2-8-17}
\lambda_n (0, q_1, -\alpha, 0)=-\lambda_{-n}(0, q_1, \alpha, 0)=\\
=-\lambda_{-n}(0, q_2, \alpha, 0)=\lambda_n (0, q_2, -\alpha, 0),
\end{equation}
for all $n \in\mathbb{Z} $.
Thus, from the coincidence of the spectra of the problems $L(0, q_1, \alpha, 0)$ and $L(0, q_2, \alpha, 0)$ follows the coincidence of the spectra of the problems $L(0, q_1, -\alpha, 0)$ and $L(0, q_2, -\alpha, 0)$ and, (note that from the condition ${0<|\alpha| <\frac{\pi}{2} }$ it follows that $\alpha \neq -\alpha$) according to Theorem \ref{thm2-8-1},  $q_1(x)=q_2(x)$ a.e..

To prove the third part, it is enough in the formula \eqref{2-8-8} to take $\gamma  =-\alpha$ and, taking into account the equality \eqref{2-8-13}, from \eqref{2-8-8} to obtain the formula \eqref{2-8-16}. 
Thus, knowing (one) spectrum of the problem $L(0,  q,\alpha,0)$ at ${0<|\alpha| < \frac{\pi}{2} }$, we know also the norming constants $a_n(0,  q,\alpha,0)$, $n\in\mathbb{Z} $, of this problem and, according to the Theorem \ref{thm2-8-1}, we can uniquely and constructively recover the potential $\Omega(x  )=\sigma_3\cdot  q(x)$ of this problem.

To prove the fourth part, note that the spectrum of the problem $L(0,  q,0, 0)$ is symmetrical with respect to the origin of the coordinates, i.e.
\[
\lambda_{0}(0,  q,0, 0)=0\quad\text{and}\quad  \lambda_{-n}(0,  q,0, 0)=-\lambda_{n}(0,  q,0, 0) \quad n \in\mathbb{Z} \, .
\]
In addition, from \eqref{2-8-14} it follows that $a_{-n}(0,  q,0, 0)=a_{n}(0,  q,0, 0)$, $n\in\mathbb{Z} $, and, thus, knowing the "half" spectral data of the problem $L(0,  q,0, 0)$, we know also the other "half", i.e. we know the spectral function, which is enough to constructively recover of the potential function.
\end{proof}

\begin{theorem}\label{thm2-8-5} 
\begin{enumerate}
\item  For all $n\in\mathbb{Z} $ the following equalities hold
\begin{align*}
\lambda_{n}(p, 0, \alpha, \beta)=-\lambda_{-n}\left(p, 0, \frac{\pi}{2} (sign\, \alpha) -\alpha, \frac{\pi}{2}  (sign\, \beta) -\beta\right)\, ,\\
a_{n}(p, 0, \alpha, \beta)=a_{-n}\left(p, 0, \frac{\pi}{2}  (sign\, \alpha) - \alpha, \frac{\pi}{2} (sign\, \beta) -\beta\right)\, .
\end{align*}
\item  (Uniqueness by one spectrum.) 
If for all $n\in\mathbb{Z} $
\begin{equation}\label{2-8-18}
\lambda_n \left(p_1, 0, \alpha, \frac{\pi}{4} \right)=\lambda_n \left(p_2, 0, \alpha, \frac{\pi}{4} \right),
\end{equation}
where $\alpha\neq\frac{\pi}{4}$, then $p_1(x)=p_2(x)$ almost everywhere.
\item  (Reconstruction by one spectrum.) 
For $\alpha  \neq  \frac{\pi}{4} $ the potential of the problem${L\left(p,0, \alpha,\frac{\pi}{4} \right)}$ is uniquely and constructively recovered by one spectrum ${\left\{\lambda_n\left((p,0, \alpha,\frac{\pi}{4} \right),\; n\in\mathbb{Z} \right\}}$. 
At the same time (assume $\sign 0 = 1$)
\begin{multline}\label{2-8-19}
a_n\left(p, 0, \alpha, \frac{\pi}{4} \right)=\\
=({\rm sign}\,\alpha)\frac{\cos 2\alpha}{\lambda_n\left(\alpha, \frac{\pi}{4} \right)+\lambda_{-n}\left(\alpha, \frac{\pi}{4} \right)}
\prod^{\infty}_{\substack{ k=-\infty\\ k\neq n}}
\frac{\lambda_n\left(\alpha, \frac{\pi}{4} \right)-\lambda_k\left(\alpha, \frac{\pi}{4} \right)}{\lambda_n \left(\alpha, \frac{\pi}{4} \right)+\lambda_{-k}\left(\alpha, \frac{\pi}{4} \right)}\; .
\end{multline}
\item The potential of the problem ${L\left(p, 0, \frac{\pi}{4} , \frac{\pi}{4} \right)}$ (or ${L\left(p, 0, -\frac{\pi}{4} , -\frac{\pi}{4} \right)}$) is uniquely and constructively recovered by sets:\\
${\left\{\lambda_n\left(p, 0, \frac{\pi}{4} , \frac{\pi}{4} \right), n>0\right\}}\quad$ and $\quad{\left\{a_n\left(p, 0, \frac{\pi}{4} , \frac{\pi}{4} \right), n\geq 0\right\}}$ or \\
${\left\{\lambda_n\left(p, 0, \frac{\pi}{4} , \frac{\pi}{4} \right), n<0\right\}}\quad$, and
$\quad{\left\{a_n\left(p, 0, \frac{\pi}{4} , \frac{\pi}{4} \right), n\leq 0\right\}}$. \\
Similarly for ${\left(-\frac{\pi}{4} , -\frac{\pi}{4} \right)}$.
\end{enumerate}
\end{theorem}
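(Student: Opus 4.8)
The plan is to exploit the unitary involution given by the Pauli matrix $\sigma_3$, which plays here the role that $\sigma_2$ plays in Theorem~\ref{thm2-8-4}. Since $q\equiv 0$, the differential expression is $\ell=\sigma_1\frac1i\frac{d}{dx}+\sigma_2 p(x)$, and $\sigma_3$ anticommutes with both $\sigma_1$ and $\sigma_2$; hence for any solution $\varphi$ of $\ell\varphi=\lambda\varphi$ one has $\ell(\sigma_3\varphi)=-\sigma_3(\ell\varphi)=-\lambda(\sigma_3\varphi)$. So if $\varphi_n=\varphi(\cdot,\lambda_n,\alpha)$ is an eigenfunction of $L(p,0,\alpha,\beta)$ with eigenvalue $\lambda_n$, then $\sigma_3\varphi_n$ is an eigenfunction of the problem obtained by transporting the boundary data through $\sigma_3$, with eigenvalue $-\lambda_n$. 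A short computation with $\sigma_3(\sin\alpha,-\cos\alpha)^T=(-\cos\alpha,\sin\alpha)^T$ shows that, modulo the period $\pi$ of the boundary conditions, the parameter $\alpha$ is sent to $\frac\pi2\sign\alpha-\alpha$ (and likewise $\beta\mapsto\frac\pi2\sign\beta-\beta$), the $\sign$ appearing precisely because the image must be renormalized into $\left(-\frac\pi2,\frac\pi2\right]$. Using the eigenvalue asymptotics \eqref{2-8-4} one checks that the transformed problem has no eigenvalues other than the $-\lambda_n$, and that with the enumeration of Chapter~\ref{chapter_3} the eigenvalue $-\lambda_n$ receives index $-n$; unitarity of $\sigma_3$ gives $\|\sigma_3\varphi_n\|=\|\varphi_n\|$, whence the norming-constant identity. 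This proves part~1.

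For part~2 I would specialize part~1 to $\beta=\frac\pi4$: then $\frac\pi2\sign\beta-\beta=\frac\pi4$ is a fixed point, so part~1 reads $\lambda_n(p,0,\alpha,\frac\pi4)=-\lambda_{-n}(p,0,\alpha^{\sharp},\frac\pi4)$ with $\alpha^{\sharp}=\frac\pi2\sign\alpha-\alpha$. Hence the single spectrum $\{\lambda_n(p_i,0,\alpha,\frac\pi4)\}_{n\in\mathbb Z}$ determines the second spectrum $\{\lambda_n(p_i,0,\alpha^{\sharp},\frac\pi4)\}_{n\in\mathbb Z}$ (it is its reflection through the origin), so the hypothesis \eqref{2-8-18} forces equality of both spectra for $p_1$ and $p_2$. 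The condition $\alpha\neq\frac\pi4$ (together with the convention $\sign 0=1$) guarantees $\alpha^{\sharp}\neq\alpha$ with $0<|\alpha-\alpha^{\sharp}|<\pi$, so the Borg-type uniqueness theorem~\ref{thm2-8-2} yields $p_1=p_2$ a.e. For part~3 the same observation turns one spectrum into a pair of spectra, and formula \eqref{2-8-8} of Theorem~\ref{thm2-8-2} with $\gamma=\alpha^{\sharp}$ expresses the norming constants through them; substituting $\lambda_k(p,0,\alpha^{\sharp},\frac\pi4)=-\lambda_{-k}(p,0,\alpha,\frac\pi4)$ and computing $\sin(\alpha^{\sharp}-\alpha)=(\sign\alpha)\cos 2\alpha$ gives precisely \eqref{2-8-19}; once all $a_n$ are known, Theorem~\ref{thm2-8-1} (part~2) reconstructs $\Omega=\sigma_2 p$ constructively.

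Part~4 is the self-dual case: taking $\alpha=\beta=\frac\pi4$ in part~1 (both fixed points) gives $\lambda_{-n}(p,0,\frac\pi4,\frac\pi4)=-\lambda_n(p,0,\frac\pi4,\frac\pi4)$ and $a_{-n}=a_n$ for all $n$, and in particular $\lambda_0=0$. Thus the "positive-index half" $\{\lambda_n,\,n>0\}$ together with $\{a_n,\,n\ge0\}$ already determines the whole spectral data, hence the spectral function, and Theorem~\ref{thm2-8-1} (part~2) again recovers the potential constructively; the case $\alpha=\beta=-\frac\pi4$ is identical, since $-\frac\pi4$ is also a fixed point of $\gamma\mapsto\frac\pi2\sign\gamma-\gamma$. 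The main technical obstacle throughout is part~1: tracking how the involution acts on the boundary parameters after their renormalization into $\left(-\frac\pi2,\frac\pi2\right]$ and, correspondingly, how the eigenvalue indices are relabeled — this is where the $\sign$ functions enter and where the asymptotics \eqref{2-8-4} must be invoked to rule out spurious eigenvalues; once part~1 is in place, parts~2--4 follow by plugging into the already established two-spectra results of Theorems~\ref{thm2-8-1} and \ref{thm2-8-2}.
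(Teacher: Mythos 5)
Your proposal is correct and follows the paper's own proof essentially verbatim: the $\sigma_3$-conjugation (anticommuting with $\sigma_1$ and $\sigma_2$, sending the boundary parameters to $\frac{\pi}{2}\sign\gamma-\gamma$) for part 1, the fixed point $\beta=\frac{\pi}{4}$ turning one spectrum into two and invoking the Borg-type uniqueness for part 2, formula \eqref{2-8-8} with $\gamma=\frac{\pi}{2}\sign\alpha-\alpha$ together with $\sin\left(\frac{\pi}{2}\sign\alpha-2\alpha\right)=(\sign\alpha)\cos 2\alpha$ for part 3, and the origin-symmetric spectrum plus half of the norming constants for part 4. One caveat you share with the paper (whose proof restricts ``for simplicity'' to $\alpha\in\left[0,\frac{\pi}{2}\right]$): the claim that $\alpha\neq\frac{\pi}{4}$ forces $\frac{\pi}{2}\sign\alpha-\alpha\neq\alpha$ fails at $\alpha=-\frac{\pi}{4}$, which is also a fixed point of the involution, so the arguments for parts 2--3 implicitly require $\alpha\neq-\frac{\pi}{4}$ as well.
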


\begin{proof} 
At first, note that the vector-function $\tilde{\varphi}_n=\sigma_3\varphi_n$ satisfies the identity
\begin{align*}
\ell\tilde{\varphi}_n=\left\{\sigma_1\frac{1}{i}\frac{d}{dx}+ \sigma_2\cdot  p(x)\right\}\sigma_3\varphi_n \equiv & -\sigma_3 \left\{\sigma_1\frac{1}{i}\frac{d}{dx}+ \sigma_2\cdot  p(x)\right\}\varphi_ n\equiv \\
\equiv & -\sigma_3\ell\varphi_n \equiv -\sigma_3\lambda_n\varphi_n=-\lambda_n\tilde{\varphi}_n,\quad n\in\mathbb{Z} 
\end{align*}
and the boundary conditions $\left(\frac{\pi}{2}  ({\rm sign}\,\alpha)-\alpha, \frac{\pi}{2} ({\rm sign}\,\beta)-\beta\right)$, i.e.
\[
\lambda_{-n}(p, 0, \alpha, \beta)=-\lambda_{-n}\left(p, 0, \frac{\pi}{2} (sign\,\alpha)-\alpha, \frac{\pi}{2} (sign\,\beta)-\beta\right),\quad n\in\mathbb{Z}.
\]
$L^2$-norms $\varphi_n$ and $\sigma_3\varphi_n$ are equal (${\tilde{\varphi}_n}= \varphi_{-n}\left(p, 0, \frac{\pi}{2} (sign\,\alpha)-\alpha, \frac{\pi}{2} (sign\,\beta)-\beta\right)$).

To prove the second part, note that (let for simplicity ${\alpha\in\left[ 0, \frac{\pi}{2} \right]}$, ${\alpha\neq\frac{\pi}{4} }$)
\begin{equation}
\begin{aligned}
&\lambda_{n}\left(p_1, 0, \frac{\pi}{2}  -\alpha, \frac{\pi}{4} \right)=-\lambda_{-n}\left(p_1, 0, \alpha, \frac{\pi}{4} \right)=\\
&=-\lambda_{-n}\left(p_2, 0, \alpha, \frac{\pi}{4} \right)=\lambda_{n}\left(p_2, 0, \frac{\pi}{2}  -\alpha, \frac{\pi}{4} \right),\quad n\in\mathbb{Z} \; ,
\end{aligned}
\end{equation}
and, according to the condition $\alpha\neq\frac{\pi}{4}$, $\frac{\pi}{2} -\alpha\neq\alpha$, i.e. we have a coincidence of two spectra, from where, according to Theorem \ref{c6:thm_3}, it follows that $p_1(x)=p_2(x)$ almost everywhere. 
Thus, in the studied case of the problem $L\left(p, 0, \alpha, \frac{\pi}{4} \right)$ knowing one spectrum $\left\{ \lambda_n \left(p, 0, \alpha, \frac{\pi}{4} \right), \; n\in\mathbb{Z} \right\}$ (at $\alpha\neq\frac{\pi}{4} $) gives information about the second spectrum $\left\{ \lambda_n\left(p, 0, \frac{\pi}{2}  (sign\,\alpha) -\alpha, \frac{\pi}{4} \right),\; n\in\mathbb{Z} \right\}$,  from where, substituting in the formula \eqref{2-8-8} $\gamma =\frac{\pi}{2}  (sign\,\alpha)-\alpha$, we get the expression \eqref{2-8-19} for norming constants, i.e. we know the spectral function by which we constructively recover the potential function.

Regarding the fourth part of the theorem \ref{thm2-8-4}, it suffices to say that the spectrum ${\left\{\lambda_n\left(p, 0, \frac{\pi}{4} , \frac{\pi}{4} \right),\, n\in\mathbb{Z} \right\}}$ is symmetric with respect to the origin. 
The rest is the same as in the case of Theorem \ref{thm2-8-4}.
\end{proof}

\begin{remark}\label{rem2-8-1} 
There are several obvious analogs of the theorem \ref{2-8-5}, namely, if the condition \eqref{2-8-18} is changed to
\[
\lambda_n \left(p_1, 0, \frac{\pi}{4} , \beta\right)=\lambda_n \left(p_2, 0, \frac{\pi}{4} , \beta\right),\quad n\in\mathbb{Z} \, ,
\]
\[
\lambda_n \left(p_1, 0, -\frac{\pi}{4} , \beta\right)=\lambda_n \left(p_2, 0, -\frac{\pi}{4} , \beta\right),\quad n\in\mathbb{Z} \, ,
\]
\[
\lambda_n \left(p_1, 0, \alpha, -\frac{\pi}{4} \right)=\lambda_n \left(p_2, 0, \alpha, -\frac{\pi}{4} \right),\quad n\in\mathbb{Z} \, ,
\]
and change part 3 accordingly, then the theorem is correct again.

Similarly, in the theorem \ref{thm2-8-4} we can change the condition \eqref{2-8-15} to
\[
\lambda_n (0, q_1, 0, \beta)=\lambda_n (0, q_2, 0, \beta),\quad n\in\mathbb{Z} \, ,
\]
${0<|\beta|  <\frac{\pi}{2} }$, and change part 3 accordingly.
\end{remark}

Finally, let us turn to the analogs of Ambarzumyan's theorem, which were mentioned in the introduction and are corollaries of Theorems \ref{thm2-8-4} and \ref{thm2-8-5}.
\begin{corollary}\label{cor2-8-2} 
If ${\lambda_n  (0,  q,a,0)=n-\frac{\alpha}{\pi}}$, $n\in\mathbb{Z} $, ${0<|\alpha|  <\frac{\pi}{2} }$, (or ${\lambda_n  (0,  q,0, \beta)=n+\frac{\beta}{\pi}}$, $n\in\mathbb{Z} $, ${0<|\beta|  <\frac{\pi}{2} }$), then $q(x) = 0$ almost everywhere.
\end{corollary}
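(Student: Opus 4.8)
The plan is to observe that the hypothesis is nothing more than the statement that the spectrum of $L(0,q,\alpha,0)$ coincides with the spectrum of the free (zero-potential) problem $L(0,0,\alpha,0)$, and then to invoke the uniqueness-by-one-spectrum result already at our disposal. First I would recall formula \eqref{2-8-5}: for $p\equiv q\equiv 0$ one has $\lambda_n(0,0,\alpha,0)=n-\alpha/\pi$ and $\lambda_n(0,0,0,\beta)=n+\beta/\pi$ for every $n\in\mathbb{Z}$. Hence the assumption $\lambda_n(0,q,\alpha,0)=n-\alpha/\pi$, $n\in\mathbb{Z}$, is exactly $\lambda_n(0,q,\alpha,0)=\lambda_n(0,0,\alpha,0)$ for all $n\in\mathbb{Z}$, and likewise in the parenthetical case with $\beta$.

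Next I would apply Theorem \ref{thm2-8-4}, part 2 (uniqueness by one spectrum), with $q_1=q$ and $q_2\equiv 0$. Since $0<|\alpha|<\pi/2$ — in particular $\alpha\neq 0$, hence $\alpha\neq-\alpha$, which is precisely the condition required there — this gives $q(x)=0$ almost everywhere on $[0,\pi]$. For the parenthetical statement involving $\beta$ I would use the variant of Theorem \ref{thm2-8-4} recorded in Remark \ref{rem2-8-1}, in which the hypothesis \eqref{2-8-15} is replaced by $\lambda_n(0,q_1,0,\beta)=\lambda_n(0,q_2,0,\beta)$, $n\in\mathbb{Z}$, with $0<|\beta|<\pi/2$; taking $q_1=q$, $q_2\equiv 0$ again yields $q(x)=0$ a.e.

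If one prefers to avoid quoting the one-spectrum uniqueness as a black box, the reduction can be made explicit via a Borg-type argument: by Theorem \ref{thm2-8-4}, part 1, $\lambda_{-n}(0,q,-\alpha,0)=-\lambda_n(0,q,\alpha,0)=-(n-\alpha/\pi)=-n+\alpha/\pi=\lambda_{-n}(0,0,-\alpha,0)$, so the spectra of $L(0,q,\alpha,0)$ and of $L(0,q,-\alpha,0)$ coincide with those of the corresponding zero-potential problems. Since $0<|\alpha|<\pi/2$ we have $0<|\alpha-(-\alpha)|=2|\alpha|<\pi$, so Theorem \ref{thm2-8-1}, part 1 (equivalently Theorem \ref{c6:thm_3}) applies with $\gamma=-\alpha$ and forces the potential $\sigma_3 q(x)$ to agree with the zero potential, i.e. $q(x)=0$ a.e.

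There is essentially no obstacle here: the assertion is a direct corollary of the already proved theorems. The only points requiring a little care are the index-and-sign bookkeeping in \eqref{2-8-5} and \eqref{2-8-13}, and the observation that the constraint $0<|\alpha|<\pi/2$ (respectively $0<|\beta|<\pi/2$) is exactly what is needed so that $\alpha\neq-\alpha$, making a genuine ``second spectrum'' available for the uniqueness step.
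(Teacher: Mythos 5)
Your argument is correct and coincides with the paper's own proof: the paper likewise notes via \eqref{2-8-5} that the hypothesis means the spectrum agrees with that of the zero potential, and then takes $q_2\equiv 0$ in part 2 of Theorem \ref{thm2-8-4} (with the $\beta$ case handled by Remark \ref{rem2-8-1}). Your optional unpacking of the one-spectrum uniqueness through \eqref{2-8-13} and the Borg-type Theorem \ref{thm2-8-1}/\ref{c6:thm_3} is just a restatement of how that theorem is itself proved, so nothing new is needed.
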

Given the equality \eqref{2-8-5}, it is enough to take $q_2(x)\equiv 0$ in part 2 of Theorem \ref{thm2-8-4}.

\begin{corollary}\label{cor2-8-3} If ${\lambda_n\left(p,0, \alpha,\frac{\pi}{4} \right)=n-\frac{\alpha}{\pi}+\frac{1}{4}}$, $n\in\mathbb{Z} $, ${\alpha\neq  \frac{\pi}{4} }$, then $p(x)=0$ almost everywhere.
\end{corollary}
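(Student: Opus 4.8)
The plan is to obtain this statement directly from part 2 of Theorem \ref{thm2-8-5} by taking the comparison potential to be zero. First I would observe that the hypothesis $\lambda_n(p,0,\alpha,\frac{\pi}{4}) = n - \frac{\alpha}{\pi} + \frac{1}{4}$ says exactly that the spectrum of $L(p,0,\alpha,\frac{\pi}{4})$ coincides with the spectrum of the zero-potential problem $L(0,0,\alpha,\frac{\pi}{4})$: by the explicit formula \eqref{2-8-5} with $\beta = \frac{\pi}{4}$ one has $\lambda_n(0,0,\alpha,\frac{\pi}{4}) = n + \frac{\pi/4 - \alpha}{\pi} = n + \frac{1}{4} - \frac{\alpha}{\pi}$ for every $n \in \mathbb{Z}$, which is precisely the prescribed sequence.

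Next I would apply part 2 of Theorem \ref{thm2-8-5} (uniqueness by one spectrum) with $p_1 = p$ and $p_2 \equiv 0$. Since $\alpha \neq \frac{\pi}{4}$ by hypothesis, the theorem applies and yields $p(x) = 0$ almost everywhere, which is the assertion. For completeness I would recall the mechanism behind that part of Theorem \ref{thm2-8-5}: the substitution $\tilde\varphi_n = \sigma_3 \varphi_n$ gives $\lambda_n(p,0,\alpha,\frac{\pi}{4}) = -\lambda_{-n}\bigl(p,0,\frac{\pi}{2}(\sign\alpha) - \alpha,\frac{\pi}{4}\bigr)$, so knowing one spectrum at boundary parameter $\alpha$ also determines the spectrum at boundary parameter $\frac{\pi}{2}(\sign\alpha) - \alpha$; because $\alpha \neq \frac{\pi}{4}$ forces $\frac{\pi}{2}(\sign\alpha) - \alpha \neq \alpha$, these are genuinely two distinct spectra, and the Borg-type two-spectra uniqueness Theorem \ref{c6:thm_3} (applied with comparison potential $0$) forces $p \equiv 0$.

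The proof is essentially immediate, so there is no real technical obstacle; the only points requiring care are (i) matching the arithmetic of the hypothesis to the explicit eigenvalue formula \eqref{2-8-5}, and (ii) checking that the excluded value $\alpha = \frac{\pi}{4}$ is exactly the degeneracy at which $\frac{\pi}{2}(\sign\alpha) - \alpha = \alpha$, so that the condition $\alpha \neq \frac{\pi}{4}$ is what guarantees the symmetry reduction in Theorem \ref{thm2-8-5} produces two different spectra rather than a tautology. If one wished to avoid invoking Theorem \ref{thm2-8-5} as a black box, the same short chain — symmetry reduction via $\sigma_3$, recovery of the second spectrum, then Theorem \ref{c6:thm_3} — can be written out directly in a few lines.
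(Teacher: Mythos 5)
Your proposal is correct and is essentially identical to the paper's own proof: the paper likewise notes via \eqref{2-8-5} that the given sequence is the spectrum of the zero-potential problem with $\beta=\frac{\pi}{4}$ and then takes $p_2\equiv 0$ in part 2 of Theorem \ref{thm2-8-5}. Your added recollection of the mechanism (the $\sigma_3$ symmetry producing the second spectrum and then Theorem \ref{c6:thm_3}) just unpacks that theorem's proof and changes nothing in the argument.
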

Taking into account the equality \eqref{2-8-5}, it is enough to take $p_2(x)\equiv 0$ in part 2 of Theorem \ref{thm2-8-5}.

From the Remark \ref{rem2-8-1}, it follows that there are three other analogs of Ambartsumyan's theorem for the Dirac system in appropriate special cases.

\section*{Notes and references}
\addcontentsline{toc}{section}{Notes and references}
The results of this chapter are published in 2006 in paper \cite{Harutyunyan:2006} and in 2016 in \cite{Harutyunyan:2016b}.

\chapter{Constructive solution of inverse problem}\label{chapter_9}

In Chapter \ref{chapter_6} we proved that spectral data $\{\lambda_n\}_{n \in \mathbb{Z}}$ and $\{a_n\}_{n \in \mathbb{Z}}$ uniquely determine the operator $L(\Omega, \alpha, 0) = L(\Omega, \alpha)$.

In this chapter we represent a method for reconstruction of operator $L(\Omega, \alpha)$ by two given sequences $\{\lambda_n\}_{n \in \mathbb{Z}}$ and $\{a_n\}_{n \in \mathbb{Z}}$, which have the properties \eqref{c3:ev_asymptotics} and \eqref{c3:30}.
For simplicity, we will consider the case $\alpha = 0$.

\section{The derivation of Gelfand-Levitan equation}\label{c9:sec_1}
As it was shown in Chapter \ref{chapter_2} the solution $\varphi(x, \lambda)$ of the Cauchy problem \eqref{c2:Cauchy_problem_y} can be represented in the form \eqref{c2:varphi_rep}, where 
$
\varphi_0(x, \lambda) = \left( \begin{array}{c}
\sin \lambda x \\
-\cos \lambda x
\end{array} \right)
$
and $K_0(x,t) = K(x,t) - K(x,-t) \sigma_2$.
For brevity  in this section we will write $K(x,t)$ instead of $K_0(x,t)$.
And so we have
\begin{equation}\label{c9:varphi_rep}
\varphi(x, \lambda) = \varphi_0(x, \lambda) + \int^x_0 K(x, t) \varphi_0 (t, \lambda)\ dt 
= (E + \mathbb{K}) \varphi_0,
\end{equation}
where the kernel $K(x,t)$ is defined in triangle $0 \leq t \leq x \leq \pi $ ($K$ is upper-diagonal kernel).
Since $\mathbb{K}$ is a Volterra integral operator with an upper-diagonal kernel, the operator $E + \mathbb{K}$ is invertible, and its inverse can be written in the form $E + \mathbb{H}$, where $\mathbb{H}$ is also a Volterra integral operator with the upper-diagonal kernel, i.e.
\[
\mathbb{H} f = \int^x_0 H(x, t) f (t)\ dt,
\]
where $H(x,t)$ is defined in triangle $0 \leq t \leq x \leq \pi $.
By \eqref{c9:varphi_rep} we get
\begin{equation}\label{c9:varphi0_rep}
\varphi_0(x, \lambda) = (E + \mathbb{H}) \varphi = \varphi(x, \lambda) + \int^x_0 H(x, t) \varphi (t, \lambda)\ dt .
\end{equation}
For $\varphi_0^T$ we have
\begin{equation}\label{c9:varphi0T_rep}
\begin{aligned}
\varphi_0^T(x, \lambda) 
=& \varphi^T(x, \lambda) + \int^x_0 (H(x, t) \varphi (t, \lambda))^T\ dt  = \\
=& \varphi^T(x, \lambda) + \int^x_0  \varphi^T (t, \lambda) H(x, t)^T\ dt.
\end{aligned}
\end{equation}
Thus from \eqref{c9:varphi_rep} we have
\begin{equation}\label{c9:sum_varphi_varphi0}
\begin{aligned}
\sum_{n=-N}^N \frac{1}{a_n} \varphi(x, \lambda_n)  \varphi_0^T(t, \lambda_n) 
=& \sum_{n=-N}^N \frac{1}{a_n} \varphi_0(x, \lambda_n) \varphi_0^T(t, \lambda_n) + \\
&+\int^x_0  K(x, s) \sum_{n=-N}^N \frac{1}{a_n} \varphi_0(s, \lambda_n) \varphi_0^T(t, \lambda_n) \ ds.
\end{aligned}
\end{equation}
On the other hand, from  \eqref{c9:varphi0T_rep}, we have 
\begin{equation}\label{c9:sum_varphi0_varphi}
\begin{aligned}
\sum_{n=-N}^N \frac{1}{a_n} \varphi(x, \lambda_n)  \varphi_0^T(t, \lambda_n) 
=& \sum_{n=-N}^N \frac{1}{a_n} \varphi(x, \lambda_n) \varphi^T(t, \lambda_n) + \\
&+ \sum_{n=-N}^N \frac{1}{a_n} \varphi(x, \lambda_n) \int^t_0  \varphi^T(s, \lambda_n) H^T(t,s) \ ds.
\end{aligned}
\end{equation}

Using the last two equalities, we obtain
\begin{equation}\label{c9:sum_1an_1an0}
\begin{aligned}
& \sum_{n=-N}^N \left[ \frac{1}{a_n} \varphi(x, \lambda_n)  \varphi^T(t, \lambda_n) -
\frac{1}{a^0_n} \varphi_0(x, \lambda_n^0)  \varphi_0^T(t, \lambda_n^0) \right]  = \\
=& \sum_{n=-N}^N \left[ \frac{1}{a_n} \varphi_0(x, \lambda_n)  \varphi_0^T(t, \lambda_n) -
\frac{1}{a^0_n} \varphi_0(x, \lambda_n^0)  \varphi_0^T(t, \lambda_n^0)\right]  + \\
&+ \int^x_0  K(x, s) \sum_{n=-N}^N \frac{1}{a_n^0} \varphi_0(s, \lambda_n^0) \varphi_0^T(t, \lambda_n^0) \ ds +\\
&+ \int^x_0  K(x, s) \sum_{n=-N}^N \left[ \frac{1}{a_n} \varphi_0(s, \lambda_n) \varphi_0^T(t, \lambda_n) - \frac{1}{a_n^0} \varphi_0(s, \lambda_n^0)  \varphi_0^T(t, \lambda_n^0)\right]  \ ds + \\
&+ \sum_{n=-N}^N \frac{1}{a_n} \varphi(x, \lambda_n) \int^t_0  \varphi^T(s, \lambda_n) H^T(t,s) \ ds.
\end{aligned}
\end{equation}
The latter equality we write in the following form
\[
\Phi_N(x,t) = I_{N1}(x,t)  + I_{N2} (x,t) + I_{N3} (x,t) + I_{N4}(x,t) , 
\]
where
\begin{align}
\Phi_N(x,t) =& \sum_{n=-N}^N \left[ \frac{1}{a_n} \varphi(x, \lambda_n)  \varphi^T(t, \lambda_n) - \frac{1}{a^0_n} \varphi_0(x, \lambda_n^0)  \varphi_0^T(t, \lambda_n^0) \right], \label{c9:PhiN} \\
I_{N1}(x,t) =&  \sum_{n=-N}^N \left[ \frac{1}{a_n} \varphi_0(x, \lambda_n)  \varphi_0^T(t, \lambda_n) -\frac{1}{a^0_n} \varphi_0(x, \lambda_n^0)  \varphi_0^T(t, \lambda_n^0)\right],  \label{c9:IN1} \\
I_{N2}(x,t) =& \int^x_0  K(x, s) \sum_{n=-N}^N \frac{1}{a_n^0} \varphi_0(s, \lambda_n^0) \varphi_0^T(t, \lambda_n^0) \ ds,  \label{c9:IN2} \\
I_{N3}(x,t) =&  \int^x_0  K(x, s) \sum_{n=-N}^N \left[ \frac{1}{a_n} \varphi_0(s, \lambda_n) \varphi_0^T(t, \lambda_n) - \frac{1}{a_n^0} \varphi_0(s, \lambda_n^0)  \varphi_0^T(t, \lambda_n^0)\right]\ ds,  \label{c9:IN3} \\
I_{N4}(x,t) =&  \sum_{n=-N}^N \frac{1}{a_n} \varphi(x, \lambda_n) \int^t_0  \varphi^T(s, \lambda_n) H^T(t,s) \ ds.  \label{c9:IN4}
\end{align}

Let $f_k \in AC[0, \pi], \ k=1, 2$ , $f = (f_1, f_2)^T$, and $f$ satisfies the boundary conditions \eqref{c3:a_bound_cond} and \eqref{c3:b_bound_cond}.
Then according to expansion theorem \ref{c4:thm_1}, we have
\begin{align*}
\lim_{N \to \infty} \int_0^\pi \Phi_N(x,t) f(t) \ dt 
=& \sum_{n=-\infty}^\infty c_n \varphi(x, \lambda_n) - \sum_{n=-\infty}^\infty c_n^0 \varphi_0(x, \lambda_n^0) = \\
=& \sum_{n=-\infty}^\infty (f, \varphi(x, \lambda_n)) \varphi(x, \lambda_n) - \sum_{n=-\infty}^\infty (f, \varphi_0(x, \lambda_n^0)) \varphi_0(x, \lambda_n^0) = \\
=& f(x) - f(x) = 0.
\end{align*}
Let us denote by $F(x,t)$ the matrix 
\begin{equation}\label{c9:Ftx}
F(x,t) := \sum_{n=-\infty}^\infty \left[ \frac{1}{a_n} \varphi_0(x, \lambda_n)  \varphi_0^T(t, \lambda_n) - \frac{1}{\pi} \varphi_0(x, \lambda_n^0)  \varphi_0^T(t, \lambda_n^0) \right],
\end{equation}
then
\begin{align*}
F(x,t) 
=& \sum_{n=-\infty}^\infty 
\left[ 
\frac{1}{a_n} 
\left(\begin{array}{c}
\sin \lambda_n x\\
-\cos \lambda_n x
\end{array}\right)
(\sin \lambda_n t ,  - \cos \lambda_n t) 
-
\frac{1}{\pi} 
\left(\begin{array}{c}
\sin \lambda_n^0 x\\
-\cos \lambda_n^0 x
\end{array}\right)
(\sin \lambda_n^0 t , - \cos \lambda_n^0 t) 
\right] = \\
=& \sum_{n=-\infty}^\infty 
\left[ 
\frac{1}{a_n} 
\left(\begin{array}{cc}
\sin \lambda_n x \sin \lambda_n t & -\sin \lambda_n x \cos \lambda_n t\\
-\cos \lambda_n x \sin \lambda_n t  & \cos \lambda_n x \cos \lambda_n t
\end{array}\right) 
\right.- \\
&- \left.
\frac{1}{\pi} 
\left(\begin{array}{cc}
\sin \lambda_n^0 x \sin \lambda_n^0 t & -\sin \lambda_n^0 x \cos \lambda_n^0 t\\
-\cos \lambda_n^0 x \sin \lambda_n^0 t  & \cos \lambda_n^0 x \cos \lambda_n^0 t
\end{array} \right) 
\right].
\end{align*}

It follows from \eqref{c9:IN1} and \eqref{c9:Ftx} that
\begin{equation}\label{c9:lim_IN1}
\begin{aligned}
\lim_{N \to \infty} &\int_0^\pi I_{N1}(x,t) f(t) \ dt = \\
=& \lim_{N \to \infty} \int_0^\pi 
\sum_{n=-N}^N \left[ \frac{1}{a_n} \varphi_0(x, \lambda_n)  \varphi_0^T(t, \lambda_n) -\frac{1}{a^0_n} \varphi_0(x, \lambda_n^0)  \varphi_0^T(t, \lambda_n^0)\right] f(t) \ dt = \\
=& \int_0^\pi F(x,t) f(t) \ dt.
\end{aligned}
\end{equation}
From \eqref{c9:IN2}, it follows that
\begin{equation}\label{c9:lim_IN2}
\begin{aligned}
\lim_{N \to \infty} & \int_0^\pi I_{N2}(x,t) f(t) \ dt = \\
=& \lim_{N \to \infty} \int_0^\pi 
\int^x_0  K(x, s) \sum_{n=-N}^N \frac{1}{a_n^0} \varphi_0(s, \lambda_n^0) \varphi_0^T(t, \lambda_n^0) \ ds f(t) \ dt = \\
=& \int_0^\pi  \int^x_0  K(x, s) \lim_{N \to \infty} \sum_{n=-N}^N \frac{1}{a_n^0} \varphi_0(s, \lambda_n^0) \ ds \varphi_0^T(t, \lambda_n^0) f(t) \ dt = \\
=&  \int^x_0  K(x, s) \sum_{n=-\infty}^\infty \frac{1}{a_n^0} \int_0^\pi \varphi_0^T(t, \lambda_n^0) f(t)  \ dt \varphi_0(s, \lambda_n^0)  \ ds = \\
=&  \int^x_0  K(x, t) f(t) \ dt.
\end{aligned}
\end{equation}
From \eqref{c9:IN3} and \eqref{c9:Ftx} we have
\begin{equation}\label{c9:lim_IN3}
\begin{aligned}
\lim_{N \to \infty} & \int_0^\pi I_{N3}(x,t) f(t) \ dt = \\
=& \lim_{N \to \infty} \int_0^\pi 
\int^x_0  K(x, s) \sum_{n=-N}^N \left[ \frac{1}{a_n} \varphi_0(s, \lambda_n) \varphi_0^T(t, \lambda_n) - \frac{1}{a_n^0} \varphi_0(s, \lambda_n^0)  \varphi_0^T(t, \lambda_n^0)\right]\ ds f(t) \ dt = \\
=&  \int_0^\pi 
\int^x_0  K(x, s) \sum_{n=-\infty}^\infty \left[ \frac{1}{a_n} \varphi_0(s, \lambda_n) \varphi_0^T(t, \lambda_n) - \frac{1}{a_n^0} \varphi_0(s, \lambda_n^0)  \varphi_0^T(t, \lambda_n^0)\right]\ ds f(t) \ dt = \\
=& \int_0^\pi \int^x_0  K(x, s) F(s, t) \ ds f(t) \ dt.
\end{aligned}
\end{equation}
From \eqref{c9:IN4}, we have
\begin{equation}\label{c9:lim_IN4}
\begin{aligned}
\lim_{N \to \infty} & \int_0^\pi I_{N4}(x,t) f(t) \ dt = \\
=& \lim_{N \to \infty} \int_0^\pi 
\sum_{n=-N}^N \frac{1}{a_n} \varphi(x, \lambda_n) \int^t_0  \varphi^T(s, \lambda_n) H^T(t,s) \ ds f(t) \ dt = \\
=& \sum_{n=-\infty}^\infty \frac{1}{a_n} \varphi(x, \lambda_n) \int_0^\pi \varphi^T(s, \lambda_n) \int_s^\pi   H^T(t,s) f(t) \ dt  \ ds= \\
=&\int_x^\pi   H^T(t,x) f(t) \ dt  
\end{aligned}
\end{equation}

Now let us define $K(x,t) = H(x, t) = 0$, for $x < t$.
Thus, combining \eqref{c9:lim_IN1}-\eqref{c9:lim_IN4} together, we obtain
\[
\int_0^\pi \left[ F(x,t) + K(x,t) + \int_0^x K(x,s) F(s,t) \ ds \right] f(t) \ dt = 0,
\]
for arbitrary $f \in AC[0,\pi]$, which satisfy the boundary conditions \eqref{c3:a_bound_cond} and \eqref{c3:b_bound_cond}.
Since the set of such functions is dense in $L^2([0,\pi], \mathbb{C}^2)$, it follows that
\begin{equation}\label{c9:Gelfand_Levitan_eq}
F(x,t) + K(x,t) + \int_0^x K(x,s) F(s,t) \ ds = 0, \qquad 0 \leq t \leq x \leq \pi.
\end{equation}
Thus, the following  assertion is true:

\begin{theorem}\label{c9:thm_1}
For each fixed $x \in (0, \pi]$, the kernel $K(x,t)$ of the transformation operator \eqref{c2:varphi_rep} satisfies the integral equation \eqref{c9:Gelfand_Levitan_eq}, where $F(x,t)$ is defined by \eqref{c9:Ftx}.
\end{theorem}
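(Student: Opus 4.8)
The plan is to follow the classical Gelfand--Levitan scheme, built on the transformation operator $E+\mathbb{K}$ of Theorem~\ref{c2:thm_2} together with its Volterra inverse $E+\mathbb{H}$, and then to exploit the completeness of the eigenfunction systems proved in Chapter~\ref{chapter_4}. First I would record the two representations $\varphi=(E+\mathbb{K})\varphi_0$ and $\varphi_0=(E+\mathbb{H})\varphi$ (equations \eqref{c9:varphi_rep}--\eqref{c9:varphi0_rep}), transpose the second to obtain \eqref{c9:varphi0T_rep}, and then compute the truncated "Cauchy kernel" $\sum_{n=-N}^{N}\frac{1}{a_n}\varphi(x,\lambda_n)\varphi_0^T(t,\lambda_n)$ in two different ways: inserting the $\mathbb{K}$-representation in the $x$-variable (giving \eqref{c9:sum_varphi_varphi0}) and the $\mathbb{H}^T$-representation in the $t$-variable (giving \eqref{c9:sum_varphi0_varphi}). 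Equating the two and subtracting the unperturbed sum $\sum_{n=-N}^N\frac{1}{\pi}\varphi_0(x,\lambda_n^0)\varphi_0^T(t,\lambda_n^0)$ produces the algebraic identity \eqref{c9:sum_1an_1an0}, i.e. $\Phi_N=I_{N1}+I_{N2}+I_{N3}+I_{N4}$ with the four explicit pieces \eqref{c9:PhiN}--\eqref{c9:IN4}; the subtraction is precisely what renders every series in sight convergent.

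Next I would fix a test vector-function $f=(f_1,f_2)^T$ with $f_k\in AC[0,\pi]$ satisfying the boundary conditions \eqref{c3:a_bound_cond}--\eqref{c3:b_bound_cond}, multiply the identity by $f(t)$, integrate over $[0,\pi]$, and let $N\to\infty$. On the left, $\int_0^\pi\Phi_N(x,t)f(t)\,dt\to 0$: indeed this integral is the difference of the $N$-th partial sums of the eigenfunction expansions of $f$ with respect to $\{\varphi_n\}$ and to $\{\varphi_0(\cdot,\lambda_n^0)\}$, both of which converge uniformly to $f$ by Theorem~\ref{c4:thm_1}. On the right I would pass to the limit in each $I_{Nj}$ separately: $I_{N1}$ yields $\int_0^\pi F(x,t)f(t)\,dt$ directly from the definition \eqref{c9:Ftx} of $F$; $I_{N2}$ yields $\int_0^x K(x,t)f(t)\,dt$ once the inner sum is recognized as a partial Fourier expansion of $f$ in the orthonormal system $\{\pi^{-1/2}\varphi_0(\cdot,\lambda_n^0)\}$; $I_{N3}$ yields the convolution term $\int_0^\pi\big(\int_0^x K(x,s)F(s,t)\,ds\big)f(t)\,dt$; and $I_{N4}$ produces a term supported in $\{t>x\}$, where $K$ vanishes. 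Collecting the four limits and using the convention $K(x,t)=H(x,t)=0$ for $t>x$ gives
\begin{equation*}
\int_0^\pi\Big[F(x,t)+K(x,t)+\int_0^x K(x,s)F(s,t)\,ds\Big]f(t)\,dt=0
\end{equation*}
for all such $f$. Since these $f$ are dense in $L^2([0,\pi],\mathbb{C}^2)$, the bracketed matrix-function vanishes for a.e.\ $t$, which is exactly \eqref{c9:Gelfand_Levitan_eq} on the triangle $0\le t\le x\le\pi$, proving Theorem~\ref{c9:thm_1}.

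The delicate part of the argument is justifying the various interchanges of summation, integration, and the passage to the limit — in particular, applying the Volterra operators $\mathbb{K}$ and $\mathbb{H}$ term-by-term to the only conditionally convergent eigenfunction series in \eqref{c9:sum_varphi_varphi0}--\eqref{c9:sum_varphi0_varphi}. Here I would invoke the weak boundedness of the transformation operators established in Section~\ref{c2:sec_7}, which guarantees that $E+\mathbb{K}$ (and likewise $E+\mathbb{H}$) maps $L^2$-convergent series to $L^2$-convergent series, so that $\mathbb{K}$ may legitimately be moved inside the sums. The convergence of the "difference" series — the kernel $F(x,t)$ and the sums appearing in $I_{N1}$ and $I_{N3}$ — relies on the asymptotics $\lambda_n=n-\frac{\alpha}{\pi}+o(1)$ and $a_n=\pi+\kappa_n$ with $\sum\kappa_n^2<\infty$ from Chapter~\ref{chapter_3}, which make $F(x,\cdot)$ a well-defined, square-summable matrix kernel; and the crucial left-hand limit $\int_0^\pi\Phi_N f\,dt\to 0$ is where the simplicity of the eigenvalues and the expansion Theorem~\ref{c4:thm_1} are used. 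Once these analytic points are secured, the algebraic identity \eqref{c9:sum_1an_1an0} together with the density argument delivers the Gelfand--Levitan equation with no further computation.
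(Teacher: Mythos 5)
Your proposal is correct and follows essentially the same route as the paper: the two representations via $E+\mathbb{K}$ and $E+\mathbb{H}$, the double evaluation of the truncated kernel sum leading to the decomposition $\Phi_N=I_{N1}+I_{N2}+I_{N3}+I_{N4}$, integration against an absolutely continuous $f$ satisfying the boundary conditions, the limit $\int_0^\pi\Phi_N f\,dt\to 0$ via Theorem \ref{c4:thm_1}, the term-by-term limits of $I_{N1}$--$I_{N4}$ (with the $I_{N4}$ contribution living on $t>x$ and the convention $K=H=0$ there), and the final density argument. The analytic justifications you cite (weak boundedness from Section \ref{c2:sec_7}, the asymptotics of $\lambda_n$ and $a_n$, simplicity of eigenvalues) are exactly the ingredients the paper relies on.
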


We will call the equation \eqref{c9:Gelfand_Levitan_eq} the Gelfand-Levitan integral equation, by an analog of the similar equation in Sturm-Liouville theory.

\section{Uniqueness of the solution of the Gelfand-Levitan equation}\label{c9:sec_2}
To prove that the Gelfand-Levitan equation \eqref{c9:Gelfand_Levitan_eq} has a unique solution, it is sufficient to prove that for arbitrary $x \in (0, \pi]$ the corresponding homogeneous equation
\begin{equation}\label{c9:homogeneous_K}
K(x,t) + \int_0^x K(x,s) F(s,t) \ ds = 0,
\end{equation}
has only trivial solution in $L^2(0, \pi)$.
Since here $x$ is a parameter, it is the same to prove that the equation 
\begin{equation}\label{c9:homogeneous_g}
g(t) + \int_0^x g(s) F(s,t) \ ds = 0, \qquad 0 \leq x \leq \pi,
\end{equation}
has only trivial solution $g(t) \equiv 0$.
We prove it by contradiction: assume there exists a solution $g(t) \not \equiv 0$ from $L^2(0, \pi)$.
Multiply both sides of \eqref{c9:homogeneous_g} by $g(t)$ and integrate w.r.t. $t$ from $0$ to $x$
\begin{equation}\label{c9:g_square}
\int_0^x |g(t)|^2 \ dt + \int_0^x \int_0^x g(t) g(s) F(s,t) \ dt \ ds  = 0, \qquad 0 \leq x \leq \pi.
\end{equation}
From the definition of $F(x,t)$ (see \eqref{c9:Ftx}) is follows that
\[
\sum_{n=-\infty}^\infty \frac{1}{a_n} \left(  \int_0^x \varphi_0^T(t, \lambda_n)  g(t) \ dt \right)^2 = 0.
\]
Since all $a_n > 0$, then
\begin{equation}\label{c9:varphi_g}
(g, \varphi_0(\cdot, \lambda_n)) = \int_0^x \varphi_0^T(t, \lambda_n)  g(t) \ dt = 0, \qquad n \in \mathbb{Z}.
\end{equation}

If we prove the completeness of the system of vector-functions $\{ \varphi_0(t, \lambda_n) \}_{ n \in \mathbb{Z}}$, then from \eqref{c9:varphi_g} will follow that $g(t) = 0$, almost everywhere, which is a contradiction.
Therefore, the equation \eqref{c9:Gelfand_Levitan_eq}  has a unique solution $K(x,t)$, such that $K(x, \cdot) \in L^2((0,x); M^2)$.

The question of completeness of system $\{ \varphi_0(t, \lambda_n) \}_{ n \in \mathbb{Z}} $ were considered in \cite{Gasymov-Dzhabiev:1975} and \cite{Albeverio-Hryniv-Mykytyuk:2005} and some other papers.
In \cite{Albeverio-Hryniv-Mykytyuk:2005} there was proved that the system $\{ \varphi_0(t, \lambda_n) \}_{ n \in \mathbb{Z}}$ is a Riesz' basis (unfortunately with many references to other works), which is more than completeness.

Here we mention only the algorithm of reconstruction of the operator $L(p,q, 0, 0)$ (with $p, q \in L^2_\mathbb{R}[0,\pi]$) by sequences $\{ \lambda_n \}_{ n \in \mathbb{Z}} $ and $\{ a_n \}_{ n \in \mathbb{Z}}$, which have properties \eqref{c3:ev_asymptotics} and \eqref{c3:30}, respectively.

\begin{itemize}
\item[Step 1.]
We construct the matrix-function $F(x,t)$ by formula \eqref{c9:Ftx}.

\item[Step 2.]
We consider the integral equation \eqref{c9:Gelfand_Levitan_eq} and prove that it has a unique solution $K(x,t)$, such that $K(x, \cdot) \in L^2((0,x); M^2)$.

\item[Step 3.]
We construct the matrix $\Omega(x)$ by formula \eqref{c2:K_A} 
\[
\Omega(x) = K_A(x, x)B-BK_A(x, x),
\]
and consider the equation \eqref{c2:Dirac_matrix_sys}
\[
B y'+\Omega(x)y=\lambda y.
\]
We prove that the function \eqref{c2:varphi_rep}
\[
\varphi(x, \lambda)=\varphi_0(x, \lambda)+\int^x_0 K(x, t)\varphi_0 (t, \lambda)\, dt\; ,
\]
satisfies the to equation \eqref{c2:Dirac_matrix_sys} with initial condition
$
\varphi(0, \lambda) =
\left(
\begin{array}{c}
0 \\ 
-1
\end{array}
\right)
$
and boundary condition $\varphi_1(\pi, \lambda_n)=0$ ($\beta = 0$), for all $n \in \mathbb{Z}$.

\item[Step 4.]
We prove that 
\[
\left( \varphi( \cdot  ,  \lambda_n) , \varphi( \cdot  ,  \lambda_m) \right) = 
\begin{cases}
a_n, & \text{ if } m=n, \\
0, & \text{ if } m \neq n.
\end{cases}
\]
\end{itemize}

\section*{Notes and references}
\addcontentsline{toc}{section}{Notes and references}
In the case of smooth coefficients, $p$ and $q$, the constructive solution of the inverse problem was considered in some articles and two books \cite{Levitan-Sargsyan:1988}, and \cite{Sargsyan:2005}.

The case $p, q \in L^1_{\mathbb{R}}[0, \pi]$ were considered in articles \cite{Gasymov-Dzhabiev:1975, Albeverio-Hryniv-Mykytyuk:2005, Lunyov-Malamud:2021} and some others.
The Riesz basis property of the root vector-system for some Dirac type operators was considered in \cite{Lunyov-Malamud:2021} (there were also a rich set of references).

Deep results on the completeness and Riesz basis property of the system of root functions of Dirac operators (and more general operators) were obtained in the works of A.A. Shkalikov, M.A. Savchuk, B.S. Mityagin and P.B. Djakov (see, e.g., \cite{Savchuk-Shkalikov:2014, Djakov-Mityagin:2020, Shkalikov:2021}).

\chapter{Singular Dirac operators}\label{chapter_10}

\section{Asymptotics of the Weyl-Titchmarsh function}\label{c10:sec_1}

Consider the canonical system of Dirac differential equations on the semi-axis 
\begin{equation}\label{3-1-1}
\ell  y = \left\{ \sigma_1 \frac{1}{i} \frac{d}{dr} + \sigma_2  p(r) + \sigma_3  q(r) \right\}\,  y = \lambda  y, \quad  y = 
\left(  \begin{array}{c} y_1 \\  y_2
\end{array} \right)\, ,
\end{equation}
where $p$ and $q$ are real, locally absolutely integrable functions on the semi-axis. 
Boundary-value problem
\begin{equation}\label{3-1-2}
\ell  y = \lambda  y, \quad  y_1(0) \cos \alpha  + y_2(0) \sin \alpha = 0\, , \quad  \alpha \in  \left(  - \frac{\pi}{2}\, ,\, \frac{\pi}{2} \right] \, .
\end{equation}
let's denote $L(p,q,\alpha )$ or $L(\Omega,\alpha )$. 
As in previously, here $\Omega(r) = \sigma_2  p(r) + \sigma_3  q(r)$. 
Through $L(\Omega,\alpha ) $ we will also denote the self-adjoint extension (see \cite{Levitan-Sargsyan:1988}) of the operator generated by the differential expression $\ell$ on the set of smooth, finite (from the right) vector-functions satisfying the boundary condition of \eqref{3-1-2}.  
It is also known \cite{Levitan-Sargsyan:1988} that there is a unique solution up to multiplication by the constant $c(\lambda)$, at $u(r,\lambda)$ of the system \eqref{3-1-1}, belonging to $L^2 (0; \infty; \mathbb{C}^2) $ at $\im \lambda \neq  0$.

Denote by $m(\lambda)$ the function
\begin{equation}\label{3-1-3}
m(\lambda) = \frac{u_1(0,\lambda) \cos \alpha  + u_2(0, \lambda) \sin \alpha } {u_1(0,\lambda) \cos \beta + u_2(0,\lambda) \sin \beta} = \frac{m_0(\lambda) \cos \alpha  + \sin \alpha } {m_0(\lambda) \cos \beta + \sin \beta}\, ,
\end{equation}
where
\[
m_0(\lambda)=\frac{u_1(0,\lambda)}{u_2(0,\lambda)}\, .
\]
Since $u(r,\lambda)$ is unique up to multiplication by a constant, the formula \eqref{3-1-3} $m(\lambda)$ is defined uniquely.

The statement about the existence of a solution from $L^2 $ at $\im \lambda \neq 0$ is usually called the H.~Weyl theorem, since it first appeared in the classical work of H.~Weyl \cite{Weyl:1910} (devoted to the Sturm-Liouville equation), where the function $m(\lambda)$ was introduced for the first time. 
For the Dirac system, the function $m(\lambda)$ was defined for the first time in the work of Titchmarsh \cite{Titchmarsh:1961}. 
The study of the analytical and asymptotic properties of the function $m(\lambda)$, which is usually called the Weyl-Titchmarsch function, plays an important role in the spectral analysis of the corresponding operators (see Weyl-Titchmarsh function) \cite{Harutyunyan:1982, Everitt-Hinton-Shaw:1983, Harutyunyan:1989, Harutyunyan:1985, Harris:1985, Everitt-Bennewitz:1980, Hinton-Shaw:1981, Hinton-Shaw:1981b, Hinton-Shaw:1982, Levitan:1982}).

Here we prove the following statement.
\begin{theorem}\label{thm3-1-1} 
Let the real coefficients $p$ and $q \in  L^1_{loc}[0,\infty)$. 
Then there is an asymptotic formula 
\begin{equation}\label{3-1-4}
\lim_{\mu \to \pm \infty}  m_0 (\nu + i \mu) = \pm  i
\end{equation}
uniformly with respect to all real $\nu$, and as a consequence of the definition \eqref{3-1-3},
\begin{equation}\label{3-1-5}
\lim_{\mu \to \pm \infty} m (\nu + i \mu) = e^{\pm i (\beta - \alpha)}.
\end{equation}
\end{theorem}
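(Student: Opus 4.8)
\textbf{Proof plan for Theorem~\ref{thm3-1-1}.}

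The plan is to express the Weyl–Titchmarsh solution $u(r,\lambda)$ through the transformation-operator representation obtained in Chapter~\ref{chapter_2} and then exploit the decay of $L^2$-solutions as $\mu\to\pm\infty$. First I would fix $\lambda=\nu+i\mu$ with $\mu$ large in absolute value and recall that, on any finite interval $[0,a]$, the fundamental matrix of \eqref{3-1-1} has the form $\Phi(r,\lambda)=e^{-B\lambda r}+\int_{-r}^r K(r,t)e^{-B\lambda t}\,dt$ with $\int_{-r}^r|K(r,t)|\,dt\le e^{c(r)}-1$ (Theorem~\ref{c2:thm_1}). The Weyl solution $u(\cdot,\lambda)$, being in $L^2(0,\infty;\mathbb{C}^2)$ for $\im\lambda\neq0$, is determined up to a constant by a Weyl-disc/nesting argument as in \cite{Levitan-Sargsyan:1988}; I would normalize it by $u(r_0,\lambda)$ for a suitable fixed $r_0$, or equivalently write $u(0,\lambda)=\Phi(r_0,\lambda)^{-1}u(r_0,\lambda)$ using $\Phi^{-1}(r,\lambda)=-B\Phi^*(r,\bar\lambda)B$. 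The heart of the matter is that for $\mu\to+\infty$ (resp.\ $\mu\to-\infty$) the $L^2$-condition forces $u$ to behave, modulo lower-order corrections, like the decaying exponential solution of the free system $By'=\lambda y$, i.e.\ like a multiple of the eigenvector of $-\lambda B$ corresponding to the exponent with negative real part.

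Concretely, for the free system the general solution is $e^{-B\lambda r}c$, and $e^{-B\lambda r}=E\cos(\lambda r)-B\sin(\lambda r)$; writing $\lambda=\nu+i\mu$, one checks that the combination which decays as $r\to+\infty$ is (up to scalar) the vector $v_+=(1,-i)^T$ when $\mu>0$ and $v_-=(1,i)^T$ when $\mu<0$, since $B v_\pm=\mp i\,v_\pm$ gives $e^{-B\lambda r}v_\pm=e^{\pm i\lambda r}v_\pm=e^{\mp\mu r}e^{\pm i\nu r}v_\pm$. Hence for the free problem $m_0(\lambda)=u_1(0,\lambda)/u_2(0,\lambda)=1/(-(\mp i))=\pm i$ exactly. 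For the perturbed problem I would show that the perturbation changes $u(0,\lambda)$ only by a factor $E+o(1)$ as $\mu\to\pm\infty$: the relevant estimates come from the representation of $u$ via a Volterra integral equation on $[0,\infty)$ with kernel involving $e^{2B\lambda(t-r)}B\Omega(t)$ (as in the $\pi$-tied construction of Chapter~\ref{chapter_2}, or directly), where the factor $e^{-2\mu(r-t)}$ for $t<r$, $\mu>0$, makes the integral operator small; combined with $p,q\in L^1_{loc}$ and a Riemann–Lebesgue-type argument (Lemma~\ref{c3:lem_1}) this yields $u_1(0,\lambda)/u_2(0,\lambda)\to\pm i$. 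The claimed uniformity in $\nu$ follows because the estimates depend only on $|\im\lambda|=|\mu|$ and on $\int_0^a(|p|+|q|)$, not on $\nu$. Finally \eqref{3-1-5} is pure algebra: substituting $m_0(\lambda)\to\pm i$ into $m(\lambda)=(m_0\cos\alpha+\sin\alpha)/(m_0\cos\beta+\sin\beta)$ gives $(\pm i\cos\alpha+\sin\alpha)/(\pm i\cos\beta+\sin\beta)=e^{\mp i\alpha}/e^{\mp i\beta}\cdot(\mp i)/(\mp i)=e^{\mp i(\alpha-\beta)}=e^{\pm i(\beta-\alpha)}$, exactly as in the computation already carried out in Lemma~\ref{lem2-3-7}.

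The main obstacle I anticipate is making rigorous the step ``$u$ is asymptotically the free decaying solution,'' because $u(\cdot,\lambda)$ is defined only implicitly (as the $L^2$-solution) and the transformation-operator representation is a priori only valid on finite intervals, not on $[0,\infty)$. I would handle this by working on a fixed finite interval $[0,a]$: write $u(r,\lambda)=\Phi(r,\lambda)u(0,\lambda)$ for $r\in[0,a]$, and separately derive, from the Weyl nested-circles construction, that the direction of $u(0,\lambda)$ in $\mathbb{C}^2$ converges to that of $v_\pm$ as $|\mu|\to\infty$; the role of $[0,a]$-analysis is then only to control $\Phi$ via \eqref{c2:K_est} and to transfer the asymptotics of the free problem. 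An alternative route, avoiding the Weyl disc, is to use the variation-of-constants form $u(r,\lambda)=e^{-B\lambda r}\big(c+\int_r^\infty e^{2B\lambda t}B\Omega(t)u(t,\lambda)\,dt\big)$ valid for the $L^2$-solution, and to estimate the integral term directly using $e^{2B\lambda t}$ acting on $u(t,\lambda)$ which decays; this reduces everything to showing the correction integral is $o(1)$ uniformly in $\nu$, again via dominated convergence and $p,q\in L^1_{loc}$. Either way, once the direction of $u(0,\lambda)$ is pinned down, \eqref{3-1-4} and hence \eqref{3-1-5} follow immediately.
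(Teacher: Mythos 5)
Your plan hinges on the claim that, as $\mu\to\pm\infty$, the Weyl solution coincides with the free decaying solution ``up to a factor $E+o(1)$'', i.e.\ that the direction of $u(0,\lambda)$ converges to that of your $v_\pm$, uniformly in $\nu$. Under the actual hypothesis $p,q\in L^1_{loc}[0,\infty)$ neither of the two routes you offer establishes this. The variation-of-constants formula $u(r,\lambda)=e^{-B\lambda r}\bigl(c+\int_r^\infty e^{2B\lambda t}B\Omega(t)u(t,\lambda)\,dt\bigr)$ is not available: $e^{2B\lambda t}$ has an eigendirection of magnitude $e^{2|\mu|t}$, the potential is only locally integrable and may grow without bound (the paper's own model $q(x)=x$ in Section~\ref{c10:sec_5} is exactly such a case), and $u(\cdot,\lambda)\in L^2$ gives no pointwise exponential decay, so the correction integral need not even converge, let alone be $o(1)$. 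The transformation-operator bound \eqref{c2:K_est} and the Riemann--Lebesgue-type Lemma~\ref{c3:lem_1} control only the fundamental matrix $\Phi(r,\lambda)$ on a fixed finite interval; they carry no information about the direction of $u(0,\lambda)$, which is dictated by the behaviour of the solution at infinity. Finally, the Weyl nested-disc construction by itself only yields existence and uniqueness (up to a constant) of the $L^2$ solution; locating the limiting $m_0(\nu+i\mu)$ near $\pm i$ uniformly in $\nu$ as $|\mu|\to\infty$ is precisely the quantitative content of \eqref{3-1-4}, so appealing to the disc without an explicit estimate of its position for large $|\mu|$ is circular. The uniformity in $\nu$ is likewise asserted but never backed by an estimate.

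For contrast, the paper proves \eqref{3-1-4} without any asymptotic representation of $u$: multiplying the componentwise equations by suitable conjugates it derives a logarithmic-derivative inequality giving $\liminf_{|\mu|\to\infty}|\mu|\,N(\nu+i\mu)\ge\tfrac12$ for $N(\lambda)=\int_0^\infty|u(r,\lambda)|^2dr/|u(0,\lambda)|^2$ (formula \eqref{3-1-12}), and the exact identity $m_0(\lambda)=\bigl(1+|m_0(\lambda)|^2\bigr)\{c(\lambda)+\nu b(\lambda)+i\mu N(\lambda)\}$ (formula \eqref{3-1-22}); since $|m_0|/(1+|m_0|^2)\le\tfrac12$ always, these two facts force $|m_0(\nu+i\mu)|\to1$, and the imaginary-part identity \eqref{3-1-24} then forces $\im m_0\to\pm1$, $\re m_0\to0$, with all bounds depending only on $|\mu|$ and local integrals of $|p|+|q|$, hence uniform in $\nu$. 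To salvage your approach you would either have to restrict the class of potentials so that the comparison with the free system can be justified (e.g.\ $\Omega\in L^1(0,\infty)$), or replace the comparison step by identities of the above type---at which point you are reproducing the paper's argument. The closing algebraic step, deducing \eqref{3-1-5} from \eqref{3-1-4} and \eqref{3-1-3}, is correct as you wrote it.
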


\begin{proof}[Proof of theorem \ref{thm3-1-1}].
Let $\lambda = \nu +i\mu$ and let $u(r,\lambda)$ is a solution to the system \eqref{3-1-1}, belonging to $L^2(0, \infty; \mathbb{C}^2)$ at $\im \lambda \neq  0$. 
Write the system \eqref{3-1-1} in coordinate-wise:
\begin{align}
\label{3-1-6}  u'_2 + p(r) u_1 + q(r) u_2 = \nu u_1 + i\mu  u_1\, ,\\
\label{3-1-7} -u'_1 + q(r)u_1 - p(r) u_2 = \nu u_2 + i\mu u_2\, ,
\end{align}
and write down also the conjugate identities:
\begin{align}
\label{3-1-8} \bar{u}'_2 + p(r) \bar{u}_1 + q(r) \bar{u}_2 = \nu \bar{u}_1 - i\mu \bar{u}_1\, ,\\
\label{3-1-9} -\bar{u}'_1 + q(r) \bar{u}_1 - p(r) \bar{u}_2 = \nu \bar{u}_2 - i\mu \bar{u}_2\, .
\end{align}
Multiplying \eqref{3-1-6} by $\bar{u}_2$, \eqref{3-1-7}  by $- \bar{u}_1$, \eqref{3-1-8}  by ${u}_2$, \eqref{3-1-9}  by $- {u}_1$ and adding all four equations, we get:
\begin{equation}\label{3-1-10}
-\frac{d}{dr} \vert  u \vert^2 = 4 p(r) \re \left(  u_1 \cdot \bar{u}_2 \right)  + 2 q(r) \left(  \vert  u_2 \vert^2 - \vert  u_1 \vert^2 \right)  + 4 \mu {\im \left( u_1 \cdot  \bar{u}_2\right)}.
\end{equation}
From here, taking into consideration the inequalities
\[
\left| 2 \re \left( u_1 \cdot \bar{u}_2 \right) \right|  \leq  \vert u\vert^2, \quad
\left| 2 \im \left( u_1 \cdot \bar{u}_2 \right) \right| \leq \vert u \vert^2, \quad
\left| \vert u_2 \vert^2 - \vert u_1 \vert^2 \right| \leq \vert u \vert^2,
\]
we get
\[
-\frac{\frac{d}{dr} \, \vert u \vert^2}{\vert u \vert^2} = -\frac{d}{dr}\, \ln \vert u \vert^2 \leq 2 \left( \vert p(r) \vert + \vert q(r) \vert + \vert \mu \vert \right) \, .
\]
By integrating the last inequality from zero to $r$ and exponating, we get
\begin{equation}\label{3-1-11}
\frac{ \left| u(r, \lambda) \right|^2}{\left| u(0, \lambda) \right|^2} \geq e^{-2 \int^r_0 \left( \vert p(s) \vert + \vert q(s) \vert + \vert \mu \vert \right) \, ds} .
\end{equation}

Define the function $ N(\lambda) = \frac{\int^\infty_0 \left| u(r,\lambda) \right|^2 dr}{\left| u(0, \lambda) \right|^2}$. 
By integrating the inequality \eqref{3-1-11} from $0$ to $\infty$, we have
\begin{align*}
 N(\lambda) \geq \int^\infty_0 \exp \left\{ - 2 \int^r_0 \left(\vert p(s)\vert +\vert q(s) \vert + \vert \mu \vert \right) \, ds \right\} \, dr > \\ 
\quad > \int^{\mu^{-\frac{1}{2}}}_0 \exp \left\{ -2 \int^r_0 \left( \vert p(s) \vert + \vert q(s) \vert \right) \, ds \cdot \exp \left( -2 \vert \mu \vert\, r \right) \right\} \, dr \geq \\ 
\quad \geq \int^{\mu^{-\frac{1}{2}}}_0 \exp \left\{ -2 \int^{\mu^{-\frac{1}{2}}}_0 \left( \vert p(s) \vert + \vert q(s) \vert \right)\, ds\right\} \, \exp\left(-2\vert \mu \vert\, r \right)\, dr = \\ 
\quad =\exp \left\{ -2 \int^{\mu^{-\frac{1}{2}}}_0 \left(\vert p(s) \vert +\vert q(s) \vert\right) \, ds \right\} \cdot\, \frac{1-e^{-2 \vert \mu \vert^{\frac{1}{2}}}}{2 \vert \mu \vert}\,.
\end{align*}
Due to the local absolute integrability of $p$ and $q$, from the latter inequality follows the limit ratio:
\begin{equation}\label{3-1-12}
\lim_{\vert \mu \vert \to \infty} \vert \mu \vert \cdot N( \nu + i\mu) \geq \frac{1}{2}\, .
\end{equation}
On the other hand, multiplying \eqref{3-1-6} by $\bar{u}_1$, \eqref{3-1-7} by $\bar{u}_2$, \eqref{3-1-8} by ${u}_1$, \eqref{3-1-9} by ${u}_2$, we get ($\lambda = \nu + i\mu $):
\begin{align}
& \label{3-1-13} u'_2\cdot \bar{u}_1 + p(r) \vert u_1 \vert^2 + q(r) u_2 \cdot \bar{u}_1 = \lambda \vert u_1 \vert^2 ,\\
& \label{3-1-14} -u'_1 \cdot \bar{u}_2 + q(r) u_1 \cdot \bar{u}_2 - p(r) \vert u_2 \vert^2 = \lambda \vert u_2 \vert^2 ,\\
& \label{3-1-15} \bar{u}'_2 \cdot u_1 + p(r) \vert u_1 \vert^2 + q(r) \bar{u}_2 \cdot u_1 = \bar{\lambda} \vert u_1\vert^2 ,\\
& \label{3-1-16} -\bar{u}'_1 \cdot u_2 + q(r) \bar{u}_1 \cdot u_2 - p(r) \vert u_2 \vert^2 = \bar{\lambda} \vert u_2 \vert^2 .
\end{align}
Adding the first and third equations and subtracting the second and fourth, we get:
\begin{equation}\label{3-1-17}
\frac{d}{dr} \left( u_1 \bar{u}_2 + \bar{u}_1  u_2\right) + 2p(r) \vert  u \vert^2 = 2 \nu  \left( \vert  u_1 \vert^2 - \vert  u_2 \vert^2 \right) \, .
\end{equation}

Since $u_1$ and $u_2 \in  L^2(0, \infty)$, there are sequences $\{r_n\}^\infty_1$ and $\{  r'_n \}^\infty_1$, tending to infinity, such that $u_1(r_n,\lambda) \to  0$ and $u_2(r'_n, \lambda) \to 0$ at $n\to\infty$ (you can actually take $r_n = r'_n$). 
Therefore, integrating the equality \eqref{3-1-17} from $0$ to $r_n$ (or to $r'_n$), and passing $n \to \infty$, we get the equality
\begin{equation}\label{3-1-18}
\begin{aligned}
-u_1(0, \lambda) \bar{u}_2 (0, \lambda) - \bar{u}_1(0, \lambda) u_2(0, \lambda) + 2 \int^\infty_0 p(r) \vert u(r, \lambda) \vert^2 dr = \\
= 2 \nu \int^\infty_0 \left(\vert u_1 \vert^2 - \vert u_2 \vert^2 \right)\, dr \, .
\end{aligned}
\end{equation}

Now adding the equations \eqref{3-1-13} and \eqref{3-1-14} and subtracting \eqref{3-1-15} and \eqref{3-1-16}, we get:
\begin{equation}\label{3-1-19}
\frac{d}{dr} \left(u_2 \bar{u}_1 - u_1 \bar{u}_2 \right) = \left(\lambda -\bar{\lambda} \right) \cdot \vert u \vert^2 = 2 i\mu \cdot \vert u \vert^2\, .
\end{equation}
Again integrating \eqref{3-1-19} from $0$ to $r_n$ and passing $n \to \infty$, we get:
\begin{equation}\label{3-1-20}
u_1(0, \lambda) \bar{u}_2 (0, \lambda) - u_2(0, \lambda) \bar{u}_1 (0, \lambda) = 2 i\mu \int^\infty_0 \vert u(r, \lambda) \vert^2 dr\, .
\end{equation}
By taking \eqref{3-1-20} and the equality \eqref{3-1-18}, we get:
\begin{equation}\label{3-1-21}
\begin{aligned}
2 u_1(0, \lambda) \bar{u}_2 (0, \lambda) = 2\int^\infty_0 p(r) \vert u(r, \lambda) \vert^2 dr + 2 \nu \int^\infty_0 \left(\vert u_1 \vert^2 - \vert u_2 \vert^2 \right)\, dr + \\
+ 2i \mu \int^\infty_0 \vert u(r, \lambda) \vert^2 dr\, .
\end{aligned}
\end{equation}
Dividing both sides of the last equality by $2 \cdot \vert u(0,\lambda) \vert^2$ and noting that $\frac{u_1(0, \lambda) \cdot \overline{u_2(0, \lambda)}}{\vert u(0, \lambda) \vert^2} = \frac{ \frac{u_1(0, \lambda)}{u_2(0, \lambda)}}{1 + \frac{\vert u_1(0, \lambda) \vert^2}{\vert u_2(0,  \lambda) \vert^2}} = \frac{m_0(\lambda)}{1 + \vert m_0(\lambda) \vert^2}\, $, rewrite the equality \eqref{3-1-21} as
\[
m_0(\lambda) = \left(1 + \left| m_0(\lambda) \right|^2 \right) \left\{\int^\infty_0 p(r)\, \frac{\left| u(r,\lambda) \right|^2}{\left| u(0,\lambda) \right|^2}\, dr + \nu \int^\infty_0 \frac{\left| u_2 \right|^2 - \left| u_1 \right|^2}{\left| u(0,\lambda) \right|^2}\, dr + i\mu \int^\infty_0 \frac{\left| u(r, \lambda) \right|^2}{\left| u(0,\lambda) \right|^2}\, dr \right\} .
\]

Rewrite this equality in the following form
\begin{equation}\label{3-1-22}
m_0(\lambda) = \left(1 + \left| m_0(\lambda) \right|^2 \right) \left\{ c(\lambda) + \nu b(\lambda) + i\mu N(\lambda) \right\}  \, ,
\end{equation}
where by $c(\lambda)$ and $b(\lambda)$ we denote the corresponding integrals, which are obviously real, and taking the absolute values, we get the inequalities
\[
\frac{1}{2} \geq \frac{\left| m_0(\lambda) \right|}{1 + \left| m_0(\lambda) \right|^2} =
\left\{ \left| c(\lambda) + \nu b(\lambda) \right|^2 + \mu^2 N^2(\lambda) \right\}^{\frac{1}{2}} > \vert \mu \vert \cdot N(\lambda)\, .
\]
Passing to the limit at $\vert \mu \vert \to \infty$ and taking into account \eqref{3-1-12}, we get
\[
\frac{1}{2} \geq \lim_{\vert \mu \vert \to \infty} \frac{\left| m_0(\lambda) \right|}{1 + \left| m_0(\lambda) \right|^2} \geq \frac{1}{2}\, ,
\]
which is possible only when
\begin{equation}\label{3-1-23}
\lim_{\vert \mu \vert \to \infty} \left| m_0(\nu + i\mu) \right| = 1 \, .
\end{equation}

On the other hand, \eqref{3-1-22} follows the identity
\begin{equation}\label{3-1-24}
\im m_0(\lambda) = \left(1 + \left| m_0(\lambda) \right|^2 \right) \cdot \mu N(\lambda)
\end{equation}
from which, taking into account \eqref{3-1-12} and \eqref{3-1-23}, follows
\[
1 \geq \lim_{\vert \mu \vert \to \infty} \left| \im m_0(\lambda) \right| = \lim_{\vert \mu \vert \to \infty}
\left(1 + \left| m_0(\lambda) \right|^2 \right) \cdot \vert \mu \vert \cdot N(\lambda) \geq 1
\]
i.e.
\[
\lim_{\vert \mu \vert \to \infty} \left| \im\, m_0(v + i\mu)\right| = 1\, .
\]

The latter, together with \eqref{3-1-23}, gives the equality $\lim_{\vert \mu \vert \to \infty} \re  m_0(\nu + i\mu) = 0$. 
Thus, $\lim_{\vert \mu \vert \to \infty} m_0(\nu + i\mu) = \pm i$ and the sign on the right hand side is defined from the identity \eqref{3-1-24}, i.e. finally have
\[
\lim_{\mu \to \pm \infty} m_0(\nu + i\mu) = \pm i
\]
and the theorem \ref{thm3-1-1} is proven.
\end{proof}

\section{Representation of norming constants by two spectra}\label{c10:sec_2}
Suppose that the real coefficients $p$ and $q$, except for the local integrability condition, satisfy the conditions that ensure the pure discreteness of the spectrum of the boundary value problem $L(p,q,\alpha )$ \eqref{3-1-2}. We will continue to call this the condition $(D)$. 
For example, the condition (see \cite{Martinov:1968})
\[
\lim_{r \to \infty} \left[  p^2(r) + q^2(r) - \sqrt{ \left[  p'(r) \right]^2 + \left[  q'(r) \right]^2} \right] = \infty, \qquad  '=\frac{d}{dr}\, ,
\]
provides a pure discrete spectrum. 
Note that if at some $\alpha _0$ the spectrum of the operator $L(\Omega,\alpha_0)$ is purely discrete, then the spectra of the operators $L(\Omega,\alpha )$ are purely discrete for all real $\alpha $. 
If the problem is self-adjoint and the spectrum is purely discrete, then all eigenvalues (besides that they are real) are simple (this can be proved similarly as in Chapter \ref{chapter_3}).

The eigenvalues of the problem \eqref{3-1-2} we will denote by $\lambda_n(p, q, \alpha )$, $\,  n \in \mathbb{Z}$, (often we will also use the short designations $\lambda_n(p, q, \alpha ) = \lambda_n(\Omega, \alpha ) = \lambda_n(\alpha)$) and enumerating them in ascending order:
\begin{equation}\label{3-2-1}
\ldots < \lambda_{-n}(\alpha ) < \lambda_{-n+1}(\alpha ) < \ldots <\lambda_0 (\alpha ) \leq 0 < \lambda_1(\alpha ) < \ldots <\lambda_n(\alpha ) < \ldots\; .
\end{equation}

Denote by $\Lambda(\alpha ) = \left\{ \lambda_n(\alpha ) \right\}^\infty_{n = -\infty}$ the set of eigenvalues (spectrum) of the problem.
Through $\varphi(r, \lambda)$ denote the solution of the Cauchy problem
\begin{equation}\label{3-2-2}
\ell \varphi = \lambda \varphi, \quad \varphi_1(0, \lambda) = \sin \alpha, \quad \varphi_2(0, \lambda) = -\cos \alpha \, .
\end{equation}
It is obvious that $\varphi(r, \lambda_n(\alpha ))$, $n \in \mathbb{Z}$ are eigenfunctions of the problem \eqref{3-1-2}. 

The squares of the $L^2$-norms of these eigenfunctions, $a_n(\alpha ) = \| \varphi(\cdot, \lambda_n(\alpha )) \|^2$, usually are called norming constants. 
The left continuous, monotone increasing, step function
\[
\rho_{\alpha }(\lambda) = \left\{ 
\begin{array}{ll}
 \sum_{0 \leq \lambda_n(\alpha ) < \lambda} a^{-1}_n(\alpha ), & \lambda>0,\\
 -\sum_{\lambda_n(\alpha ) < \lambda\leq 0} a^{-1}_n(\alpha ), & \lambda<0,
\end{array}
\right.
\]
is called the spectral function of the problem \eqref{3-1-2} (or of the operator $L(\Omega,\alpha )$).

The inverse problem by two spectra is to determine the potential function $\Omega(r)$ by the spectrum $\lambda(\alpha )$ of the problem $L(\Omega, \alpha ) $ and by the spectrum $\Lambda(\beta) = \left\{  \lambda_n(\beta) \right\}^\infty_{n=-\infty}$ of the problem $L(\Omega,\beta) $ different from \eqref{3-1-2} only by the boundary condition. 
In the work \cite{Gasymov-Levitan:1966}, it is proved that by the spectral function $\rho_0(\lambda)$ it is possible to uniquely and constructively recover the potential function $\Omega(r )$. 
From \cite{Gasymov-Levitan:1966} it also follows that $\Omega(r)$ can be recovered by $\rho_{\alpha }(\lambda)$ (at $\alpha \neq  0$) if the parameter $\alpha $ from the boundary condition is known in advance. 
Therefore, the inverse problem by two spectra can be considered as a problem for determining the norming constants $a_n(\alpha )$ by the spectra $\Lambda(\alpha )$ and $\Lambda(\beta)$. 

\begin{theorem}\label{thm3-2-1} 
Let $p, q \in  L^1_{loc}[0,\infty)$ and satisfy the condition $(D)$. 
Then the norming constants $a_n(\alpha )$ are determined by the formulas:
\begin{equation}\label{3-2-3}
a_n(\alpha ) = c \, \frac{\sin(\beta - \alpha )}{\lambda_n(\alpha ) - \lambda_n(\beta)} \,\cdot \, \frac{\lambda_0(\alpha ) - \lambda_n (\alpha)}{\lambda_0(\beta) - \lambda_n(\alpha )}\, \cdot \prod^\infty_{\substack{k = -\infty \\ k \neq 0}} \, \frac{\lambda_k(\beta)}{\lambda_k(\alpha )} \, p_k \, , \quad n \neq 0,
\footnote{Infinite products in this paragraph are understood in the sense of the principal value, i.e. $\prod^\infty_{-\infty} a_k = \lim_{n \to \infty} \prod^n_{k = -n} a_k$.}
\end{equation}
where $\beta$ is arbitrary number from $\left( -\frac{\pi}{2}\, ,\, \frac{\pi}{2}\right]$, $\beta \neq  \alpha $,
\[
p_k = \frac{\lambda_k(\alpha ) - \lambda_n(\alpha )}{\lambda_k(\beta) - \lambda_n(\alpha )} \quad \text{ for } \; k \neq  n, \quad \text{ and } \; p_n = 1,
\]
\begin{equation}\label{3-2-4}
a_0(\alpha ) = c\, \frac{\sin(\beta - \alpha )}{\lambda_0(\alpha ) - \lambda_0(\beta)}\, \prod^\infty_{\substack{k = -\infty \\ k \neq 0}} \,
\frac{\lambda_k(\beta)}{\lambda_k(\alpha )}\, \frac{\lambda_k(\alpha ) - \lambda_0(\alpha) )}{\lambda_k(\beta) - \lambda_0(\alpha )}\, ,
\end{equation}
and the positive constant $c$ is being determined from the relation
\begin{equation}\label{3-2-5}
c \cdot \lim_{\mu \to \infty} \prod^\infty_{\substack{k = -\infty \\ k \neq 0}} \, \frac{\lambda_k(\beta)}{\lambda_k(\alpha )}\, \cdot \, \left| \frac{\lambda_k(\alpha ) -i\mu}{\lambda_k(\beta) - i\mu} \right| = 1\, ,
\end{equation}
i.e. $\rho_{\alpha }(\lambda)$, and therefore the potential function $\Omega(r)$, are defined uniquely by two spectra.
\end{theorem}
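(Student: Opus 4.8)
The plan is to mirror the regular-case argument of Theorem~\ref{thm2-3-5} (Chapter~\ref{chapter_3}), replacing the eigenvalue asymptotics — unavailable on the half-axis — by the asymptotics of the Weyl--Titchmarsh function obtained in Theorem~\ref{thm3-1-1}. First I would introduce the meromorphic function
\[
m_{\alpha,\beta}(\lambda) = \frac{u_1(0,\lambda)\cos\alpha + u_2(0,\lambda)\sin\alpha}{u_1(0,\lambda)\cos\beta + u_2(0,\lambda)\sin\beta},
\]
where $u(r,\lambda)$ is the Weyl solution, and observe that its poles are exactly the eigenvalues $\{\lambda_n(\alpha)\}$ of $L(\Omega,\alpha)$ and its zeros are exactly $\{\lambda_n(\beta)\}$. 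As in the regular case, I would first establish that $m_{\alpha,\beta}$ is a Herglotz (real, upper-half-plane preserving) function when $0<\beta-\alpha<\pi$: the key identities are the half-axis analogues of \eqref{c3:46} and \eqref{c3:47}, namely $\dot u_1(0,\lambda)u_2(0,\lambda) - u_1(0,\lambda)\dot u_2(0,\lambda) = \int_0^\infty |u(r,\lambda)|^2\,dr$ type relations (up to normalization of $u$), which follow by differentiating $\ell u = \lambda u$ in $\lambda$ and integrating, using $u,\dot u \in L^2$ and picking sequences $r_n\to\infty$ along which the boundary terms vanish, exactly as in the proof of Theorem~\ref{thm3-1-1}. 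This yields $\im m_{\alpha,\beta}(\lambda) = (\text{positive})\cdot\sin(\beta-\alpha)\cdot\im\lambda$.

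Next I would apply the representation theorem for real Herglotz meromorphic functions (Theorem~\ref{thm2-3-6}) to get
\[
m_{\alpha,\beta}(\lambda) = c\,\frac{\lambda-\lambda_0(\alpha)}{\lambda-\lambda_0(\beta)}\prod_{\substack{k=-\infty\\k\neq 0}}^\infty\Bigl(1-\frac{\lambda}{\lambda_k(\alpha)}\Bigr)\Bigl(1-\frac{\lambda}{\lambda_k(\beta)}\Bigr)^{-1},
\]
$c>0$, the product in principal-value sense, with the correct interlacing of $\{\lambda_n(\alpha)\}$ and $\{\lambda_n(\beta)\}$ following from the Herglotz property and monotonicity of $\lambda_n(\cdot)$ in the boundary parameter. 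Rewriting this as in \eqref{c3:34} separates out $\prod \lambda_k(\beta)/\lambda_k(\alpha)$ and $\prod(\lambda_k(\alpha)-\lambda)/(\lambda_k(\beta)-\lambda)$. To pin down $c$, I would pass to $\lambda=i\mu$, $\mu\to+\infty$: by Theorem~\ref{thm3-1-1}, $\lim_{\mu\to\infty} m_{\alpha,\beta}(i\mu) = e^{i(\beta-\alpha)}$, so $|m_{\alpha,\beta}(i\mu)|\to 1$; combining this with the uniform convergence of $\prod |(\lambda_k(\alpha)-i\mu)/(\lambda_k(\beta)-i\mu)|$ (which must be proven — see below) gives relation \eqref{3-2-5}. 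Then the formula $\partial m_{\alpha,\beta}/\partial\lambda\big|_{\lambda=\lambda_n(\alpha)} = a_n(\alpha)/\sin(\beta-\alpha)$, derived from the Herglotz identity above (the half-axis analogue of \eqref{c3:53}--\eqref{c3:55}, using $u_n = c_n\varphi_n$ and $a_n = \|\varphi_n\|^2$), matched against the derivative of the product representation at a simple pole, yields \eqref{3-2-3} and \eqref{3-2-4}. Finally, uniqueness of $\Omega$ follows since the two spectra determine $\rho_\alpha$ via these formulas, and $\rho_\alpha$ (with $\alpha$ known, $\alpha\neq 0$, or the spectral function $\rho_0$) determines $\Omega$ by the Gasymov--Levitan reconstruction \cite{Gasymov-Levitan:1966}.

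The main obstacle — and the place where the half-axis case genuinely differs from the regular one — is the convergence of the infinite products $\prod_{k\neq 0}\lambda_k(\beta)/\lambda_k(\alpha)$ and $\prod_{k\neq 0}|(\lambda_k(\alpha)-i\mu)/(\lambda_k(\beta)-i\mu)|$ (uniformly in $\mu\geq 1$), and the legitimacy of passing to the limit term-by-term in the logarithm. In Chapter~\ref{chapter_3} these were Lemmas~\ref{lem2-3-5} and \ref{lem2-3-6}, proved from the precise asymptotics $\lambda_n = n + (\beta-\alpha)/\pi + h_n$ with $\{h_n\}\in\ell^2$. Here no such asymptotic formula is available, so I would instead argue via the Weyl--Titchmarsh function directly: the ratio $m_{\alpha,\beta}(i\mu)$ being bounded and bounded away from $0$ as $\mu\to\infty$ (Theorem~\ref{thm3-1-1}) forces the tail of $\sum \log|(\lambda_k(\alpha)-i\mu)/(\lambda_k(\beta)-i\mu)|$ to be controlled uniformly; more concretely, one can use the interlacing $\lambda_k(\beta) < \lambda_k(\alpha) < \lambda_{k+1}(\beta)$ (or the reverse) together with the fact that $\sum_k 1/\lambda_k^2 < \infty$ (which follows from $m_{\alpha,\beta}$ having a convergent Herglotz representation, i.e. being of finite order via the $L^2$-bound $N(\lambda)$ from Theorem~\ref{thm3-1-1}) to dominate each factor by $1 + O(1/k^2)$ plus a conditionally convergent piece. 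Separating real and imaginary parts in $\log m_{\alpha,\beta}(i\mu)$ and passing to the limit — exactly as in \eqref{c3:62}--\eqref{c3:63} — then gives both \eqref{3-2-5} and, by computing the residue of $m_{\alpha,\beta}$ at $\lambda_n(\alpha)$ two ways, the formulas \eqref{3-2-3}--\eqref{3-2-4}. I expect the bulk of the technical work to lie in making this convergence argument rigorous without eigenvalue asymptotics; the rest is a faithful transcription of the regular-case computation.
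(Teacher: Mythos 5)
Your outline follows the paper's own route almost step for step: the same function $m(\lambda)$ built from the Weyl solution, the Herglotz property via the half-axis analogues of \eqref{c3:46}--\eqref{c3:47}, Levin's representation theorem (Theorem \ref{thm2-3-6}) with interlacing, the identity $\partial m/\partial\lambda\big|_{\lambda=\lambda_n(\alpha)}=a_n(\alpha)/\sin(\beta-\alpha)$, the normalization of $c$ from $\lim_{\mu\to\infty}m(i\mu)=e^{i(\beta-\alpha)}$ (Theorem \ref{thm3-1-1}), and reconstruction of $\Omega$ from $\rho_\alpha$ by Gasymov--Levitan. One slip to fix: with the formula you wrote, the numerator carries $\alpha$, so the eigenvalues $\lambda_n(\alpha)$ are the \emph{zeros} of $m_{\alpha,\beta}$ and $\lambda_n(\beta)$ are its poles, not the reverse; accordingly, \eqref{3-2-3}--\eqref{3-2-4} come from evaluating the derivative of the product representation at the simple zero $\lambda_n(\alpha)$ (where $m$ vanishes), not from ``the residue of $m_{\alpha,\beta}$ at $\lambda_n(\alpha)$'' or ``the derivative at a simple pole'' --- as literally described that computation does not produce the formulas, even though the derivative identity you state is the correct one.

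The genuine gap is in your treatment of the convergence question, which you rightly single out as the crux. Your proposed justification rests on $\sum_k 1/\lambda_k^2<\infty$, and this is false in general: it does not follow from the Herglotz property or from the bound on $N(\lambda)$ in Theorem \ref{thm3-1-1} (Herglotz only controls sums of the type $\sum_k \text{jump}_k/(1+\lambda_k^2)$), and it already fails for the model operator of Section \ref{c10:sec_5}, whose eigenvalues are $\lambda_k=2\,\sign(k)\sqrt{|k|}$, so that $\sum 1/\lambda_k^2$ diverges like the harmonic series. Moreover, the separate convergence of $\prod_{k\neq0}\lambda_k(\beta)/\lambda_k(\alpha)$ and the uniform-in-$\mu$ convergence of the split product (the analogues of Lemmas \ref{lem2-3-5} and \ref{lem2-3-6}) are neither available without eigenvalue asymptotics nor needed. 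The paper never splits the product: for each fixed $\mu$ the principal-value convergence of the \emph{combined} factors is already part of the representation \eqref{3.2.6''}; the argument part is a sum of visual angles of the disjoint interlacing segments $[\lambda_k(\beta),\lambda_k(\alpha)]$ seen from $i\mu$, hence monotone in the number of terms and bounded by $\pi$; and the $\mu\to\infty$ limit of the log-modulus sum in \eqref{3-2-9} exists automatically because every other term of that identity has a limit ($\ln|m(i\mu)|\to0$ by \eqref{3-1-5}, and the $k=0$ factor tends to $\ln c$). This is precisely why the constant is fixed by the $\mu$-limit of the combined product in \eqref{3-2-5}, rather than by a relation of the form \eqref{c3:37} as in the regular case. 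If you recast your convergence step in this form, the rest of your argument goes through as in the paper.
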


In the next Section \ref{c10:sec_3} an example (i.e., clearly written out potential) of the canonical Dirac operator will be constructed, the spectrum of which coincides with the spectrum in advance given canonical Dirac operator, but a finite number of the norming constants differ. 
This example, in particular, shows that the Dirac operator is not defined uniquely (generally speaking) by a single spectrum. 
However, if the coefficient $p(r) \equiv 0$, then it turns out that $\lambda_n(-\alpha ) = -\lambda_{-n}(\alpha )$, i.e. knowing the spectrum $\Lambda(\alpha )$, we automatically know the second spectrum $\Lambda(-\alpha )$ (if $\alpha \neq 0$), and taking in the formulas \eqref{3-2-3}-\eqref{3-2-5} $\beta = -\alpha $ ($\alpha \neq 0$), we can determine the norming constants on one spectrum. 
Similarly, it is proved that when $q(r) \equiv 0$, if $ 0 \leq \alpha \leq \frac{\pi}{2}$, $ \lambda_n \left(\frac{\pi}{2} - \alpha \right) = -\lambda_{-n}(\alpha )$ and taking $ \beta = \frac{\pi}{2} - \alpha$ ($\alpha \neq \frac{\pi}{4}$), we solve the problem by one spectrum
$\left\{ \lambda_n(\alpha )\right\}^\infty_{-\infty}$, if $ -\frac{\pi}{2} < \alpha < 0$, $ \alpha \neq - \frac{\pi}{4}$, then $ \lambda_n \left(-\frac{\pi}{2} - \alpha \right) = -\lambda_{-n} (\alpha )$ and taking $ \beta=-\frac{\pi}{2} - \alpha $, we again find the norming constants by one spectrum $\left\{ \lambda_n(\alpha )\right\}^\infty_{-\infty}$, ($ \alpha \neq -\frac{\pi}{4}$).

\begin{theorem}\label{thm3-2-2} 
\begin{itemize}
\item[1.] 
Let $p(r) \equiv 0$, and $q \in L^1_{loc} (0, \infty)$ and satisfies the condition $(D)$. 
Then the norming constants are determined by the formulas $(\alpha \neq 0$, $ \alpha \neq \frac{\pi}{2})$
\[
a_n(\alpha ) = c\, \frac{\sin 2\alpha }{\lambda_n(\alpha ) + \lambda_{-n}(\alpha )}\, \cdot \, \frac{\lambda_0(\alpha ) - \lambda_n(\alpha )}{\lambda_0(\alpha ) + \lambda_n(\alpha )}\, \cdot \prod^\infty_{\substack{k = -\infty \\ k \neq 0}} \, \frac{-\lambda_{-k} (\alpha)}{\lambda_k(\alpha )}\, p_k , \quad n \neq 0, 
\]
where
\[
p_k = \frac{\lambda_n(\alpha ) - \lambda_k(\alpha )}{\lambda_n(\alpha ) + \lambda_{-k}(\alpha )} \quad \text{ for } \; k \neq n, \quad p_n=1,
\]
\[
a_0(\alpha ) = -c\, \frac{\sin 2 \alpha }{2 \lambda_0(\alpha )}\, \prod^\infty_{\substack{k = -\infty \\ k \neq 0}} \,
\frac{\lambda_{-k}(\alpha )}{\lambda_k(\alpha )}\,\cdot\, \frac{\lambda_k(\alpha ) -\lambda_0(\alpha)}{\lambda_{-k}(\alpha ) + \lambda_0(\alpha )},
\]
and the positive constant $c$ is being determined from the relation
\[
c \cdot \lim_{\mu \to \infty} \prod^\infty_{\substack{k = -\infty \\ k \neq 0}} \, \frac{ -\lambda_{-k}(\alpha )}{\lambda_k(\alpha )}\, \cdot\, \left| \frac{\lambda_k(\alpha ) - i\mu}{\lambda_{-k}(\alpha ) + i\mu} \right| = 1.
\]

\item[2.] 
Let $q(r) \equiv 0$, and $p\in L^1_{loc}[0,\infty)$ and satisfies $(D)$. 
Then (at $\alpha  \neq \pm \frac{\pi}{4}$)
\[
a_n(\alpha ) = c\, \frac{\cos 2 \alpha }{\lambda_n(\alpha ) + \lambda_{-n}(\alpha )}\, \cdot\, \frac{\lambda_n(\alpha ) - \lambda_0(\alpha )}{\lambda_n(\alpha ) + \lambda_0(\alpha )}\, \cdot \prod^\infty_{\substack{k = -\infty \\ k \neq 0}} \, \frac{-\lambda_{-k}(\alpha ) }{\lambda_k(\alpha )}\, p_k , \quad n\neq 0,
\]
where
\[
p_k = \frac{\lambda_n(\alpha ) - \lambda_k(\alpha )}{\lambda_n(\alpha ) + \lambda_{-k}(\alpha )} \quad \text{ for } \; k\neq n, \quad p_n = 1,
\]
\[
a_0(\alpha ) = c\, \frac{\cos 2\alpha }{2 \lambda_0(\alpha )}\, \prod^\infty_{\substack{k = -\infty \\ k \neq 0}}\,
\frac{\lambda_{-k}(\alpha )}{\lambda_k(\alpha )}\, \cdot\, \frac{\lambda_k(\alpha )-\lambda_0(\alpha )}{\lambda_{-k}(\alpha ) + \lambda_0(\alpha )},
\]
and the $c$ is the same as in the first part of the theorem.
\end{itemize}
\end{theorem}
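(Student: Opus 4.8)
The plan is to reduce Theorem \ref{thm3-2-2} to Theorem \ref{thm3-2-1} by exploiting the same symmetries of the Dirac system that were used in Chapter \ref{chapter_8} (Theorems \ref{thm2-8-4} and \ref{thm2-8-5}), now in the singular setting on the half-axis. The key observation is that when $p(r)\equiv 0$ the potential matrix is $\Omega(r)=\sigma_3 q(r)$, and conjugation by $\sigma_2$ anticommutes with $\sigma_3$: if $\varphi(r,\lambda)$ solves $\ell\varphi=\lambda\varphi$ with boundary parameter $\alpha$, then $\tilde\varphi=\sigma_2\varphi$ satisfies $\ell\tilde\varphi=-\lambda\tilde\varphi$ with boundary parameter $-\alpha$, exactly as in the proof of Theorem \ref{thm2-8-4}. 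Since $\sigma_2$ is unitary, $L^2$-norms are preserved and the $L^2(0,\infty)$-membership of the Weyl solution $u(r,\lambda)$ is preserved as well, so the condition $(D)$ of pure discreteness is stable under this symmetry. Consequently $\lambda_n(0,q,-\alpha)=-\lambda_{-n}(0,q,\alpha)$ and $a_n(0,q,-\alpha)=a_{-n}(0,q,\alpha)$ for all $n\in\mathbb{Z}$ (with the enumeration \eqref{3-2-1}, noting $\lambda_0$ is the least non-negative eigenvalue). Likewise for part 2, when $q(r)\equiv 0$ we have $\Omega(r)=\sigma_2 p(r)$, and conjugation by $\sigma_3$ (which anticommutes with $\sigma_2$) sends the problem with parameter $\alpha$ to the problem with parameter $\frac{\pi}{2}(\sign\alpha)-\alpha$, with eigenvalues negated and norming constants preserved.

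First I would carefully establish these two symmetry identities on the half-axis, checking that the ascending enumeration is respected: applying $\sigma_2$ (resp.\ $\sigma_3$) reverses the order of eigenvalues and flips signs, so the index $n$ becomes $-n$, and one must verify that $\lambda_0(-\alpha)$ is still the least non-negative eigenvalue, i.e.\ $\lambda_0(-\alpha)=-\lambda_0(\alpha)$ when $\lambda_0(\alpha)\le 0$ — this requires a small case analysis but follows from the interlacing structure \eqref{3-2-1}. Next I would invoke Theorem \ref{thm3-2-1} with the second boundary parameter chosen as $\beta=-\alpha$ in part 1 (legitimate since $\alpha\ne 0$ forces $\beta=-\alpha\ne\alpha$, and $\beta\in\left(-\frac{\pi}{2},\frac{\pi}{2}\right]$ after possibly replacing $\alpha$ by a representative; the excluded value $\alpha=\frac{\pi}{2}$ is precisely where $\beta=-\alpha$ would leave the fundamental strip or coincide issues arise) and as $\beta=\frac{\pi}{2}(\sign\alpha)-\alpha$ in part 2 (legitimate provided $\beta\ne\alpha$, i.e.\ $\alpha\ne\pm\frac{\pi}{4}$). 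Then I substitute $\lambda_n(\beta)=\lambda_n(-\alpha)=-\lambda_{-n}(\alpha)$ (resp.\ $\lambda_n(\beta)=-\lambda_{-n}(\alpha)$) into formulas \eqref{3-2-3}, \eqref{3-2-4}, \eqref{3-2-5}, and reindex the infinite products $k\mapsto -k$ (permissible because they are taken in the principal-value sense $\prod_{-\infty}^\infty a_k=\lim_{n\to\infty}\prod_{k=-n}^n a_k$, which is invariant under $k\mapsto -k$). After simplification $\sin(\beta-\alpha)=\sin(-2\alpha)=-\sin 2\alpha$ in part 1 and $\sin(\beta-\alpha)=\sin\left(\frac{\pi}{2}(\sign\alpha)-2\alpha\right)=(\sign\alpha)\cos 2\alpha$ in part 2, which produces exactly the claimed expressions.

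The main obstacle I anticipate is bookkeeping rather than conceptual: tracking the sign changes and index reversals consistently through the three formulas, and in particular verifying that the constant $c$ in \eqref{3-2-5} is genuinely well-defined and positive after the substitution — one must check that $\prod_{k\ne 0}\frac{-\lambda_{-k}(\alpha)}{\lambda_k(\alpha)}\cdot\left|\frac{\lambda_k(\alpha)-i\mu}{\lambda_{-k}(\alpha)+i\mu}\right|$ converges and has a nonzero limit, which is the half-axis analogue of Lemmas \ref{lem2-3-5} and \ref{lem2-3-6}; here we cannot use the eigenvalue asymptotics (there is no analogue on the half-axis, as emphasized in the chapter introduction), so convergence must instead be inferred from the already-established convergence in Theorem \ref{thm3-2-1} together with the symmetry $\lambda_n(\beta)=-\lambda_{-n}(\alpha)$. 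A secondary point needing care is the sign factor $\sign\alpha$ in part 2 and the convention $\sign 0=1$: one should separately treat $\alpha\in\left(0,\frac{\pi}{2}\right]$, where $\beta=\frac{\pi}{2}-\alpha$, and $\alpha\in\left(-\frac{\pi}{2},0\right)$, where $\beta=-\frac{\pi}{2}-\alpha$, checking in each subcase that $\beta$ lies in the admissible range and that $\beta\ne\alpha\iff\alpha\ne\pm\frac{\pi}{4}$. Once these verifications are in place, the two theorems follow immediately and no further analytic input beyond Theorem \ref{thm3-2-1} is required.
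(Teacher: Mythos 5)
Your proposal follows essentially the same route as the paper: in part 1 the paper applies $\sigma_2$ (namely $v_{-n}=-\sigma_2\varphi_n$) to obtain $\lambda_{-n}(-\alpha)=-\lambda_n(\alpha)$ and then specializes Theorem \ref{thm3-2-1} with $\beta=-\alpha$, and in part 2 it applies $\sigma_3$ to get $\beta=\frac{\pi}{2}(\sign\alpha)-\alpha$, exactly as you describe. Your additional remarks on enumeration, principal-value reindexing $k\mapsto-k$, and the convergence of the product defining $c$ are consistent with (indeed slightly more careful than) the paper's argument, which leaves those verifications implicit.
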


From the definition \eqref{3-1-3} of the function $m(\lambda)$ it can be seen that $m(\lambda)$ is a meromorphic function, and its zeros are the spectrum $\Lambda(\alpha )$ of the problem \eqref{3-1-2}, and the poles are the spectrum $\Lambda(\beta)$. 
From \eqref{3-1-3} it is also not difficult to obtain equality (see the proof of equality \eqref{c3:48})
\begin{equation}\label{3-2-6}
\im \, m(\lambda) = \frac{\int^\infty_0 \left| u(r,\lambda) \right|^2dr\, \cdot \sin(\beta -\alpha ) }{\left| u_1(0,\lambda) \cos \beta + u_2(0,\lambda) \sin \beta \right|^2} \, \cdot \im \lambda\, ,
\end{equation}
from which it follows that $m(\lambda)$ is a "real" meromorphic function (i.e. $\im\, m(\lambda) = 0$ at $\im \, \lambda=0$), mapping at $\sin(\beta -\alpha )>0$, i.e. at $ -\frac{\pi}{2} < \alpha < \beta \leq \frac{\pi}{2}$ the upper half-plane to the upper half-plane and, therefore, according to the theorem \ref{thm2-3-6}:

\begin{itemize}
\item[1.]
The zeros $\left\{ \lambda_n(\alpha ) \right\}^\infty_{n = -\infty}$ and the poles $\left\{ \lambda_n(\beta) \right\}^\infty_{n = -\infty}$ of the function $m(\lambda)$ are all simple and are alternated with each other, and (according to the enumeration of the eigenvalues specified at the beginning), the zeros lie to the right of the poles, i.e.
\begin{equation}\label{3.2.6'}
\lambda_n(\beta) < \lambda_n(\alpha ) < \lambda_{n+1}(\beta), \quad n=0, \pm 1, \pm 2, \ldots, \lambda_{-1}(\alpha ) < 0 < \lambda_1(\beta)\, .
\end{equation}

\item[2.] 
There is a representation:
\begin{equation}\label{3.2.6''}
m(\lambda) = c\, \frac{\lambda - \lambda_0(\alpha )}{\lambda - \lambda_0(\beta)}\, \prod^\infty_{\substack{k = -\infty \\ k \neq 0}} \,
\frac{\lambda_{k}(\beta)}{\lambda_k(\alpha )}\, \cdot\, \frac{\lambda_k(\alpha ) - \lambda}{\lambda_{k}(\beta) - \lambda}\, ,
\end{equation}
where $c > 0$.
\end{itemize}

Calculating the derivative of $ \left. \frac{dm(\lambda)}{d\lambda} \right \vert_{\lambda = \lambda_n(\alpha )}$ based on the definition \eqref{3-1-3} we get
\[
\left. \frac{dm(\lambda)}{d \lambda} \right \vert_{\lambda = \lambda_n(\alpha )} = \frac{a_n(\alpha )}{\sin(\beta - \alpha )}\, .
\]
Calculating the same derivative from the representation \eqref{3.2.6''} and equating their values, we get the formulas \eqref{3-2-3} and \eqref{3-2-4}. 
Therefore, the question of determining the norming constants by two spectra is reduced to the definition of a positive constant $c$, participating in the presentation of \eqref{3.2.6''}.

\begin{lemma}\label{lem3-2-1} 
The constant $c>0$, participating in the representation \eqref{3.2.6''}, is defined by two spectra $\left\{ \lambda_n(\alpha) \right\}^\infty_{n=0}$ and $\left\{ \lambda_n(\beta) \right\}^\infty_{n=0}$ (without $\lambda_0(\alpha )$ and $\lambda_0(\beta)$) from equality
\[
c \cdot \lim_{\mu \to \infty} \prod^\infty_{\substack{k = -\infty \\ k \neq 0}} \, \frac{\lambda_k(\beta)}{\lambda_k(\alpha )}\, \cdot \, \left| \frac{\lambda_k(\alpha) - i\mu}{\lambda_k(\beta) - i\mu} \right| = 1\, .
\]
In addition, there is equality
\begin{equation}\label{3-2-7}
\lim_{\mu \to \infty} \sum^\infty_{k=-\infty} \arg \frac{\lambda_k(\alpha ) - i\mu}{\lambda_k(\beta) - i\mu} = \beta - \alpha  \; .
\end{equation}
\end{lemma}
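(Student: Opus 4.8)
The plan is to read the representation \eqref{3.2.6''} of the Weyl--Titchmarsh function along the imaginary axis $\lambda=i\mu$, $\mu>0$, and to compare both sides as $\mu\to+\infty$ using Theorem \ref{thm3-1-1}. First I would rewrite each factor of the infinite product in \eqref{3.2.6''} in the symmetric form
\[
\frac{\lambda_k(\beta)}{\lambda_k(\alpha)}\cdot\frac{\lambda_k(\alpha)-\lambda}{\lambda_k(\beta)-\lambda}=\frac{1-\lambda/\lambda_k(\alpha)}{1-\lambda/\lambda_k(\beta)},
\]
so that at $\lambda=i\mu$ one gets
\[
m(i\mu)=c\,\frac{i\mu-\lambda_0(\alpha)}{i\mu-\lambda_0(\beta)}\prod_{k\neq 0}\frac{1-i\mu/\lambda_k(\alpha)}{1-i\mu/\lambda_k(\beta)},
\]
the product being convergent in the principal-value sense, since its convergence is precisely what Theorem \ref{thm2-3-6} (together with the interlacing \eqref{3.2.6'}) guarantees; in particular $\prod_{k\neq 0}\bigl|(1-i\mu/\lambda_k(\alpha))(1-i\mu/\lambda_k(\beta))^{-1}\bigr|$ converges for each fixed $\mu$.

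To obtain the formula for $c$, I would take absolute values of this identity. By \eqref{3.2.6'} the numbers $\lambda_k(\alpha)$ and $\lambda_k(\beta)$ have the same sign for $k\neq0$, hence $\lambda_k(\beta)/\lambda_k(\alpha)>0$ and $\bigl|(1-i\mu/\lambda_k(\alpha))(1-i\mu/\lambda_k(\beta))^{-1}\bigr|=\tfrac{\lambda_k(\beta)}{\lambda_k(\alpha)}\bigl|\tfrac{\lambda_k(\alpha)-i\mu}{\lambda_k(\beta)-i\mu}\bigr|$, so that
\[
c\prod_{k\neq 0}\frac{\lambda_k(\beta)}{\lambda_k(\alpha)}\left|\frac{\lambda_k(\alpha)-i\mu}{\lambda_k(\beta)-i\mu}\right|=\frac{|m(i\mu)|}{\bigl|(i\mu-\lambda_0(\alpha))(i\mu-\lambda_0(\beta))^{-1}\bigr|}.
\]
As $\mu\to+\infty$ the denominator on the right tends to $1$, and by \eqref{3-1-5} of Theorem \ref{thm3-1-1} (with $\nu=0$) one has $|m(i\mu)|\to|e^{i(\beta-\alpha)}|=1$; passing to the limit gives exactly the asserted relation for $c$. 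Note that here no interchange of limit and summation is needed: the limit is applied to the full $\mu$-dependent quantity, which is already a convergent product for each $\mu$.

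For \eqref{3-2-7} I would instead take principal arguments of the same identity. Since $c>0$ and $\lambda_k(\beta)/\lambda_k(\alpha)>0$ for $k\neq0$, we have $\arg\tfrac{1-i\mu/\lambda_k(\alpha)}{1-i\mu/\lambda_k(\beta)}=\arg\tfrac{\lambda_k(\alpha)-i\mu}{\lambda_k(\beta)-i\mu}$, and similarly $\tfrac{i\mu-\lambda_0(\alpha)}{i\mu-\lambda_0(\beta)}=\tfrac{\lambda_0(\alpha)-i\mu}{\lambda_0(\beta)-i\mu}$, whence
\[
\arg m(i\mu)=\sum_{k=-\infty}^{\infty}\arg\frac{\lambda_k(\alpha)-i\mu}{\lambda_k(\beta)-i\mu}.
\]
Each summand is the positive angle under which the interval $[\lambda_k(\beta),\lambda_k(\alpha)]$ of the real axis is seen from $i\mu$; by \eqref{3.2.6'} these intervals are pairwise disjoint, so the partial sums are increasing and bounded by $\pi$, which simultaneously gives convergence of the series and justifies $\arg\prod=\sum\arg$ with no winding ambiguity. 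Letting $\mu\to+\infty$ and using $\arg m(i\mu)\to\arg e^{i(\beta-\alpha)}=\beta-\alpha$ (legitimate because $0<\beta-\alpha<\pi$ keeps $e^{i(\beta-\alpha)}$ off the negative real axis, so the principal argument is continuous there) yields \eqref{3-2-7}.

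The main obstacle is not computational but organizational: one has to keep careful track of which factors are positive reals (so their arguments vanish and their moduli can be factored off), treat the $k=0$ factor $(i\mu-\lambda_0(\alpha))(i\mu-\lambda_0(\beta))^{-1}$ separately because $\lambda_0(\alpha)$ and $\lambda_0(\beta)$ may lie on opposite sides of the origin, and check that the principal-value infinite product behaves well under $|\cdot|$ and $\arg$ --- all of which is governed by the interlacing \eqref{3.2.6'} coming from Theorem \ref{thm2-3-6}. The single genuinely analytic input is the limit \eqref{3-1-5}, provided entirely by Theorem \ref{thm3-1-1}; this is exactly why, in contrast to the regular case, no asymptotics of the eigenvalues themselves are required.
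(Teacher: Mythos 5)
Your proof is correct and follows essentially the same route as the paper: the paper takes the principal-value logarithm of the product representation at $\lambda=i\mu$ and separates real and imaginary parts, which is exactly your splitting into modulus and argument, with the same treatment of the $k=0$ factor, the same use of the interlacing \eqref{3.2.6'} to discard the positive factors and to bound the angle sums by $\pi$, and the same appeal to the asymptotics \eqref{3-1-5} of Theorem \ref{thm3-1-1}. No gaps; your justification of $\arg\prod=\sum\arg$ via the disjoint-interval visual-angle argument is the same device the paper uses.
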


\begin{proof} 
Write the representation \eqref{3-2-6} at $\lambda = i\mu$, i.e.
\[
m(i \mu) = c \, \frac{\lambda_0(\alpha ) - i \mu}{\lambda_0(\beta) - i\mu}\, \prod^\infty_{\substack{k = -\infty \\ k \neq 0}} \, 
\frac{\lambda_{k}(\beta)}{\lambda_k(\alpha )}\, \cdot \, \frac{\lambda_k(\alpha ) - i\mu}{\lambda_{k}(\beta) - i\mu}\, ,
\]
and take the logarithm (the principal value of the logarithm) from both sides:
\begin{equation}\label{3-2-8}
\begin{aligned}
\ln m(i \mu) := &\ln \vert m(i\mu) \vert + i \arg m(i\mu) = \\
=& \ln \left| c\, \frac{\lambda_0(\alpha ) - i\mu}{\lambda_0(\beta) - i\mu} \right| + i\arg \left(c \,  \frac{\lambda_0(\alpha) - i\mu}{\lambda_0(\beta) - i\mu}\right) + \\
& + \sum^\infty_{\substack{k = -\infty \\ k \neq 0}} \,\left\{ \ln \left| \frac{\lambda_{k}(\beta)}{\lambda_k(\alpha )}\, \cdot \, \frac{\lambda_k(\alpha) - i\mu}{\lambda_{k}(\beta) - i\mu} \right| + i \arg \left(\frac{\lambda_{k}(\beta)}{\lambda_k(\alpha )} \, \cdot \, \frac{\lambda_k(\alpha ) - i\mu}{\lambda_{k}(\beta) - i\mu} \right) \right\} \, .
\end{aligned}
\end{equation}
Since $c>0$ and $ \frac{\lambda_{k}(\beta)}{\lambda_k(\alpha )} > 0$ at $k \neq 0$, these multipliers under the argument sign can be discarded. 
Note also $ \arg \frac{\lambda_k(\alpha ) - i\mu}{\lambda_{k}(\beta) - i\mu}$ is equal to the angle under which the segment of the real axis $\left[ \lambda_k(\beta), \lambda_k(\alpha) \right]$ from the point $i\mu$ is visible. 
Therefore, the sequence $\varphi_n(\mu) = \sum^n_{k=-n} \arg \frac{\lambda_k(\alpha ) - i\mu}{\lambda_{k}(\beta) - i\mu}$ is a positive, monotonic increasing and limited (uniformly with respect to $\mu > 0$, $\varphi_n(\mu) < \pi$) sequence, i.e. it converges. 
Taking all this into account and separating the real and imaginary parts in \eqref{3-2-8}, we get two equations:
\begin{equation}\label{3-2-9}
\ln \left| m(i\mu) \right| = \ln \left(c \cdot \left| \frac{\lambda_0(\alpha ) - i\mu}{\lambda_0(\beta) - i\mu}\right| \right) + \sum^\infty_{\substack{k = -\infty \\ k \neq 0}}
\ln \left(\frac{\lambda_{k}(\beta)}{\lambda_k(\alpha )}\, \cdot \left|\frac{\lambda_k(\alpha ) - i\mu}{\lambda_{k}(\beta) - i\mu} \right| \right) \, ,
\end{equation}
\begin{equation}\label{3-2-10}
\arg m(i\mu) = \sum^\infty_{-\infty} \arg \frac{\lambda_k(\alpha) - i\mu}{\lambda_{k}(\beta) - i\mu}\, .
\end{equation}
Since from the equality \eqref{3-1-5} follows the equalities
\[
\lim_{\mu \to \infty} \left| m(i\mu) \right| = 1, \quad \lim_{\mu \to \infty} \arg m(i\mu) = \beta -\alpha\, ,
\]
then \eqref{3-2-9} can be rewritten as
\begin{equation}\label{3-2-11}
0 = \ln c + \lim_{\mu \to \infty} \sum^\infty_{\substack{k = -\infty \\ k \neq 0}}  \,
\ln \left(\frac{\lambda_{k}(\beta)}{\lambda_k(\alpha )}\, \cdot \left| \frac{\lambda_k(\alpha)-i\mu}{\lambda_{k}(\beta) - i\mu} \right| \right)\, ,
\end{equation}
and \eqref{3-2-10} as \eqref{3-2-7}. 
Obviously, \eqref{3-2-11} can be rewritten as \eqref{3-2-5}. 
The Lemma \ref{lem3-2-1} is proven.
\end{proof}

Thus, the Theorem \ref{thm3-2-1} is proved.

Let's move on to the proof of the theorem \ref{thm3-2-2}. 
Let $p(r) \equiv 0$ and let $\varphi_n(\alpha ) := \varphi \left(r, \lambda_n(\alpha ) \right)$ is a vector-eigenfunction corresponding to the eigenvalue $\lambda_n(\alpha )$ and normalized by the condition \eqref{3-2-2}. 
Let $v_{-n} := -\sigma_2\varphi_n(\alpha ) = \left(-\varphi_{n1}, \varphi_{n2} \right)^T$, where $T$ is stands for transposition. 
Since the matrices $\sigma_k$ are anti-commutating, we have
\[
\ell v_{-n} = \left(\sigma_1 \frac{1}{i} \frac{d}{dx} + \sigma_3 q(x) \right) v_{-n} = -\ell \sigma_2 \varphi_n = \sigma_2 \lambda_n(\alpha ) \varphi_n = \lambda_n(\alpha ) \sigma_2 \varphi_n = -\lambda_n (\alpha ) v_{-n}\, ,
\]
and from the definition of $v_{-n}$ it follows that it satisfies the initial condition $v_{-n}(0) = \left( \sin(-\alpha), - \cos(-\alpha) \right)^T$ and, therefore, the boundary condition $y_1(0) \cos(-\alpha ) + y_2(0) \sin(-\alpha ) = 0$, i.e. $v_{-n}$ is an eigenfunction of the problem $\ell y = y$, $y_1(0) \cos(-\alpha ) + y_2(0) \sin(-\alpha ) = 0$,  corresponding to the eigenvalue $-\lambda_n(\alpha )$ and normalized by the initial condition mentioned above. 
Under the designations adopted above, we have
\[
-\lambda_n(\alpha ) = \lambda_{-n}(-\alpha ), \; v_{-n}(r) = \varphi\left(r,\lambda_{-n}(-\alpha) \right) = \varphi_{-n}(-\alpha ), \quad n \in \mathbb{Z}\, .
\]
Therefore, if you take $-\alpha $ as $\beta$, then a simple substitution of $\lambda_k(\beta) = -\lambda_{-k}(\alpha )$ in \eqref{3-2-3}, \eqref{3-2-4} and \eqref{3-2-5} leads to the formulas of the first part of the theorem \ref{thm3-2-2}.

Let now $q(r) \equiv 0$ and $ \alpha \in \left[ 0, \frac{\pi}{2} \right]$. 
Let's prove that in this case $\lambda_n(\alpha ) = -\lambda_{-n}\left(\frac{\pi}{2} - \alpha \right)$, i.e. $\lambda_n(p, 0, \alpha ) = -\lambda_{-n}\left(p, 0, \frac{\pi}{2} - \alpha \right)$ for all $n \in \mathbb{Z}$ and $ \alpha \in \left[ 0, \frac{\pi}{2} \right]$.

If $\varphi(r, \lambda)$ is a solution to the Cauchy problem \eqref{3-2-2}, then $\varphi(r, \lambda_n(\alpha )) = \varphi_n(r)$ is an eigenfunction corresponding to the eigenvalue $\lambda_n(\alpha )$. 
Take 
\[
v_{-n}(r) := \sigma_3 \varphi_n(r) =  \left( \begin{array}{cc} 0 & 1 \\ 1 & 0 \end{array} \right)  \left( \begin{array}{c} \varphi_{n1} \\ \varphi_{n2} \end{array} \right) = \left( \begin{array}{c} \varphi_{n2} (r) \\  \varphi_{n1}(r) \end{array} \right).
\]
Then 
\begin{align*}
 \ell v_{-n}(r) & \equiv \left\{ \sigma_1 \frac{1}{i} \frac{d}{dr} + \sigma_2 p(r) \right\}  \sigma_3 \varphi_n(r) \equiv - \sigma_3 \left\{ \sigma_1 \frac{1}{i} \frac{d}{dr} + \sigma_2 p(r)\right\}  \varphi_n(r) \equiv \\ 
 &\equiv - \sigma_3 \ell \varphi_n(r) \equiv -\sigma_3 \lambda_n(\alpha) \varphi_n(r) \equiv - \lambda_n(r) v_{-n}(r).
\end{align*}
Also, 
\[
v_{-n}(0) = \left(\varphi_{n2}(0), \varphi_{n1}(0) \right)^T = \left(-\cos \alpha , \sin\alpha \right)^T = -\left(\sin \left(\frac{\pi}{2} - \alpha \right), -\cos \left(\frac{\pi}{2} - \alpha \right) \right)^T
\]
and $v_{-n}(r)$ satisfies the boundary condition 
\[
 v_{-n1}(0) \cos \left( \frac{\pi}{2} - \alpha \right) + v_{-n2}(0) \sin \left(\frac{\pi}{2} - \alpha \right) = 0,
\]  
and $ -\frac{\pi}{2} < \frac{\pi}{2} - \alpha \leq \frac{\pi}{2}$.

If $ \alpha \in \left(-\frac{\pi}{2}\, ,\, 0 \right)$, then $ -\frac{\pi}{2} < -\frac{\pi}{2} - \alpha < 0$ and $ v_{-n}(0) = \left(-\cos \alpha , \sin \alpha \right)^T$ satisfies the boundary condition $v_{-n1}(0) \cos \left(-\frac{\pi}{2} - \alpha \right) + v_{-n2}(0) \sin \left(-\frac{\pi}{2} - \alpha \right) = 0$,  i.e. in this case, $ \lambda_{-n}\left(p, 0, -\frac{\pi}{2} - \alpha \right) = - \lambda_n(\alpha )$ for all $n \in \mathbb{Z}$.
Theorem \ref{thm3-2-2} is proved.

\section{Changing finite number of spectral data }\label{c10:sec_3}

In this paragraph, we consider the canonical self-adjoint operator $L=L(\Omega, \alpha )$, defined in Section \ref{c10:sec_1}, without any restrictions on its spectrum. 
In this (general) case, it is known \cite{Levitan-Sargsyan:1988} that for the operator $L$ there is a unique, non-decreasing function $\rho_\alpha (\lambda)$, defined on the real axis, such that for any $f \in L^2\left(0, \infty; \mathbb{R}^2 \right)$ there is Parseval equality
\[
\|f \|^2 = \int^\infty_0 \left\{ f^2_1(x) + f^2_2(x) \right\} \, dx =
\lim_{n \to \infty} \int^\infty_{-\infty} F^2_n(\lambda)\, d\rho_{\alpha }(\lambda) \, ,
\]
where
\[
F_n(\lambda) = \int_0^n\left\{ f_1(x) \varphi_1(x,\lambda) + f_2(x) \varphi_2(x,\lambda) \right\} \, dx\, ,
\]
and $\varphi(x,\lambda)$ is the solution to the Cauchy problem \eqref{3-2-2}.

The function $\rho(\lambda)$  is called the spectral function of the operator $L$. 
If the spectrum of the operator $L$ contains eigenvalue $\lambda_0$, then it is obvious that $\varphi(x,\lambda_0)$ is an eigenfunction. 
The square of its $L_2$-norms will be denoted the $a^{-1}_0$ (usually we denote $a_0$, but here it is convenient to use inverse values):
\[
\| \varphi(\cdot, \lambda_0)\|^2 = \int^\infty_0 \left[ \varphi^2_1(x,\lambda_0) + \varphi^2_2(x,\lambda_0)\right]\, dx = a^{-1}_0\, .
\]
The number $a_0$ is called norming constant.

This paragraph is devoted to obtaining explicit formulas for rebuilding the Dirac operator coefficients when changing a finite number of eigenvalues and/or norming constants. 
Roughly speaking, we answer the following question: 
"What happens to the potential $\Omega(x)$ if a finite number of eigenvalues and/or norming constants is being changed?".

The spectrum of the operator $L$ will be denoted by $\sigma(L)$, and the set of eigenvalues by $\sigma_d(L)$. 
It is known that $\sigma_d(L)$ coincides with the set of all points of discontinuity of the spectral function $\rho(\lambda)$.

Consider an arbitrary finite set of $n$ of real numbers $\mu_k$, not belonging to $\sigma_d(L)$, the arbitrary finite set of $n + l$ of positive numbers $b_k$ and the finite set of $m+l$ of the eigenvalues $\lambda_k$ of the operator $L$ ($n, m, l \geq 0$). 
By $a_k$, we will denote the norming constants corresponding to $\lambda_k$. 
By $\delta(\lambda)$ we will denote the Dirac $\delta$ function. 
In these notations, the main result of this section is formulated as follows.

\begin{theorem}\label{thm3-3-1} 
Let $\rho(\lambda)$ be the spectral function of the operator $L$. 
Then the function $\tilde{\rho}(\lambda)$, defined by the relation
\begin{equation}\label{3-3-1}
\begin{aligned}
d \tilde{\rho}(\lambda) = d \rho(\lambda) & +  \sum^n_{k=1} b_k \delta (\lambda - \mu_k)\, d \lambda -\sum^m_{i=1} a_i \delta (\lambda -\lambda_i) \, d \lambda + \\ 
& + \sum^l_{p=1} \left(b_{p+n} - a_{p+m} \right) \, \delta \left(\lambda - \lambda_{p+m} \right)\, d \lambda\, ,
\end{aligned}
\end{equation}
is also a spectral function. 
More precisely, there is a unique self-adjoint canonical Dirac operator $\tilde{L} = L \left(\tilde{\Omega}, \alpha \right)$ for which $\tilde{\rho}(\lambda)$ is a spectral function. 
In this case $\tilde{\Omega}(x)$ is defined by the expression
\begin{equation}\label{3-3-2}
\begin{aligned}
\tilde{\Omega}(x) = \Omega(x) + \sum^{n+m+l}_{k=1} \frac{\gamma_k}{1 + \gamma_k g_{k-1} (x,\nu_k)}\, & \left[ B \varphi_{k-1}(x, \nu_k) \varphi^T_{k-1}(x, \nu_k) - \right. \\
& \left. -\varphi_{k-1}(x, \nu_k) \varphi^T_{k-1}(x, \nu_k)\, B \right]\, ,
\end{aligned}
\end{equation}
where $T$ is the transponation sign,
\[
\nu_k = \left\{
\begin{array}{ll}
\mu_k, & 1 \leq k \leq n\, ,\\
\lambda_k, & n+1 \leq k \leq n+m+l\, ,
\end{array}\right.
\quad
\gamma_k = \left\{
\begin{array}{ll}
b_k, & 1 \leq k \leq n\, ,\\
-a_{k-n}, & n+1 \leq k \leq n+m\, ,\\
b_{k-m} - a_{k-n}, & n+m+1 \leq k \leq n+m+l\, ,
\end{array}
\right.
\]
and vector-functions $\varphi_k(x,\lambda)$ are determined from recurrent relations
\begin{equation}\label{3-3-3}
\begin{aligned}
\varphi_0(x,\lambda) = & \varphi(x,\lambda), \\
\varphi_k(x, \lambda) = & \varphi_{k-1} (x,\lambda) - \frac{\gamma_k \cdot \varphi_{k-1}(x,\nu_k)}{1 + \gamma_k g_{k-1} (x,\nu_k)} \int^\pi_0 \varphi^T_{k-1} (t, \nu_k)\, \varphi_{k-1}(t, \lambda)\, dt ,
\end{aligned}
\end{equation}
for $k = 1, 2, \ldots, n+m+l $, where 
\[
g_{k-1}(x, \nu_k) = \int^x_0 \left| \varphi_{k-1}(t, \nu_k) \right|^2 dt\, .
\]
\end{theorem}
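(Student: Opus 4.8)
The plan is to reduce the statement to a finite iteration of the single-step modification already carried out in Chapter~\ref{chapter_7} (Theorem~\ref{c7:thm_1}), transplanted from the regular interval $[0,\pi]$ to the half-axis. First I would observe that the measure $d\tilde\rho$ in \eqref{3-3-1} differs from $d\rho$ by a finite sum of point masses: at each $\nu_k$ ($1\le k\le n+m+l$) the jump of the spectral function is changed by $\gamma_k$, where $\gamma_k$ is as defined in the statement. Reordering the sum so that the $\mu_k$ come first, then the ``pure removal'' eigenvalues $\lambda_{1},\dots,\lambda_{m}$, then the ``mixed'' eigenvalues $\lambda_{m+1},\dots,\lambda_{m+l}$, one sees that $d\tilde\rho$ is obtained from $d\rho$ by adding/subtracting exactly $\gamma_k\,\delta(\lambda-\nu_k)\,d\lambda$ for $k=1,\dots,n+m+l$. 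Since the uniqueness of the potential from the spectral function is known (it follows from \cite{Gasymov-Levitan:1966}, cited in Section~\ref{c10:sec_2}), it suffices to exhibit \emph{one} canonical operator $\tilde L$ whose spectral function is $\tilde\rho$, and then the explicit formula \eqref{3-3-2} will be forced.

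Next I would set up the induction on $k$. Put $L_0:=L$, $\rho_0:=\rho$, $\Omega_0:=\Omega$, and for $k=1,\dots,n+m+l$ let $L_k$ be the operator with potential $\Omega_k$ obtained from $L_{k-1}$ by the single modification
\[
\Omega_k(x)=\Omega_{k-1}(x)+\frac{\gamma_k}{1+\gamma_k g_{k-1}(x,\nu_k)}\Bigl[B\varphi_{k-1}(x,\nu_k)\varphi_{k-1}^T(x,\nu_k)-\varphi_{k-1}(x,\nu_k)\varphi_{k-1}^T(x,\nu_k)B\Bigr],
\]
with $g_{k-1}$ and the transformed eigenfunctions $\varphi_k$ defined by the recurrences \eqref{3-3-3}, precisely the half-axis analogue of formulas \eqref{2-7-5}--\eqref{2-7-7} with $e^{t}-1$ replaced by $\gamma_k$ (when $\nu_k$ is an added point $\mu_k$ one is \emph{inserting} a new mass $b_k>0$, so the relevant ``$\theta$-function'' is $1+b_k g_{k-1}(x,\mu_k)$, automatically positive; when $\nu_k$ is an existing eigenvalue being removed, $\gamma_k=-a_{k-n}$ and the relevant factor $1-a_{k-n}g_{k-1}(x,\lambda_{k-n})$ must be shown to stay positive on $[0,\infty)$, using $\int_0^\infty|\varphi_{k-1}(t,\lambda_{k-n})|^2dt=a_{k-n}^{-1}$, hence the integral over $[0,x]$ is $<a_{k-n}^{-1}$; in the mixed case the same estimate with $\gamma_k=b_{k-m}-a_{k-n}>-a_{k-n}$ applies). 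For each single step the verification is an essentially algebraic computation identical to the one in the proof of Theorem~\ref{c7:thm_1}: differentiating $\varphi_k$ and using $B\varphi_{k-1}'+\Omega_{k-1}\varphi_{k-1}=\lambda\varphi_{k-1}$, the identity $\varphi_{k-1}^T(x,\nu)B\varphi_{k-1}(x,\nu)=0$, and $g_{k-1}'(x,\nu_k)=\gamma_k^{-1}\!\cdot\!\gamma_k|\varphi_{k-1}(x,\nu_k)|^2$, one checks that $\tilde\ell_k\varphi_k=\lambda\varphi_k$ with $\tilde\ell_k=B\frac{d}{dx}+\Omega_k$; the cross-eigenfunction identity \eqref{2-7-17}, proved there by integrating $h_m^TBh_n'-(Bh_m')^Th_n=(\lambda_n-\lambda_m)h_m^Th_n$ from $0$, carries over verbatim.

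Then I would track the effect on the spectral function. Since the transformation $\varphi\mapsto\varphi_k=(E+\mathbb H_k)\varphi$ is Volterra with a half-axis kernel of the type analysed in \cite{Gasymov-Levitan:1966}, $L_k$ is again a canonical self-adjoint Dirac operator with a spectral function $\rho_k$, and the Gelfand--Levitan machinery (the kernel $F(x,y)$ built from $d\rho_k-d\rho_{k-1}$ being a single rank-one term $\gamma_k\varphi_{k-1}(x,\nu_k)\varphi_{k-1}^T(y,\nu_k)$ up to a $\delta$, exactly as in Section~\ref{sec7:explicit}, equations \eqref{eq2.1.8}--\eqref{eq2.1.10}) shows $d\rho_k=d\rho_{k-1}+\gamma_k\,\delta(\lambda-\nu_k)\,d\lambda$: when $\nu_k=\mu_k\notin\sigma_d(L_{k-1})$ one adds a genuine new eigenvalue with norming constant $b_k$ (one checks $\varphi_k(\cdot,\mu_k)\in L^2$ precisely because of the denominator $1+b_k g_{k-1}$), and when $\nu_k=\lambda_{k-n}\in\sigma_d(L_{k-1})$ the jump $a_{k-n}^{-1}$ is replaced by $(a_{k-n}-\gamma_k\text{-correction})^{-1}$; with $\gamma_k=-a_{k-n}$ the eigenvalue is deleted, and with $\gamma_k=b_{k-m}-a_{k-n}$ its norming constant becomes $b_{k-m}$ (the new norm $\|\varphi_k(\cdot,\lambda_{k-n})\|^2$ is computed by the same telescoping integral as in the normalization check of Theorem~\ref{c7:thm_1}, giving $1/b_{k-m}$ after rescaling). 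Composing the $n+m+l$ steps gives $d\rho_{n+m+l}=d\tilde\rho$, and composing the recurrences \eqref{3-3-3} yields $\tilde\Omega=\Omega_{n+m+l}$ in the form \eqref{3-3-2}; uniqueness of $\tilde L$ then follows from the uniqueness-by-spectral-function theorem.

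The main obstacle I expect is not the algebra of a single step — that is a transcription of Chapter~\ref{chapter_7} — but the \emph{half-axis bookkeeping}: verifying that after each modification the new potential is still locally integrable on $[0,\infty)$ and the new $\varphi_k(\cdot,\nu_k)$ is (or is not) in $L^2(0,\infty)$ exactly according to whether a mass is added or removed, and that the positivity of the denominators $1+\gamma_k g_{k-1}(x,\nu_k)$ persists for all $x\ge 0$ in the deletion and mixed cases. This is where the inequality $\int_0^x|\varphi_{k-1}(t,\nu_k)|^2dt<\int_0^\infty|\varphi_{k-1}(t,\nu_k)|^2dt=a^{-1}$ (valid when $\nu_k$ is an eigenvalue of $L_{k-1}$) does the decisive work, and the key subtlety is to confirm that the intermediate operators $L_k$ retain enough of $\sigma_d(L)$ for these relations to be available at the $(k+1)$-st step — which they do, since each step changes the spectrum at only one point $\nu_k$ distinct from all the others by construction.
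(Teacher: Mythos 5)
Your overall architecture coincides with the paper's: both proofs iterate a rank-one Gelfand--Levitan modification, one point $\nu_k$ at a time, arrive at the same single-step potential formula $\Omega_k=\Omega_{k-1}+\frac{\gamma_k}{1+\gamma_k g_{k-1}}\bigl[B\varphi_{k-1}\varphi_{k-1}^T-\varphi_{k-1}\varphi_{k-1}^TB\bigr]$, and settle positivity of the denominators by the same estimate $g_{k-1}(x,\nu_k)\le\int_0^\infty|\varphi_{k-1}|^2\,dt$ (the paper uses the equivalent identity $(1-a_0g_0(x))(1+a_0g_1(x))\equiv 1$ in the deletion case, and your bound $1+(b-a)g(x)>b/a$ in the mixed case is exactly theirs). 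The algebraic verification that $\tilde\ell_k\varphi_k=\lambda\varphi_k$ indeed transcribes from Chapter \ref{chapter_7}, since that computation never uses normalization of the eigenfunction or that $\nu_k$ belongs to the spectrum.

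The genuine gap is in the step you dispose of with ``the Gelfand--Levitan machinery shows $d\rho_k=d\rho_{k-1}+\gamma_k\,\delta(\lambda-\nu_k)\,d\lambda$.'' This is the heart of the converse direction and it is not automatic. First, Section \ref{c10:sec_3} places \emph{no} restriction on the spectrum of $L$ — it may have a continuous part — so your bookkeeping in terms of ``adding one eigenvalue'' and ``changing one norming constant'' does not characterize the spectral measure; you must show that the \emph{entire} measure, continuous part included, is unchanged except for the single point mass. Second, the assertion that a finite point perturbation of a spectral function is again a spectral function is precisely the statement of the theorem, so it cannot be borrowed from the inverse-problem literature without circularity (the paper's Section \ref{c10:sec_5} cites exactly this theorem for that fact). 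The paper closes this gap by a direct verification of Parseval's equality \eqref{3-3-21} for the transformed system: using the explicit direct kernel $K(x,s)=-\frac{b}{1+bg(x)}\varphi(x,\mu)\varphi^T(s,\mu)$ and its inverse kernel $H(x,s)=-K^T(s,x)$ from \eqref{3-3-19}, it computes $\int F^2\,d\rho_1=\|h\|^2+bF^2(\mu)$ and shows via \eqref{3-3-27}--\eqref{3-3-28} that $bF^2(\mu)=\|f\|^2-\|h\|^2$, so that $\|f\|^2=\int F^2\,d\rho_1$ for every finite $f$; this, together with the identities \eqref{3-3-17}--\eqref{3-3-18} establishing the transformation operator, is what proves that $\rho_1$ is the spectral function of the constructed operator. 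Your proposal needs this Parseval computation (or a carefully justified appeal to the Gasymov--Levitan inverse theorem with its hypotheses checked for the perturbed measure) at each of the $n+m+l$ steps; without it, the claim that nothing else in the spectral measure moves is unsubstantiated.
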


Before proceeding to the proof of this theorem, note that similar questions for the Schrödinger operator on the semi-axis were studied in \cite{Jost-Kohn:1953} (see also \cite{Krein:1953, Krein:1954}). 
For the regular Sturm-Liouville operator, these questions were studied in the work of H.~Hochstadt \cite{Hochstadt:1973}. 
From another point of view, B.M. Levitan \cite{Levitan:1978} approached the problem of finite-dimensional perturbations of the Sturm-Liouville operator on the finite interval. 
For the Sturm-Liouville operator on the finite segment, this question was studied in the works \cite{Panakhov:1980, Panakhov:1980b, Kirchev-Hristov:1979b}, and for the regular Dirac operator in \cite{Kirchev-Hristov:1979, Panakhov:1981}. 
For the singular Schrödinger operator on $(0, \infty)$ with a purely discrete spectrum, similar problems were solved in \cite{Grosse-Martin:1979} and \cite{Adamyan:1981}. 
One-dimensional perturbations of the Dirac operator (and several more general operators) on the whole axis with decreasing coefficients (the case of scattering theory) were studied by A.B.~Shabat in \cite{Shabat:1976}, see also \cite{Chadan-Sabatier:1977} and \cite{Adamyan:1981, Gerdzhikov-Kulish:1979}.

\begin{proof}[Proof of Theorem \ref{thm3-3-1}]

1. Suppose that there is a canonical self-adjoint Dirac operator $L_1 = L(\Omega_1, \alpha )$ with a local absolutely continuous potential $\Omega_1(x)$ such that its spectral function $\rho_1(\lambda)$ is related to the spectral function $\rho(\lambda)$ of the operator $L$ as follows
\begin{equation}\label{3-3-4}
\rho_1(\lambda) = \left\{
\begin{array}{ll}
\rho(\lambda), & \lambda \leq \mu\, ,\\
\rho(\lambda) + b, & \lambda > \mu\, ,
\end{array}\right.
\end{equation}
i.e.
\begin{equation}\label{3-3-5}
d \rho_1(\lambda) = d \rho(\lambda) + b \cdot \delta(\lambda-\mu)\, d \lambda\, ,
\end{equation}
where $b>0$, and $\mu \not \in \sigma_{d} (L)$, $\im \mu = 0$. 
How are the potential matrices $\Omega_1(x)$ and $\Omega(x)$ related in this case?

With our assumption of the existence of the operator $L_1$, we know (see Chapter \ref{chapter_2}) that there is a transformation operator $I+K$
\begin{equation}\label{3-3-6}
\psi(x,\lambda) = (I+K)\varphi(x,\lambda) = \varphi(x,\lambda) + \int^x_0 K(x,s) \varphi(s,\lambda) \, ds\, ,
\end{equation}
mapping the solution $\varphi(x,\lambda)$ of the Cauchy problem \eqref{3-2-2} into the solution $\psi(x,\lambda)$  of the Cauchy problem
\begin{equation}\label{3-3-7}
\begin{cases}
\ell_1\psi = \lambda \psi\, ,\\ 
\psi_1(0,\lambda) = \sin \alpha \, ,\quad \psi_2(0, \lambda) = -\cos \alpha \, .
\end{cases}
\end{equation}
moreover, the kernel $K(x,s)$ is related to the potentials $\Omega_1(x)$ and $\Omega(x)$ by the equality
\begin{equation}\label{3-3-9}
\Omega_1(x) - \Omega(x) = K(x,x) B  - B K(x,x) .
\end{equation}
Furthermore (see \cite{Levitan-Sargsyan:1988, Gasymov-Levitan:1966}), the kernel $K(x,s)$ must satisfy the Gelfand-Levitan integral equation
\begin{equation}\label{3-3-10}
K(x,s) + F(x,s) + \int^x_0 K(x,t) F(t,s)\, dt = 0,\quad 0 \leq s< x < \infty\, ,
\end{equation}
where the matrix $F(x,s)$ is defined by equality
\begin{equation}\label{3-3-11}
F(x,s) = \int^\infty_{-\infty} \varphi(x,\lambda)\, \varphi^T(s, \lambda)\, d(\rho_1(\lambda) - \rho(\lambda))\, .
\end{equation}
Under our conditions (see \eqref{3-3-5}), the kernel $F(x,s)$ of the integral equation \eqref{3-3-10} is degenerate, and therefore can be solved explicitly. 
In fact, \eqref{3-3-5} shows that $F(x,s)$ is of the form
\begin{equation}\label{3-3-12}
F(x,s) = b\varphi(x,\mu)\, \varphi^T(s,\mu)\, .
\end{equation}
Using this expression for kernel $F(x,s)$, from \eqref{3-3-10}, after simple calculations, we get
\begin{equation}\label{3-3-13}
K(x,s) = -\frac{b}{1 + b g(x)}\, \varphi(x,\mu)\, \varphi^T(s,\mu)\, ,
\end{equation}
where $ g(x) = \int^x_0 \left| \varphi(t,\mu)\right|^2 dt$. 
Now from \eqref{3-3-9} we have
\begin{equation}\label{3-3-14}
\begin{aligned}
\Omega_1(x) = & \Omega(x) + K(x,x) B - B K(x,x) = \\
=&  \Omega(x) - \frac{b}{1 + b g(x)}  \left[ \varphi(x,\mu)\, \varphi^T(x,\mu) B - B \varphi(x,\mu)\, \varphi^T(x,\mu)\right]=\\
=& \Omega(x) + \frac{b}{1 + b g(x)} 
\left(
\begin{array}{cc}
2 \varphi_1(x,\mu)\, \varphi_2(x,\mu) & \varphi^2_2(x,\mu) - \varphi^2_1 (x,\mu)\\
\varphi^2_2(x,\mu) - \varphi^2_1 (x,\mu) & -2\varphi_1(x,\mu)\, \varphi_2(x,\mu)
\end{array}
\right) \, .
\end{aligned}
\end{equation}
Thus, we proved that if there is an operator $L_1$, the spectral function of which is associated with the spectral function $\rho(\lambda)$ of the operator $L$ by the relation \eqref{3-3-4} (i.e. $L_1$ has one "extra" eigenvalue $\mu$ with the norming constant $b$), then the potential matrix $\Omega_1(x)$ of the operator $L_1$ is associated with $\Omega(x)$ by equation \eqref{3-3-14}.

Let we have the operator $L_1 = L\left(\Omega_1, \alpha \right)$, where $\Omega_1(x)$ is defined by the equation \eqref{3-3-14}. 
We will prove that the function $\rho_1(\lambda)$, defined by the equation \eqref{3-3-4}, is a spectral function of the operator $L_1$.

To do this, first, make sure that the function $\psi(x,\lambda)$, defined by the equation \eqref{3-3-6}, where $K(x,s)$ is defined by the expression \eqref{3-3-13}, is a solution to the Cauchy problem \eqref{3-3-7}. 
Obviously, $\psi(x,\lambda)$ satisfies the initial conditions in \eqref{3-3-7}. 
To prove the identity
\begin{equation}\label{3-3-15}
\ell_1 \psi(x,\lambda) \equiv \lambda\psi (x,\lambda)\, ,
\end{equation}
note that from \eqref{3-3-6} and \eqref{3-3-14} we have
\begin{equation}\label{3-3-16}
\begin{aligned}
\ell_1 \psi(x,\lambda) = & \lambda\varphi(x,\lambda) + K(x,x) B \varphi(x,\lambda) + \\
&+  \int^x_0
\left[ K(x,s) \Omega(s) - \frac{\partial K(x,s)}{\partial s}\, B \right]\, \varphi(s,\lambda)\, ds + \\
& + \int^x_0\left[ B\frac{\partial K(x,s)}{\partial x} + \Omega_1(x) K(x,s) - K(x,s) \Omega(s) + \frac{\partial K(x,s)}{\partial s}\, B\right]\, \varphi(s,\lambda)\, ds\, .
\end{aligned}
\end{equation}
Using the explicit form \eqref{3-3-13} of the matrix $K(x,s)$ and the identity $\ell \varphi(x,\lambda) \equiv \lambda \varphi(x,\lambda)$, it is not difficult to show that there are identities
\begin{gather}
\label{3-3-17} \lambda \int^x_0 K(x,s) \varphi(s,\lambda)\, ds \equiv K(x,x) B \varphi(x,\lambda) + \int^x_0 \left(K(x,s) \Omega(s) - \frac{\partial K(x,s)}{\partial s} \right) \, \varphi(s,\lambda)\, ds\, ,\\
\label{3-3-18} B \frac{\partial K(x,s)}{\partial x} + \Omega_1(x) K(x,s) - K(x,s) \Omega(s) + \frac{\partial K(x,s)}{\partial s} B \equiv 0\, .
\end{gather}
Substituting \eqref{3-3-17} and \eqref{3-3-18} into \eqref{3-3-16}, we get the identity \eqref{3-3-15}. 
Thus, it is proved that the kernel $K(x,s)$ generates a transformation operator between $L$ and $L_1$. 
Quite similarly, it can be shown that the kernel (matrix)
\begin{equation}\label{3-3-19}
H(x,s) = \frac{b}{1 - b g_1(x)}\, \psi(x,\mu) \psi^T(s,\mu) = \frac{b}{1 + b g(s)}\, \varphi(x,\mu) \varphi^T(s,\mu) = -K^T(s,x)\, ,
\end{equation}
where $ g_1(x) = \int^x_0 \left| \psi(t,\mu) \right|^2 dt$, generates the transformation operator $I+H=(I+K)^{-1}$:
\begin{equation}\label{3-3-20}
\varphi(x,\lambda) = (I+H)\psi(x,\lambda) = \psi(x,\lambda) + \int^x_0 H(x,s) \psi(s,\lambda)\, ds\, ,
\end{equation}
which maps the solution $\psi$ of the Cauchy problem \eqref{3-3-7} into the solution $\varphi$ of the Cauchy problem \eqref{3-2-2}.

To prove that $\rho_1(\lambda)$ is the spectral function of the operator $L_1$, it is necessary to show that $\psi(x,\lambda)$ generates Parseval’s equality as of measure $d \rho_1(\lambda)$. 
To do this, it is enough to prove that for any finite function $f \in L_2(0,\infty; \mathbb{R}^2)$
\begin{equation}\label{3-3-21}
\| f \|^2 \equiv \int^\infty_0 \left[ f^2_1(x) + f^2_2(x)\right]\, dx = \int^\infty_{-\infty} F^2(\lambda)\, d \rho_1(\lambda)\, ,
\end{equation}
where 
\begin{equation}\label{3-3-22}
F(\lambda) = \int^\infty_0 f^T(x) \cdot \psi(x,\lambda)\, dx\, .
\end{equation}
Substituting in \eqref{3-3-22} instead of $\psi(x,\lambda)$ its representation \eqref{3-3-6} and using that $f$ finite, which allows one to change the order of integration, for $F(\lambda)$ we get the expression
\begin{equation}\label{3-3-23}
F(\lambda) = \int^\infty_0 h^T (t) \varphi(t,\lambda)\, dt\, ,
\end{equation}
where 
\begin{equation}\label{3-3-24}
h(t) = f(t) + \int^\infty_t K^T(x,t) f(x)\, dx\, .
\end{equation}
Based on \eqref{3-3-23} and \eqref{3-3-20}, we also get the expression $f$ through $h$:
\begin{equation}\label{3-3-25}
f(t) = h(t) + \int^\infty_0 H^T(x,t) h(x)\, dx\, .
\end{equation}
Since $\varphi(x,\lambda)$ generates Parseval’s equality with measure $\rho(\lambda)$, then from \eqref{3-3-21} and \eqref{3-3-5} we have
\begin{equation}\label{3-3-26}
\int^\infty_{-\infty} F^2(\lambda)\, d\rho_1 (\lambda) = \int^\infty_{-\infty} F^2(\lambda)\, d\rho(\lambda) + b F^2(\mu) = \left| h\right|^2 + b F^2(\mu)\, .
\end{equation}
Consider the term $b F^2(\mu)$. 
From \eqref{3-3-23} we get
\begin{equation}\label{3-3-27}
\begin{aligned}
b F^2(\mu) = & b \int^\infty_0 h^T(t) \varphi(t,\mu)\, dt \int^\infty_0 h^T(x) \varphi(x,\mu)\, dx = \\
=& b \int^\infty_0 h^T(t) \left[ \int^\infty_0 \varphi(t,\mu) \varphi^T(x,\mu) h(x)\, dx \right]\, dt\, .
\end{aligned}
\end{equation}
Taking into account the representation \eqref{3-3-12} of the matrix $F(t,x)$ and \eqref{3-3-24}, as well as the relation between the matrices $F(t,x)$, $K(t,x)$ and $H(t,x)$, consider the integral
\begin{equation}\label{3-3-28}
\begin{aligned}
b \int^\infty_0 \varphi(t,\mu) \varphi^T (x,\mu)h(x)\, dx = & \int^\infty_0 F(t,x) \left(f(x) + \int^\infty_x K^T(s,t) f(s)\, ds\r\, dx = \\
= &\int^\infty_0 \left[ F(t,s) + \int^s_0 F(t,x) K^T (s,x)\, dx\right]\, f(s)\, ds = \\
= & \int^t_0 \left[ F(t,s) + \int^s_0 F(t,x) K^T (s,x)\, dx\right]\, f(s)\, ds + \\
&+ \int^\infty_t \left[ F(t,s) + \int^s_0 F(t,x) K^T (s,x)\, dx\right]\, f(s) \, ds = \\
= &\int^t_0 H(t,s)f(s)\, ds - \int^\infty_t K^T (s,t)\, f(s)\, ds\, .
\end{aligned}
\end{equation}
Substituting the last expression into \eqref{3-3-27} and taking into account \eqref{3-3-24} and \eqref{3-3-25}, we get that 
\[
b F^2(\mu) = \left| f\right|^2 - \left| h\right|^2 \, ,
\] 
which together with \eqref{3-3-26} proves Parseval's equality \eqref{3-3-21}.

Thus, based on the initial operator $L$, we have built the operator $L_1$, which has one "extra" eigenvalue. 
To add to the spectrum of the operator $L$ a finite number $n$ of eigenvalues $\mu_1, \mu_2, \ldots, \mu_n$ with the corresponding norming constants $b_1, b_2, \ldots, b_n$ it is enough to repeat what you did $n$ times. 
Then we get the operator $\tilde{L}_1$, generated by the expression $ \tilde{\ell}_1 = B \frac{d}{dx} + \tilde{\Omega}_1(x)$ and the boundary condition \eqref{3-2-2}, the potential $\tilde{\Omega}_1(x)$ is defined by the expression \eqref{3-3-2}, where $m = l=0$.

2. If we want to reduce the number of eigenvalues of a given operator $L$ (obviously, now we can take $\tilde{L}_1$ as the initial operator), then we will do the following. 
Let $\lambda_0$ be an eigenvalue of the operator $L$ with a norming constant $a_0$. 
We want to construct an operator $L_2$, the spectral function of which satisfies the relation
\begin{equation}\label{3-3-29}
\rho_2(\lambda) = \left\{
\begin{array}{ll}
\rho(\lambda), & \lambda \leq \lambda_0 \\
\rho(\lambda) - a_0, & \lambda > \lambda_0
\end{array}\right. 
\qquad \text{i.e.} \quad d \rho_2(\lambda) = d\rho(\lambda) - a_0 \delta (\lambda -\lambda_0)\, d \lambda\, .
\end{equation}
Quite similar to the previous one, we get that if such an operator exists, then its potential function $\Omega_2(x)$ is determined by the equality
\begin{equation}\label{3-3-30}
\Omega_2(x) = \Omega(x) - \frac{a_0}{1 - a_0 g_0(x)}
\left(
\begin{array}{ll}
2 \varphi_1(x,\lambda_0) \varphi_2(x,\lambda_0) & \varphi^2_2(x,\lambda_0) -\varphi^2_1(x,\lambda_0) \\
\varphi^2_2(x,\lambda_0) - \varphi^2_1(x,\lambda_0) & -2 \varphi_1(x,\lambda_0) \varphi_2(x,\lambda_0)
\end{array}
\right)
\end{equation}
where $ g_0(x) = \int^x_0 \left| \varphi(t,\lambda_0)\right|^2 dt$, and vice versa, the canonical Dirac operator generated by the expression $ \ell = B\, \frac{d}{dx} + Q_2(x)$ and the boundary condition \eqref{3-2-2} has a spectral function defined by the equality \eqref{3-3-29}.
Note that from the formulas \eqref{3-3-6} and \eqref{3-3-22} at $\lambda = \lambda_0$ follows the identity $(1 - a_0 g_0(x)) \cdot (1 + a_0 g_1(x)) \equiv 1$, where $ g_1(x) = \int^x_0 \left| \psi(t,\lambda_0)\right|^2 dt$, from which, in its turn, it follows that the denominator $(1 - a_0 g_0(x))$ on the right side of the equality \eqref{3-3-30} is positive for any finite $x$.

If we want to remove not one, but a finite number $m$ of the eigenvalues $\lambda_1, \lambda_2, \ldots, \lambda_m$ with the corresponding norming constants $a_1, a_2, \ldots, a_m$, then repeat this procedure $m$ times. 
As a result (if we proceed from the operator $\tilde{L}_1$) we will build the operator $\tilde{L}_2$, the spectral function of which will be determined from the equality \eqref{3-3-1}, where $l = 0$, and the potential function $\tilde{\Omega}_2(x)$ from the equality \eqref{3-3-2} with $l = 0$.

3. Now we want, without changing the eigenvalues, to change the norming constants, i.e., if the eigenvalue $\lambda_0$ of the operator $L$ was associated with the norming constant $a_0$, then for the new operator (let's denote it by the $L_3$) it would be associated with norming constant $b_0 \neq a_0$. 
This means that the spectral functions $\rho_3(\lambda)$ and $\rho(\lambda)$ must be related by the relation
\begin{equation}\label{3-3-31}
\rho_3(\lambda) = \left\{
\begin{array}{ll}
\rho(\lambda), & \lambda \leq \lambda_0\, ,\\
\rho(\lambda) + (b_0 - a_0), & \lambda > \lambda_0\, .
\end{array}\right.
\end{equation}
Repeating the reasoning, we come to the expression
\begin{equation}\label{3-3-32}
\Omega_3(x) = \Omega(x) + \frac{b_0 - a_0}{1 - (b_0 - a_0) g(x)}
\left(
\begin{array}{ll}
2 \varphi_1(x,\lambda_0) \varphi_2(x,\lambda_0) & \varphi^2_2(x,\lambda_0) -\varphi^2_1(x,\lambda_0) \\
\varphi^2_2(x,\lambda_0) - \varphi^2_1(x,\lambda_0) & -2 \varphi_1(x,\lambda_0) \varphi_2(x,\lambda_0)
\end{array}
\right)
\end{equation}
where $ g(x) = \int^x_0 \left| \psi(t,\lambda_0)\right|^2 dt \leq a_0^{-1}$. 
For the denominator $1 + (b_0 - a_0) g(x)$, note that if $b_0 - a_0 < 0$, then $ 1 + (b_0 - a_0) g(x) > 1 - (a_0 - b_0) a^{-1}_0 = \frac{b_0}{a_0} > 0$, i.e. for a new norming constant $b_0$ one can take any positive number. 
By repeating what has been done $l$ times, we can change the finite number $l$ of norming constants. 
Combining the obtained results, we come to Theorem \ref{thm3-3-1}.
\end{proof}

\section{EVF of the family of Dirac operators on the semi-axis}\label{c10:sec_4}

In this paragraph, we again assume that the spectrum of the self-adjoint operator $L(\Omega,\alpha )$ is purely discrete. 
Provided that the spectrum is purely discrete, the G.~Weyl solution can be chosen such (among linearly dependent) that its components $u_1(x,\lambda)$ and $u_2(x,\lambda)$ were entire functions of the parameter $\lambda$ (for any fixed $x \in [0,\infty)$) and for any eigenvalue $\lambda_n(\alpha )$, $u(\cdot, \lambda_n(\alpha ))\in L^2\left(0, \infty; \mathbb{C}^2\right)$ (see \cite{Levitan-Sargsyan:1988, Hinton-Shaw:1982}).

Once again, let's turn to the issue of the enumeration of eigenvalues. 
According to \eqref{3-2-1}, with $\alpha = \frac{\pi}{2}$, we have:
\begin{equation}\label{3-4-1}
\cdots < \lambda_{-n} \left( \frac{\pi}{2} \right) < \lambda_{-n+1} \left( \frac{\pi}{2} \right) < \cdots < \lambda_0 \left( \frac{\pi}{2} \right) \leq 0 < \lambda_1 \left( \frac{\pi}{2} \right) < \cdots\;  .
\end{equation}
Further, according to $(3.2.6')$, if $ -\frac{\pi}{2} < \alpha < \beta \leq \frac{\pi}{2}$, then
\begin{equation}\label{3-4-2}
\lambda_{n} \left( \frac{\pi}{2} \right) < \lambda_{n}(\beta) < \lambda_n(\alpha) < \lambda_{n+1} \left( \frac{\pi}{2} \right) \; , \quad n \in \mathbb{Z}\, .
\end{equation}
From the last inequality it follows that each eigenvalue $\lambda_n(\cdot)$ is a decreasing function on $ \left(-\frac{\pi}{2}\, ,\, \frac{\pi}{2}\right]$. 
For all other real numbers $\gamma$ (which can obviously be represented as $\gamma = \alpha - \pi m$, where $ \alpha \in \left(-\frac{\pi}{2}\, ,\, \frac{\pi}{2}\right]$, $m \in \mathbb{Z}$) we will take the following enumeration:
\begin{equation}\label{3-4-3}
\lambda_n(\gamma) = \lambda_n(\alpha - \pi m) = \lambda_{n+m}(\alpha ),\quad n, m \in \mathbb{Z}\, ,
\end{equation}
where $\lambda_k(\alpha )$ for $ \alpha \in \left(-\frac{\pi}{2}\, ,\, \frac{\pi}{2}\right]$ are already enumerated through \eqref{3-4-1} and \eqref{3-4-2}. 
Thus, we consider each eigenvalue $\lambda_n(\cdot)$ as a function defined on the entire real line.

\begin{definition}\label{defin3-4-1} 
The function $\lambda(\gamma)$ of the real variable $\gamma$ is called the function of the eigenvalues of the family of operators $ \left\{ L(\Omega,\alpha )\, ,\, \alpha \in \left(-\frac{\pi}{2}\, ,\, \frac{\pi}{2}\right] \right\} $ (for short EVF), if for any $ \alpha \in \left(-\frac{\pi}{2}\, ,\, \frac{\pi}{2}\right]$ and any $m \in \mathbb{Z}$
\begin{equation}\label{3-4-4}
\lambda (\alpha - \pi m) = \lambda_m(\alpha )\, ,
\end{equation}
where $\lambda_m(\alpha )$ are enumerated according to \eqref{3-4-1} and \eqref{3-4-2}.
\end{definition}

From \eqref{3-4-3}, it can be seen that $\lambda_0(\gamma)$ has the property \eqref{3-4-4}, i.e., the existence of EVF is obvious.

\begin{theorem}\label{thm3-4-1} 
EVF of the family of operators $\left\{ L(\Omega, \alpha )\, ,\, \alpha \in \left(-\frac{\pi}{2}\, ,\, \frac{\pi}{2}\right] \right\} $ is a real analytical and strictly decreasing function defined on the whole real line. 
Its derivative has the property
\begin{equation}\label{3-4-5}
\left. \frac{\partial \lambda(\gamma)}{\partial \gamma}\right\vert_{\gamma = \alpha - \pi m} = - \, \frac{1}{a_m(\alpha )}\; ,\quad \alpha \in \left(-\frac{\pi}{2}\, ,\, \frac{\pi}{2}\right]\, ,\quad m \in \mathbb{Z}\, .
\end{equation}
\end{theorem}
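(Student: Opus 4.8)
The argument is the semi-axis analogue of the proof of Theorem \ref{c5:thm_1}; the only genuinely new point is the behaviour at the singular endpoint $r=+\infty$, where the finite-interval boundary terms of Chapter \ref{chapter_5} must be replaced by limits along suitable sequences $r_k\to\infty$ (as in Section \ref{c10:sec_2}). For monotonicity on each fundamental interval I would argue as follows: the interlacing inequalities \eqref{3-4-2} show that for every $n\in\mathbb{Z}$ the eigenvalue $\lambda_n(\cdot)$ is strictly decreasing on $\left(-\frac{\pi}{2},\frac{\pi}{2}\right]$, so by the gluing rule \eqref{3-4-3}--\eqref{3-4-4} the EVF $\lambda(\cdot)$ is strictly decreasing on each interval $\left(\pi m-\frac{\pi}{2},\pi m+\frac{\pi}{2}\right]$, $m\in\mathbb{Z}$, and these cover $\mathbb{R}$; global strict monotonicity will then follow once the last step shows $\lambda'(\gamma)<0$ at every real $\gamma$.

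\textbf{Real analyticity.} Because the spectrum is purely discrete, the Weyl solution $u(x,\lambda)$ may be normalized so that $u_1(x,\cdot)$ and $u_2(x,\cdot)$ are entire in $\lambda$; since $u$ does not depend on the boundary condition at $0$, the function $\chi(\lambda,\gamma):=u_1(0,\lambda)\cos\gamma+u_2(0,\lambda)\sin\gamma$ is entire on $\mathbb{C}^2$, and for real $\gamma$ its zeros in $\lambda$ are precisely the eigenvalues of $L(\Omega,\gamma)$. I would then apply the implicit function theorem (Theorem \ref{c5:thm_2}) with $F=\chi$, which needs $\frac{\partial\chi}{\partial\lambda}\neq0$ at each eigenvalue. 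Differentiating the system in $\lambda$ and combining as in the derivation of \eqref{c3:40} gives $\frac{d}{dr}\left[\dot u_1u_2-u_1\dot u_2\right]\equiv-\left(u_1^2+u_2^2\right)$; integrating over $(0,\infty)$, the boundary term at $\infty$ vanishing because $u(\cdot,\lambda)$ and $\dot u(\cdot,\lambda)$ lie in $L^2(0,\infty)$, yields $\dot u_1(0,\lambda)u_2(0,\lambda)-u_1(0,\lambda)\dot u_2(0,\lambda)=\int_0^\infty|u(r,\lambda)|^2\,dr$. At $\lambda=\lambda_m(\gamma)$ the vector $u(0,\lambda_m(\gamma))$ is a nonzero multiple $c_m(\gamma)$ of $(\sin\gamma,-\cos\gamma)$ (else $u\equiv0$ by uniqueness, contradicting that it is an eigenfunction); substituting this into the last identity and into $\frac{\partial\chi}{\partial\lambda}=\dot u_1(0,\lambda)\cos\gamma+\dot u_2(0,\lambda)\sin\gamma$ gives
\[
\left.\frac{\partial\chi}{\partial\lambda}\right|_{\lambda=\lambda_m(\gamma)}=-\frac{1}{c_m(\gamma)}\int_0^\infty|u(r,\lambda_m(\gamma))|^2\,dr\neq0 .
\]
Hence at each real $\gamma_0=\alpha_0-\pi m$, Theorem \ref{c5:thm_2} produces a single-valued analytic solution of $\chi(\lambda,\gamma)=0$ near $\gamma_0$ coinciding with $\lambda(\cdot)$ for real arguments; $\gamma_0$ being arbitrary, $\lambda(\cdot)$ is real analytic on all of $\mathbb{R}$.

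\textbf{The derivative formula.} I would repeat the bilinear-identity argument of \eqref{2-5-9}--\eqref{2-5-10}. Writing $\ell\varphi(\cdot,\lambda(\gamma),\gamma)=\lambda(\gamma)\varphi(\cdot,\lambda(\gamma),\gamma)$ for two nearby real parameters $\gamma,\gamma'$, multiplying one scalarly by the other, subtracting and integrating over $(0,\infty)$ yields
\[
\bigl[\lambda(\gamma)-\lambda(\gamma')\bigr]\bigl(\varphi(\cdot,\lambda(\gamma),\gamma),\varphi(\cdot,\lambda(\gamma'),\gamma')\bigr)=\sin(\gamma'-\gamma),
\]
the scalar product being that of $L^2(0,\infty;\mathbb{C}^2)$: the contribution at $\infty$ drops out because the eigenfunctions lie in $L^2(0,\infty)$, and the contribution at $0$ is the Wronskian of the initial vectors $(\sin\gamma,-\cos\gamma)$ and $(\sin\gamma',-\cos\gamma')$, namely $\sin(\gamma'-\gamma)$. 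Dividing by $\gamma-\gamma'$ and letting $\gamma'\to\gamma$ gives $\lambda'(\gamma)\,\|\varphi(\cdot,\lambda(\gamma),\gamma)\|^2=-1$. In particular $\lambda'(\gamma)<0$ everywhere, so $\lambda(\cdot)$ is strictly decreasing on $\mathbb{R}$; and evaluating at $\gamma=\alpha-\pi m$, where $\lambda(\gamma)=\lambda_m(\alpha)$ and $\|\varphi(\cdot,\lambda_m(\alpha))\|^2=a_m(\alpha)$ by Definition \ref{defin3-4-1}, we obtain \eqref{3-4-5}. (Equivalently, one may differentiate $\chi(\lambda(\gamma),\gamma)\equiv0$, use $\frac{\partial\chi}{\partial\gamma}\big|_{\lambda=\lambda(\gamma)}=-c_m(\gamma)$ together with $u(\cdot,\lambda_m(\alpha))=c_m\varphi(\cdot,\lambda_m(\alpha))$, hence $\int_0^\infty|u|^2=c_m^2a_m(\alpha)$, and reach the same conclusion.)

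\textbf{Main obstacle.} Everything non-routine is concentrated in the analytic bookkeeping at $r=+\infty$ needed for the second step: that the Weyl solution can be chosen with entire $\lambda$-dependence of its boundary values, that it and $\dot u$ belong to $L^2(0,\infty)$, and that all boundary contributions at $+\infty$ in the integrated identities genuinely vanish — which is handled by passing to limits along sequences $r_k\to\infty$ as in Section \ref{c10:sec_2}. Once these soft facts are granted, the remaining computations are a literal transcription of the regular case treated in Theorem \ref{c5:thm_1}.
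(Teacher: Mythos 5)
Your argument is essentially the paper's own proof: the same function $F(\lambda,\gamma)=u_1(0,\lambda)\cos\gamma+u_2(0,\lambda)\sin\gamma$ treated with the implicit function theorem, the non-vanishing of $\partial F/\partial\lambda$ obtained from the Wronskian identity of Lemma \ref{lem3-4-2} together with the limit-point linear dependence $u(0,\lambda_0)=c_1(\sin\gamma_0,-\cos\gamma_0)^T$, and the derivative formula \eqref{3-4-5} (hence strict monotonicity) from the bilinear identity \eqref{3-4-14} divided by $\alpha-\beta$. The only deviation is your unproved claim that $\dot u(\cdot,\lambda)\in L^2(0,\infty)$ to dispose of the boundary term at infinity; the paper does not assert this and instead integrates up to a sequence $x_n\to\infty$ chosen using only $u(\cdot,\lambda_0)\in L^2(0,\infty)$, which is precisely the fallback device you yourself mention.
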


\begin{corollary}\label{cor3-4-1} 
From the formulas \eqref{3-2-3} and \eqref{3-2-4} it follows that the derivative (on the left side of the formula \eqref{3-4-5}) can be represented through the spectra $\Lambda(\alpha ) = \{\lambda_n(\alpha)\}_{n \in \mathbb{Z}}$ and $\Lambda(\beta ) = \{\lambda_n(\beta)\}_{n \in \mathbb{Z}}$, where $\beta$ is an arbitrary point of $\left(-\frac{\pi}{2}\, ,\, \frac{\pi}{2}\right]$, other than $\alpha $.
\end{corollary}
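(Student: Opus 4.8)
The plan is to deduce the corollary directly from the two results already in hand, since no new analysis is required beyond a substitution and a bookkeeping check of what data actually enters the formulas. The starting point is Theorem \ref{thm3-4-1}, which already evaluates the object appearing on the left side of \eqref{3-4-5}, namely $\left.\partial\lambda(\gamma)/\partial\gamma\right|_{\gamma=\alpha-\pi m}=-1/a_m(\alpha)$ for $\alpha\in\left(-\frac{\pi}{2},\frac{\pi}{2}\right]$ and $m\in\mathbb{Z}$. Hence the entire assertion reduces to showing that the norming constant $a_m(\alpha)$ of the problem $L(\Omega,\alpha)$ can be written using only the elements of the two spectra $\Lambda(\alpha)=\{\lambda_k(\alpha)\}_{k\in\mathbb{Z}}$ and $\Lambda(\beta)=\{\lambda_k(\beta)\}_{k\in\mathbb{Z}}$. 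This is precisely the content of Theorem \ref{thm3-2-1}.

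Concretely, I would fix an arbitrary real $\gamma$ and represent it as $\gamma=\alpha-\pi m$ with $\alpha\in\left(-\frac{\pi}{2},\frac{\pi}{2}\right]$ and $m\in\mathbb{Z}$, following the enumeration of Definition \ref{defin3-4-1}. Applying formula \eqref{3-2-3} with $n=m$ when $m\neq0$, or \eqref{3-2-4} when $m=0$, gives an explicit expression for $a_m(\alpha)$. The key observation I would then stress is that every ingredient of the right-hand sides of \eqref{3-2-3} and \eqref{3-2-4} is built solely from the numbers $\lambda_k(\alpha)$ and $\lambda_k(\beta)$, the elementary factor $\sin(\beta-\alpha)$, and the positive constant $c$; and that $c$ is itself determined by the two spectra through relation \eqref{3-2-5} (equivalently Lemma \ref{lem3-2-1}), which fixes $c$ via a limit of products of the $\lambda_k(\alpha)$ and $\lambda_k(\beta)$. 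Thus nothing beyond $\Lambda(\alpha)$ and $\Lambda(\beta)$ enters into $a_m(\alpha)$, and taking the reciprocal yields the sought representation of the derivative through two spectra, valid for each admissible $\beta\neq\alpha$.

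The statement is therefore essentially an immediate corollary, and the only points genuinely needing attention — already secured by the earlier results, so that the "main obstacle" is more a matter of careful verification than of new difficulty — are the following. First, one must confirm that the infinite products in \eqref{3-2-3}–\eqref{3-2-5}, understood in the principal-value sense, converge, so that the expressions are well defined; this rests on the convergence arguments underlying Theorem \ref{thm3-2-1} and Lemma \ref{lem3-2-1}. Second, one should note that although the representation is written with the auxiliary parameter $\beta$, the quantity $a_m(\alpha)$, and hence the derivative, does not depend on $\beta$; the corollary does not require proving this cancellation, but remarking that each choice of $\beta$ produces the same value makes clear that the formula is a legitimate representation through two spectra rather than through additional hidden data.
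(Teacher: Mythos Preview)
Your proposal is correct and matches the paper's approach exactly: the paper states the corollary without a separate proof, treating it as an immediate consequence of combining \eqref{3-4-5}, which gives $\partial\lambda(\gamma)/\partial\gamma|_{\gamma=\alpha-\pi m}=-1/a_m(\alpha)$, with the representations \eqref{3-2-3}--\eqref{3-2-5} of Theorem~\ref{thm3-2-1}, which express $a_m(\alpha)$ purely through the two spectra $\Lambda(\alpha)$ and $\Lambda(\beta)$. Your additional remarks on the convergence of the infinite products and the $\beta$-independence of $a_m(\alpha)$ are valid clarifications but go beyond what the paper explicitly records.
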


To prove this theorem, we need two lemmas. 
Consider the function
\[
F(\lambda,\gamma) = u_1(0, \lambda) \cos \gamma + u_2(0, \lambda) \sin \gamma\, .
\]
Obviously, $F(\lambda,\gamma)$ is an entire function of two complex variables. 
Let $ \alpha_0 \in \left(-\frac{\pi}{2}\, ,\, \frac{\pi}{2}\right]$, $\gamma_0 = \alpha _0 - \pi m$, $m \in \mathbb{Z}$, and $\lambda_0 = \lambda(\gamma_0) = \lambda(\alpha_0 -\pi m) = \lambda_m(\alpha _0)$, i.e. $\lambda_0$ is an eigenvalue of the operator $L(\Omega, \alpha_0)$. 
Then there is a well-known lemma on the limit point (see \cite{Levitan-Sargsyan:1988}):

\begin{lemma}\label{lem3-4-1} 
When $p,q \in L^1_{loc}(0,\infty;\mathbb{R})$, for any complex $\lambda$ there is no more than one linearly independent solution of the system \eqref{3-1-1}, from $L^2 \left(0, \infty; \mathbb{C}^2\right)$,
\end{lemma}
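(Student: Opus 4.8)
The plan is to prove this by contradiction, using the fact that the canonical Dirac system is a trace-free normal first-order system whose Wronskian is therefore constant, which is incompatible with having \emph{two} linearly independent $L^{2}$-solutions. This is precisely the statement that the Dirac operator is ``in the limit point case at $+\infty$''.

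First I would recall the reduction already made in the Introduction and in Chapter \ref{chapter_1}: equation \eqref{3-1-1} can be written as $By'+\Omega(r)y=\lambda y$ with $B=\tfrac{1}{i}\sigma_{1}$ and $\Omega=\sigma_{2}p+\sigma_{3}q$, and, multiplying on the left by $B^{-1}=-B$, as a normal system $y'=A(r,\lambda)y$ with $A(r,\lambda)=-\lambda B+B\Omega(r)$, whose trace is identically zero (both $B$ and $B\Omega$ are trace-free). By Theorem \ref{c1:thm_1} solutions exist, are absolutely continuous on every finite subinterval, and Liouville's formula applies, so the Wronskian $W(y,z)(r)=y_{1}(r)z_{2}(r)-y_{2}(r)z_{1}(r)$ of any two solutions of $\ell y=\lambda y$ (with the \emph{same} $\lambda$) is constant on $[0,\infty)$. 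Next, suppose for contradiction that for some $\lambda\in\mathbb{C}$ there are two linearly independent solutions $y$ and $z$ of \eqref{3-1-1}, both in $L^{2}(0,\infty;\mathbb{C}^{2})$. Since the solution space is two-dimensional, linear independence forces $W(y,z)\not\equiv 0$; by the previous step there is a constant $c>0$ with $|W(y,z)(r)|=c$ for all $r\ge 0$. On the other hand the elementary determinant bound $|y_{1}z_{2}-y_{2}z_{1}|\le |y_{1}||z_{2}|+|y_{2}||z_{1}|\le |y(r)|\,|z(r)|$ (Cauchy--Schwarz in $\mathbb{R}^{2}$, i.e. Hadamard for $2\times 2$ determinants) gives $|y(r)|\,|z(r)|\ge c$ for every $r\ge 0$, whence $\int_{0}^{\infty}|y(r)|\,|z(r)|\,dr=+\infty$. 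But $y,z\in L^{2}(0,\infty;\mathbb{C}^{2})$ together with the Cauchy--Schwarz inequality on $L^{2}(0,\infty)$ yield $\int_{0}^{\infty}|y(r)|\,|z(r)|\,dr\le\bigl(\int_{0}^{\infty}|y|^{2}\bigr)^{1/2}\bigl(\int_{0}^{\infty}|z|^{2}\bigr)^{1/2}<\infty$, a contradiction. Hence for every $\lambda\in\mathbb{C}$ there is at most one linearly independent solution in $L^{2}(0,\infty;\mathbb{C}^{2})$.

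There is essentially no hard step here; the whole point is that the first-order, trace-free structure of the canonical Dirac system forces the limit point alternative automatically (unlike the Sturm--Liouville case, where the limit circle case is genuinely possible). The only things needing care are the constancy of the Wronskian — which is exactly the circumstance pointed out in the Introduction — and the trivial $2\times 2$ determinant estimate; everything else is Cauchy--Schwarz. I would note that the hypothesis $p,q\in L^{1}_{loc}[0,\infty)$ enters only through Theorem \ref{c1:thm_1}, to legitimize the normal-form reduction and Liouville's formula on each finite interval.
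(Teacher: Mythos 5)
Your argument is correct: constancy of the Wronskian (the system \eqref{3-1-1} rewrites as $y'=A(r,\lambda)y$ with $\operatorname{tr}A\equiv 0$, so Liouville applies on every finite interval under $p,q\in L^1_{loc}$ via Theorem \ref{c1:thm_1}), the pointwise bound $|W|\le|y(r)|\,|z(r)|$, and Cauchy--Schwarz in $L^2$ together force any two $L^2$ solutions for the same $\lambda$ to be linearly dependent. The paper itself offers no proof of this lemma --- it is quoted as the well-known limit-point statement with a reference to Levitan--Sargsyan --- and the classical proof given there is exactly this Wronskian-plus-Cauchy--Schwarz argument, so your proposal reproduces the standard route rather than deviating from it.
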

and from the fact that $\varphi(\cdot, \lambda_0, \alpha_0) \in L^2(0, \infty; \mathbb{C}^2)$, it follows that $u(\cdot, \lambda_0)$ and $\varphi(\cdot, \lambda_0, \alpha_0)$ linearly dependent and, in particular
\[
u(0, \lambda_0) = c \, \varphi(0, \lambda_0, \alpha_0) 
= c\left( 
\begin{array}{c}
\sin \alpha_0 \\  -\cos\alpha_0 
\end{array}
\right)
= c_1\left(
\begin{array}{c}
\sin \gamma_0  \\ 
-\cos \gamma_0
\end{array}
\right) \, ,
\]
where $c_1 = \pm c \neq 0$. 
Therefore
\begin{align}\label{3-4-6}
\left. \frac{\partial F(\lambda, \gamma_0)}{\partial \lambda} \right\vert_{\lambda = \lambda_0} = \frac{\partial u_1(0, \lambda_0)}{\partial \lambda}\, \cos \gamma_0 + \frac{\partial u_2(0, \lambda_0)}{\partial \lambda}\, \sin \gamma_0 = \\
= - \frac{1}{c_1} \left(\frac{\partial u_1(0, \lambda_0)}{\partial \lambda}\, u_2(0, \lambda_0) -\frac{\partial u_2(0, \lambda_0)}{\partial \lambda}\, u_1(0, \lambda_0) \right)\, .
\end{align}

On the other hand, let's prove the following statement.
\begin{lemma}\label{lem3-4-2} 
There is an equality
\begin{equation}\label{3-4-7}
\frac{\partial u_1(0, \lambda_0)}{\partial \lambda}\, u_2(0, \lambda_0) - \frac{\partial u_2(0, \lambda_0)}{\partial \lambda}\, u_1(0, \lambda_0) = \int^\infty_0 \vert u(x,\lambda_0) \vert^2 dx\, .
\end{equation}
\end{lemma}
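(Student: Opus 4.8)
The plan is to reduce \eqref{3-4-7} to the pointwise identity \eqref{c3:40}, integrated over the whole half-axis, the only genuinely new ingredient being the behaviour of the resulting boundary term at infinity. First I would observe that the derivation of Lemma \ref{lem3-4-1}'s companion, Lemma \ref{lem2-3-4}, uses only that $u$ has entire components in $\lambda$ and that $\ell u\equiv\lambda u$ holds pointwise in $x$; both are available for the Weyl solution under the standing assumption that the spectrum is purely discrete and that $u$ has been chosen with entire components. Hence the identity
\[
\frac{d}{dx}\left[\dot u_1(x,\lambda)\,u_2(x,\lambda)-u_1(x,\lambda)\,\dot u_2(x,\lambda)\right]\equiv-\left[u_1^2(x,\lambda)+u_2^2(x,\lambda)\right]
\]
holds verbatim for the Weyl solution as well, where $\dot u=\partial u/\partial\lambda$ exists and, by differentiating $\ell u=\lambda u$, satisfies $\ell\dot u=u+\lambda\dot u$.

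Next I would fix $\lambda=\lambda_0$, an eigenvalue of $L(\Omega,\alpha_0)$. Since $p$ and $q$ are real, $u(\cdot,\lambda)$ may be taken real for real $\lambda$, so $u_1^2(x,\lambda_0)+u_2^2(x,\lambda_0)=|u(x,\lambda_0)|^2$; and because $\lambda_0$ is an eigenvalue, $u(\cdot,\lambda_0)\in L^2(0,\infty;\mathbb{C}^2)$, so $\int_0^\infty|u(x,\lambda_0)|^2\,dx$ converges. Integrating the displayed identity over $[0,X]$ gives
\[
\left[\dot u_1u_2-u_1\dot u_2\right]\big|_{x=X}-\left[\dot u_1u_2-u_1\dot u_2\right]\big|_{x=0}=-\int_0^X|u(x,\lambda_0)|^2\,dx ,
\]
so it remains only to show that $W(X):=\dot u_1(X,\lambda_0)u_2(X,\lambda_0)-u_1(X,\lambda_0)\dot u_2(X,\lambda_0)\to0$ as $X\to\infty$; granting this, letting $X\to\infty$ produces exactly \eqref{3-4-7}.

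The hard part will be the vanishing of this boundary term at infinity. One route is to prove the analogous identity first for $\im\lambda\neq0$, where the Weyl solution decays exponentially so that $W(X)\to0$ trivially, and then to pass to the limit $\lambda\to\lambda_0$, using continuity of the entire functions $u_{1,2}(0,\cdot)$ together with the companion identity \eqref{c3:41} (with complex conjugates), which expresses $\int_0^\infty|u(x,\lambda)|^2\,dx$ through boundary data at $0$ and thereby controls the $L^2$-norm as $\lambda\to\lambda_0$. A second, more direct route is to work at $\lambda_0$ itself: by Lemma \ref{lem3-4-1} the $L^2$-solution $u(\cdot,\lambda_0)$ is a constant multiple of the Cauchy solution $\varphi(\cdot,\lambda_0,\alpha_0)$ of \eqref{3-2-2}, so one writes $\dot u(\cdot,\lambda_0)=\mathrm{const}\cdot\dot\varphi(\cdot,\lambda_0,\alpha_0)+z(\cdot)$, where $z$ solves the homogeneous equation $\ell z=\lambda_0 z$; since the initial data of $\varphi$ are $\lambda$-independent one has $\dot\varphi(0,\lambda_0)=0$, and then $W(X)$ reduces to a constant (trace-free) Wronskian of two solutions of $\ell y=\lambda_0 y$ plus the integrated form of \eqref{c3:40} applied to $\varphi$, whose tail is controlled because $\varphi(\cdot,\lambda_0,\alpha_0)$ is the eigenfunction and hence lies in $L^2$. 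Either way, the decisive point — and the main obstacle — is that discreteness of the spectrum forces the eigenfunction, and after this decomposition the relevant part of its $\lambda$-derivative, to decay at infinity, which is precisely what kills $W(X)$. Once $W(X)\to0$ is established the lemma follows; combined with \eqref{3-4-6}, with $\partial F/\partial\gamma\big|_{(\lambda_0,\gamma_0)}=-c_1$ and the implicit function theorem, it then yields \eqref{3-4-5}.
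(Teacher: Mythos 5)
Your setup is the same as the paper's: differentiate the system in $\lambda$, form $W(x)=\dot u_1(x,\lambda_0)u_2(x,\lambda_0)-u_1(x,\lambda_0)\dot u_2(x,\lambda_0)$, obtain $W'(x)=-\bigl(u_1^2+u_2^2\bigr)$, and integrate. But you then leave the decisive step — the vanishing of the boundary contribution at infinity — unproven, and neither of your two sketched completions works as stated. Route (b) is circular: writing $\dot u(\cdot,\lambda_0)=c_1\dot\varphi(\cdot,\lambda_0,\alpha_0)+z$ with $\ell z=\lambda_0 z$ and using $\dot\varphi(0,\lambda_0)=0$ gives $W(X)=c_1\omega-c_1^2\int_0^X|\varphi(x,\lambda_0)|^2\,dx$, where $\omega$ is the constant Wronskian of $z$ and $\varphi$; this shows only that $W(X)$ has a finite limit $c_1\omega-c_1^2\|\varphi\|^2$, and the claim that this limit is zero is exactly equivalent to the lemma, since $W(0)=c_1\omega$ and \eqref{3-4-7} asserts $W(0)=c_1^2\|\varphi\|^2$. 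Nothing in your sketch determines $\omega$, so no progress is made there.

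Route (a) is also not available under the paper's hypotheses: with $p,q\in L^1_{loc}$ and only the discreteness condition $(D)$, the Weyl solution for $\im\lambda\neq 0$ is merely square integrable, not exponentially decaying, so ``$W(X)\to 0$ trivially'' is unjustified; and even granting the identity off the real axis, the passage $\lambda\to\lambda_0$ requires interchanging this limit with the improper integral $\int_0^\infty$, which the relation \eqref{3-1-20} does not provide — as $\im\lambda\to 0$ it degenerates to a $0/0$ expression whose evaluation is essentially the statement being proved. The paper's own proof of Lemma \ref{lem3-4-2} dispatches the boundary term far more simply: since $u(\cdot,\lambda_0)\in L^2(0,\infty;\mathbb{R}^2)$ (with real components), there is a sequence $x_n\to\infty$ along which $u_{1,2}(x_n,\lambda_0)\to 0$, and one integrates the identity only from $0$ to $x_n$ and lets $n\to\infty$, so no claim that $W(X)\to 0$ for every $X$ is needed (the paper is terse here too, since the factors $\dot u_{1,2}(x_n)$ are not discussed, but that is the intended device). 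As written, your proposal identifies the crux correctly but does not close it; adopting the paper's sequence argument, or otherwise proving decay of the boundary term, is the missing ingredient.
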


\begin{proof} 
Let's write down the fact that $u(x, \lambda)$ is the solution to the problem \eqref{3-1-1} (for brevity, we omit the arguments):
\begin{align}
& \label{3-4-9} u'_2 + p \cdot u_1 + q u_2 = \lambda u_1\, ,\\
& \label{3-4-10} -u'_1 + q \cdot u_1 - pu_2 = \lambda u_2\, ,
\end{align}
and differentiating these equalities by $\lambda$, we get
\begin{align}
& \label{3-4-11} \dot{u}'_2 + p \cdot \dot{u}_1 + q \dot{u}_2 = u_1 + \lambda \dot{u}_1\, ,\\
& \label{3-4-12} -\dot{u}'_1 + q \cdot \dot{u}_1 - p \dot{u}_2 = u_2 + \lambda \dot{u}_2\, .
\end{align}
Multiplying both sides of \eqref{3-4-9} by $-\dot{u}_1$, \eqref{3-4-10} by $-\dot{u}_2$, \eqref{3-4-11} by $u_1$, \eqref{3-4-12} by $u_2$ and adding all together, we obtain
\begin{equation}\label{3-4-13}
\frac{d}{dx} \left( \dot{u}_2 \cdot \dot{u}_1 - \dot{u}_1 \cdot \dot{u}_2 \right) = u^2_1 + u^2_2\, .
\end{equation}
Since at $\lambda = \lambda_0$, $\, u(\cdot, \lambda_0) \in L^2\left((0, \infty): \mathbb{R}^2 \right)$ (recalling that $\lambda_0$, $p(\cdot)$ and $q(\cdot)$ are real, then the components of the solution $u(x, \lambda_0)$ can also be taken real), then there is a sequence of $x_n \to \infty$ such that $u_{1,2}(x_n, \lambda_0) \to 0$, when $n \to \infty$. 
Therefore, by integrating \eqref{3-4-13} with respect to $x$ from $0$ to $x_n$ and then passing $n \to \infty$, we get the equality \eqref{3-4-7}. 
Lemma \ref{lem3-4-2} is proven.
\end{proof}

\begin{proof}[Proof of Theorem \ref{thm3-4-1}.]
From Lemma \ref{lem3-4-2} and \eqref{3-4-6} it follows that $ \frac{\partial F(\lambda_0, \gamma_0)}{\partial \lambda} \neq 0$, on the other hand we have that at $\lambda_0 = \lambda_m(\alpha _0)$ and $\gamma_0 = \alpha _0 - \pi m$, $ F(\lambda_0, \gamma_0) = 0$.
Hence, according to the implicit function theorem (see Theorem 6.1.5 \cite{Bibikov:1981}) it follows that for any point $\gamma_0 \in \mathbb{R}$ there exists some complex neighborhood $V_0$ ($\gamma_0 \in  V_0 \subset \mathbb{C}$), in which the single-valued analytic function $\tilde{\lambda}(\cdot)$, coincides with $\lambda(\cdot)$ for the real values of the argument, i.e. $\tilde{\lambda}(\gamma) = \lambda(\gamma)$, if $\gamma \in  \mathbb{R}$. 
Since the point $\gamma_0$ was arbitrary from $\mathbb{R}$, the real analyticity of EVF $\lambda(\cdot)$ on the whole real axis is proved.

To prove \eqref{3-4-5} note that from the fact that $\varphi(\cdot, \lambda, \alpha)$ is a solution to the Cauchy problem \eqref{3-2-2}. 
It is easy to obtain the equality
\begin{equation}\label{3-4-14}
\left[ \lambda_n(\alpha ) - \lambda_n(\beta)\right] \left(\varphi (\cdot, \lambda_n(\alpha ), \alpha ), \varphi(\cdot, \lambda_n(\beta), \beta) \right) = \sin(\beta - \alpha )\, ,
\end{equation}
where $(f, g)$ is a scalar product in $L^2\left( (0,\infty); \mathbb{C}^2\right) $. 
Since $\varphi (\cdot, \lambda(\alpha ),\alpha )$ analytically depends on $\lambda$ and on $\alpha $, and $\lambda_n(\alpha )$ on $\alpha $, then
\[
\left(\varphi (\cdot, \lambda_n, \alpha ), \varphi(\cdot, \lambda_n(\beta), \beta)\right) \longrightarrow \left| \varphi(\cdot, \lambda_n(\alpha ), \alpha )\right|^2\, ,
\]
when $\beta \to \alpha $. 
Given that $\lambda_n(\alpha ) - \lambda_n(\beta) = \lambda(\alpha - \pi n) - \lambda(\beta - \pi n)$, dividing both sides of \eqref{3-4-14} by $\alpha - \beta$, and passing $\beta \to \alpha $, we get
\[
\left. \frac{\partial \lambda(\gamma)}{\partial \gamma}\right\vert_{\gamma = \alpha - \pi n} = - \frac{1}{\left|\varphi(\cdot,\lambda_n(\alpha ), \alpha )\right|^2} = -\frac{1}{a_n(\alpha )}\; .
\]
From this equality, in particular, it follows that the EVF is a strictly decreasing function on the whole real line. 
Theorem \ref{thm3-4-1} is proved.
\end{proof}

\section{Dirac operators with linear potential and its perturbations}\label{c10:sec_5}

In this section, we consider the Dirac operator with linear potential function on the whole and half axes, and find the eigenvalues and eigenfunctions in explicit form. After we perturb spectral function and construct operators generated by that spectral function.

\subsection{Operator on whole axis}

Let $p$ and $q$ are real-valued, local integrable on $(- \infty, \infty)$ functions ($p, q \in L^1_{\mathbb{R}, loc}(- \infty, \infty)$).
By $L(p, q)$ we denote a self-adjoint operator (see \cite{Naimark:1969}), generated by differential expression $\ell$ in Hilbert space of two-component vector-functions $L^2( (- \infty, \infty); {\mathbb{C}}^2)$ on the domain
\[
D =  \Big\{ y=\left(         \begin{array}{c}
                                                                     	y_1 \\
                                                                     	y_2
                                                                     \end{array}
                                                                   \right);
y_k \in L^2 (- \infty, \infty) \cap AC (- \infty, \infty);
\]
\begin{equation}\label{eq2.2.3}
(\ell y)_k \in L^2 (- \infty, \infty), k = 1, 2 \Big\},
\end{equation}
where $AC (- \infty, \infty) = AC (\mathbb{R})$ is the set of functions, which are absolutely continuous on each finite segment
$[a, b] \subset (-\infty, \infty), -\infty < a < b < \infty$.
We assume that the spectrum of this operator is purely discrete (see, e.g., \cite{Martinov:1968, Ashrafyan-Harutyunyan:2016}), and consists of simple eigenvalues, which we denote by $\lambda_n (p, q)$, $n \in \mathbb{Z}$.

At first we consider an operator $L(0, x)$ (with $p(x) \equiv 0 $ and $q(x) \equiv x $), which corresponds to the system
\begin{equation}\label{eq2.2.4}
\quad  \ell y \equiv \Big\{  B \dfrac{d}{dx} + \Omega_0(x) \Big\} y = \lambda y,
\end{equation}
where $\Omega_0(x) = \left(
                 \begin{array}{cc}
                   0 & x \\
                   x & 0 \\
                 \end{array}
               \right),
$
on the domain \eqref{eq2.2.3}.
This operator we call Dirac operator with linear potential.

As it follows from the results of \cite{Martinov:1968} and \cite{Levitan-Sargsyan:1970}, this operator's spectrum is purely discrete and consists of simple eigenvalues.
The eigenvalues of the operator $L(p, q)$, we will denote $\lambda_n (p, q)$ with the corresponding enumeration.
One of the sufficient conditions for discreteness of the spectra is (see \cite{Martinov:1968})
\begin{equation}\label{eq2.2.5}
  \underset{|x| \rightarrow \infty} \lim \left[ p^2(x) + q^2(x) - \sqrt{(p'(x))^2 + (q'(x))^2} \right] = \infty .
\end{equation}
It is easy to see that in our case $(p(x) \equiv 0, q(x) \equiv x)$ the condition \eqref{eq2.2.5} holds.

Writing the system \eqref{eq2.2.4} componentwise, we get
\begin{gather}
  -y_1' + x y_1 = \lambda y_2, \label{eq2.2.6}\\
  y_2' + x y_2 = \lambda y_1, \label{eq2.2.7}
\end{gather}
we can obtain two second-order differential equations for both $y_1$ and $y_2$ separately:
\begin{gather}
  - y_1'' + x^2 y_1 = (\lambda^2 - 1) y_1, \label{eq2.2.8}\\
  - y_2'' + x^2 y_2 = (\lambda^2 + 1) y_2. \label{eq2.2.9}
\end{gather}
It is well known (see, e.g., \cite{Levitan-Sargsyan:1970}) that the equation
\[
  - y '' + x^2 y = \mu y
\]
has solution from $L^2(-\infty, \infty)$ only for $\mu = 2 n + 1, \ n = 0, 1, 2, \ldots,$  and corresponding solutions are
Chebyshev-Hermite polynomials

\begin{equation}\label{eq2.2.10}
  H_n(x) = (-1)^n e^{x^2} \dfrac{d^n e^{-x^2}}{d x^n}.
\end{equation}
Therefore, $\lambda$ can be an eigenvalue of $L(0, x)$ only if $\lambda^2 - 1 = 2 n + 1$, i.e. $\lambda^2 = 2 (n + 1)$.
Thus, if $\lambda = \lambda_{\pm n} = \pm \sqrt{2 (n + 1)}$, then the solutions of the equation \eqref{eq2.2.8} are
\[
y_1(x) = H_n(x), \qquad n = 0, 1, 2\ldots.
\]
At the same time $\lambda^2 + 1 = 2 n + 3 = 2 (n + 1) +1$ and consequently the solutions of the equation \eqref{eq2.2.9} are
\[
y_2(x) = H_{n+1}(x), \qquad n = 0, 1, 2\ldots.
\]

The Chebyshev-Hermite polynomials have the properties (see \cite{Levitan-Sargsyan:1970})
\[
{H'}_n(x) = 2 n H_{n-1}(x), \quad H_{n+1}(x)-2x H_n(x) + 2n H_{n-1}(x) = 0 \qquad n = 1, 2, \ldots.
\]
The general formulae for $H_n(x)$ are
\[
H_n(x) = (2x)^n - \dfrac{n(n-1)}{1!} (2x)^{n-2} + \dfrac{n(n-1)(n-2)(n-3)}{2!} (2x)^{n-4} + \cdots ,
\]
in which the last member is $(-1)^{\frac{n}{2}}\dfrac{n!}{(n/2)!}$, for even $n$ and
$(-1)^{\frac{(n-1)}{2}}\dfrac{n!}{((n-1)/2)!} 2x$, for odd $n$.
Thus, we note that $H_{2k+1}(0) = 0$, for $k = 0, 1, 2, \ldots$.

It is well known (see, e.g. \cite{Levitan-Sargsyan:1970}) that the squares of the $L^2$-norm of $H_n(x)$ with the weight $e^{-x^2}$ is equal
\[
\int_{-\infty}^{\infty} H_n^2(x) e^{-x^2} dx = 2^n n! \sqrt{\pi}
\]
and
\[
\int_{-\infty}^{\infty} H_n(x) H_m(x) e^{-x^2} dx = 0, \qquad n, m = 0, 1, 2, \ldots, \quad n \neq m.
\]
Therefore, if we take
\begin{equation}\label{eq2.2.11}
  \varphi_n (x) = C_n e^{- \frac{x^2}{2}} H_n(x), \qquad n = 0, 1, 2, \ldots,
\end{equation}
where
\begin{equation}\label{eq2.2.12}
  C_n = \dfrac{1}{\sqrt{2^n n! \sqrt{\pi}}},
\end{equation}
then the system $\{ \varphi_n(x) \}_{n=0}^{\infty}$ will became orthonormal system on whole real axis.
It is called the system of Chebyshev-Hermite orthonormal functions.
Now let us show that vector-functions
\[
  U_{-n}(x) = \left(
         \begin{array}{c}
           - \varphi_{n-1}(x) \\
           \varphi_n(x) \\
         \end{array}
       \right),
\]
\begin{equation}\label{eq2.2.13}
  U_0(x) = \left(
         \begin{array}{c}
           0 \\
           \varphi_0(x) \\
         \end{array}
       \right),
\end{equation}
\[
  U_{n}(x)  = \left(
         \begin{array}{c}
           \varphi_{n-1}(x) \\
           \varphi_n(x) \\
         \end{array}
       \right),
\]
for $n = 1, 2, \ldots$, corresponding to eigenvalues $\lambda_{-n} = - \sqrt{2n}, \lambda_0 = 0, \lambda_n = \sqrt{2n}$, are eigenfunctions of the operator $L(0, x)$.
At first we will show that for $n = 1, 2, \ldots :$
\begin{equation}\label{eq2.2.14}
  \left\{ \begin{array}{c}
    \varphi^{\prime}_n(x) + x \varphi_n (x) = \sqrt{2n} \varphi_{n-1} (x), \\
    - \varphi^{\prime}_{n-1} (x) + x \varphi_{n-1} (x) = \sqrt{2n} \varphi_n (x).
  \end{array} \right.
\end{equation}

Indeed, for $\varphi^{\prime}_n (x), n = 1, 2, \ldots$, we have
\[
\varphi^{\prime}_n (x) = - C_n x e^{- \frac{x^2}{2}} H_n(x) + C_n e^{- \frac{x^2}{2}} H^{\prime}_n(x) =
C_n e^{- \frac{x^2}{2}} \left( H^{\prime}_n (x) - x H_n(x) \right),
\]
Putting these into the left side of the equation \eqref{eq2.2.7} and using the property ${H'}_n(x) = 2 n H_{n-1}(x)$ we will get equalities
\begin{gather*}
\varphi^{\prime}_n (x) + x \varphi_n (x) = \\
C_n e^{- \frac{x^2}{2}} \left( H^{\prime}_n (x) - x H_n (x) \right) + x C_n e^{- \frac{x^2}{2}} H_n (x) = \\
C_n e^{- \frac{x^2}{2}} H^{\prime}_n (x) =
C_n e^{- \frac{x^2}{2}} 2 n H_{n-1} (x) = \\
\dfrac{C_n}{C_{n-1}} 2 n C_{n-1} e^{- \frac{x^2}{2}} H_{n-1} (x) =
\dfrac{2 n C_n}{C_{n-1}} \varphi_{n-1} (x).
\end{gather*}
Taking into account \eqref{eq2.2.12}, we see that the fraction $\dfrac{2 n C_n}{C_{n-1}} = \sqrt{2 n}$.
Thus, we have
\[
  \varphi^{\prime}_n (x) + x \varphi_n (x) = \sqrt{2 n} \varphi_{n-1} (x), \qquad n = 1, 2, \ldots .
\]
In the similar way we obtain the following equations (here we use the property $H_{n+1}(x)-2x H_n(x) + 2n H_{n-1}(x) = 0$ )
\[
  - \varphi^{\prime}_{n-1} (x) + x \varphi_{n-1} (x) = \sqrt{2 n} \varphi_n (x), \qquad n = 1, 2, \ldots.
\]
Thus, we have $\ell U_n (x) = \sqrt{2n} U_n (x), \ n=1, 2, \ldots$, i.e.
$U_n (x), \ n = 1, 2, \ldots ,$ are the eigenfunctions of the operator $L(0, x)$ with the eigenvalues $\lambda_n (0, x) = \sqrt{2 n}, \ n = 1, 2, \ldots$.

$U_{-n}(x)$ will satisfy the system
\begin{equation}\label{eq2.2.15}
  \left\{ \begin{array}{c}
    \varphi^{\prime}_n(x) + x \varphi_n (x) = (- \sqrt{2 n}) ( - \varphi_{n-1} (x)), \\
    - (- \varphi^{\prime}_{n-1} (x)) + x (- \varphi_{n-1} (x)) = (- \sqrt{2 n}) \varphi_n (x).
  \end{array}\right.
\end{equation}
In fact the systems \eqref{eq2.2.15} and \eqref{eq2.2.14} coincide, which means that for $n = 1, 2, \ldots$
$U_{-n} (x)$ are also the solutions (eigenfunctions) for the system \eqref{eq2.2.14} (\eqref{eq2.2.15})
with the eigenvalues $\lambda_{-n} (0, x) = - \sqrt{2 n}, \ n = 1, 2, \ldots$.

$U_0(x)$ satisfies to the system \eqref{eq2.2.4}, when $\lambda_0 (0, x) = 0$
(note that $\varphi_0(x) = \dfrac{1}{\pi^{\frac{1}{4}}} e^{- \frac{x^2}{2}}$ and
$\varphi^{\prime}_0(x) = \dfrac{1}{\pi^{\frac{1}{4}}} (-x) e^{- \frac{x^2}{2}}$):
\[
  \left\{ \begin{array}{c}
    \varphi^{\prime}_0(x) + x \varphi_0 (x) = \dfrac{1}{\pi^{\frac{1}{4}}} (-x) e^{- \frac{x^2}{2}} +
    x \dfrac{1}{\pi^{\frac{1}{4}}} e^{- \frac{x^2}{2}} = 0  , \\
    -0 + x \cdot 0 = 0 \cdot \dfrac{1}{\pi^{\frac{1}{4}}} e^{- \frac{x^2}{2}}.
  \end{array}\right.
\]

So, such defined vector-functions $U_n(x), \ n \in \mathbb{Z}$ are eigenfunctions of the operator
$L(0, x)$ with the eigenvalues $\lambda_{n} (0, x) = sign(n) \sqrt{2 |n|}, \ n  \in \mathbb{Z} $.

\vspace{10mm}

\subsection{Operators on half axis}

Let us also consider the canonical Dirac system on the half-axis.
Let $p$ and $q$ are real-valued, local summable on $(0,\infty)$ functions, i.e. $p, q \in L^1_{\mathbb{R},loc}(0, \infty)$.
For $\alpha \in \left( -\dfrac{\pi}{2}, \dfrac{\pi}{2} \right]$, by $L(p, q, \alpha)$ we denote the self-adjoint operator, generated by differential expression $\ell$ in Hilbert space
of two-component vector-functions $L^2(( 0, \infty); {\mathbb{C}}^2)$ on the domain
\begin{equation*}
\begin{aligned}
D_{\alpha} =  \Big\{ y=\left(
\begin{array}{c}
y_1 \\
y_2 \\
\end{array}
\right)
; y_k \in L^2 (0, \infty) \cap AC (0, \infty);\\
(\ell y)_k \in L^2 ( 0, \infty), k = 1, 2; \ y_1(0) \cos \alpha + y_2(0) \sin \alpha = 0 \Big\}
\end{aligned}
\end{equation*}
where $AC (0, \infty)$ is the set of functions, which are absolutely continuous on each finite segment
$[a, b] \subset (0, \infty), 0 < a < b < \infty$.
We assume that the spectrum of this operator is purely discrete (see, e.g., \cite{Martinov:1968, Ashrafyan-Harutyunyan:2016}), and consists of simple eigenvalues, which we denote by $\lambda_n (p, q, \alpha)$, $n \in \mathbb{Z}$.
It is easy to see that if in boundary condition $y_1(0) \cos \alpha + y_2(0) \sin \alpha = 0$ we take $\alpha = 0$, then we have condition
\begin{equation}\label{eq2.2.16}
  y_1 (0) = 0,
\end{equation}
and if we take $\alpha = \dfrac{\pi}{2}$, we obtain boundary condition
\begin{equation}\label{eq2.2.17}
  y_2 (0) = 0.
\end{equation}
Let $y = \varphi(x, \lambda, \alpha, \Omega)$ is the same as in the case of finite interval, i.e. $\varphi(x, \lambda)$ is the solution of Cauchy problem \eqref{c1:Cauchy_problem}.
Then $\varphi_n(x)=\varphi(x, \lambda_n)$ are the eigenfunctions, $a_n = \int_{0}^{\infty} |\varphi_n(x,\Omega)|^2 dx$, $n \in \mathbb{Z}$, are the norming constants, and $h_n(x) = h_n(x, \Omega, \lambda_n) = \dfrac{\varphi_n(x)}{\sqrt{a_n}}$ are the normalized eigenfunctions.

It is easy to see from \eqref{eq2.2.10}-\eqref{eq2.2.13} that the eigenfunctions of the operator $L(0, x, 0)$ are
vector-functions $U_{2k}(x)$, which correspond to the eigenvalues $\lambda_{k} (0, x, 0) = \lambda_{2 k} (0, x) = 2 sign(k) \sqrt{|k|}, \ k \in \mathbb{Z}$.
And the eigenfunctions of the operator $L(0, x, \pi/2)$ are vector-functions $U_{2k+1}(x)$ corresponding to the eigenvalue $\lambda_{k} (0, x, \pi / 2) = \lambda_{2 k + 1}(0, x) = sign(2 k + 1) \sqrt{2 |2 k + 1|}$, $\; k \in \mathbb{Z}$.

By $\psi = \psi(x, \lambda, \alpha, \Omega)$ we denote the solution of the Cauchy problem
($\alpha \in \mathbb{C}$)

\begin{equation*}
  \ell y  =  \lambda y, \qquad
    y(0) = \left(
           \begin{array}{c}
             \sin \alpha \\
             - \cos \alpha \\
           \end{array}
         \right),
\end{equation*}
on $(0, \infty)$, and we denote this problem by $S(p, q, \lambda, \alpha)$.
Such solution exists and unique and its components $\psi_1$ and $\psi_2$ are entire functions in parameters $\lambda$ and $\alpha$ (see, e.g. \cite{Harutyunyan:2004}).

If $\alpha = 0$ and $\Omega = \Omega_0$, then $\psi (x, \lambda, 0, \Omega_0)$ satisfies to the boundary condition \eqref{eq2.2.16} and in order to be an eigenfunction of the operator $L(0, x, 0)$ it must be from $L^2 (0, \infty; \mathbb{C}^2)$.
As we have seen recently, it is possible only when $\lambda = \lambda_k (0, x, 0) = \lambda_{2k}(0, x) = 2 sign(k) \sqrt{|k|}, \ k \in \mathbb{Z}$.
Thus the eigenvalues and eigenfunctions of the operator $L(0, x, 0)$ are $\lambda_k (0, x, 0)$ and
$\psi(x, \lambda_k(0, x, 0), 0, \Omega_0) = \psi (x, \lambda_{2k}(0, x), 0, \Omega_0)$, for $ k \in \mathbb{Z}$.

Let us now consider Cauchy problems $S(0, x, \lambda_{n}(0, x, 0), 0)$, for $n \in \mathbb{Z}$.
It is easy to see that the functions
\begin{equation}\label{eq2.2.18}
  V_n(x) = - \dfrac{U_{2n}(x)}{\varphi_{2n}(0)}, \qquad n \in \mathbb{Z},
\end{equation}
are the solutions to these Cauchy problems.
At the same time $V_n(x)$ are eigenfunctions of the operator $L(0, x, 0)$ which correspond to the eigenvalues $\lambda_n(0, x, 0)$, for $n \in \mathbb{Z}$.
Since the solution to the Cauchy problem is unique, it follows that
\begin{equation}\label{eq2.2.19}
  V_n(x) \equiv \psi(x, \lambda_n(0, x, 0), 0, \Omega_0), \qquad n \in \mathbb{Z}.
\end{equation}

The squares of the $L^2$-norms of these functions
\[
a_n = a_n (0, x) = \| V_n (\cdot)\|^2 =
\displaystyle \int_{0}^{\infty} | V_{n,1}(x) |^2 + | V_{n,2}(x) |^2 dx
\]
are called norming constants.
Using \eqref{eq2.2.11}-\eqref{eq2.2.13} and \eqref{eq2.2.18} we can easily calculate the values of the norming constants:
\[
  a_0 = \dfrac{\pi ^ {1/2}}{2}, \quad
  a_{-n} = a_n = \dfrac{1}{|\varphi_{2n}(0)|^2} = \dfrac{4^n (n!)^2 \pi ^ {1/2}}{(2n)!}, \qquad n = 1, 2, \ldots.
\]
The norming constants and eigenvalues are called spectral data of the operator $L(0, x, 0)$.

Thus, we have two "model" operators on half axis with pure discrete spectra, for which we know eigenvalues, eigenfunctions and norming constants.
Now we want to construct new operators (with in advance given spectra) on the half axis, starting from these "model" operators.

\vspace{10mm}

\subsection{On changing spectral function}

The spectral function of an operator $L(0, x, 0)$ is defined as \cite{Gasymov-Levitan:1966, Levitan-Sargsyan:1970}
\begin{equation*}
   \rho(\lambda) = \left\{ \begin{array}{c}
                                   \sum_{0 < \lambda_n \leq \lambda} \ a_n^{-1}, \qquad \lambda > 0,   \\
                                   - \sum_{\lambda < \lambda_n \leq 0} \ a_n^{-1}, \qquad \lambda < 0,
                                   \end{array} \right.
\end{equation*}
and $\rho(0) = 0$, i.e. $\rho(\lambda)$ is left-continuous, step function with jumps in points $\lambda = \lambda_n$ equals $a_n^{-1}$.

In what follows $\delta(x)$ is Dirac $\delta$-function (see, e.g. \cite{Schwartz:1961}), $\delta_{ij}$ is Kronecker symbol and $v_{ij}(x) = \int_0^{x} V_{i}^{*} (s) V_{j}  (s) ds$, where by the sign $^{*}$ we denote a transponation of vector functions, i.e. $\psi^{*} (x, \lambda) = (\psi_{1}(x, \lambda) \ \psi_{2}(x, \lambda))$, (note that $v_{ij}(x)$ is a scalar function).

In this paragraph, we will answer the question, what will happen with the potential $\Omega_0(x)$ if we change spectral data, i.e., if we add or subtract eigenvalues and change the values of norming constants.
It was proved (see \cite{Harutyunyan:1986}), that if $\rho(\lambda)$ is a spectral function of some self-adjoint operator $L(p, q, \alpha)$, then a function $\tilde \rho(\lambda)$, which differs from $\rho(\lambda)$ by only for finite number of points and is still remaining left-continuous, increasing, step function, is also spectral.
It means that there exists a self-adjoint canonical Dirac operator $\tilde L = L(\tilde p, \tilde q, \alpha)$, for which $\tilde \rho(\lambda)$ is spectral function.

\vspace{10mm}

\subsection{Adding and subtracting eigenvalues}

At first, we want to construct a new operator $\tilde L = L(\tilde p, \tilde q, 0)$, which has the same spectra as $L(0, x, 0)$ except one eigenvalue.
For instance, if we extract eigenvalue $\lambda_0(0, x, 0) = 0$ we will get the following

\begin{theorem}\label{thm2.2.1}
Let $\rho (\lambda)$ is a spectral function of the operator $L(0, x, 0)$. Then the function $\tilde{\rho} (\lambda)$, defined by relation
\begin{equation*}
   \tilde \rho (\lambda) = \left\{ \begin{array}{c}
                                  \rho(\lambda), \qquad \lambda \leq \lambda_0,   \\
                                  \rho(\lambda) - a^{-1}_0, \qquad \lambda > \lambda_0,
                                   \end{array} \right.
\end{equation*}
where $a_0 = \sqrt{\pi}/2$, i.e.
\begin{equation}\label{eq2.2.20}
  d \tilde{\rho} (\lambda) = d \rho (\lambda) -\dfrac{1}{a_0} \delta(\lambda - \lambda_0) d \lambda
\end{equation}
is also spectral. Moreover, there exists unique self-adjoint canonical Dirac operator $\tilde{L}$ generated by the differential
expression $\tilde{l} = B \dfrac{d}{dx} + \tilde{\Omega}(x)$ and the boundary condition \eqref{eq2.2.16}, for which $\tilde{\rho} (\lambda)$
is spectral function.
Wherein the potential function $\tilde{\Omega}(x)$ is represented by the following formula

\begin{equation}\label{eq2.2.21}
\tilde{\Omega}(x) =    \left(
                        \begin{array}{cc}
                          0 & x - \displaystyle \dfrac{e^{-x^2}}{a_0 - \int_{0}^{x} e^{-s^2} ds} \\
                          x - \displaystyle \dfrac{e^{-x^2}}{a_0 -  \int_{0}^{x} e^{-s^2} ds} & 0 \\
                        \end{array}
                      \right)
\end{equation}
and for the eigenfunctions, the following formulae hold

\begin{equation}\label{eq2.2.22}
  \tilde V_n(x) = \left(
                  \begin{array}{c}
                    V_{n,1}(x) \\
                    \\
                    V_{n,2}(x) + \displaystyle \dfrac{e^{-\frac{x^2}{2}} \int_{0}^{x} e^{-\frac{s^2}{2}} V_{n,2}(s) ds}{a_0 - \int_{0}^{x} e^{-s^2} ds} \\
                  \end{array}
                \right),
                \qquad n \in \mathbb{Z} \backslash \{0\}.
\end{equation}
\end{theorem}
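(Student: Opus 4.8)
The plan is to obtain $\tilde{L}$ as the operator produced from the model operator $L(0,x,0)$ by deleting its single eigenvalue $\lambda_0(0,x,0)=0$, i.e.\ to apply the ``subtracting one eigenvalue'' part of Theorem~\ref{thm3-3-1} (equations \eqref{3-3-29}--\eqref{3-3-30}) in the special situation where the eigenfunction at $\lambda_0=0$ is known in closed form. First I would record the relevant data of $L(0,x,0)$. By \eqref{eq2.2.18}, \eqref{eq2.2.13}, \eqref{eq2.2.11}--\eqref{eq2.2.12} the solution of the Cauchy problem defining $\varphi$ at $\alpha=0$, $\lambda=0$ is $\varphi(\cdot,0,0,\Omega_0)=V_0(\cdot)=\left(0,\,-e^{-x^2/2}\right)^T$, so $\left|V_0(x)\right|^2=e^{-x^2}$, $g_0(x):=\int_0^x\left|V_0(t)\right|^2\,dt=\int_0^x e^{-s^2}\,ds$, and $a_0=\left\|V_0\right\|^2=\int_0^\infty e^{-s^2}\,ds=\sqrt{\pi}/2$, as asserted in the statement. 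A bookkeeping remark is needed here: the quantity denoted ``$a_0$'' in Section~\ref{c10:sec_3} is the jump of the spectral function, i.e.\ $1/\left\|V_0\right\|^2=2/\sqrt{\pi}$, whereas in Section~\ref{c10:sec_2} and in the present theorem $a_0=\left\|V_0\right\|^2=\sqrt{\pi}/2$; carrying this reciprocal correctly through the formulas of Theorem~\ref{thm3-3-1} is exactly what converts them into \eqref{eq2.2.21}--\eqref{eq2.2.22}, consistently with \eqref{eq2.2.20}.

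With $d\tilde{\rho}(\lambda)=d\rho(\lambda)-a_0^{-1}\delta(\lambda)\,d\lambda$ as in \eqref{eq2.2.20}, the Gelfand--Levitan kernel \eqref{3-3-11} is the rank-one matrix $F(x,s)=-a_0^{-1}V_0(x)V_0^T(s)$, so the integral equation \eqref{3-3-10} is degenerate and is solved explicitly exactly as in the derivation of \eqref{3-3-13}, giving
\[
K(x,s)=\frac{1}{a_0-g_0(x)}\,V_0(x)V_0^T(s)
=\frac{1}{a_0-\int_0^x e^{-t^2}\,dt}
\left(\begin{array}{cc}0 & 0\\[2pt] 0 & e^{-(x^2+s^2)/2}\end{array}\right).
\]
Since $V_0(x)V_0^T(x)B-BV_0(x)V_0^T(x)=\left(\begin{array}{cc}0 & -e^{-x^2}\\ -e^{-x^2} & 0\end{array}\right)$, formula \eqref{3-3-9} with $\Omega=\Omega_0$ yields at once \eqref{eq2.2.21}. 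Substituting $K$ into the transformation operator $\psi=(I+K)\varphi$ of \eqref{3-3-6}, taking $\varphi(\cdot,\lambda_n(0,x,0),0,\Omega_0)=V_n(\cdot)$ and using $V_0^T(s)V_n(s)=-e^{-s^2/2}V_{n,2}(s)$ together with $V_0(x)=\left(0,-e^{-x^2/2}\right)^T$, gives precisely \eqref{eq2.2.22}.

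It then remains to confirm that $\tilde{\rho}$ really is the spectral function of $\tilde{L}=L(\tilde{\Omega},0)$, and here I would reproduce, in the present singular setting, the verification carried out in the proof of Theorem~\ref{thm3-3-1}: the functions $\tilde{V}_n=(I+K)V_n$ solve $\tilde{\ell}\,y=\lambda_n y$ (this follows from the two identities analogous to \eqref{3-3-17}--\eqref{3-3-18}), they satisfy the boundary condition $y_1(0)=0$ because $K(x,s)$ annihilates first components, and they generate Parseval's equality with measure $d\tilde{\rho}$ by the same $\left|f\right|^2=\left|h\right|^2+\dots$ computation. Uniqueness of $\tilde{L}$ then follows from the fact that a canonical Dirac operator on the half-axis is uniquely and constructively recovered from its spectral function (\cite{Gasymov-Levitan:1966}). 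The main obstacle I expect is the $L^2$-verification for $\tilde{V}_n$: the denominator $a_0-\int_0^x e^{-t^2}\,dt$ is positive for every finite $x$ but tends to $0$ as $x\to\infty$, so one must check that the correction term in \eqref{eq2.2.22} still decays. This works because orthogonality $V_n\perp V_0$ (together with $V_{0,1}\equiv0$) forces $\int_0^\infty e^{-s^2/2}V_{n,2}(s)\,ds=0$, so the numerator $e^{-x^2/2}\int_0^x e^{-s^2/2}V_{n,2}(s)\,ds$ vanishes faster than the denominator; a short estimate (or l'H\^opital's rule on the ratio) shows the term stays bounded and square-integrable, so $\tilde{V}_n\in L^2(0,\infty;\mathbb{C}^2)$.
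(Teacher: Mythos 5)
Your proposal is correct and follows essentially the same route as the paper: the paper's proof likewise writes the rank-one Gelfand--Levitan kernel $F_0(x,y)=-a_0^{-1}V_0(x)V_0^{*}(y)$, solves the degenerate equation explicitly to get $G_0(x,y)=\frac{1}{a_0-\int_0^x e^{-s^2}ds}\,V_0(x)V_0^{*}(y)$, and then obtains \eqref{eq2.2.21} and \eqref{eq2.2.22} from $\tilde{\Omega}=\Omega_0+G_0(x,x)B-BG_0(x,x)$ and the transformation operator, exactly as you do via the subtraction case of Theorem \ref{thm3-3-1}. Your added checks (the $a_0$-versus-$a_0^{-1}$ bookkeeping between Sections \ref{c10:sec_2} and \ref{c10:sec_3}, and the $L^2$-decay of $\tilde V_n$ via orthogonality to $V_0$) are sound refinements of details the paper delegates to the general theory.
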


\begin{proof}
At first we denote $\tilde{\psi}(x, \lambda) = \psi(x, \lambda, 0, \tilde{\Omega})$ and $\psi(x, \lambda) = \psi(x, \lambda, 0, \Omega_0)$.
It is known (see \cite{Gasymov-Levitan:1966, Marchenko:1977, Levitan-Sargsyan:1970, Harutyunyan:2008-2, Albeverio-Hryniv-Mykytyuk:2005}), that there exists transformation operator $\mathbb{I}+\mathbb{G}$:

\begin{equation}\label{eq2.2.23}
\tilde{\psi}(x,\lambda) = (\mathbb{I}+\mathbb{G}) \psi (x,\lambda) = \psi (x,\lambda) + \int_0^x G(x, s) \psi(s, \lambda) ds,
\end{equation}
which transforms the solution $\psi(x, \lambda)$ of the Cauchy problem $S(0, x, \lambda, 0)$ to the solutions $\tilde{\psi}(x,\lambda)$ of the Cauchy problem
$S(\tilde{p}, \tilde{q}, \lambda, 0)$.
It is also known (see, e.g., \cite{Gasymov-Levitan:1966, Levitan-Sargsyan:1970}) that the kernel $G(x, y)$ satisfies the Gel'fand-Levitan integral equation:

\begin{equation}\label{eq2.2.24}
G(x,y)+F(x,y)+ \int_0^x G(x,s)F(s,y)ds = 0,\quad 0 \leq y \leq x < \infty,
\end{equation}
where matrix function $F(x, y)$ is defined by the formula

\begin{equation}\label{eq2.2.25}
F(x,y)= \int_{-\infty}^{\infty} \psi(x,\lambda) \psi^{*} (y,\lambda) d[\tilde{\rho}(\lambda) -\rho(\lambda)].
\end{equation}
It is also known that the potentials $\tilde{\Omega}(x)$ and $\Omega_0(x)$ are connected by the relation

\begin{equation}\label{eq2.2.26}
\tilde{\Omega}(x) = \Omega_0(x) + G(x, x) B - B G(x, x).
\end{equation}

From the \eqref{eq2.2.10}-\eqref{eq2.2.13} and definition \eqref{eq2.2.18} it follows, that $V_0^{*}(x) = (0 \ \ e^{-\frac{x^2}{2}})$.
Putting the relation \eqref{eq2.2.20} into \eqref{eq2.2.25}, and using \eqref{eq2.2.19}, for the kernel $F(x, y) = F_0(x, y)$ we obtain:
\[
  F_0(x,y) = -a_0^{-1} \psi(x,\lambda_0) \psi^{*} (y,\lambda_0) = -a_0^{-1} V_0(x) V_0^{*} (y) =
\]
\begin{equation}\label{eq2.2.27}
                                                               = \left(
                                                                \begin{array}{cc}
                                                                  0 & 0 \\
                                                                  0 & -a_0^{-1} e^{-\frac{(x^2+y^2)}{2}} \\
                                                                \end{array}
                                                              \right).
\end{equation}
After some calculations from the equation \eqref{eq2.2.24} and formula \eqref{eq2.2.27} for $G_0(x,y)$ we obtain
\[
  G_0(x,y) = \dfrac{1}{a_0 - \int_{0}^{x} e^{-s^2} ds} V_0(x) V_0^{*} (y) =  \left(
                                                                \begin{array}{cc}
                                                                  0 & 0 \\
                                                                  0 & \dfrac{e^{-\frac{(x^2+y^2)}{2}}}{a_0 - \int_{0}^{x} e^{-s^2} ds} \\
                                                                \end{array}
                                                              \right).
\]
Now taking into account \eqref{eq2.2.19}, putting $G_0(x,y)$ into the equations \eqref{eq2.2.23} and \eqref{eq2.2.26} we can easily obtain \eqref{eq2.2.21} and \eqref{eq2.2.22}.
Theorem \ref{thm2.2.1} is proved.
\end{proof}

Now we want to subtract any finite number of eigenvalues.
For this reason we denote by $Z_n$ the arbitrary set of finite $n$ number of integers, in increasing order, $Z_n = \{z_1, z_2, \ldots, z_n\} \subset \mathbb{Z}$
(e.g., if $Z_4 = \{z_1, z_2, z_3, z_4\} = \{ -127, 0 , 32, 1259\}$, for
$\displaystyle \sum_{i=1}^4 s_{z_i} = s_{-127} + s_0 + s_{32} + s_{1259}$).

\begin{theorem}\label{thm2.2.2}
Let $\rho (\lambda)$ is the spectral function of the operator $L$.
Then the function $\tilde{\rho} (\lambda)$, defined by relation
\begin{equation*}
  d \tilde{\rho} (\lambda) = d \rho (\lambda)  - \sum_{k=1}^n a_{z_k}^{-1} \delta(\lambda - \lambda_{z_k}) d \lambda
  \end{equation*}
is also spectral. Moreover, there exists a unique self-adjoint canonical Dirac operator $\tilde{L}$ generated on the half axis by the differential
expression $\tilde{l} = B \dfrac{d}{dx} + \tilde{\Omega}(x)$ and the boundary condition \eqref{eq2.2.16}, for which $\tilde{\rho} (\lambda)$
is spectral function.
Wherein, the potential function $\tilde{\Omega}(x)$ is

\begin{equation*}
\tilde{\Omega}(x) =   \left(
                        \begin{array}{cc}
                          p(x, n) & q(x, n) \\
                          q(x, n) & -p(x, n) \\
                        \end{array}
                      \right),
\end{equation*}
where $ p(x, n)$ and  $q(x, n)$ are defined by the following formulae:

\[
\begin{array}{c}
  p(x, n)  =  - \dfrac{1}{\det S(x, n)} \displaystyle \sum_{k=1}^n \displaystyle \sum_{p=1}^2 V_{z_k,(3-p)}(x) \det S_{p}^{(k)}(x, n) ,\\
  \\
  q(x, n)  = x + \dfrac{1}{\det S(x, n)} \displaystyle \sum_{k=1}^n \displaystyle \sum_{p=1}^2 (-1)^{p-1} V_{z_k,p}(x) \det S_{p}^{(k)}(x, n) ,
\end{array}
\]
where $S(x, n)$ is $n \times n$ square matrix $ S(x, n) = \{ \delta_{z_i z_j} - a_{z_j}^{-1} v_{z_i z_j} (x) \}_{i,j=1}^n $
and $S_{p}^{(k)}(x, n)$ are matrices, which are obtained from the matrix $S(x, n)$, when we replace $k$-th column of $S(x, n)$
by $H_p(x, n) = \{ a_{z_i}^{-1} V_{z_i,p}(x) \}_{i = 1}^n$ column, $p = 1, 2$.
And for the eigenfunctions $\tilde V_m(x)$ ($ \ m \in \mathbb{Z} \backslash Z_n$) we obtain the representations

\[
  \tilde V_m(x) = \left(
                  \begin{array}{c}
                    V_{m,1}(x) + \dfrac{1}{\det S(x,n)} \displaystyle \sum_{k=1}^n v_{z_k m}(x) \det S_{1}^{(k)}(x, n) \\
                    \\
                    V_{m,2}(x) + \dfrac{1}{\det S(x,n)} \displaystyle \sum_{k=1}^n v_{z_k m}(x) \det S_{2}^{(k)}(x, n) \\
                  \end{array}
                \right).
\]
\end{theorem}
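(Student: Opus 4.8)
The plan is to run the Gelfand--Levitan scheme exactly as in the proof of Theorem~\ref{thm2.2.1}, the only new feature being that the correcting measure is now a sum of $n$ point masses instead of one, so the Gelfand--Levitan kernel has rank $n$ and the integral equation collapses to an $n\times n$ linear algebraic system, which I solve by Cramer's rule. First I would record that $\tilde\rho$ is a left-continuous, non-decreasing step function differing from the spectral function $\rho$ of $L=L(0,x,0)$ only at the finitely many points $\lambda_{z_1},\dots,\lambda_{z_n}$, all remaining jumps staying positive; hence, by the result recalled in the subsection ``On changing spectral function'' (see \cite{Harutyunyan:1986} and \cite{Gasymov-Levitan:1966}), there is a unique self-adjoint canonical Dirac operator $\tilde L=L(\tilde p,\tilde q,\tilde\alpha)$ with spectral function $\tilde\rho$, and $\tilde\alpha=0$ because the transformation operator preserves the value $\psi(0,\lambda)=(0,-1)^T$ and hence the boundary condition \eqref{eq2.2.16}. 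It then remains only to make $\tilde\Omega$ and the $\tilde V_m$ explicit.

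For this I would invoke the transformation operator $\mathbb I+\mathbb G$ of \eqref{eq2.2.23} between $\psi(x,\lambda)=\psi(x,\lambda,0,\Omega_0)$ and $\tilde\psi(x,\lambda)=\psi(x,\lambda,0,\tilde\Omega)$, whose kernel satisfies the Gelfand--Levitan equation \eqref{eq2.2.24} with $F$ given by \eqref{eq2.2.25}. Since $d\tilde\rho(\lambda)-d\rho(\lambda)=-\sum_{k=1}^n a_{z_k}^{-1}\delta(\lambda-\lambda_{z_k})\,d\lambda$ and $\psi(x,\lambda_{z_k},0,\Omega_0)=V_{z_k}(x)$ by \eqref{eq2.2.19}, the kernel degenerates to $F(x,y)=-\sum_{k=1}^n a_{z_k}^{-1}V_{z_k}(x)V_{z_k}^*(y)$. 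Looking for the solution of \eqref{eq2.2.24} in the form $G(x,y)=\sum_{k=1}^n g_k(x)V_{z_k}^*(y)$, substituting, and using $\int_0^x V_{z_i}^*(s)V_{z_j}(s)\,ds=v_{z_iz_j}(x)$ together with the linear independence of $\{V_{z_k}^*\}_{k=1}^n$ (they belong to distinct eigenvalues), I obtain, coordinate-wise for $p=1,2$, precisely the system $S(x,n)\,g_p(x,n)=H_p(x,n)$ with the matrices $S(x,n)$ and columns $H_p(x,n)$ of the statement, whence $g_{k_p}(x)=\det S_p^{(k)}(x,n)/\det S(x,n)$ by Cramer's rule.

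The one point that needs genuine care is that $\det S(x,n)\neq 0$ for every finite $x\geq 0$. Here I would observe that $\det S(x,n)=\det(A-P_x)/\det A$, where $A=\mathrm{diag}(a_{z_1},\dots,a_{z_n})$ and, using orthogonality of the eigenfunctions, $(A-P_x)_{ij}=a_{z_i}\delta_{ij}-v_{z_iz_j}(x)=\int_x^\infty V_{z_i}^*(s)V_{z_j}(s)\,ds$, i.e.\ $A-P_x$ is the Gram matrix of $V_{z_1}|_{(x,\infty)},\dots,V_{z_n}|_{(x,\infty)}$; these are linearly independent for every finite $x$ (a vanishing linear combination, after applying $\ell$ repeatedly and a Vandermonde argument on the distinct $\lambda_{z_k}$, forces each $c_kV_{z_k}\equiv 0$ on $(x,\infty)$, hence $c_k=0$ by uniqueness for the Cauchy problem), so $A-P_x$ is positive definite and $\det S(x,n)>0$ — the exact analogue of the positivity of $a_0-\int_0^x e^{-s^2}\,ds=\int_x^\infty e^{-s^2}\,ds$ in Theorem~\ref{thm2.2.1}. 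In particular $\tilde\Omega-\Omega_0$ is continuous on $[0,\infty)$, so $\tilde\Omega\in L^1_{\mathbb{R},loc}(0,\infty)$. Inserting $G(x,x)=\sum_k g_k(x)V_{z_k}^*(x)$ into \eqref{eq2.2.26}, $\tilde\Omega(x)=\Omega_0(x)+G(x,x)B-BG(x,x)$, and carrying out the $2\times2$ multiplication by $B$ then yields exactly the stated $p(x,n)$ and $q(x,n)$; and evaluating \eqref{eq2.2.23} at $\lambda=\lambda_m$, $m\notin Z_n$, gives $\tilde V_m(x)=V_m(x)+\int_0^x G(x,s)V_m(s)\,ds=V_m(x)+\sum_k g_k(x)v_{z_km}(x)$, which is the asserted formula once the Cramer expressions are substituted.

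Finally, to confirm that $\tilde\rho$ really is the spectral function of the operator with potential $\tilde\Omega$ — equivalently, that the $\lambda_m$, $m\notin Z_n$, stay eigenvalues with unchanged norming constants $a_m$ while the $\lambda_{z_k}$ are removed — I would reproduce the Parseval-equality computation of the proof of Theorem~\ref{thm3-3-1}: verify that $\tilde\psi$ solves the Cauchy problem $S(\tilde p,\tilde q,\lambda,0)$, that $\tilde\psi(\cdot,\lambda_m)\in L^2(0,\infty;\mathbb{C}^2)$ whereas the removed $\lambda_{z_k}$ carry no $L^2$ solution, and that $\tilde\psi$ generates Parseval's equality for the measure $d\tilde\rho$; the uniqueness of the operator attached to a given spectral function then closes the argument. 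I expect this last step to be the main obstacle: one must run the Parseval manipulation of Theorem~\ref{thm3-3-1} with a rank-$n$ rather than rank-one correction, juggling the identities that link $F$, $G$, and the inverse transformation kernel. By contrast, the algebraic reduction and the Cramer bookkeeping of the previous two paragraphs are routine once the positivity $\det S(x,n)>0$ is in hand; alternatively, one could reach the same result by removing the $\lambda_{z_k}$ one at a time via the one-eigenvalue case of Theorem~\ref{thm3-3-1} and then collecting the iterated transformations into the determinant formulas.
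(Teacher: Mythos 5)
Your proposal follows the paper's proof essentially verbatim: the same degenerate Gelfand--Levitan kernel $F(x,y)=-\sum_{k=1}^n a_{z_k}^{-1}V_{z_k}(x)V_{z_k}^*(y)$, the same separable ansatz reducing \eqref{eq2.2.24} to an $n\times n$ linear system solved by Cramer's rule, and the same substitution into \eqref{eq2.2.26} and \eqref{eq2.2.23} to obtain $\tilde\Omega$ and $\tilde V_m$, with the spectrality of $\tilde\rho$ delegated (as the paper does) to \cite{Harutyunyan:1986} and the Parseval scheme of Theorem \ref{thm3-3-1}. Your Gram-matrix argument showing $\det S(x,n)>0$ for all finite $x$ is a welcome addition that the paper's proof leaves unstated.
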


\begin{proof}

In this case, the kernel $F(x, y)$ can be written in the following form:

\begin{equation}\label{eq2.2.28}
F(x, y) = F_n(x, y)= \displaystyle \sum_{k=1}^n -a_{z_k}^{-1} V_{z_k}(x) V_{z_k}^{*} (y),
\end{equation}
and consequently, the integral equation \eqref{eq2.2.24} becomes an integral equation with a degenerate kernel, i.e., it becomes a system of linear equations, and we will look for the solution in the following form:

\begin{equation}\label{eq2.2.29}
G_n(x, y)= \displaystyle \sum_{k=1}^n g_{z_k}(x) V_{z_k}^{*} (y),
\end{equation}
where $g_{z_k}(x) = \left(
                  \begin{array}{c}
                    g_{z_k,1} (x) \\
                    g_{z_k,2} (x) \\
                  \end{array}
                \right)
$
is unknown vector-function.
Putting the expressions \eqref{eq2.2.28} and \eqref{eq2.2.29} into the integral equation \eqref{eq2.2.24} we will obtain
a system of algebraic equations for determining the vector-functions $g_{z_k}(x)$:

\begin{equation}\label{eq2.2.30}
g_{z_k}(x) - \displaystyle \sum_{i = 1}^n a_{z_k}^{-1} v_{z_i z_k}(x) g_{z_i}(x) = a_{z_k}^{-1} V_{z_k}(x), \quad k = 1, 2, \ldots, n.
\end{equation}
It would be better if we consider the equations \eqref{eq2.2.30} for the vectors $g_{z_k} (x)$
by coordinates $g_{z_k,1}(x)$ and $g_{z_k,2}(x)$ to be systems of scalar linear equations:

\begin{equation*}
g_{z_k,p}(x) - \displaystyle \sum_{i = 1}^n  a_{z_k}^{-1} v_{z_i z_k}(x) g_{z_i,p}(x) = a_{z_k}^{-1} V_{z_k,p}(x), \quad k=1, 2, \ldots, n,  \quad p= 1, 2.
\end{equation*}
The latter systems might be written in matrix form

\[
S(x, n) g_p(x, n) = H_p(x, n), \qquad p=1, 2,
\]
where the column vectors $g_p(x, n) = \{ g_{z_k,p} (x, n)\}_{k=1}^n,\ p = 1, 2$.
The solution to this system can be found in the form (Cramer's rule):

\[
g_{z_k,p}(x, d)
 = \dfrac{\det S_{p}^{(k)}(x, n)}{\det S(x, n)}, \quad k = 1, 2, \ldots, n, \quad p = 1, 2.
\]

Thus we have obtained for $g_{z_k}(x)$ the following representation:

\[
g_{z_k}(x, n) = \dfrac{1}{\det S(x, n)}
\left(
  \begin{array}{c}
    \det S_{1}^{(k)}(x, n) \\
    \det S_{2}^{(k)}(x, n) \\
  \end{array}
\right).
\]
Using these $g_{z_k}(x, n)$, from \eqref{eq2.2.29} we find the function $G_n(x, y)$.
Now taking into account \eqref{eq2.2.19}, putting $G_n(x,y)$ into the equations \eqref{eq2.2.26} and \eqref{eq2.2.23} we obtain the representations for $p(x, n)$, $q(x, n)$ and $\tilde{V}_m(x), \ m \in \mathbb{Z} \backslash \{ Z_n \}$.

Theorem \ref{thm2.2.2} is proved.
\end{proof}

Now we want to add any finite number of new real eigenvalues $\mu_k \neq \lambda_m, \ m \in \mathbb{Z}$, to the spectra, with positive norming constants $c_k, \ k = 1, 2, \ldots, n$.

\begin{theorem}\label{thm2.2.4}
Let $\rho (\lambda)$ is the spectral function of the operator $L$, then the function $\tilde{\rho} (\lambda)$, defined by relation

\[
  d \tilde{\rho} (\lambda) = d \rho (\lambda)  + \sum_{k=1}^n c_{k}^{-1} \delta(\lambda - \mu_{k}) d \lambda
\]
also is spectral. Moreover, there exists a unique self-adjoint canonical Dirac operator $\tilde{L}$ generated on the half axis by the differential
expression $\tilde{l} = B \dfrac{d}{dx} + \tilde{\Omega}(x)$ and the boundary condition \eqref{eq2.2.16}, for which $\tilde{\rho} (\lambda)$
is spectral function.
Wherein, the potential function $\tilde{\Omega}(x)$ is

\[
\tilde{\Omega}(x) =   \left(
                        \begin{array}{cc}
                          p(x, n) & q(x, n) \\
                          q(x, n) & -p(x, n) \\
                        \end{array}
                      \right),
\]
where $ p(x, n)$ and  $q(x, n)$ are defined by the following formulae:

\[
\begin{array}{c}
  p(x, n)  =  - \dfrac{1}{\det S(x, n)} \displaystyle \sum_{k=1}^n \displaystyle \sum_{p=1}^2 W_{k,(3-p)}(x) \det S_{p}^{(k)}(x, n) ,\\
  \\
  q(x, n)  = x + \dfrac{1}{\det S(x, n)} \displaystyle \sum_{k=1}^n \displaystyle \sum_{p=1}^2 (-1)^{p-1} W_{k,p}(x) \det S_{p}^{(k)}(x, n) ,
\end{array}
\]
and where $W_{k}(x) := \psi(x, \mu_{k}, 0, \Omega_0), \ k = 1, 2, \ldots, n, $ and
$S(x, n)$ is $n \times n$ square matrix $ S(x, n) = \{ \delta_{i j} + c_{j}^{-1} w_{i j}(x) \}_{i,j=1}^n $ ($w_{i j}(x) = \int_{0}^{x} W^{*}_i(s) W_j(s) ds $),
and $S_{p}^{(k)}(x, n)$ are matrices, which are obtained from the matrix $S(x, n)$, when we replace $k$-th column of $S(x, n)$
by $H_p(x, n) = \{ - c_{i}^{-1} W_{i,p}(x) \}_{i = 1}^n$ column, $p = 1, 2$.
For the eigenfunctions $\tilde V_m(x)$ (for $m \in \mathbb{Z}$) we obtain the representations

\[
  \tilde V_m(x) = \left(
                  \begin{array}{c}
                    V_{m,1}(x) + \dfrac{1}{\det S(x,n)} \displaystyle \sum_{k=1}^n \int_{0}^{x} W^{*}_k(s) V_m(s) ds \det S_{1}^{(k)}(x, n)  \\
                    \\
                    V_{m,2}(x) + \dfrac{1}{\det S(x,n)} \displaystyle \sum_{k=1}^n \int_{0}^{x} W^{*}_k(s) V_m(s) ds \det S_{2}^{(k)}(x, n)  \\
                  \end{array}
                \right),
\]
and for the eigenfunctions $\tilde W_k(x)$ (for $k = 1, 2, \ldots, n$) we obtain the representations

\[
  \tilde W_k(x) = \left(
                  \begin{array}{c}
                    W_{k,1}(x) + \dfrac{1}{\det S(x,n)} \displaystyle \sum_{l=1}^n w_{l k}(x) \det S_{1}^{(l)}(x, n)  \\
                    \\
                    W_{k,2}(x) + \dfrac{1}{\det S(x,n)} \displaystyle \sum_{l=1}^n w_{l k}(x) \det S_{2}^{(l)}(x, n)  \\
                  \end{array}
                \right).
\]

\end{theorem}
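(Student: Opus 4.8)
The plan is to follow the same degenerate-kernel scheme used in the proofs of Theorems \ref{thm2.2.1} and \ref{thm2.2.2}, now with the spectral measure being augmented rather than reduced. Starting from the model operator $L(0,x,0)$, with solution $\psi(x,\lambda)=\psi(x,\lambda,0,\Omega_0)$ of the Cauchy problem $S(0,x,\lambda,0)$, I would invoke the transformation operator $\mathbb{I}+\mathbb{G}$ of \eqref{eq2.2.23} together with the Gel'fand--Levitan equation \eqref{eq2.2.24}, whose kernel $F(x,y)$ is given by \eqref{eq2.2.25} with $d[\tilde\rho-\rho](\lambda)=\sum_{k=1}^{n}c_k^{-1}\delta(\lambda-\mu_k)\,d\lambda$. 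Since $\psi(x,\mu_k,0,\Omega_0)=W_k(x)$, this yields at once the finite-rank kernel $F(x,y)=\sum_{k=1}^{n}c_k^{-1}W_k(x)W_k^{*}(y)$, so that \eqref{eq2.2.24} collapses to a finite linear algebraic system for the vector coefficients $g_k(x)$ in the ansatz $G(x,y)=\sum_{k=1}^{n}g_k(x)W_k^{*}(y)$, exactly as in the passage \eqref{eq2.2.29}--\eqref{eq2.2.30}.

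Next I would solve that system componentwise by Cramer's rule. The coefficient matrix is $S(x,n)=\bigl(\delta_{ij}+c_j^{-1}w_{ij}(x)\bigr)_{i,j=1}^{n}$; writing $S(x,n)=I+D\,C(x)$ with $D=\mathrm{diag}(c_1^{-1},\dots,c_n^{-1})$ and $C(x)=(w_{ij}(x))$ the Gram matrix of $W_1,\dots,W_n$ on $[0,x]$, one sees that $S(x,n)$ is similar to $I+D^{1/2}C(x)D^{1/2}$, hence has all eigenvalues $\geq 1$ and $\det S(x,n)\geq 1>0$ for every $x$; this guarantees solvability and, via \eqref{eq2.2.26}, that $\tilde\Omega(x)=\Omega_0(x)+G(x,x)B-BG(x,x)$ is well defined. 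Substituting the Cramer solution $g_{k,p}(x,n)=\det S_p^{(k)}(x,n)/\det S(x,n)$ into \eqref{eq2.2.29} and then into \eqref{eq2.2.26} and \eqref{eq2.2.23} produces the claimed closed-form expressions for $p(x,n)$, $q(x,n)$, for the transformed old eigenfunctions $\tilde V_m(x)=(\mathbb{I}+\mathbb{G})V_m$, $m\in\mathbb Z$ (the integrals being $\int_0^x W_k^{*}V_m$), and for the new eigenfunctions $\tilde W_k(x)=(\mathbb{I}+\mathbb{G})W_k$, $k=1,\dots,n$ (the integrals being the $w_{lk}$).

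It then remains to certify that $\tilde\rho$ really is the spectral function of $\tilde L=L(\tilde p,\tilde q,0)$, i.e. that the spectrum of $\tilde L$ is $\{\lambda_m\}_{m\in\mathbb Z}\cup\{\mu_k\}_{k=1}^{n}$ with norming constants $\{a_m\}\cup\{c_k\}$ and that Parseval's identity holds for $d\tilde\rho$. I would proceed as in the proof of Theorem \ref{thm3-3-1}: show that $\tilde\psi=(\mathbb{I}+\mathbb{G})\psi$ satisfies $\tilde\ell\,\tilde\psi=\lambda\tilde\psi$ with $\tilde\psi(0)=\bigl(\sin 0,-\cos 0\bigr)^{T}$, using the kernel identities analogous to \eqref{3-3-17}--\eqref{3-3-18}; construct the inverse transformation $\mathbb{I}+\mathbb{H}=(\mathbb{I}+\mathbb{G})^{-1}$ with its own explicit finite-rank kernel; and then verify Parseval for the measure $d\tilde\rho$ by the same norm-bookkeeping computation that expresses an arbitrary compactly supported $f$ through its transform and reads off the contribution $\sum_k c_k^{-1}\tilde F(\mu_k)^2$. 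In particular this computation forces $\|\tilde W_k(\cdot)\|^2=c_k$, so each $\tilde W_k$ genuinely lies in $L^2(0,\infty;\mathbb C^2)$ and $\mu_k$ is a true eigenvalue of $\tilde L$.

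The main obstacle is exactly this last step --- proving that $d\tilde\rho$ is a legitimate spectral measure, i.e. the $L^2$-membership of the new eigenfunctions together with completeness (Parseval). A quicker alternative I would also record: Theorem \ref{thm3-3-1} already establishes, in full generality, that adjoining $n$ eigenvalues to a semi-axis canonical Dirac operator yields another such operator, with potential given by the recursion \eqref{3-3-2}--\eqref{3-3-3} (take $m=l=0$, $L=L(0,x,0)$, and identify the added norming data). One then only has to check the purely algebraic fact that this $n$-fold recursion telescopes into the single-determinant Cramer formula above, which is an easy induction on $n$ built on the cofactor expansion of $\det S(x,n)$; by the uniqueness clause of Theorem \ref{thm3-3-1} the two expressions for $\tilde\Omega$ coincide, and the spectral statement is inherited directly from Theorem \ref{thm3-3-1}.
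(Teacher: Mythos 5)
Your proposal follows essentially the same route as the paper: the paper's proof of Theorem \ref{thm2.2.4} is exactly the degenerate-kernel Gel'fand--Levitan argument of Theorem \ref{thm2.2.2} with $F(x,y)=\sum_{k=1}^{n}c_k^{-1}W_k(x)W_k^{*}(y)$, the ansatz $G(x,y)=\sum_k g_k(x)W_k^{*}(y)$, Cramer's rule for $S(x,n)$, and then \eqref{eq2.2.26} and \eqref{eq2.2.23} for $\tilde\Omega$ and the eigenfunctions, while the legitimacy of the perturbed spectral function rests on the general finite-dimensional perturbation result of Theorem \ref{thm3-3-1}. Your added observations (positivity of $\det S(x,n)$ via the Gram-matrix structure, and the explicit Parseval bookkeeping for the new eigenvalues) are correct supplementary details, not a different method.
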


The proof is similar to the proof of Theorem \ref{thm2.2.2}.

\vspace{10mm}

\subsection{Scaling norming constants}

The following theorem says that one can change the values of the finite number of norming constants $a_n$ by any positive number $b_n \neq a_n$.

\begin{theorem}\label{thm2.2.3}
Let $\rho (\lambda)$ is the spectral function of the operator $L$.
Then the function $\tilde{\rho} (\lambda)$, defined by relation
\[
  d \tilde{\rho} (\lambda) = d \rho (\lambda)  + \sum_{k=1}^n (b_{z_k}^{-1} - a_{z_k}^{-1}) \delta(\lambda - \lambda_{z_k}) d \lambda
\]
also is spectral. Moreover, there exists a unique self-adjoint canonical Dirac operator $\tilde{L}$ generated on the half axis by the differential
expression $\tilde{l} = B \dfrac{d}{dx} + \tilde{\Omega}(x)$ and the boundary condition \eqref{eq2.2.16}, for which $\tilde{\rho} (\lambda)$
is spectral function.
Wherein, the potential function $\tilde{\Omega}(x)$ is

\[
\tilde{\Omega}(x) =   \left(
                        \begin{array}{cc}
                          p(x, n) & q(x, n) \\
                          q(x, n) & -p(x, n) \\
                        \end{array}
                      \right),
\]
where $ p(x, n)$ and  $q(x, n)$ are defined by the following formulae:

\[
\begin{array}{c}
  p(x, n)  =  - \dfrac{1}{\det S(x, n)} \displaystyle \sum_{k=1}^n \displaystyle \sum_{p=1}^2 V_{z_k,(3-p)}(x) \det S_{p}^{(k)}(x, n) ,\\
  \\
  q(x, n)  = x + \dfrac{1}{\det S(x, n)} \displaystyle \sum_{k=1}^n \displaystyle \sum_{p=1}^2 (-1)^{p-1} V_{z_k,p}(x) \det S_{p}^{(k)}(x, n) ,
\end{array}
\]
where $S(x, n)$ is $n \times n$ square matrix $ S(x, n) = \{ \delta_{z_i z_j} + ( b_{z_i}^{-1} - a_{z_i}^{-1}) v_{z_i z_j} (x) \}_{i,j=1}^n $
and $S_{p}^{(k)}(x, n)$ are matrices, which are obtained from the matrix $S(x, n)$, when we replace $k$-th column of $S(x, n)$
by $H_p(x, n) = \{ - ( b_{z_j}^{-1} - a_{z_j}^{-1}) V_{z_i,p}(x) \}_{i = 1}^n$ column, $p = 1, 2$.
And for the eigenfunctions $\tilde V_m(x)$ ($ m \in \mathbb{Z}$) we obtain the representations

\[
  \tilde V_m(x) = \left(
                  \begin{array}{c}
                    V_{m,1}(x) + \dfrac{1}{\det S(x,n)} \displaystyle \sum_{k=1}^n v_{z_k m}(x) \det S_{1}^{(k)}(x, n)  \\
                    \\
                    V_{m,2}(x) + \dfrac{1}{\det S(x,n)} \displaystyle \sum_{k=1}^n v_{z_k m}(x) \det S_{2}^{(k)}(x, n)  \\
                  \end{array}
                \right).
\]

\end{theorem}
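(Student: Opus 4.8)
The plan is to repeat, almost verbatim, the scheme used in the proofs of Theorems \ref{thm2.2.1} and \ref{thm2.2.2}, with the only structural novelty that the added point masses now sit at eigenvalues $\lambda_{z_k}$ of the unperturbed operator $L=L(0,x,0)$ and carry coefficients $b_{z_k}^{-1}-a_{z_k}^{-1}$ of either sign (only the positivity of the prescribed $b_{z_k}$ matters). First I would invoke the result of \cite{Harutyunyan:1986}: since $d\tilde\rho(\lambda)$ differs from $d\rho(\lambda)$ by finitely many Dirac masses while remaining left-continuous, increasing and step-like, it is again the spectral function of a unique self-adjoint canonical Dirac operator $\tilde L=L(\tilde p,\tilde q,0)$ on the half-axis. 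Then I would bring in the transformation operator $\mathbb I+\mathbb G$ of \eqref{eq2.2.23}, the Gel'fand--Levitan equation \eqref{eq2.2.24}--\eqref{eq2.2.25}, and the trace relation \eqref{eq2.2.26} connecting $\tilde\Omega(x)$ with $\Omega_0(x)$.

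Next I would compute the kernel explicitly. Using \eqref{eq2.2.19}, i.e. $\psi(x,\lambda_{z_k},0,\Omega_0)=V_{z_k}(x)$, the jump structure of $d\tilde\rho-d\rho$ gives the degenerate kernel
\[
F(x,y)=F_n(x,y)=\sum_{k=1}^{n}\bigl(b_{z_k}^{-1}-a_{z_k}^{-1}\bigr)\,V_{z_k}(x)\,V_{z_k}^{*}(y).
\]
Seeking $G_n(x,y)=\sum_{k=1}^{n}g_{z_k}(x)\,V_{z_k}^{*}(y)$ and inserting into \eqref{eq2.2.24}, the equation reduces, after introducing $v_{ij}(x)=\int_0^x V_i^{*}(s)V_j(s)\,ds$, to the algebraic system
\[
g_{z_k}(x)+\sum_{i=1}^{n}\bigl(b_{z_i}^{-1}-a_{z_i}^{-1}\bigr)\,v_{z_i z_k}(x)\,g_{z_i}(x)=-\bigl(b_{z_k}^{-1}-a_{z_k}^{-1}\bigr)\,V_{z_k}(x),\quad k=1,\dots,n.
\]
Splitting into coordinates $p=1,2$, this becomes $S(x,n)g_p(x,n)=H_p(x,n)$ with $S(x,n)$ and $S_p^{(k)}(x,n)$ exactly as stated; Cramer's rule yields $g_{z_k,p}(x)=\det S_p^{(k)}(x,n)/\det S(x,n)$. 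Substituting the resulting $G_n(x,x)$ into \eqref{eq2.2.26} gives the formulas for $p(x,n)$ and $q(x,n)$, and substituting $G_n(x,s)$ into \eqref{eq2.2.23} applied to $\psi(x,\lambda_m,0,\Omega_0)=V_m(x)$ gives the representation for $\tilde V_m(x)$; this part is identical to the computations in Theorems \ref{thm2.2.2} and \ref{thm2.2.4}.

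Two points then need verification. First, $\det S(x,n)\neq 0$ for every $x\in[0,\infty)$, so that the Gel'fand--Levitan equation is uniquely solvable and $\tilde\Omega\in L^1_{\mathbb R,\mathrm{loc}}(0,\infty)$; when $b_{z_i}^{-1}\ge a_{z_i}^{-1}$ this is immediate because $(v_{z_i z_j}(x))$ is a Gram matrix, so $S(x,n)$ is positive definite, and in the general-sign case one argues, as at the end of the proof of Theorem \ref{thm2.2.1}, via the analogous determinant for the inverse transformation operator $(\mathbb I+\mathbb G)^{-1}$, whose kernel has the mirror form $-G_n^{*}$; this gives a factorization identity forcing $\det S(x,n)>0$. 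Second, one must check that the norming constants of $\tilde L$ are indeed $b_{z_k}$ at $\lambda_{z_k}$ and unchanged elsewhere: this follows because the jump of $\tilde\rho$ at $\lambda_{z_k}$ equals $b_{z_k}^{-1}$ by construction (equivalently, $\|\tilde V_{z_k}\|^2=b_{z_k}$, which can also be verified directly from the explicit representation of $\tilde V_{z_k}$), while local summability of $\tilde\Omega$ rules out spurious eigenvalues. The main obstacle is precisely the non-vanishing of $\det S(x,n)$ uniformly in $x$ in the indefinite case; everything else is routine linear algebra already carried out for the cousin theorems.
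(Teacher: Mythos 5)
Your proposal is correct and follows essentially the same route the paper intends: the paper disposes of this theorem with "the proof is similar to the proof of Theorem \ref{thm2.2.2}", i.e. exactly your scheme of writing the degenerate kernel $F_n(x,y)=\sum_{k}(b_{z_k}^{-1}-a_{z_k}^{-1})V_{z_k}(x)V_{z_k}^{*}(y)$, solving the Gel'fand--Levitan equation by the ansatz $G_n(x,y)=\sum_k g_{z_k}(x)V_{z_k}^{*}(y)$ and Cramer's rule, and substituting into \eqref{eq2.2.26} and \eqref{eq2.2.23} (your extra care about $\det S(x,n)\neq 0$ in the indefinite case goes beyond what the paper records and is welcome). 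One small slip: in your algebraic system the scalar factor should be attached to the equation index, i.e. $g_{z_k}+(b_{z_k}^{-1}-a_{z_k}^{-1})\sum_i v_{z_i z_k}g_{z_i}=-(b_{z_k}^{-1}-a_{z_k}^{-1})V_{z_k}$, which (using the symmetry of $v_{ij}$) is the matrix $S(x,n)$ of the statement, whereas as written you have its transpose.
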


The proof is similar to the proof of Theorem \ref{thm2.2.2}.

Thus, we have proved that one can perturb the linear potential of the canonical Dirac operator by adding and subtracting a finite number of the eigenvalues and/or changing a finite number of norming constants with having changed potential function in explicit form.

\

\begin{remark}
We take the operator $L(0, x, 0)$ as a "model" operator for perturbing spectral function.
Analogues theorems can be proven for the second model operator $L(0, x, \pi / 2)$.
\end{remark}

\section*{Notes and references}
\addcontentsline{toc}{section}{Notes and references}
The asymptotic of the function $m_0(\lambda)$ ($\lim_{\mu \to \infty}m_0(i \mu) = i$) was obtained for the first time in \cite{Harutyunyan:1982} with fairly strong restrictions on the coefficients $p$ and $q$ and then in the work \cite{Harutyunyan:1989} for locally summable $p$ and $q$.

The same asymptotics \eqref{3-1-4} under the same conditions is obtained in the work of W. N.~Everitt et al. \cite{Everitt-Hinton-Shaw:1983}. However, the proof we are giving for Theorem \ref{thm3-1-1} is completely different from the method of the authors \cite{Everitt-Hinton-Shaw:1983} and, in our opinion, is more simple.

The results of Section \ref{c10:sec_2} was obtained in \cite{Harutyunyan:1985}.

The results of Section \ref{c10:sec_3} was obtained in \cite{Harutyunyan:1986}.

The concept of EVF of the family of Dirac operators on the semi-axis, given in Section \ref{c10:sec_4}, was introduced in \cite{Harutyunyan:1990}.

The results of Section  \ref{c10:sec_5} was obtain in \cite{Ashrafyan-Harutyunyan:2016}.

\bibliographystyle{alpha}
\renewcommand{\baselinestretch}{1.5}\normalsize
\renewcommand{\bibsection}{\chapter*{\bibname \markboth{\bibname}{\bibname} \addcontentsline{toc}{chapter}{\bibname}}}
\bibliography{References}

\end{document}